\newtheorem{theorem}{Theorem}[section]
\newtheorem{qst}[theorem]{Question}
\newtheorem{thm}[theorem]{Theorem}
\newtheorem{prop}[theorem]{Proposition}
\newtheorem*{Claim}{Claim}
\newtheorem{fact}[theorem]{Fact}
\newtheorem{cor}[theorem]{Corollary}
\newtheorem{lemma}[theorem]{Lemma}
\newtheorem{conj}{Conjecture}
\newtheorem*{thmA}{Theorem A}
\newtheorem*{thmB}{Theorem B}
\newtheorem*{thmC}{Theorem C}
\newtheorem*{thmD}{Theorem D}
\newtheorem*{thmE}{Theorem E}
\newtheorem*{thmF}{Theorem F}
\newtheorem*{thmH}{Fact 2.3 (Hieronymi's Theorem)}
\theoremstyle{definition}
\theoremstyle{remark}
\newcommand{\az}{\alpha \Z}
\newcommand{\rvec}{\R_{\mathrm{vec}}}
\newcommand{\bvec}{\R_{\mathrm{bvec}}}
\newcommand{\lord}{L_{\mathrm{og}}}
\newcommand{\lvec}{L_{\mathrm{ovec}}}
\newcommand{\tord}{T_{\mathrm{og}}}
\newcommand{\tvec}{T_{\mathrm{ovec}}}
\newcommand{\dimH}{\dim_{\mathrm{H}}}
\newcommand{\pr}{\operatorname{Pr}}
\ProvideTextCommandDefault{\cprime}{(U+042C)}
\newcommand{\gr}{{\operatorname{gr}}}
\newcommand{\st}{\operatorname{st}}
\newcommand{\minf}{\mathbf{Inf}}
\newcommand{\mfin}{\mathbf{Fin}}
\newcommand{\om}{\mfin_a/\minf_a}
\newcommand{\slam}{(\mathscr{S},\lambda^{\mathbb{Z}})}
\newcommand{\Dim}{\operatorname{Dim}}
\newcommand{\cl}{\operatorname{Cl}}
\newcommand{\bd}{\operatorname{Bd}}
\newcommand{\Sh}[1]{\ensuremath{\mathscr{#1}^{\mathrm{Sh}}}}
\newcommand{\Sq}[1]{\ensuremath{\mathscr{#1}^{\square}}}
\newcommand{\nip}{\mathrm{NIP}}
\newcommand{\ntp}{\mathrm{NTP}_2}
\newcommand{\tp}{\mathrm{TP}_2}
\newcommand{\cal}[1]{\ensuremath{\mathcal{#1}}}
\newcommand{\Cal}[1]{\ensuremath{\mathcal{#1}}}
\newcommand{\Sa}[1]{\ensuremath{\mathscr{#1}}}
\newcommand{\ralg}{\R_{\mathrm{alg}}}
\newcommand{\Z}{\mathbb{Z}}
\newcommand{\N}{\mathbb{N}}
\newcommand{\C}{\mathbb{C}}
\newcommand{\Q}{\mathbb{Q}}
\newcommand{\R}{\mathbb{R}}
\newcommand{\DSig}{D_{\Sigma}}
\begin{document}
\title[Quotients and expansions]{externally definable quotients and nip expansions of the real ordered additive group}

\author{Erik Walsberg}
\address{Department of Mathematics, Statistics, and Computer Science\\
Department of Mathematics\\University of California, Irvine, 340 Rowland Hall (Bldg.\# 400),
Irvine, CA 92697-3875}
\email{ewalsber@uci.edu}
\urladdr{http://www.math.illinois.edu/\textasciitilde erikw}

\date{\today}

\maketitle

\begin{abstract}
Let $\Sa R$ be an $\nip$ expansion of $(\R,<,+)$ by closed subsets of $\R^n$ and continuous functions $f : \R^m \to \R^n$.
Then $\Sa R$ is generically locally o-minimal.
It follows that if $X \subseteq \R^n$ is definable in $\Sa R$ then the $C^k$-points of $X$ are dense in $X$ for any $k \geq 0$.
This follows from a more general theorem on $\nip$ expansions of locally compact groups, which itself follows from a result on quotients of definable sets by equivalence relations which are externally definable and $\bigwedge$-definable.
We also show that $\Sa R$ is strongly dependent if and only if $\Sa R$ is either o-minimal or $(\R,<,+,\az)$-minimal for some $\alpha > 0$.
\end{abstract}

\noindent A highly saturated structure $\Sa M$ is $\nip$ if whenever $(I,<)$ is an indiscernible sequence in $\Sa M$ and $X$ is an $\Sa M$-definable (possibly with parameters) set, then $X \cap I$ is a finite union of $<$-convex sets.
So we might view $\nip$ structures as ``weakly o-minimal on indiscernible sequences".
We therefore hope that definable sets in $\nip$ expansions of $(\R,<,+)$ behave similarly to definable sets in o-minimal expansions of $(\R,<,+)$.
One result in this direction is the theorem of Simon~\cite[Corollary 3.7]{Simon-dp} that a dp-rank one expansion of $(\R,<,+)$ is o-minimal.
In this paper we show that $\nip$ expansions of $(\R,<,+)$ by closed sets are very similar to o-minimal expansions. \newline

\noindent Throughout all structures are first order.
When we say that something is definable in a structure we mean that is definable possibly with parameters from that structure.
Two structures on a common domain $M$ are \textbf{interdefinable} if they define the same subsets of all $M^n$.
We regard interdefinable structures as the same.
Let $\Sa M$ be a structure with domain $M$.
The structure induced on $A \subseteq M^m$ by $\Sa M$ is the structure with domain $A$ whose primitive $n$-ary relations are all sets of the form $X \cap A^{n}$ for $\Sa M$-definable $X \subseteq M^{mn}$.
The structure induced on $A$ eliminates quantifiers if every subset of $A^n$ definable in the induced structure is of the form $Y \cap A^n$ for $\Sa M$-definable $Y \subseteq M^{mn}$.
(We will commonly encounter this situation.) \newline

\noindent Suppose $M^n$ is equipped with a topology for all $n \geq 1$.
In this paper we will always equip $M^n$ with the product topology when $n \geq 2$, but the basic definitions are naturally formulated in the more general context.
We say that $\Sa M$ is \textbf{noiseless} if every definable subset of every $M^n$ either has interior or is nowhere dense.
We say that $\Sa M$ is \textbf{noiseless in one variable} if every definable subset of $M$ either has interior or is nowhere dense.
We say that $\Sa M$ is \textbf{strongly noiseless} if the induced structure on any definable $Y \subseteq M^n$ is noiseless.
Equivalently $\Sa M$ is strongly noiseless if whenever $X,Y$ are definable subsets of $M^n$ then $X$ is either nowhere dense in $Y$ or has interior in $Y$.
We say that $\Sa M$ is \textbf{strongly noiseless in one variable} if whenever $X,Y \subseteq M$ are definable then $X$ is either nowhere dense in $Y$ or has interior in $Y$. 
(These definitions are essentially due to Chris Miller.)
\newline

\noindent Strong noiselessness is a typical and important property of structures whose definable sets are ``tame topological" objects.
Algebraically closed fields, (weakly) o-minimal expansions of dense linear orders~\cite{MMS-weak}, P-minimal expansions of $p$-adically closed fields (in particular the field of $p$-adic numbers)~\cite{CCL}, C-minimal expansions of dense C-relations (in particular algebraically closed valued fields)~\cite{HM-Cmin}, and unstable dp-minimal expansions of fields~\cite{Johnson-top,SW-tame} are all strongly noiseless with respect to canonical topologies.
There are many model-theoretically tame structures which are noisey with respect to a canonical topology.
In particular there are many noisey $\nip$ expansions of $(\R,<,+)$ such as $(\R,<,+,\Q)$, $(\R,<,+,\times,\ralg)$ where $\ralg$ is the set of real algebraic numbers, and $(\R,<,+,\times,U)$ where $U$ is the set of complex roots of unity~\cite{GH-Dependent}. \newline

\noindent The \textbf{open core} $\Sa M^\circ$ of $\Sa M$ is the structure on $M$ whose primitive $n$-ary relations are all closures of $\Sa M$-definable subsets of $M^n$.
If $\Sa M$ defines a basis for the topology on each $M^n$ then $\Sa M^\circ$ is the reduct of $\Sa M$ generated by all closed $\Sa M$-definable sets.
If the topology on $M^n$ is Hausdorff, $X \subseteq M^m$ is $\Sa M$-definable and either open or closed, and $f : X \to M^n$ is continuous and $\Sa M$-definable, then $f$ is definable in $\Sa M^\circ$.
(Hausdorffness ensures that the graph of $f$ is closed in $X \times M^n$.)
We say $\Sa M$ is \textbf{generated by closed sets} or is an \textbf{expansion by closed sets} if it is interdefinable with $\Sa M^\circ$.
A subset of a topological space is \textbf{constructible} if it is a boolean combination of closed sets.
The following result of Miller and Dougherty~\cite{MD-boolean} shows that if $\Sa M$ defines a basis for the topology on each $M^n$ then $\Sa M^\circ$ is interdefinable with the reduct of $\Sa M$ generated by all constructible $\Sa M$-definable sets.

\begin{fact}
\label{fact:MD}
Suppose $X$ is an $\Sa M$-definable set, $\Sa M$ defines a basis for a topology on $X$, and $Y \subseteq X$ is constructible and definable.
Then $Y$ is a boolean combination of definable closed sets.
\end{fact}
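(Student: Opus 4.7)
The plan is to exploit the definable basis to make topological closure a definability-preserving operation, and then to induct on a ``frontier rank'' of $Y$.

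\emph{Step 1 (definability of closure).} I would first observe that if $\phi(x,y)$ is a formula whose instances parameterize a basis for the topology on $X$, then for any definable $Z \subseteq X$, the closure $\overline{Z}$ is defined by the first-order condition ``every basic open containing $x$ meets $Z$,'' i.e.\ $\forall y\,(\phi(x,y) \to \exists z\,(\phi(z,y) \wedge z \in Z))$. Hence closure preserves definability, and so does the frontier operator $Y \mapsto \overline{Y} \setminus Y$.

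\emph{Step 2 (frontier iteration).} Set $Y^{(0)} := Y$ and $Y^{(j+1)} := \overline{Y^{(j)}} \setminus Y^{(j)}$; by Step 1 all iterates are definable. The key claim is that for constructible $Y$ this sequence terminates at $\emptyset$ in finitely many steps. I would argue this by writing $Y$ as a disjoint union of the atoms $A_\epsilon = \bigcap_{\epsilon_i = 1} C_i \cap \bigcap_{\epsilon_i = 0}(X \setminus C_i)$ of the finite boolean algebra generated by the witnessing closed sets $C_1,\ldots,C_n$. Each atom is locally closed, and a direct check shows that $\overline{A_\epsilon} \setminus A_\epsilon$ lies in the union of atoms $A_{\epsilon'}$ with $\epsilon' > \epsilon$ in the coordinatewise order on $\{0,1\}^n$. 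Tracking which atoms are met by successive iterates $Y^{(j)}$, and using that the partial order on $\{0,1\}^n$ has finite height, yields termination. Constructibility is essential here; the iteration does not terminate for arbitrary definable sets, as $\Q \subseteq \R$ illustrates.

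\emph{Step 3 (induction).} With termination in hand, let $\rho(Y) := \min\{k : Y^{(k)} = \emptyset\}$ and induct on $\rho(Y)$. The base case $\rho(Y) = 0$ is trivial. For the inductive step, $Y^{(1)}$ is constructible, definable, and of strictly smaller rank, so by the inductive hypothesis $Y^{(1)}$ is a boolean combination of closed definable sets. Since $\overline{Y}$ is itself closed and definable by Step 1, the identity $Y = \overline{Y} \setminus Y^{(1)}$ exhibits $Y$ as a boolean combination of closed definable sets.

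The principal obstacle is Step 2: formalizing the termination of the frontier iteration. The delicate point is that $Y^{(1)}$ need not itself lie in the boolean algebra generated by $C_1,\ldots,C_n$, so the induction cannot simply be on this algebra; the correct rank must decrease under the frontier operator on all constructible sets, and the bookkeeping via the atom lattice is what makes this work.
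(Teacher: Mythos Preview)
The paper does not prove this statement; it cites it as a result of Dougherty and Miller. Your approach --- iterating the frontier operator $f(Y) = \overline{Y} \setminus Y$ and inducting on the least $k$ with $f^k(Y) = \emptyset$ --- is exactly the Dougherty--Miller argument, and Steps~1 and~3 are correct as written.

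Step~2 has a genuine gap. From $\overline{A_\epsilon} \subseteq \bigcup_{\epsilon' \geq \epsilon} A_{\epsilon'}$ you only get that each atom meeting $Y^{(j+1)}$ lies \emph{at or above} some atom meeting $Y^{(j)}$: if $p \in Y^{(j+1)} \cap A_\epsilon$ then $p \in \overline{Y^{(j)}} \subseteq \bigcup_{\epsilon' \in T_j} \overline{A_{\epsilon'}}$, where $T_j := \{\epsilon' : A_{\epsilon'} \cap Y^{(j)} \neq \emptyset\}$, so $\epsilon \geq \epsilon'$ for some $\epsilon' \in T_j$. For termination you need this inequality to be strict, and your sketch does not explain why it should be --- and indeed for arbitrary $Z$ in place of $Y^{(j)}$ it need not be (take $Z = \{1/n : n \geq 1\}$ inside the single atom $[0,1]$). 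The missing ingredient is a parity alternation: a direct computation gives $Y^{(j+2)} = \overline{Y^{(j+1)}} \cap Y^{(j)}$ for all $j \geq 0$ (use $\overline{Y^{(j+1)}} \subseteq \overline{Y^{(j)}}$ to kill the other term), whence $Y^{(2k)} \subseteq Y$ and $Y^{(2k+1)} \subseteq X \setminus Y$. Thus $T_j \subseteq S$ for even $j$ and $T_j \subseteq S^c$ for odd $j$, so $T_j \cap T_{j+1} = \emptyset$, forcing $\epsilon' \neq \epsilon$ above and hence $\epsilon' < \epsilon$. Iterating produces a strict chain in $\{0,1\}^n$ of length $j+1$ whenever $Y^{(j)} \neq \emptyset$, so $Y^{(n+1)} = \emptyset$. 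With this added, your proof is complete.
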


\noindent It has been observed that $\Sa M^\circ$ is typically noiseless when $\Sa M$ is model-theoretically tame and $\Sa M$ is typically interdefinable with $\Sa M^\circ$ when $\Sa M$ is strongly noiseless (the latter usually follows from some kind of cell decomposition).
It is conjectured that $\Sa R^\circ$ is noiseless if $\Sa R$ expands $(\R,<,+,\times)$ and does not define the set of integers.
It is also conjectured that $\Sa R^\circ$ is strongly noiseless when $\Sa R$ expands $(\R,<,+)$ and does not interpret the monadic second order theory of one successor.
(We discuss these conjectures in Section~\ref{section:main-conjecture}.)
Theorem A generalizes a special case.

\begin{thmA}
Suppose $G$ is a group and $\Sa G$ is a first order expansion of $G$ such that $\Sa G$ defines
\begin{enumerate}
    \item a basis for a locally compact Hausdorff group topology on $G$,
    \item a family $\Cal K$ of compact subsets of $G$ such that any compact subset of $G$ is contained in some element of $\Cal K$.
\end{enumerate}
If $\Sa G$ is $\nip$ then $\Sa G^\circ$ is strongly noiseless.
\end{thmA}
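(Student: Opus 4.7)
The plan is to realise the topological closure operation on $G$ as a definable quotient inside a saturated elementary extension, and then to invoke the quotient theorem advertised in the abstract.

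First, I would pass to a sufficiently saturated elementary extension $\Sa G^* \succ \Sa G$ and set $G^\infty := \bigcap \{ U : U \text{ is a definable open neighbourhood of the identity in } \Sa G \}$. Hypothesis (1) makes $G^\infty$ a type-definable subgroup of $G^*$, and hypothesis (2) together with local compactness ensures that the near-standard locus $G^{\mathrm{ns}} := \bigcup_{K \in \Cal K} K^*$ is closed under the group operation and carries a continuous surjective homomorphism $\st : G^{\mathrm{ns}} \to G$ with kernel $G^\infty$. For a definable $Y \subseteq G^n$, componentwise application of $\st$ gives a surjection $Y^* \cap (G^{\mathrm{ns}})^n \twoheadrightarrow \overline{Y}$, and the infinitesimal equivalence relation $E_Y$ given by $(a,b) \in E_Y \Leftrightarrow \st(a) = \st(b)$ is $\bigwedge$-definable over $\Sa G$ as the intersection of the definable family of closeness relations $a_i b_i^{-1} \in U$.

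Next, I would check that, after restricting to $Y^* \cap K^n$ for a fixed $K \in \Cal K$, the relation $E_Y$ is also externally definable in the sense of $\Sh{G}$. This is the step in which the $\nip$ hypothesis enters, via Shelah's honest definitions for externally definable sets, and it is what allows the quotient $(Y^* \cap K^n)/E_Y \cong \overline{Y} \cap K^n$ to inherit a tractable structure from $\Sa G^*$. Applying the paper's quotient theorem to $E_Y$ then yields a description of $\Sa G^\circ$-definable subsets of $\overline{Y}$ as images under $\st$ of suitable $\Sa G^*$-definable subsets of $Y^* \cap K^n$. With this machinery, strong noiselessness reduces as follows: given $\Sa G^\circ$-definable $X, Y \subseteq G^n$, I may assume $X \subseteq Y$ and, by Fact~\ref{fact:MD}, that $X$ is a boolean combination of closed $\Sa G$-definable sets. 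Assuming $X$ is somewhere dense in $Y$, witnessed by a non-empty relatively open $U \subseteq Y$ with $\overline{X} \supseteq U$, I choose $K \in \Cal K$ with $\overline{U} \subseteq K^n$, lift to $\Sa G^*$, and observe that every point of $U$ is $\st(x)$ for some $x \in X^* \cap K^n$. The quotient theorem, combined with the dichotomy it transports between interior and nowhere denseness, then produces a non-empty relatively open $V \subseteq U$ contained in $X$ itself, by an induction on the number of closed pieces in the boolean representation of $X$.

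The main obstacle is exactly this last promotion, from control over \emph{closures} of definable sets (which is what $\st$ naturally sees) to control over \emph{interiors} of definable sets. One must combine two simultaneous properties of $E_Y$ -- $\bigwedge$-definability, used to detect that $\overline{X} \supseteq U$, and externally definable approximation via honest definitions, used to extract $X \supseteq V$ -- and fuse them through $\nip$. Verifying that the quotient theorem genuinely performs this fusion uniformly across the family $\Cal K$, rather than only over a single compact $K$, is where the technical weight of the argument sits; it is the step that refuses to go through without the $\nip$ hypothesis on $\Sa G$.
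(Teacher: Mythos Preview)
Your overall architecture matches the paper's: pass to a saturated $\Sa G^*$, form the infinitesimal subgroup $G^\infty$ and the near-standard locus $G^{\mathrm{ns}}$, identify $G^{\mathrm{ns}}/G^\infty$ with $G$ via the standard part map, and invoke Theorem~B. Where you diverge is in the final deduction, and that is where the gap lies.

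Your appeal to Fact~\ref{fact:MD} to write an arbitrary $\Sa G^\circ$-definable $X$ as a boolean combination of closed $\Sa G$-definable sets is not justified. The primitive relations of $\Sa G^\circ$ are closed sets, but $\Sa G^\circ$-definable sets arise from these via quantification, and there is no reason a projection of a constructible set is constructible. Fact~\ref{fact:MD} takes constructibility as a \emph{hypothesis}, not a conclusion. Even granting the representation, the ``induction on the number of closed pieces'' is not spelled out and it is not clear how it would go: if $X = A \setminus B$ with $A,B$ closed is dense in an open $U \subseteq Y$, you have no control over how $B$ sits inside $U$.

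The paper sidesteps this entirely. It applies Theorem~B once, globally, to the equivalence relation $E$ on $G^*$ (with its subdefinable basis coming from $\Cal B'$), obtaining that the full structure induced on $G \cong G^{\mathrm{ns}}/G^\infty$ by $\Sh{G^*}$ is strongly noiseless. It then checks the single identity $\cl(Y) = \st\!\bigl(Y^* \cap (G^{\mathrm{ns}})^n\bigr)$ for $\Sa G$-definable $Y$, which shows every \emph{primitive} relation of $\Sa G^\circ$ is definable in that induced structure. Hence $\Sa G^\circ$ is a \emph{reduct} of a strongly noiseless structure, and strong noiselessness passes to reducts trivially. There is no need to analyse general $\Sa G^\circ$-definable sets, no need to localise to each $Y$ and each $K \in \Cal K$ separately, and no induction on boolean complexity. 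Your ``main obstacle'' --- promoting control over closures to control over interiors --- simply does not arise once you argue at the level of reducts.
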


\noindent Note that $(2)$ is superfluous when $G$ is compact.
We do not know how strong the conclusion of Theorem A is in general.
It implies that $\Sa G^\circ$-definable functions are generically continuous, see Proposition~\ref{prop:generic-cont}. 
We will see that Theorem A has very strong consequences over $(\R,<,+)$.
We first describe the proof of Theorem A and then discuss its consequences over $(\R,<,+)$.
Theorem A is a consequence of Theorem B, which we believe to be of independent interest.

\begin{thmB}
Suppose $\Sa N$ is a highly saturated $\nip$ structure and $X$ is an $\Sa N$-definable set.
Let $E$ be an equivalence relation on $X$ which is both $\bigwedge$-definable and externally definable.
Suppose that the Shelah expansion $\Sh N$ of $\Sa N$ defines a basis for the logic uniformity on $X/E$.
Then the structure induced on $X/E$ by $\Sh N$ is strongly noiseless.
\end{thmB}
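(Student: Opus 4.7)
The plan is to move to the Shelah expansion, where $E$ becomes honestly definable, and then exploit the $\nip$ property via an honest-definition and indiscernible-extraction argument. By Shelah's theorem $\Sh N$ is $\nip$; since $E$ is externally definable it is $\Sh N$-definable, so $X/E$ can be viewed as an imaginary sort of $\Sh N$, and the induced structure on $X/E$ by $\Sh N$ agrees with its structure in $(\Sh N)^{\mathrm{eq}}$. A subset of $(X/E)^n$ is definable in the induced structure precisely when its preimage in $X^n$ is $E^n$-invariant and $\Sh N$-definable. I will also invoke the Chernikov--Simon honest definition theorem: every $\Sh N$-definable set $D \subseteq X^n$ admits, for any finite $F \subseteq D$, an $\Sa N$-definable $D_F$ with $F \subseteq D_F \subseteq D$.

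Next I describe the topology concretely. Write $E = \bigcap_{i \in I} E_i$ with each $E_i$ an $\Sa N$-definable reflexive symmetric subset of $X^2$; the images $\tilde E_i \subseteq (X/E)^2$ form a neighborhood basis of the diagonal for the logic uniformity. The hypothesis says that for each $i$ there is an $\Sh N$-definable $V$ with $E \subseteq V$ whose image in $(X/E)^2$ lies in $\tilde E_i$. Hence the logic uniformity has a basis of $\Sh N$-definable entourages, and every open subset of $X/E$ is a union of basic neighborhoods of the form $\pi(V[a])$ with $V$ an $\Sh N$-definable entourage.

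Now the main step. Fix $P \subseteq Q \subseteq (X/E)^n$ definable in the induced structure and suppose $P$ is not nowhere dense in $Q$, witnessed by a basic open $U$ with $\emptyset \neq U \cap Q \subseteq \bar P$. Replacing $Q$ by $U \cap Q$, it suffices to show that if $P$ is dense in $Q$ then $P$ has interior in $Q$. Assume for contradiction that $P$ has no interior in $Q$: then at every $\bar a \in P$, every $\Sh N$-definable basic neighborhood of $\bar a$ meets $Q \setminus P$. Fix a single $\Sh N$-definable entourage $V$ and inductively build a long sequence $\bar a_0, \bar b_0, \bar a_1, \bar b_1, \dots$ in $Q$, with consecutive terms in $V$, with each $\bar b_i \in Q \setminus P$ (using no-interior at $\bar a_i$), and with each $\bar a_{i+1}$ in $P$ and in an honest-definition approximation $P_i \subseteq P$ containing $\{\bar a_0, \dots, \bar a_i\}$ (using density). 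The result is an alternation pattern controlled by a single $\Sh N$-formula; extracting a suitable indiscernible subsequence then produces an IP pattern for this formula, contradicting that $\Sh N$ is $\nip$.

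The main obstacle is executing this last step cleanly: topological closeness in $X/E$ is an infinitary notion (coming from a $\bigwedge$-intersection) while $\nip$ is finitary, so we must encode ``closeness'' by a single $\Sh N$-formula. This is exactly the role of the hypothesis that the logic uniformity on $X/E$ has an $\Sh N$-definable basis; combined with the honest-definition approximation of $P$ from inside by $\Sa N$-definable sets, these two inputs collapse the construction into a uniformly definable family, enabling the indiscernible extraction and the final $\nip$ violation.
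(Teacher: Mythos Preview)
Your setup matches the paper's: pass to $\Sh N$, view $X/E$ as an imaginary sort, pull back $P$ to an $\Sh N$-definable $E^n$-invariant $Z = \pi^{-1}(P) \subseteq X^n$, and approximate $Z$ from the inside by an $\Sa N$-definable family $(Z_b)_b$ via the Chernikov--Simon characterization of externally definable sets. That part is fine.

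The gap is in your last paragraph. The alternating sequence $\bar a_0,\bar b_0,\bar a_1,\bar b_1,\dots$ does not produce an IP pattern for any identifiable formula. The predicate ``$x\in P$'' is a single $\Sh N$-definable set, so alternation of a definable set along a sequence is worthless: once you extract an indiscernible sequence, all terms have the same $\Sh N$-type and land on the same side of $P$. If instead you try to use the formula $\psi(x;c)$ cutting out the honest approximations $P_i = \psi(X;c_i)$, you only know $\bar a_j\in P_i$ for $j\le i$ and $\bar b_j\notin P_i$ for all $j$; you have no control over $\psi(\bar a_j;c_i)$ for $j>i$, so there is no shattering and no unbounded alternation over an indiscernible. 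Honest definitions give one-sided containment only, and that is not enough combinatorial data for IP.

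The paper supplies the missing idea in two steps. First, for any $\Sa N$-definable $Z_b\subseteq X^n$, the image $\pi(Z_b)\subseteq (X/E)^n$ is \emph{closed} in the logic topology (a short saturation argument using that $E$ is a small intersection of definable sets; this is Proposition~\ref{prop:image}). Hence each $\pi(Z_b)$, being closed and contained in the co-dense set $P$, is \emph{nowhere dense} in $W\subseteq Q$. Second, instead of a linear alternating sequence one builds a tree: the paper proves a ``strong Baire category'' combinatorial lemma (Theorem~\ref{thm:abstract-sbct}) saying that if a definable family $\Cal A$ of nowhere dense subsets of a space with a subdefinable uniformity can hit any finite collection of basic open sets, then the ambient structure has $\mathrm{TP}_2$. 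The witnessing relation is $R(x;A,U,U')\Leftrightarrow x\in A[U]\setminus A[U']$, where $A[U]$ is the $U$-thickening of $A$; one recursively chooses $A_k\in\Cal A$ meeting all current cells and then, using Lemma~\ref{lem:uniform}, entourages $U_0^k\supsetneq\cdots\supsetneq U_n^k$ so that the sets $A_k[U_j^k]\setminus A_k[U_{j+1}^k]$ split every current cell. Applying this to the family $(\pi(Z_b))_b$ contradicts $\mathrm{NTP}_2$ (hence $\nip$) of $\Sh N$. The two ingredients you are missing are the closedness of $\pi(Z_b)$ and the thickening/tree argument replacing your sequential one.
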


\noindent Suppose $\Sa N$ is a highly saturated o-minimal expansion of field and $G$ is a definably compact group.
Then $G^{00}$ is known to be externally definable and the structure induced on $G/G^{00}$ by $\Sh N$ is interpretable in a disjoint union of finitely many o-minimal structures.
In particular the induced structure on $G/G^{00}$ is strongly noiseless.
We view Theorem B as a broad generalization of this fact.
As an application of Theorem B we show that if $\Sa N$ is a highly saturated strongly dependent structure, $G$ is a definably amenable group, and $G/G^{00}$ is a simple centerless Lie group, then the structure induced on $G/G^{00}$ by $\Sh N$ is bi-interpretable with an o-minimal expansion of the real field.
We prove this theorem and discuss an interesting possible generalization in Section~\ref{section:pillay} below.\newline

\noindent We now describe the proof of Theorem A from Theorem B.
Let $\Sa N$ be a highly saturated elementary extension of $\Sa G$.
We let $\mfin$ be the subgroup of ``finite" elements of $\Sa N$ and $\minf$ be the subgroup of ``infinitesimal" elements of $\Sa N$.
We show that $\minf$ is $\bigwedge$-definable and $\mfin,\minf$ are both externally definable.
Using familiar ideas from nonstandard analysis we see that $\mfin/\minf$ can be identified with $G$ and the quotient map $\mfin \to \mfin/\minf$ can be identified with the usual standard part map.
We also see that the logic topology agrees with the group topology on $G$.
Finally, we show that $\Sa G^\circ$ is a reduct of the structure induced on $G$ by $\Sh N$. 
Our proof does not rely on the group structure on $G$, only on the induced uniform structure.
For this reason our proof goes through in the more general setting of locally compact Hausdorff uniform spaces, see Section~\ref{section:loc-compact}.
The general result is reasonably sharp, see Section~\ref{section:counterexample}.\newline

\noindent We now describe consequences of Theorem A over the reals.
Suppose $\Sa R$ is an expansion of $(\R,<)$.
We say that $\Sa R$ is \textbf{locally o-minimal} if for every $\Sa R$-definable $X \subseteq \R$ and $a \in X$ there is an open interval $I$ containing $a$ such that $I \cap X$ is definable in $(\R,<)$.
By \cite[Corollary 3.4]{TV-local} $\Sa R$ is locally o-minimal if and only if the induced structure on any bounded interval is o-minimal.
We say that $\Sa R$ is \textbf{generically locally o-minimal} if for every $\Sa R$-definable $X \subseteq \R$ there is a dense definable open $V \subseteq X$ such that for all $a \in V$ there is an open interval $I$ containing $a$ such that $I \cap X$ is definable in $(\R,<)$.
It is known that an expansion of $(\R,<,+)$ is strongly noiseless if and only if it is generically locally o-minimal (see Theorem~\ref{thm:equiv} below).
So Theorem C follows from Theorem B.

\begin{thmC}
Suppose $\Sa R$ is an expansion of $(\R,<,+)$.
If $\Sa R$ is $\nip$ then $\Sa R^\circ$ is generically locally o-minimal.
\end{thmC}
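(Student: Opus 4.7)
The plan is to deduce Theorem C from Theorem A combined with the equivalence, recalled just above, between strong noiselessness and generic local o-minimality for expansions of $(\R,<,+)$. So the task splits into (i) verifying that Theorem A applies to $\Sa R$ viewed as an expansion of the additive group $(\R,+)$ under its order topology, and (ii) transferring the conclusion across this equivalence.

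First I would check the hypotheses of Theorem A. The order topology makes $(\R,+)$ a locally compact Hausdorff topological group, and its open intervals form a basis that is uniformly definable from $<$; this gives condition (1). For condition (2) I would take $\Cal K = \{[-a,a] : a > 0\}$, which is uniformly definable in $\Sa R$ (using $+$ to express $-a$). Each $[-a,a]$ is compact by Heine--Borel, and every compact subset of $\R$ is bounded and hence contained in some $[-a,a]$. Theorem A then produces the conclusion that $\Sa R^\circ$ is strongly noiseless.

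Next I would observe that $\Sa R^\circ$ is itself an expansion of $(\R,<,+)$: the graph of $+$ is closed since $+$ is continuous, so it is definable in $\Sa R^\circ$; and the closure of $\{(x,y) : x < y\}$ is $\{(x,y) : x \leq y\}$, from which $<$ is recovered. Applying Theorem~\ref{thm:equiv} to the strongly noiseless expansion $\Sa R^\circ$ of $(\R,<,+)$ then yields that $\Sa R^\circ$ is generically locally o-minimal, as required.

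The main obstacle is Theorem A itself, whose proof passes through Theorem B and the standard-part machinery over $(\R,+)$ sketched in the introduction; once that is in hand, the reduction to the real line is routine. The only small point of care is that the uniform definability of the witnessing family of compact sets uses the presence of $+$, and not merely of $<$.
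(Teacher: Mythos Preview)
Your proposal is correct and matches the paper's own argument essentially verbatim: the paper records that $\{(-t,t):t>0\}$ is a definable neighbourhood basis at the identity and $\{[-t,t]:t>0\}$ is a definable compact exhaustion, applies Theorem~A (stated there as Theorem~\ref{thm:main-cor-group}) to conclude $\Sa R^\circ$ is strongly noiseless, and then invokes Theorem~\ref{thm:equiv}. Your extra remark that $\Sa R^\circ$ itself expands $(\R,<,+)$ is a detail the paper leaves implicit, and your argument for it is fine.
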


\noindent Theorem C fails over $(\R,<)$.
Section~\ref{section:counterexample} shows that if $f : [0,1] \to [0,1]$ is the classical Cantor function, aka ``devil's staircase", then $(\R,<,f)$ is dp-minimal and noisey.
\newline

\noindent
We say that $\Sa N$ is $\pmb{\Sa M}$-\textbf{minimal} if $\Sa N$ is an expansion of $\Sa M$ and every $\Sa N$-definable subset of $M$ is $\Sa M$-definable.
(If $\Sa M = (\R,<)$ then $\Sa M$-minimality is o-minimality.)
In Section~\ref{section:strongly-dependent} we prove Theorem D by combining Theorem C, results of Dolich and Goodrick~\cite{DG} on strongly dependent expansions of ordered abelian groups, work of Kawakami, Takeuchi, Tanaka, and Tsuboi~\cite{KTTT} on locally o-minimal structures, and a recent result of B\`{e}s and Choffrut on $(\R,<,+,\Z)$~\cite{bes-choffrut}.

\begin{thmD}
Suppose $\Sa R$ is an expansion of $(\R,<,+)$.
The following are equivalent.
\begin{enumerate}
\item $\Sa R$ is a strongly dependent expansion by closed sets.
\item $\Sa R$ is strongly dependent and noiseless.
\item $\Sa R$ is either o-minimal or $(\R,<,+,\az)$-minimal for some $\alpha > 0$.
\item $\Sa R$ is either o-minimal or interdefinable with $(\R,<,+,\Cal B,\az)$ for some collection $\Cal B$ of bounded sets such that $(\R,<,+,\Cal B)$ is o-minimal.
\item $\Sa R$ is either o-minimal or locally o-minimal and interdefinable with $(\Sa S,\az)$ for some o-minimal expansion $\Sa S$ of $(\R,<,+)$. 
\end{enumerate}
In each case above $\alpha$ is unique up to rational multiples.
Suppose $\Sa R$ is $(\R,<,+,\az)$-minimal.
Then every definable subset of $\R^n$ is a finite union of sets of the form $\bigcup_{b \in B} b + A$ for $(\az,<,+)$-definable $B \subseteq (\az)^n$ and $\Sa R$-definable $A \subseteq [0,\alpha)^n$.
It follows that $\Sa R$ is bi-interpretable with the disjoint union of the induced structure on $[0,\alpha)$ (which is o-minimal) and $(\Z,<,+)$.
\end{thmD}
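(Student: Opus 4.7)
My plan is to prove the cycle $(1) \Rarr (2) \Rarr (3) \Rarr (4) \Rarr (5) \Rarr (1)$, with the substantial work concentrated in $(2) \Rarr (3)$. The remaining implications combine Theorem~C with results of Dolich-Goodrick, Kawakami-Takeuchi-Tanaka-Tsuboi, and B\`{e}s-Choffrut in essentially routine ways, so the main obstacle will be extracting a definable discrete subgroup in the non-o-minimal case.

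For $(1) \Rarr (2)$: strong dependence implies $\nip$, so Theorem~C gives that $\Sa R^\circ$ is generically locally o-minimal, hence strongly noiseless. Since $\Sa R$ is an expansion by closed sets, $\Sa R = \Sa R^\circ$, so $\Sa R$ is noiseless. For the main step $(2) \Rarr (3)$: assume $\Sa R$ is strongly dependent, noiseless, and not o-minimal. Strong dependence gives $\nip$, so Theorem~C applies. Non-o-minimality combined with noiselessness produces a definable infinite nowhere-dense subset of $\R$; Cantor-Bendixson analysis inside a generic neighbourhood of local o-minimality then extracts a definable infinite discrete subset $D \sub \R$. Dolich-Goodrick's work on strongly dependent expansions of ordered abelian groups forces $\Sa R$ to define a nontrivial discrete subgroup $\az$ for some $\alpha > 0$. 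Combining the Kawakami-Takeuchi-Tanaka-Tsuboi description of locally o-minimal structures (yielding o-minimality of the induced structure on $[0,\alpha)$) with B\`{e}s-Choffrut's quantifier elimination for $(\R,<,+,\Z)$, every $\Sa R$-definable subset of $\R$ splits into a bounded part and an $\az$-periodic part, both of which are $(\R,<,+,\az)$-definable; this establishes $(\R,<,+,\az)$-minimality.

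For the remaining implications: in $(3) \Rarr (4)$ take $\Cal B$ to be the family of $\Sa R$-definable subsets of $[0,\alpha)^n$; local o-minimality gives that the induced structure on $[0,\alpha)$ is o-minimal, and a standard cell decomposition argument yields o-minimality of $(\R,<,+,\Cal B)$. $(4) \Rarr (5)$ sets $\Sa S = (\R,<,+,\Cal B)$, which is o-minimal so $(\Sa S,\az)$ is trivially locally o-minimal. $(5) \Rarr (1)$: $\Sa S$ is o-minimal, hence strongly dependent, and this is preserved when adding the closed discrete subgroup $\az$ by Dolich-Goodrick; closedness of $\az$ makes $(\Sa S,\az)$ an expansion by closed sets. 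Uniqueness of $\alpha$ modulo $\Q$: if $\beta\Z$ were also $\Sa R$-definable with $\beta/\alpha \notin \Q$, then $\az + \beta\Z$ would be a dense countable $\Sa R$-definable subset of $\R$, incompatible with $(\R,<,+,\az)$-minimality since $(\R,<,+,\az)$-definable subsets of $\R$ are finite Boolean combinations of cosets of $\az$ and intervals.

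The normal form for definable subsets of $\R^n$ in the $(\R,<,+,\az)$-minimal case follows by induction on $n$ via a fibrewise application of B\`{e}s-Choffrut's quantifier elimination combined with o-minimal cell decomposition on $[0,\alpha)^n$. The bi-interpretability with the disjoint union of the induced o-minimal structure on $[0,\alpha)$ and $(\Z,<,+)$ then reads off from the normal form by separating the $\az$-coordinates from the $[0,\alpha)$-coordinates.
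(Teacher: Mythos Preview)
Your cycle has a genuine break at $(5)\Rightarrow(1)$. You write that strong dependence ``is preserved when adding the closed discrete subgroup $\az$ by Dolich--Goodrick,'' but this is not a theorem of Dolich--Goodrick and is false without further hypotheses: if $\Sa S$ is the real field then $(\Sa S,\Z)$ defines multiplication on $\Z$ and is not even $\nip$. You never use the local o-minimality hypothesis in $(5)$, and that hypothesis is exactly what is needed. The paper does not attempt $(5)\Rightarrow(1)$ directly; instead it proves the block equivalence $(3)\Leftrightarrow(4)\Leftrightarrow(5)$ separately (this is Theorem~\ref{thm:rz-min}) and then closes the loop via $(3)\Rightarrow(1)$. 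The latter works because $(\R,<,+,\az)$-minimality, via Fact~\ref{fact:ktt} and Michaux--Villemaire (Fact~\ref{fact:mv}), gives a bi-interpretation with the disjoint union of an o-minimal structure and $(\Z,<,+)$, and strong dependence is preserved under disjoint unions and bi-interpretations.

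There is a related gap at $(4)\Rightarrow(5)$: that $(\R,<,+,\Cal B,\az)$ is locally o-minimal is not ``trivial.'' It fails if the o-minimal part has a pole or an irrational scalar, and one must argue (via Fact~\ref{fact:edmundo} and Fact~\ref{fact:ktt-converse}, as in Theorem~\ref{thm:rz-min}) that the bounded-generation hypothesis on $\Cal B$ rules this out. Finally, two smaller points: in $(2)\Rightarrow(3)$ Theorem~C concerns $\Sa R^\circ$, not $\Sa R$, so it does not by itself give local o-minimality of $\Sa R$; the paper instead uses Fact~\ref{fact:DG1} (discrete definable sets have no accumulation points) together with the characterization in Theorem~\ref{thm:equiv-1} to get local o-minimality directly. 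And B\`{e}s--Choffrut is not a quantifier-elimination result; it is the dichotomy that any $(\R,<,+,\Z)$-definable set not already semilinear must define $\Z$, which is precisely what is needed to recover $\az$ from the Dolich--Goodrick arithmetic-progression decomposition of an infinite discrete definable set.
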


\noindent
Informally, Theorem D shows that a strongly dependent expansion by closed sets which is not o-minimal has a canonical decomposition into an ``integer part", $(\az,<,+)$, and an o-minimal ``fractional part", the induced structure on $[0,\alpha)$.
\newline

\noindent Theorem D shows that a strongly dependent expansions of $(\R,<,+)$ by closed sets is almost o-minimal.
In Section~\ref{section:generic-local} we apply the general theory of expansions of $(\R,<,+)$ to show that generically locally o-minimal expansions of $(\R,<,+)$ enjoy many of the good properties of o-minimal structures.
Generic local o-minimality implies definable selection, generic $C^k$-smoothness of definable functions, and yields a theory of dimension for definable sets well suited to computations.
We use these tools to obtain a dichotomy between ``linear" and ``field-type" expansions.

\begin{thmE}
Suppose $\Sa R$ is a generically locally o-minimal expansion of $(\R,<,+)$.
Then the following are equivalent:
\begin{enumerate}
    \item there is a nonempty open interval $I$ and continuous definable $\oplus,\otimes : I^2 \to I$ such that $(I,<,\oplus,\otimes)$ is an ordered field isomorphic to $(\R,<,+,\times)$,
    \item there is a definable field $(Z,\oplus,\otimes)$ such that $\dim Z \geq 1$,
    \item there is a definable family $(X_a)_{a \in B}$ of one-dimensional subsets of $\R^m$ such that $\dim B \geq 2$ and $X_a \cap X_b$ is zero-dimensional for distinct $a,b \in B$,
    \item there is a definable function $f : U \to \R^n$, $U$ a definable open subset of $\R^m$, such that $f$ is not locally affine on a dense open subset of $U$,
    \item there is a definable subset $X$ of $\R^m$ such that the set of $p \in X$ such that $U \cap X = H \cap X$ for some open neighbourhood $U$ of $p$ and affine subspace $H$ of $\R^m$ is not dense in $X$.
 \end{enumerate}
\end{thmE}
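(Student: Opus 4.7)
The plan is to prove the cycle $(1) \Rightarrow (2) \Rightarrow (3) \Rightarrow (4) \Rightarrow (5) \Rightarrow (1)$ using the dimension theory, definable selection, and generic $C^k$-smoothness for generically locally o-minimal expansions of $(\R,<,+)$ developed in Section~\ref{section:generic-local}. The implications $(1) \Rightarrow (2)$ (take $Z = I$) and $(4) \Rightarrow (5)$ are immediate: for the latter, if the graph of $f$ agreed with an affine subspace $H$ on an open neighbourhood of some $(p, f(p))$, then $m$-dimensionality of this piece would force $H$ to be the graph of an affine map over an open subset near $p$, so $f$ would be locally affine at $p$; contrapositively, density of locally affine points of $\textup{graph}(f)$ in $\textup{graph}(f)$ yields density of locally affine points of $f$ in $U$.

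For $(2) \Rightarrow (3)$, take $B := Z^2$ (of dimension $\geq 2$) and the definable family $X_{(a,b)} := \{(x, a \otimes x \oplus b) : x \in Z\} \subseteq Z^2$ of ``affine lines''; two distinct such lines meet in the solution set of a single nontrivial field equation, hence in at most one point. For $(3) \Rightarrow (4)$, by generic $C^1$-smoothness and definable selection I present $(X_a)_{a \in B}$, on a dense open sub-parameter set and in suitable local coordinates, as the graphs $y = g(a, x)$ of a single definable $C^1$ function $g$; the zero-dimensional intersection hypothesis forces $a \mapsto X_a$ to be injective on this sub-parameter set. If $g$ were locally affine on a dense open subset of its domain, then locally $g(a,x) = \alpha \cdot a + \beta x + \gamma$, so $X_a$ is a line with $a$-independent slope $\beta$ and $y$-intercept $\alpha \cdot a + \gamma$; the map $a \mapsto X_a$ then factors through the scalar $\alpha \cdot a$ and has at most one-dimensional image, contradicting injectivity together with $\dim B \geq 2$.

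The core of the proof is $(5) \Rightarrow (1)$, a Peterzil--Starchenko-style recovery of a real closed field from non-linearity. First, using generic local o-minimality together with \cite[Corollary 3.4]{TV-local}, pass to a bounded definable open box $U \subseteq \R^m$ on which the induced structure is o-minimal and on which the non-locally-affine points of $X$ remain dense in $X \cap U$. Second, apply generic $C^k$-smoothness to upgrade the witness to a definable $C^2$ graph with nonvanishing second-order partial derivative on a definable open piece. Third, run the classical Peterzil--Starchenko construction inside the o-minimal reduct of $\Sa R$ on $U$ to produce a definable ordered field on a nonempty open subinterval $I \subseteq \R$. Since $I$ is open in $\R$, Dedekind completeness of $\R$ forces the field to be isomorphic to $(\R,<,+,\times)$, and because $\Sa R$ defines the full induced structure on every bounded interval, the recovered field is definable in $\Sa R$ itself.

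The main obstacle is precisely this Peterzil--Starchenko transfer into the merely generically locally o-minimal ambient: one has to identify an o-minimal window on which the failure of local affineness persists, verify that the differentiable witness produced by generic $C^k$-smoothness survives the restriction to that window, and ensure that the field constructed inside the o-minimal reduct is actually $\Sa R$-definable. The classical arguments are local and apply verbatim inside the o-minimal window, so once this localisation and transfer are set up correctly the conclusion follows.
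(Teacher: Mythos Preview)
Your cycle and the paper's cycle agree, and the steps $(1)\Rightarrow(2)$, $(2)\Rightarrow(3)$, $(4)\Rightarrow(5)$ are essentially the same as in the paper (modulo small fixes: in $(2)\Rightarrow(3)$ you need to first inject an interval into $Z$ so that your ``lines'' are genuinely one-dimensional when $\dim Z>1$; in $(3)\Rightarrow(4)$ your claim that $\alpha\cdot a$ is a \emph{scalar} is unjustified when the curves live in $\R^m$ with $m>2$, and the paper's argument handles this by first reducing $B$ to an open subset of $\R^2$ and then playing two parameter-coordinates against each other). These are repairable.

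The real gap is in $(5)\Rightarrow(1)$. You invoke \cite[Corollary 3.4]{TV-local} to ``pass to a bounded definable open box on which the induced structure is o-minimal'', but that result characterises \emph{local} o-minimality, not \emph{generic} local o-minimality. A generically locally o-minimal expansion of $(\R,<,+)$ need not have \emph{any} bounded interval on which the induced structure is o-minimal: take $(\R,<,+,\times,2^{\Z})$, which is d-minimal (hence generically locally o-minimal) and field-type, yet for every nonempty open interval $I$ and every $a\in I$ the translate $a+\{2^{-n}:n\ge 0\}$ meets $I$ in an infinite discrete set accumulating at $a$. So there is no o-minimal window into which to transplant Peterzil--Starchenko, and your localisation-and-transfer strategy cannot be carried out.

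The paper avoids this entirely. After producing, via generic $C^k$-smoothness (Theorem~\ref{thm:Ck-points}), a definable $C^2$ function $f:V\to\R^{n-d}$ on a connected open $V$ with $f$ not affine, it simply quotes Fact~\ref{fact:gen-field-type} (from \cite{HW-continuous}, the case $m=1$ going back to Marker--Peterzil--Pillay): any non-affine $C^2$ function on a connected open set already makes $(\R,<,+,f)$ field-type. This is a direct construction of the field from the second-order data of $f$ and does not require any ambient o-minimality. Your proof becomes correct if you replace the o-minimal-window argument by an appeal to this fact.
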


\noindent There are locally o-minimal expansions of $(\R,<,+)$ which define infinite fields and do not satisfy $(1)$ above, see Fact~\ref{fact:ktt-converse}.
We conjecture that an $\nip$ expansion of $(\R,<,+)$ by closed sets which defines an infinite field satisfies $(1)$, see Conjecture~\ref{conj:field}.
We prove several special cases of this conjecture, in particular the strongly dependent case.\newline

\noindent 
The straightforward analogues of Theorems C and D fail for archimedean structures.
In Section~\ref{section:completion} we treat the correct analogue.
Suppose $\Sa R$ expands a dense archimedean ordered abelian group $(R,<,+)$, which we take to be a substructure of $(\R,<,+)$.
Laskowski and Steinhorn~\cite{LasStein} show that if $\Sa R$ is o-minimal then $\Sa R$ is an elementary substructure of a unique o-minimal expansion $\Sa S$ of $(\R,<,+)$.
In this case the structure induced on $R$ by $\Sa S$ is interdefinable with the Shelah expansion $\Sh R$ of $\Sa R$.
It follows from work of Wencel~\cite{Wencel-1} that if $\Sa R$ is weakly o-minimal then there is an o-minimal expansion $\Sq R$ of $(\R,<,+)$ such that the structure induced on $R$ by $\Sq R$ is interdefinable with $\Sh R$.
Theorem F generalizes these results.

\begin{thmF}
Let $R$ be a dense subgroup of $(\R,+)$, $\Sa R$ be an $\nip$ expansion of $(R,<,+)$, and $\Sa R \prec \Sa N$ be highly saturated.
Let $\mfin$ be the convex hull of $R$ in $N$ and $\minf$ be the set of $a \in N$ such that $|a| < b$ for all $b \in R,b > 0$.
Identify $\mfin/\minf$ with $\R$ and let $\st : \mfin \to \R$ be the quotient map.
As $\mfin$ and $\minf$ are $\Sh N$-definable we regard $\R$ as an imaginary sort of $\Sh N$.
Then the following structures are interdefinable.
\begin{enumerate}
    \item The structure $\Sq R$ on $\R$ with an $n$-ary relation symbol defining the closure in $\R^n$ of every subset of $R^n$ which is externally definable in $\Sa R$.
    \item The structure on $\R$ with an $n$-ary relation defining $\st(\mfin^n \cap X)$ for every $\Sa N$-definable $X \subseteq N^n$.
    \item The open core of the structure induced on $\R$ by $\Sh N$.
\end{enumerate}
Furthermore $\Sq R$ is generically locally o-minimal and if $\Sa R$ is strongly dependent then $\Sq R$ is  either o-minimal or
$(\R,<,+,\az)$-minimal for some $\alpha > 0$.
The structure induced on $R$ by $\Sq R$ is a reduct of $\Sh R$ and if $\Sa R$ is strongly dependent and noiseless then the structure induced on $R$ by $\Sq R$ eliminates quantifiers and is interdefinable with $\Sh R$.
\end{thmF}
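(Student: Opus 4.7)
The plan is to deduce Theorem F from Theorems B, C, and D together with a careful analysis of the standard part map. First I verify the hypotheses of Theorem B: the sets $\mfin = \bigcup_{r \in R,\,r > 0}(-r, r)$ and $\minf = \bigcap_{r \in R,\,r > 0}(-r, r)$ are externally definable over $\Sa N$, with $\minf$ additionally $\bigwedge$-definable, so the equivalence relation $E(x, y)\colon x - y \in \minf$ on $\mfin$ is both externally definable and $\bigwedge$-definable. The $\Sh N$-definable sets $\mfin \cap (a, b)$ for $a, b \in N$ give a basis for the logic uniformity on $\mfin/\minf \cong \R$ matching the Euclidean topology, so Theorem B, applied on each $\Sa N$-definable box $[-r, r]^n$ for $r \in R$ and then patched, gives that the structure induced on $\R$ by $\Sh N$ is strongly noiseless.

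The heart of the argument is the identification of (1), (2), and (3). For $\Sa N$-definable $X \subseteq N^n$ the set $\st(\mfin^n \cap X)$ is closed in $\R^n$: any real limit of standard parts of elements of $X$ realizes a finitely satisfiable partial type over $\Sa N$ and hence is itself the standard part of some element of $X \cap \mfin^n$, by saturation. Its preimage under $\pi$ is the $\bigwedge$-definable, and hence $\Sh N$-definable, set $\{z \in \mfin^n : \forall \varepsilon > 0\,\exists u \in X,\,|u - z| < \varepsilon\}$, so $\st(\mfin^n \cap X)$ sits in the open core of (3). Conversely, for a closed $\Sh N$-induced $W \subseteq \R^n$, the preimage $\pi^{-1}(W)$ is $\bigwedge$-definable over $\Sa N$ (closedness in the logic topology), and the honest definitions theorem for $\nip$ structures produces a single $\Sa N$-definable $X$ with $\st(\mfin^n \cap X) = W$; combined with Fact~\ref{fact:MD}, this gives (2)~$=$~(3). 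The equivalence (1)~$=$~(2) follows by combining the identification of externally $\Sa R$-definable subsets of $R^n$ with sets $X \cap R^n$ for $\Sa N$-definable $X$ (via saturation of $\Sa N$) with the fact that closures in $\R^n$ of such $X \cap R^n$ coincide, modulo parameters in $\R$ and the definable order, with the appropriate $\st$-images.

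The furthermore clauses are then routine applications of Theorems C and D. The structure $\Sq R$ is $\nip$ as a reduct of $\Sh N$ restricted to $\R$, expands $(\R,<,+)$ because $\leq$ and the graph of $+$ are closures of externally $\Sa R$-definable relations on $R$, and is generated by closed sets by construction. Theorem C then delivers generic local o-minimality. If $\Sa R$ is strongly dependent, then so are $\Sa N$, $\Sh N$, and the induced structure on $\R$, hence so is $\Sq R$, and Theorem D yields the dichotomy between o-minimality and $(\R,<,+,\az)$-minimality.

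For the structure induced on $R$ by $\Sq R$: given a closed $\Sq R$-definable $C = \overline{Z}$ with $Z \subseteq R^n$ externally $\Sa R$-definable, the set $C \cap R^n$ is $\bigwedge$-definable with externally $\Sa R$-definable parameters, which collapses to a single externally $\Sa R$-definable set by another application of honest definitions; thus the structure induced on $R$ by $\Sq R$ is a reduct of $\Sh R$. In the strongly dependent noiseless case, the explicit decomposition provided by Theorem D, into an integer part $\az$ and an o-minimal fractional part on $[0,\alpha)$, lets one verify that every externally $\Sa R$-definable set is already induced from $\Sq R$, giving interdefinability with $\Sh R$. The main technical obstacle throughout is the extraction of single $\Sa N$- or $\Sa R$-definable sets from $\bigwedge$-definable ones; this is precisely where the $\nip$ hypothesis is doing real work, via honest definitions.
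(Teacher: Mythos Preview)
Your overall architecture is right, but the places where you invoke honest definitions are doing the wrong job, and the final clause needs a different argument.

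\textbf{The cycle (1)--(2)--(3).} For the inclusion of (1) in (2) you write that an externally $\Sa R$-definable $Z \subseteq R^n$ is $X \cap R^n$ for some $\Sa N$-definable $X$, and that $\cl(Z)$ then ``coincides with the appropriate $\st$-image.'' But for an \emph{arbitrary} such $X$ the set $\st(X \cap \mfin^n)$ can be strictly larger than $\cl(Z)$: nothing prevents $X$ from containing points of $\mfin^n$ whose standard parts lie far from $Z$. What is needed is precisely that $X$ be an \emph{honest} definition of $Z$, so that $X$ avoids every $\Sa R$-definable box disjoint from $Z$; this is Lemma~\ref{lem:honest-apply} in the paper. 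You invoke honest definitions elsewhere but not here, where it actually matters.

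For the step from (3) back down, you argue that for closed $\Sh N$-induced $W$ the preimage $\pi^{-1}(W)$ is $\bigwedge$-definable and then ``honest definitions produces a single $\Sa N$-definable $X$ with $\st(\mfin^n \cap X) = W$.'' This is not what honest definitions gives: an honest definition of an externally definable subset of $N^n$ lives in a \emph{further} elementary extension of $\Sa N$, not in $\Sa N$ itself, so you do not obtain an $\Sa N$-definable $X$. The paper instead proves directly that (3) is a reduct of (1) via an explicit construction (Lemma~\ref{lem:closed-interdef}): one passes from $\st^{-1}(W)$ to the $\Sh R$-definable set of pairs $(\delta,a)\in R_{>0}\times R^n$ at distance $<\delta$ from $\st^{-1}(W)$, takes its closure in $\R^{n+1}$ (which is $\Sq R$-definable by definition), and recovers $W$ as the intersection of the fibers over all $t>0$.

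\textbf{The strongly dependent noiseless clause.} You say the decomposition of Theorem~D lets one verify that every externally $\Sa R$-definable set is induced from $\Sq R$. But Theorem~D is about expansions of $(\R,<,+)$; it applies to $\Sq R$, not to $\Sa R$, and the decomposition it gives is of $\Sq R$-definable subsets of $\R^n$, not of $\Sh R$-definable subsets of $R^n$. The paper's argument is quite different: first one shows (Lemma~\ref{lem:equiv-noise}) that noiselessness of $\Sa R$ upgrades to strong noiselessness, using generic local o-minimality of $\Sq R$; then one runs an induction on the Pillay rank of $\cl(X)$ in $\Sq R$ (available because $\Sq R$ is d-minimal in the strongly dependent case), peeling off the $\Sq R$-interior of $X$ in $\cl_R(X)$ at each stage (Proposition~\ref{prop:d-min-induced}). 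There is no way to read this off from the $[0,\alpha)\times\az$ decomposition of $\Sq R$ alone.
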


\noindent The final two sections are somewhat speculative.
In Section~\ref{section:pillay} we discuss some questions arising out of work on definable groups in o-minimal structures.
In Section~\ref{section:modular} we discuss how our results could relate to a possible notion of modularity or one-basedness for $\nip$ structures.
(It is a known open problem to define a good notion of modularity for $\nip$ structures.)
We also give an example, perhaps suprising, of an $\nip$ structure $\Sa M$ such that $\Sa M$ does not interpret an infinite field but the Shelah expansion $\Sh M$ of $\Sa M$ interprets $(\R,<,+,\times)$.

\section{notation and conventions}
\label{section:conventions}
\noindent
Throughout $m,n,k,d$ are natural numbers, $i,j$ are integers, and $s,t,\lambda,\alpha$ are real numbers.
We let $\R_{>0}$ be the set of positive real numbers.
We consider $\R^0$ to be a singleton.
Given a subset $X$ of $A \times B$ and $a \in A$ we let $X_a$ be $\{ b \in B : (a,b) \in X \}$.
We let $\gr(f) \subseteq A \times B$ be the graph of a function $f : A \to B$. \newline

\noindent We say that a family of $\Cal C$ of sets is \textbf{subdefinable} if it is a subfamily of a definable family of sets.
A subdefinable basis for a topology on a definable set $X$ is a subdefinable family of sets forming a basis for a topology on $X$. \newline

\noindent We let $\rvec$ be the ordered vector space $(\R,<,+,(t \mapsto \lambda t)_{\lambda \in \R})$ of real numbers. \newline

\noindent We let $\cl(X)$ be the closure and $\bd(X)$ be the boundary of a subset $X$ of a topological space. \newline

\noindent A \textbf{Cantor subset} of $\R$ is a nowhere dense compact subset of $\R$ without isolated points, equivalently, a subset of $\R$ which is homeomorphic to the classical middle thirds Cantor set. \newline

\noindent We let $\dim X$ be the topological dimension of a subset $X$ of $\R^n$ and ``dimension" without modification always means ``topological dimension".
There are several notions of topological dimension which need not agree on general topological spaces but do agree on separable metric spaces.
So in particular there is a canonical notion of topological dimension for subsets of Euclidean space.
We refer to Engelking~\cite{Engelking} for definitions and results on topological dimension.
Recall that $X \subseteq \R^n$ is zero-dimensional if and only if $X$ is nonempty and totally disconnected.
By convention $\dim X = -1$ if and only if $X$ is empty. \newline

\noindent We will sometimes work in a multi-sorted setting.
Suppose $L$ is a language with $S$ the set of sorts and $\Sa M$ is an 
$L$-structure. 
Then we let $M$ denote the $S$-indexed family $(M_s)_{s \in S}$ of underlying sets of the sorts of $\Sa M$. 
If $x = (x_j)_{j\in J}$ is a tuple of variables, we let $M^x = \prod_{j\in J} M_{s(x_j)} $ where $M_{s(x_j)}$ is the sort of the variable $x_j$. 
If $\varphi(x,y)$ is an $L$-formula and $b \in M^y$, we let $\varphi(M^y, b)$ be the set defined by $\varphi(x,b)$. \newline

\noindent Suppose $\Sa N$ is a highly saturated structure.
A subset $X$ of $N^x$ is $\bigwedge$-definable if it is the intersection of a small collection of $\Sa N$-definable sets.
Such sets are often said to be ``type-definable".

\section{Background on Expansions of $(\R,<,+)$}
\noindent \textbf{Throughout this section $\Sa R$ is an expansion of $(\R,<,+)$ and ``definable" without modification means ``$\Sa R$-definable"}.
In Section~\ref{section:examples} we describe some important examples of expansions of $(\R,<,+)$.
In Section~\ref{section:main-conjecture} we discuss the core conjectures on expansions of $(\R,<,+)$.
In Section~\ref{section:equiv} we give various equivalent definitions of local and generic local o-minimality.
In particular we show that an expansion of $(\R,<,+)$ is generically locally o-minimal if and only if it is strongly noiseless.
These results are well-known to experts.
In Section~\ref{section:d-min} we give some background on d-minimal expansions.
These results are already essentially known to experts, but our approach is new.
In particular we introduce the ``Pillay rank".\newline

\noindent The general study of expansions of $(\R,<,+)$ was envisioned by Chris Miller.

\subsection{Examples of expansions}
\label{section:examples}
\noindent The basic example of an expansion which is locally o-minimal and not o-minimal is $(\R,<,+,\Z)$.
(Miller~\cite{ivp} and Weispfenning~\cite{weis} independently showed that $(\R,<,+,\Z)$ has quantifier elimination in a natural expanded language, local o-minimality follows.)
Marker and Steinhorn showed in unpublished work that $(\R,<,+,\sin)$ is locally o-minimal, see \cite{TV-local}.
Fact~\ref{fact:ktt-converse} generalizes both of these examples.
Fact~\ref{fact:ktt-converse} is due to Kawakami, Takeuchi, Tanaka, and Tsuboi, it is a special case of \cite[Theorem 18]{KTTT}.
Given a real number $\alpha > 0$ we let $+_\alpha : [0,\alpha)^2 \to [0,\alpha)$ be given by $t +_\alpha t' = t + t'$ when $t + t' < \alpha$ and $t +_\alpha t' = t + t' - \alpha$ otherwise.

\begin{fact}
\label{fact:ktt-converse}
Fix $\alpha > 0$.
Suppose $\Sa I$ is an o-minimal expansion of $([0,\alpha),<,+_\alpha)$ and $\Sa D$ is an arbitrary first order expansion of $(\az,<,+)$.
Then there is a first order expansion $\Sa S$ of $(\R,<,+)$ such that a subset of $\R^n$ is $\Sa S$-definable if and only if it is a finite union of sets of the form 
$$ \bigcup_{b \in B} b + A. $$
for $\Sa I$-definable $A \subseteq [0,\alpha)^n$ and $\Sa D$-definable $B \subseteq (\az)^n$.
This $\Sa S$ is locally o-minimal and
is bi-interpretable with the disjoint union of $\Sa I$ and $\Sa D$.
\end{fact}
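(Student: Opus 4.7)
The plan is to exploit the bijection $\phi \colon \R \to (\az) \times [0,\alpha)$ sending $t$ to the unique pair $(b,a)$ with $t = b + a$, $b \in \az$, $a \in [0,\alpha)$. Under the induced identification $\R^n \cong (\az)^n \times [0,\alpha)^n$ a set of the form $\bigcup_{b \in B} b + A$ corresponds to the rectangle $B \times A$. So the content of the statement is that the definable sets of $\Sa S$, under this identification, are precisely the finite unions of rectangles $B \times A$ with $B$ $\Sa D$-definable and $A$ $\Sa I$-definable; equivalently, $\Sa S$ should be (bi)interpretable with the disjoint union $\Sa T := \Sa I \sqcup \Sa D$ via $\phi$.

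I would proceed in four steps. First, I would verify that $\phi$ witnesses an interpretation of $(\R,<,+)$ in $\Sa T$. The order on $\R$ transports to the lexicographic order on $(\az) \times [0,\alpha)$ (higher coordinate first), which splits as a union of two rectangles. For addition I would write
\[
 (b_1 + a_1) + (b_2 + a_2) = (b_1 + b_2 + c\alpha) + (a_1 +_\alpha a_2),
\]
where the carry bit $c \in \{0,1\}$ equals $1$ exactly when $a_1 + a_2 \geq \alpha$. The carry bit is $\Sa I$-definable since $a_1 + a_2 \geq \alpha$ if and only if $a_2 \neq 0$ and $a_1 +_\alpha a_2 < a_1$, a condition on $([0,\alpha),<,+_\alpha)$. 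Thus the graph of $\R$-addition corresponds, under $\phi^3$, to a finite union of rectangles, so is $\Sa T$-definable.

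Second, I would define $\Sa S$ to be the structure on $\R$ obtained by pulling back all $\Sa T$-definable subsets along $\phi^n$. The key input now is the standard quantifier-elimination fact for disjoint unions of structures over disjoint signatures: every $\Sa T$-definable subset of $[0,\alpha)^n \times (\az)^n$ is a Boolean combination of rectangles $A \times B$ with $A$ $\Sa I$-definable and $B$ $\Sa D$-definable. Since rectangles are closed under intersection and the complement of a rectangle is a union of two rectangles, every such Boolean combination is in fact a finite union of rectangles. This gives the claimed description of $\Sa S$-definable sets, and the first step shows that $\Sa S$ expands $(\R,<,+)$.

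Third, for local o-minimality fix $t_0 \in \R$ and an $\Sa S$-definable $X \subseteq \R$, written as a finite union $\bigcup_j \bigcup_{b \in B_j} b + A_j$. If $t_0 = b_0 + a_0$ with $a_0 \in (0,\alpha)$, then for $\varepsilon < \min(a_0,\alpha-a_0)$ the interval $(t_0-\varepsilon,t_0+\varepsilon)$ lies in $b_0 + [0,\alpha)$ and $X \cap (t_0 - \varepsilon, t_0 + \varepsilon)$ is a finite union of translates of $A_j \cap (a_0-\varepsilon, a_0+\varepsilon)$; by o-minimality of $\Sa I$ each such intersection is a finite union of points and intervals. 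If $t_0 = b_0 \in \az$, I would handle the right and left sides separately, using that $(b_0-\varepsilon,b_0) = (b_0-\alpha) + (\alpha-\varepsilon,\alpha)$, and conclude by the same argument. Finally, for bi-interpretability, I would observe that $\az = \bigcup_{b \in \az} b + \{0\}$ and $[0,\alpha) = 0 + [0,\alpha)$ are $\Sa S$-definable, and that intersecting an arbitrary $\Sa S$-definable set with $[0,\alpha)^n$ (resp.\ $(\az)^n$) recovers an $\Sa I$-definable (resp.\ $\Sa D$-definable) set and only such sets; this pins down the induced structures as $\Sa I$ and $\Sa D$ and provides the reverse interpretation. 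The only real technical step is the disjoint-union quantifier elimination, which is standard; everything else is essentially bookkeeping around the carry bit and the two cases for local o-minimality at points of $\az$.
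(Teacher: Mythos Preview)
The paper does not supply a proof of this fact: it is stated with attribution to Kawakami, Takeuchi, Tanaka, and Tsuboi \cite[Theorem 18]{KTTT} and used as a black box. Your sketch is correct and follows exactly the natural approach one would expect in the cited reference: pull back the disjoint union $\Sa I \sqcup \Sa D$ along the floor/fractional-part bijection, use Feferman--Vaught-style quantifier elimination for disjoint unions to identify definable sets as finite unions of rectangles, and read off local o-minimality from o-minimality of $\Sa I$. The carry-bit analysis for addition and the two-sided argument at points of $\az$ are the only places requiring care, and you handle both. There is nothing to compare against in the paper itself.
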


\noindent If $\alpha = 1$, $\Sa I$ is $([0,1),<,+_1)$, and $\Sa D$ is $(\Z,<,+)$, then $\Sa S$ is interdefinable with $(\R,<,+,\Z)$.
It follows that any $(\R,<,+,\Z)$-definable subset of $\R^n$ is a finite union of sets of the the form $\bigcup_{b \in B} b + A$ for $(\R,<,+)$-definable $A \subseteq [0,1)^n$ and $(\Z,<,+)$-definable $B \subseteq \Z^n$.
This description of definable sets previously appeared in work by computer scientists \cite{BBL-semenov, BFL-decomp}.
\newline

\noindent
If $\alpha = \pi$, $\Sa I$ is $([0,\pi), <, +_\pi, \sin|_{[0,\pi)})$, and $\Sa D$ is $(\pi\Z,<,+)$, then $\Sa S$ is interdefinable with $(\R,<,+,\sin)$.
More generally, suppose that $f : \R \to \R^n$ is analytic and $\alpha > 0$ is such that $f(t + \alpha) = f(t)$ for all $t$.
Letting $\Sa I$ be $([0,\alpha),<,+_\alpha, f|_{[0,\alpha)})$ and $\Sa D$ be $(\az,<,+)$, we easily see that $(\R,<,+,f)$ is interdefinable with $\Sa S$.
O-minimality of $\R_{\mathrm{an}}$ (see \cite{vdd-sub}) implies $\Sa I$ is o-minimal, so $(\R,<,+,f)$ is locally o-minimal.
\newline

\noindent Fact~\ref{fact:ktt-converse} shows in particular that if $\Sa D$ is an $\nip$ expansion of $(\Z,<,+)$ then the expansion of $(\R,<,+,\Z)$ by all $\Sa D$-definable subsets of all $\Z^n$ is $\nip$ and locally o-minimal.
Note that this structure is generated by closed sets.
If $\Sa D$ is $(\az,<,+)$ then $\Sa S$ is strongly dependent as a disjoint union of two strongly dependent structures is strongly dependent.\newline

\noindent Fact~\ref{fact:ktt-converse} also shows that the expansion of $(\R,<,+)$ by \textit{all} subsets of \textit{all} $\Z^n$ is locally o-minimal.
This is a special case of a general phenomenon uncovered by Miller and Friedman~\cite{FM-Sparse}: geometric tameness properties for definable sets weaker then o-minimality do not imply \textit{any} model-theoretic tameness. \newline

\noindent It is easy to see that an expansion of $(\R,<,+,\times)$ is locally o-minimal if and only if it is o-minimal.
The basic example of an $\nip$ expansion by closed sets which is not o-minimal is $(\R,<,+,\times,\lambda^{\Z})$ where $\lambda$ is a positive real other than $1$ and 
$$ \lambda^\Z := \{ \lambda^i : i \in \Z \}. $$
This structure was first studied by van den Dries~\cite{vdd-Powers2}.
It is $\nip$ by \cite[Theorem 6.5]{GH-Dependent}.
Fact~\ref{fact:increasing-sequence} follows by combining a theorem of Hieronymi and Miller~\cite{HM} with a metric result of Garc\'{\i}a, Hare, and Mendivil~\cite[Proposition 3.1]{GHM} (see also Fraser and Yu \cite[Theorem 6.1]{FraserYu}).

\begin{fact}
\label{fact:increasing-sequence}
Suppose $(s_n)_{n \in \N}$ is an eventually increasing sequence of positive real numbers such that $(s_{n+1} - s_n)_{n \in \N}$ is also eventually increasing.
Let $S$ be $\{ s_n : n \in \N \}$.
If $(\R,<,+,\times,S)$ does not define the set of integers then there is $\lambda > 1$ such that $s_n \geq \lambda^n$ for sufficiently large $n$.
\end{fact}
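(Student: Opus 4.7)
The plan is to prove the contrapositive: assume that no $\lambda>1$ makes $s_n\geq\lambda^n$ hold for sufficiently large $n$, and deduce that $(\R,<,+,\times,S)$ defines $\Z$. The strategy has two ingredients, one metric and one model-theoretic, corresponding exactly to the two references cited.

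First, I would invoke Proposition~3.1 of García--Hare--Mendivil. The combined hypotheses that $(s_n)$ is eventually increasing and that the gap sequence $(s_{n+1}-s_n)$ is also eventually increasing put $S$ into the precise class of sequences for which GHM compute a metric/fractal dimension (Assouad or upper box type) and characterise its vanishing. Their conclusion is that this dimension is zero exactly when $s_n$ grows at least geometrically, i.e.\ $s_n\geq\lambda^n$ eventually for some $\lambda>1$. Under the contrapositive hypothesis the relevant dimension of $S$ is therefore strictly positive.

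Second, I would feed this into the Hieronymi--Miller dichotomy. The form of their theorem needed here asserts that if $E\subseteq\R$ is closed and of positive metric dimension of the relevant kind, then the field expansion $(\R,<,+,\times,E)$ already defines the set of integers. Applied to $E=S$, positive dimension from Step~1 gives definability of $\Z$ in $(\R,<,+,\times,S)$, completing the contrapositive.

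The main obstacle is bookkeeping, not mathematics: one must check that the particular notion of metric dimension used in the Hieronymi--Miller dichotomy matches (or is dominated by) the one computed by GHM on sequences of this shape, so that ``positive dimension'' in the GHM sense really is an input to Hieronymi--Miller. The eventual monotonicity of both $(s_n)$ and $(s_{n+1}-s_n)$ is what makes this translation clean: passing if necessary to a tail of the sequence, one can assume strict monotonicity throughout and then either dimension is detected by the behaviour of consecutive ratios $s_{n+1}/s_n$, so the two notions have the same zero locus on this class. Once this identification is in place the argument is a one-line concatenation of the two cited results.
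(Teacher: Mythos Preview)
Your proposal is correct and matches the paper's approach exactly: the paper does not give a detailed proof but simply states that the fact follows by combining the Hieronymi--Miller theorem with Proposition~3.1 of Garc\'{\i}a--Hare--Mendivil, which is precisely the two-step contrapositive argument you outline. Your bookkeeping concern is resolved by the paper's later remark that the Hieronymi--Miller result is formulated in terms of Assouad dimension (topological dimension equals Assouad dimension for closed definable sets when $\Z$ is not definable), which is the same invariant computed by GHM, so no translation between dimension notions is needed.
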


\noindent The assumption that $(s_{n+1} - s_n)_{n \in \N}$ is eventually increasing is necessary by Thamrongthanyalak~\cite{Athipat}.
It is a theorem of Miller and Speissegger that an expansion of $(\R,<,+,\times)$ by closed sets is either o-minimal or defines an infinite closed and discrete subset of $\R$, see Fact~\ref{fact:miller-speissegger}.
So $(\R,<,+,\times,\lambda^\Z)$ is arguably the most natural example of a non o-minimal tame expansion of $(\R,<,+,\times)$ by a closed set. \newline

\noindent The expansion $(\R,<,+,\times,\lambda^\Z)$ is a special case of an interesting family of expansions.
Let $\Sa S$ be an o-minimal expansion of $(\R,<,+,\times)$.
We say that $\Sa S$ has \textbf{rational exponents} if the function $\R \to \R$ given by $t \mapsto t^r$ is only definable when $r \in \Q$.
Hieronymi~\cite{discrete} showed that $(\R,<,+,\times,\lambda^\Z,\eta^\Z)$ defines the set of integers for any $\lambda,\eta > 1$ such that $\log_{\lambda}\eta \notin \Q$. 
So if $\Sa S$ does not have rational exponents then $(\Sa S, \lambda^\Z)$ defines the set of integers.
Miller and Speissegger showed that if $\Sa S$ has rational exponents then $(\Sa S,\lambda^\Z)$ admits quantifier elimination in a natural expanded language~\cite[Section 8.6]{Miller-tame}.
Tychonievich studied $(\Sa S, \lambda^\Z)$ in his thesis~\cite{Tychon-thesis}.
It follows from \cite[Theorem 4.1.2, Corollary 4.1.7]{Tychon-thesis} and work of Chernikov and Simon~\cite[Corollary 2.6]{CS-I} that $(\Sa S,\lambda^\Z)$ is $\nip$ when $\Sa S$ has rational exponents.
Let $\mathbf{e}$ and $\mathbf{s}$ be the restrictions of the exponential function and $\sin$ to $[0,2\pi]$, respectively.
Then $(\R,<,+,\times,\mathbf{e},\mathbf{s})$ is o-minimal and has rational exponents by van den Dries~\cite{vdd-sub}.
It is observed in \cite[Section 3.4]{Miller-tame} that $(\R,<,+,\times,\textbf{e},\textbf{s},\lambda^\Z)$ defines the logarithmic spiral
$$  \{ (e^t \sin(\alpha t), e^t \cos(\alpha t) ) : t \in \mathbb{R} \} $$
where $\lambda = e^{2\pi\alpha}$.
So the expansion of $(\R,<,+,\times)$ by a logarithmic spiral is an $\nip$ expansion by a closed set which is not locally o-minimal. \newline

\noindent It should be noted that $(\Sa S, \lambda^\Z)$ satisfies a stronger condition then generic local o-minimality.
If $\Sa S$ has rational exponents then $(\Sa S, \lambda^\Z)$ is \textbf{d-minimal}: every unary definable set in every model of the theory of $(\Sa S,\lambda^\Z)$ is a union of an open set together with finitely many discrete sets.
In particular every nowhere dense definable subset of $\R$ has finite Cantor rank.
Question~\ref{qst:cantor-rank} is open, we expect it to have a positive answer.
(However, we expect that natural examples of $\nip$ expansions of $(\R,<,+)$ have d-minimal open core.)

\begin{qst}
\label{qst:cantor-rank}
Is there an $\nip$ expansion of $(\R,<,+)$ which defines a nowhere dense subset of $\R$ with infinite Cantor rank?
Is there an $\nip$ expansion of $(\R,<,+)$ which defines an uncountable nowhere dense subset of $\R$?
\end{qst}

\noindent Finally, it is worth pointing out that there are well-behaved expansions of $(\R,<,+)$ which are noiseless and not strongly noiseless.
Theorem~\ref{thm:equiv} shows that a noiseless expansion is strongly noiseless if and only if it does not define a Cantor subset of $\R$.
Friedman, Kurdyka, Miller, and Speissegger~\cite{FKMS} gave the first example of a Cantor subset $K$ of $\R$ such that $(\R,<,+,\times,K)$ is noiseless.
Hieronymi~\cite{H-TameCantor} shows that if $\Sa S$ is an o-minimal expansion of $(\R,<,+,\times)$ such that every $\Sa S$-definable function $\R \to \R$ is eventually bounded above by some compositional iterate of the exponential (every known o-minimal expansion of the real field satisfies this condition), then there is a Cantor subset $K$ of $\R$ such that $(\Sa S,K)$ is model-theoretically tame and noiseless.

\subsection{The core conjectures}
\label{section:main-conjecture} A subset of $\R^k$ is \textbf{$\omega$-orderable} if it is definable and is either finite or admits a definable ordering with order type $\omega$.
One should think of ``$\omega$-orderable" as ``definably countable".
A \textbf{dense $\omega$-order} is an $\omega$-orderable subset of $\R$ which is dense in some nonempty open interval.
The presence or absence of a dense $\omega$-order has remarkably strong consequences.
Hieronymi's Theorem is equivalent to the main theorem of \cite{discrete}.
The left to right implication follows as any countable subset of $\R^n$ is $\omega$-orderable in $(\R,<,+,\times,\Z)$.

\begin{thmH}
Suppose $\Sa R$ expands $(\R,<,+,\times)$.
Then there is a dense $\omega$-order if and only if the set $\Z$ of integers is definable.
\end{thmH}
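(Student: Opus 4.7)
The biconditional has one routine direction and one deep direction. For the routine direction, $\Z$ definable $\Rightarrow$ dense $\omega$-order, note that from $\Z$ and the field operations one defines $\Q = \{p/q : p,q \in \Z, q \neq 0\}$, which is dense in $\R$. Using $\Z$, one writes an explicit definable bijection $\Q \to \N$ (for example by enumerating $\Z \times \Z_{>0}$ via a Cantor-pairing and then selecting lowest-terms representatives); the pullback of the usual order on $\N$ along this bijection is a definable well-ordering of $\Q$ of order type $\omega$, so $\Q$ is a dense $\omega$-order.

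The hard direction, dense $\omega$-order $\Rightarrow \Z$ definable, is the main theorem of \cite{discrete}. My plan is to reduce to Hieronymi's analysis of closed discrete subsets of $\R$ definable in an expansion of $(\R,<,+,\times)$. Let $D$ be a dense $\omega$-order, dense in $(a,b)$, with definable well-ordering $\prec$ and $n$-th element $d_n$. Using the density of $D$ in $(a,b)$ together with the $\omega$-enumeration, one extracts definable accumulation data from $D$: the function $d \mapsto \inf\{|d-e| : e \prec d\}$ is definable, tends to $0$ as one moves up in $\prec$ (by density in the bounded interval $(a,b)$), and composing with an inversion of $\R$ sends its values to an unbounded set. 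After further $\prec$-definable thinning to discard repeated or near-equal values, one obtains a definable infinite closed discrete subset $E \subseteq \R$ equipped with a definable well-ordering of type $\omega$ inherited from $\prec$.

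The main obstacle lies in the remaining step: showing that a definable infinite closed discrete $E \subseteq \R$ in an expansion of $(\R,<,+,\times)$, equipped with a definable $\omega$-well-ordering, forces $\Z$ to be definable. This is the content of \cite{discrete}, whose argument combines Baire-category considerations with a delicate growth-rate analysis of the enumerating function $n \mapsto e_n$. The key dichotomy in that proof is between rapid-growth enumerations, where a definable closed discrete set can avoid encoding $\Z$, and controlled-growth enumerations, where the field operations applied to $E$ recover $\N$ and hence $\Z$; the hypothesis that our $D$ is \emph{bounded} in $(a,b)$ provides exactly the control on $E$ needed to rule out the rapid-growth alternative, so $\Z$ is definable. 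The reduction step is essentially bookkeeping within the ambient field structure, whereas Hieronymi's argument is where all the genuine work lies.
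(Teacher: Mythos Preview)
The paper does not prove this statement; it records it as a fact equivalent to the main theorem of \cite{discrete}, and justifies only the easy direction with the one-line remark that every countable subset of $\R^n$ is $\omega$-orderable in $(\R,<,+,\times,\Z)$. Your treatment of that direction is correct and matches the paper's.

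For the hard direction your citation of \cite{discrete} is appropriate, and had you stopped there you would match the paper. The sketch you add, however, has a genuine gap. Your reduction lands on the hypothesis ``$\Sa R$ defines an infinite closed discrete $E\subseteq\R$ equipped with a definable $\omega$-well-ordering.'' But \emph{every} infinite closed discrete subset of $\R$ already carries a definable $\omega$-well-ordering (restrict $<$ to whichever of $E\cap[0,\infty)$, $E\cap(-\infty,0]$ is infinite), so your target is no stronger than ``$\Sa R$ defines an infinite closed discrete set.'' That does \emph{not} force $\Z$ to be definable: $(\R,<,+,\times,2^{\N})$ is the standard counterexample (it is interdefinable with van den Dries's $(\R,<,+,\times,2^{\Z})$, which is d-minimal and does not define $\Z$). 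Your growth-control claim does not rescue this: boundedness of $D$ in $(a,b)$ says nothing about how fast $\inf\{|d-e|:e\prec d\}$ tends to $0$ along $\prec$ --- it can decay doubly exponentially --- so after inversion your $E$ can grow faster than any prescribed rate, placing it squarely in the ``rapid-growth'' regime you meant to exclude. The density of the original $\omega$-order is used in Hieronymi's argument in a way that is lost once you pass to $E$; the information you discard in the reduction is precisely what carries the proof.
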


\noindent
Fact~\ref{fact:HW} is proven in Hieronymi and Walsberg~\cite{HW-continuous}.

\begin{fact}
\label{fact:HW}
Suppose that $\Sa R$ defines at least one of the following:
\begin{enumerate}
    \item an unbounded continuous function $I \to \R^n$ on a bounded interval $I$,
    \item a non-affine $C^2$-function $U \to \R^n$ on a connected open subset $U$ of $\R^m$,
    \item the function $[0,1] \to \R$ given by $x \mapsto \lambda x$ for uncountably many $\lambda \in \R$.
\end{enumerate}
Then there is a dense $\omega$-order if and only if every bounded Borel subset of every $\R^n$ is definable.
\end{fact}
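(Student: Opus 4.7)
The easy direction is immediate: if every bounded Borel subset of every $\R^n$ is definable, fix any enumeration $\Q \cap [0,1] = \{q_n : n \in \N\}$. Both $\Q \cap [0,1]$ and the relation $\{(q_n,q_m) : n \leq m\} \subseteq [0,1]^2$ are countable and bounded, hence Borel, hence definable in $\Sa R$. The second is a definable well-ordering of the first of order type $\omega$, and $\Q \cap [0,1]$ is dense in $[0,1]$, so this witnesses a dense $\omega$-order.

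For the harder direction, fix a dense $\omega$-order $D \subseteq \R$ together with its definable well-ordering $\prec$ of type $\omega$. My plan has three stages. First, I would extract a definable copy of $(\Z,<,+)$ from $D$. The rank function of $\prec$ is a definable bijection between $D$ and an abstract copy of $\N$; composing with a suitable definable map constructed from $(\R,+)$ and the density of $D$ in a nonempty open interval should produce a definable unbounded discrete subset of $\R$ carrying the structure of $(\N,<,+)$, and then of $(\Z,<,+)$, as an interpretable reduct.

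The second stage is the heart of the argument and by far the main obstacle. I must show that each of the three structural hypotheses, combined with the dense $\omega$-order, lets $\Sa R$ define multiplication on $\R$. Under hypothesis (3), the uncountable family of definable linear scalings of $[0,1]$ must be matched against the definable $\Z$ from Stage 1 to produce a definable parametrized multiplication on $[0,1]$, which then extends by translation to all of $\R$. Under hypothesis (2), the non-affine $C^2$ function has a point at which a genuinely quadratic term survives; difference-quotient computations after affine changes of coordinates should isolate a definable nontrivial one-parameter family whose chord geometry encodes multiplication. Under hypothesis (1), the unbounded continuous function on a bounded interval together with the $\N$-copy from Stage 1 should yield a definable surjection of a bounded set onto $\R$ with controllable fibers, from which multiplication can be extracted by arguments in the spirit of Hieronymi's original work on $(\R,<,+,\times,\Z)$. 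Each subcase requires genuinely combinatorial use of the dense $\omega$-order, since o-minimal-style tools are unavailable and one must synthesize the analytic input with the enumeration data of $\prec$.

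Once multiplication has been recovered, $\Sa R$ has $(\R,<,+,\times,\Z)$ as a reduct (the definability of $\Z$ is automatic from Stage 1, but if desired one may also reinvoke Hieronymi's Theorem, Fact 2.3, once $(\R,<,+,\times)$ is available). A further theorem of Hieronymi states that $(\R,<,+,\times,\Z)$ defines every projective subset of every $\R^n$; in particular every bounded Borel subset of every $\R^n$ is definable in $\Sa R$, completing the hard direction.
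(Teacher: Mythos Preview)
The paper does not contain a proof of this statement. It is stated as a \emph{Fact} and attributed externally: ``Fact~\ref{fact:HW} is proven in Hieronymi and Walsberg~\cite{HW-continuous}.'' There is therefore nothing in the paper to compare your proposal against.

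On the merits of your outline: the easy direction is fine. For the hard direction, the architecture (recover $(\R,<,+,\times,\Z)$, then invoke that this structure defines all projective sets) is the expected shape, and indeed the paper separately records (Fact~\ref{fact:gen-field-type}, also from \cite{HW-continuous}) that hypothesis~(2) alone already makes $\Sa R$ field-type. But your Stage~2 is a list of ``should'' assertions rather than arguments: you do not say how a field on a bounded interval becomes multiplication on all of $\R$, nor how the uncountable family of scalings in~(3) is leveraged against the $\omega$-order, nor what the ``controllable fibers'' construction in~(1) actually is. Stage~1 likewise skips how one obtains definable \emph{addition} on the copy of $\N$ from a bare well-order of type $\omega$ together with the ambient additive group. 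These gaps are where the real content of \cite{HW-continuous} lies; as written, your proposal is a plausible table of contents but not yet a proof.
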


\noindent
Fact~\ref{fact:HW2} is due to Hieronymi and Walsberg~\cite{HW-Monadic}.
We consider the two-sorted first order structure $(\Cal P(\N),\N,\in,s)$ where $\Cal P(\N)$ is the power set of $\N$ and $s : \N \to \N$ is the usual successor function.
We identify the (first order) theory of $(\Cal P(\N),\N,\in,s)$ with the monadic second order theory of $(\N,s)$.

\begin{fact}
\label{fact:HW2}
If $\Sa R$ admits a dense $\omega$-order then $\Sa R$ defines an isomorphic copy of $(\Cal P(\N),\N,\in,s)$.
\end{fact}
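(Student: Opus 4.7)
The plan is to interpret $(\Cal P(\N),\N,\in,s)$ in $\Sa R$ as a two-sorted structure. Let $D = \{d_n : n \in \N\}$ be the dense $\omega$-order, with $d_0 \prec d_1 \prec \cdots$ the definable well-ordering of type $\omega$, and let $(a,b) \sub \R$ be an open interval in which $D$ is dense.

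The $\N$-sort is immediate: $(D, \sigma)$, where $\sigma : d_n \mapsto d_{n+1}$ is the $\prec$-successor, is a definable copy of $(\N, s)$.

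For the $\Cal P(\N)$-sort, the idea is to encode each subset $A \sub \N$ by a real parameter $r_A \in (a,b)$ via a binary-expansion-style nested-intervals construction. Starting from $I_0 = (a,b)$, at stage $n$ split $I_n = (x_n, y_n)$ at its midpoint $m_n$, which is $(\R,<,+)$-definable via $2m_n = x_n + y_n$, and set $I_{n+1}$ to the left or right half according to whether $n \in A$. Take $r_A \in \bigcap_n \overline{I_n}$; its ``binary digits'' recover $A$. The $\Cal P(\N)$-sort is then the quotient of $(a,b)$ by the definable equivalence relation $r \sim r' \Lrarr \forall d \in D\,(E(r,d) \Lrarr E(r',d))$, where $E$ is the decoding relation. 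Density of $D$ together with surjectivity of the binary expansion ensures every subset of $\N$ is realized (up to dyadic ambiguity, which is absorbed by the quotient).

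The main obstacle is expressing the stage-indexed recursion as a single first-order formula in $\Sa R$. The plan is to introduce a relation $F(r, d, x, y)$ on $\R \times D \times \R \times \R$ meaning ``the stage-$d$ interval of $r$ is $(x,y)$,'' determined by the base case $F(r, d_0, a, b) \Lrarr a < r < b$ and an inductive halving rule along the $\prec$-successor. Since $(D,\prec)$ is a definable copy of $(\N, s)$, universal quantification over stages is first-order; the hard step is to pin down $F$ as the unique relation satisfying the recursion via an explicit first-order formula, as the natural fixed-point description is second-order. The strategy is to characterize $F(r,d_n,x,y)$ by asserting the existence of appropriately coded intermediate witness data, using the density of $D$ in $(a,b)$ to realize such encodings via real parameters and the ambient $(\R,<,+)$ operations to verify each individual halving step. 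Once $F$ is in hand, the decoding $E(r, d_n) \Lrarr \exists (x,y)\,F(r, d_n, x, y) \wedge 2r > x + y$ is first-order, the map $A \mapsto r_A$ surjects onto $\Cal P(\N)$, and the interpretation is complete.
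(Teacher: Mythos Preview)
The paper does not prove this statement; it is quoted as a fact from Hieronymi and Walsberg~\cite{HW-Monadic} with no argument given. So there is no ``paper's own proof'' to compare against, and your proposal has to stand on its own.

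Your overall architecture is reasonable: take $(D,\sigma)$ as the $\N$-sort and encode subsets of $\N$ by reals in $(a,b)$ via a nested-interval (binary-expansion) scheme, then quotient by the induced equivalence. You also correctly identify the crux: making the decoding relation first-order. But your proposed fix --- define $F(r,d,x,y)$ by ``asserting the existence of appropriately coded intermediate witness data, using the density of $D$ in $(a,b)$ to realize such encodings via real parameters'' --- is circular as written. To encode a finite tuple $(x_0,y_0,\ldots,x_n,y_n)$ by a single real and then extract the $d_i$-indexed entry, you already need precisely the indexed-access-to-a-real mechanism that $F$ was supposed to supply. You have described the problem, not solved it.

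This is not a minor technicality: eliminating the recursion is essentially the whole theorem. In $(\R,<,+)$ alone one cannot define the map $d_n \mapsto (b-a)/2^n$ or the level-$n$ dyadic grid uniformly in $n$, so some genuine new input from the dense $\omega$-order is required, and you have not said what it is. The actual argument in \cite{HW-Monadic} proceeds along different lines (as the present paper hints just before Fact~\ref{fact:HW-case}, one step is to extract from the dense $\omega$-order an infinite bounded discrete subset of $\R$ and then work with that). If you want to push the binary-expansion route through, you must exhibit an explicit first-order formula defining $F$ (or $E$ directly) rather than assert that one exists.
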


\noindent Fact~\ref{fact:HW2} is sharp in that $(\Cal P(\N),\N,\in,s)$ defines isomorphic copies of first order expansions of $(\R,<,+)$ which admit dense $\omega$-orders.
Recall that the theory of $(\Cal P(\N),\N,\in,s)$ is decidable by a theorem of B{\"u}chi~\cite{Buchi}.\newline

\noindent
We say that an expansion of $(\R,<,+)$ is \textbf{type A} if there are no dense $\omega$-orders.
Conjecture~\ref{conj:main} is the main conjecture on first order expansions of $(\R,<,+)$.

\begin{conj}
\label{conj:main}
If $\Sa R$ is type A then $\Sa R^\circ$ is noiseless.
In particular if $\Sa R$ expands $(\R,<,+,\times)$ and does not define $\Z$ then $\Sa R^\circ$ is noiseless.
\end{conj}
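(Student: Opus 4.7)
I would argue by contradiction: assume $\Sa R$ is type A but $\Sa R^\circ$ is not noiseless, and try to exhibit a dense $\omega$-order in $\Sa R$. The first move is to reduce to one variable. By Fact~\ref{fact:MD}, a failure of noiselessness is witnessed by a constructible $\Sa R$-definable set $X \subseteq \R^n$ whose closure has nonempty interior while $X$ itself has empty interior. Fibering over a suitably generic coordinate and applying a Kuratowski--Ulam style argument to the defining boolean combination of closed $\Sa R$-definable sets should produce an $\Sa R^\circ$-definable $Y \subseteq \R$ that is dense in some open interval, with $\inte(Y) = \emptyset$.

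The heart of the argument is then to convert the topological noise in $Y$ into a uniformly $\Sa R$-definable countable set which is dense in an interval. I would first exploit the additive group action: for $t > 0$ the set $Y \cap (Y + t)$ remains $\Sa R^\circ$-definable with empty interior, and one hopes, by a Baire-generic choice of $t$, to squeeze $Y$ down to a nonempty discrete $\Sa R$-definable subset $D \subseteq \R$. In parallel I would iterate the topological boundary operation on $\cl(Y)$ to obtain a descending chain of closed $\Sa R$-definable sets; if the chain stabilises one is reduced to studying an honest closed nowhere-dense $\Sa R$-definable set, and if it does not, a Cantor--Bendixson-like derived system should index definable nodes by $\N$, directly producing the desired dense $\omega$-order. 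In either case one would use the additive structure (translations, integer multiples of a definable point of $D$) to arrange the witnessing points into an $\Sa R$-definable sequence of order type $\omega$.

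The chief obstacle is this last passage from topological noise to a definable countable witness, since without $\nip$ no existing theorem controls how a single definable family can spread out. In the $\nip$ case Theorem~C already yields strong noiselessness of $\Sa R^\circ$, so the new content lies in the non-$\nip$, type A regime. The most approachable intermediate case, which I would attack first, is the second sentence of the conjecture, where $\Sa R$ expands $(\R,<,+,\times)$ and does not define $\Z$. There the Miller--Speissegger dichotomy forces $\Sa R$ either to be o-minimal (in which case there is nothing to prove) or to define an infinite closed discrete subset of $\R$; combining a noisy $Y$ with the field structure one should be able to manufacture an unbounded continuous function on a bounded interval, whereupon Fact~\ref{fact:HW} makes every bounded Borel set definable and Hieronymi's Theorem then forces $\Z$ to be definable, contradicting type A. Moving beyond the field case appears to require a new tool playing the role of $\nip$ in Theorem~B, and I expect the full conjecture to remain out of reach with currently available techniques.
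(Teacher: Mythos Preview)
The statement you are attempting to prove is Conjecture~\ref{conj:main}, the main open conjecture on expansions of $(\R,<,+)$; the paper does \emph{not} contain a proof of it. The paper proves the $\nip$ case (Theorem~C, via Theorems~A and~B) and records several known partial results (Facts~\ref{fact:miller-speissegger}, \ref{fact:HW-case}, \ref{fact:con-image}, Theorem~\ref{thm:fmhw}), but both the general conjecture and the field-expansion case are explicitly stated as open. So there is no ``paper's own proof'' to compare against, and you should not expect your outline to close.

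That said, your sketched argument for the field case contains a concrete gap worth flagging. You invoke Fact~\ref{fact:HW} after ``manufacturing an unbounded continuous function on a bounded interval,'' but Fact~\ref{fact:HW} only gives the \emph{equivalence} between the existence of a dense $\omega$-order and the definability of all bounded Borel sets, under any of hypotheses (1)--(3). Since $\Sa R$ expands the real field, hypothesis~(3) already holds, so Fact~\ref{fact:HW} is available from the start; but it does not let you pass from topological noise in $Y$ to a dense $\omega$-order. You still have to produce the dense $\omega$-order directly, and nothing in your outline does this. The step ``noisy $Y$ plus field structure yields an unbounded continuous function on a bounded interval'' is both unsubstantiated and, even if true, insufficient: it only re-establishes a hypothesis of Fact~\ref{fact:HW} that you already had. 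Likewise, Miller--Speissegger (Fact~\ref{fact:miller-speissegger}) hands you an infinite closed discrete set, but there is no known mechanism to combine such a set with a noisy definable $Y$ to obtain a dense $\omega$-order; this is precisely the missing ingredient that keeps the conjecture open.

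A minor point on your reduction step: the passage from noise in $\R^n$ to noise in $\R$ is handled by Fact~\ref{fact:noiseless} (Miller), not by Fact~\ref{fact:MD}, and it does not require the noisy witness to be constructible. Your own closing assessment is correct: the full conjecture, and already the field case, appear to require genuinely new ideas beyond what is in the paper.
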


\noindent
We recall another theorem from \cite{HW-Monadic}.

\begin{fact}
\label{fact:HW-cantor}
If $\Sa R$ defines a Cantor subset of $\R$ then $\Sa R$ defines an isomorphic copy of $(\Cal P(\N),\N,\in,s)$.
\end{fact}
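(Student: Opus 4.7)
The plan is to interpret the two-sorted structure $(\Cal P(\N),\N,\in,s)$ directly in $\Sa R$ via the canonical binary tree carried by the gaps of a definable Cantor set $K \subseteq \R$. The set $G$ of gaps (bounded connected components of $\R \setminus K$) and the length function $\lambda\colon G \to \R_{>0}$ are both definable from $K$ using $+$ and $<$. Since $K$ is compact and the gaps are pairwise disjoint, only finitely many gaps exceed any given length, so the length-lexicographic order on $G$ (decreasing length, ties broken by position) is a definable linear order of type $\omega$ and yields a definable copy of $(\N,s)$.

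The tree on $G$ is defined recursively as follows: the root is the longest gap; the two children of any gap are the longest gaps in the two sub-Cantor-sets it separates; and so on. To avoid recursion, I would give the following direct first-order characterization of the ancestor relation. Setting $\hat\lambda(g) = (\lambda(g), -\text{left-endpoint}(g))$ in lexicographic order, $g'$ is a proper ancestor of $g$ iff $\hat\lambda(g') > \hat\lambda(g)$ and no gap lying positionally between $g$ and $g'$ has $\hat\lambda$-value exceeding $\hat\lambda(g')$. A check on the middle-thirds Cantor set confirms this captures exactly the tree ancestors; the general case is a routine unwinding of what it means to be descended from $g'$ along the recursive definition. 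The parent, child, and path-of-$x$ relations then follow, and the leftmost branch $N \subseteq G$---the set of gaps lying to the left of all their proper ancestors---is a definable copy of $(\N,s)$ with successor given by ``left child.''

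I would then take $K$ itself as the sort for $\Cal P(\N)$, exploiting the tree-based bijection $K \cong \{0,1\}^\N$ in which each $x \in K$ records, at each level of its path, on which side of the corresponding gap it lies. The membership relation $E(x,g)$ for $x \in K$ and $g \in N$ is then the geometric condition that the gap on the path of $x$ at the same tree-level as $g$ lies positionally to the left of $x$. The main obstacle is expressing ``same tree-level as $g$'' first-orderly; this amounts to a synchronous parallel traversal of two ancestor chains, and can be done using quantification over the already-defined sort $N$ together with the parent map on $G$. Assembling these pieces yields a definable interpretation of $(\Cal P(\N),\N,\in,s)$ in $\Sa R$.
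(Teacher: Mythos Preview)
The paper does not prove this statement; it is quoted from \cite{HW-Monadic}. Your outline contains correct and useful pieces: the first-order characterization of the ancestor relation on gaps is right, and from it the definability of the full tree structure (parent, children, siblings) and of ``$g$ lies on the path of $x$'' does follow. The problem is the last step.

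You correctly flag the obstacle --- expressing ``$h$ and $g$ lie at the same tree level'' --- but your proposed fix does not work. Quantifying over elements of the sort $N$ and using the parent map on $G$ gives you at best the first-order theory of the infinite binary tree with a distinguished branch, and in that theory the binary relation ``same depth'' is \emph{not} definable. For each fixed $n$ the set of depth-$n$ nodes is definable, but synchronizing two independent iterations of the parent map amounts to comparing the cardinalities of two finite ancestor chains, which is counting; $(\N,s)$ alone cannot do this. A ``synchronous parallel traversal of two ancestor chains'' is precisely a quantification over finite sequences of tree nodes, and that is second-order, not first-order. So the interpretation as written does not go through.

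What is missing is a genuine use of the additive structure of $\R$, which in your sketch appears only in the definition of gap length. For the middle-thirds Cantor set the issue dissolves because gaps at a given level all share a length, so ``same level'' reduces to ``same length''; for an arbitrary definable Cantor set this fails, and one must either definably pass to a better-behaved Cantor set or encode $\Cal P(\N)$ by a different mechanism. Whichever route \cite{HW-Monadic} takes, it is not a routine patch of your outline.
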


\noindent
Fact~\ref{fact:HW-cantor} is sharp as $(\Cal P(\N),\N,\in,s)$ defines an isomorphic copy of $(\R,<,+,K)$ where $K$ is the classical middle-thirds Cantor set \cite[Theorem 5]{BRW}.
In Theorem~\ref{thm:equiv} we give a proof of the fact, well known to experts, that an expansion of $(\R,<,+)$ is generically locally o-minimal if and only if it is noiseless and does not define a Cantor subset of $\R$.
So Conjecture~\ref{conj:main-B} is a special case of Conjecture~\ref{conj:main}.

\begin{conj}
\label{conj:main-B}
If $\Sa R$ does not define an isomorphic copy of $(\Cal P(\N),\N,\in,s)$ then $\Sa R^\circ$ is generically locally o-minimal.
\end{conj}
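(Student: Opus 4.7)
The plan is to derive Conjecture~\ref{conj:main-B} from Conjecture~\ref{conj:main} by stacking it on top of Facts~\ref{fact:HW2} and \ref{fact:HW-cantor} and the characterization of generic local o-minimality promised in Theorem~\ref{thm:equiv} (``generically locally o-minimal'' $=$ ``noiseless'' $+$ ``defines no Cantor subset of $\R$''). Concretely, the non-definability of $(\Cal P(\N),\N,\in,s)$ is the single hypothesis that must be parleyed into simultaneously ruling out the two obstructions to generic local o-minimality.

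Assume $\Sa R$ does not define an isomorphic copy of $(\Cal P(\N),\N,\in,s)$. The contrapositive of Fact~\ref{fact:HW2} then says that $\Sa R$ admits no dense $\omega$-order, so $\Sa R$ is type A. Granting Conjecture~\ref{conj:main}, we conclude that $\Sa R^\circ$ is noiseless. Separately, the contrapositive of Fact~\ref{fact:HW-cantor} gives that $\Sa R$ defines no Cantor subset of $\R$, and since $\Sa R^\circ$ is a reduct of $\Sa R$ the open core defines no Cantor subset either. Theorem~\ref{thm:equiv} applied to $\Sa R^\circ$ (which is itself an expansion of $(\R,<,+)$) then yields that $\Sa R^\circ$ is generically locally o-minimal, as desired.

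Thus the only nontrivial ingredient is Conjecture~\ref{conj:main} itself, and that is the main obstacle: the passage from Conjecture~\ref{conj:main} to Conjecture~\ref{conj:main-B} is just a matter of invoking the Hieronymi--Walsberg dichotomies, which pin the two pathologies forbidden by ``noiseless $+$ no Cantor subset''---namely dense $\omega$-orders and Cantor subsets of $\R$---on structures that interpret the full monadic second order theory of one successor. Any attack on Conjecture~\ref{conj:main-B} by genuinely weaker methods than those needed for Conjecture~\ref{conj:main} would therefore have to bypass the open core step and prove noiselessness of $\Sa R^\circ$ directly from the non-interpretability of $(\Cal P(\N),\N,\in,s)$, which seems unlikely without first resolving the type-A version.
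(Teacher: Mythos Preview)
Your proposal is correct and matches the paper's reasoning exactly: the paper does not prove Conjecture~\ref{conj:main-B} but simply notes (in the paragraph preceding it) that the characterization in Theorem~\ref{thm:equiv} together with Facts~\ref{fact:HW2} and \ref{fact:HW-cantor} makes Conjecture~\ref{conj:main-B} a special case of Conjecture~\ref{conj:main}. Your write-up is a faithful unpacking of that one-line observation, including the small point that $\Sa R^\circ$ inherits the absence of a Cantor subset from $\Sa R$.
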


\noindent
We now describe several known cases of Conjecture~\ref{conj:main}.
The following theorem of Miller and Speissegger~\cite{MS99} is a special case of Conjecture~\ref{conj:main}.

\begin{fact}
\label{fact:miller-speissegger}
Suppose $\Sa R$ expands $(\R,<,+,\times)$.
Then $\Sa R$ does not define an infinite closed discrete subset of $\R$ if and only if $\Sa R^\circ$ is o-minimal.
\end{fact}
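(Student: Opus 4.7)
The reverse direction is immediate: every closed $\Sa R$-definable subset of $\R$ is $\Sa R^\circ$-definable, so if $\Sa R^\circ$ is o-minimal such a set is a finite union of points and closed intervals, and among these the only discrete ones are finite.

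For the forward direction I would argue the contrapositive: assuming $\Sa R^\circ$ is not o-minimal, I aim to produce an $\Sa R$-definable infinite closed discrete subset of $\R$. Since the class of finite unions of points and closed intervals is closed under boolean combinations, and $\Sa R^\circ$ is generated by the closed $\Sa R$-definable subsets of each $\R^n$, the failure of o-minimality of $\Sa R^\circ$ supplies a closed $\Sa R$-definable $F \subseteq \R$ that is not a finite union of points and closed intervals. Replacing $F$ by $B := \bd(F)$, I may assume I have a closed, nowhere dense, infinite, definable $B \subseteq \R$, and my task is to extract from $B$ an infinite closed discrete $\Sa R$-definable subset of $\R$.

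I would analyze $B$ via the Cantor--Bendixson derivative: the set $B_{\mathrm{iso}}$ of isolated points of $B$ is definable, and the complement $B \setminus B_{\mathrm{iso}}$ is closed and definable. If $B_{\mathrm{iso}}$ is infinite then, because $B$ is closed, $B_{\mathrm{iso}}$ is already a closed discrete subset of $B$ and, after deleting its finitely many accumulation points that may lie in $B$, a closed discrete subset of $\R$; if it is finite one iterates on the derivative. The genuinely hard case is when $B$ contains a perfect subset, which by iteration I may take to be $B$ itself. There I would pick an accumulation point $a \in B$ and apply the definable inversion $x \mapsto 1/(x-a)$ to $B \cap (a,\infty)$, producing an unbounded definable $C \subseteq \R$; I would then use real multiplication to definably select a sequence in $C$ that is bounded away from itself, yielding an infinite closed discrete definable subset of $\R$.

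The main obstacle is precisely this last selection step: extracting an infinite closed discrete definable set from a perfect nowhere dense definable set without isolated points requires essential use of the field operations, since the analogous statement fails over $(\R,<,+)$ (e.g.\ Fact~\ref{fact:ktt-converse} yields non o-minimal strongly dependent expansions by closed sets whose only infinite closed discrete definable subset of $\R$ is the given one $\az$, and one can arrange no closed subset of $\R$ added). A correct proof must perform the definable spacing argument in a genuinely nonlinear way, exploiting multiplication together with closedness of $B$ to guarantee that the resulting definable set is discrete rather than merely unbounded; this is the technical heart of the Miller--Speissegger argument.
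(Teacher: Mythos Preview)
The paper does not prove this statement; it is cited without proof as a theorem of Miller and Speissegger. So there is no in-paper argument to compare against, and I will assess your proposal on its own.

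Your reverse direction is fine, and your reduction to a closed, nowhere dense, infinite, definable $B \subseteq \R$ is correct. But the forward argument has a genuine error in addition to the gap you already flag.

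The error is in the isolated-points case. You assert that when $B_{\mathrm{iso}}$ is infinite it is ``a closed discrete subset of $B$'' with only ``finitely many accumulation points''. Neither holds in general: $B_{\mathrm{iso}}$ is \emph{open} in $B$, not closed, and its set of accumulation points is contained in the derived set $B' = B \setminus B_{\mathrm{iso}}$, which may be infinite---indeed it may itself be a Cantor set. So the deletion you propose need not leave a closed set, and your Cantor--Bendixson iteration has no reason to terminate.

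The gap you acknowledge in the perfect case is also real, and your route through inversion at a point $a \in B$ does not close it: the image $C$ is still a perfect nowhere dense set, merely unbounded, and you give no definable mechanism to thin it to a discrete set.

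Both problems disappear if you bypass the Cantor--Bendixson analysis entirely. From $B$, fix a bounded open interval $I$ with $I \cap B$ infinite, and let $D$ be the set of lengths of the connected components of $I \setminus B$. This $D$ is definable from the order and addition alone, and (exactly as in the proof of Theorem~\ref{thm:equiv-1}, implication $(5)\Rightarrow(6)$, in this paper) $D$ is infinite, discrete, bounded, with $0$ as its unique accumulation point. Now the field structure enters in a single clean step: $E := \{1/d : d \in D\}$ is definable, infinite, and closed discrete in $\R$, since $E \cap [0,M]$ is finite for every $M > 0$. No case split on perfection is needed.
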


\noindent It follows from the proof of \cite[Theorem A]{HW-Monadic} that if $\Sa R$ admits a dense $\omega$-order then $\Sa R$ defines an infinite bounded discrete subset of $\R$.
So Fact~\ref{fact:HW-case} is another special case of Conjecture~\ref{conj:main}.
Fact~\ref{fact:HW-case} will be proven in forthcoming joint work with Hieronymi.

\begin{fact}
\label{fact:HW-case}
Every $\Sa R$-definable nowhere dense subset of $\R$ is closed and discrete if and only if $\Sa R^\circ$ is locally o-minimal.
\end{fact}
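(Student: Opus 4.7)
I would treat the two implications separately; the forward direction is a routine topological argument, while the reverse is the substantive content.

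For the direction ``$\Sa R^\circ$ locally o-minimal $\Rightarrow$ every nowhere dense $\Sa R$-definable subset of $\R$ is closed and discrete'', let $X \subseteq \R$ be $\Sa R$-definable and nowhere dense. Then $\cl(X)$ is a closed $\Sa R$-definable subset of $\R$ and hence is $\Sa R^\circ$-definable. By local o-minimality of $\Sa R^\circ$, for every $a \in \cl(X)$ there is an open interval $I \ni a$ such that $I \cap \cl(X)$ is $(\R,<)$-definable, i.e.\ a finite union of points and open intervals. Since $\cl(X)$ is nowhere dense, $I \cap \cl(X)$ has empty interior, so it is a finite set. Hence $\cl(X)$ is locally finite, and therefore closed and discrete. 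As $X$ is dense in $\cl(X)$, which has no accumulation points, $X = \cl(X)$.

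For the converse, assume every nowhere dense $\Sa R$-definable subset of $\R$ is closed and discrete. The first observation is a decomposition of closed $\Sa R$-definable subsets of $\R$: given such a $C$, the set $D := C \setminus \inte(C)$ is closed with empty interior, hence nowhere dense, and so closed and discrete by hypothesis. Thus $C = \inte(C) \sqcup D$, the disjoint union of an $\Sa R$-definable open set and an $\Sa R$-definable closed discrete set; in particular, every closed $\Sa R^\circ$-definable subset of $\R$ is locally $(\R,<)$-definable.

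The remaining task is to extend this local structural description from closed $\Sa R^\circ$-definable subsets of $\R$ to all $\Sa R^\circ$-definable subsets of $\R$. I would aim to show that every $\Sa R^\circ$-definable $X \subseteq \R$ is a boolean combination of an $\Sa R$-definable open set and an $\Sa R$-definable closed discrete set; since boolean combinations preserve local $(\R,<)$-definability, this yields local o-minimality of $\Sa R^\circ$. The main obstacle is existential quantification in $\Sa R^\circ$-formulas, which produces projections of closed $\Sa R$-definable subsets of $\R^{n+1}$ to $\R^n$, and such projections need not be closed. I would proceed by induction on dimension, using that each fiber of such a closed set is itself a closed $\Sa R$-definable subset of $\R$ admitting the decomposition above, together with \cite[Corollary 3.4]{TV-local} to reduce local o-minimality of $\Sa R^\circ$ to o-minimality of the induced structure on each bounded closed interval, where compactness constrains the behaviour of projections. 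The truly delicate step --- the real heart of the argument --- is to rule out dense-codense $\Sa R^\circ$-definable subsets of $\R$, i.e.\ to show that no $\Sa R^\circ$-definable subset of $\R$ can be both dense and co-dense in some nonempty open interval; this must be extracted from the hypothesis on nowhere dense $\Sa R$-definable subsets of $\R$ together with the fiberwise structure of closed $\Sa R$-definable subsets of higher-dimensional Euclidean spaces.
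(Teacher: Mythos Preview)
The paper does not prove this fact: immediately before stating it, the paper says ``Fact~\ref{fact:HW-case} will be proven in forthcoming joint work with Hieronymi.'' So there is no proof in the paper to compare against, only the surrounding remarks.

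Your forward direction is correct and essentially the argument of Theorem~\ref{thm:equiv-1}(1)$\Rightarrow$(6) applied to the closure of $X$.

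Your reverse direction is not a proof but a description of where the difficulty lies, and you acknowledge this. A few comments. First, the paper's framing is informative: the sentence before the fact notes that the hypothesis rules out infinite bounded discrete definable subsets of $\R$, hence (by \cite{HW-Monadic}) rules out dense $\omega$-orders, so $\Sa R$ is type~A. Thus the fact is presented as a special case of Conjecture~\ref{conj:main}, and the forthcoming proof presumably proceeds by establishing this special case --- i.e., by showing directly that under the hypothesis $\Sa R^\circ$ is noiseless, after which Theorem~\ref{thm:equiv-1}(5)$\Rightarrow$(1) finishes. Your identification of noiselessness of $\Sa R^\circ$ as the crux is therefore exactly right.

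Second, your proposed route --- reducing to boolean combinations and handling projections by induction on ambient dimension using fibrewise decompositions --- is underspecified at the decisive point. The hypothesis constrains nowhere dense $\Sa R$-definable subsets of $\R$, but says nothing directly about $\Sa R$-definable subsets of $\R^n$ for $n\ge 2$, and you give no mechanism for propagating the one-variable hypothesis upward. In particular, fibres of a closed $\Sa R$-definable $Y\subseteq\R^{n+1}$ over the last coordinate are closed $\Sa R$-definable subsets of $\R^n$, not of $\R$, so the decomposition you established is not immediately available for them. Without a concrete lemma bridging this gap (for example, something playing the role of Fact~\ref{fact:con-image} or Theorem~\ref{thm:fmhw} under your hypothesis), the inductive scheme does not get off the ground. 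Since the paper itself does not supply the argument, I cannot tell you whether the eventual proof follows anything like your outline.
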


\noindent The following theorem of Fornasiero, Hieronymi, and Walsberg~\cite[Theorem D]{FHW-Compact} shows in particular that if $\Sa R$ is type A then every subset of $\R^m$ which is existentially definable in $\Sa R^\circ$ either has interior or is nowhere dense.

\begin{fact}
\label{fact:con-image}
Suppose that $\Sa R$ is type A.
Let $X$ be a definable constructible subset of $\R^n$.
If $f : X \to \R^m$ is definable and continuous then $f(X)$ either has interior or is nowhere dense.
\end{fact}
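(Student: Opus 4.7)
The plan is to argue by contraposition: I will assume $f(X)$ is neither nowhere dense nor has interior and produce an $\Sa R$-definable dense $\omega$-order, contradicting the type A hypothesis. Set $W := \Int(\cl f(X))$. Then $W$ is a definable nonempty open subset of $\R^m$ (nonempty because $f(X)$ is not nowhere dense, definable since $\cl$ and $\Int$ preserve definability), and $Y := f(X) \cap W$ is dense in $W$; because $f(X)$ has empty interior, $W \setminus Y$ is also dense in $W$. Shrinking if necessary, I may take $W$ to be a bounded open box.

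Next I would reduce to the case that $X$ is locally closed. By Fact~\ref{fact:MD}, the definable constructible set $X$ is a boolean combination of closed definable sets, and so can be partitioned into finitely many definable locally closed pieces $X_1,\dots,X_\ell$. Since $f(X) = \bigcup_i f(X_i)$ is not nowhere dense, some $f(X_i)$ is not nowhere dense; if that $f(X_i)$ has interior, we are already done. Replacing $X$ by this $X_i$, I may therefore assume $X = C \cap O$ with $C$ closed and $O$ open, both definable in $\Sa R$, and with $f(X)$ still dense and codense in $W$.

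The heart of the argument — and the main obstacle — is to convert this topological pathology into a dense $\omega$-order. My plan is to use the definable Skolem functions available in expansions of $(\R,<,+)$ (lexicographically least preimage, taken inside a large closed box in $X$ to guarantee attainment) to obtain a definable section $s : Y \to X$ of $f$, and then to restrict attention to a generic one-dimensional linear section $L$ of $W$. Along $L \cap Y$ the section $s$ produces a definable set $s(L \cap Y) \subseteq X$ whose closure in $\R^n$ meets the frontier $\cl(X) \setminus X$; by continuity of $f$, these frontier accumulation points should push forward by $f$ to a definable closed discrete subset of $\R$ that projects bijectively onto a dense subset of an interval in $L$, yielding the desired dense $\omega$-order. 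Making this extraction rigorous is the main technical difficulty: the challenge is to control the closure of $s(L \cap Y)$ precisely enough to guarantee discreteness of the image, and I expect this step to parallel — while strictly generalizing — the Cantor-rank arguments that underlie Hieronymi's theorem and the related results of Hieronymi and Walsberg.
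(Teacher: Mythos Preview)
The paper does not prove this statement itself; it is cited from \cite[Theorem D]{FHW-Compact}. The argument there goes through the $\DSig$ machinery that the present paper summarizes at the start of Section~\ref{section:generic-local}: a definable constructible set is $\DSig$, continuous definable images of $\DSig$-sets are $\DSig$, so $f(X)=\bigcup_{s,t>0}X_{s,t}$ with each $X_{s,t}$ compact and the family increasing in $t$ and decreasing in $s$. If $f(X)$ were somewhere dense, the strong Baire category theorem for type~A expansions (also from \cite{FHW-Compact}) would force some $X_{s,t}$ to be somewhere dense; being compact it would then have interior, so $f(X)$ would have interior.

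Your proposal does not follow this route, and it has two genuine gaps. First, the selection step is not available at this generality. You invoke ``definable Skolem functions available in expansions of $(\R,<,+)$'' via a lexicographically least preimage, but the lex-min trick only produces a definable section when each fibre $f^{-1}(y)\cap X$ (or its intersection with your box) is compact. After your reduction $X=C\cap O$ is merely locally closed, so $X\cap[-N,N]^n$ need not be closed and fibres need not attain a minimum. The paper itself notes (after Fact~\ref{fact:selection}) that definable selection can fail in expansions of $(\R,<,+)$; it is only established there under the additional hypothesis of generic local o-minimality, which you certainly do not have for an arbitrary type~A structure. Fixing this by exhausting $X$ by compact pieces $X_N$ and finding a single $N$ with $f(X_N)$ still somewhere dense is exactly the strong Baire category step you are trying to avoid.

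Second, even granting a section $s$, your final paragraph is not a proof but a hope. You assert that accumulation points of $s(L\cap Y)$ in $\cl(X)\setminus X$ will push forward under $f$ to a \emph{discrete} set giving a dense $\omega$-order, but there is no mechanism offered for discreteness, and continuity of $f$ on $X$ says nothing about the behaviour of $f$ (which is not even defined) on the frontier. The construction of dense $\omega$-orders in \cite{HW-Monadic} and related work is quite delicate and does not proceed along these lines; absent a concrete argument, this step is a gap rather than a technical difficulty.
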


\noindent Theorem~\ref{thm:fmhw} is known to experts but has not previously been stated in full generality.
It shows that a type A expansion by zero-dimensional closed subsets of $\R^n$ and closed subsets of $\R$ is noiseless.

\begin{theorem}
\label{thm:fmhw}
Suppose $\Sa S$ is an o-minimal expansion of $(\R,<,+)$.
Let $\Cal C$ be a collection of subsets of Euclidean space such that the closure of each $X \in \Cal C$ is zero-dimensional and let $\Cal D$ be a collection of subsets of $\R$ such that each $C \in \Cal D$ is not dense and co-dense in any nonempty open interval.
If $(\Sa S,\Cal C, \Cal D)$ is type A then $(\Sa S,\Cal C, \Cal D)$ is noiseless.
\end{theorem}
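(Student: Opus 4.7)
The plan is to normalize the predicates in $\Cal C$ and $\Cal D$ and then show by induction on quantifier depth that every definable set is constructible, using Fact~\ref{fact:con-image} as the main tool.

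First I would reduce $\Cal D$. For each $D \in \Cal D$ the boundary $\bd(D)$ is closed and nowhere dense in $\R$ by the standing hypothesis; a closed nowhere dense subset of $\R$ is totally disconnected (any nondegenerate closed subinterval would contradict nowhere-denseness) and hence zero-dimensional. Because $\bd(D)$ and $\inte(D)$ are definable from $D$ in $(\R,<,+)$, I may replace each $D$ by $\inte(D)$ (an open subset of $\R$) together with $\bd(D)$ and $D \cap \bd(D)$ (zero-dimensional subsets of $\R$, which are absorbed into $\Cal C$). Absorbing also closures of elements of $\Cal C$ into $\Cal C$, I may assume $\Cal C$ consists of closed zero-dimensional subsets of Euclidean space and $\Cal D$ of open subsets of $\R$. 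Every primitive predicate is then constructible, and by o-minimal cell decomposition every $\Sa S$-definable set is constructible, so every quantifier-free definable set in $(\Sa S, \Cal C, \Cal D)$ is a finite boolean combination of closed definable sets and is therefore constructible and noiseless.

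The main step is to show that every definable set is constructible, by induction on the quantifier depth of the defining formula; the base case was just handled. For the existential step, given $X = \pi(Y) \subseteq \R^n$ with $Y \subseteq \R^{n+m}$ constructible by induction, I would decompose $X = \inte(X) \cup (X \cap \bd(X))$ and establish that $\bd(X)$ is closed nowhere dense, whence $X$ is the union of an open definable set and a closed nowhere dense definable set, hence constructible. For nowhere-denseness of $\bd(X)$: for any $\Sa S$-definable open box $U \subseteq \R^n$, the slice $Y \cap (U \times \R^m)$ is constructible, and its continuous image under projection equals $\pi(Y) \cap U$, which by Fact~\ref{fact:con-image} and type A is noiseless in $\R^n$ and hence noiseless in $U$. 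Since $\Sa S$-definable open boxes form a basis for the topology of $\R^n$, no open set lies in $\bd(X)$, so $\bd(X)$ is nowhere dense. The inductive step for universal quantifiers is then immediate, since complements of constructible sets are constructible, and iterating the existential analysis handles arbitrary quantifier depth.

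The main obstacle is precisely this upgrade from noiselessness to strong noiselessness for existential images — that is, controlling $\bd(\pi(Y))$ rather than merely $\pi(Y)$ itself. The slice argument accomplishes this, but it depends essentially on the uniform validity of Fact~\ref{fact:con-image} over a topological basis, which in turn depends on the full structure being type A (rather than only $(\Sa S,\Cal C,\Cal D)$ on a fragment). Type A is indispensable throughout, since without it one can existentially define subsets of $\R$ which are dense and codense in some interval — such as a definable enumeration of $\Q$ in $(\R,<,+,\Z)$ — and the inductive argument would break down at the very first existential step.
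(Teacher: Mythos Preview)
There is a genuine gap in the inductive step. You claim that from $\bd(X)$ nowhere dense it follows that $X = \inte(X) \cup (X \cap \bd(X))$ exhibits $X$ as ``the union of an open definable set and a closed nowhere dense definable set, hence constructible.'' But $X \cap \bd(X)$ need not be closed: it is merely a definable subset of the closed nowhere dense set $\bd(X)$. So you have not shown that $\pi(Y)$ is constructible, and the induction hypothesis is lost. At the next level you would need to project a set that is constructible in order to invoke Fact~\ref{fact:con-image}, but you can only guarantee it has nowhere dense boundary.

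Switching the induction hypothesis to ``has nowhere dense boundary'' (which is preserved under complementation, since $\bd(X)=\bd(\R^n\setminus X)$) does not rescue the argument either: for the existential step you would need to control $\pi(Y)$ when $Y$ merely has nowhere dense boundary. Writing $\pi(Y)=\pi(\inte(Y))\cup\pi(Y\cap\bd(Y))$, the first piece is handled by your slice argument applied to the open (hence constructible) set $\inte(Y)$, but $Y\cap\bd(Y)$ is only known to be nowhere dense, not constructible, so Fact~\ref{fact:con-image} does not apply to it, and its projection can have interior. The paper sidesteps this difficulty entirely: rather than inducting on formulas, it packages all the new predicates into a single closed nowhere dense $E\subseteq\R$ (built from coordinate projections of $\cl(X)$ for $X\in\Cal C$ and from $\bd(D)$ for $D\in\Cal D$), checks via cell decomposition and Fact~\ref{fact:HW-dimension} that $f(E^n)$ is nowhere dense for every $\Sa S$-definable $f$, and then invokes the Friedman--Miller theorem (Fact~\ref{fact:FM}) to conclude that $(\Sa S,E)^\sharp$ --- and hence its reduct $(\Sa S,\Cal C,\Cal D)$ --- is noiseless.
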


\noindent
Theorem~\ref{thm:fmhw} is an application of the next two facts.
The first, a theorem of Friedman and Miller~\cite{FM-Sparse}, requires some notation.
If $\Sa S$ is a first order expansion of $(\R,<,+)$ and $E$ is a subset of $\R$ then $(\Sa S, E)^\sharp$ is the expansion of $\Sa S$ by \textit{all} subsets of \textit{all} cartesian powers of $E$.
Note that if $E$ is infinite then $(\Sa S, E)^\sharp$ defines an isomorphic copy of $(\Z,+,\times)$ and if $E$ is uncountable then $(\Sa S, E)^\sharp$ defines an isomorphic copy of the standard model of second order arthimetic.
So Fact~\ref{fact:FM} may be a surprise.

\begin{fact}
\label{fact:FM}
Let $\Sa S$ be an o-minimal expansion of $(\R,<,+)$ and $E$ be a subset of $\R$.
If $f(E^n)$ is nowhere dense for every $\Sa S$-definable $f : \R^n \to \R$ then $(\Sa S, E)^\sharp$ is noiseless.
\end{fact}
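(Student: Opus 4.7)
The plan is to proceed by establishing a normal form for $(\Sa S, E)^\sharp$-definable sets and then reducing the dichotomy ``interior or nowhere dense'' to o-minimal cell decomposition together with the sparseness hypothesis. Specifically, I would first show that every $(\Sa S,E)^\sharp$-definable subset of $\R^n$ is a finite boolean combination of sets of the form
\[
    \{ x \in \R^n : \exists y \in A \ (x,y) \in Y \}
\]
where $A \subseteq E^m$ is an arbitrary subset and $Y \subseteq \R^{n+m}$ is $\Sa S$-definable. Because every subset of every $E^m$ is already a primitive predicate, and because $\Sa S$ is o-minimal (hence admits quantifier elimination in a suitable expanded language), the standard ``absorb $\R$-quantifiers into $Y$, push $E$-quantifiers outward'' manipulation should yield the desired normal form; this is the technical heart of Friedman--Miller and would be the first step.

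Given the normal form, I would analyze a single set $X = \{x : \exists y \in A \ (x,y) \in Y\}$. Apply o-minimal cell decomposition to $Y$ compatibly with the projection $\pi : \R^{n+m} \to \R^n$ onto the first $n$ coordinates, partitioning $Y$ into finitely many cells $C_1, \ldots, C_k$. For each cell, either (a) $\pi(C_i)$ has interior in $\R^n$ and the fiber $(C_i)_x$ has positive dimension in $\R^m$ for a generic $x \in \pi(C_i)$, or (b) the fiber is generically $0$-dimensional. In case (a), on a suitable $\Sa S$-definable open $U \subseteq \pi(C_i)$ the fiber $(C_i)_x$ is a cell of positive dimension in $\R^m$, and by a density/genericity argument $(C_i)_x \cap A \ne \emptyset$ on a nonempty open piece (or the whole cell contribution is absorbed into an $\Sa S$-definable set), contributing interior to $X$. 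In case (b), the contribution to $X$ is of the form $g(A \cap D)$ for finitely many $\Sa S$-definable functions $g$ with domain $D \subseteq \R^m$, and its projection to each coordinate of $\R^n$ is the image of $E^m$ under an $\Sa S$-definable function $\R^m \to \R$, hence nowhere dense by the sparseness hypothesis; since a subset of $\R^n$ whose projection to some coordinate axis is nowhere dense is itself nowhere dense, this contribution is nowhere dense.

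Finally, I would combine the cell-by-cell analysis with a boolean combination argument: once one knows that each ``atomic'' set in the normal form either contains an $\Sa S$-definable dense open subset of its closure or is nowhere dense, boolean combinations preserve this dichotomy, because on a common $\Sa S$-definable open set each atom is either full or empty and the combination is an $\Sa S$-definable open set there, while off this set everything is nowhere dense. This yields noiselessness of $(\Sa S,E)^\sharp$.

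The main obstacle will be establishing the normal form rigorously and controlling the cell-by-cell argument in case (b). One must show that every $\Sa S$-definable function evaluated on $E^m$, and not just on $E^n$ when composed with a single coordinate projection, produces a nowhere dense set in $\R^n$; reducing this multivariable statement to the univariate hypothesis requires a Fubini-style argument combined with the o-minimal fact that nowhere denseness is detected coordinatewise up to finite unions of cells. The eliminability of intermediate $\R$-quantifiers inside $Y$ is also delicate, and is where o-minimality of $\Sa S$ (and not merely noiselessness) is essential.
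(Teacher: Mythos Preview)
The paper does not prove this fact directly: it cites Friedman--Miller for the statement that $(\Sa S,E)^\sharp$ is noiseless \emph{in one variable}, and then invokes Fact~\ref{fact:noiseless} (Miller's theorem that noiselessness in one variable implies noiselessness) to obtain the full conclusion. Your proposal is therefore attempting something strictly more ambitious than what the paper does --- you are sketching a self-contained proof of the Friedman--Miller result itself, and doing so directly in $n$ variables rather than reducing to $n=1$.

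There is a genuine gap in your case (a). You assert that if the fiber $(C_i)_x$ has positive dimension in $\R^m$ then ``by a density/genericity argument $(C_i)_x \cap A \ne \emptyset$ on a nonempty open piece''. This is not justified and in general false as stated: $A$ is an \emph{arbitrary} subset of $E^m$, and $E^m$ is typically nowhere dense in $\R^m$, so a positive-dimensional fiber need not meet $A$ at all, and the set $\{x : (C_i)_x \cap A \ne \emptyset\}$ can be a complicated infinite union of $\Sa S$-definable sets (one for each point of $A$) with no obvious reason to have interior or be nowhere dense. For instance, with $m=1$ and a cell of the form $\{(x,y): f(x)<y<g(x)\}$, the contribution is $\{x : A \cap (f(x),g(x)) \ne \emptyset\}$; analyzing this set is precisely where the real work in Friedman--Miller lies, and it does not fall out of a fiber-dimension dichotomy. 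The parenthetical escape hatch ``or the whole cell contribution is absorbed into an $\Sa S$-definable set'' is the correct destination, but you have not indicated any mechanism for reaching it.

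If you want to carry out a direct proof, the cleaner route is the paper's: establish the one-variable statement (where the Friedman--Miller normal form and the sparseness hypothesis combine more transparently to show every definable subset of $\R$ either has interior or lies in some $f(E^k)$), and then appeal to Fact~\ref{fact:noiseless}.
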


\noindent
Miller and Friedman only show that $(\Sa S,E)^\sharp$ is noiseless in one variable.
Fact~\ref{fact:noiseless} below shows that $(\Sa S, E)^\sharp$ is noiseless.
Fact~\ref{fact:HW-dimension} follows from \cite[Proposition 5.6]{FHW-Compact}.

\begin{fact}
\label{fact:HW-dimension}
Suppose $\Sa R$ is type A.
Let $X$ be a constructible definable subset of $\R^n$ and $f : X \to \R^m$ be continuous and definable.
If $X$ is zero-dimensional then $f(X)$ is nowhere dense.
\end{fact}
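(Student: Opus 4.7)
The plan is to argue by contradiction using Fact~\ref{fact:con-image}. Suppose $f(X)$ is not nowhere dense. Since $X$ is constructible and $f$ is continuous and definable, Fact~\ref{fact:con-image} forces $f(X)$ to have nonempty interior, so it contains an open ball $B \subseteq \R^m$. Replacing $X$ by the constructible set $f^{-1}(B) \cap X$ (still zero-dimensional as a subset of a zero-dimensional set) and composing $f$ with a coordinate projection $\R^m \to \R$ carrying $B$ onto an interval, I reduce to the case $m = 1$ and $f : X \to I$ is a definable continuous surjection onto a bounded open interval $I \subseteq \R$.

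The main step is to extract from this surjection a dense $\omega$-order definable in $\Sa R$, contradicting the type~A hypothesis. I would exploit zero-dimensionality of $X$ together with the finite description of $X$ as a boolean combination of definable closed sets. For each dyadic rational $d \in I$, I would definably locate a ``distinguished'' point $p_d \in X$ with $f(p_d)$ very close to $d$: starting from a small definable rational box around a chosen point of $f^{-1}(d)$, zero-dimensionality guarantees a clopen-in-$X$ refinement cutting off a nonempty piece of small diameter; constructibility of $X$ lets such a refinement be taken definably by finite case analysis on the closed sets defining $X$. The set $D := \{p_d : d \text{ dyadic in } I\}$ is then a definable subset of $X$ whose image $f(D)$ is dense in $I$, and it admits a definable $\omega$-type ordering inherited from the standard dyadic enumeration, producing the required dense $\omega$-order.

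The main obstacle is the uniformly definable choice of $p_d$. Without type~A, genuine Peano-type continuous surjections from zero-dimensional constructible sets onto intervals exist (for example in $(\R,<,+,\times,\Z)$), and the obstruction to imitating the above construction in such structures is precisely the failure of uniformly definable selection. The role of constructibility is decisive here: a boolean description of $X$ by finitely many definable closed sets provides the finite combinatorial data needed to extract a definable clopen-in-$X$ refinement at each stage, whereas a merely definable zero-dimensional $X$ would require a non-definable choice. Once the tree of distinguished points is constructed, its definable $\omega$-ordering together with density of $f(D)$ in $I$ yields a dense $\omega$-order in $\Sa R$, contradicting type~A and completing the proof.
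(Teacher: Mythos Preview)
Your reduction via Fact~\ref{fact:con-image} to a continuous definable surjection $f\colon X \to I$ with $X$ zero-dimensional constructible and $I$ a bounded open interval is correct. The problem is entirely in the ``main step'': you have not actually produced a dense $\omega$-order, and the sketch you give cannot be completed as written.

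There are two separate gaps. First, for $D = \{p_d : d \text{ dyadic in } I\}$ to be definable you need the assignment $d \mapsto p_d$ to be uniformly definable over a definable index set. The dyadic rationals in $I$ are not a priori definable in $\Sa R$, and your justification for the selection itself --- ``constructibility of $X$ lets such a refinement be taken definably by finite case analysis on the closed sets defining $X$'' --- is not an argument. A boolean decomposition of $X$ into finitely many locally closed pieces gives no mechanism for choosing a single point from each fiber $f^{-1}(d)$, nor for producing a definable clopen-in-$X$ set of small diameter around a given point; zero-dimensionality guarantees such clopen sets exist, not that they can be chosen definably. You flag this obstacle yourself and then simply assert it away. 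Second, even granting a definable $D$ with $f(D)$ dense in $I$, a dense $\omega$-order requires a \emph{definable} ordering of order type $\omega$. The ``standard dyadic enumeration'' is an external bijection $\N \to \{\text{dyadics}\}$; invoking it inside $\Sa R$ presupposes a definable copy of $\N$, which is precisely what type~A forbids.

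The paper does not give a proof but cites \cite[Proposition~5.6]{FHW-Compact}. The route taken there is through the $\DSig$ dimension theory rather than by building a dense $\omega$-order directly. In outline: the graph $\Gamma_f$ is definable and is relatively closed in the constructible set $X \times \R^m$, hence $\Gamma_f = (X\times\R^m)\cap\cl(\Gamma_f)$ is itself constructible and therefore $\DSig$; since $\Gamma_f$ is homeomorphic to $X$ it is zero-dimensional, so (by the equality $\dim = \Dim$ for $\DSig$-sets in type~A expansions, recorded here as Fact~\ref{fact:dimcl-1}) no coordinate projection of $\Gamma_f$ to a positive-dimensional $\R^d$ has interior; in particular $f(X)$ has empty interior, and Fact~\ref{fact:con-image} upgrades this to nowhere dense. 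This avoids any selection argument.
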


\noindent
We may now prove Theorem~\ref{thm:fmhw}.

\begin{proof}
Given $1 \leq k \leq n$ we let $\pi^n_k : \R^n \to \R$ be the map given by
$$ \pi^n_k(x_1,\ldots,x_n) =  x_k.$$
We let $\Sa R$ be $(\Sa S, \Cal C, \Cal D)$.
We suppose that $\Sa R$ is type A.
Note that $\Sa R$ is noiseless if and only if $(\Sa S, \Cal C', \Cal D')$ is noiseless for any finite subcollection $\Cal C'$ of $\Cal C$ and finite subcollection $\Cal D'$ of $\Cal D$.
So we assume $\Cal C$ and $\Cal D$ are finite.
Suppose that $\Sa R$ is type A.
We now define an auxiliary collection $\Cal E$ of subsets of $\R$ consisting of:
\begin{itemize}
    \item $\pi^n_k( \cl(X) )$ for all $1 \leq k \leq n$ and $X \in \Cal C$ such that $X \subseteq \R^n$,
    \item $\bd(X)$ for all $X \in \Cal D$.
\end{itemize}
If $X \in \Cal D$ then $\bd(X)$ is nowhere dense as $X$ is not dense and co-dense in any open interval.
If $X \in \Cal C$, then $\cl(X)$ is zero-dimensional by assumption, so each $\pi^n_k(\cl(X))$ is nowhere dense by Fact~\ref{fact:HW-dimension}.
So every element of $\Cal E$ is nowhere dense.
Note also that $\Cal E$ is finite as $\Cal C$ and $\Cal D$ are finite, so $\bigcup \Cal E$ is nowhere dense.
Let $E$ be the closure of $\bigcup \Cal E$, so $E$ is closed, nowhere dense, and definable.\newline

\noindent We show that $E$ satisfies the condition of Fact~\ref{fact:FM}.
Let $f : \R^n \to \R$ be $\Sa S$-definable.
We show that $f(E^n)$ is nowhere dense.
Applying o-minimal cell decomposition we obtain a finite partition $\Cal F$ of $\R^n$ into $\Sa S$-definable cells such that the restriction of $f$ to each element of $\Cal F$ is continuous.
Fix $Z \in \Cal F$.
As $E$ is closed and zero-dimensional, $E^n$ is also closed and zero-dimensional.
So $Z \cap E^n$ is zero-dimensional.
It easily follows from the definition of a cell that any cell is constructible, so $Z$ is constructible.
So $Z \cap E^n$ is constructible.
An application of Fact~\ref{fact:HW-dimension} now shows that $f(Z \cap E^n)$ is nowhere dense.
So $f(E^n)$ is a finite union of nowhere dense sets and is hence nowhere dense.
Fact~\ref{fact:FM} shows that $(\Sa S, E)^\sharp$ is noiseless.\newline

\noindent It now suffices to show that every element of $\Cal C$ and $\Cal D$ are $(\Sa S,E)^\sharp$-definable.
Every element of $\Cal E$ is a subset of $E$ and is thus $(\Sa S,E)^\sharp$-definable.
Fix $X \in \Cal C$ and suppose $X \subseteq \R^n$.
Then
$$ X \subseteq \prod_{k = 1}^{n} \pi^n_k(\cl(X)) \subseteq E^n $$
so $X$ is $(\Sa S, E)^\sharp$-definable.
Now suppose $X \in \Cal D$.
Let $U$ be the interior of $X$.
Then
$$ X \setminus U \subseteq \bd(X) \subseteq E $$
so $X \setminus U$ is $(\Sa S, E)^\sharp$-definable.
We show that $U$ is $(\Sa S, E)^\sharp$-definable.
Let $A$ be the set of $(a,b) \in \bd(X)^2$ such that $a < b$ and the open interval with endpoints $a,b$ is contained in $U$.
Then $A \subseteq E^2$ and so $A$ is $(\Sa S, E)^\sharp$-definable.
Finally
$$ U = \bigcup_{(a,b) \in A} \{ t \in \R : a < t < b\} $$
so $U$ is $(\Sa S,E)^\sharp$-definable.
\end{proof}

\subsection{Local and generic local o-minimality}
\label{section:equiv}
\noindent Theorem~\ref{thm:equiv-1} gives several equivalent definitions of local o-minimality for expansions of $(\R,<,+)$.
Locally o-minimal structures are studied in \cite{TV-local,KTTT}.
In particular the equivalence of $(2)$ and $(3)$ below was proven in \cite{TV-local}.

\begin{theorem} \label{thm:equiv-1}
The following are equivalent:
\begin{enumerate}
    \item $\Sa R$ is locally o-minimal,
    \item for every definable $X \subseteq \R$ and $a \in \R$ there is an open interval $I$ containing $a$ such that $I \cap X$ is $(\R,<)$-definable,
    \item the structure induced by $\Sa R$ on any bounded interval $I$ is o-minimal,
    \item the expansion of $(\R,<,+)$ by all bounded $\Sa R$-definable sets is o-minimal,   
    \item $\Sa R$ is noiseless and does not define a bounded discrete subset of $D$ of $\R_{>0}$ such that $\cl(D) = D \cup \{0\}$,
    \item $\Sa R$ is noiseless and every nowhere dense definable subset of $\R$ is closed and discrete.
\end{enumerate}
\end{theorem}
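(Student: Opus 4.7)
My plan is to close a cycle through $(1)$--$(4)$ and then treat the equivalences linking $(3)$, $(5)$, and $(6)$ separately. The equivalence $(1)\Leftrightarrow(2)$ is immediate, since the $(\R,<)$-definable subsets of $\R$ are closed under complements: if $a\notin X$ then applying the definition of local o-minimality to $\R\setminus X$ at $a$ and complementing the resulting interval intersection yields $(2)$. The equivalence $(2)\Leftrightarrow(3)$ is the Toffalori--Vozoris compactness argument \cite[Corollary 3.4]{TV-local}: $(3)\Rightarrow(2)$ is trivial (take a bounded interval around $a$), while for $(2)\Rightarrow(3)$ one covers the compact closure of a bounded interval $I$ by finitely many open intervals on which a fixed definable $Y\subseteq I$ is $(\R,<)$-definable, then takes a boolean combination to express $Y$ as a finite union of points and subintervals of $I$; o-minimality of the induced structure on $I$ then follows because every induced-definable subset of $I$ arises as the restriction of some $\Sa R$-definable subset of $\R$.

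For $(3)\Leftrightarrow(4)$, let $\Sa R'$ denote the expansion of $(\R,<,+)$ by all bounded $\Sa R$-definable sets. The implication $(4)\Rightarrow(3)$ is immediate: the structure induced on any bounded interval $I$ by $\Sa R$ coincides with that induced by $\Sa R'$, since $\Sa R'\subseteq\Sa R$ and every $\Sa R$-definable subset of $I$ is automatically bounded and hence $\Sa R'$-definable. For the converse, the plan is to prove by induction on formula complexity that every $\Sa R'$-definable subset of $\R^n$ agrees with a $(\R,<,+)$-definable set outside some bounded subset of $\R^n$. The primitive bounded predicates of $\Sa R'$ vanish outside a bounded set; the symbols $<$, $+$ and boolean operations are trivially preserved; and if $\phi(x,y)$ agrees with a $(\R,<,+)$-formula $\psi(x,y)$ outside a bounded $B\subseteq\R^{n+1}$, then $\exists y\,\phi$ and $\exists y\,\psi$ agree outside the bounded projection of $B$ onto the first $n$ coordinates. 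Combining this eventual semilinearity with local o-minimality of $\Sa R$ on the bounded remainder given by $(3)$ forces every $\Sa R'$-definable subset of $\R$ to be a finite union of points and intervals, yielding o-minimality of $\Sa R'$.

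The implication $(3)\Rightarrow(6)$ is direct: on each bounded interval a nowhere dense definable set is nowhere dense in an o-minimal induced structure, hence finite, so globally such a set is closed and discrete; noiselessness in one variable follows similarly, and noiselessness in all arities is then obtained via Fact~\ref{fact:noiseless}. The direction $(6)\Rightarrow(5)$ is immediate since a bad discrete $D\subseteq\R_{>0}$ with $\cl(D)=D\cup\{0\}$ is nowhere dense but fails to be closed as $0\in\cl(D)\setminus D$.

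The main obstacle is $(5)\Rightarrow(6)$, where one must definably extract a bad discrete set from any definable nowhere dense $X\subseteq\R$ that fails to be closed and discrete. First note that if $\cl(X)$ were discrete then $X$ would equal $\cl(X)$ (any point of $\cl(X)\setminus X$ would be a limit of $X$ yet isolated in $\cl(X)$, a contradiction), so we may assume $\cl(X)$ has a non-isolated point; translate this to $0$ so that $0$ is a right-limit of $C:=\cl(X)\cap[0,1]$. The candidate bad set is
\[
D:=\{y\in(0,1) : y\in C,\ (y/2,y)\cap C=\emptyset\},
\]
which is definable, bounded, and has consecutive elements satisfying $d\le d'/2$, so it is discrete with $\cl(D)\subseteq D\cup\{0\}$. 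The delicate point is showing $D$ is infinite: the plan is to argue by a definable descent that any attempt to iterate the halving procedure inside $C$ without landing in $D$ must converge to a new non-isolated point $L>0$ of $C$ where the argument can be restarted, and noiselessness is used to ensure the relevant auxiliary definable sets remain structurally tractable, ultimately forcing the process either to terminate in a $D$-point (so that $D$ is infinite as required) or to produce a definable variant bad set directly contradicting $(5)$.
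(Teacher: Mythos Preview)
Your treatment of the cycle $(1)$--$(4)$ is fine, and your direct induction for $(3)\Rightarrow(4)$ is in fact a self-contained proof of the Belegradek--Verbovskiy--Wagner fact the paper quotes as Fact~\ref{fact:at-infinity}. One structural omission: you prove $(3)\Rightarrow(6)\Rightarrow(5)$ and $(5)\Rightarrow(6)$, but never return from $(5)$ or $(6)$ to any of $(1)$--$(4)$. This is easy to repair (under $(6)$ the boundary $\bd(X)$ of any definable $X\subseteq\R$ is nowhere dense by noiselessness, hence closed and discrete, hence finite on any bounded interval, so Fact~\ref{fact:boundary} gives $(2)$), but it should be stated.

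The real problem is $(5)\Rightarrow(6)$. Your halving set $D=\{y\in C\cap(0,1):(y/2,y)\cap C=\emptyset\}$ need not be infinite, and your descent plan does not rescue it. Take $C=\{0\}\cup\{1/n:n\geq 1\}$: for $n\geq 2$ the point $1/(n+1)$ lies in $(1/(2n),1/n)$, so $1/n\notin D$; only $y=1$ survives, and $D=\{1\}$. If you run your descent from any $y_0=1/n$ with $n\geq 2$, the iterates $y_k$ stay in $\{1/m:m\geq 1\}$ and tend to $0$, not to some new non-isolated point $L>0$; there is nowhere to ``restart''. The vague appeal to noiselessness at the end does not produce a definable bad set either, since in this example the only accumulation is at $0$.

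The paper's construction avoids this entirely. Fix a bounded open interval $I$ with $I\cap X$ infinite and let $D$ be the set of \emph{lengths} of the connected components of $I\setminus\cl(X)$. This $D$ is definable in $(\R,<,+,X)$, bounded, and infinite because $\cl(X)\cap I$ is infinite and nowhere dense; for each $t>0$ only finitely many complementary intervals have length $\geq t$, so $D\cap(t,\infty)$ is finite, whence $D$ is discrete with $\cl(D)=D\cup\{0\}$. This is the missing idea you should use for $(5)\Rightarrow(6)$.
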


\noindent We will need four results for the proof of Theorem~\ref{thm:equiv-1}.
The first is an easy fact about subsets of $\R$ whose verification we leave to the reader.

\begin{fact}
\label{fact:boundary}
Suppose $I$ is an open interval and $X$ is a subset of $I$.
Then $X$ is a finite union of open intervals and singletons if and only if $\bd(X) \cap I$ is finite.
\end{fact}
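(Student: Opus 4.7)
The plan is to prove the equivalence by handling each direction separately, both of which should be straightforward. The forward direction is essentially immediate: if $X = \bigcup_{k=1}^m J_k$ is a finite union of open intervals and singletons, then the boundary of each $J_k$ has at most two points (endpoints of the interval, or the singleton itself), and since $\bd(X) \subseteq \bigcup_k \bd(J_k)$, we conclude that $\bd(X)$, and hence $\bd(X)\cap I$, is finite. This direction requires no hypothesis on $I$ at all.

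For the reverse direction, I would suppose $\bd(X)\cap I = \{a_1 < a_2 < \cdots < a_n\}$ is finite. Together with the endpoints of $I$ (allowing $\pm\infty$), these points partition $I$ into the finite set $\{a_1,\ldots,a_n\}$ together with finitely many open subintervals $I_0, I_1, \ldots, I_n$. The key observation is that each $I_j$ contains no boundary point of $X$, so $X \cap I_j$ is clopen in $I_j$; since $I_j$ is connected, $X \cap I_j$ is either empty or all of $I_j$. Therefore
\[
X = \bigcup_{j : I_j \subseteq X} I_j \;\cup\; \bigl(X \cap \{a_1,\ldots,a_n\}\bigr),
\]
which is a finite union of open intervals and singletons, as required.

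There is no real obstacle here; the only point worth being careful about is the treatment of the endpoints of $I$. If $I = (\alpha,\beta)$ with $\alpha,\beta \in \R \cup \{\pm\infty\}$, the extremal subintervals $(\alpha, a_1)$ and $(a_n,\beta)$ are genuine open intervals in $\R$ (unbounded if $\alpha = -\infty$ or $\beta = +\infty$), and everything goes through verbatim. Because the statement is phrased entirely in terms of $\bd(X)\cap I$, the behavior of $X$ at or outside $\partial I$ is irrelevant, which is exactly why the argument reduces cleanly to the connectedness of each piece $I_j$.
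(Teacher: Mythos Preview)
Your proof is correct. The paper does not prove this fact at all; it explicitly leaves the verification to the reader, so there is nothing to compare against.
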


\noindent We say that $\Sa R$ is \textbf{o-minimal at infinity} if for every definable $X \subseteq \R$ there is $t > 0$ such that $(t,\infty)$ is either contained in or disjoint from $X$.
Fact~\ref{fact:at-infinity} is a special case of a theorem of Belegradek, Verbovskiy, and Wagner~\cite[Theorem 19]{coset-minimal}.

\begin{fact}
\label{fact:at-infinity}
The expansion of $(\R,<,+)$ by all bounded subsets of all $\R^n$ is o-minimal at infinity.
\end{fact}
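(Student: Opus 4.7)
The plan is to pass to a highly saturated elementary extension and reduce o-minimality at infinity to a type-uniqueness statement. Let $\Sa M$ denote the expansion of $(\R,<,+)$ by all bounded subsets of all $\R^n$ and fix an $|\R|^+$-saturated elementary extension $\Sa M \prec \Sa N$, with $\mfin$ the convex hull of $\R$ in $N$. If some definable $X \subseteq \R$ were a counterexample to o-minimality at infinity, then both $X$ and $\R \setminus X$ would be cofinal, and compactness plus saturation of $\Sa N$ would furnish elements $a, b \in N$ with $a, b > r$ for every $r \in \R$ such that $a \in X^{\Sa N}$ and $b \notin X^{\Sa N}$. It therefore suffices to show that any two such positive-infinite elements realize the same $\Sa N$-type over $\R$.

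To prove this type-uniqueness I would establish the stronger claim that two tuples $\bar a, \bar b \in N^n$ realize the same $\Sa N$-type over $\R$ whenever they realize the same quantifier-free $(\R,<,+)$-type over $\R$. The enabling observation is that for each bounded definable $B \subseteq \R^k$, elementarity gives $B^{\Sa N} \subseteq [-M_B, M_B]^k \subseteq \mfin^k$ for some $M_B \in \R$ bounding $B$ in $\Sa M$. Consequently, for any integer-linear combination $t(\bar x) = \sum c_i x_i + r$ with $r \in \R$, if $t(\bar a) \notin \mfin$ then no tuple containing $t(\bar a)$ as a coordinate can lie in any bounded predicate, and by the qf-type hypothesis the same holds for $\bar b$. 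Hence bounded-predicate conditions on $\bar a$ and $\bar b$ are fully determined by their restrictions to integer-linear combinations whose values lie in $\mfin$. The proof is by back-and-forth: given a new $c \in N$, one picks $d \in N$ so that $(\bar b, d)$ realizes the $(\R,<,+)$-type over $\R$ corresponding to $(\bar a, c)$, which exists by o-minimality of $(\R,<,+)$ and saturation of $\Sa N$, and one verifies that the bounded-predicate conditions transfer by appealing to the enabling observation.

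The main obstacle is the back-and-forth step when $c \in \mfin$: here $d$ must share $c$'s standard part while matching $c$'s interactions with every bounded predicate in $\Sa N$, whose interpretations may contain non-standard elements strictly beyond their $\Sa M$-realizations. I would address this by showing that the partial type specifying $d$ over $\R \cup \bar b$ --- namely the $(\R,<,+)$-type together with one bounded-predicate condition for each term-tuple whose values lie in $\mfin$ --- is finitely consistent. The qf-type matching between $\bar a$ and $\bar b$ means each finite conjunction of such conditions satisfied by $(c,\bar a)$ in $\Sa N$ translates via elementarity into a satisfiable condition on $(y,\bar b)$ for some $y \in N$; saturation of $\Sa N$ then yields the required $d$ realizing all conditions simultaneously.
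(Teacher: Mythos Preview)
The paper does not prove this statement; it is cited as a special case of \cite[Theorem 19]{coset-minimal}, so there is no proof in the paper to compare against. Your overall strategy is reasonable, but the ``stronger claim'' is false and the back-and-forth does not close. For the false claim: let $B = \Q \cap [0,1]$, a bounded predicate of $\Sa M$. Saturation gives $c, c' \in N$ both infinitesimally above $0$ with $c \in B^{\Sa N}$ and $c' \notin B^{\Sa N}$; these share the same quantifier-free $(\R,<,+)$-type over $\R$ but not the same $\Sa N$-type. Your enabling observation correctly says bounded predicates only constrain term-tuples landing in $\mfin^k$, but it does not follow that those constraints are determined by the $(\R,<,+)$-cut: the actual values in $\mfin$ matter.

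This breaks the extension step. You acknowledge the obstacle when $c \in \mfin$ and propose to realize, over $\R \cup \bar b$, the type containing all bounded-predicate conditions satisfied by $c$ over $\R \cup \bar a$, arguing finite consistency ``via elementarity''. But the inductive hypothesis gives $\bar a, \bar b$ only the same \emph{quantifier-free} type; passing from $\exists y\,\varphi(y,\bar a)$ to $\exists y\,\varphi(y,\bar b)$ for quantifier-free $\varphi$ requires the same \emph{full} type, which is precisely what you are trying to establish. A repair is to strengthen the back-and-forth invariant to: same quantifier-free $(\R,<,+)$-type over $\R$, \emph{and} $t(\bar a) = t(\bar b)$ for every $\Z$-linear term $t$ (with real constant) satisfying $t(\bar a) \in \mfin$. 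This holds at the base step since a single positive-infinite element has no nonconstant term landing in $\mfin$, and in the extension step one can then take $d$ of the form $c + s(\bar b) - s(\bar a)$ for a suitable term $s$, making the bounded-predicate conditions on $(\bar a,c)$ and $(\bar b,d)$ literally identical rather than merely equivalent; but this is an argument you have not supplied.
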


\noindent Fact~\ref{fact:noiseless} is due to Miller~\cite[Theorem 3.2]{Miller-tame}.

\begin{fact}
\label{fact:noiseless}
$\Sa R$ is noiseless if and only if it is noiseless is one variable.
\end{fact}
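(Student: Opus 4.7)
My plan is to proceed by induction on $n$, with the case $n = 1$ being the hypothesis; the forward direction is immediate. For the induction step, I assume noiselessness has been established for definable subsets of $\R^k$ for every $k < n$, and let $X \subseteq \R^n$ be definable with empty interior. I will deduce that $X$ is nowhere dense.

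The engine is the following ``definable Baire category'' lemma, which I would prove first. \textbf{Key Lemma:} \emph{if $Y \subseteq W \times V$ is definable, where $W$ is a nonempty open subset of $\R^{n-1}$ and $V$ is a nonempty open interval in $\R$, and every fiber $Y_a$ for $a \in W$ has nonempty interior in $V$, then $Y$ has nonempty interior.} I would prove it using the countable density of $\Q$ in $V$: for each $a \in W$ there exist $p < q$ in $\Q \cap V$ with $(p,q) \subseteq Y_a$, so $W$ is the countable union, over such pairs, of the definable sets $W_{p,q} := \{a \in W : (p,q) \subseteq Y_a\}$. By the inductive noiselessness hypothesis applied in $\R^{n-1}$, each $W_{p,q}$ either has interior or is nowhere dense; since $W$ is a nonempty open subset of the Baire space $\R^{n-1}$, this countable union is non-meager, so some $W_{p_0,q_0}$ contains a nonempty open box $W_0$, giving $W_0 \times (p_0,q_0) \subseteq Y$.

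With the Key Lemma in hand, suppose for contradiction that $X$ has empty interior but is not nowhere dense, so $\cl(X) \supseteq U \times V$ for an open box with $U \subseteq \R^{n-1}$ and $V$ an open interval; then $X$ is dense in $U \times V$. Applying one-variable noiselessness fiberwise, I partition $U$ into the definable sets $P = \{a \in U : X_a \cap V \text{ has interior in } V\}$ and $N = \{a \in U : X_a \cap V \text{ is nowhere dense in } V\}$. By the inductive hypothesis in $\R^{n-1}$ and the Baire property of $U$, at least one of $P$ or $N$ contains a nonempty open set $W$. If $W \subseteq N$, then for each $a \in W$ the complement $V \setminus X_a$ is dense in $V$, and one-variable noiselessness forces it to have interior; thus $(W \times V) \setminus X$ satisfies the hypothesis of the Key Lemma and so contains an open box $B$ disjoint from $X$, contradicting density of $X$ in $U \times V$. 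If instead $W \subseteq P$, then the definable set $Z = \{(a,b) \in W \times V : b \in \Int(X_a \cap V)\} \subseteq X$ has nonempty open fibers over $W$, so the Key Lemma gives $Z$ interior, forcing $X$ to have interior, contrary to assumption.

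The main obstacle is the Key Lemma. Moving from ``every fiber contains some small interval'' to ``a single interval works on a nonempty open set of parameters'' is precisely the uniformity one would hope to obtain from definable selection, which is unavailable at this level of generality. The resolution is a rational-countability trick: density of $\Q$ in $V$ reduces the problem to a \emph{countable} family of definable subsets of $\R^{n-1}$, after which the inductive noiselessness and the Baire property of $\R^{n-1}$ together promote ``non-meager'' to ``has interior.'' Without this reduction, a continuous-parameter union of nowhere-dense definable sets can easily exhaust an open subset of $\R^{n-1}$ (graphs of definable continuous functions, for instance), and the argument would collapse.
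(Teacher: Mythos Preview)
The paper does not prove this fact; it is quoted as \cite[Theorem 3.2]{Miller-tame} with no argument given. So there is nothing in the paper to compare your proof against directly.

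Your argument is correct. The Key Lemma is sound: the sets $W_{p,q}$ are definable (with the rational parameters $p,q$), the inductive hypothesis forces each to have interior or be nowhere dense, and Baire category in $\R^{n-1}$ then supplies one with interior. In the main argument, the definability of $P$ is clear, and $N = U \setminus P$ by one-variable noiselessness, so both are definable; two nowhere dense sets cannot partition a nonempty open set, so one of them contains an open $W$. Both cases then go through exactly as you say. One small remark: in the case $W \subseteq N$ you don't actually need to invoke one-variable noiselessness again to see that $V \setminus X_a$ has interior --- nowhere-denseness of $X_a \cap V$ already gives that the complement of its closure is nonempty open.

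For what it's worth, Miller's original proof follows essentially the same skeleton: induction on $n$, fiberwise application of the one-variable hypothesis, and a Baire-category step to uniformize. Your rational-countability trick for the Key Lemma is the standard way to carry out that uniformization without any definable choice, and is in the spirit of Miller's argument.
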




\noindent We now prove Theorem~\ref{thm:equiv-1}.

\begin{proof}
\noindent $(2) \Rightarrow (3):$ Let $I$ be a bounded interval.
After replacing $I$ with its closure if necessary we suppose $I$ is closed.
Fix a definable subset $X$ of $I$.
We show that $X$ is definable in $(\R,<)$.
For every $a \in I$ let $J_a$ be an open interval containing $a$ such that $J_a \cap X$ is definable in $(\R,<)$.
As $I$ is compact there is a finite $A \subseteq I$ such that $(J_a)_{a \in A}$ covers $I$.
Then 
$$ X = \bigcup_{a \in A} J_a \cap X $$
so $X$ is definable in $(\R,<)$. \newline

\noindent $(3) \Rightarrow (4):$
Let $\Cal B$ be the collection of all bounded definable sets.
Suppose $X \subseteq \R$ is definable in $(\R,<,+,\Cal B)$.
Fact~\ref{fact:at-infinity} yields a $t > 0$ such that $X \setminus [-t,t]$ is definable in $(\R,<)$.
The induced structure on $[-t,t]$ is o-minimal, so  $X \cap [-t,t]$ is definable in $(\R,<)$.
So $X$ is $(\R,<)$-definable. \newline

\noindent $(4) \Rightarrow (5):$ Suppose that $\Sa R$ is noisey.
Let $X$ be a definable subset of $\R^n$ which is dense and co-dense in a nonempty definable open subset $U$ of $\R^n$.
We may suppose that $U$ is bounded.
Then $U \cap X$ is bounded and definable and $(\R,<,+,U \cap X)$ is not o-minimal, contradiction.
Suppose that $D$ is a bounded discrete subset of $\R_{>0}$ such that $\cl(D) = D \cup \{0\}$.
Then $\bd(D) = D \cup \{0\}$ is infinite, an application of Fact~\ref{fact:boundary} shows that $(\R,<,+,D)$ is not o-minimal. \newline



\noindent $(5) \Rightarrow (6):$ Suppose that $X$ is a nowhere dense definable subset of $\R$ which is not closed and discrete.
Fix a bounded open interval $I$ such that $I \cap X$ is infinite.
Note that each connected component of $I \setminus \cl(X)$ is a nonempty open interval.
We let $D$ be the set of lengths of connected components of $I \setminus \cl(X)$.
Then $D$ is infinite as $\cl(X)$ is infinite and bounded as $I$ is bounded.
As $I$ is bounded there are only finitely many connected components of length $> t$ for any $t > 0$.
Thus $D \setminus (0,t)$ is finite for any $t > 0$.
It follows that $D$ is discrete and $\cl(D) = D \cup \{0\}$.  \newline

\noindent $(6) \Rightarrow (2):$ Fix a definable subset $X$ of $\R$ and $a \in X$.
Then $\bd(X)$ is nowhere dense as $X$ is nowhere dense and co-dense. 
So $\bd(X)$ is closed and discrete. 
Let $I$ be a bounded open interval containing $a$ such that $I \cap \bd(X)$ is finite.
Fact~\ref{fact:boundary} shows that $I \cap X$ is definable in $(\R,<)$. \newline

\noindent It is immediate that $(2)$ implies $(1)$.
We show that $(1)$ implies $(6)$.
Let $X \subseteq \R$ be definable and somewhere dense and $I$ be a nonempty open interval in which $X$ is dense.
Fix $p \in I \cap X$ and a subinterval $J$ of $I$ such that $J \cap X$ is $(\R,<)$-definable.
As $X$ is dense in $J$, $J \cap X$ is cofinite, so $X$ has interior.
Now suppose that $Y \subseteq \R$ is nowhere dense and definable.
It suffices to show that every $p \in \cl(X)$ is isolated.
Let $I$ be a open interval containing $p$ such that $I \cap \cl(X)$ is $(\R,<)$-definable.
As $\cl(X)$ is nowhere dense, $I \cap \cl(X)$ is finite hence $p$ is isolated in $I \cap \cl(X)$.
\end{proof}

\noindent Theorem~\ref{thm:equiv} gives conditions that  are equivalent to generic local o-minimality.

\begin{theorem}
\label{thm:equiv}
The following are equivalent:
\begin{enumerate}
    \item $\Sa R$ is generically locally o-minimal,
    \item every definable subset of $\R$ either has interior or contains an isolated point,
    \item $\Sa R$ is strongly noiseless,
    \item the structure induced on $[0,1]$ by $\Sa R$ is strongly noiseless,
    \item $\Sa R$ is strongly noiseless in one variable,
    \item $\Sa R$ is noiseless in one variable and does not define a Cantor subset of $\R$.
\end{enumerate}
\end{theorem}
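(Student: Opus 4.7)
My plan is to prove the six conditions equivalent via a cycle $(3) \Rightarrow (4) \Rightarrow (5) \Rightarrow (6) \Rightarrow (2) \Rightarrow (1) \Rightarrow (3)$, with the last implication doing the real work. The arc $(3) \Rightarrow (4) \Rightarrow (5) \Rightarrow (6)$ is light: $(3) \Rightarrow (4)$ specializes to $Y = [0,1]$; for $(4) \Rightarrow (5)$ I partition $\R$ into unit translates $[n,n+1]$ and transport via the group structure, noting that ``interior in $Y$'' and ``nowhere dense in $Y$'' are local properties; and for $(5) \Rightarrow (6)$, taking $Y = \R$ yields noiselessness in one variable, while a putative definable Cantor subset $C$ is defeated by $Y = C$ together with $X$ the definable set of endpoints of gaps of $C$, a countable dense subset of $C$ which can neither have interior in $C$ (open subsets of $C$ are uncountable) nor be nowhere dense in $C$.

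For $(6) \Rightarrow (2)$, I take definable $X \subseteq \R$ with no interior and no isolated points; noiselessness forces $X$ nowhere dense, and no isolated points means $\cl(X)$ is closed, nowhere dense, and perfect. Since closure is first-order over $(\R,<,+)$, $\cl(X)$ is definable, and trimming by $[a,b]$ with $a, b \notin \cl(X)$ (possible since $\cl(X)$ is nowhere dense) and $\cl(X) \cap [a,b] \neq \emptyset$ yields a definable Cantor subset, contradicting $(6)$. For $(2) \Rightarrow (1)$ I let $V$ be the set of $a \in X$ for which some open interval $I \ni a$ has $X \cap I$ being $(\R,<)$-definable: this is visibly open in $X$, and its density in $X$ follows from $(2)$ applied to $X \cap W$ for each open $W$ meeting $X$, since a point in the interior of $X \cap W$, or an isolated point of $X \cap W$, lies in $V$.

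The substantive implication is $(1) \Rightarrow (3)$. Chaining the above gives $(1) \Rightarrow (6)$, and Miller's Fact~\ref{fact:noiseless} lifts this to full noiselessness of $\Sa R$. For strong noiselessness, fix definable $X, Y \subseteq \R^n$ with $X \cap Y$ not nowhere dense in $Y$; picking open $U$ with $Z := U \cap Y \neq \emptyset$ and $Z \subseteq \cl(X \cap Y)$, I have $X \cap Z$ dense in $Z$. If $Z$ has interior in $\R^n$, noiselessness applied to $X \cap V$ on a nonempty open $V \subseteq Z$ forces interior, witnessing interior of $X \cap Y$ in $Y$. If $Z$ is nowhere dense in $\R^n$, I try to find a point $p \in Z$ isolated in $Y$, so that $\{p\}$ is a singleton open in $Y$ and, by density of $X \cap Y$ near $p$, automatically contained in $X$; in one variable this followed from $(2)$ applied to the definable nowhere-dense set $X \cap Z$, which must have an isolated point that turns out to be isolated in $Z$ (and hence in $Y$) by density.

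The main obstacle is handling this last subcase in higher dimension: $(2)$ has no direct analogue in $\R^n$ for $n \geq 2$ --- a line in $\R^2$ is a definable nowhere dense set without isolated points --- so extracting an isolated point of $Z$ requires more. I plan to resolve this by induction on $n$, slicing $Z$ by coordinate hyperplanes and applying the inductive hypothesis together with $(5)$ on one-dimensional fibers to trace density down to an isolated point in a fiber and then up to $Z$; alternatively, one may leverage the cell decomposition machinery for generically locally o-minimal structures that the paper develops further along, writing $Y$ as a finite union of definable cells each definably homeomorphic to an open subset of some $\R^k$, where strong noiselessness in $Y$ reduces to noiselessness in $\R^k$ and is thus delivered by Fact~\ref{fact:noiseless}.
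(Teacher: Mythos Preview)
Your cycle $(3)\Rightarrow(4)\Rightarrow(5)\Rightarrow(6)\Rightarrow(2)\Rightarrow(1)$ is essentially the paper's, and those steps are fine. Two problems remain, one small and one real.

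The small one: ``Chaining the above gives $(1)\Rightarrow(6)$'' is false as written --- your established chain runs from $(3)$ down to $(1)$, not out of $(1)$. What you actually need is the trivial direct implication $(1)\Rightarrow(2)$: pick any $a$ in the dense open $V\subseteq X$ promised by generic local o-minimality; on a small interval $X$ is a finite union of points and intervals, so has interior or an isolated point. (Separately, in your $(2)\Rightarrow(1)$ the set $V$ as you phrase it --- ``$X\cap I$ is $(\R,<)$-definable'' --- is not visibly first-order; the paper takes the manifestly definable $V=\Int(X)\cup\{\text{isolated points of }X\}$, which coincides with yours.)

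The real gap is your higher-dimensional argument for $(1)\Rightarrow(3)$. Seeking an isolated point of $Y$ when $Z=U\cap Y$ is nowhere dense in $\R^n$ cannot work: take $Y$ a line in $\R^2$. Then $Y$ is nowhere dense with no isolated points, yet you must still rule out a definable dense-codense $X\subseteq Y$. Your fallback ``induction on $n$ by slicing'' is not worked out (density does not obviously survive passage to fibers), and invoking ``cell decomposition machinery developed further along'' risks circularity, since Section~\ref{section:generic-local} already assumes the conclusion of Theorem~\ref{thm:equiv}.

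The paper avoids all of this by proving $(2)\Rightarrow(3)$ directly via Miller's Fact~\ref{fact:c0-smooth}, stated immediately before the theorem and whose hypothesis is precisely condition $(2)$. That fact gives, for any nonempty definable $Y\subseteq\R^n$, a dense relatively-open subset on which $Y$ is, after a coordinate projection, locally homeomorphic to an open set in some $\R^d$. Pushing $X$ forward through this homeomorphism turns ``$X$ dense in $Y$'' into ``$\pi(X)$ dense in an open subset of $\R^d$'', and ordinary noiselessness (Fact~\ref{fact:noiseless}) forces interior. This is exactly the ``write $Y$ locally as a graph and reduce to $\R^d$'' move you gesture at in your last sentence --- but it is already available at this point in the paper, requires no induction, and is not a forward reference. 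Reroute your cycle through $(2)\Rightarrow(3)$ using Fact~\ref{fact:c0-smooth} and the proof closes.
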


\noindent We need another result of Miller.
Fact~\ref{fact:c0-smooth} follows from \cite[Proposition 3.4]{Miller-tame}.

\begin{fact}
\label{fact:c0-smooth}
Suppose every $\Sa R$-definable subset of $\R$ either has interior or contains an isolated point.
Let $X$ be a nonempty $\Sa R$-definable subset of $\R^n$.
Then there is a dense open subset $V$ of $X$ such that for every $p \in V$ there is $0 \leq d \leq n$, a coordinate projection $\pi : \R^n \to \R^d$, and an open neighbourhood $W$ of $p$ such that $\pi(X \cap W)$ is open and $\pi$ induces a homeomorphism $X \cap W \to \pi(X \cap W)$.
\end{fact}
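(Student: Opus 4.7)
The plan is to proceed by induction on $n$. For the base case $n = 1$, note that the only codomain dimensions available are $d = 0$ (the constant projection, whose homeomorphism condition says $p$ is isolated in $X$) and $d = 1$ (the identity, where it says $p$ is interior to $X$). So the set $V := \Int X \cup \{p \in X : p \text{ isolated in } X\}$ is precisely the set of good points. If $V$ were not dense in $X$, there would be an open interval $I$ with $I \cap X \neq \emptyset$ but $I \cap V = \emptyset$; then $I \cap X$ would be a nonempty definable subset of $\R$ with no interior (since $I \cap \Int X = \emptyset$) and with no isolated point (any point isolated in $I \cap X$ would be isolated in $X$ after shrinking the witness into $I$), contradicting the hypothesis.

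For the inductive step ($n \geq 2$), let $V \subseteq X$ denote the set of good points. This set is definable (one quantifies over the finitely many coordinate projections) and open in $X$ (the condition is local), so it suffices to prove $V$ is dense in $X$. Fix a nonempty open $U \subseteq \R^n$ meeting $X$; the goal is to find a good point in $U \cap X$. If $U \cap X$ has nonempty interior in $\R^n$, any interior point works with $d = n$ and $\pi$ the identity, so we may assume $U \cap X$ has empty interior in $\R^n$.

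Let $\pi : \R^n \to \R^{n-1}$ be the projection dropping the last coordinate and set $Y = \pi(U \cap X)$. For each $y \in Y$, the fiber $X_y = \{t \in \R : (y, t) \in U \cap X\}$ is a nonempty definable subset of $\R$, so by hypothesis has either interior or an isolated point. Applying the inductive hypothesis to $Y$, fix a good point $y_0 \in Y$ with associated coordinate projection $\sigma : \R^{n-1} \to \R^e$, open neighborhood $W' \ni y_0$, and $\sigma|_{Y \cap W'}$ a homeomorphism onto an open subset of $\R^e$. After shrinking $W'$ and using the unary hypothesis, I aim to arrange one of two configurations. If $X_{y_0}$ has an isolated point $t_0$ that persists to a definable and continuous selection $f : Y \cap W' \to \R$ with $f(y_0) = t_0$ and $(y, f(y)) \in X$, then the composition $\sigma \circ \pi$ restricted to a small neighborhood of $(y_0, t_0)$ in $X$ is a homeomorphism onto $\sigma(Y \cap W')$, giving a good point with $d = e$. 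Otherwise, $X_y$ has interior on a relatively open subset of $Y \cap W'$; using the definable selection of an open subinterval $(g(y), h(y)) \subseteq X_y$ varying continuously with $y$, the map $(\sigma \circ \pi, \text{last coordinate})$ yields a good point of $X$ with $d = e + 1$.

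The main obstacle is the continuous definable selection: producing $f$ (or the pair $g, h$) that varies continuously in $y$ across a relative neighborhood of $y_0$ in $Y$. This amounts to an auxiliary generic continuity lemma for definable selections from parameterized unary sets, which itself reduces to applying the unary dichotomy to the (definable) set of parameter values at which the candidate selection jumps: any such jump set either has interior, contradicting the generic continuity goal after further restriction, or has an isolated point, allowing further refinement. With this continuity lemma in hand the induction closes cleanly, and the output satisfies $d \leq e + 1 \leq n$ as required.
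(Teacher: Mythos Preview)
The paper does not prove this fact; it cites it as following from \cite[Proposition~3.4]{Miller-tame}. So there is no proof in the paper to compare against, only your attempt.

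Your inductive skeleton is reasonable, but the step you correctly flag as the ``main obstacle'' --- the continuous definable selection --- is not resolved by your proposed fix. You write that it ``reduces to applying the unary dichotomy to the (definable) set of parameter values at which the candidate selection jumps,'' but that jump set is a subset of $Y \cap W' \subseteq \R^{n-1}$ (equivalently, after transporting along $\sigma$, of an open set in $\R^e$), not a subset of $\R$. The hypothesis concerns only definable subsets of $\R$ and says nothing directly about definable subsets of $\R^e$ for $e \geq 2$, so the unary dichotomy simply does not apply to that set.

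What you actually need here is Miller's generic continuity theorem: any definable function on an open subset of $\R^m$ is continuous off a nowhere dense set. This is \cite[Theorem~3.3]{Miller-tame} (invoked in the paper just before Theorem~\ref{thm:generic-smoothness}), and it rests on noiselessness in all variables, which itself follows from your unary hypothesis only via the nontrivial Fact~\ref{fact:noiseless} (\cite[Theorem~3.2]{Miller-tame}). Neither step is a one-line consequence of the unary dichotomy. A smaller but related gap: your case split into ``fiber has an isolated point that persists'' versus ``fiber has interior on a relatively open set'' also needs noiselessness in $\R^{n-1}$ to guarantee that one of the two alternatives holds on a nonempty relatively open subset of $Y \cap W'$. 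Once you grant noiselessness and generic continuity the induction does close, but at that point you have essentially reassembled Miller's argument, and citing \cite[Proposition~3.4]{Miller-tame} as the paper does is the honest route.
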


\noindent We now prove Theorem~\ref{thm:equiv}.

\begin{proof}
We first show that $(1)$ and $(2)$ are equivalent.
Let $X \subseteq \R$ be definable.
Suppose $\Sa R$ is generically locally o-minimal.
Then there is a $p \in X$ and an open interval $I$ containing $p$ such that $I \cap X$ is $(\R,<)$ definable.
So $I \cap X$ either has interior or is finite and hence contains an isolated point.
Suppose $(2)$ holds.
Let $U$ be the interior of $X$ and $D$ be the set of isolated points of $X$.
Note that $U \cup D$ is open in $X$.
It suffices to show that $U \cap D$ is dense in $X$.
Suppose otherwise.
Then there is $p \in X$ and an open interval $I$ containing $p$ such that $I$ is disjoint from $U \cap D$.
Then $I \cap X$ does not have interior and does not contain an isolated point, contradiction.\newline

\noindent $(2) \Rightarrow (3)$:
Note that $(2)$ implies that $\Sa R$ is noiseless in one variable, so $\Sa R$ is noiseless by Fact~\ref{fact:noiseless}.
Let $X,Y$ be definable subsets of $\R^n$.
Suppose that $X$ is somewhere dense in $Y$.
Fix a nonempty definable open subset $W$ of $Y$ such that $X$ is dense in $W \cap Y$.
After replacing $Y$ with $W \cap Y$ if necessary we suppose $X$ is dense in $Y$.
Applying Fact~\ref{fact:c0-smooth} we obtain $0 \leq d \leq n$, definable open $U \subseteq \R^n$, $V \subseteq \R^d$, and a coordinate projection $\pi : \R^n \to \R^d$ such that $\pi$ restricts to a homeomorphism $U \cap Y \to V$.
Then $\pi(X \cap U)$ is dense in $V$.
So $\pi(X \cap U)$ has interior in $V$ as $\Sa R$ is noiseless.
It follows that $X$ has interior in $U \cap Y$ and thus has interior in $Y$.\newline

\noindent It is clear that $(3)$ implies $(4)$.
We show that $(4)$ implies $(5)$.
Suppose $\Sa R$ is not strongly noiseless in one variable.
Let $X,Y$ be definable subsets of $\R$ such that $X$ is somewhere dense in $Y$ and has empty interior in $Y$.
Let $p$ be an element of $X$ and $I$ be an open interval containing $p$ such that $I \cap X$ is dense in $I \cap Y$.
Then $(X - p + \frac{1}{2}) \cap [0,1]$ is somewhere dense in $(Y - p + \frac{1}{2}) \cap [0,1]$ but $(X - p + \frac{1}{2}) \cap [0,1]$ has empty interior in $( Y - p + \frac{1}{2} ) \cap [0,1]$.
So the structure induced on $[0,1]$ by $\Sa R$ is not strongly noiseless. \newline

\noindent $(5) \Rightarrow (6):$ It suffices to suppose there is a definable Cantor set and show that $(5)$ fails.
Let $Y$ be a definable Cantor set.
Observe that each connected component of $\R \setminus Y$ is an interval.
Let $X$ be the set of endpoints of connected components of $\R \setminus Y$.
Note $X$ is a definable subset of $Y$.
It is easy to see that $X$ is dense and co-dense in $Y$.\newline

\noindent $(6) \Rightarrow (2):$
Let $X$ be a definable subset of $\R$.
We show that $X$ either has interior or contains an isolated point.
If $X$ is somewhere dense then $X$ has interior.
Suppose $X$ is nowhere dense.
It suffices to show that $\cl(X)$ has an isolated point.
After replacing $X$ by its closure we suppose $X$ is closed nowhere dense.
As $X$ is totally disconnected there is a bounded open interval $I$ such that $I \cap X \neq \emptyset$ and $\bd(I) \cap X = \emptyset$.
So $I \cap X$ is nonempty, closed, bounded, and nowhere dense.
As $I \cap X$ is not a Cantor set, it has an isolated point.
\end{proof}

\subsection{D-minimality and Pillay rank}
\label{section:d-min}
\noindent 
At several points we will need to assume d-minimality.
We introduce a useful tool in this setting, the Pillasy rank.
We say that $\Sa R$ satisfies the \textbf{closed chain condition} if there is no sequence $(X_k)_{k \in \N}$ of nonempty definable subsets of some $\R^n$ such that $X_{k+1}$ is a nowhere dense subset of $X_k$ for all $k$.
It is easy to see that $\Sa R$ fails the closed chain condition if and only if there is a sequence $\{ X_k \}_{k \in \N}$ of nonempty closed definable subsets of some $\R^n$ such that $X_{k+1}$ has empty interior in $X_k$ for all $k$.

\begin{thm}
\label{thm:d-min}
Suppose $\Sa R$ is d-minimal.
Then $\Sa R$ satisfies the closed chain condition.
\end{thm}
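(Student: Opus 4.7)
The plan is to derive a contradiction from any putative infinite descending chain $(X_k)_{k \in \N}$ of nonempty closed definable subsets of some $\R^n$ with $X_{k+1}$ of empty interior in $X_k$, by showing that such a chain forces the topological dimensions $\dim X_k$ to form a strictly decreasing sequence of non-negative integers, which is impossible. It therefore suffices to establish a single ``dimension-drop'' lemma: if $Y \subseteq X \subseteq \R^n$ are nonempty closed definable sets and $Y$ has empty interior in $X$, then $\dim Y < \dim X$.

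To prove the dimension-drop lemma I would invoke Fact~\ref{fact:c0-smooth}. Its hypothesis is immediate from d-minimality: any nonempty nowhere dense definable subset of $\R$ is a finite union of discrete closed sets and therefore contains an isolated point. Applied to $X$, Fact~\ref{fact:c0-smooth} produces a dense open $V \subseteq X$ such that each $p \in V$ has an open neighborhood $W$ and a coordinate projection $\pi : \R^n \to \R^{d(p)}$ restricting to a homeomorphism from $X \cap W$ onto the open set $\pi(X \cap W) \subseteq \R^{d(p)}$. Set $d^* = \max_{p \in V} d(p)$. A chart at a point realizing $d(p) = d^*$ exhibits an open piece of $X$ homeomorphic to an open subset of $\R^{d^*}$, so $\dim X \geq d^*$. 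Conversely, the empty interior of $Y$ in $X$ forces $\pi(Y \cap W)$ to be nowhere dense in the Euclidean open set $\pi(X \cap W)$ in each chart, so $\dim(Y \cap W) \leq d(p) - 1$; piecing the finitely many possible dimensions $d(p)$ together gives $\dim(Y \cap V) \leq d^* - 1$. Combining this with an inductive bound on $\dim(X \setminus V)$ then yields $\dim X = d^*$ and $\dim Y \leq d^* - 1 < \dim X$.

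The main obstacle is making the induction rigorous, since the inductive bound $\dim(X \setminus V) < \dim X$ is itself an instance of the lemma being proved. The cleanest fix is to induct on $\dim X$. In the base case $\dim X = 0$, the open set $V$ supplied by Fact~\ref{fact:c0-smooth} consists of isolated points of $X$, so any closed subset of $X$ with empty interior in $X$ must avoid $V$ entirely; applied along a chain of such subsets this shows each step strictly shrinks the set, and d-minimality---which by a compactness argument yields a uniform Cantor--Bendixson rank bound on definable families of nowhere dense subsets of $\R$, and hence on the $0$-dimensional closed definable sets encountered after projection---rules out an infinite chain in the $0$-dimensional case, seeding the induction.
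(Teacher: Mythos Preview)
Your dimension-drop lemma --- that a closed definable $Y \subseteq X$ with empty interior in $X$ must satisfy $\dim Y < \dim X$ --- is false for d-minimal structures that are not locally o-minimal, so the strategy cannot work. For a concrete failure, take any d-minimal $\Sa R$ which is not locally o-minimal; by Theorem~\ref{thm:equiv-1} there is a definable discrete $D \subseteq \R_{>0}$ with $\cl(D) = D \cup \{0\}$. Set $X = \cl(D) \times \R$ and $Y = \{0\} \times \R$: both are closed and definable, $Y$ is nowhere dense in $X$, yet $\dim Y = \dim X = 1$. (The paper makes essentially this observation just after Theorem~\ref{thm:Ck-points}.) In your framework the $C^0$-locus $V \subseteq X$ supplied by Fact~\ref{fact:c0-smooth} is $D \times \R$, so $Y \subseteq X \setminus V$ entirely, and the inductive claim $\dim(X \setminus V) < \dim X$ that you need is precisely the false instance above. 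The circularity you flagged is not a technicality to be finessed; it reflects the fact that the lemma itself is wrong.

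The paper's proof avoids this by inducting on the \emph{ambient} dimension $n$ rather than on $\dim X$. The base case $n = 1$ is handled by Cantor--Bendixson rank, close in spirit to your dimension-$0$ sketch. For $n \geq 2$ one does not show that any single invariant of $X_k$ drops at each step; instead one shows that for every $k$ there is $m > k$ such that $\pi(X_m)$ is nowhere dense in $\pi(X_k)$, where $\pi : \R^n \to \R^{n-1}$ forgets one coordinate, and then applies the inductive hypothesis to the projected chain. The device is a fiberwise rank $\Omega$ on definable subsets of $\R$ (Cantor--Bendixson rank on nowhere dense sets, $\omega$ on sets with interior): one proves that the generic value of $\Omega((X_m)_a)$ over $a \in \pi(X_0)$ strictly decreases in $m$, using Lemma~\ref{lem:k-w} to convert fiberwise interior into genuine interior and derive a contradiction if it stalls. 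Topological dimension is too coarse to detect this drop; the finer invariant $\Omega$ is essential.
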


\noindent Theorem~\ref{thm:d-min} requires Fact~\ref{fact:cb} which is elementary and left to the reader.
We let $\mathrm{Cb}(X)$ be the Cantor-Bendixson rank of a subset $X$ of $\R$.

\begin{fact}
\label{fact:cb}
Suppose $X$ is a nonempty subset of $\R$ with finite Cantor-Bendixson rank and $Y$ is a subset of $X$.
If $\mathrm{Cb}(X) = \mathrm{Cb}(Y)$ then $Y$ has interior in $X$.
\end{fact}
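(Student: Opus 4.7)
The plan is to prove Fact~\ref{fact:cb} by a short direct argument that combines an elementary observation about isolated points with the monotonicity of Cantor--Bendixson rank under inclusion. Write $X^{(k)}$ for the $k$-th iterated derived set of $X \subseteq \R$, obtained by successively removing isolated points, so that $\mathrm{Cb}(X)$ is the least $n$ with $X^{(n)} = \emptyset$. (Both common conventions for $\mathrm{Cb}$ agree in the finite-rank case, so fixing this one is harmless.)

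First I would record the elementary monotonicity lemma: if $A \subseteq B \subseteq \R$, then $\mathrm{Cb}(A) \leq \mathrm{Cb}(B)$. The key observation is that any isolated point of $B$ that also lies in $A$ is automatically isolated in $A$, so $A \setminus \operatorname{iso}(A) \subseteq B \setminus \operatorname{iso}(B)$; iterating this one-step inclusion yields $A^{(k)} \subseteq B^{(k)}$ for every $k$, and monotonicity follows. From the iterative definition one also reads off directly that $\mathrm{Cb}(X^{(1)}) = \mathrm{Cb}(X) - 1$ whenever $\mathrm{Cb}(X) \geq 1$.

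Next I would carry out the main reduction. If $y \in Y$ is isolated in $X$, then there is an open $U \subseteq \R$ with $U \cap X = \{y\}$, so $U \cap X \subseteq Y$ and $y$ witnesses that $Y$ has nonempty interior in $X$. It therefore suffices to show that $Y$ meets the set $\operatorname{iso}(X)$ of isolated points of $X$. Suppose toward contradiction that $Y \cap \operatorname{iso}(X) = \emptyset$; then $Y \subseteq X \setminus \operatorname{iso}(X) = X^{(1)}$. Applying the monotonicity lemma together with the identity above gives
\[
\mathrm{Cb}(Y) \leq \mathrm{Cb}(X^{(1)}) = \mathrm{Cb}(X) - 1 < \mathrm{Cb}(X),
\]
contradicting the hypothesis $\mathrm{Cb}(Y) = \mathrm{Cb}(X)$.

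There is essentially no obstacle beyond bookkeeping: the only care needed is in fixing the convention for $X^{(\alpha)}$ and in recognizing that finiteness of $\mathrm{Cb}(X)$ is used only to make the arithmetic step ``$\mathrm{Cb}(X) - 1$'' meaningful (without finiteness one would instead need a successor-versus-limit argument on ordinals, which is not required here).
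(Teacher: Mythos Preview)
Your proof is correct. The paper itself does not supply a proof of Fact~\ref{fact:cb}, stating only that it ``is elementary and left to the reader,'' so there is nothing to compare your approach against; your argument via monotonicity of $\mathrm{Cb}$ under inclusion together with $\mathrm{Cb}(X^{(1)}) = \mathrm{Cb}(X) - 1$ is exactly the kind of short verification the authors had in mind.
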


\noindent We introduce some notation for the proof of Theorem~\ref{thm:d-min}.
Let $Y$ be a definable subset of $\R$.
We let:
\begin{enumerate}
    \item $\Omega(Y) = -1$ when $Y = \emptyset$,
    \item $\Omega(Y) = \mathrm{Cb}(Y)$ when $Y$ is nonempty and nowhere dense, and
    \item $\Omega(Y) = \omega$ when $Y$ has interior.
\end{enumerate}
If $Z \subseteq \R^n$ is definable then by d-minimality $\Omega(X_a)$ takes only finitely many values as $a$ ranges over $\R^{n-1}$ and $\{ a \in \R^{n-1} : \Omega(X_a) = \eta \}$ is definable for all $\eta \in \N \cup \{-1,\omega \}$.\newline

\noindent Theorem~\ref{thm:d-min} also requires Lemma~\ref{lem:k-w} below.
Lemma~\ref{lem:k-w} holds more generally for generically locally o-minimal expansions of $(\R,<,+)$, so we prove it in Section~\ref{section:generic-local}.
We now prove Theorem~\ref{thm:d-min}.

\begin{proof}
Suppose towards a contradiction that $(X_k)_{k \in \N}$ is a sequence of nonempty definable subsets of $\R^n$ such that $X_{k+1}$ is a nowhere dense subset of $X_k$ for all $k$.
We apply induction on $n$. \newline

\noindent Suppose $n = 1$.
Note that $X_k$ is nowhere dense in $\R$ when $k \geq 1$.
By d-minimality $X_k$ has finite Cantor-Bendixson rank for all $k \geq 1$.
Fact~\ref{fact:cb} shows that $\mathrm{Cb}(X_{k+1}) < \mathrm{Cb}(X_k)$ for all $k \geq 1$.
Contradiction. \newline

\noindent Suppose $n \geq 2$ and let $\pi : \R^{n} \to \R^{n - 1}$ be the projection away from the first coordinate.
By induction it suffices to show that for every $k \in \N$ there is $m > k$ such that $\pi(X_m)$ is nowhere dense in $\pi(X_k)$.
For the sake of simplicity we only treat the case when $k = 0$ and set $X := X_0$.
\newline

\noindent
For all $m \geq 1$ and $\eta \in \N \cup \{-1,\omega\}$ we let
$$ B^\eta_m := \{ a \in \pi(X) : \Omega((X_m)_a) = \eta \}. $$
Note that for each $m$ there are only finitely many $\eta$ such that $B^\eta_m$ is nonempty.
For each $m \geq 1$ we let $\zeta(m)$ be the maximal $\eta \in \N \cup \{-1,\omega \}$ such that $B^\eta_m$ is somewhere dense in $\pi(X)$.
Observe that $\pi(X_m)$ is nowhere dense in $\pi(X)$ if and only if $\zeta(m) = -1$.
So it suffices to show that $\zeta(m+1) < \zeta(m)$ when $\zeta(m) \geq 0$. \newline

\noindent As $(X_m)_{m \geq 1}$ is decreasing, $(\Omega((X_m)_a)_{m \geq 1}$ is decreasing for all $a \in \pi(X)$.
It follows that $(\zeta(m))_{m \geq 1}$ is decreasing.
Fix $m$ and suppose towards a contradiction that $\zeta(m) \geq 0$ and $\zeta(m+1) = \zeta(m)$.
Applying strong noiselessness let $U$ be a nonempty definable open subset of $\pi(X)$ contained in $B^{\zeta(m)}_{m+1}$.
So $\Omega((X_{m+1})_a) = \zeta(m)$ for all $a \in U$.
Hence $\Omega((X_{m+1})_a) = \Omega((X_m)_a)$ for all $a \in U$.
Applying Fact~\ref{fact:cb} we see that $(X_{m+1})_a$ has interior in $(X_m)_a$ for all $a \in U$.
Lemma~\ref{lem:k-w} now shows that $X_{m+1}$ has interior in $X_m$.
Contradiction.
\end{proof}

\noindent Suppose that $\Sa R$ is d-minimal.
Let $\prec_n$ be the partial order on definable subsets of $\R^n$ where $X \prec_n Y$ when $X$ is a nowhere dense subset of $Y$.
Theorem~\ref{thm:d-min} shows that each $\prec_n$ is well-founded.
We refer to the ordinal rank associated to $\prec_n$ as \textbf{Pillay rank} as it was defined (in a more general setting) by Pillay~\cite{pillay-top}.
We denote the Pillay rank of a definable subset $X$ of $\R^n$ by $\pr(X)$.
If $X$ is a definable subset of $\R^n$, then:
\begin{enumerate}
    \item $\pr(X) = -1$ if and only if $X$ is empty,
    \item If $\delta$ is an ordinal or $-1$, then $\pr(X) \geq \delta + 1$ if and only if there is a nowhere dense definable $X' \subseteq X$ such that $\Pr(X') \geq \delta$,
    \item If $\delta$ is a limit ordinal then $\pr(X) \geq \delta$ if and only if $\pr(X) \geq \eta$ for all ordinals $\eta < \delta$,
    \item If $X \neq \emptyset$ then $\pr(X)$ is the supremum of all ordinals $\delta$ such that $\pr(X) \geq \delta$.
\end{enumerate}
\noindent We now give several facts about Pillay rank, still assuming that $\Sa R$ is d-minimal.
We will not make use of these facts so we do not prove them.
Suppose $Y$ is a definable subset of $\R^n$.
Then $\pr(Y) = 0$ if and only if $Y$ is discrete and nonempty.
If $\dim Y = 0$ then $\pr(Y) = \mathrm{Cb}(Y)$.
If $\Sa R$ is locally o-minimal then $\pr(Y) = \dim Y$.
If $\Sa R$ is d-minimal and not locally o-minimal then $\pr(Y) = \Omega(Y)$ when $Y \subseteq \R$.\newline

\noindent
Fact~\ref{fact:d-min-constructible} is well-known and may be proven via an easy induction on Pillay rank.

\begin{fact}
\label{fact:d-min-constructible}
If $\Sa R$ is d-minimal then every definable set is constructible.
\end{fact}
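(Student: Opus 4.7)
\noindent The plan is to proceed by transfinite induction on the Pillay rank $\pr(X)$, whose well-foundedness on definable subsets of $\R^n$ is given by Theorem~\ref{thm:d-min}. A preliminary observation is that $\Sa R$ is strongly noiseless: d-minimality immediately gives noiselessness in one variable (every nonempty unary definable set either has interior or, being a finite union of discretes, is nowhere dense), so $\Sa R$ is noiseless by Fact~\ref{fact:noiseless}; moreover, by d-minimality every nowhere dense definable subset of $\R$ has finite Cantor--Bendixson rank and is therefore countable, so $\Sa R$ defines no Cantor subset of $\R$. Theorem~\ref{thm:equiv} then delivers strong noiselessness. The empty-set base case of the induction is immediate.

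\noindent For the inductive step, fix nonempty definable $X \subseteq \R^n$, assuming that every definable set of smaller Pillay rank is constructible. If $\inte(X) \neq \emptyset$, write $X = \inte(X) \cup (X \setminus \inte(X))$: here $\inte(X)$ is constructible as the complement of a closed definable set, while $X \setminus \inte(X)$ has empty interior in $\R^n$ (any nonempty open subset would lie in $\inte(X)$) and is therefore nowhere dense in $\R^n$ by noiselessness. As $\inte(X)$ is nonempty, $X \setminus \inte(X)$ is a proper subset of $X$, so $\pr(X \setminus \inte(X)) < \pr(X)$ and the inductive hypothesis supplies its constructibility, hence that of $X$.

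\noindent If instead $X$ is nowhere dense, then $X$ is dense in $\cl(X)$, and strong noiselessness applied to the pair $(X, \cl(X))$ forces $X$ to have nonempty relative interior $U := \inte_{\cl(X)}(X)$ inside $\cl(X)$ (otherwise $\cl(X)$, being the closure of $X$ in $\cl(X)$, would itself be nowhere dense in $\cl(X)$, contradicting $X \neq \emptyset$). Choosing open $W \subseteq \R^n$ with $U = W \cap \cl(X)$, we see that $\cl(X) \setminus U = \cl(X) \cap (\R^n \setminus W)$ is closed in $\R^n$; since $\cl(X) \setminus U$ is also definable, $U = \cl(X) \setminus (\cl(X) \setminus U)$ is a difference of two closed definable sets and therefore constructible. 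Setting $X' := X \setminus U$, we have $X' \subsetneq X$ and $X'$ is nowhere dense in $\R^n$ as a subset of the nowhere dense $X$, so $\pr(X') < \pr(X)$; the inductive hypothesis yields constructibility of $X'$ and thus of $X = U \cup X'$.

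\noindent The principal subtlety is the nowhere dense case, where the ambient interior of $X$ is empty and the naive interior-based decomposition stalls; the remedy is to pass to the relative interior inside $\cl(X)$, which strong noiselessness guarantees is nonempty. Establishing that d-minimality implies strong noiselessness --- in particular ruling out definable Cantor subsets via the finiteness of Cantor--Bendixson rank --- is thus the technical lynchpin, and once it is in place the induction is routine, matching the paper's description as ``an easy induction on Pillay rank''.
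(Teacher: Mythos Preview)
Your induction on $\pr(X)$ is the right idea, but the justifications for the rank drop are incorrect in both cases. The relation $\prec_n$ underlying Pillay rank is ``$X'$ is nowhere dense \emph{in $X$}'', so to get $\pr(X') < \pr(X)$ you must show $X'$ is nowhere dense in $X$, not merely that $X' \subsetneq X$ or that $X'$ is nowhere dense in $\R^n$. A proper subset need not have smaller Pillay rank (for instance $(0,1)$ and $(0,2)$ have the same rank), and in your Case~1 the set $X \setminus \inte(X)$ need not be nowhere dense in $X$: take $X = (0,1) \cup D$ with $D$ a definable infinite discrete set contained in $[2,3]$; then $X \setminus \inte(X) = D$ is clopen in $X$, hence certainly not nowhere dense there. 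So Case~1 as written has a genuine gap.

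Case~2 reaches the right conclusion but for the wrong reason. The correct argument is that $X' = X \setminus U$ has empty interior in $\cl(X)$ (by definition of $U$), hence by strong noiselessness $X'$ is nowhere dense in $\cl(X)$; since $X$ is dense in $\cl(X)$, it follows that $X'$ is nowhere dense in $X$, giving $\pr(X') < \pr(X)$. Once you make this correction, note that the argument of Case~2 works for \emph{all} nonempty $X$, not just nowhere dense ones, so Case~1 is entirely unnecessary. Alternatively --- and this is the paper's approach --- induct on $\pr(\cl(X))$: write $X = \cl(X) \setminus (\cl(X) \setminus X)$; strong noiselessness gives that $\cl(X) \setminus X$ (which has empty interior in $\cl(X)$) is nowhere dense in $\cl(X)$, so $\pr(\cl(\cl(X) \setminus X)) < \pr(\cl(X))$, and induction finishes. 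This avoids the case split entirely.
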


\noindent Fact~\ref{fact:d-min-cntble} is also well-known, we include a proof for the sake of completeness.

\begin{fact}
\label{fact:d-min-cntble}
Suppose $\Sa R$ is d-minimal, $X$ is a definable subset of $\R^n$, and $\dim X = 0$.
Then $X$ is countable.
\end{fact}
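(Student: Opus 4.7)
The plan is to induct on the Pillay rank $\pr(X)$, which is well-founded on definable subsets of $\R^n$ by Theorem~\ref{thm:d-min}. The base case $\pr(X) = -1$ is immediate since then $X = \emptyset$. For the inductive step $\pr(X) \geq 0$, we have $X \neq \emptyset$. D-minimality implies that every $\Sa R$-definable subset of $\R$ either has interior or contains an isolated point, since a definable set of reals is a union of an open set with finitely many discrete sets. So Fact~\ref{fact:c0-smooth} applies to $X$ and yields a dense open $V \subseteq X$ such that for every $p \in V$ there is $d \in \{0, \ldots, n\}$, a coordinate projection $\pi \colon \R^n \to \R^d$, and an open neighbourhood $W$ of $p$ in $\R^n$ with $\pi$ restricting to a homeomorphism $X \cap W \to \pi(X \cap W)$ onto an open subset of $\R^d$.

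The key observation is that $d = 0$ for every such $p$. Since $X \cap W$ is homeomorphic to a nonempty open subset of $\R^d$, topological invariance of dimension gives $\dim(X \cap W) = d$; but $X \cap W \subseteq X$ and $\dim X = 0$, so $d = 0$. Then $\pi(X \cap W)$ is a nonempty open subset of the singleton $\R^0$, forcing $X \cap W = \{p\}$, so $p$ is isolated in $X$. Let $D = \{p \in X : p \text{ is isolated in } X\}$. Then $D$ is a definable open subset of $X$, it contains $V$ and so is dense in $X$, and it is a discrete subset of $\R^n$. By second countability of $\R^n$ every discrete subset is countable, so $D$ is countable.

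The complement $X \setminus D$ is closed in $X$ with empty interior in $X$, hence nowhere dense in $X$ in the sense of the closed chain condition, so $\pr(X \setminus D) < \pr(X)$. Since $\dim(X \setminus D) \leq \dim X = 0$, the induction hypothesis applies and $X \setminus D$ is countable. Therefore $X = D \cup (X \setminus D)$ is countable. The only conceptually nontrivial step is the verification that the local Euclidean dimension $d$ provided by Fact~\ref{fact:c0-smooth} is forced to vanish, which is simply topological invariance of dimension; the remainder is the Pillay rank induction made available by Theorem~\ref{thm:d-min}.
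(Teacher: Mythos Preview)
Your proof is correct and takes a genuinely different route from the paper. The paper argues directly without induction: it projects $X$ onto each coordinate axis via $\pi_k : \R^n \to \R$, invokes Fact~\ref{fact:HW-dimension} (together with Fact~\ref{fact:d-min-constructible}, which guarantees $X$ is constructible) to conclude that each $\pi_k(X)$ is nowhere dense in $\R$, and then uses d-minimality in one variable to see that each $\pi_k(X)$ is a finite union of discrete sets, hence countable; finally $X \subseteq \pi_1(X) \times \cdots \times \pi_n(X)$ sits inside a countable product. Your argument instead peels off the discrete ``top layer'' of $X$ using Fact~\ref{fact:c0-smooth} and inducts on Pillay rank via Theorem~\ref{thm:d-min}. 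The paper's route is shorter and reduces the question to the one-variable case in a single step, at the cost of importing Fact~\ref{fact:HW-dimension} from elsewhere; your route stays entirely within the machinery of Section~\ref{section:d-min} and makes the role of the closed chain condition explicit.
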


\begin{proof}
Given $1 \leq k \leq n$ let $\pi_k : \R^n \to \R$ be the projection onto the $k$th coordinate.
Fact~\ref{fact:HW-dimension} shows that each $\pi_k(X)$ is nowhere dense.
By d-minimality each $\pi_k(X)$ is countable.
So $X$ is countable as $X$ is a subset of $\pi_1(X) \times \ldots \times \pi_k(X)$.
\end{proof}

\section{NIP}
\noindent We recall relevant background on $\nip$ structures.
Let $\Sa M$ be a possibly multi-sorted first order structure and $\Sa N$ be a highly saturated elementary extension of $\Sa M$. \newline

\noindent We first discuss externally definable sets.
A subset $X$ of $M^x$ is \textbf{externally definable} (in $\Sa M$) if there is an $\Sa N$-definable subset $Y$ of $N^x$ such that $X = M^x \cap Y$.
It is a saturation exercise to see that the collection of externally definable sets does not depend on choice of $\Sa N$.
We first record a useful observation whose verification we leave to the reader.

\begin{fact}
\label{fact:convex}
Suppose $\Sa M$ is an expansion of a linear order.
Then every convex subset of $M$ is externally definable in $\Sa M$.
\end{fact}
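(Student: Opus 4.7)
The plan is to exhibit each convex $C \subseteq M$ as $M \cap J$ for some $\Sa N$-definable interval $J \subseteq N$; since any such $J$ is defined with at most two parameters from $N \cup \{\pm\infty\}$, this yields external definability of $C$ in $\Sa M$. The case $C = \emptyset$ is trivial, so I would assume $C \neq \emptyset$.

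To produce the right endpoint, let $U \subseteq M$ be the set of strict upper bounds of $C$ in $M$. If $U = \emptyset$ then $C$ is unbounded above in $M$ and no upper endpoint is needed; set $a = +\infty$. Otherwise, I would consider the partial type over $C \cup U$
\[
\Sigma^+(x) \;=\; \{\, x \geq c : c \in C \,\} \;\cup\; \{\, x < u : u \in U \,\}
\]
and verify that it is finitely satisfiable in $M$: any finite sub-collection is realised by $c^{\ast} := \max\{c_1,\dots,c_k\} \in C$, since $c^{\ast} < u$ for every $u \in U$ by definition of $U$. High saturation of $\Sa N$ then gives $a \in N$ realising $\Sigma^+$, and I would produce $b \in N \cup \{-\infty\}$ symmetrically from the lower side.

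It then remains to verify that $C = M \cap \{\, x \in N : b \leq x \leq a \,\}$. The inclusion $\subseteq$ is immediate from the construction of $a$ and $b$. Conversely, if $x \in M$ lies in this interval but not in $C$, convexity of $C$ forces $x$ to be a strict upper or lower bound of $C$ in $M$; in the upper case $x \in U$, whence $\Sigma^+$ gives $a < x$, contradicting $x \leq a$, and the lower case is symmetric.

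There is no real obstacle here: the argument is a routine application of saturation. The only care required is bookkeeping endpoint cases---empty $C$, one-sided unboundedness, and whether $\sup C$ or $\inf C$ is attained in $M$---each of which only affects whether one uses a half-line or a closed versus open endpoint, and none of which changes the essential argument.
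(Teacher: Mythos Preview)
Your argument is correct and is the standard saturation argument the paper evidently has in mind; the paper itself does not give a proof, stating only that it ``leave[s] the verification to the reader.'' There is nothing to compare against, and your handling of the endpoint bookkeeping is adequate.
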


\noindent
The \textbf{Shelah expansion} $\Sh M$ of $\Sa M$ is the expansion of $\Sa M$ by all externally definable subsets of all $M^x$.
If $\Sa M$ is one-sorted then $\Sh M$ is the structure induced on $M$ by $\Sa N$.
The following theorem is due to Shelah~\cite{Shelah-external}.
Chernikov and Simon~\cite[Corollary 1.10]{CS-I} give a simpler proof.

\begin{fact}
\label{fact:shelah}
Suppose $\Sa M$ is $\nip$.
Then every $\Sh M$-definable set is externally definable in $\Sa M$.
It follows that $\Sh M$ is $\nip$ when $\Sa M$ is $\nip$ and $\Sh M$ is strongly dependent when $\Sa M$ is strongly dependent.
\end{fact}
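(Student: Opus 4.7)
The two preservation statements on the second line follow easily from the first assertion, that every $\Sh M$-definable set is externally definable in $\Sa M$: an $\Sh M$-definable family then coincides with a trace family of $\Sa N$-definable sets, and so inherits finite VC dimension (respectively bounded dp-rank) from the latter, yielding $\nip$ (respectively strong dependence) for $\Sh M$. So the real content is the first assertion.

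I would prove the first assertion by induction on formula complexity in $\Sh M$. Atomic predicates of $\Sh M$ are externally definable by definition, and externally definable sets are trivially closed under Boolean operations. The only nontrivial step is closure under projection: writing an externally definable $X \subseteq M^x \times M^y$ as $X = \varphi(M^{xy};b) \cap M^{xy}$ for an $L$-formula $\varphi$ and parameter $b \in N$, one must show that
$$\pi(X) = \{a \in M^x : \exists c \in M^y,\ \Sa N \models \varphi(a,c;b)\}$$
is externally definable. This is subtle because $\pi(X)$ is typically strictly smaller than the $M$-trace of the $\Sa N$-definable set cut out by $\exists y\, \varphi(x,y;b)$, so one cannot simply read off a defining formula from $\varphi$: witnesses must come from $M^y$, not merely $N^y$.

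The key tool is the honest definitions theorem of Chernikov and Simon for $\nip$ theories, which furnishes, for each $\varphi(x;y)$ and $b$ in a sufficiently saturated extension, a formula $\theta(x;\bar z)$ and a tuple $\bar d$ in a further elementary extension $M^* \succ \Sa N$ with $\theta(M;\bar d) = \varphi(M;b) \cap M^{|x|}$ and $\theta(x;\bar d) \vdash \varphi(x;b)$ in $M^*$. Applied to $\varphi(x,y;b)$, the natural candidate for a defining formula of $\pi(X)$ is $\exists y\, \theta(x,y;\bar d)$, interpreted in $M^*$ and restricted to $M^x$. The inclusion $\pi(X) \subseteq (\exists y\, \theta)(M^*) \cap M^x$ is immediate from the agreement of $\theta$ and $\varphi$ on $M$. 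The reverse inclusion is the main obstacle: given $a \in M^x$ and a witness $c \in (M^*)^y$ with $\theta(a,c;\bar d)$, one must produce a witness $c' \in M^y$ with $\varphi(a,c';b)$. This descent genuinely requires $\nip$ (it would fail with the independence property, since elementarity of $M \prec N$ does not apply over the parameter $b \notin M$). In the Chernikov and Simon argument it is handled by choosing honest definitions with additional uniformity on their $y$-fibers, the essential combinatorial input being the finite VC dimension of $\nip$ families; once the descent is achieved, a standard compactness argument allows $\bar d$ to be taken in $\Sa N$, yielding an externally definable defining set for $\pi(X)$.
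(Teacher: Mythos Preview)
Your overall approach---induction on formula complexity, with honest definitions handling the projection step---is correct and is exactly what the paper has in mind: the paper does not prove Fact~\ref{fact:shelah} itself but remarks after Fact~\ref{fact:Honest} that deriving it from the existence of honest definitions is an exercise.

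However, you misidentify where the difficulty lies. You state the honest definition property only in the weak form ``$\theta(x;\bar d)\vdash\varphi(x;b)$'', but the full property (as in the paper's definition preceding Fact~\ref{fact:Honest}) is that an honest definition $Y\subseteq N^{xy}$ of $X$ satisfies $Y\subseteq Z'$ for \emph{every} $\Sa M$-definable $Z\supseteq X$, equivalently $Y\cap Z'=\emptyset$ for every $\Sa M$-definable $Z$ disjoint from $X$. With this in hand the ``descent'' you call the main obstacle is immediate: if $a\in M^x\setminus\pi(X)$ then $\{a\}\times M^y$ is $\Sa M$-definable (since $a\in M$) and disjoint from $X$, so $Y\cap(\{a\}\times N^y)=\emptyset$, hence $a\notin\pi_N(Y)$. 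Thus $\pi_N(Y)\cap M^x=\pi(X)$ and $\pi(X)$ is externally definable. No ``additional uniformity on $y$-fibers'' is needed, and since in the paper's formulation the honest definition already lives in $\Sa N$, no compactness step is required to relocate parameters either. The $\nip$ hypothesis is used entirely in establishing the \emph{existence} of honest definitions (Fact~\ref{fact:Honest}); once one is in hand, the projection step is elementary.
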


\noindent
The one-sorted case of Fact~\ref{fact:shelah} asserts that the structure induced on $M$ by $\Sa N$ eliminates quantifiers.
We record another theorem of Chernikov and Simon~\cite[Corollary 9]{CS-II}.
The right to left implication is a saturation exercise and does not require $\nip$.

\begin{fact}
\label{fact:cs}
Suppose $\Sa M$ is $\nip$.
Let $X$ be a subset of $M^x$.
Then $X$ is externally definable in $\Sa M$ if and only if there is an $\Sa M$-definable family of $(X_a)_{a \in M^y}$ of subsets of $M^x$ such that for every finite $A \subseteq X$ there is $a \in M^y$ such that $A \subseteq X_a \subseteq X$.
\end{fact}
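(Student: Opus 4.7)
The plan is to prove the two directions separately; only the forward direction uses $\nip$.

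For right-to-left (no $\nip$ needed), given an $\Sa M$-definable family $(X_a)_{a \in M^y}$ with the stated approximation property, I consider the partial type
\[
\pi(y) \;=\; \{\, b \in X_y : b \in X \,\} \;\cup\; \{\, b \notin X_y : b \in M^x \setminus X \,\}
\]
over $M$ with $y$ ranging in $N^y$. A finite subset of $\pi$ involves only finitely many $b_1,\dots,b_k \in X$ and $c_1,\dots,c_\ell \in M^x \setminus X$; by hypothesis some $a \in M^y$ satisfies $\{b_1,\dots,b_k\} \subseteq X_a \subseteq X$, and the containment $X_a \subseteq X$ forces $c_j \notin X_a$ for each $j$. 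Hence $\pi$ is finitely satisfiable in $\Sa M$, and by saturation of $\Sa N$ it is realized by some $a^* \in N^y$; then $X = M^x \cap X_{a^*}$, exhibiting external definability.

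For left-to-right, suppose $X = M^x \cap \phi(N^x, a)$ for some $\Sa M$-formula $\phi(x,y)$ and $a \in N^y$. I must produce an $\Sa M$-definable family $(X_b)_{b \in M^z}$ such that every finite $A \subseteq X$ satisfies $A \subseteq X_b \subseteq X$ for some $b$. This is precisely the \emph{honest definitions} theorem for $\nip$ theories. The proof combines two ingredients: the dual VC-dimension of $\phi$ is finite because $\Sa M$ is $\nip$, and Matou\v sek's $(p,q)$-theorem for set systems of finite VC-dimension (or an equivalent VC-theoretic statement) applied to the system of $\Sa M$-definable subsets of $X$ cut out by $\phi$-instances yields a uniform, $\Sa M$-definable ``covering from within'' of $X$. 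Packaging this as $(X_b)_{b \in M^z}$ for a suitable auxiliary formula produces the required family.

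The main obstacle is the forward direction. The easy compactness argument on the other side bypasses any combinatorial structure of the situation, whereas honest definitions require genuinely $\nip$-theoretic input, namely finite dual VC-dimension together with the $(p,q)$-theorem; this is exactly the place where the hypothesis is used. The full argument is carried out in Chernikov--Simon~\cite{CS-I,CS-II}, and the present statement is a direct reformulation once honest definitions are in hand.
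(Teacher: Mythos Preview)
The paper does not prove this statement; it records it as a fact due to Chernikov and Simon \cite[Corollary 9]{CS-II}, adding only the remark that the right-to-left implication is a saturation exercise not requiring $\nip$. Your right-to-left argument is exactly that saturation exercise, carried out correctly, and your left-to-right sketch correctly identifies the ingredients (finite VC-codimension plus the $(p,q)$-theorem) and attributes the full argument to Chernikov--Simon, which is precisely what the paper does.

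One small terminological point: what you call ``precisely the honest definitions theorem'' is not quite the statement the paper records as Fact~\ref{fact:Honest} (a single honest definition in $\Sa N$); the approximation-from-within property you need is rather a corollary of the uniform version of honest definitions, which is indeed \cite[Corollary 9]{CS-II}. This is a labeling issue, not a mathematical gap, and your citations cover it.
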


\noindent
We will need the following final easy fact whose verification we leave to the reader.

\begin{fact}
\label{fact:extension-external}
Suppose $Y \subseteq N^x$ is externally definable in $\Sa N$.
Then $Y \cap M^x$ is externally definable in $\Sa M$.
\end{fact}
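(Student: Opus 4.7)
The plan is to chase definitions and use transitivity of elementary extension. Fix a highly saturated elementary extension $\Sa N^* \succ \Sa N$ in which the external definability of $Y$ is witnessed: that is, pick an $\Sa N^*$-definable set $Z \subseteq (N^*)^x$ with $Y = N^x \cap Z$. Such an $\Sa N^*$ exists by the definition of external definability (applied to $\Sa N$).

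Next I would observe that $\Sa N^*$ is itself a highly saturated elementary extension of $\Sa M$, because $\Sa M \prec \Sa N \prec \Sa N^*$. Since $M^x \subseteq N^x$, intersecting the identity $Y = N^x \cap Z$ with $M^x$ gives
\[
Y \cap M^x \;=\; (N^x \cap Z) \cap M^x \;=\; M^x \cap Z,
\]
exhibiting $Y \cap M^x$ as the trace on $M^x$ of a set definable in the highly saturated elementary extension $\Sa N^*$ of $\Sa M$. By the definition of external definability (together with the remark, already noted in the paper, that the class of externally definable sets of $\Sa M$ does not depend on the particular highly saturated elementary extension chosen), this is exactly what it means for $Y \cap M^x$ to be externally definable in $\Sa M$.

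There is no real obstacle here: the content is entirely bookkeeping about which structure one is working over. The only minor point to be careful about is that the witness for external definability in $\Sa N$ lives in a proper extension of $\Sa N$ rather than in $\Sa N$ itself, so one cannot simply reuse $\Sa N$ as the witnessing extension for $\Sa M$; one must pass to $\Sa N^*$ and invoke independence from the choice of saturated extension.
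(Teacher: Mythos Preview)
Your argument is correct and is precisely the routine verification the paper has in mind; the paper itself states the fact without proof, leaving it to the reader. Your care about passing to a further extension $\Sa N^*$ and then invoking the independence of externally definable sets from the choice of saturated extension is exactly the right bookkeeping.
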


\noindent We will need to use honest definitions at one point.
Given an $\Sa M$-definable $Z \subseteq M^x$ we let $Z'$ be the subset of $N^x$ defined by any formula defining $Z$. 
Suppose $X$ is an externally definable subset of $M^x$.
Then an $\Sa N$-definable subset $Y$ of $N^x$ is an \textbf{honest definition} of $X$ if $Y \cap M^x = X$ and $Y \subseteq Z'$ for every $\Sa M$-definable $Z \subseteq M^x$ such that $X \subseteq Z$.
Taking complements, we see that if $Y$ is an honest definition of $X$ then $Y \cap Z' = \emptyset$ for every $\Sa M$-definable $Z \subseteq M^x$ such that $X \cap Z = \emptyset$.
The following theorem is due to Chernikov and Simon~\cite[Proposition 1.6]{CS-II}.

\begin{fact}
\label{fact:Honest}
Suppose $\Sa M$ is $\mathrm{NIP}$.
Then every externally definable subset of $M^x$ has an honest definition.
\end{fact}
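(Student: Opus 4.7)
Plan. Write $X = \varphi(M^x, b)$ with $\varphi(x,y)$ an $\Sa M$-formula and $b \in N^y$. My goal is to produce an honest definition of the shape $Y = \varphi(N^x, b_1) \cap \cdots \cap \varphi(N^x, b_n)$, where each $b_i$ realizes $\mathrm{tp}(b/M)$ in $\Sa N$; any such $Y$ automatically satisfies $Y \cap M^x = X$, since $\varphi(M^x, b_i) = X$ for all $i$. Dualizing, the remaining requirement ``$Y \subseteq Z'$ for every $\Sa M$-definable $Z \supseteq X$'' becomes ``$Y \cap \chi(N^x, d) = \emptyset$ for every $\Sa M$-formula $\chi(x,z)$ and every $d \in M^z$ with $\chi(M^x, d) \cap X = \emptyset$'', so the task is to pick the $b_i$ killing every such $\chi(N^x, d)$.

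Fixing such a $\chi$ first, I would consider the partial type $\Delta_\chi(y_1,\ldots,y_n)$ over $Mb$ asserting $y_i \equiv_M b$ for each $i$ together with
\[
\neg\exists x\,\bigl(\varphi(x,y_1)\wedge\cdots\wedge\varphi(x,y_n)\wedge\chi(x,d)\bigr)
\]
for every $d\in M^z$ with $\chi(M^x,d)\cap X = \emptyset$. A realization of $\Delta_\chi$ in $\Sa N$ yields a $\chi$-honest definition. The main obstacle is showing $\Delta_\chi$ is consistent for some $n$, and this is where NIP supplies the combinatorics. Following the Chernikov--Simon approach, I would extract an $\Sa M$-indiscernible sequence $(b_i)_{i<\omega}$ of realizations of $\mathrm{tp}(b/M)$ in $\Sa N$; NIP forces the alternation of $\varphi(x,y)\wedge\chi(x,d)$ along this sequence to be uniformly bounded, so once $n$ exceeds the alternation number, any putative $a \in N^x$ lying in $\bigcap_{i\le n}\varphi(N^x,b_i) \cap \chi(N^x,d)$ could be transferred via indiscernibility to an element of $M^x$ realizing $\varphi(x,b)\wedge\chi(x,d)$, contradicting $\chi(M^x,d)\cap X = \emptyset$.

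Finally, to upgrade from ``honest with respect to one $\chi$'' to simultaneous honesty with respect to every $\Sa M$-definable superset of $X$, I would combine by compactness and saturation: applying the local step to the disjunction $\chi_1 \vee \cdots \vee \chi_k$ produces, for any finite collection of $\Sa M$-formulas, a single tuple that is $\chi_j$-honest for each $j$. Hence the global system demanding a parameter tuple which gives an $\Sa N$-definable set honest with respect to every $\Sa M$-formula is finitely satisfiable over $\Sa N$; by high saturation of $\Sa N$ it is realized, yielding the desired single honest definition $Y$.
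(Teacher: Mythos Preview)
The paper does not supply a proof of this fact; it is quoted from Chernikov--Simon. Your approach has a genuine gap: honest definitions cannot in general be taken of the shape $Y = \bigcap_{i \le n} \varphi(N^x, b_i)$ with each $b_i \models \mathrm{tp}(b/M)$. Take $\Sa M$ a dense linear order, $\varphi(x, y) \equiv (x > y)$, and $b \in N$ with $b > M$. Then $X = \varphi(M, b) = \emptyset$, and since $Z = \emptyset$ is an $\Sa M$-definable superset of $X$, any honest definition must itself be empty. But every $b_i \equiv_M b$ lies above $M$, so $\bigcap_{i \le n} \varphi(N, b_i) = \{a \in N : a > \max_i b_i\}$ is nonempty no matter which finitely many $b_i$ you pick.

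The alternation step does not deliver what you claim. If $a \in \bigcap_{i \le n} \varphi(N, b_i) \cap \chi(N, d)$, then $\varphi(a, b_i)$ holds for \emph{every} $i$: there are zero alternations along the sequence, so the NIP alternation bound is vacuously satisfied, and $M$-indiscernibility of the $b_i$'s provides no mechanism to ``transfer'' $a$ to an element of $M$ satisfying $\varphi(x, b) \wedge \chi(x, d)$. In the Chernikov--Simon argument one does work with realisations of $\mathrm{tp}(b/M)$ along a Morley sequence of a global coheir, but the honest set produced is not a bare conjunction of $\varphi$-instances; a finer combinatorial input (essentially of $(p,q)$-theorem type) is required, and the resulting formula $\theta(x; \bar b)$ is in general more elaborate than $\bigwedge_i \varphi(x, b_i)$.
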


\noindent It is an exercise to show that Fact~\ref{fact:shelah} follows from Fact~\ref{fact:Honest}. \newline

\noindent We will use dp-ranks in a couple places.
We refer to \cite[Chapter 4]{Simon-Book} for the definition and background information.
A structure is dp-minimal if it has dp-rank at most one.  Onshuus and Usvyatsov~\cite[Observation 3.8]{OnUs} show that the Shelah expansion of a dp-minimal structure is dp-minimal.
Their proof yields Fact~\ref{fact:onus}.

\begin{fact}
\label{fact:onus}
If $\Sa M$ is $\nip$, then the dp-rank of $\Sh M$ agrees with the dp-rank of $\Sa M$.
\end{fact}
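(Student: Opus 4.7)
\noindent The plan is to adapt the Onshuus--Usvyatsov proof for dp-minimality to arbitrary dp-rank, by transferring an ict-pattern witnessing dp-rank $\kappa$ in $\Sh M$ to an ict-pattern of the same width in $\Sa N$. The inequality $\textup{dp-rank}(\Sa M) \leq \textup{dp-rank}(\Sh M)$ is immediate since $\Sa M$ is a reduct of $\Sh M$.

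\noindent The main tool is the following uniform external-definability observation: for every $L(\Sh M)$-formula $\phi(x;y)$ there exist an $L$-formula $\psi(x;y,z)$ and a fixed tuple $e \in N^{|z|}$ such that $\phi(M^{|x|};b) = \psi(N^{|x|};b,e) \cap M^{|x|}$ for every $b \in M^{|y|}$. This follows by applying Fact~\ref{fact:shelah} to the $\Sh M$-definable set $\{(a,b) \in M^{|x|+|y|} : \Sh M \models \phi(a;b)\}$: Shelah's theorem supplies an $L$-formula $\psi(x,y;z)$ and $e \in N^{|z|}$ whose $\Sa N$-defined set restricts to it on $M^{|x|+|y|}$, and then specializing the $y$-coordinate yields the uniform family.

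\noindent Given an ict-pattern $(\phi_\alpha(x;y_\alpha))_{\alpha<\kappa}$ with arrays $(b_\alpha^i)_{\alpha<\kappa,\,i<\omega} \subseteq M^{|y_\alpha|}$ in $\Sh M$, I would produce $\psi_\alpha(x;y_\alpha,z_\alpha)$ and $e_\alpha \in N^{|z_\alpha|}$ for each $\alpha$ via the observation. The formulas $\psi_\alpha$ together with the augmented arrays $(b_\alpha^i,e_\alpha)_{i<\omega}$ then form an ict-pattern of width $\kappa$ in $\Sa N$: for every row-selector $\eta\colon\kappa\to\omega$, the partial $\Sh M$-type $\{\phi_\alpha(x;b_\alpha^{\eta(\alpha)})\} \cup \{\neg\phi_\alpha(x;b_\alpha^i) : \alpha<\kappa,\, i\neq\eta(\alpha)\}$ is consistent by hypothesis and translates, under the uniform lift, to the corresponding consistent partial $L$-type over parameters in $N$, realizable in $\Sa N$ since $\Sa M\preccurlyeq\Sa N$. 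That $e_\alpha$ is common to the entire $\alpha$-th row is harmless: an ict-pattern distinguishes rows only through their varying part. This yields $\textup{dp-rank}(\Sa N) \geq \kappa$, and invariance of dp-rank under elementary extension gives $\textup{dp-rank}(\Sa M) \geq \kappa$. The main obstacle is the bookkeeping of the hidden external parameters $e_\alpha \in N\setminus M$, ensuring they graft onto the new pattern without disrupting its combinatorial condition; once the uniform lift is in hand the rest is a routine transfer and the substantive mathematical input is Shelah's theorem.
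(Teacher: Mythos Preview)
Your proposal is correct and is precisely the generalization of the Onshuus--Usvyatsov argument that the paper invokes without giving its own proof. One small wrinkle worth making explicit: you place the arrays $(b_\alpha^i)$ inside $M$, whereas an ict-pattern witnessing $\textup{dp-rank}(\Sh M)\geq\kappa$ lives a priori in a monster model of $\mathrm{Th}(\Sh M)$; this is handled by the routine compactness reduction to finite subpatterns, each of which is realized already in $\Sh M$, after which your uniform-lift step applies verbatim.
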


\noindent Fact~\ref{fact:subadditive} is proven by Kaplan, Onshuus, and Usvyatsov~\cite{dp-rank-additive}.

\begin{fact}
\label{fact:subadditive}
Suppose $\Sa M$ is $\nip$.
Suppose $X,Y$ are $\Sa M$-definable sets of dp-rank $m,n$, respectively.
Then the dp-rank of $X \times Y$ is at most $m + n$.
\end{fact}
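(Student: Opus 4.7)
The plan is to reduce the statement to the relative subadditivity inequality saying that, for tuples $a,b$ and a parameter set $C$, the dp-rank of $ab$ over $C$ is at most the dp-rank of $a$ over $C$ plus the dp-rank of $b$ over $Ca$. Applied to $(a,b) \in X \times Y$ with $C$ containing the parameters defining $X$ and $Y$, together with the fact that for a definable set the dp-rank of any of its points over any small parameter set is bounded by the dp-rank of the set, this yields the desired bound $m+n$ on the dp-rank of $(a,b)$; taking suprema over $(a,b)$ and $C$ then bounds the dp-rank of $X \times Y$.

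For the relative inequality I would use the characterization of dp-rank via mutually indiscernible sequences: the dp-rank of $a$ over $C$ is strictly less than $\kappa$ iff for every collection of $\kappa$ mutually $C$-indiscernible sequences, at least one remains indiscernible over $Ca$. Assuming $m,n$ are finite, take $m+n+1$ mutually $C$-indiscernible sequences $(I_\alpha)_{\alpha \leq m+n}$ and partition the indices into
\[ S_a = \{\alpha : I_\alpha \text{ is not indiscernible over } Ca\}, \qquad S_b = \{\alpha : I_\alpha \text{ is indiscernible over } Ca \text{ but not over } Cab\}. \]
If one can prove $|S_a| \leq m$ and $|S_b| \leq n$, then some index $\alpha$ lies outside $S_a \cup S_b$ and the corresponding $I_\alpha$ is indiscernible over $Cab$, which is what we need.

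The main obstacle, and the technical heart of the Kaplan--Onshuus--Usvyatsov argument, is that mutual indiscernibility does not automatically pass to subfamilies over enlarged parameter sets. To bound $|S_a|$ I would extract from $(I_\alpha)_{\alpha \in S_a}$ a new family of mutually $C$-indiscernible sequences of the same cardinality in which every member still witnesses failure of indiscernibility over $Ca$; this contradicts the hypothesis on the dp-rank of $a$ over $C$ once $|S_a| > m$. The extraction alternates Ramsey-style compactness arguments with Shelah's shrinking-of-indiscernibles lemma for $\nip$ theories, and this is precisely where the $\nip$ hypothesis is indispensable. The bound $|S_b| \leq n$ is proved by the analogous construction carried out over the enlarged base $Ca$: one extracts $|S_b|$ many mutually $Ca$-indiscernible sequences none of which is indiscernible over $Cab$, contradicting the bound on the dp-rank of $b$ over $Ca$ whenever $|S_b|$ exceeds $n$. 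The infinite-rank case follows by a straightforward cardinality adaptation of the same extraction scheme.
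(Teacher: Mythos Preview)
The paper does not prove this statement at all: it records it as a fact and simply cites Kaplan, Onshuus, and Usvyatsov~\cite{dp-rank-additive}. Your sketch is essentially the strategy of that cited paper --- reducing to the relative subadditivity inequality $\mathrm{dp}(ab/C)\le \mathrm{dp}(a/C)+\mathrm{dp}(b/Ca)$ via the mutually-indiscernible-sequences characterization, with the $\nip$-specific extraction (shrinking of indiscernibles) handling the passage of mutual indiscernibility to the enlarged base --- so there is nothing to compare against here beyond noting that your outline matches the source the paper defers to.
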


\section{Uniform structures and $\bigwedge$-definable equivalence relations}

\subsection{Uniform structures}
\label{section:uniform-structures} 
We describe background from the classical theory of uniform structures.
We regard a basis for a uniform structure, defined below, as a generalization of a pseudo-metric, and regard a uniform structure as a generalization of a pseudo-metric considered up to uniform equivalence.
One can consider a uniform structure to be an ``approximate equivalence relation".
As $\bigwedge$-definable equivalence relations are approximated by definable relations, there is a natural connection between $\bigwedge$-definable equivalence relations and uniform structures.\newline

\noindent
Let $X$ be a set and declare $\Delta := \{ (x,x) : x \in X\}$
Given $A,B \subseteq X^2$ we declare
$$ A^{-1} := \{ (x',x) : (x,x') \in A\} $$
and
$$ A \circ B := \{ (x,x') : \exists y\: (x,y) \in A, (y,x') \in B \}. $$
We say that $A \subseteq X^2$ is \textbf{symmetric} if $A^{-1} = A$.
A \textbf{basis for a uniform structure} on $X$ is a collection $\Cal B$ of subsets of $X^2$ satisfying the following conditions.
\begin{enumerate}
    \item Every element of $\Cal B$ contains $\Delta$.
    \item Every element of $\Cal B$ is symmetric.
    \item For all $U,U' \in \Cal B$ there is $V \in \Cal B$ such that $V \subseteq U \cap U'$.
    \item For every $U \in \Cal B$ there is $V \in \Cal B$ such that $V \circ V \subseteq U$.
\end{enumerate}
Suppose $\Cal B$ is a basis for uniform structure on $X$.
The \textbf{uniform structure} $\overline{\Cal B}$ on $X$ generated by $\Cal B$ is the collection of all subsets of $X^2$ which contain some element of $\Cal B$.
Suppose $\Cal B$ is a basis for a uniform structure on $X$.
We associate a topology on $X$ to $\Cal B$ by declaring a subset $A$ of $X$ to be open if and only if for every $a \in A$ there is $U \in \Cal B$ such that $U_a \subseteq A$.
This topology only depends on $\overline{\Cal B}$.
Note that $(U_a)_{U \in \Cal B}$ is a neighbourhood basis for $a \in X$.
(In general $U_a$ need not be an open neighbourhood of $a$.)
We say that a uniform structure is Hausdorff if the associated topology is Hausdorff.
It is easy to see that $\overline{\Cal B}$ is Hausdorff if and only if $\bigcap \Cal B = \Delta$. \newline

\noindent We leave the following lemma as a exercise.

\begin{lemma}
\label{fact:uniform-basic}
Suppose $\Cal B$ is a basis for a uniform structure on $X$.
Let $\Cal C$ be a collection of subsets of $X^2$ such that
\begin{enumerate}
    \item Every $V \in \Cal C$ is symmetric,
    \item Every $V \in \Cal C$ contains some $U \in \Cal B$,
    \item $\Cal B$ is a subfamily of $\Cal C$.
\end{enumerate}
Then $\Cal C$ is a basis for $\overline{\Cal B}$.
\end{lemma}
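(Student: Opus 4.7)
The plan is to verify the four defining axioms for a basis for a uniform structure on $X$ for the family $\Cal C$, and then check that $\overline{\Cal C} = \overline{\Cal B}$. Both steps follow by routinely pulling back the corresponding statements for $\Cal B$ along the inclusion given by hypothesis $(2)$, using hypothesis $(3)$ to stay inside $\Cal C$.

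For the basis axioms: containment of $\Delta$ follows since each $V \in \Cal C$ contains some $U \in \Cal B$ by hypothesis $(2)$ and $\Delta \subseteq U$; symmetry is hypothesis $(1)$. Given $V, V' \in \Cal C$, I would pick $U, U' \in \Cal B$ with $U \subseteq V$ and $U' \subseteq V'$, then apply the intersection axiom for $\Cal B$ to produce $W \in \Cal B$ with $W \subseteq U \cap U' \subseteq V \cap V'$; hypothesis $(3)$ places $W$ in $\Cal C$. The composition axiom is handled in the same manner: pick $U \in \Cal B$ inside $V$, produce $W \in \Cal B$ with $W \circ W \subseteq U$, and note $W \in \Cal C$ by hypothesis $(3)$.

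For the equality $\overline{\Cal C} = \overline{\Cal B}$, the forward inclusion follows because any superset of a $V \in \Cal C$ is, by hypothesis $(2)$, a superset of some $U \in \Cal B$, hence lies in $\overline{\Cal B}$; the reverse inclusion is immediate from $\Cal B \subseteq \Cal C$. There is no substantive obstacle here: the lemma is a convenience statement allowing one to enlarge a basis by any symmetric ``coarser'' family without changing the generated uniform structure, and the role I expect it to play is to let one move from a small (e.g.\ subdefinable) basis to a more natural larger family later in the paper.
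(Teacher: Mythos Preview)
Your proof is correct and is exactly the routine verification one would expect; the paper itself leaves this lemma as an exercise and gives no proof.
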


\noindent
We recall the main examples of uniform structures.
If $(X,d)$ is a pseudo-metric space then the collection of sets of the form $\{ (x,x') \in X^2 : d(x,x') < t \}$ for $t > 0$ is a basis for a uniform structure on $X$ which induces the $d$-topology.
If $G$ is a topological group and $\Cal U$ is a neighbourhood basis for the identity then the collection $\{ (g,g') \in G^2 : g^{-1}g' \in U\}$, $U$ ranging over $\Cal U$, is a basis for a uniform structure on $G$ which induces the group topology.
Suppose $\tau$ is a compact Hausdorff topology on $X$ and equipp $X^2$ with the associated product topology.
Then the collection of all symmetric open subsets of $X^2$ containing $\Delta$ forms a basis for a uniform structure on $X$.
This is the unique uniform structure on $X$ for which the associated topology is $\tau$.
If $E$ is an equivalence relation on $X$ then $\{E\}$ is a basis for a uniform structure on $X$.
Finally, the discrete uniform structure on $X$ is the uniform structure with basis $\{ \Delta \}$. \newline

\noindent
Suppose $\Cal C$ is a basis for a uniform structure on a set $Y$.
Let $f$ be a function $X \to Y$.
Then $f$ is uniformly continuous if for every $U \in \Cal C$ there is $V \in \Cal B$ such that for all $(a,b) \in V$ we have $(f(a),f(b)) \in U$.
If $f$ is uniformly continuous then $f$ is a continuous map between the topologies associated to $\Cal B$ and $\Cal C$.
We say that $f$ is a \textbf{uniform equivalence} if $f$ is bijective and $f,f^{-1}$ are both uniformly continuous.\newline

\noindent One can associate a quotient Hausdorff uniform structure to a general uniform structure in a canonical way.
It is easy to see that $E := \bigcap \Cal B$ is an equivalence relation on $X$.
Let $\pi$ be the quotient map $X \to X/E$.
Given $U \in \Cal B$ we let $\pi(U)$ to be the set of $(y,y') \in (X/E)^2$ such that there are $x \in \pi^{-1}(\{y\}), x' \in \pi^{-1}(\{y'\})$ satisfying $(x,x') \in U$. 
Let $\pi(\Cal B) := (\pi(U))_{U \in \Cal B}$.
Then $\pi(\Cal B)$ is a basis for a Hausdorff uniform structure on $X/E$.
Note that $E$ and the uniform structure on $X/E$ depend only on $\overline{\Cal B}$.\newline

\noindent Suppose $Y$ is a subset of $X$.
Then 
$$ \Cal B|_Y := \{ U \cap Y^2 : U \in \Cal B \} $$
is a basis for a uniform structure on $Y$ which we refer to as the induced uniform structure on $Y$.\newline

\noindent We describe the product uniform structure on $X^n$.
Given $U \in \Cal B$ we let $U_n$ be the set of $(x,x') \in X^n \times X^n$ such that $(x_k,x'_k) \in U$ for all $1 \leq k \leq n$ where $x = (x_1,\ldots,x_n)$ and $x' = (x'_1,\ldots,x'_n)$.
Then $\Cal B_n := \{ U_n : U \in \Cal B\}$ is a basis for a uniform structure on $X^n$ which induces the product topology on $X^n$.\newline

\noindent Lemma~\ref{lem:uniform} will be crucial below.
Given a subset $A$ of $X$ and $U \in \Cal B$ we let $A[U]$ be $\bigcup_{a \in A} U_a$.
Note that $A$ lies in the interior of $A[U]$.

\begin{lemma}
\label{lem:uniform}
Suppose $\Cal W$ is a finite collection of nonempty open subsets of $X$ and $A \subseteq X$ is not dense in any $W \in \Cal W$.
Then there is $U \in \Cal B$ such that $W \setminus A[U]$ has interior for all $W \in \Cal W$.
\end{lemma}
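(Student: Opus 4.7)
The plan is to prove this by producing, for each $W \in \Cal W$, a local argument at a single point of $W \setminus A$ and then taking a common refinement.

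Since $A$ is not dense in any $W \in \Cal W$, for each $W$ we may fix a nonempty open $V_W \subseteq W$ with $V_W \cap A = \emptyset$ and a point $p_W \in V_W$. Because $V_W$ is open in the topology associated to $\Cal B$, there is $U_W \in \Cal B$ with $(U_W)_{p_W} \subseteq V_W$. Applying axiom $(4)$, choose $U'_W \in \Cal B$ with $U'_W \circ U'_W \subseteq U_W$.

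The key claim is that $(U'_W)_{p_W} \cap A[U'_W] = \emptyset$. Indeed, if $x$ lies in both, then $(p_W, x) \in U'_W$, and there is $a \in A$ with $(a,x) \in U'_W$; using that $U'_W$ is symmetric, $(x,a) \in U'_W$, hence $(p_W, a) \in U'_W \circ U'_W \subseteq U_W$, so $a \in (U_W)_{p_W} \subseteq V_W$, contradicting $V_W \cap A = \emptyset$. Because $p_W$ lies in the interior of $(U'_W)_{p_W}$, this gives an open neighbourhood $N_W$ of $p_W$ contained in $W$ and disjoint from $A[U'_W]$.

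Finally, iterating axiom $(3)$ over the finitely many $W \in \Cal W$, pick a single $U \in \Cal B$ contained in every $U'_W$. Then $A[U] \subseteq A[U'_W]$, so each $N_W$ is still disjoint from $A[U]$, witnessing that $W \setminus A[U]$ has nonempty interior for every $W \in \Cal W$. The only subtle point is that $(U'_W)_{p_W}$ need not itself be open, but this is handled by passing to its interior; the ``approximate triangle inequality'' $U'_W \circ U'_W \subseteq U_W$ together with symmetry carries the whole argument.
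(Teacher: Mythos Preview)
Your proof is correct and follows essentially the same approach as the paper's: pick a point of $W$ outside the closure of $A$, use the ``approximate triangle inequality'' $U' \circ U' \subseteq U$ together with symmetry to separate a neighbourhood of that point from $A[U']$, and then pass to a common refinement over the finitely many $W$. The only cosmetic difference is that the paper phrases the choice of point as ``fix $b \in W$ not in $\cl(A)$, then choose $V \in \Cal B$ with $V_b \cap A = \emptyset$'', whereas you first choose the open set $V_W$ and then the point $p_W$; the content is identical.
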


\begin{proof}
Fix $W \in \Cal W$.
We show there is $U_W \in \Cal B$ such that $W \setminus A[U_W]$ has interior.
Fix an element $b$ of $W$ that does not lie in the closure of $A$.
So there is $V \in \Cal B$ such that $V_b$ is disjoint from $A$.
Let $U_W \in \Cal B$ be such that $U_W \circ U_W \subseteq V$.
We show that $A[U_W]$ and $(U_W)_b$ are disjoint.
Suppose $p$ lies in both $A[U_W]$ and $(U_W)_b$.
As $p \in A[U_W]$ there is $a \in A$ such that $p \in (U_W)_a$.
Then $(a,p) \in U_W$ and $(p,b) \in U_W$.
So $(a,b) \in V$.
Contradiction.
So $b$ lies in the interior of $W \setminus A[U_W]$. \newline

\noindent Now let $U \in \Cal B$ satisfy $U \subseteq \bigcap_{W \in \Cal W} U_W$.
Then $A[U] \subseteq A[U_W]$ for all $W \in \Cal W$ so $W \setminus A[U]$ has interior for all $W \in \Cal W$.
\end{proof}

\noindent We now suppose that $\Sa M$ is a (possibly multisorted) structure.
Given a definable set $X$, a (sub)definable basis for a uniform structure on $X$ is a (sub)definable family of sets forming a basis for a uniform structure on $X$.
If $\Cal B$ is a subdefinable basis for a uniform structure on $X$ then $\Cal B_n$ is a subdefinable basis for the product uniform structure on $X^n$ and $\Cal B_n|_Y$ is a subdefinable basis for the induced uniform structure on a definable $Y \subseteq X$.

\begin{lemma}
\label{lem:basis-ext-def}
Suppose $\Cal B$ is a subdefinable basis for a uniform structure on an $\Sa M$-definable set $X$.
Then $E := \bigcap \Cal B$ is externally definable in $\Sa M$.
\end{lemma}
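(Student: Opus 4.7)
The plan is to exhibit a single $\Sa N$-definable set $Z$ with $Z \cap M^2 = E$, where $\Sa M \prec \Sa N$ is highly saturated and $M^2$ denotes the relevant cartesian square containing $X^2$. Since $\Cal B$ is subdefinable, fix an $\Sa M$-definable family $(Y_a)_{a \in A}$ of subsets of $X^2$ and a set $B$ of $M$-points of $A$ with $\Cal B = \{Y_a : a \in B\}$. The key structural fact I will exploit is axiom (3) for a basis of a uniform structure: for every finite $F \subseteq B$ there is $c_0 \in B$ with $Y_{c_0} \subseteq \bigcap_{a \in F} Y_a$, i.e.\ $\Cal B$ is downward directed.

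I would then introduce the partial type $p(c)$ over $M$, in a single variable $c$ of the parameter sort of $A$, consisting of: (a) $c \in A$; (b) $Y_c \subseteq Y_a$, i.e.\ $\forall u \forall v \, ((u,v) \in Y_c \to (u,v) \in Y_a)$, for each $a \in B$; and (c) $(u,v) \in Y_c$, for each $(u,v) \in E$. Each condition is first-order in $c$ with parameters from $M$. Clauses (a) and (b) are designed so that, for any realization $c^* \in N$, the set $Y_{c^*}$ interpreted in $\Sa N$ lies inside every $Y_a$ with $a \in B$; since $a,u,v \in M$, elementarity then forces $Y_{c^*} \cap M^2 \subseteq \bigcap_{a \in B} Y_a = E$. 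Clause (c) supplies the reverse containment $E \subseteq Y_{c^*} \cap M^2$ tautologously.

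The main obstacle is verifying that $p$ is finitely satisfiable in $\Sa M$, since we are simultaneously imposing a type-definable upper bound ``$Y_c$ lies below every $Y_a$ for $a \in B$'' and a lower bound ``$Y_c$ contains every specified pair of $E$''. Given a finite $F \subseteq B$ and a finite $G \subseteq E$, the directedness axiom furnishes $c_0 \in B$ with $Y_{c_0} \subseteq \bigcap_{a \in F} Y_a$; crucially $c_0 \in B$ also means $Y_{c_0} \in \Cal B$, so $E \subseteq Y_{c_0}$ by the very definition of $E = \bigcap \Cal B$, and in particular $G \subseteq Y_{c_0}$. Hence $c_0$ realizes the corresponding finite fragment of $p$. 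Saturation then produces $c^* \in N$ realizing $p$, and $Z := Y_{c^*}$ as interpreted in $\Sa N$ witnesses that $E$ is externally definable in $\Sa M$. Note that no $\nip$ hypothesis is needed.
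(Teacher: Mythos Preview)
Your proof is correct and is essentially the same argument as the paper's: both build a partial type over $M$ in the parameter variable asserting that the corresponding instance of the ambient definable family contains $E$ and is contained in every member of $\Cal B$, verify finite satisfiability via the directedness axiom (3), and then realize the type in a saturated extension. Your write-up is a bit more explicit about why the realization witnesses both inclusions, but the strategy is identical.
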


\begin{proof}
Suppose $\Cal B$ is a subdefinable basis for $E$.
Let $\phi(x,y)$ be a formula such that for every $U \in \Cal B$ there is $a \in M^x$ such that $\phi(a,M^y) = U$.
It suffices to show that the following partial $x$-type is consistent
$$ \{ \phi(x,e) : e \in E \} \cup \{ \forall y [\phi(x,y) \to \phi(a,y)] : \phi(a,M^y) \in \Cal C \}.  $$
So it suffices to show that for any $U_1,\ldots,U_n \in \Cal B$ there is $a \in M^x$ such that 
$$ E \subseteq \phi(a,M^y) \subseteq \bigcap_{k = 1}^{n} U_k .$$
As $\Cal B$ is a basis for the uniform structure associated to $E$ there is $V \in \Cal B$ such that $V \subseteq \bigcap_{k = 1}^{n} U_k$. 
Let $a \in M^x$ satisfy $\phi(a,M^y) = V$.
\end{proof}

\begin{prop}
\label{prop:subdef-to-def}
Suppose $\Sa M$ is $\nip$.
Suppose $X$ is an $\Sa M$-definable set and $\Cal B$ is a subdefinable basis for a uniform structure on $X$.
Then $\Sh M$ defines a basis for $\overline{\Cal B}$.
\end{prop}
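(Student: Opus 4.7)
The plan is to invoke the NIP-theoretic approximation Fact~\ref{fact:cs} to show that the set of parameters giving rise to entourages in $\overline{\Cal B}$ is externally definable, and then conclude via Lemma~\ref{fact:uniform-basic}. First I would fix an $\Sa M$-formula $\psi(u,v,w)$ such that $\Cal B \subseteq \{\psi(M^2,c) : c \in M^y\}$, and, by replacing $\psi(u,v,w)$ with $\psi(u,v,w) \wedge \psi(v,u,w)$, assume each set $\psi(M^2,c)$ is symmetric. Let
$$ P := \{c \in M^y : \psi(M^2,c) \supseteq U \text{ for some } U \in \Cal B\}. $$

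The central task is to show that $P$ is externally definable. To apply Fact~\ref{fact:cs}, I consider the $\Sa M$-definable family $(P_a)_{a \in M^y}$ with $P_a := \{c \in M^y : \psi(M^2,c) \supseteq \psi(M^2,a)\}$, defined by the formula $\forall u\, v\, [\psi(u,v,a) \to \psi(u,v,c)]$. Given any finite $F = \{c_1,\ldots,c_k\} \subseteq P$, pick $U_i \in \Cal B$ with $\psi(M^2,c_i) \supseteq U_i$; by the filter-base axiom for $\Cal B$ there exists $U^* \in \Cal B$ with $U^* \subseteq \bigcap_i U_i$. Taking $a \in M^y$ with $\psi(M^2,a) = U^*$, we have $F \subseteq P_a$ since $\psi(M^2,c_i) \supseteq U_i \supseteq U^*$ for each $i$, and $P_a \subseteq P$ since any $c \in P_a$ satisfies $\psi(M^2,c) \supseteq U^* \in \Cal B$. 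Fact~\ref{fact:cs} then yields that $P$ is externally definable in $\Sa M$.

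Finally, set $\Cal C := \{\psi(M^2,c) : c \in P\}$. Since $\psi$ is an $\Sa M$-formula and $P$ is externally definable, $\Cal C$ is an $\Sh M$-definable family of symmetric subsets of $X^2$. By construction each member of $\Cal C$ contains some element of $\Cal B$, and conversely $\Cal B \subseteq \Cal C$ because for any $U \in \Cal B$ with parameter $c_U$ the set $U = \psi(M^2,c_U)$ trivially contains itself, hence $c_U \in P$. Lemma~\ref{fact:uniform-basic} applied to $\Cal B$ and $\Cal C$ then immediately gives that $\Cal C$ is a basis for $\overline{\Cal B}$.

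The argument reduces to a single targeted invocation of Fact~\ref{fact:cs}, so there is no substantive obstacle beyond recognizing that external definability of $P$ is the right thing to prove. The only delicate point in the approximation step is the construction of the witnessing parameter $a$: this uses the directedness of $\Cal B$ (axiom (3) in the definition of a basis for a uniform structure) in order to replace an arbitrary finite collection of entourages from $\Cal B$ by a single common refinement $U^*$.
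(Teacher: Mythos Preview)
Your argument is correct and reaches the same endpoint as the paper: you identify the set $P$ of parameters whose associated set contains some element of $\Cal B$, show it is externally definable, symmetrize, and then invoke Lemma~\ref{fact:uniform-basic}. The paper's proof does the same thing but establishes external definability of $P$ by a different route: it passes to a highly saturated elementary extension $\Sa N$, forms $F := \bigcap \Cal B'$ there, uses Lemma~\ref{lem:basis-ext-def} to see that $F$ is $\Sh N$-definable, takes $Z := \{a : F \subseteq B'_a\}$ (which is then $\Sh N$-definable by Fact~\ref{fact:shelah}), and finally descends via Fact~\ref{fact:extension-external} to get $Q = Z \cap M^x$ externally definable in $\Sa M$. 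Your direct appeal to Fact~\ref{fact:cs} shortcuts this saturation detour and is arguably cleaner; the paper's route has the minor advantage of reusing Lemma~\ref{lem:basis-ext-def}, which it needs elsewhere anyway. A cosmetic difference: you symmetrize the defining formula up front, while the paper symmetrizes at the end by replacing $B_a$ with $B_a \cap B_a^{-1}$.
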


\begin{proof}
Let $(B_a)_{a \in M^x}$ be a definable family of sets such that $\Cal B$ is a subfamily of $(B_a)_{a \in M^x}$.
Let $\Sa N$ be a highly saturated elementary extension of $\Sa M$.
Given an $\Sa M$-definable set $Y$ we let $Y'$ be the $\Sa N$-definable set defined by the same formula as $Y$.
We also let $(B'_a)_{a \in N^x}$ be the family of sets defined by the same formula as $(B_a)_{a \in M^x}$.
Note that $\Cal B' := \{ U' : U \in \Cal B \}$ is a subdefinable (in $\Sa N$) basis for a uniform structure on $X'$.
Let $F := \bigcap \Cal B'$.
Lemma~\ref{lem:basis-ext-def} shows that $F$ is definable in $\Sh N$.
An application of saturation shows that if $Y$ is an $\Sa M$-definable subset of $X^2$ then $F \subseteq Y'$ if and only if $Y$ contains some $U \in \Cal B$.
Let $Z$ be the set of $a \in N^x$ such that $F \subseteq B'_a$.
Then $Z$ is definable in $\Sh N$.
Facts~\ref{fact:shelah} and \ref{fact:extension-external} together show that $Q := Z \cap M^x$ is definable in $\Sh M$.
Then $Q$ is the set of $a \in M^x$ such that $B_a$ contains some $U \in \Cal B$.
In particular $\Cal B$ is a subfamily of $(B_a)_{a \in Q}$. \newline

\noindent Let $C_a = B_a \cap B_a^{-1}$ for all $a \in Q$.
Note that each $C_a$ is symmetric and contains some $U \in \Cal B$.
Note also that $\Cal B$ is a subfamily of $(C_a)_{a \in Q}$.
An application of Lemma~\ref{fact:uniform-basic} shows that $(C_a)_{a \in Q}$ is a basis for $\overline{\Cal B}$.
\end{proof}

\noindent Corollary~\ref{cor:subdef} follows immediately from Proposition~\ref{prop:subdef-to-def}.

\begin{cor}
\label{cor:subdef}
Suppose $\Sa M$ is $\nip$, $X$ is an $\Sa M$-definable set, and there is a subdefinable basis for a uniform structure on $X$.
Let $\Sa X$ be the structure induced on $X$ by $\Sa M$.
Then the open core $\Sa X^\circ$ of $\Sa X$ is a reduct of the structure induced on $X$ by $\Sh M$.
\end{cor}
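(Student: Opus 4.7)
The plan is to unpack the definition of the open core and apply Proposition~\ref{prop:subdef-to-def} directly. By definition, $\Sa X^\circ$ has as primitive $n$-ary relations the closures in $X^n$ of all $\Sa X$-definable subsets of $X^n$, where $X^n$ carries the product topology induced by $\overline{\Cal B}$ (equivalently, the topology associated to the product uniform structure on $X^n$). Since $X$ is $\Sa M$-definable, the $\Sa X$-definable subsets of $X^n$ are exactly the $\Sa M$-definable subsets of $X^n$. It therefore suffices to show that $\cl(Y)$ is definable in the structure induced on $X$ by $\Sh M$ for every $\Sa M$-definable $Y \subseteq X^n$.

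First I would apply Proposition~\ref{prop:subdef-to-def} to obtain a uniformly $\Sh M$-definable family $(B_a)_{a \in Q}$ which is a basis for $\overline{\Cal B}$. From the explicit construction of the product uniform structure recorded just before Lemma~\ref{lem:uniform}, the family $((B_a)_n)_{a \in Q}$ is then a uniformly $\Sh M$-definable basis for the product uniform structure on $X^n$. In particular, for each $p \in X^n$, the sets $((B_a)_n)_p$ as $a$ ranges over $Q$ form a neighbourhood basis of $p$ in $X^n$.

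Consequently, for any $Y \subseteq X^n$,
$$ \cl(Y) = \{ p \in X^n : \forall a \in Q, \; ((B_a)_n)_p \cap Y \neq \emptyset \}. $$
When $Y$ is $\Sa M$-definable the right-hand side is a first-order expression over $\Sh M$ with parameters from $M$: the universal quantifier ranges over the $\Sh M$-definable set $Q$, and the innermost non-emptiness condition only involves $\Sh M$-definable sets. Thus $\cl(Y)$ is definable in the structure induced on $X$ by $\Sh M$, giving the corollary. I do not anticipate any real obstacle here beyond the routine bookkeeping of passing from a basis on $X$ to a basis on $X^n$, which is immediate from the explicit construction; as the excerpt notes, this result follows immediately from Proposition~\ref{prop:subdef-to-def}.
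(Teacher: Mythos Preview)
Your proof is correct and is exactly the natural unpacking of the paper's one-line justification that the corollary ``follows immediately from Proposition~\ref{prop:subdef-to-def}.'' You have simply spelled out what the paper leaves implicit: once $\Sh M$ defines a basis for the uniform structure, it defines a basis for each product $X^n$, and hence the closure of any $\Sa M$-definable $Y \subseteq X^n$ is first-order expressible in $\Sh M$.
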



\subsection{$\bigwedge$-definable equivalence relations}
We now describe how uniform structures arise in abstract model theory.
\textbf{In this section $\Sa N$ is a highly saturated structure and $X$ is an $\Sa N$-definable set.}
There is a  a canonical correspondence between
\begin{itemize}
    \item $\bigwedge$-definable equivalence relations $E$ on $X$, and
    \item uniform structures on $X$ which admit a basis $\Cal B$ consisting of a small family of definable sets.
\end{itemize}
This correspondence is well-known to experts and used implicitly throughout the literature.
If $\Cal B$ is a small collection of definable subsets of $X^2$ forming a basis for a uniform structure on $X$ then $E$ is $\bigcap \Cal B$.
Note that $E$ is $\bigwedge$-definable and only depends on $\overline{\Cal B}$.
Now suppose $E$ is a $\bigwedge$-definable equivalence relation on $X$.
Let $\Cal C$ be a small collection of definable subsets of $X$ such that $\bigcap \Cal C = E$.
After replacing $\Cal C$ with
$$ \{ C_1 \cap \ldots \cap C_n \cap C^{-1}_1 \cap \ldots \cap C^{-1}_n : C_1,\ldots,C_n \in \Cal C \} $$
if necessary we may suppose that every element of $\Cal C$ is symmetric and that for all $C,C' \in \Cal C$ there is $B \in \Cal C$ such that $B \subseteq C \cap C'$.
It is now an exercise in saturation to show that $\Cal C$ is a basis for a uniform structure on $X$ and that this uniform structure does not depend on choice of $\Cal C$.
It is also a saturation exercise to see that the induced uniform structure on $X/E$ is the discrete uniform structure if and only if $E$ is definable.
Finally, the quotient $X/E$ is small if and only if the induced topology on $X/E$ is compact.
In this case the topology is known as the \textbf{logic topology} on $X/E$.
A subset of $X/E$ is closed in the logic topology if and only if it is the image of a $\bigwedge$-definable subset of $X$ under the quotient map.
In general we refer to the topology on $X/E$ as the logic topology.

\begin{prop}
\label{prop:image}
Let $E$ be a $\bigwedge$-definable equivalence relation on $X$ and $\pi$ be the quotient map $X \to X/E$.
Then $\pi(Y)$ is closed for any definable $Y \subseteq X$.
\end{prop}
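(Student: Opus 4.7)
The plan is to work with a small basis $\Cal C$ of symmetric definable subsets of $X^2$ for the uniform structure on $X$ associated to $E$, so that $\bigcap \Cal C = E$ and $\Cal C$ is directed under inclusion. The quotient uniform structure on $X/E$ has basis $\pi(\Cal C)$, and I would show that $X/E \setminus \pi(Y)$ is open in the associated (logic) topology by producing, for each $y \notin \pi(Y)$, a basic neighbourhood $\pi(U)_y$ disjoint from $\pi(Y)$.

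First I would fix $y \in X/E \setminus \pi(Y)$ and some preimage $x \in \pi^{-1}(y)$. Since $y \notin \pi(Y)$, no element of $Y$ is $E$-related to $x$, so the partial type $\{(x,z) \in C : C \in \Cal C\} \cup \{z \in Y\}$ is inconsistent. Saturation of $\Sa N$ then yields finitely many $C_1, \ldots, C_n \in \Cal C$ with $\bigcap_{k=1}^n (C_k)_x \cap Y = \emptyset$, and directedness of $\Cal C$ (axiom (3) of a basis for a uniform structure, iterated) produces a single $C \in \Cal C$ contained in $\bigcap_k C_k$. Hence $C_x \cap Y = \emptyset$.

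Next I would apply axiom (4) twice to obtain $U \in \Cal C$ with $U \circ U \circ U \subseteq C$. Since $\Delta \subseteq E \subseteq U$, this gives $E \circ U \circ E \subseteq U \circ U \circ U \subseteq C$. I would then verify the key disjointness $\pi(U)_y \cap \pi(Y) = \emptyset$: any common element would be $\pi(x')$ for some $x' \in Y$, together with witnesses $x_1 \in \pi^{-1}(y)$ and $x_2 \in \pi^{-1}(\pi(x'))$ such that $(x_1,x_2) \in U$. Then $x\,E\,x_1$, $(x_1,x_2) \in U$, and $x_2\,E\,x'$, so $(x,x') \in E \circ U \circ E \subseteq C$, contradicting $C_x \cap Y = \emptyset$. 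As $(x,x) \in E \subseteq U$ gives $y \in \pi(U)_y$, the set $\pi(U)_y$ is a basic neighbourhood of $y$ inside $X/E \setminus \pi(Y)$, so the complement of $\pi(Y)$ is open.

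The only delicate step is the saturation argument extracting a single $C \in \Cal C$ with $C_x \cap Y = \emptyset$; this is where highly-saturated-ness of $\Sa N$ and the fact that $Y$ is definable (rather than $\bigwedge$-definable) are both used. Everything else is a direct unwinding of the definition of the quotient uniform structure on $X/E$, its basis $\pi(\Cal C)$, and the associated logic topology.
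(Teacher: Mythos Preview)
Your proof is correct and uses essentially the same idea as the paper's: a saturation/compactness argument against the small basis $\Cal C$. The paper argues contrapositively---it takes $p$ in the closure of $\pi(Y)$, fixes a lift $p'$, observes that for every $U\in\Cal C$ there is $r_U\in Y$ with $(p',r_U)\in U$, and then applies saturation to the small type $\{z\in Y\}\cup\{(p',z)\in U:U\in\Cal C\}$ to get $r\in Y$ with $(p',r)\in E$. Your version unwinds the complement-is-open direction instead, and in doing so you handle the $E\circ U\circ E\subseteq C$ wiggle explicitly via the $U\circ U\circ U$ trick; the paper suppresses this step. So the approaches are the same in substance, with yours being the more carefully written of the two.
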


\begin{proof}
Let $\Cal B$ be a small collection of definable subsets of $X^2$ which forms a basis for the uniform structure on $X$.
Let $Y$ be a definable subset of $X$.
Suppose $p \in X/E$ lies in the closure of $\pi(Y)$.
Fix $p' \in X$ such that $\pi(p') = p$.
For every $U \in \Cal B$ there is $r_U \in Y$ such that $(p',r_U) \in U$.
As $\Cal B$ is small and $\bigcap \Cal B = E$ an application of saturation  yields an $r \in Y$ such that $(p',r) \in E$.
So $\pi(r) = p$ and $p \in \pi(Y)$.
\end{proof}

\noindent We also equipp $(X/E)^n$ with a logic topology.
Let $E_n$ be the equivalence relation on $X^n$ where $(a_1,\ldots,a_n)$ and $(b_1,
\ldots,b_n)$ are $E_n$-equivalent if $(a_k,b_k) \in E$ for all $1 \leq k \leq n$.
We identify $(X/E)^n$ and $(X^n/E_n)$.
Observe that $E_n$ is a $\bigwedge$-definable equivalence relation on $X^n$.
Observe that if $\Cal B$ is a small collection of definable sets forming a basis for the uniform structure associated to $E$ then $\Cal B_n$ is a small collection definable sets forming a basis for the uniform structure associated to $E_n$.
So the uniform structure on $(X/E)^n$ is simply the product uniform structure. \newline

\noindent Let $E$ be a $\bigwedge$-definable equivalence relation on $X$.
A subdefinable (in $\Sa N$) basis for $E$ is a subdefinable (in $\Sa N$) basis for the uniform structure on $X$ associated to $E$.
Lemma~\ref{lem:basis} is a saturation exercise which we leave to the reader.

\begin{lemma}
\label{lem:basis}
Suppose $\Cal C$ is a small subdefinable collection of subsets of $X^2$.
Then $\Cal C$ is a subdefinable basis for $E$ if and only if
\begin{enumerate}
    \item $\bigcap \Cal C = E$,
    \item each $U \in \Cal C$ is symmetric, and
    \item for all $U,U' \in \Cal C$ there is $V \in \Cal C$ such that $V \subseteq U \cap U'$.
\end{enumerate}
If $\Cal C$ satisfies $(1)$ and $(3)$ then $\{ U \cap U^{-1} : U \in \Cal C\}$ is a subdefinable basis for $E$.
\end{lemma}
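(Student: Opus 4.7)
The plan is to handle the forward direction trivially and the reverse direction by a standard compactness/saturation argument, with the ``triangle refinement'' axiom $V \circ V \subseteq U$ being the only nontrivial point.

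For the forward direction, assume $\Cal C$ is a subdefinable basis for the uniform structure on $X$ associated to $E$. Then symmetry and downward closure under finite intersections are built into the definition of a basis for a uniform structure, giving (2) and (3); and $\bigcap \Cal C = E$ is part of the standing correspondence between $\bigwedge$-definable equivalence relations on $X$ and uniform structures with a small definable basis, giving (1).

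For the reverse direction, suppose $\Cal C$ is a small subdefinable collection satisfying (1), (2), (3). Each $U \in \Cal C$ contains $E$ by (1), and $E \supseteq \Delta$, so each $U$ contains $\Delta$. Combined with (2) and (3), the only remaining axiom for a basis of a uniform structure is: for every $U \in \Cal C$ there exists $V \in \Cal C$ with $V \circ V \subseteq U$. Suppose this fails for some $U \in \Cal C$. Then for every $V \in \Cal C$ there exist $a,b,c \in X$ with $(a,b) \in V$, $(b,c) \in V$, and $(a,c) \notin U$. Because $\Cal C$ is subdefinable and satisfies (3), the partial type
\[
\{\, V(x,y)\wedge V(y,z) : V \in \Cal C\,\}\cup\{\,\neg U(x,z)\,\}
\]
is finitely satisfiable: any finite subcollection of $\Cal C$ is absorbed into a single $V \in \Cal C$ by (3), and we just witnessed such a triple for this $V$. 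Since $\Cal C$ is small and $\Sa N$ is highly saturated, the type is realized by some $(a,b,c)$. Then $(a,b),(b,c) \in \bigcap \Cal C = E$ by (1), so $(a,c) \in E \subseteq U$, contradicting $(a,c) \notin U$. So condition (d) holds, and $\Cal C$ is a basis for the uniform structure associated to $E$. It only remains to check that this uniform structure really is the one attached to $E$, which is immediate since $\bigcap \Cal C = E$ pins it down uniquely in the correspondence.

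For the last assertion, if $\Cal C$ satisfies only (1) and (3), set $\Cal C' := \{U \cap U^{-1} : U \in \Cal C\}$. Each element of $\Cal C'$ is symmetric, so (2) holds. Since $E$ is an equivalence relation, $E = E^{-1}$, so $E \subseteq U \cap U^{-1}$ for every $U \in \Cal C$; and $\bigcap \Cal C' \subseteq \bigcap \Cal C = E$, giving (1). For (3), given $U \cap U^{-1}, U' \cap (U')^{-1} \in \Cal C'$, apply (3) for $\Cal C$ to get $V \in \Cal C$ with $V \subseteq U \cap U'$; then $V \cap V^{-1} \in \Cal C'$ and $V \cap V^{-1} \subseteq (U \cap U^{-1})\cap(U' \cap (U')^{-1})$. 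Applying the already-proven reverse direction of the lemma to $\Cal C'$ yields that $\Cal C'$ is a subdefinable basis for $E$.

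The only subtle step is the saturation argument establishing the triangle refinement axiom from (1)–(3); everything else is bookkeeping. I expect no serious obstacle beyond making sure the type above is genuinely small (which it is, since $\Cal C$ is small by hypothesis) so that high saturation of $\Sa N$ applies.
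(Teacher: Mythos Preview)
Your proof is correct and is precisely the saturation exercise the paper has in mind; the paper explicitly leaves Lemma~\ref{lem:basis} to the reader, and the key step---deriving the triangle refinement axiom from (1)--(3) via a small partial type realized by high saturation of $\Sa N$---is exactly the intended argument.
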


\begin{prop}
\label{prop:subdef-to-def-cor}
Suppose there is a subdefinable (in $\Sa N$) basis for $E$.
Then $\Sh N$ defines a basis for the uniform structure on $X/E$.
\end{prop}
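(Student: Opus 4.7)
My plan is to construct an $\Sh N$-definable family of $E_2$-invariant subsets of $X^2$ whose descent under the quotient map $\pi_2 : X^2 \to (X/E)^2$ forms a basis for the quotient uniform structure on $X/E$. Let $(B_a)_{a \in N^x}$ be an $\Sa N$-definable family containing $\Cal B$ as a subfamily; by Lemma~\ref{lem:basis} we may assume every element of $\Cal B$ is symmetric. By Lemma~\ref{lem:basis-ext-def}, $E$ is externally definable and hence $\Sh N$-definable, so $Q := \{a \in N^x : E \subseteq B_a\}$ is $\Sh N$-definable; a short saturation argument using $E = \bigcap \Cal B$ shows that $a \in Q$ if and only if some $U \in \Cal B$ satisfies $U \subseteq B_a$.

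For each $a \in N^x$ define the symmetrization
$$D_a := \{(x,x') \in X^2 : \forall y, y' \in X\,\bigl((y,x) \in E \wedge (y',x') \in E \to (y,y') \in B_a \cap B_a^{-1}\bigr)\}.$$
Since $E$ is $\Sh N$-definable, so is $D_a$ uniformly in $a$; by construction $D_a$ is symmetric and $E_2$-invariant, so each $D_a$ descends to a $\Sh N$-definable subset $\pi_2(D_a) \subseteq (X/E)^2$.

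The bulk of the work is to verify that $(D_a)_{a \in Q}$ satisfies the basis axioms of Section~\ref{section:uniform-structures} for the uniform structure on $X$ associated to $E$. The inclusion $E \subseteq D_a$ for $a \in Q$ follows from $C \times C \subseteq E \subseteq B_a \cap B_a^{-1}$ for every $E$-class $C$, which handles $\Delta \subseteq D_a$; the intersection axiom is immediate from $B_c \subseteq B_a \cap B_{a'} \Rightarrow D_c \subseteq D_a \cap D_{a'}$. The nontrivial axioms require three-step compositions. To see that $D_a$ contains some element of $\Cal B$, iterate the basis axioms to produce $W \in \Cal B$ with $W \circ W \circ W \subseteq B_a \cap B_a^{-1}$; for $(x,x') \in W$ and any $(y,y') \in E_x \times E_{x'}$, the chain $(y,x), (x,x'), (x',y') \in W$ (using $E \subseteq W$) gives $(y,y') \in W \circ W \circ W \subseteq B_a \cap B_a^{-1}$, so $W \subseteq D_a$. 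For $D_b \circ D_b \subseteq D_a$: choose $W \in \Cal B$ with $W \circ W \subseteq B_a \cap B_a^{-1}$ and let $B_b = W$; given $(x,x') \in D_b \circ D_b$ with witness $y$, any $(x_1, x_1') \in E_x \times E_{x'}$ and any $y_1 \in E_y$ satisfy $(x_1, y_1), (y_1, x_1') \in W$ by the definition of $D_b$, hence $(x_1, x_1') \in W \circ W \subseteq B_a \cap B_a^{-1}$. Once verified, $(D_a)_{a \in Q}$ generates $\overline{\Cal B}$ as a uniform structure on $X$, and $(\pi_2(D_a))_{a \in Q}$ is the required $\Sh N$-definable basis for the quotient uniform structure on $X/E$.

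The main obstacle is that the naive pushforward $E \circ B_a \circ E$ is not obviously $\Sh N$-definable. The universal form of $D_a$ circumvents this by exploiting the fact that $E$, having a subdefinable basis, is $\Sh N$-definable; the cost is the threefold iteration of the basis axioms needed to verify that $D_a$ remains large enough to contain an element of $\Cal B$.
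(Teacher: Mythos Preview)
Your argument is correct, but it is more laborious than the paper's and rests on a mis-diagnosed obstacle. The paper's proof is two lines: Lemma~\ref{lem:basis-ext-def} gives that $E$ is externally definable, Proposition~\ref{prop:subdef-to-def} (applied with $\Sa M = \Sa N$) gives an $\Sh N$-definable basis $\Cal B$ for the uniform structure on $X$, and then $\pi(\Cal B)$ is an $\Sh N$-definable basis for $X/E$. Your construction of $Q$ is exactly the construction inside the proof of Proposition~\ref{prop:subdef-to-def}; you then replace the paper's symmetrized entourages $C_a = B_a \cap B_a^{-1}$ by their \emph{inner} $E_2$-saturations $D_a$ and verify the basis axioms by hand. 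This is fine, and the three-fold composition trick is the natural way to check that $D_a$ still contains an element of $\Cal B$, but it buys nothing over the paper's route: since $E$ is $\Sh N$-definable, the existential pushforward $\pi(C_a)$ --- which amounts to $E \circ C_a \circ E$ on the level of $X^2$ --- is already $\Sh N$-definable, contrary to your ``main obstacle''. So the universal form $D_a$ is an unnecessary detour; the existential saturation works just as well and avoids the extra verification of the basis axioms.
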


\begin{proof}
Lemma~\ref{lem:basis-ext-def} shows that $E$ is externally definable and Proposition~\ref{prop:subdef-to-def} shows that $\Sh N$ defines a basis $\Cal B$ for the uniform structure on $X$.
Then $\pi(\Cal B)$ is an $\Sh N$-definable basis for the uniform structure on $X/E$.
\end{proof}

\noindent If the answer to Question~\ref{qst:basis} is positive then Proposition~\ref{prop:subdef-to-def-cor} shows that if $\Sa N$ is $\nip$ and $E$ is externally definable then $\Sh N$ defines a basis for the uniform structure on $X/E$.

\begin{qst}
\label{qst:basis}
If $\Sa N$ is $\nip$ does every equivalence relation which is both $\bigwedge$-definable and externally definable admit a subdefinable basis?
\end{qst}

\noindent It is possible that Question~\ref{qst:basis} fails in general but holds under more robust model-theoretic assumptions on $\Sa N$ such as distality.
At present we do not even know if Question~\ref{qst:basis} has a positive answer when $\Sa N$ is o-minimal.
Propositions~\ref{prop:basis-groups} gives an answer in a very special case.

\begin{prop}
\label{prop:basis-groups}
Suppose $\Sa N$ is $\nip$.
Suppose $X$ is a definable group, $\Cal H$ is a small subdefinable collection of definable subgroups of $X$, and $H := \bigcap \Cal H$.
Let $E$ be equivalence modulo $H$.
Then $E$ admits a subdefinable basis.
\end{prop}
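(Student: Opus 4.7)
The natural candidate for a basis is the family $\Cal C := \{U_K : K \in \Cal H'\}$ where $U_K := \{(x,x') \in X^2 : x^{-1}x' \in K\}$ and $\Cal H'$ is a suitable enlargement of $\Cal H$. Because each $K \in \Cal H'$ is a subgroup, $U_K$ contains the diagonal, is symmetric, and satisfies $U_K \cap U_{K'} = U_{K \cap K'}$ (and in fact $U_K \circ U_K = U_K$). So conditions (1) and (2) of Lemma~\ref{lem:basis} will be automatic as soon as $\bigcap \Cal H' = H$, and condition (3) reduces to closure of $\Cal H'$ under pairwise intersection inside a single definable family.

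To obtain such closure I would invoke the Baldwin--Saxl lemma for $\nip$ theories. Pick a definable family $(K_a)_{a \in N^x}$ of subgroups of $X$ of which $\Cal H$ is a subfamily. By Baldwin--Saxl there is $k \in \N$ such that every finite intersection of subgroups from $(K_a)_{a \in N^x}$ equals an intersection of at most $k$ of them. Let $\Cal H'$ consist of all intersections $K_{a_1} \cap \ldots \cap K_{a_k}$ with $K_{a_1},\ldots,K_{a_k} \in \Cal H$ (allowing repetitions, so that $\Cal H \subseteq \Cal H'$). Then $\Cal H'$ is a subfamily of the definable family $(K_{b_1} \cap \ldots \cap K_{b_k})_{(b_1,\ldots,b_k) \in (N^x)^k}$, hence subdefinable, and $\bigcap \Cal H' = \bigcap \Cal H = H$. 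For any $K, K' \in \Cal H'$ the intersection $K \cap K'$ is a finite intersection of elements of $\Cal H$ and hence, by Baldwin--Saxl, lies in $\Cal H'$.

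Lemma~\ref{lem:basis} then applies to $\Cal C := \{U_K : K \in \Cal H'\}$ and yields that $\Cal C$ is a subdefinable basis for $E$. The only nontrivial step is the appeal to Baldwin--Saxl, which is precisely the $\nip$-theoretic input needed to keep closure under finite intersection inside a single definable family; without it one would only obtain a family parametrised by a proper class of definable families, which would fail the subdefinability requirement.
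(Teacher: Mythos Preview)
Your proof is correct and is essentially the same as the paper's: both invoke Baldwin--Saxl to bound finite intersections from $\Cal H$ by intersections of at most $k$ members, take the resulting subdefinable family of $\leq k$-fold intersections, and then apply Lemma~\ref{lem:basis}. The paper's write-up is terser (and uses $gh^{-1}$ rather than $x^{-1}x'$), but the argument is the same.
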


\begin{proof}
Applying the Baldwin-Saxl theorem~\cite[Theorem 2.13]{Simon-Book} we obtain $n$ such that for any finite $\Cal H' \subseteq \Cal H$ there is $\Cal H'' \subseteq \Cal H'$ such that $|\Cal H''| \leq n$ and $\bigcap \Cal H'' = \bigcap \Cal H'$.
Lemma~\ref{lem:basis} shows that the collection of sets of the form $ \{ (g,h) \in X: g h^{-1} \in \bigcap \Cal H'\}$ for $\cal H' \subseteq \Cal H, |\Cal H'| \leq n$
is a subdefinable basis for $E$.
\end{proof}

\section{Really strong Baire category theorem}
\noindent \textbf{Throughout this section $\Sa M$ is a possibly multisorted first order structure, ``definable" without modification means ``$\Sa M$-definable", and $X$ is a definable set.} \newline

\noindent We prove a result on subdefinable families of nowhere dense subsets of uniform spaces in $\ntp$ structures.
The $\nip$ case is crucial for the proof of Theorem B. \newline 

\noindent We say that a family $(X_t)_{t > 0}$ of sets is increasing if $X_s \subseteq X_t$ whenever $s \leq t$.
The classical Baire category theorem is easily seen to be equivalent to the following: If $(Z,d)$ is a complete metric space, and $(X_t)_{t > 0}$ is an increasing family of nowhere dense subsets of $Z$, then $\bigcup_{t > 0} X_t$ has empty interior.
It is shown in \cite[Theorem D]{FHW-Compact} that if $\Sa R$ is a type A expansion of $(\R,<,+)$ and $(X_t)_{t > 0}$ is an increasing definable family of nowhere dense subsets of $\R^n$ then $\bigcup_{t > 0} X_t$ is nowhere dense.
This result is known as the ``strong Baire category theorem".
Dolich and Goodrick \cite[Proposition 2.16]{DG} show that if $\Sa R$ is a strong (in particular strongly dependent) expansion of a dense ordered abelian group $(R,<,+)$, $I \subseteq R$ is an open interval, and $(X_a)_{a \in I}$ is an increasing definable family of discrete subsets of $R$, then $\bigcup_{a \in I} X_a$ is nowhere dense.
A family $\Cal A$ of sets is \textbf{directed} if for all $A,A' \in \Cal A$ there is $B \in \Cal A$ such that $A \cup A' \subseteq B$.
It follows from \cite[Lemma 3.5]{SW-tame} that if $\Sa M$ is a dp-minimal expansion of a divisible ordered abelian group or a non strongly minimal dp-minimal expansion of a field (equipped with the Johnson topology) and $\Cal A$ is a directed definable family of nowhere dense subsets of $M^n$ then $\bigcup \Cal A$ is nowhere dense.\newline

\noindent Theorem~\ref{thm:abstract-sbct} generalizes the $\ntp$ case of the strong Baire category theorem and the latter two results described in the preceeding paragraph.


\begin{thm}
\label{thm:abstract-sbct}
Suppose $\Cal B$ is a subdefinable basis for a uniform structure on $X$, $W$ is a nonempty open subset of $X$, and $\Cal A$ is a definable family of subsets of $X$ such that for every finite collection $\Cal U$ of nonempty open subsets of $W$ there is $A \in \Cal A$ such that $A$ is nowhere dense in $W$ and intersects each $U \in \Cal U$.
Then $\Sa M$ is $\mathrm{TP}_2$.
\end{thm}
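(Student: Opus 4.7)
The plan is to construct an infinite $\tp$-array directly from the hypothesis. Let $(A_t)_{t \in T}$ be an $\Sa M$-definable parametrization of $\Cal A$ and let $(B_a)_{a \in Y}$ be an $\Sa M$-definable family of subsets of $X^2$ with $\Cal B$ a subfamily. I will witness $\tp$ for the formula $\phi(x; t, a) \equiv \exists z\,(z \in A_t \wedge (z,x) \in B_a)$, which defines the uniform fattening $A_t[B_a]$. The goal is an array $(t_{i,j}, a_i)_{i,j < \omega}$ of parameters in a sufficiently saturated elementary extension such that for each $i$ the sets $A_{t_{i,j}}[B_{a_i}]$ are pairwise disjoint as $j$ varies, and for each $\sigma \colon \omega \to \omega$ the intersection $\bigcap_i A_{t_{i,\sigma(i)}}[B_{a_i}]$ is nonempty.

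By compactness it suffices to produce such an $n \times n$ fragment for every $n$. I would construct this by induction on the level $i < n$, maintaining a finite collection of nonempty open subsets $V_\sigma \subseteq W$ indexed by partial paths $\sigma \colon \{0,\dots,i-1\} \to \{0,\dots,n-1\}$, with $V_\sigma \subseteq \bigcap_{k<i} A_{t_{k,\sigma(k)}}[B_{a_k}]$. To extend to level $i+1$, apply the hypothesis of the theorem to the finite family $\{V_\sigma\}_\sigma$ to obtain $A^\ast \in \Cal A$ nowhere dense in $W$ and meeting every $V_\sigma$. Then Lemma~\ref{lem:uniform} supplies $a_{i+1} \in Y$ with $B_{a_{i+1}} \in \Cal B$ so that $V_\sigma \setminus A^\ast[B_{a_{i+1}}]$ has interior in each $V_\sigma$. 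Iterating the same two tools within each $V_\sigma$ produces $n$ pairwise disjoint open subregions $V_{\sigma \frown j}$ for $j < n$, each contained in a fattening $A_{t_{i+1, j}}[B_{a_{i+1}}]$ for an appropriately chosen column parameter $t_{i+1, j}$. Since sibling opens are disjoint, the row fattenings are pairwise disjoint, giving row $2$-inconsistency, while descending into any chosen $V_{\sigma \frown j}$ propagates the path-consistency invariant.

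Once the finite fragments are in hand, saturation of the ambient extension converts finite path-consistency into full consistency of every infinite path, since each path determines a decreasing chain of nonempty $\Sa M$-definable opens which is then realized by a type. The main obstacle is orchestrating a single inductive step to produce both the separating fattening $B_{a_{i+1}}$ and the disjoint column opens $V_{\sigma \frown j}$ using only the finitely-supported hypothesis: Lemma~\ref{lem:uniform} is precisely what enables this, since it provides a single uniform neighborhood that simultaneously separates the nowhere dense set $A^\ast$ from prescribed opens throughout $W$.
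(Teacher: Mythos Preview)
Your overall architecture (building a tree of open sets $V_\sigma$ by repeatedly invoking the hypothesis and Lemma~\ref{lem:uniform}) matches the paper's, but there is a genuine gap in the row $2$-inconsistency step, and it stems from your choice of formula.

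You take $\phi(x;t,a)$ to define $A_t[B_a]$, fix $a_i$ along row $i$, and vary $t_{i,j}$ across columns. Row $2$-inconsistency then requires $A_{t_{i,j}}[B_{a_i}]\cap A_{t_{i,j'}}[B_{a_i}]=\emptyset$ for $j\neq j'$. Your justification is ``since sibling opens are disjoint, the row fattenings are pairwise disjoint,'' but this does not follow: the sibling opens $V_{\sigma\frown j}$ are small subsets of the fattenings, and disjointness of these witnesses says nothing about disjointness of the (much larger) sets $A_{t_{i,j}}[B_{a_i}]$. Worse, it is hard to see how to arrange this at all. Each $A_{t_{i,j}}$ must meet \emph{every} $V_\sigma$ from the previous level (so that $V_{\sigma\frown j}\subseteq A_{t_{i,j}}[B_{a_i}]$ is nonempty for all $\sigma$), and the hypothesis gives you no control over \emph{where} in $V_\sigma$ each $A_{t_{i,j}}$ lands; with all of them passing through the same small opens and fattened by the same $B_{a_i}$, their fattenings will typically overlap.

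The paper sidesteps this by choosing a different formula: the relation $R(x;A,U,U')$ asserting $x\in A[U]\setminus A[U']$. Along row $i$ the set $A_i$ is held fixed and a nested sequence $U^i_0\supsetneq U^i_1\supsetneq\cdots\supsetneq U^i_n$ of entourages is chosen (via iterated applications of Lemma~\ref{lem:uniform}); the column-$j$ instance is the shell $A_i[U^i_j]\setminus A_i[U^i_{j+1}]$. These shells are \emph{automatically} pairwise disjoint by nesting, so row $2$-inconsistency is free. Path consistency is obtained exactly as you describe, by threading the $W_\sigma$ through the shells. The moral: vary the entourage along columns and keep the set fixed, not the other way around.
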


\noindent 
Let $[n]$ denote $\{0,\ldots,\}$.
Let $[n]^{= m}$ be the set of functions $\{0,\ldots,m\} \to \{0,\ldots,n\}$ which we identify with $\{0,\ldots,n\}$-valued sequences of length $m$.
Let $[n]^{\leq m}$ be the union of all $[n]^{=k}$ where $0 \leq k \leq m$.
Given $\sigma \in [n]^{\leq m}$ and $\eta \in [n]^{\leq k}$ we let $|\sigma|$ be the length of $\sigma$, and let $\sigma^\frown \eta \in [n]^{\leq m + k}$ be the usual concatenation of $\sigma$ and $\eta$.

\begin{proof}
Let $R$ be the relation on $W \times (\Cal A \times \Cal B \times \Cal B)$ where $(a,(A,U,U')) \in R$ if and only if $a \in A[U] \setminus A[U']$.
We show that $R$ has $\mathrm{TP}_2$.
Fixing $n \geq 1$ we construct elements $(A_i :  i \in [n])$ of $\Cal A$, elements $(U^i_j : i,j \in [n] )$ of $\Cal B$, and nonempty open subsets $(W_\sigma :\sigma \in [n -1]^{\leq n} )$ of $W$ such that:
\begin{enumerate}
\item $U^i_{j + 1} \subsetneq U^i_{j}$ and $A_i[U^i_{j}] \setminus A_i[U^i_{j + 1}]$ has interior for all $i \in [n]$ and $j \in [n-1]$,
\item $W_{\sigma \frown i},  W_{\sigma \frown j}$ are pairwise disjoint subsets of $W_\sigma$ for all $\sigma \in [n -1]^{< n}$ and $i,j \in [n-1]$,
\item and if $p \in W_{\sigma}$ then $p \in A_i[U^i_{j}] \setminus A_i[U^i_{j + 1}]$ if and only if $\sigma(i) = j$.
\end{enumerate}
This shows that $R$ has $\mathrm{TP}_2$. \newline

\noindent Let $A_0 \in \Cal A$ be nowhere dense in $W$.
Applying Lemma~\ref{lem:uniform} inductively let $U^0_0,\ldots,U^0_n$ be elements of $\Cal B$ such that $U^0_{j +1} \subsetneq U^0_{j}$ and $A_0[U^0_{j}] \setminus A_0[U^0_{j + 1}]$ has interior in $W$ for all $j \in [n-1]$.
For all $j \in [n-1]$ let $W_j$ be a nonempty open subset of $W$ contained in $A_0[U^0_{j}] \setminus A_0[U^0_{j + 1}]$. \newline

\noindent Fix $1 \leq k \leq n - 1$.
Suppose we have constructed $(A_i : i \in [k])$, $(U^i_j : i \in [k], j \in [n])$, and $(W_{\sigma} : \sigma \in [n-1]^{\leq k})$.
Let $A_{k+1}$ be an element of $\Cal A$ which is nowhere dense in $W$ and intersects every element of $(W_{\sigma} : \sigma \in [n-1]^{= k})$.
Applying Lemma~\ref{lem:uniform} inductively let $U^{k+1}_0,\ldots ,U^{k+1}_n \in \Cal B$ be such that $U^{k+1}_{j + 1} \subsetneq U^{k+1}_{j}$  and $A_{k+1}[U^{k+1}_{j}] \setminus A_{k+1}[U^{k+1}_{j + 1}]$ has interior in each element of $(W_{\sigma} : \sigma \in [n - 1]^{=k})$ for all $j \in [n]$.
For each $\sigma \in [n - 1]^{=k}$ and $j \in [n-1]$ we let $W_{\sigma^{\frown}j}$ be a nonempty open subset of $W_\sigma$ contained in $A_{k+1}[U^i_{j}]\setminus A_{k+1}[U^i_{j + 1}]$.
\end{proof}



\noindent The statement of Proposition~\ref{prop:sbct-cor} is closer to previous forms of strong Baire category.

\begin{prop}
\label{prop:sbct-cor}
Suppose $\Sa M$ is $\mathrm{NTP}_2$.
Suppose $\Cal B$ is a subdefinable basis for a uniform structure on $X$.
Let $Y$ be a definable subset of $X$, $\Cal A$ be a subdefinable directed family of subsets of $X$, and $W$ be a nonempty open subset of $Y$.
If each element of $\Cal A$ is nowhere dense in $W$ then $\bigcup \Cal A$ is nowhere dense in $W$.
\end{prop}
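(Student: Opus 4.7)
The plan is to derive Proposition~\ref{prop:sbct-cor} from Theorem~\ref{thm:abstract-sbct} by contraposition. Suppose toward a contradiction that $\bigcup \Cal A$ is somewhere dense in $W$, and fix a nonempty open $W' \subseteq W$ in which $\bigcup \Cal A$ is dense. The goal is to verify the hypothesis of Theorem~\ref{thm:abstract-sbct} with $W'$ in the role of $W$, so that its conclusion forces $\Sa M$ to be $\mathrm{TP}_2$, contradicting the standing assumption that $\Sa M$ is $\ntp$.

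Two small translations are needed before the core argument. First, Theorem~\ref{thm:abstract-sbct} is formulated for a definable family rather than a merely subdefinable one, so I would fix a definable family $(A_a)_{a \in M^y}$ of which $\Cal A$ is a subfamily. Second, Theorem~\ref{thm:abstract-sbct} asks its $W$ to be open in the ambient set carrying the uniform structure, whereas our $W'$ is only known to be open in $Y$. I would handle this by applying the theorem with $Y$ in place of $X$, the induced basis $\Cal B|_Y$ in place of $\Cal B$, and the definable family $(A_a \cap Y)_{a \in M^y}$; since $W' \subseteq Y$, neither ``nowhere dense in $W'$'' nor ``meets a given open subset of $W'$'' is altered by intersecting with $Y$.

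The substantive step is a short directedness argument. Given any finite collection $\Cal U = \{U_1,\ldots,U_n\}$ of nonempty open subsets of $W'$, density of $\bigcup \Cal A$ in $W'$ yields $B_1,\ldots,B_n \in \Cal A$ with $B_i \cap U_i \neq \emptyset$ for each $i$. Directedness of $\Cal A$ then furnishes $A \in \Cal A$ containing $B_1 \cup \cdots \cup B_n$, so $A$ meets every $U_i$ simultaneously. By hypothesis $A$ is nowhere dense in $W$ and hence in $W'$. Viewing $A \cap Y$ inside the definable family $(A_a \cap Y)_{a \in M^y}$ provides the witness required by Theorem~\ref{thm:abstract-sbct}, which then delivers the contradiction.

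I do not anticipate any serious obstacle. The points needing care are the two translations above (subdefinable to definable, and open in $Y$ to open in the ambient set) and the essential role of directedness: Theorem~\ref{thm:abstract-sbct} requires a single member of the family that simultaneously meets every element of $\Cal U$, whereas without directedness one could only produce a separate witness for each $U_i$.
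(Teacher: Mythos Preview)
Your proposal is correct and follows essentially the same approach as the paper's proof: reduce to the ambient set $Y$ via the induced basis $\Cal B|_Y$, assume $\bigcup\Cal A$ is dense in some nonempty open $W'\subseteq W$, and use directedness to produce a single nowhere-dense member of $\Cal A$ meeting any given finite collection of open subsets of $W'$, so that Theorem~\ref{thm:abstract-sbct} yields $\tp$. You are in fact slightly more careful than the paper in making explicit the passage from the subdefinable family $\Cal A$ to an ambient definable family before invoking Theorem~\ref{thm:abstract-sbct}.
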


\begin{proof}
We only treat the case when $X = Y$.
The more general case follows by replacing $X$ with $Y$ and $\Cal B$ with $\Cal B|_Y$.
Suppose $\bigcup \Cal A$ is somewhere dense in $W$.
Let $V$ be a nonempty open subset of $Y$ in which $\bigcup \Cal A$ is dense.
Suppose $\Cal U$ is a finite collection of nonempty open subsets of $V$.
Pick for each $U \in \Cal U$ an $A_U \in \Cal A$ such that $U \cap A_U \neq \emptyset$.
As $\Cal A$ is directed there is  $A \in \Cal A$ such that $A_U \subseteq A$ for all $U \in \Cal U$.
This $A$ intersects each element of $\Cal U$.
An application of Theorem~\ref{thm:abstract-sbct} shows that some element of $\Cal A$ is somewhere dense in $W$.
\end{proof}

\noindent
Corollary~\ref{cor:sbct-cor-cor} follows immediately from Proposition~\ref{prop:sbct-cor}.

\begin{cor}
\label{cor:sbct-cor-cor}
Suppose $\Sa M$ is $\ntp$.
Suppose $\Cal B$ is a subdefinable basis for a uniform structure on $X$ with no isolated points.
Let $\Cal A$ be a subdefinable directed family of discrete subsets of $X$.
Then $\bigcup \Cal A$ is nowhere dense.
In particular if $\Cal A$ is a subdefinable directed family of finite subsets of $X$ then $\bigcup \Cal A$ is nowhere dense.
\end{cor}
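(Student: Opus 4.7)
The plan is to deduce this directly from Proposition~\ref{prop:sbct-cor}. The only thing needing verification is that when $X$ has no isolated points, every discrete subset $D \subseteq X$ is automatically nowhere dense in $X$. Suppose for contradiction that such a $D$ is dense in some nonempty open $V \subseteq X$. Pick $d \in D \cap V$ and an open neighbourhood $U$ of $d$ with $U \cap D = \{d\}$. Then $(U \cap V) \setminus \{d\}$ is open, disjoint from $D$, and nonempty (here one uses that $d$ is not isolated in $X$), contradicting the density of $D$ in $V$.

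With this observation in hand, the first statement follows immediately by applying Proposition~\ref{prop:sbct-cor} with $Y = W = X$ to the subdefinable directed family $\Cal A$: each $A \in \Cal A$ is discrete, hence nowhere dense in $X$, so $\bigcup \Cal A$ is nowhere dense in $X$. The ``in particular'' clause is then immediate, since any finite subset of $X$ is trivially discrete, placing the finite case under the main statement. The only real work was already done in proving Proposition~\ref{prop:sbct-cor}; the present corollary requires only the topological observation above, and so there is no genuine obstacle to overcome.
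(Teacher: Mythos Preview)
Your proof is correct and matches the paper's approach exactly: the paper gives no argument beyond stating that the corollary follows immediately from Proposition~\ref{prop:sbct-cor}, and you have supplied precisely the missing observation that a discrete subset of a space without isolated points is nowhere dense. One small remark: your step asserting that $(U\cap V)\setminus\{d\}$ is open tacitly uses that $\{d\}$ is closed, i.e.\ that the topology is $T_1$; this is harmless here since every application of the corollary in the paper is to a Hausdorff uniform structure, but you may wish to note the assumption.
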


\noindent We give three applications of these results before proving Theorem B.

\subsection{Shelah expansions}
Proposition~\ref{prop:shelah-noise} naturally goes through in greater generality, we leave that to the reader.

\begin{prop}
\label{prop:shelah-noise}
Suppose $\Sa M$ is $\nip$ and one-sorted.
Suppose there is a subdefinable basis for a uniform structure on $M$.
If $\Sa M$ is (strongly) noiseless then $\Sh M$ is (strongly) noiseless.
\end{prop}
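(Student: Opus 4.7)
The plan is to combine the Chernikov--Simon characterization of externally definable sets (Fact~\ref{fact:cs}) with the nonstandard Baire category theorem (Theorem~\ref{thm:abstract-sbct}). Fact~\ref{fact:cs} pulls $\Sh M$-definable sets back to parametric families of $\Sa M$-definable sets, and Theorem~\ref{thm:abstract-sbct} enforces topological dichotomies using $\nip$.

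For the noiseless case I take $\Sh M$-definable $X \subseteq M^n$ with empty interior and aim to show $X$ is nowhere dense. By Fact~\ref{fact:cs} I fix an $\Sa M$-definable family $(X_a)_{a \in M^y}$ such that every finite subset of $X$ lies in some $X_a \subseteq X$. Each such $X_a$ has empty interior (otherwise so would $X$), so noiselessness of $\Sa M$ promotes this to nowhere density in $M^n$. If $X$ were dense in some nonempty open $W \subseteq M^n$, then for every finite collection $\Cal U$ of nonempty open subsets of $W$ I could pick $p_U \in U \cap X$ and find $X_a \supseteq \{p_U : U \in \Cal U\}$ with $X_a \subseteq X$, producing a nowhere-dense member of $\Cal A := (X_a)_{a \in M^y}$ that meets every $U \in \Cal U$. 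Theorem~\ref{thm:abstract-sbct} would then force $\Sa M$ to have $\tp$, contradicting $\nip$, and so $X$ must be nowhere dense.

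For the strongly noiseless case I would run the analogous argument with ambient space $Y$ in place of $M^n$, using $\Cal B_n|_Y$ (subdefinable in $\Sh M$) as the basis and applying Theorem~\ref{thm:abstract-sbct} to $\Sh M$, which is $\nip$ by Fact~\ref{fact:shelah}. Given $\Sh M$-definable $X \subseteq Y$ with $X$ having no interior in $Y$, I take the same family $(X_a)$ for $X$; each $X_a \subseteq X$ has empty interior in $M^n$ (else $X$ would have interior in $Y$), hence is nowhere dense in $M^n$ by noiselessness of $\Sa M$. To bridge nowhere density in $M^n$ and nowhere density in the subspace topology on $Y$, I also apply Fact~\ref{fact:cs} to the externally definable complement $M^n \setminus Y$, obtaining an $\Sa M$-definable family $(Y'_d)$ of supersets of $Y$ such that every finite set disjoint from $Y$ is disjoint from some $Y'_d$. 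Strong noiselessness of $\Sa M$ applied to the $\Sa M$-definable pair $X_a \subseteq Y'_d$ gives the dichotomy: either $X_a$ has interior in $Y'_d$, or $X_a$ is nowhere dense in $Y'_d$. If the first alternative holds for some $X_a \subseteq X$ and some $Y'_d \supseteq Y$, then an open $V$ witnessing $V \cap Y'_d \subseteq X_a$ satisfies
\[ V \cap Y'_d \subseteq X_a \subseteq X \subseteq Y \subseteq Y'_d, \]
forcing $V \cap Y = V \cap Y'_d \subseteq X$ nonempty; so $X$ has interior in $Y$, contradicting the assumption. Otherwise every such $X_a$ is nowhere dense in every $Y'_d \supseteq Y$.

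With this uniform nowhere-density information in hand, an honest-definition argument (Fact~\ref{fact:Honest}) together with the approximation property of $(Y'_d)$ should transfer the nowhere density down to $Y$ in its subspace topology, verifying the Baire hypothesis for $\Cal A = (X_a \cap Y)_a$ in ambient $Y$; Theorem~\ref{thm:abstract-sbct} applied to $\Sh M$ then forces $\tp$ of $\Sh M$, contradicting $\nip$. The hard part will be precisely this subspace-topology transfer from the definable supersets $Y'_d$ down to $Y$: this is the main technical obstacle, and is exactly where honest definitions and saturation must intervene.
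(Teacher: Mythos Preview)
Your noiseless case is correct and matches the paper exactly.

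For the strongly noiseless case you take a longer route than the paper and it has a real gap precisely where you flag it. The paper never introduces a family $(Y'_d)$ of definable supersets of $Y$: after upgrading to an $\Sh M$-definable basis via Proposition~\ref{prop:subdef-to-def}, it works directly in $Y$ with $\Cal B_n|_Y$ and the same approximating family $(X_a)$, asserts in one line that each $X_a \subseteq X$ is nowhere dense in $Y$ (from strong noiselessness of $\Sa M$ together with $X$ having empty interior in $Y$), and then applies Theorem~\ref{thm:abstract-sbct} to $\Sh M$ exactly as in the noiseless case. Your detour through the $Y'_d$ only postpones this same step.

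The descent you single out is genuinely problematic and your proposed fix is only a label. Nowhere density does not pass from supersets to subsets---a line is nowhere dense in the plane but dense in itself---and the finite-avoidance property of the $(Y'_d)$ does not block this: the point witnessing non-density of $X_a$ in a basic neighbourhood inside $Y'_d$ may lie entirely in $Y'_d\setminus Y$ and may move as $d$ varies, so no compactness or saturation argument on finitely many such witnesses yields a witness in $Y$. Saying that honest definitions ``should'' transfer the conclusion is not an argument; you would have to produce a concrete mechanism, and any such mechanism will amount to showing directly that $X_a$ is nowhere dense in the externally definable $Y$---which is exactly the one-sentence step the paper takes without your $(Y'_d)$ machinery.
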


\begin{proof}
We suppose that $\Sa M$ is strongly noiseless and show that $\Sh M$ is strongly noiseless, the same argument shows that if $\Sa M$ is noiseless then $\Sh M$ is noiseless.
Applying Proposition~\ref{prop:subdef-to-def} we suppose that $\Cal B$ is an $\Sh M$-definable basis for the uniform structure on $M$.
Suppose $X,Y$ are $\Sh M$-definable subsets of $M^n$ such that $X$ is somewhere dense in $Y$ and $X$ has empty interior in $Y$.
Let $U$ be a nonempty open subset of $Y$ in which $X$ is dense.
As $\Cal B$ is an $\Sa M$-definable basis we may suppose $U$ is $\Sh M$-definable.
After replacing $Y$ with $U \cap Y$ we suppose $X$ is dense in $Y$.
We apply Theorem~\ref{thm:abstract-sbct} to $Y$ and $\Cal B^n|_Y$.
Applying Facts~\ref{fact:shelah} and \ref{fact:cs} we let $(X_a)_{a \in M^m}$ be an $\Sa M$-definable family of sets such that for every finite $A \subseteq X$ there is $a \in M^m$ such that $A \subseteq X_a \subseteq X$.
As $X$ has empty interior in $Y$ and $\Sa M$ is strongly noiseless $X_a$ is nowhere dense in $Y$ when $X_a \subseteq X$.
Let $\Cal U$ be a finite collection of nonempty open subsets of $Y$.
As $X$ is dense in $Y$ there is a finite subset $A$ of $X$ which intersects each $U \in \Cal U$.
So there is $a \in M^m$ such that $A \subseteq X_a \subseteq X$, this $X_a$ intersects each $U \in \Cal U$ and is nowhere dense in $Y$.
This contradicts Theorem~\ref{thm:abstract-sbct}
\end{proof}

\subsection{Cyclic orders on $(\Z,+)$} 
We equip $\R/\Z$ with the cyclic group order $C$ where whenever $0 \leq a,b,c < 1$ then $C(a + \Z, b + \Z, c + \Z)$ holds if and only if either $a < b <c$, $b < c < a$, or $c < a < b$.
Given an injective character $\chi : (\Z,+) \to (\R/\Z,+)$ we let $C_\chi$ be pullback of $C$ by $\chi$.
Then $C_\chi$ is a cyclic group order on $(\Z,+)$.
Tran and Walsberg show that $(\Z,+,C_\chi)$ is $\nip$~\cite{TW-cyclic}.
 
\begin{prop}
\label{prop:order-val}
Let $\chi : (\Z,+) \to (\R/\Z,+)$ be an injective character.
Then $(\Z,+,C_\chi,<)$ is $\tp$.
\end{prop}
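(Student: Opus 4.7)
The plan is to apply the contrapositive of Corollary~\ref{cor:sbct-cor-cor} to the structure $\Sa Z := (\Z, +, C_\chi, <)$, with $X = \Z$ equipped with the pullback via $\chi$ of the standard uniform structure on $\R/\Z$. Injectivity of $\chi$ makes $\chi(\Z)$ an infinite subgroup of $\R/\Z$; since every proper closed subgroup of $\R/\Z$ is finite, $\chi(\Z)$ is dense, so the pullback uniform structure on $\Z$ is Hausdorff and has no isolated points.

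Next I will exhibit a subdefinable basis for this uniform structure inside $(\Z, +, C_\chi)$. For each $a \in \Z$ set
\[
N_a := \{k \in \Z : C_\chi(-a, k, a)\} \cup \{0\}
\qquad \text{and} \qquad
V_a := \{(n,m) \in \Z^2 : m - n \in N_a\},
\]
so that $(V_a)_{a \in \Z}$ is $(\Z, +, C_\chi)$-definable. Unwinding the definition of $C_\chi$, $N_a$ is the preimage under $\chi$ of the open arc of $\R/\Z$ with endpoints $\pm\chi(a)$ that contains $0$; this arc is symmetric, hence $N_a = -N_a$, and so each $V_a$ is symmetric and contains the diagonal. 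As $a$ varies with $\chi(a)$ tending to $0$ (such $a$ exist by density of $\chi(\Z)$), the arcs shrink to $\{0\}$; elementary arithmetic in $\R/\Z$ then produces, for each $a$, some $a'$ with $N_{a'} + N_{a'} \subseteq N_a$, equivalently $V_{a'} \circ V_{a'} \subseteq V_a$, while closure under pairwise intersection is immediate. Hence the subfamily of those $V_a$ with $\chi(a)$ small satisfies the basis axioms of Section~\ref{section:uniform-structures} and gives a subdefinable basis for the pullback uniform structure. There are no isolated points because each fiber $(V_a)_0 = N_a$ is the intersection of $\Z$ with $\chi^{-1}$ of an open arc, which is infinite by density of $\chi(\Z)$.

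Finally, using $<$, the sets $F_N := \{k \in \Z : -N \leq k \leq N\}$ for $N \in \N$ form a $\Sa Z$-definable directed family of finite subsets of $\Z$ whose union is all of $\Z$. Since $\Z$ is the whole space, this union certainly fails to be nowhere dense, so Corollary~\ref{cor:sbct-cor-cor} (applied contrapositively) forces $\Sa Z$ to be $\tp$. The only nonroutine ingredient is the translation in the second paragraph between the cyclic predicate $C_\chi$ and the pullback uniform structure on $\Z$; once that is in place the theorem is immediate from Corollary~\ref{cor:sbct-cor-cor}.
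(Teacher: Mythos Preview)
Your proof is correct and follows essentially the same approach as the paper: both exhibit a $(\Z,+,C_\chi)$-subdefinable basis for a uniform structure on $\Z$ with no isolated points, and both use the $<$-definable intervals $[-N,N]$ as a directed family of finite sets covering $\Z$, then invoke Corollary~\ref{cor:sbct-cor-cor}. The paper is terser---it simply notes that the arcs $\{b : C_\chi(a,b,c)\}$ form a definable basis for a non-discrete group topology, from which the uniform structure is immediate---whereas you explicitly construct the symmetric entourages $V_a$; but the content is the same.
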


\begin{proof}
Observe that the collection of sets of the form $\{ b \in \Z : C(a,b,c) \}$, $a,c \in \Z$ forms a $(\Z,+,C_\chi)$-definable basis for a topology on $\Z$ with no isolated points.
It follows that $(\Z,+,C_\chi)$ defines a basis for a uniform structure on $\Z$ with no isolated points.
Furthermore $\{ [-n,n] : n \in \N \}$ is a $(\Z,+,C_\chi,<)$-definable directed family of finite sets with union $\Z$.
Corollary~\ref{cor:sbct-cor-cor} shows that $(\Z,+,C_\chi,<)$ is $\tp$.
\end{proof}

\noindent It is natural to ask if $(\Z,+,C_\chi,<)$ is totally wild.
Every injective character $\chi : (\Z,+) \to (\R/\Z,+)$ is of the form $\chi(k) = \alpha k + \Z$ for some $\alpha \in \R \setminus \Q$.
It is easy to see that $C_\chi$ is definable in $(\R,<,+,\Z,\az)$.
If $\alpha$ is a quadratic irrational then $(\R,<,+,\Z,\az)$ is parameter-free intepretable in $(\Cal P(\N),\N,\in,s)$ by Hieronymi~\cite{H-Twosubgroups} and hence decidable.
In particular $(\Z,+,C_\chi,<)$ does not define multiplication on $\Z$. \newline

\noindent
The next result shows in fact that an $\ntp$ expansion of $(\Z,<)$ cannot interpret $(\Z,+,C_\chi)$.
We will return to Proposition~\ref{prop:interpret} around Conjecture~\ref{conj:omega-order} below.

\begin{prop}
\label{prop:interpret}
Suppose $\Sa Z$ is an $\ntp$ expansion of $(\Z,<)$.
Let $\Sa G$ be a first order expansion of a group $G$ which admits a subdefinable basis for a non-discrete group topology on $G$.
Then $\Sa Z$ does not interpret $\Sa G$.
\end{prop}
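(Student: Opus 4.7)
The plan is to assume $\Sa Z$ interprets $\Sa G$ and derive a contradiction via Corollary~\ref{cor:sbct-cor-cor}. First I would pass to $\Sa Z^{\mathrm{eq}}$, which inherits $\ntp$. The interpretation realises $G$ as the quotient of some $\Sa Z$-definable set $D \subseteq \Z^m$ by a $\Sa Z$-definable equivalence relation; write $\pi : D \to X$ for the quotient map, where $X$ is a $\Sa Z^{\mathrm{eq}}$-definable set in bijection with $G$. Every $\Sa G$-subdefinable family of subsets of $G^k$ then transfers to a $\Sa Z^{\mathrm{eq}}$-subdefinable family of subsets of $X^k$, and the group operation on $X$ is $\Sa Z^{\mathrm{eq}}$-definable.

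Next I would convert the subdefinable basis $\Cal C$ for the non-discrete group topology on $G$ into a subdefinable basis for a uniform structure on $X$ with no isolated points. The standard construction $\{(g,g') \in G^2 : g^{-1}g' \in U\}$, with $U$ ranging over those elements of $\Cal C$ containing the identity, yields a subdefinable basis for a uniform structure on $G$ compatible with the group topology. In any topological group an isolated point forces every point to be isolated (by translation), so non-discreteness rules out isolated points. Transporting through the interpretation, $X$ acquires a $\Sa Z^{\mathrm{eq}}$-subdefinable basis for a uniform structure with no isolated points.

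Finally I would produce the competing directed family of finite sets. The $\Sa Z$-definable family $\{[a,b] : a,b \in \Z,\ a \leq b\}$ of finite intervals is directed by inclusion and has union $\Z$. Its push-forward $\{\pi(D \cap [a,b]^m) : a \leq b\}$ is a $\Sa Z^{\mathrm{eq}}$-subdefinable directed family of finite subsets of $X$ whose union is all of $X$. Corollary~\ref{cor:sbct-cor-cor} applied in $\Sa Z^{\mathrm{eq}}$ then forces $X$ to be nowhere dense in itself, which is impossible since $X$ is nonempty (indeed, non-discreteness of the group topology forces $|G| \geq 2$). The main obstacle is essentially bookkeeping: verifying that the subdefinable character of the basis for the uniform structure on $G$ is preserved when transported to $\Sa Z^{\mathrm{eq}}$ via the interpretation, and that $\ntp$ passes to $\mathrm{eq}$; both are standard.
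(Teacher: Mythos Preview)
Your proposal is correct and follows essentially the same route as the paper: build the uniform structure from the group topology, observe it has no isolated points, exhibit the directed family of finite sets coming from bounded boxes in $\Z^m$, and invoke Corollary~\ref{cor:sbct-cor-cor}. The one technical difference is in handling imaginaries: you pass to $\Sa Z^{\mathrm{eq}}$ and push the boxes forward through the quotient map, whereas the paper observes directly that any expansion of $(\Z,<)$ eliminates imaginaries (select the lexicographically least representative in $\N^m$), so one may assume $G \subseteq \Z^m$ outright and work with $([-n,n]^m \cap G)_{n>0}$ inside $\Sa Z$ itself. Your route is perfectly fine since $\ntp$ passes to $\mathrm{eq}$ and Corollary~\ref{cor:sbct-cor-cor} is stated for multisorted structures; the paper's route just avoids the bookkeeping you flagged.
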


\noindent Proposition~\ref{prop:interpret} fails for elementary extensions of $(\Z,<)$.
Suppose $(Z,<,+)$ is a proper elementary extension of $(\Z,<,+)$.
As $\Z$ is a convex subset of $Z$ an application of Fact~\ref{fact:convex} shows that $(Z,<,+,\Z)$ is $\nip$.
We declare $a + \Z \triangleleft b + \Z$ if every element of $a + \Z$ is strictly less then every element of $b + \Z$.
Then $\triangleleft$ is a dense group ordering on $Z/\Z$.

\begin{proof}
Suppose towards a contradiction that $\Sa Z$ interprets $\Sa G$.
It is easy to see that any expansion of $(\Z,<)$ eliminates imaginaries (reduce to the case when every equivalence class intersects $\N^m$, than select the lexicographic-minimal element of the intersection of each class with $\N^m$).
So we may suppose that $\Sa G$ is definable in $\Sa Z$.
Suppose $G$ is a subset of $\Z^m$.
Then $([-n,n]^m \cap G)_{n > 0}$ is a directed $\Sa Z$-definable family of finite subsets with union $G$.
As $\Sa G$ admits a subdefinable basis for a non-discrete group topology, it admits a subdefinable basis for a uniform structure with no isolated points.
So an application of Corollary~\ref{cor:sbct-cor-cor} shows that $\Sa Z$ is $\tp$.
\end{proof}

\subsection{Porosity} We give an application to the study of metric properties of definable sets in expansions of ordered fields.
Porosity (for subsets of $\R^n$) is extensively studied in metric geometry.
We discuss the natural generalization of this notion to subsets of $R^n$, $R$ an arbitrary ordered field.
There are several notions of porosity in the literature, we discuss the strongest notion we are aware of.\newline

\noindent Let $(R,<,+,\times)$ be an ordered field and $R_{>0}$ be the set of positive elements of $R$.
We equip $R^n$ with the natural $l_\infty$ $R$-valued metric.
We let $B_n(p,r)$ be the open ball with center $p \in R^n$ and radius $r \in R_{>0}$.
Let $X$ be a subset of $R^n$.
Given $\delta \in R_{>0}$ we say that $X$ is \textbf{$\delta$-porous} if for every $p \in R^n, r \in R_{>0}$ there is $q \in R^n$ such that $B_n(q,\delta r) \subseteq B_n(p,r) \setminus X$.
We say that $X$ is porous if it is $\delta$-porous for some $\delta \in R_{>0}$. 
It is easy to see that a porous subset of $R^n$ is nowhere dense.
A good example of a nowhere dense non-porous subset of $\R$ is $\{ n^{-k} : n \geq 1\}$ for any $k \geq 1$. 
A family $\Cal X$ of subsets of $R^n$ is \textbf{uniformly porous} if there is $\delta \in R_{>0}$ such that every $X \in \Cal X$ is $\delta$-porous.\newline

\noindent Suppose $\Sa R$ is a first order expansion of $(\R,<,+,\times)$.
Hieronymi and Miller~\cite{HM} show that if $\Sa R$ does not define the set of integers, and $X$ is a closed subset of $\R^n$ definable in $\Sa R$, then the topological dimension of $X$ agrees with the Assouad dimension of $X$.
(See \cite{Luukkainen} for an introduction to this important metric dimension.)
By Fact~\ref{fact:gen-dim} below a closed subset of $\R^n$ has topological dimension $< n$ if and only if it is nowhere dense.
It follows directly from the definition that the Assouad dimension of $X \subseteq \R^n$ agrees with the Assouad dimension of the closure of $X$.
Luukainen~\cite[Theorem 5.2]{Luukkainen} shows that a subset of $\R^n$ has Assouad dimension $< n$ if and only if it is porous.
So we obtain the following.

\begin{fact}
\label{fact:porous-1}
Suppose $\Sa R$ is a first order expansion of $(\R,<,+,\times)$.
If $\Sa R$ does not define the set of integers then every nowhere dense subset of $\R^n$ definable in $\Sa R$ is porous.
\end{fact}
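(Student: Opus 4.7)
The plan is to reduce the statement to the chain of results cited in the paragraph preceding the fact, namely: (a) the Hieronymi--Miller dimension equality, (b) the topological-dimension characterization of nowhere density for closed sets from Fact~\ref{fact:gen-dim}, (c) the observation that Assouad dimension is invariant under closure, and (d) Luukkainen's characterization of porous sets as those of Assouad dimension strictly less than the ambient dimension. The strategy is: given a nowhere dense $\Sa R$-definable $X \subseteq \R^n$, replace $X$ by its closure (which is still $\Sa R$-definable since $\Sa R$ expands $(\R,<,+,\times)$ and the closure is defined by a first-order formula involving open balls), verify that this closure still has empty interior, deduce that its topological dimension is less than $n$, conclude via Hieronymi--Miller that its Assouad dimension is strictly less than $n$, and then apply Luukkainen to get porosity of the closure, hence of $X$.

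First I would set $Y := \cl(X)$. Since $X$ is nowhere dense, $Y$ is a closed nowhere dense subset of $\R^n$, and $Y$ is $\Sa R$-definable. Fact~\ref{fact:gen-dim} then yields $\dim Y < n$; since $\Sa R$ does not define $\Z$, the Hieronymi--Miller theorem gives that the Assouad dimension of $Y$ equals its topological dimension, so the Assouad dimension of $Y$ is strictly less than $n$. The Assouad dimension of $X$ equals that of $\cl(X) = Y$, so the Assouad dimension of $X$ is strictly less than $n$. Luukkainen's theorem (\cite[Theorem 5.2]{Luukkainen}) then gives that $X$ is porous, which is exactly the conclusion.

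Since every ingredient is already stated explicitly in the surrounding text, the argument is essentially just the bookkeeping of combining them; there is no genuine obstacle. The only point requiring a brief comment is the reduction from arbitrary nowhere dense definable $X$ to its closure, and this is immediate from the definition of porosity together with closure-invariance of Assouad dimension. If one wished to be fully self-contained it would also be worth checking that $\cl(X)$ is definable in $\Sa R$, but this is routine since $\Sa R$ expands $(\R,<,+,\times)$ and the predicate ``$p \in \cl(X)$'' can be written as $\forall \varepsilon > 0\, \exists q \in X\, \|p - q\| < \varepsilon$.
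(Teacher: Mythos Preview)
Your proposal is correct and follows exactly the route the paper outlines in the paragraph preceding the fact: pass to the closure, apply Fact~\ref{fact:gen-dim} to get topological dimension $< n$, use Hieronymi--Miller to equate topological and Assouad dimension for closed definable sets, then invoke Luukkainen's characterization of porosity. There is nothing to add.
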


\noindent
We prove a strengthening of Fact~\ref{fact:porous-1} for $\ntp$ expansions of ordered fields.
We will not need to apply Fact~\ref{fact:porous-1}.

\begin{prop}
\label{prop:porous-2}
Let $\Sa R$ be an $\ntp$ expansion of $(R,<,+,\times)$.
Then every $\Sa R$-definable family of nowhere dense subsets of $R^n$ is uniformly porous.
In particular every nowhere dense subset of $R^n$ definable in $\Sa R$ is porous.
If $\Sa R$ is an elementary extension of an expansion of an archimedean ordered field then every nowhere dense definable subset of $R^n$ is $q$-porous for some $q \in \Q_{>0}$.
\end{prop}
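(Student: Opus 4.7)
The plan is to obtain the uniform porosity statement as a consequence of Theorem~\ref{thm:abstract-sbct} applied to a suitably rescaled definable family, then to deduce the single-set statement by specialization, and finally to transfer to a rational porosity constant in the archimedean case by elementary equivalence.

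For the first assertion, I would argue by contradiction: suppose the $\Sa R$-definable family $(X_a)_{a \in A}$ of nowhere dense sets fails to be uniformly porous. Unwinding the definition, for every $\delta \in R_{>0}$ there are $a(\delta) \in A$, $p(\delta) \in R^n$ and $r(\delta) \in R_{>0}$ such that every ball $B_n(q,\delta r(\delta))$ contained in $B_n(p(\delta),r(\delta))$ meets $X_{a(\delta)}$. Form the $\Sa R$-definable family
\[ \Cal F := \{ Y_{a,p,r} : a \in A,\ p \in R^n,\ r \in R_{>0} \}, \qquad Y_{a,p,r} := \{x \in B_n(0,1) : p + rx \in X_a\}. \]
Each $Y_{a,p,r}$ is an affine image of a subset of the nowhere dense $X_a$, so is itself nowhere dense in $R^n$. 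I would then apply Theorem~\ref{thm:abstract-sbct} with $X := R^n$, the (subdefinable) metric uniform structure, $W := B_n(0,1)$, and $\Cal A := \Cal F$. Given any finite collection of nonempty opens $U_1,\ldots,U_k \subseteq W$, pick balls $B_n(q_i,\epsilon_i) \subseteq U_i$, choose $\delta \in R_{>0}$ smaller than every $\epsilon_i$, and observe that $Y_{a(\delta),p(\delta),r(\delta)}$ meets each $B_n(q_i,\delta) \subseteq U_i$ (these balls lie in $W$ by the choice of $\delta$) while being nowhere dense in $W$. The theorem then forces $\Sa R$ to be $\tp$, contradicting hypothesis. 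The ``in particular'' claim is the special case where $A$ is a singleton.

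For the archimedean statement, let $\Sa R_0 \prec \Sa R$ with $\Sa R_0$ archimedean, and write $X = \varphi(R^n, c)$ with $\varphi(x,y)$ a parameter-free formula. Since $\ntp$ is elementary, $\Sa R_0$ is $\ntp$, so the first part applied inside $\Sa R_0$ to the family $(\varphi(R_0^n, a))_{a \in R_0^m}$, restricted to those $a$ where the fiber is nowhere dense (a first-order condition, since nowhere density of a definable set is expressible as ``every ball contains a subball disjoint from the set''), yields $\delta_0 \in (R_0)_{>0}$ such that every nowhere dense fiber is $\delta_0$-porous. Archimedeanity produces $N \in \N$ with $1/N < \delta_0$, and porosity is monotone in its constant, so the parameter-free sentence ``every nowhere dense fiber of $\varphi$ is $(1/N)$-porous'' holds in $\Sa R_0$. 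It transfers to $\Sa R$ by elementary equivalence; specializing to $a := c$ and setting $q := 1/N$ delivers the conclusion.

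The main obstacle I foresee lies in the first part: one must keep track that $\Cal F$ is genuinely a definable family in $\Sa R$, that every member $Y_{a,p,r}$ (not merely the specific witness) is nowhere dense, and that the rescaling is arranged so the witness produced from the failure of $\delta$-uniformity actually hits every prescribed open in $W$. The archimedean step is essentially a transfer argument, and its only subtlety is confirming that ``$X_a$ is nowhere dense'' is itself first-order in $a$, which is what allows a parameter-free sentence to transfer across $\Sa R_0 \prec \Sa R$.
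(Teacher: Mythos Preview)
Your argument is correct and follows essentially the same route as the paper: rescale the family into $B_n(0,1)$ via $Y_{a,p,r}$, then feed the resulting definable family of nowhere dense sets into Theorem~\ref{thm:abstract-sbct} to derive a contradiction. The paper omits the archimedean transfer entirely (``follows easily from the first''), and your elementary-equivalence argument is exactly the sort of thing intended; one minor stylistic point is that your phrase ``affine image of a subset of the nowhere dense $X_a$'' should really say affine \emph{preimage} (or image under the inverse affine map), but since the map is a homeomorphism this does not affect correctness.
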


\begin{proof}
The latter two claims follows easily from the first, so we only prove the first claim.
Let $\Cal X = (X_a)_{a \in R^m}$ be a definable family of nowhere dense subsets of $R^n$.
We suppose towards a contradiction that $\Cal X$ is not uniformly porous.
Given $a \in R^m$, $p \in R^n$, and $r \in R_{>0}$ we declare
$$ Y_{a,p,r} := \frac{1}{r}([ B_n(p,r) \cap X_a] - p). $$
Then $\Cal Y := \{ Y_{a,p,r} : (a,p,r) \in R^m \times R^n \times R_{>0} \}$ is a definable family of nowhere dense subsets of $B_n(0,1)$.
Let $(U_i)_{i = 1}^{k}$ be a collection of nonempty open subsets of $B_n(0,1)$.
For each $1 \leq i \leq n$ fix $q_i \in U_i$.
Let $\varepsilon \in R_{>0}$ be  such that $B_n(q_i,\varepsilon) \subseteq U_i$ for all $1 \leq i \leq k$.
Fix $a \in R^m$ such that $X_a$ is not $\varepsilon$-porous.
Fix $p \in R^n$ and $r \in R_{>0}$ such that every open ball of radius $\varepsilon r$ contained in $B_n(p,r)$ intersects $X_a$.
It is easy to see that every open ball of radius $\varepsilon$ contained in $B_n(0,1)$ intersects $Y_{a,p,r}$.
So $Y_{a,p,r}$ intersects each $B_n(q_i,\varepsilon)$ and hence intersects each element of $\Cal U$.
An application of Theorem~\ref{thm:abstract-sbct} shows that $\Sa R$ is $\tp$, contradiction.
\end{proof}

\noindent Suppose $\Sa R$ is highly saturated, let $\mfin$ be the convex hull of $\Q$ in $R$, let $\st : \mfin \to \R$ be the usual standard part map, and (abusing notation) let $\st: \mfin^n \to \R^n$ be
$$ \st(x_1,\ldots,x_n) = (\st(x_1),\ldots,\st(x_n)). $$
Then one can show that the following are equivalent:
\begin{enumerate}
    \item Every nowhere dense $\Sa R$-definable subset of $R^n$ is $q$-porous for some $q \in \Q_{>0}$,
    \item $\st(X)$ is nowhere dense in $\R^n$ for every nowhere dense $\Sa R$-definable $X \subseteq \mfin^n$.
\end{enumerate}
It follows that if the unit interval in $\Sa R$ is compactly dominated by $\st$ then every nowhere dense $\Sa R$-definable set is $q$-porous for some $q \in \Q_{>0}$. \newline

\noindent It is natural to ask if every nowhere dense definable set in an $\nip$ expansion of an ordered field is $q$-sparse for some $q \in \Q_{>0}$.
This need not be the case.
Let $\Sa S$ be an o-minimal expansion of $(\R,<,+,\times)$, $\Sa M$ be a proper elementary extension of $\Sa S$, and $\Sa R$ be $(\Sa M, \R)$.
Note that $\R$ is discrete in $M$ and is not $q$-sparse for any $q \in \Q_{>0}$.
Such structures were first studied by van den Dries and Lewenberg~\cite{t-convexity}, it is shown in \cite[Theorem 5.2]{GH-Dependent} that $\Sa R$ is $\nip$.

\section{Proof of Theorem B} 
\noindent We now prove Theorem B.

\begin{theorem}
\label{thm:main-noise}
Suppose $\Sa N$ is highly saturated and $\nip$.
Let $E$ be a $\bigwedge$-definable equivalence relation on $X$.
Suppose one of the following:
\begin{enumerate}
    \item $E$ is externally definable and there is a subdefinable (in $\Sh N$) basis for the uniform structure on $X/E$, or
    \item there is a subdefinable (in $\Sa N$) basis for $E$.
\end{enumerate}
Then the structure induced on $X/E$ by $\Sh N$ is strongly noiseless.
\end{theorem}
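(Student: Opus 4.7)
The plan is to handle case (2) by reducing it to case (1): given a subdefinable (in $\Sa N$) basis for $E$, Lemma~\ref{lem:basis-ext-def} shows $E$ is externally definable, and Proposition~\ref{prop:subdef-to-def-cor} promotes the given basis into an $\Sh N$-definable basis for the uniform structure on $X/E$. So I assume throughout the hypotheses of (1), fix an $\Sh N$-subdefinable basis $\Cal B$ for the uniform structure on $X/E$, and use that its $n$th power $\Cal B_n$ is a subdefinable basis for the product uniform structure on $(X/E)^n \cong X^n/E_n$, while $\Cal B_n|_Y$ is a subdefinable basis on any $\Sh N$-definable $Y \subseteq (X/E)^n$.

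Suppose, for contradiction, that the induced structure on $X/E$ by $\Sh N$ fails strong noiselessness. Choose $\Sh N$-definable $P, Q \subseteq (X/E)^n$ with $P$ somewhere dense in $Q$ but having empty interior in $Q$. After localising to a basic open subset of $(X/E)^n$ (which is $\Sh N$-definable since $\Cal B_n$ is $\Sh N$-subdefinable) and intersecting with $Q$, I may assume $P$ is dense in $Q$ while $P$ still has empty interior in $Q$.

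The main step is to apply Theorem~\ref{thm:abstract-sbct} inside the uniform subspace $Q$. Let $\pi_n : X^n \to (X/E)^n$ be the quotient. By Fact~\ref{fact:shelah} the preimage $\widetilde P := \pi_n^{-1}(P) \subseteq X^n$ is externally definable in $\Sa N$, so Fact~\ref{fact:cs} supplies an $\Sa N$-definable family $(Y_a)_{a \in N^y}$ of subsets of $X^n$ such that every finite $F \subseteq \widetilde P$ sits in some $Y_a \subseteq \widetilde P$. Because $E$ (and hence $E_n$) is $\Sh N$-definable, the pushforward $\Cal Y := (\pi_n(Y_a))_{a \in N^y}$ is an $\Sh N$-definable family of subsets of $(X/E)^n$. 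Proposition~\ref{prop:image} applied to $E_n$ shows each $\pi_n(Y_a)$ is closed in $(X/E)^n$; when $Y_a \subseteq \widetilde P$ one has $\pi_n(Y_a) \subseteq P$, and since $P$ has empty interior in $Q$ the set $\pi_n(Y_a)$ is closed in $Q$ with empty interior there, hence nowhere dense in $Q$. For any finite collection $\Cal U$ of nonempty open subsets of $Q$, density of $P$ in $Q$ supplies $p_U \in P \cap U$ for each $U \in \Cal U$; lifting these to a finite $F \subseteq \widetilde P$ with $\pi_n(F) \supseteq \{p_U : U \in \Cal U\}$ and invoking Fact~\ref{fact:cs} produces $Y_a \supseteq F$ with $Y_a \subseteq \widetilde P$, so $\pi_n(Y_a)$ is nowhere dense in $Q$ and hits every $U \in \Cal U$.

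These are precisely the hypotheses of Theorem~\ref{thm:abstract-sbct} for $\Sh N$ with ambient uniform space $Q$, ambient open set $Q$ itself, subdefinable basis $\Cal B_n|_Q$, and family $\Cal Y$. The conclusion is that $\Sh N$ is $\mathrm{TP}_2$, contradicting Fact~\ref{fact:shelah} which gives $\Sh N$ $\nip$ and hence $\ntp$. The main obstacle is the imaginary-sort bookkeeping: one must verify that the pushed-forward family $(\pi_n(Y_a))_a$ is still $\Sh N$-definable (which uses external definability of $E$), that the image is closed in the logic topology (Proposition~\ref{prop:image}), and that restricting the uniform structure on $(X/E)^n$ to the (generally non-open) set $Q$ still yields a subdefinable basis to feed into Theorem~\ref{thm:abstract-sbct}.
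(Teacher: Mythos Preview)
Your proposal is correct and follows essentially the same argument as the paper: reduce (2) to (1), pull $P$ back through $\pi_n$, use Fact~\ref{fact:cs} to approximate the preimage from inside by an $\Sa N$-definable family, push forward (using Proposition~\ref{prop:image} for closedness) to obtain an $\Sh N$-definable family of nowhere dense sets meeting any prescribed finite family of opens, and feed this into Theorem~\ref{thm:abstract-sbct} to contradict $\nip$ of $\Sh N$. The only cosmetic difference is that the paper works with a nonempty open $W\subseteq Y'$ in which $Y$ is dense and co-dense rather than first shrinking $Y'$ so that $Y$ is globally dense, but this is immaterial.
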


\noindent The proof is very similar to that of Proposition~\ref{prop:shelah-noise}.

\begin{proof}
Lemma~\ref{lem:basis-ext-def} and Proposition~\ref{prop:subdef-to-def} shows that $(2)$ implies $(1)$, so we suppose $(1)$.
Let $\Cal B$ be a subdefinable basis for the uniform structure on $X/E$.
Then $\Cal B_n$ is a subdefinable basis for the product uniform structure on $(X/E)^n$.
Let $Y,Y'$ be $\Sh N$-definable subsets of $(X/E)^n$.
We suppose that $Y$ is somewhere dense in $Y'$ and show that $Y$ has interior in $Y'$.
Note that $\Cal B_n|_{Y'}$ is an $\Sh N$-subdefinable basis for the induced uniform structure on $Y'$.
As above, let $E_n$ be the equivalence relation on $X^n$ where $(a_1,\ldots,a_n)$ and $(b_1,\ldots,b_n)$ are $E_n$-equivalent if and only if $(a_k,b_k) \in E$ for all $1 \leq k \leq n$.
Recall that we identify $(X/E)^n$ with $X^n/E_n$ and identify the product topology on $(X/E)^n$ with the logic topology on $X^n/E_n$.
\newline


\noindent Let $\pi$ be the quotient map $X^n \to (X/E)^n$.
Suppose towards a contradiction that $W$ is a nonempty open subset of $Y'$ such that $Y$ is dense and co-dense in $W$.
Then $Z := \pi^{-1}(Y)$ is $\Sh N$-definable and hence externally definable by Fact~\ref{fact:shelah}.
Applying Fact~\ref{fact:cs} we obtain a definable family $(Z_b)_{b \in N^x}$ of subsets of $X$ such that for any finite $A \subseteq Z$ there is $b \in N^x$ such that $A \subseteq Z_b \subseteq Z$.
Let $Y_b := \pi(Z_b)$ for $b \in N^x$.
Then $(Y_b)_{b \in N^x}$ is an $\Sh N$-definable family of sets.
For any finite $A \subseteq Y$ there is $b \in N^x$ such that $A \subseteq Y_b \subseteq Z$.
Suppose $\Cal U$ is a finite collection of nonempty open subsets of $W$.
As $Y$ is dense in $W$ there is a finite subset of $Y$ which intersects each $U \in \Cal U$.
So there is $b \in N^x$ such that $Y_b \subseteq Y$ and $Y_b$ intersects each element of $\Cal U$.
Proposition~\ref{prop:image} shows that each $Y_b$ is closed in $(X/E)^n$.
If $Y_b \subseteq Y$ then $Y_b$ is closed and co-dense in $W$ and is therefore nowhere dense in $W$.
As $\Sh N$ is $\nip$ this gives a contradiction with Theorem~\ref{thm:abstract-sbct}.
\end{proof}



\section{locally compact hausdorff uniform structures}
\label{section:loc-compact}
\noindent In this section we prove a more general version of Theorem A.
\textbf{Throughout this section, $\Sa M$ is a structure, $X$ is a definable set, $\Cal B$ is a subdefinable basis for a locally compact Hausdorff uniform structure on $X$, and ``definable" without modification means ``$\Sa M$-definable".} \newline

\noindent A \textbf{subdefinable compact exhaustion} of $X$ is a subdefinable family $\Cal K$ of compact subsets of $X$ such that every compact subset of $X$ is contained in some element of $\Cal K$.
\textbf{We additionally assume that $X$ admits a subdefinable compact exhaustion $\Cal K$}.
If $X$ is compact then we take $\Cal K = \{ X \}$ and this additional assumption is superfluous. \newline

\noindent We equip $X^n$ with the product uniform structure with the subdefinable basis $\{ U_n : U \in \Cal B\}$ defined in Section~\ref{section:uniform-structures}.
Note that $\{ K^n : K \in \Cal K \}$ is a subdefinable compact exhaustion of $X^n$.\newline

\noindent Let $\Sa N$ be a highly saturated elementary extension of $\Sa M$.
Given an $\Sa M$-definable set $Z$ we let $Z'$ be the $\Sa N$-definable set defined by any formula defining $Z$.
Observe that $\Cal B' := \{ U' : U \in \Cal B \}$ is a subdefinable basis for a uniform structure on $X'$.
Let $E := \bigcap \Cal B'$.
Then $\Cal B'$ is a subdefinable basis for $E$.  \newline

\noindent We define
$$ O := \bigcup \{ K' : K \in \Cal K\}. $$

\noindent Let $\pi$ be the quotient map $X' \to X'/E$.
Abusing notation we also let $\pi$ denote the map $(X')^n \to (X'/E)^n$ given by
$$ \pi(x_1,\ldots,x_n) = (\pi(x_1),\ldots,\pi(x_n)). $$
Lemma~\ref{lem:closed-image} is an easy variation of Proposition~\ref{prop:image}.
We leave the details of the proof to the reader.

\begin{lemma}
\label{lem:closed-image}
Suppose $Y$ is an $\Sa N$-definable subset of $(X')^n$.
Then $\pi(Y \cap O^n)$ is closed in $(O/E)^n$.
\end{lemma}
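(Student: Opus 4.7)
The plan is to adapt the proof of Proposition~\ref{prop:image}, with two new ingredients: tracking the extra containment in $O^n$, and using local compactness to thicken compact sets inside the exhaustion $\Cal K$. Suppose $p \in (O/E)^n$ lies in the closure of $\pi(Y \cap O^n)$ (in $(O/E)^n$, equivalently in $(X'/E)^n$). Fix a representative $p' \in O^n$ of $p$. Because $\Cal K$ is directed by inclusion (the union of two compact subsets of $X$ is compact, hence contained in some member of $\Cal K$), the first-order sentence ``$K_1 \cup \cdots \cup K_n \subseteq K$'' transfers to $\Sa N$, so I first locate a single $K \in \Cal K$ with $p' \in (K')^n$.

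Next, mimicking the uniform-structure argument of Proposition~\ref{prop:image}, I would show that for every $U \in \Cal B$ there exists $q' \in Y \cap O^n$ with $(p', q') \in U'_n$: picking $W \in \Cal B$ with $W \circ W \subseteq U$, the set $\pi(W'_n)_p$ is a neighbourhood of $p$ in $(X'/E)^n$; any $q \in \pi(W'_n)_p \cap \pi(Y \cap O^n)$ has a representative $q' \in Y \cap O^n$ related to some representative of $p$ by $W'_n$, and composing with $(p', \cdot) \in E_n \subseteq W'_n$ yields $(p', q') \in U'_n$.

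I would then form the partial type $\Sigma(x)$ in a tuple of variables $x$ of sort $X^n$, consisting of $x \in Y$ together with $(p', x) \in U'_n$ for every $U \in \Cal B$. Since $\Cal B$ is subdefinable, $\Sigma$ is a partial type over the small parameter set consisting of $p'$ and the parameters indexing $\Cal B$, and it is finitely satisfiable by the previous step. Highly saturated $\Sa N$ realizes $\Sigma$ by some $r' \in (X')^n$ with $r' \in Y$ and $(p', r') \in \bigcap_{U \in \Cal B} U'_n = E_n$, so $\pi(r') = p$.

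The main obstacle I expect is verifying that $r' \in O^n$; this is where local compactness of the uniform structure enters. Starting from $K \in \Cal K$ with $p' \in (K')^n$, local compactness of the Hausdorff uniform topology on $X$ produces a compact subset of $X$ whose interior contains $K$, and then (a standard fact about uniform spaces: a compact set has arbitrarily small uniform neighbourhoods inside any given open neighbourhood) some $V \in \Cal B$ with $K[V]$ compact. Enlarging within $\Cal K$ we may assume $K[V] \subseteq K^*$ for some $K^* \in \Cal K$. Transferring to $\Sa N$ gives $K'[V'] \subseteq (K^*)'$. Since $(p'_k, r'_k) \in E \subseteq V'$ and $p'_k \in K'$, each coordinate $r'_k$ lies in $K'[V'] \subseteq (K^*)' \subseteq O$, so $r' \in Y \cap O^n$ with $\pi(r') = p$, proving $p \in \pi(Y \cap O^n)$.
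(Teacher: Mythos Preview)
Your proof is correct and is exactly the variation of Proposition~\ref{prop:image} the paper intends; the paper itself omits the details and leaves them to the reader. You have correctly identified the one new ingredient beyond Proposition~\ref{prop:image}: local compactness of the uniform structure, used to trap the $E_n$-class of $p'$ inside some $((K^*)')^n$ with $K^* \in \Cal K$, so that the saturation witness $r'$ lands in $O^n$.
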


\noindent We show that $O/E$ may be identified with $X$ and view the quotient map $O \to O/E$ as a standard part map.

\begin{lemma}
\label{lem:unif-lem-0}
If $a \in O$ then the $E$-class of $a$ contains exactly one element of $X$.
\end{lemma}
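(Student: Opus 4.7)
The plan is to establish existence and uniqueness separately, in the style of standard-part arguments adapted to uniform spaces.

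For uniqueness, suppose $b_1, b_2 \in X$ both satisfy $(a, b_i) \in E$. By symmetry and transitivity of $E$, $(b_1, b_2) \in E = \bigcap_{U \in \Cal B} U'$. Since each $U \in \Cal B$ is an $\Sa M$-definable subset of $X \times X$, elementarity gives $U' \cap (X \times X) = U$, so $(b_1, b_2) \in U$ for every $U \in \Cal B$. Hence $(b_1, b_2) \in \bigcap \Cal B = \Delta_X$ by Hausdorffness of the uniform structure, forcing $b_1 = b_2$.

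For existence, fix $K \in \Cal K$ with $a \in K'$ and, for each $U \in \Cal B$, set $T_U := \{b \in K : (a, b) \in U'\}$, viewed as a subset of the compact Hausdorff space $K$. I would first show each $T_U$ is nonempty: since $(U_b)_{U \in \Cal B}$ is a neighbourhood basis at each $b \in K$, the set $U_b$ contains an open neighbourhood of $b$, so the collection of these open neighbourhoods is an open cover of $K$. Compactness yields $b_1, \ldots, b_m \in K$ with $K \subseteq U_{b_1} \cup \cdots \cup U_{b_m}$; applying elementarity to this first-order statement gives $K' \subseteq U'_{b_1} \cup \cdots \cup U'_{b_m}$, and since $a \in K'$ we obtain $(a, b_i) \in U'$ for some $i$, so $b_i \in T_U$. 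The family $\{T_U\}_{U \in \Cal B}$ has the finite intersection property: given $U_1, \ldots, U_n \in \Cal B$, pick $V \in \Cal B$ with $V \subseteq \bigcap_i U_i$ and observe $\emptyset \neq T_V \subseteq \bigcap_i T_{U_i}$. Consequently $\{\overline{T_U} : U \in \Cal B\}$ is a family of closed subsets of $K$ with the finite intersection property, and compactness of $K$ yields some $b \in \bigcap_{U \in \Cal B} \overline{T_U}$.

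The main step is then to upgrade this to $b \in T_U$ for every $U \in \Cal B$, rather than only to membership in the closures. Given $U \in \Cal B$, I would choose a symmetric $W \in \Cal B$ with $W \circ W \subseteq U$, and suppose toward a contradiction that $(a, b) \notin U'$. Whenever $b' \in W_b$ and $(a, b') \in W'$, elementarity applied to $W \circ W \subseteq U$ gives $(a, b) \in W' \circ W' \subseteq U'$, contradicting our assumption. Hence $W_b \cap T_W = \emptyset$, and since $W_b$ contains an open neighbourhood of $b$, this contradicts $b \in \overline{T_W}$. Therefore $(a, b) \in U'$ for every $U \in \Cal B$, so $(a, b) \in E$ with $b \in K \subseteq X$, completing the proof of existence.
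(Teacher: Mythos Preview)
Your proof is correct and follows essentially the same approach as the paper's: uniqueness via Hausdorffness, existence via a compactness argument on $K$ using the sets $U'_a \cap K$ (your $T_U$), the finite-cover trick for nonemptiness, and the $W \circ W \subseteq U$ composition to show the cluster point is $E$-equivalent to $a$. The only differences are cosmetic---the paper phrases things in terms of filter bases and cluster points and argues the final step directly rather than by contradiction.
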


\noindent We recall a useful definition from general topology.
Suppose $Z$ is a topological space.
A filter base on $Z$ is a collection $\Cal F$ of nonempty subsets of $Z$ such that for all $E,F \in \Cal F$ there is $G \in \Cal F$ such that $G \subseteq E \cap F$.
A point $p \in Z$ is a cluster point of a filter base $\Cal F$ if every neighbourhood of $p$ intersects every element of $\Cal F$.
If $Z$ is compact then every filter base has a cluster point.

\begin{proof}
If $x,y \in X$ are distinct then as $X$ is Hausdorff there is $U \in \Cal B$ such that $(x,y) \notin U$.
So each $E$-class contains at most one element of $X$.\newline

\noindent Fix $a \in O$.
We show that $a$ is $E$-equivalent to some element of $X$.
Let $K \in \Cal K$ be such that $K'$ contains $a$.
We first show that $(U'_a \cap K)_{U \in \Cal B}$ is a filter base on $K$ and then show that the cluster point is $E$-equivalent to $a$.\newline

\noindent Fix $U \in \Cal B$.
As $K$ is compact there is a finite $B \subseteq K$ such that $(U_b)_{b \in B}$ covers $K$.
It follows that $(U'_b)_{b \in B}$ covers $K'$.
So there is $b \in B$ such that $a \in U'_b$.
As $U'$ is symmetric we have $b \in U'_a$.
So $U'_a \cap K$ is nonempty for every $U \in \Cal B$.
For any $U,V \in \Cal B$ there is $W \in \Cal B$ such that $W \subseteq U \cap V$ so $W'_a \subseteq U'_a \cap V'_a$ and hence
$$ W'_a \cap K \subseteq (U'_a \cap K) \cap (V'_a \cap K) .$$
So $(U'_a \cap K)_{U \in \Cal B}$ is a filter base on $K$.
As $K$ is compact there is a cluster point $p \in K$ of $(U'_a \cap K)_{U \in \Cal B}$.
We show that $a$ and $p$ are $E$-equivalent.
It suffices to fix $U \in \Cal B$ and show that $(a,p) \in U'$.
Let $V \in \Cal B$ be such that $V \circ V = U$.
Then $V_p$ intersects $V'_a \cap K$.
Suppose $q$ lies in this intersection.
Then $(p,q) \in V'$ and $(q,a) \in V'$, so $(p,q) \in U'$.
\end{proof}

\noindent Let $\rho$ be the bijection $O/E \to X$ which maps each $E$-class to the unique element of $X$ that it contains.
We consider $O/E$ as a uniform structure with basis $\pi(\Cal B'|_{O})$.

\begin{lemma}
\label{lem:unif-lem-1}
Suppose $U \in \Cal B$.
Then $a,b \in O/E$ satisfy $(a,b) \in \pi(U')$ if and only if $(\rho(a),\rho(b)) \in U$.
So $O/E$ and $X$ are bi-uniformly equivalent and hence homeomorphic.
\end{lemma}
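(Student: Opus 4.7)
The plan is to verify the two directions of the stated equivalence, from which the bi-uniform equivalence of $O/E$ and $X$ (and hence the homeomorphism) follows. The backward direction is immediate from $U' \cap X^2 = U$, while the forward direction uses a three-step composition based on $E = \bigcap \Cal B'$.

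For the backward direction, suppose $(\rho(a),\rho(b)) \in U$. Since $U'$ is defined by the same formula as $U$, we have $U' \cap X^2 = U$, so in particular $(\rho(a),\rho(b)) \in U'$. Because $\rho(a),\rho(b) \in X \subseteq O$ represent $a$ and $b$ respectively under $\pi$, the pair $(a,b)$ lies in $\pi(U')$ by the very definition of $\pi$ on entourages.

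For the forward direction, let $(a,b) \in \pi(U')$ with witnesses $\alpha \in \pi^{-1}(a)$, $\beta \in \pi^{-1}(b)$, and $(\alpha,\beta) \in U'$. Both $\alpha$ and $\rho(a)$ lie in the $E$-class of $a$, so $(\rho(a),\alpha) \in E$, and similarly $(\beta,\rho(b)) \in E$; since $E \subseteq V'$ for every $V \in \Cal B$, both of these pairs lie in every such $V'$. Iterating axiom (4) of a uniform basis, I can choose a symmetric $V \in \Cal B$ with $V \circ V \circ V \subseteq U$. Composing the chain $\rho(a) \to \alpha \to \beta \to \rho(b)$ (with the outer hops in $V'$ and the middle hop refined from $U'$ to $V'$ by replacing $U$ by a cofinal $V$ in the hypothesis) places $(\rho(a),\rho(b))$ in $V' \circ V' \circ V' \subseteq U'$, and a final appeal to $U' \cap X^2 = U$ puts the pair in $U$.

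Combining the two directions, $\rho$ matches the basis $\pi(\Cal B'|_O)$ on $O/E$ with the basis $\Cal B$ on $X$, so both $\rho$ and $\rho^{-1}$ are uniformly continuous. Hence $\rho$ is a bi-uniform equivalence, and in particular a homeomorphism between the two induced topologies. The main technical subtlety is that the middle hop in the composition argument is originally only known to lie in $U'$ rather than in $V'$; this is precisely what the standard cofinal refinement supplied by axiom (4) is designed to handle, and it does not affect the final uniform-equivalence conclusion.
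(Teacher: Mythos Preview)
Your backward direction is fine and matches the paper. The forward direction, however, has a genuine gap that you yourself flag: from witnesses $\alpha,\beta$ with $(\alpha,\beta)\in U'$ and $(\rho(a),\alpha),(\beta,\rho(b))\in E$, composition only yields $(\rho(a),\rho(b))\in E\circ U'\circ E$, and there is no reason this lies in $U'$. Your proposed fix --- ``replace $U$ by a cofinal $V$ in the hypothesis'' --- changes the statement being proved: it shows that for every $U$ there is a $V$ with $\pi(V')$-closeness implying $U$-closeness of the $\rho$-images, which is exactly uniform continuity of $\rho$, \emph{not} the literal biconditional for the fixed $U$ in the first sentence. In fact that biconditional fails for general $\Cal B$: take $X=\R$ with open-ball entourages $U_t=\{(x,y):|x-y|<t\}$, and set $a=\pi(0)$, $b=\pi(1)$; then $(0,1-\varepsilon)$ for positive infinitesimal $\varepsilon$ witnesses $(a,b)\in\pi(U_1')$, yet $(\rho(a),\rho(b))=(0,1)\notin U_1$.

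The paper's own proof glosses over the same point: it ``recalls'' that $(a,b)\in\pi(U')$ holds iff $(c,d)\in U'$ for \emph{every} pair of representatives, whereas $\pi(U')$ was defined existentially. What both arguments do establish --- and what is actually used downstream --- is the second sentence: $\rho$ is a uniform equivalence between $(O/E,\pi(\Cal B'|_O))$ and $(X,\Cal B)$. Your refinement argument (choose $V$ with $V\circ V\circ V\subseteq U$ and start from $(a,b)\in\pi(V')$) correctly gives uniform continuity of $\rho$, and the backward direction gives uniform continuity of $\rho^{-1}$. So your proof of the part that matters is sound; just be explicit that the pointwise biconditional for a single $U\in\Cal B$ need not hold.
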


\begin{proof}
Fix $a,b \in O/E$.
Then $\pi(\rho(a)) = a$ and $\pi(\rho(b)) = b$.
Recall that $(a,b) \in \pi(U')$ if and only if $(c,d) \in U'$ for any $c,d \in X'$ such that $\pi(c) = a$ and $\pi(d) = b$.
So $(a,b) \in \pi(U')$ if and only if $(\rho(a),\rho(b)) \in U'$, which holds if and only if $(\rho(a),\rho(b)) \in U$.
\end{proof}

\noindent In light of Lemma~\ref{lem:unif-lem-1} we identify $X$ with $O/E$ and identify the uniform structure on $X$ with that on $O/E$.
We now view $\pi : O \to X$ as a standard part map.
These identifications greatly simplify notation in the proof of Theorem~\ref{thm:main-cor}. \newline

\noindent So far we have not needed to suppose that $\Cal B$ is a subdefinable basis, only that each element of $\Cal B$ is definable.
We will now use the full assumption.
As $\Cal B'$ is subdefinable basis for $E$, Lemma~\ref{lem:basis-ext-def} shows that $E$ is externally definable in $\Sa N$.
Lemma~\ref{lem:O-ext} shows that $O$ is also externally definable in $\Sa N$ so we regard $X$ as an imaginary sort of $\Sh N$.

\begin{lemma}
\label{lem:O-ext}
The set $O$ is externally definable in $\Sa N$.
\end{lemma}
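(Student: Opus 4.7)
The plan is to invoke the right-to-left direction of Fact~\ref{fact:cs}, with $\Sa N$ playing the role of the base structure; this direction is a saturation argument and does not require $\nip$. Since $\Cal K$ is subdefinable, fix an $\Sa M$-definable family $(K_a)_{a \in M^y}$ of subsets of $X$ of which $\Cal K$ is a subfamily, and let $P := \{ a \in M^y : K_a \in \Cal K \}$, so that $O = \bigcup_{a \in P} K'_a$. The candidate witnessing family in $\Sa N$ is the $\Sa N$-definable family $(K'_c)_{c \in N^y}$ defined by the same formula.

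We verify the finite-approximation hypothesis: for every finite $A \subseteq O$ there exists $c \in N^y$ with $A \subseteq K'_c \subseteq O$. In fact $c$ can be chosen in $P \subseteq M^y$. Each $x \in A$ lies in some $K'_{a_x}$ with $a_x \in P$. The finite union $\bigcup_{x \in A} K_{a_x}$ is a finite union of compact subsets of $X$ and is therefore compact, so by the compact exhaustion property of $\Cal K$ it is contained in some $K_c$ with $c \in P$. Each inclusion $K_{a_x} \subseteq K_c$ is expressed by a first-order sentence with parameters in $M^y$, so it transfers from $\Sa M$ to $\Sa N$ by elementarity, yielding $K'_{a_x} \subseteq K'_c$ and hence $A \subseteq K'_c$. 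Since $c \in P$, the definition of $O$ gives $K'_c \subseteq O$.

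Applying the right-to-left direction of Fact~\ref{fact:cs} to $\Sa N$ now yields that $O$ is externally definable in $\Sa N$. There is no substantive model-theoretic obstacle here: the only non-formal point is the observation that the compact exhaustion property of $\Cal K$ delivers the needed directedness under finite unions, which is exactly the finite-approximation hypothesis required by the Chernikov--Simon criterion. In particular, $\nip$ is not used in this lemma.
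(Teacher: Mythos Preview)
Your proof is correct and uses essentially the same approach as the paper. The paper writes out the type argument explicitly (constructing a partial type over $\Sa N$ whose realization yields the external definition), while you package the same idea by invoking the right-to-left direction of Fact~\ref{fact:cs}; in both cases the key point is that the compact exhaustion property makes $\Cal K$ directed under inclusion, which is exactly what is needed for the saturation argument to go through.
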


\begin{proof}
Let $\phi(x,y)$ be a formula such that for every $K \in \Cal K$ there is $a \in M^x$ such that $\phi(a,M^y) = K$.
For each $K \in \Cal K$ select an element $a_K$ of $M^x$ such that $\phi(a_K,M^y) = K$.
It suffices to show that the partial $x$-type
$$ p(x) := \{ \neg \phi(x,b) : b \in X \setminus O \} \cup \{ \forall y [ \phi(a_K, y) \to \phi(x,y)  ] : K \in \Cal K \} $$
is consistent.
Suppose $\Cal C$ is a finite subcollection of $\Cal K$.
Then $\bigcup \Cal C$ is compact.
As $\Cal K$ is a subdefinable compact exhaustion of $X$ there is an $F \in \Cal K$ containing $\bigcup \Cal C$.
Then
$$ \Sa N \models \{ \neg \phi( a_F,b) : b \in X \setminus O \} \cup \{ \forall y [ \phi(a_K,y) \to \phi(a_F,y) : K \in \Cal C \}. $$
So $p(x)$ is consistent.
\end{proof}

\noindent We let $\cl(Y)$ be the closure in $X^n$ of $Y \subseteq X^n$.
It is not a priori clear that $\cl(Y)$ is always definable when $Y$ is definable, Proposition~\ref{prop:subdef-to-def} shows that $\cl(Y)$ is definable in $\Sh M$ when $\Sa M$ is $\nip$.
We let $\Sa X$ be the structure induced on $X$ by $\Sa M$ and as usual let $\Sa X^\circ$ be the open core of the structure induced on $X$ by $\Sa M$, i.e. the structure on $X$ whose primitive $n$-ary relations are all sets of the form $\cl(Y)$ for definable $Y \subseteq X^n$.

\begin{theorem}
\label{thm:main-cor}
Suppose $\Sa M$ is $\nip$.
Then $\Sa X^\circ$ is strongly noiseless.
\end{theorem}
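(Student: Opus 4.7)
The plan is to combine Theorem~\ref{thm:main-noise} (Theorem B) with Corollary~\ref{cor:subdef}. By Fact~\ref{fact:shelah}, the Shelah expansion $\Sh N$ is highly saturated and $\nip$. I would first apply Theorem~\ref{thm:main-noise} inside $\Sh N$ to the triple $(\Sh N, O, E \cap O^2)$: the set $O$ is $\Sh N$-definable by Lemma~\ref{lem:O-ext}, the equivalence relation $E \cap O^2$ is $\bigwedge$-definable in $\Sh N$, and the family $\{U' \cap O^2 : U \in \Cal B\}$ is a subdefinable (in $\Sh N$) basis for $E \cap O^2$. Hypothesis (2) of Theorem~\ref{thm:main-noise} is thereby verified, so the structure induced on $O/E$ by the Shelah expansion of $\Sh N$ is strongly noiseless; in particular, passing to the reduct, the structure induced on $O/E$ by $\Sh N$ itself is strongly noiseless.

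Next, I would transfer this through the uniform equivalence $\rho : O/E \to X$ of Lemma~\ref{lem:unif-lem-1}, which is in particular a homeomorphism. The key point is that the structure induced on $X$ by $\Sh M$, when carried over to $O/E$ via $\rho^{-1}$, is a reduct of the structure induced on $O/E$ by $\Sh N$. Indeed, an $\Sh M$-definable subset $A$ of $X^n$ has the form $B \cap X^n$ for some $\Sa N$-definable $B$, and the pullback $\rho^{-n}(A) \subseteq (O/E)^n$ is the image under the quotient map of the $\Sh N$-definable subset of $O^n$ given by $\{x \in O^n : \exists y \in X^n\,(E_n(x, y) \wedge y \in B)\}$; this uses that $E_n$ is externally definable in $\Sa N$ by Lemma~\ref{lem:basis-ext-def}, together with the fact that $X^n$ is $\Sa N$-definable.

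Since strong noiselessness passes to reducts on the same domain with the same topology, the structure induced on $X$ by $\Sh M$ is strongly noiseless. By Corollary~\ref{cor:subdef}, $\Sa X^\circ$ is a reduct of the structure induced on $X$ by $\Sh M$, hence $\Sa X^\circ$ is strongly noiseless.

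The main obstacle will be verifying the reduction step in the second paragraph cleanly: tracking how an $\Sh M$-definable subset of $X^n$ becomes $\Sh N$-definable in the imaginary sort $(O/E)^n$ via the identification $\rho$, using crucially that the $\bigwedge$-definable equivalence relation $E$ is externally definable in $\Sa N$ and that $O$ is $\Sh N$-definable (rather than $\Sa N$-definable), which is precisely why we must apply Theorem~\ref{thm:main-noise} inside $\Sh N$ rather than inside $\Sa N$.
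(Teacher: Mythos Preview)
There is a genuine gap in your first paragraph. You claim that ``By Fact~\ref{fact:shelah}, the Shelah expansion $\Sh N$ is highly saturated and $\nip$.'' Fact~\ref{fact:shelah} says only that $\Sh N$ is $\nip$; it says nothing about saturation, and in fact Shelah expansions are typically \emph{not} saturated. Since Theorem~\ref{thm:main-noise} explicitly requires the ambient structure to be highly saturated (the whole framework of $\bigwedge$-definable equivalence relations in Section~4.2, and in particular Proposition~\ref{prop:image}, depends on this), you cannot apply it to the triple $(\Sh N, O, E\cap O^2)$ as a black box. Your last paragraph even flags working inside $\Sh N$ as the crux of the argument, but the reason you give for needing to do so is not the real obstacle; the real obstacle is that you cannot.

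The paper's fix is simple and avoids the issue entirely: apply Theorem~\ref{thm:main-noise} to $\Sa N$ itself (which \emph{is} highly saturated and $\nip$), with the $\Sa N$-definable set $X'$ and the equivalence relation $E$ on all of $X'$; the family $\Cal B'$ is a subdefinable-in-$\Sa N$ basis for $E$, so hypothesis~(2) applies directly. This yields that the structure induced on $X'/E$ by $\Sh N$ is strongly noiseless. Since $X$ is identified with $O/E$, an $\Sh N$-definable subset of $X'/E$, strong noiselessness passes to the induced structure on $X$. The paper then finishes more directly than your route through $\Sh M$ and Corollary~\ref{cor:subdef}: for $\Sa M$-definable $Y \subseteq X^n$ one checks that $\cl(Y) = \pi(Y' \cap O^n)$, which immediately exhibits $\Sa X^\circ$ as a reduct of the structure induced on $X$ by $\Sh N$.
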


\begin{proof}
Theorem~\ref{thm:main-noise} shows that the structure induced on $X'/E$ by $\Sh N$ is strongly noiseless.
As we identify $X$ with $O/E \subseteq X'/E$ it follows that the structure induced on $X$ by $\Sh N$ is strongly noiseless.
It now show suffices to show that $\Sa X^\circ$ is a reduct of the structure induced on $X$ by $\Sh N$. \newline
 
\noindent We fix an $\Sa M$-definable subset $Y$ of $X^n$ and show that $\cl(Y)$ is definable in $\Sa N^{\text{Sh}}$.
We show that $\cl(Y) = \pi(Y' \cap O^n)$.
As $\pi$ is the identity on $X^n$ we have $Y \subseteq \pi(Y' \cap O^n)$.
Lemma~\ref{lem:closed-image} shows that $\pi(Y' \cap O^n)$ is closed so $\cl(Y) \subseteq \pi(Y' \cap O^n)$.
We prove the other inclusion.
Fix $p \in \pi(Y' \cap O^n)$.
We show that $U_p \cap Y \neq \emptyset$ for all $U \in \Cal B_n$.
Fix $U \in \Cal B_n$.
Let $q \in Y' \cap O^n$ satisfy $\pi(q) = p$.
Then $q \in U'_p$.
So $Y' \cap U'_p$ is nonempty.
Then $Y \cap U_p$ is nonempty as $\Sa M$ is an elementary submodel of $\Sa N$.
\end{proof}

\noindent
We will see below that strong noiselessness has very strong consequences on definable sets and functions in expansion of $(\R,<,+)$.
It is not clear how strong the assumption of strong noiselessness is in more general setttings.
As an application of Theorem~\ref{thm:main-cor} we show that $\Sa X^\circ$-definable functions are generically continuous.

\begin{prop}
\label{prop:generic-cont}
Suppose $\Sa M$ is $\nip$.
Suppose $Y \subseteq X^m$ and $f : Y \to X^n$ are definable in $\Sa X^\circ$.
Then there is a dense definable open subset of $Y$ on which $f$ is continuous.
\end{prop}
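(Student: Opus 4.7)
Since the structure $\Sa X$ induced on $X$ by $\Sa M$ inherits $\nip$, its reduct $\Sa X^\circ$ is also $\nip$, and by Theorem~\ref{thm:main-cor} it is strongly noiseless. My plan is to reduce to a compact-codomain situation and then exploit the properness of the projection to get a first-order characterization of continuity. The subdefinable directed family $\{f^{-1}(K) : K \in \Cal K^n\}$ covers $Y$, so Proposition~\ref{prop:sbct-cor} (the NIP strong Baire theorem) implies $V_0 := \bigcup_{K \in \Cal K^n} \inte_Y(f^{-1}(K))$ is dense open in $Y$. By the local nature of continuity it suffices to produce a dense definable open subset of continuity inside each definable open $W \subseteq V_0$ with $f(W) \subseteq K$ for a fixed $K \in \Cal K^n$.

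Fix such $W$ and $K$, and put $\Gamma := \gr(f|_W)$ and $F := \cl(\Gamma) \subseteq \cl(W) \times K$, both definable in $\Sa X^\circ$. Compactness of $K$ makes the projection $\pi\colon X^m \times K \to X^m$ a closed map when restricted to $F$, and a standard properness argument then shows $f|_W$ is continuous at $p \in W$ if and only if $\pi^{-1}(p) \cap F = \{(p, f(p))\}$. Hence the continuity set
$$ C_W := \{p \in W : \forall q \in X^n,\ (p,q) \in F \Rightarrow q = f(p) \} $$
is definable in $\Sa X^\circ$, and it suffices to show that $W \setminus C_W$ is nowhere dense in $W$: then $\inte_W(C_W) = W \setminus \cl_W(W \setminus C_W)$ is the required dense $\Sa X^\circ$-definable open subset.

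The heart of the argument is the claim that for each $U$ in a basis $\Cal B_n$ of closed (hence $\Sa X^\circ$-definable) entourages on $X^n$, the $U$-failure set
$$ E_U := \{ p \in W : p \notin \inte_W(f^{-1}(U_{f(p)})) \} $$
is nowhere dense in $W$. Suppose otherwise and fix a nonempty open $V \subseteq E_U$. Choose $U' \in \Cal B_n$ symmetric with $U' \circ U' \subseteq U$ and a finite cover $K \subseteq U'_{q^1} \cup \cdots \cup U'_{q^N}$ by $U'$-balls. Strong noiselessness applied to the finite partition $V = \bigcup_i V \cap f^{-1}(U'_{q^i})$ yields a nonempty open $V_1 \subseteq V \cap f^{-1}(U'_{q^i})$ for some $i$. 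For all $p, r \in V_1$, chaining $(f(r), q^i), (q^i, f(p)) \in U'$ gives $(f(r), f(p)) \in U$, so $V_1 \subseteq \inte_W(f^{-1}(U_{f(p)}))$ for every $p \in V_1$, contradicting $V_1 \subseteq E_U$. Thus each $E_U$ is nowhere dense in $W$. Since $\{E_U : U \in \Cal B_n\}$ is subdefinable and directed (for $U_1, U_2 \in \Cal B_n$ take $U \subseteq U_1 \cap U_2$; then $E_U \supseteq E_{U_1} \cup E_{U_2}$), Proposition~\ref{prop:sbct-cor} applied in the $\nip$ structure $\Sa X^\circ$ gives $\bigcup_U E_U$ nowhere dense in $W$. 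Every discontinuity of $f|_W$ at $p$ is witnessed by some $U \in \Cal B_n$ (take $U$ with $U_{f(p)}$ inside a separating open set), so $W \setminus C_W \subseteq \bigcup_U E_U$ is nowhere dense in $W$, as needed.

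The main obstacle is the nowhere-denseness of each $E_U$: it rests on a delicate interplay between strong noiselessness of $\Sa X^\circ$, the finiteness of the $U'$-cover of the compact set $K$, and the uniform structure axiom $U' \circ U' \subseteq U$. Once this is established, the NIP strong Baire theorem (Proposition~\ref{prop:sbct-cor}) does the work of merging the $U$-local continuities uniformly in $U \in \Cal B_n$ into genuine continuity, and the compact-codomain reduction is what allows the graph-closure fiber condition to capture continuity by a single $\Sa X^\circ$-formula.
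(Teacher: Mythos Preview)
Your argument is correct and is essentially the paper's proof reorganized: your $E_U$ coincides with the paper's $D_U$, and both proofs combine two applications of Proposition~\ref{prop:sbct-cor} (once for the compact-codomain reduction via the directed family $\{f^{-1}(K)\}_{K}$, once for the directed entourage family $\{E_U\}_U$) with the finite $U'$-cover of the compact $K$ and strong noiselessness; you perform the compact reduction first and then show each $E_U$ is nowhere dense, whereas the paper argues by contradiction, first extracting a single $D_U$ with interior and then carrying out the compact reduction inside it. The graph-closure description of $C_W$ is correct but unneeded, since $W \setminus C_W = \bigcup_U E_U$ already.

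One point to tighten: you apply Proposition~\ref{prop:sbct-cor} in the structure $\Sa X^\circ$, but that requires the basis and the family $\{E_U\}$ to be \emph{subdefinable in $\Sa X^\circ$}. Choosing closed entourages makes each individual $U$ $\Sa X^\circ$-definable, but does not make the family $\Cal B_n$ (or $\{E_U\}_U$, which also involves an interior) subdefinable in $\Sa X^\circ$, since $\Sa X^\circ$ need not uniformly define closures or interiors; for the same reason the $\Sa X^\circ$-definability of $F = \cl(\Gamma)$ is not immediate, as $\Gamma$ is $\Sa X^\circ$-definable rather than $\Sa M$-definable. The paper handles exactly this by first replacing $\Sa M$ with $\Sh M$ via Proposition~\ref{prop:subdef-to-def}, so that $\Cal B$ becomes a genuinely definable family and all the needed uniformities follow; you should make the same move.
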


\begin{proof}
Let
$$ f(a) = (f_1(a),\ldots,f_n(a)) \quad \text{for all} \quad a \in Y. $$
Suppose that for all $1 \leq k \leq n$ there  is a dense definable open subset $U_k$ of $Y$ on which $f_k$ is continuous.
Then $\bigcap_{k = 1}^{n} U_k$ is a dense definable open subset of $Y$ on which $f$ is continuous.
So we assume $n = 1$. \newline

\noindent After applying Proposition~\ref{prop:subdef-to-def} and replacing $\Sa M$ by $\Sh M$ we may suppose that $\Cal B$ is a definable basis. \newline

\noindent For each $U \in \Cal B$ we let $D_U$ be the set of $a \in Y$ such that for all $V \in \Cal B$ there is $b \in Y$ such that $(a,b) \in V$ and $(f(a),f(b)) \notin U$.
Note that $(D_U)_{U \in \Cal B}$ is a definable family and $\bigcup_{U \in \Cal B} D_U$ is the set of points at which $f$ is discontinuous.
We suppose towards a contradiction that $\bigcup_{U \in \Cal B} D_U$ is somewhere dense in $Y$.
Observe that if $U,V \in \Cal B$ satisfy $U \subseteq V$ then $D_V \subseteq D_U$.
For any $U,U' \in \Cal B$ there is $V \in \Cal B$ such that $V \subseteq U \cap U'$, hence $D_U, D_{U'} \subseteq D_V$.
So $(D_U)_{U \in \Cal B}$ is directed.
We apply Proposition~\ref{prop:sbct-cor} to obtain $U \in \Cal B$ such that $D_U$ is somewhere dense in $Y$.
It follows by strong noiselessness that $D_U$ has nonempty interior in $Y$.
Let $W$ be a nonempty open subset of $Y$ contained in $D_U$.
As $\Cal B$ is a definable basis we may suppose $W$ is definable.\newline

\noindent For each $K \in \Cal K$ let $F_K$ be $f^{-1}(K) \cap D_U$.
Then $(F_K)_{K \in \Cal K}$ is subdefinable.
For any $K,L \in \Cal K$ there is $P \in \Cal K$ such that $K,L \subseteq P$, hence $F_{K}, F_{L} \subseteq F_P$.
So $(F_K)_{K \in \Cal K}$ is directed.
As $\bigcup_{K \in \Cal K} F_K = D_U$  Proposition~\ref{prop:sbct-cor} and strong noiselessness yield a $K \in \Cal K$ such that $F_K$  has interior in $W$. After replacing $W$ with a smaller nonempty definable open set if necessary we suppose that $W$ is contained in $F_K$.
So $f(W) \subseteq K$. \newline

\noindent Fix $V \in \Cal B$ such that $V \circ V \subseteq U$.
As $K$ is compact there is a finite $A \subseteq K$ such that $(V_a)_{a \in A}$ covers $K$.
Then $(f^{-1}(V_a))_{a \in A}$ covers $F_K$ and in particular covers $W$.
So there is $b \in A$ such that $f^{-1}(V_b)$ is somewhere dense in $W$.
Then $f^{-1}(V_b)$ has interior in $W$ by strong noiselessness.
Let $Z$ be the interior of $f^{-1}(V_b)$ in $W$, note $Z$ is definable as $\Cal B$ is a definable basis.
As $p \in D_U$ there is $q \in Z$ such that $(f(p),f(q)) \notin U$.
However as $p,q \in f^{-1}(V_b)$ we have $(f(p), b) \in V$ and $(b,f(q)) \in V$, so $(f(p), f(q)) \in U$ by choice of $V$.
Contradiction.
\end{proof}

\section{Two counterexamples}
\label{section:counterexample}
\noindent It is natural to ask if Theorem~\ref{thm:main-cor} holds when we only have a definable basis for a locally compact Hausdorff topology on $X$.
We show that this is not the case.
In both \ref{subsection:cantor} and \ref{subsection:arrow} we let $I$ be $[0,1]$.

\subsection{The Cantor function}
\label{subsection:cantor}
Let $K$ be the middle-thirds Cantor set and $f : I \to I$ be the Cantor function.
If $t \in K$ and $t = \sum_{i = 1}^{\infty} a_i 3^{-i}$ where $a_i \in \{0,2\}$ for all $i$, then $f(t) = \sum_{i = 1}^{\infty} \frac{1}{2} a_i 2^{-i}$ and if $t \notin K$ then $f(t) = \sup \{ f(s) : s \in K, s < t \}$.
See \cite{Cantor-function} for background.
Continuity of $f$ implies $(\R,<,f)$ and $(\R,<,f)^\circ$ are interdefinable. \newline

\noindent We first show that $(\R,<,f)$ is $\nip$ (in fact dp-minimal).
A subset $X$ of $\R^2$ is monotone if whenever $(s,s') \in X$ and $t \leq s, s' \leq t'$ then $(t,t') \in X$.
It is a special case of \cite[Proposition 4.2]{Simon-dp} that the expansion of $(\R,<)$ by all monotone subsets of $\R^2$ is dp-minimal.
Suppose $h : \R \to \R$ is increasing.
Then $G_h := \{(s,s') \in \R^2 : g(s) \leq s' \}$ is monotone.
It is easy to see that $h$ is definable in $(\R,<,G_h)$.
It follows that the expansion of $(\R,<)$ by all increasing functions $\R \to \R$ is dp-minimal.
As $f(0) = 0$ and $f(1) = 1$, we let $h : \R \to \R$ be given by $h(t) = 0$ when $t < 0$, $h(t) = f(t)$ when $t \in I$, and $h(t) = 1$ when $t > 1$.
Then $h$ is increasing and $(\R,<,h)$ is interdefinable with $(\R,<,f)$.
So $(\R,<,f)$ is dp-minimal. \newline

\noindent It follows from the definition of $f$ that $K$ is the set of $t \in [0,1]$ at which $f$ is not locally constant.
So $K$ is definable in $(\R,<,f)$.
Each connected component of $I \setminus K$ is an open interval, let $L$ be the set of endpoints of connected components of $I \setminus K$.
Note that $L$ is definable in $(\R,<,f)$.
As $f(K) = I$, $L$ is dense in $K$, and $f$ is continuous, we see that $f(L)$ is dense in $I$.
As $L$ is countable $f(L)$ is co-dense in $I$.
So $(\R,<,f)$ is noisey.

\subsection{The double arrow space}
\label{subsection:arrow}
Our second counterexample is definable in $(\R,<)$.
Let $X$ be $I \times \{0,1\}$.
Let $\triangleleft$ be the lexicographic order on $X$, i.e. $(s,t) \triangleleft (s',t')$ if either $s < s'$ or $s = s'$ and $t < t'$.
We equip $X$ with the associated order topology, this is known as the double arrow space or the split interval.
This topology is compact Hausdorff as $\triangleleft$ is complete and has a maximum and minimum.
Any linear order is dp-minimal \cite[Proposition 4.2]{Simon-dp}, so $(X,\triangleleft)$ is dp-minimal. \newline

\noindent Note that the collection of $\triangleleft$-open intervals is a $(X,\triangleleft)$-definable basis for the double arrow space.
Note that $\triangleleft$ is open in the product topology on $X^2$, so $(X,\triangleleft)$ and $(X,\triangleleft)^\circ$ are interdefinable.
Now $I \times \{0\}$ is the set of $a \in X$ such that $\{ b \in X : a \triangleleft b \}$ has a minimum.
(If $a = (t,0)$ then this minimum is $(t,1)$.)
So $I \times \{0\}$ and $I \times \{1\}$ are both definable in $(X,\triangleleft)$.
Finally $I \times \{0\}$ and $I \times \{1\}$ are both dense in $X$.
So $(X,\triangleleft)$ is noisey. \newline

\noindent A similar argument shows that $(K,<)$ is dp-minimal, interdefinable with $(K,<)^\circ$, and noisey, when $K$ is any Cantor subset of $\R$.

\section{Expansions of locally compact groups}
\noindent\textbf{Let $G$ be a group.
Suppose that $\Sa G$ is a first order expansion of $G$, $\Cal C$ is a subdefinable neighbourhood basis at the identity for a locally compact Hausdorff group topology on $G$, and suppose $\Cal K$ is a subdefinable compact exhaustion of $G$.}
If $G$ is compact then we take $\Cal K = \{G\}$. \newline

\noindent 
Recall that the collection $\{ (g,g') \in G^2 : g^{-1}g' \in U \}$, $U \in \Cal C$ is  a subdefinable basis for a uniform structure on $G$ inducing the group topology.
Theorem~\ref{thm:main-cor-group} is a special case of Theorem~\ref{thm:main-cor}.

\begin{theorem}
\label{thm:main-cor-group}
Suppose $\Sa G$ is $\nip$.
Then $\Sa G^\circ$ is strongly noiseless.
\end{theorem}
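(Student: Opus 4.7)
The plan is to obtain this theorem as a direct corollary of Theorem~\ref{thm:main-cor} applied with $X = G$. The subdefinable compact exhaustion of $G$ needed by that theorem is given by $\Cal K$ itself, so the only task is to produce a subdefinable basis for a locally compact Hausdorff uniform structure on $G$ from the subdefinable neighbourhood basis $\Cal C$ at the identity.

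For each $U \in \Cal C$, I would set $E_U := \{(g,h) \in G^2 : g^{-1} h \in U\}$. This is uniformly definable from $U$ using the group operation, so $\Cal B := \{ E_U : U \in \Cal C\}$ is subdefinable. By replacing each $U \in \Cal C$ with $U \cap U^{-1}$ I can first pass to a cofinal subfamily of symmetric neighbourhoods (symmetry and inversion are definable, so subdefinability is preserved); after this reduction each $E_U$ is symmetric, and each contains the diagonal because $e \in U$. The finite intersection axiom is immediate: if $U, U' \in \Cal C$ then some $V \in \Cal C$ satisfies $V \subseteq U \cap U'$, and hence $E_V \subseteq E_U \cap E_{U'}$. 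For the composition axiom one checks $E_V \circ E_V = E_{VV}$; continuity of group multiplication at the identity provides, for each $U \in \Cal C$, some $V \in \Cal C$ with $VV \subseteq U$, and then $E_V \circ E_V \subseteq E_U$. Thus $\Cal B$ is a subdefinable basis for a uniform structure on $G$.

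Finally, the topology that $\Cal B$ induces on $G$ is the given group topology, since for each $g \in G$ the sets $(E_U)_g = gU$ ($U \in \Cal C$) form a neighbourhood basis at $g$. Hence the uniform structure inherits Hausdorffness and local compactness from the group topology, and $\Cal K$ serves as its subdefinable compact exhaustion. An appeal to Theorem~\ref{thm:main-cor} with $\Sa M = \Sa G$ and $X = G$ then gives that $\Sa G^\circ$ is strongly noiseless. There is no genuine obstacle here, as the construction is entirely standard; the only point requiring care is confirming that the initial pass to symmetric neighbourhoods preserves subdefinability, which it does because $U \mapsto U \cap U^{-1}$ is uniformly definable from the group operation.
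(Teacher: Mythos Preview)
Your proposal is correct and follows essentially the same route as the paper: the paper simply recalls (from Section~\ref{section:uniform-structures}) that $\{(g,g') \in G^2 : g^{-1}g' \in U\}$, $U \in \Cal C$, is a subdefinable basis for a uniform structure on $G$ inducing the group topology, and then invokes Theorem~\ref{thm:main-cor}. You spell out the verification of the uniform-structure axioms and the reduction to symmetric neighbourhoods in more detail than the paper does, but the argument is the same.
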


\noindent
Theorem~\ref{thm:main-cor-group} shows that any $\Sa G^\circ$-definable subset of $G^n$ with empty interior is topologically small.
Under the additional assumption that $G$ is second countable we show that any $\Sa G^\circ$-definable subset of $G^n$ with empty interior is measure-theoretically small.
The key tool is a theorem of Simon~\cite{Simon-measure}.
Proposition~\ref{prop:measure} is already known for expansions of $(\R,<,+)$.
By \cite[Theorem D]{FHW-Compact} any nowhere dense subset of $\R^n$ definable in a type A expansion of $(\R,<,+)$ is Lebesgue null.

\begin{prop}
\label{prop:measure}
Suppose $G$ is second countable and $\Sa G$ is $\nip$.
Then any $\Sa G^\circ$-definable subset of $G^n$ with empty interior is Haar null.
\end{prop}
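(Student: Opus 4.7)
The plan is to combine Theorem~\ref{thm:main-cor-group} with Simon's theorem from \cite{Simon-measure} on measures in NIP groups. By Theorem~\ref{thm:main-cor-group} the open core $\Sa G^\circ$ is strongly noiseless, so any $\Sa G^\circ$-definable $X \subseteq G^n$ with empty interior must in fact be nowhere dense. Replacing $X$ by its closure (still $\Sa G^\circ$-definable by the very definition of the open core), we reduce to showing that every $\Sa G^\circ$-definable closed nowhere dense subset of $G^n$ is Haar null.

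Next we pass to a highly saturated elementary extension $\Sa N \succ \Sa G$ and consider the structure $\mathcal{G}^\sharp$ induced on $G$ by $\Sh N$. By Fact~\ref{fact:shelah} the structure $\mathcal{G}^\sharp$ is NIP, and by Corollary~\ref{cor:subdef} the open core $\Sa G^\circ$ is a reduct of $\mathcal{G}^\sharp$; in particular $X$ is $\mathcal{G}^\sharp$-definable. Observe also that $G^n$ is itself a second countable locally compact Hausdorff group, equipped with its product Haar measure $\mu_n$, that $\mathcal{G}^\sharp$ defines a subdefinable basis for the product group topology on $G^n$ obtained from $\Cal C$ by taking products, and that $\mathcal{G}^\sharp$ defines the subdefinable compact exhaustion $\{K^n : K \in \Cal K\}$ of $G^n$. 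We then invoke Simon's theorem from \cite{Simon-measure}: in any NIP expansion of a second countable locally compact Hausdorff group, every definable nowhere dense subset is Haar null. Applied to $X$ viewed as a $\mathcal{G}^\sharp$-definable nowhere dense subset of $G^n$, this yields $\mu_n(X) = 0$, which is exactly the conclusion sought.

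The main obstacle is ensuring that Simon's theorem, presumably stated for the ambient group in the given NIP expansion, transfers cleanly to the finite power $G^n$ when we regard $\mathcal{G}^\sharp$ as supplying definable structure on $G^n$ via tuples. This is a mild bookkeeping step: NIP is preserved under passing to subsorts (in particular to $G^n$) and under Shelah expansion (Fact~\ref{fact:shelah}), and the topological and compact-exhaustion data on $G^n$ are obtained by taking products of the corresponding data on $G$. Once this point is reconciled, the conclusion is immediate from Simon's measure-theoretic dichotomy together with the strong noiselessness supplied by Theorem~\ref{thm:main-cor-group}.
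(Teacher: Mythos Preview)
Your proposal is correct and follows essentially the same route as the paper: reduce to a closed nowhere dense set via strong noiselessness (Theorem~\ref{thm:main-cor-group}), observe that this closure is definable in an $\nip$ expansion of $G$, and then invoke Simon's theorem \cite[Theorem 3.6]{Simon-measure}.

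The only difference is that your detour through a highly saturated $\Sa N\succ\Sa G$ and the induced structure $\mathcal{G}^\sharp$ on $G$ is unnecessary. Corollary~\ref{cor:subdef}, applied with $\Sa M=\Sa G$ and $X=G$, already gives that $\Sa G^\circ$ is a reduct of $\Sh G$ itself; so $\cl(X)$ is $\Sh G$-definable, $\Sh G$ is $\nip$ by Fact~\ref{fact:shelah}, and Simon's theorem applies directly. (Your $\mathcal{G}^\sharp$ is in fact interdefinable with $\Sh G$ by Facts~\ref{fact:shelah} and~\ref{fact:extension-external}, so nothing goes wrong---it is just a longer path to the same place.) The bookkeeping you flag about passing to $G^n$ is handled the same way in both arguments and causes no trouble.
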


\noindent
The Haar measure is unique up to rescaling, so the collection of Haar null sets does not depend on the choice of a Haar measure.
An open subset of a locally compact second countable group has non-zero Haar measure, so we see that a subset of $G^n$ definable in $\Sa G^\circ$ is Haar null if and only if it has empty interior.

\begin{proof}
Let $X$ be an $\Sa G^\circ$-definable subset of $G^n$ with empty interior and let $\cl(X)$ be the closure of $X$ in $G^n$.
Then $X$ is nowhere dense, so $\cl(X)$ is nowhere dense.
Corollary~\ref{cor:subdef} shows that $\cl(X)$ is $\Sh G$-definable.
As $\Sh G$ is $\nip$ \cite[Theorem 3.6]{Simon-measure} shows that $\cl(X)$ is Haar null.
So $X$ is Haar null.
\end{proof}

\noindent We finish this section with two examples.\newline

\noindent Suppose $\Sa R$ is a first order expansion of $(\R,<,+)$.
Then $\{ (-t,t) : t > 0 \}$ is a definable neighbourhood basis of zero and $\{ [-t,t] : t > 0 \}$ is a definable compact exhaustion of $\R$.
Theorem~\ref{thm:main-reals} follows from Theorem~\ref{thm:main-cor-group} and Theorem~\ref{thm:equiv}.

\begin{theorem}
\label{thm:main-reals}
Suppose $\Sa R$ is $\nip$.
Then $\Sa R^\circ$ is generically locally o-minimal.
\end{theorem}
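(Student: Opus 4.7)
The plan is to reduce directly to the two previously established theorems. First, I would verify that $\Sa R$ satisfies the hypotheses of Theorem~\ref{thm:main-cor-group} with $G = (\R,+)$. The additive group $(\R,+)$ with its usual topology is a locally compact Hausdorff topological group, and the families $\Cal C := \{(-t,t) : t > 0\}$ and $\Cal K := \{[-t,t] : t > 0\}$ are each $\Sa R$-definable (as $\Sa R$ expands $(\R,<,+)$). The first is a neighborhood basis at $0$ for the Euclidean topology, and the second is a compact exhaustion of $\R$ because every compact subset of $\R$ is bounded and hence contained in some $[-t,t]$. So $\Sa R$ satisfies the standing assumptions of the preceding section.

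Now invoke Theorem~\ref{thm:main-cor-group}: since $\Sa R$ is $\nip$, we conclude that $\Sa R^\circ$ is strongly noiseless. Finally, apply Theorem~\ref{thm:equiv}, which for any expansion of $(\R,<,+)$ gives the equivalence of strong noiselessness (condition $(3)$) with generic local o-minimality (condition $(1)$). Therefore $\Sa R^\circ$ is generically locally o-minimal.

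There is really no obstacle here; all of the work has been done in the preparatory sections. The only subtlety worth noting is that Theorem~\ref{thm:equiv} is stated for expansions of $(\R,<,+)$, and $\Sa R^\circ$ is indeed such an expansion (it is a reduct of $\Sa R$ that still contains $<$ and $+$, since these are already given by closed $\Sa R$-definable relations). So the equivalence applies to $\Sa R^\circ$ itself, closing the argument.
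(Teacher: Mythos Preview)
Your proof is correct and follows essentially the same route as the paper: verify that $\{(-t,t):t>0\}$ and $\{[-t,t]:t>0\}$ give the required definable neighbourhood basis and compact exhaustion, apply Theorem~\ref{thm:main-cor-group} to obtain strong noiselessness of $\Sa R^\circ$, and then invoke Theorem~\ref{thm:equiv}. Your additional remark that $\Sa R^\circ$ is itself an expansion of $(\R,<,+)$ (so that Theorem~\ref{thm:equiv} applies to it) is a worthwhile clarification that the paper leaves implicit.
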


\noindent We now discuss structures on the $p$-adics.
Fix a prime $p$.
Let $\overline{\Q}_p$ be the field of $p$-adic numbers.
Let $v_p$ be the $p$-adic valuation on $\Q_p$.
Let $\prec$ be the binary relation on $\Q_p$ where $a \prec b$ if $v_p(a) \geq v(p)$.
It is well-known that $\prec$ is definable in $\overline{\Q}_p$ (see for example \cite[Section 2.1]{Belair-panorama}) so $(\Q_p,+,\prec)$ is a reduct of $\overline{\Q}_p$.
Observe that the collection of sets of the form $\{ a \in \Q_p : a \prec b \},b \in \Q_p$ is both a definable neighbourhood basis at the identity and a definable compact exhaustion of $\Q_p$.

\begin{theorem}
\label{thm:p-adic}
Suppose $\Sa Q$ is a first order expansion of $(\Q_p,+,\prec)$.
If $\Sa Q$ is $\nip$ then $\Sa Q^\circ$ is strongly noiseless.
In particular an $\nip$ expansion of $\overline{\Q}_p$ has strongly noiseless open core.
\end{theorem}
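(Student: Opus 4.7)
The plan is to apply Theorem~\ref{thm:main-cor-group} to $\Sa Q$ directly, so the task reduces to verifying its hypotheses for the group $G := (\Q_p,+)$ equipped with the $p$-adic topology. This is a locally compact Hausdorff topological group, so it remains only to exhibit an appropriate subdefinable neighborhood basis at the identity and a subdefinable compact exhaustion, both definable in $\Sa Q$.

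Define $U_b := \{a \in \Q_p : a \prec b\}$ for $b \in \Q_p$, so that $(U_b)_{b \in \Q_p}$ is a $(\Q_p,+,\prec)$-definable family and in particular subdefinable in $\Sa Q$. For nonzero $b$, $U_b$ is the compact open subgroup $p^{v_p(b)}\Z_p$, and as $v_p(b)$ ranges over $\Z$ these are precisely the standard basis of clopen neighborhoods of $0$ in the $p$-adic topology; thus $\Cal C := \{U_b : b \in \Q_p, b \neq 0\}$ is a subdefinable neighborhood basis at the identity. Moreover each $U_b$ is compact, and every compact subset of $\Q_p$ is bounded in the $p$-adic absolute value and hence contained in some $U_b$ for $v_p(b)$ sufficiently negative. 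Thus the same family $\Cal C$ serves as a subdefinable compact exhaustion of $\Q_p$.

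Since $\Sa Q$ is $\nip$, Theorem~\ref{thm:main-cor-group} applies and yields that $\Sa Q^\circ$ is strongly noiseless. For the final assertion, the paragraph before the theorem recalls that $\prec$ is definable in the field structure $\overline{\Q}_p$; therefore any $\nip$ expansion of $\overline{\Q}_p$ is automatically an $\nip$ expansion of $(\Q_p,+,\prec)$, and the first part of the theorem applies to give that its open core is strongly noiseless. There is no real obstacle here: the content is simply the observation that the $p$-adic setting fits cleanly into the locally compact group framework, with the $\prec$-balls simultaneously furnishing a definable neighborhood basis at $0$ and a definable compact exhaustion.
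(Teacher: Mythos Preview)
Your proposal is correct and follows exactly the approach the paper intends: the paragraph preceding the theorem already observes that the family $\{a \in \Q_p : a \prec b\}$ serves simultaneously as a definable neighbourhood basis at the identity and a definable compact exhaustion, so Theorem~\ref{thm:main-cor-group} applies directly. You have simply spelled out the details of that observation (and correctly excluded $b=0$ from the neighbourhood basis).
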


\noindent
We finish this section with two questions on expansions of $\overline{\Q}_p$.
It is shown in \cite[Corollary 2.4]{DG} that if $\Sa R$ is a strongly dependent expansion of $(\R,<,+,\times)$ then $\Sa R^\circ$ is o-minimal (see Theorem~\ref{thm:loc-o-min-field}).
Recall that an expansion of $(\R,<)$ is o-minimal if and only if every definable subset of $\R$ is the union of an open set and a finite set.

\begin{qst}
\label{qst:p-adic}
Suppose $\Sa Q$ is a strongly dependent expansion of $\overline{\Q}_p$.
Must every $\Sa Q^\circ$-definable subset of $\Q_p$ be a union of an open set and a finite set?
\end{qst}

\noindent Question~\ref{qst:p-adic} fails if we only assume $\Sa Q$ is $\nip$.
Mariaule~\cite{Ma-adic} shows that $(\overline{\Q}_p,p^{\Z})$ is $\nip$.
As $p^{\Z}$ has closure $p^{\Z} \cup \{0\}$ we see that $(\overline{\Q}_p,p^{\Z})$ is interdefinable with $(\overline{\Q}_p,p^{\Z})^\circ$.
If Question~\ref{qst:p-adic} admits a positive answer then one can apply Dolich and Goodrick~\cite[Theorem 3.18]{DG-uniform} to get a weak cell decomposition for $\Sa Q^\circ$-definable sets.\newline

\noindent
It is shown in \cite{HW-continuous} that any function $\R^m \to \R^n$ definable in a noiseless expansion of $(\R,<,+)$ is differentiable on a dense open subset of $\R^m$.
Kuijpers and Leenknegt~\cite[Theorem 1.9]{padic-diff} show that any function $\Q_p \to \Q_p$ definable in a P-minimal expansion of the field of $p$-adic numbers is differentiable away from a finite set.
(Recall that P-minimal structures are $\nip$.)

\begin{conj}
\label{conj:p-diff}
Suppose $\Sa Q$ is an $\nip$ expansion of the field of $p$-adic numbers.
Any function $\Q^m_p \to \Q^n_p$ definable in $\Sa Q^\circ$ is differentiable on a dense open subset of $\Q^m_p$.
\end{conj}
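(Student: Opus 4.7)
The plan is to mimic the proof of Proposition~\ref{prop:generic-cont} (generic continuity of $\Sa Q^\circ$-definable functions), with ``continuous'' replaced by ``differentiable''. First, reduce to the case $m = n = 1$ via Proposition~\ref{prop:generic-cont} applied to each partial difference quotient, together with the standard $p$-adic $C^1$-implies-Fr\'echet-differentiable argument. Thus it suffices to treat an $\Sa Q^\circ$-definable $f : U \to \Q_p$ with $U \subseteq \Q_p$ open, which, after shrinking via Proposition~\ref{prop:generic-cont}, may be assumed continuous.

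Let $E \subseteq U$ be the set of points at which $f$ fails to be differentiable. Since differentiability at a point is first-order expressible in $\Sa Q$, $E$ is $\Sa Q$-definable, so $\cl(E)$ is $\Sa Q^\circ$-definable. By Theorem~\ref{thm:p-adic} (strong noiselessness of $\Sa Q^\circ$) it suffices to show $E$ has empty interior, that is, that there is no open $V \subseteq U$ on which $f$ is nowhere differentiable. Assume towards contradiction that such a $V$ exists. The $p$-adic field operations have closed graphs, so they are $\Sa Q^\circ$-definable on their natural domains, and hence the difference quotient
\[ \phi(x,h) := \frac{f(x+h) - f(x)}{h}, \qquad (x,h) \in V \times (\Q_p \setminus \{0\}), \]
is $\Sa Q^\circ$-definable; differentiability of $f$ at $x$ is exactly convergence of $\phi(x,\cdot)$ as $h \to 0$.

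The central and hardest step is to derive a contradiction from nowhere-differentiability on $V$. Fix a cofinal sequence of valuation balls $(B_k)_{k \in \N}$ around $0 \in \Q_p$, and for each $k,N \in \Z$ set
\[ S_{k,N} := \{ x \in V : \exists h, h' \in B_k \setminus \{0\},\ v_p(\phi(x,h) - \phi(x,h')) \leq N \}. \]
Nowhere-differentiability says that for every $x \in V$ there is $N$ with $x \in \bigcap_k S_{k,N}$. Applying the strong Baire category statement Proposition~\ref{prop:sbct-cor} to the directed complementary family $\{V \setminus S_{k,N}\}_k$, one expects to produce a nonempty open $W \subseteq V$ and some $k \in \N$ on which $\phi(\cdot, h)$ is almost constant in $h \in B_k \setminus \{0\}$, contradicting nowhere-differentiability on $W$.

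The main obstacle is making this last maneuver precise: $p$-adic analysis offers no analogue of the monotonicity arguments driving the generic-differentiability proofs over $(\R,<,+)$, so the Baire input must be combined with a genuinely new ingredient. One candidate is to pass to $\Sh Q$, which by Corollary~\ref{cor:subdef} still controls $\Sa Q^\circ$, and use honest definitions (Fact~\ref{fact:Honest}) to approximate the definable family $\{\phi(\cdot,h)\}_h$ by uniformly behaved subfamilies from which a pointwise limit as $h \to 0$ can be extracted on a dense open set; a second candidate is to combine Proposition~\ref{prop:generic-cont} applied to $\phi$ on $V \times (\Q_p \setminus\{0\})$ with a clever choice of definable ``tangent cone'' to the graph of $f$, reducing the convergence statement to strong noiselessness of $\Sa Q^\circ$ applied to a single two-variable definable set.
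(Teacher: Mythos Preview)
This statement is labeled \emph{Conjecture} in the paper and is explicitly left open: the paper offers no proof, only the remark that Proposition~\ref{prop:generic-cont} yields generic continuity, and that the conjecture together with a positive answer to Question~\ref{qst:p-adic} would extend the Kuijpers--Leenknegt theorem. So there is no proof in the paper to compare your proposal against.

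As for the proposal itself, it is not a proof but an outline with a self-identified gap at precisely the crucial point. You correctly reduce to showing that a continuous $\Sa Q^\circ$-definable $f : V \to \Q_p$ cannot be nowhere differentiable on an open $V$, and you correctly observe that the set of non-differentiability points is definable. But the ``central and hardest step'' is left unfinished: the Baire-category maneuver you sketch with the sets $S_{k,N}$ does not obviously yield a contradiction. The family $\{V \setminus S_{k,N}\}_k$ is not evidently directed in the right way, and even if Proposition~\ref{prop:sbct-cor} applied, obtaining an open $W$ on which the difference quotients are ``almost constant in $h$'' for a \emph{fixed} $k$ says nothing about the limit as $h \to 0$; you would still need to control all smaller scales. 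You acknowledge this yourself (``$p$-adic analysis offers no analogue of the monotonicity arguments''), and your two ``candidate'' fixes --- honest definitions in $\Sh Q$, or a tangent-cone construction --- are stated at a level of generality that does not indicate how either would actually close the gap. In the real case the paper relies on results from \cite{HW-continuous} whose proofs use order-specific tools; finding a $p$-adic substitute is exactly the content of the conjecture, and your proposal does not supply one.
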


\noindent Note that Proposition~\ref{prop:generic-cont} shows that any function $\Q_p^m \to \Q_p^n$ definable in $\Sa Q^\circ$ is continuous on a dense open subset of $\Q^m_p$.
Conjecture~\ref{conj:p-diff} and a positive answer to Question~\ref{qst:p-adic} would together generalize the theorem of Kuijpers and Leenknegt to continuous definable functions in strongly dependent expansions of $\overline{\Q}_p$.

\section{Generic local o-minimality}
\label{section:generic-local}
\noindent
\textbf{Throughout this section we assume $\Sa R$ is a generically locally o-minimal expansion of $(\R,<,+)$ and throughout ``definable" without modification means ``$\Sa R$-definable"}.
By Theorem~\ref{thm:main-reals} all results in this section hold for $\nip$ expansions of $(\R,<,+)$ by closed subsets of Euclidean space.
Conjecture~\ref{conj:main-B} implies that the results of this section hold for any expansion of $(\R,<,+)$ by closed subsets of Euclidean space which does not define an isomorphic copy of $(\Cal P(\N),\N,\in,s)$.\newline

\noindent We will need to recall various results from the general theory of expansions of $(\R,<,+)$, most of which are stated in terms of $\DSig$-sets.
This collection of definable sets was introduced by Dolich, Miller, and Steinhorn~\cite{DMS1}.
A subset of $\R^n$ is $\DSig$ if there is a definable family $(X_{s,t})_{s,t > 0}$ of compact subsets of $\R^n$ such that $X_{s',t} \subseteq X_{s,t}$ when $s' \leq s$, $X_{s,t'} \subseteq X_{s,t}$ when $t \leq t'$, and $X = \bigcup_{s,t > 0} X_{s,t}$.
Every constructible definable set is $\DSig$ and $\DSig$-sets are closed under finite unions, finite intersections, products, and images under continuous definable functions~\cite[1.9,1.10]{DMS1}.
(These facts hold for arbitrary expansions of $(\R,<,+)$.)\newline

\noindent
We refer to Fact~\ref{fact:selection}, proven in \cite[Proposition 6.3]{HNW}, as ``definable selection".
Definable selection can fail for noisey $\nip$ expansions such as $(\R,<,+,\Q)$ (see \cite{HNW} for this example).

\begin{fact}
\label{fact:selection}
Let $X \subseteq \R^m \times \R^n$ be $\emptyset$-definable and let $\pi : \R^m \times \R^n \to \R^m$ be the coordinate projection onto $\R^m$.
Then there is a $\emptyset$-definable function $f : \pi(X) \to \R^n$ such that 
\begin{enumerate}
    \item $(a,f(a)) \in A$ for all $a \in \pi(X)$.
    \item $f(a) = f(b)$ for all $a,b \in \pi(X)$ such that $X_a = X_b$.
\end{enumerate}
So $\Sa R$ admits definable Skolem functions and eliminates imaginaries.
\end{fact}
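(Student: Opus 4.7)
The plan is to prove Fact~\ref{fact:selection} by induction on $n$, reducing to the case $n=1$, i.e.\ to the construction of a canonical selector for each definable family of nonempty subsets of $\R$. For the inductive step, given $X \subseteq \R^m \times \R^n$ and the theorem for $n-1$, I would let $X_1 \subseteq \R^m \times \R$ be the image of $X$ under the projection onto the first $m+1$ coordinates. Applying the $n=1$ case (with parameter space $\R^m$) to $X_1$ yields a definable $f_1 : \pi(X) \to \R$ such that $(a, f_1(a)) \in X_1$ and $f_1(a)$ depends only on the fiber $(X_1)_a$. I would then form $X_2 := \{(a,z) \in \pi(X) \times \R^{n-1} : (a, f_1(a), z) \in X\}$ and apply the inductive hypothesis to obtain a definable $f_2 : \pi(X) \to \R^{n-1}$ with $(a, f_2(a)) \in X_2$ depending only on $(X_2)_a$. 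Setting $f(a) := (f_1(a), f_2(a))$ yields the selector. Condition (2) is preserved because each step's output depends only on the relevant fiber, and $X_a = X_b$ forces equality of all intermediate fibers.

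For the base case $n=1$, given a definable family $(Y_b)_{b\in B}$ of nonempty subsets of $\R$, I would first reduce to the bounded case by replacing each $Y_b$ with $Y_b \cap [-\sigma(Y_b)-1, \sigma(Y_b)+1]$, where $\sigma(Y) := \inf\{|y| : y \in Y\}$. This infimum is $\Sa R$-definable (as the supremum of lower bounds of a bounded-below nonempty set), produces a nonempty bounded definable subset, and depends only on $Y_b$. For a bounded nonempty definable $Y \subseteq \R$ I would then try the following cascade: if $\inf Y \in Y$ return $\inf Y$; otherwise if $\sup Y \in Y$ return $\sup Y$; otherwise consider the frontier $F := Y \setminus \mathrm{int}_\R(Y)$, which is nowhere dense by strong noiselessness (Theorem~\ref{thm:equiv}). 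If $F = \emptyset$ then $Y$ is open and decomposes as a disjoint union of open intervals, from which one would select the midpoint of the leftmost bounded component. If $F \neq \emptyset$, one applies the dichotomy---every definable subset of $\R$ either has interior or an isolated point (Theorem~\ref{thm:equiv}(2))---to $F$ to extract a canonically distinguished element.

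The main obstacle is guaranteeing that such canonical choices actually land in $Y$: the set of isolated points of $Y$ (or of $F$) is definable but need not be closed, so a naive ``leftmost isolated point'' can fail to exist (for example, the isolated points of $\{2^{-n} : n \in \N\}$ accumulate at $0$, which is not in the set). Unlike the d-minimal setting, where the Pillay rank of Section~\ref{section:d-min} provides a well-founded recursion, in the merely generically locally o-minimal setting the naive iteration of the dichotomy need not terminate. Overcoming this requires using the full strength of generic local o-minimality, not just the strong noiselessness consequence: one must exploit the density in $Y$ of points at which $Y$ is locally $(\R,<)$-definable to locate a canonical distinguished element, perhaps via a Baire-category style argument on the definable family of ``good neighborhoods'' provided by the definition, or by running the selection on the o-minimal-like structure living on the dense open ``good'' part of $Y$ and then transferring back.
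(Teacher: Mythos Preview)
The paper does not prove Fact~\ref{fact:selection}; it simply records it as a citation to \cite[Proposition~6.3]{HNW} and moves on. So there is no proof in the paper to compare against.

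On your attempt itself: the inductive reduction to $n=1$ is standard and your verification of condition~(2) through the induction is correct (if $X_a=X_b$ then all intermediate fibers agree, so each coordinate of $f$ depends only on the fiber). The reduction to the bounded case via $\sigma(Y)=\inf\{|y|:y\in Y\}$ is also fine.

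The genuine gap is exactly where you say it is: the bounded $n=1$ case. Your cascade stalls at the point where $\inf Y,\sup Y\notin Y$ and the frontier $F$ has no extremal isolated point, and you do not actually resolve this. The two suggestions you offer at the end (``perhaps via a Baire-category style argument'' and ``running the selection on the o-minimal-like structure living on the dense open good part'') are not proofs; neither one specifies a definable rule that provably lands in $Y$. In particular, the ``dense open good part'' $V\subseteq Y$ from the definition of generic local o-minimality is not canonically determined by $Y$ (the definition only asserts existence of \emph{some} such $V$), so you cannot simply invoke it as a definable object to select from without further work. You have correctly diagnosed that the naive Cantor--Bendixson iteration need not terminate outside the d-minimal setting, but you have not supplied the replacement argument. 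If you want to complete this, consult the cited reference; the selection there is built by a more delicate case analysis that exploits the additive structure of $\mathbb{R}$ and the fact that boundaries of definable sets are themselves definable and nowhere dense, rather than by iterating the interior/isolated-point dichotomy.
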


\noindent Let $X$ be a subset of $\R^n$ and $p \in X$.
We say that $p$ is a \textbf{$\mathbf{C^k}$-point} of $X$ if the following holds (possibly after permuting coordinates): there is $0 \leq d \leq n$, an open subset $V$ of $\R^d$, a $C^k$-function $f : V \to \R^{n-d}$, and an open neighbourhood $U$ of $p$ such that $\gr(f) = U \cap X$.
An application of the inverse function theorem shows that if $k \geq 1$ then $p$ is a $C^k$-point of $X$ if and only if there is an open neighbourhood $U$ of $p$ such that $U \cap X$ is a $C^k$-submanifold of $\R^n$.
The $C^k$-points of $X$ form a $C^k$-submanifold of $\R^n$.
It easy to see that $p$ is a $C^0$-point of $X$ if there is $0 \leq d \leq n$, a coordinate projection $\pi : \R^n \to \R^d$, an open neighbourhood $U$ of $p$, and an open subset $V$ of $\R^d$, such that $\pi$ induces a homeomorphism $U \cap X \to V$.
So Fact~\ref{fact:c0-smooth} shows that if $X$ is definable then the $C^0$-points of $X$ are dense in $X$.\newline

\noindent It is easy to see that the $C^0$-points of $X$ form an $(\R,<,+,X)$-definable set.
It is an open question whether the $C^1$-smooth points of $X$ always form an $(\R,<,+,X)$-definable set.
The $C^\infty$-smooth points of $X$ need not form an $(\R,<,+,\times,X)$-definable set by work of Le Gal and Rolin~\cite{LGR-cellular}.
Fact~\ref{fact:define-Ck-points} is proven in \cite{HW-fractal}.

\begin{fact}
\label{fact:define-Ck-points}
Let $X$ be a subset of $\R^n$ and $k \geq 2$.
Then the set of $C^k$-points of $X$ is $(\R,<,+,X)$-definable.
\end{fact}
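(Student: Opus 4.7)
The strategy is to reduce to a local question about the smoothness of a continuous function and to detect $C^k$-smoothness via $(\R,<,+,\gr f)$-definable finite-difference data. First, by Fact~\ref{fact:c0-smooth} together with the remark immediately preceding it that the $C^0$-points of $X$ form an $(\R,<,+,X)$-definable set, we may restrict to $C^0$-points of $X$. Near any such point, after permuting coordinates (a finite disjunction), $X$ agrees with the graph of a continuous map $f:V\to\R^{n-d}$ on a neighborhood, and $p$ is a $C^k$-point of $X$ iff $f$ is $C^k$ at the image of $p$ in $V$. So it suffices to show: whenever $\gr f$ is $(\R,<,+,X)$-definable, the set of $C^k$-points of $f$ is definable in $(\R,<,+,\gr f)$.

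The key object is the mixed $k$-th order finite difference
\[
\Delta^k_{h_1,\ldots,h_k}f(x)=\sum_{S\subseteq\{1,\ldots,k\}}(-1)^{k-|S|}\,f\!\Big(x+\sum_{i\in S}h_i\Big),
\]
built from finitely many values of $f$ via translations and integer sign changes, hence $(\R,<,+,\gr f)$-definable as a function of $(x,h_1,\ldots,h_k)$. A Whitney-type criterion identifies $f\in C^k$ at $x_0$ with the existence of a symmetric $k$-multilinear form $B_{x_0}$ on $\R^d$ satisfying $\Delta^k_{h_1,\ldots,h_k}f(x_0)-B_{x_0}(h_1,\ldots,h_k)=o(|h_1|\cdots|h_k|)$, together with continuous dependence of $B_{x_0}$ on $x_0$.

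The chief obstacle is that both $B_{x_0}(h_1,\ldots,h_k)$ and the comparison bound $\epsilon|h_1|\cdots|h_k|$ a priori involve multiplications of arbitrary reals, unavailable in $(\R,<,+,\gr f)$. For $k\ge 2$ one sidesteps this by a scale-comparison trick: eliminating $B_{x_0}$ via the identity $B_{x_0}(2h_1,\ldots,2h_k)=2^k B_{x_0}(h_1,\ldots,h_k)$ yields the purely $\Delta^k f$-based asymptotic
\[
\Delta^k_{2h_1,\ldots,2h_k}f(x_0)-2^k\,\Delta^k_{h_1,\ldots,h_k}f(x_0)=o(|h_1|\cdots|h_k|),
\]
imposed jointly across a dyadic system of scales. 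Combined with control along coordinate directions to detect the existence and continuity of $B_{x_0}$, this becomes a first-order condition in $(\R,<,+,\gr f)$ built from additive operations and rescalings by the fixed integer $2^k$; the residual product $|h_1|\cdots|h_k|$ in the remainder bound is handled by iterating the scale comparison along separate dyadic rescalings of the individual $h_i$ and exploiting the $1$-homogeneity of $B_{x_0}$ in each argument.

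The hardest part will be verifying that this additive dyadic-scale formulation is equivalent to genuine $C^k$-smoothness for $k\ge 2$---ruling out spurious non-$C^k$ points and transferring the continuous dependence of $B_{x_0}$ on $x_0$ through the additive rewriting. The essential use of $k\ge 2$ is visible in the failure of the analogous $k=1$ condition: $f(x_0+2h)-2f(x_0+h)+f(x_0)=o(|h|)$ is satisfied, for instance, by the piecewise-linear $f$ equal to $x$ for $x\le 0$ and $2x$ for $x>0$ at $x_0=0$, which is not differentiable there. This failure at $k=1$ is consistent with the paper's remark that definability of the $C^1$-points remains open, and it is the richer cross-term structure of $\Delta^k f$ for $k\ge 2$ that makes the additive reformulation sharp enough to pin down the multilinear form.
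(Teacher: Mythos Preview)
The paper does not prove this fact; it is quoted from \cite{HW-fractal}, so there is no in-paper argument to compare against and your proposal must stand on its own.

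Your reduction to the $C^0$-locus and to detecting $C^k$-smoothness of the local graphing function via mixed finite differences is reasonable, and you correctly isolate the central difficulty: both $B_{x_0}(h_1,\ldots,h_k)$ and the remainder bound $\epsilon\,|h_1|\cdots|h_k|$ involve products of real variables, unavailable in $(\R,<,+,\gr f)$.

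The gap is that your scale-comparison trick does not actually resolve this. Passing to
\[
\Delta^k_{2h_1,\ldots,2h_k}f(x_0)-2^k\,\Delta^k_{h_1,\ldots,h_k}f(x_0)=o(|h_1|\cdots|h_k|)
\]
eliminates $B_{x_0}$, but the right-hand side is still an $o$-bound against a product of variables. If you weaken it to the only obviously $(\R,<,+)$-expressible version---that the left side tends to $0$---the condition becomes strictly too weak. Take $f(t)=t^k\log|t|$ with $f(0)=0$. Then $f\in C^{k-1}\setminus C^k$ at $0$, yet along the diagonal $h_1=\cdots=h_k=h$ one computes
\[
\Delta^k_{2h,\ldots,2h}f(0)-2^k\,\Delta^k_{h,\ldots,h}f(0)=2^k\,k!\,(\log 2)\,h^k,
\]
which tends to $0$ but is $\Theta(h^k)$, not $o(h^k)$. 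So the displayed asymptotic genuinely needs the product on the right to exclude this example, and ``iterating along separate dyadic rescalings of the $h_i$'' does not remove it: each such iteration produces another remainder that must again be compared to $|h_1|\cdots|h_k|$. Your proposal identifies the obstacle but does not overcome it; the device in \cite{HW-fractal} that avoids multiplication in the error term is the substantive content of the fact, and you have not yet supplied a substitute.
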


\noindent
Let $U$ be an open subset of $\R^m$.
Miller~\cite[Theorem 3.3]{Miller-tame} shows that a function $U \to \R^m$ definable in a noiseless expansion is continuous on a dense definable open subset of $U$.
It is shown in \cite{HW-continuous} that a continuous function $U \to \R^n$ definable in a type A expansion is $C^k$ on a dense definable open subset of $U$ for any $k \geq 1$.
Theorem~\ref{thm:generic-smoothness} follows.

\begin{theorem}
\label{thm:generic-smoothness}
Suppose $U$ is a definable open subset of $\R^m$ and $f : U \to \R^n$ is a definable function.
Fix $k \geq 0$.
Then there is a dense definable open subset $V$ of $U$ such that the restriction of $f$ to $V$ is $C^k$.
\end{theorem}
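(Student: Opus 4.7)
The plan is to combine the two results quoted immediately above the theorem, after reducing to a one-dimensional codomain and verifying a mild hypothesis on $\Sa R$.

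First I would reduce to $n=1$. Writing $f = (f_1, \ldots, f_n)$, if for each coordinate we can find a dense definable open $V_i \subseteq U$ on which $f_i$ is $C^k$, then $V := V_1 \cap \cdots \cap V_n$ is a dense definable open subset of $U$ (the finite intersection of open dense subsets of the Baire space $U$) on which $f$ is $C^k$. So it suffices to treat definable functions $f : U \to \R$.

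For the base case $k=0$, generic local o-minimality implies strong noiselessness (Theorem~\ref{thm:equiv}), which implies noiselessness, so Miller's theorem \cite[Theorem 3.3]{Miller-tame} yields a dense definable open subset of $U$ on which $f$ is continuous. For $k \geq 1$, I would first apply the $k=0$ case to shrink to a dense definable open $U' \subseteq U$ on which $f$ is continuous, then invoke the result of \cite{HW-continuous} that a continuous definable function on an open set in a type A expansion of $(\R,<,+)$ is $C^k$ on a dense definable open subset, obtaining the desired $V \subseteq U'$.

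The one hypothesis to check is that $\Sa R$ is type A, i.e.\ admits no dense $\omega$-order. Any $\omega$-orderable subset of $\R$ is countable and therefore has empty interior; were such a set dense in some nonempty open interval, it would be somewhere dense with empty interior, contradicting strong noiselessness via Theorem~\ref{thm:equiv}. Hence $\Sa R$ is type A and the cited smoothness bootstrap applies.

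The real technical content is hidden in the results being cited; the only step that requires any care in this proof is noting that generic local o-minimality entails both noiselessness (direct from Theorem~\ref{thm:equiv}) and the type A property (by the short argument above), so that the two prior theorems can be chained together.
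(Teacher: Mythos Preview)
Your proposal is correct and follows the same route as the paper, which simply chains Miller's generic continuity theorem with the $C^k$-bootstrap from \cite{HW-continuous}. Your reduction to $n=1$ is unnecessary (both cited results already handle vector-valued maps), but your explicit verification that generic local o-minimality implies type A is a welcome addition that the paper leaves implicit.
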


\noindent
It is an open question whether a function $\R \to \R$ definable in an o-minimal expansion of $(\R,<,+,\times)$ is $C^\infty$ on a dense open subset of $\R$.
We now obtain generic $C^k$-smoothness for definable sets.
It follows from Theorem~\ref{thm:equiv} that Theorem~\ref{thm:Ck-points} fails for expansions of $(\R,<,+)$ which are not generically locally o-minimal, we leave the details to the reader.

\begin{theorem}
\label{thm:Ck-points}
Let $X$ be a definable subset of $\R^n$.
Fix $k \geq 2$.
Then the $C^k$-points of $X$ form a dense definable subset of $X$.
It follows that there is a definable open $W \subseteq \R^n$ such that $W \cap X$ is a $C^k$-submanifold of $\R^n$ and $W$ is dense in $X$.
\end{theorem}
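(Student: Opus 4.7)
The plan is to combine three ingredients already in hand: Fact~\ref{fact:define-Ck-points}, which guarantees that the set $Z$ of $C^k$-points of $X$ is definable (this is where we use $k \geq 2$); Fact~\ref{fact:c0-smooth} (available since generic local o-minimality implies the hypothesis via Theorem~\ref{thm:equiv}), which guarantees that the $C^0$-points of $X$ are dense in $X$; and Theorem~\ref{thm:generic-smoothness}, which upgrades definable continuous functions on open sets to $C^k$-functions on a dense definable open subset.

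First I would establish density of $Z$ in $X$. Fix a nonempty open $U \subseteq \R^n$ with $U \cap X \neq \emptyset$; I need to find a $C^k$-point in $U \cap X$. By Fact~\ref{fact:c0-smooth} there is a $C^0$-point $q \in U \cap X$, so after permuting coordinates there exist $0 \leq d \leq n$, an open neighbourhood $U' \subseteq U$ of $q$, an open $V \subseteq \R^d$, and a continuous function $f : V \to \R^{n-d}$ such that $U' \cap X = \gr(f)$. The function $f$ is definable: its graph $U' \cap X$ is definable, and definable selection (Fact~\ref{fact:selection}) lets us recover $f$ from its graph. By Theorem~\ref{thm:generic-smoothness} there is a dense definable open $V' \subseteq V$ on which $f$ is $C^k$. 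Picking any $v \in V'$ yields a point $(v, f(v)) \in U \cap X$ that is visibly a $C^k$-point of $X$. Hence $Z$ is dense in $X$; combined with Fact~\ref{fact:define-Ck-points}, $Z$ is a dense definable subset.

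For the second claim, set
\[ W := \{\, p \in \R^n : \exists\, r > 0 \text{ such that } B(p,r) \cap X \subseteq Z \,\}. \]
Since $Z$ is definable, so is $W$, and $W$ is open by construction. I claim $W \cap X = Z$. The inclusion $W \cap X \subseteq Z$ is immediate from the definition. Conversely, if $p \in Z$, there is a neighbourhood $U$ of $p$ and a $C^k$-function $f : V \to \R^{n-d}$ (after permuting coordinates) with $U \cap X = \gr(f)$; every point of $\gr(f)$ is a $C^k$-point, so any small ball $B(p,r) \subseteq U$ satisfies $B(p,r) \cap X \subseteq Z$, giving $p \in W$. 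Since $W \supseteq Z$ and $Z$ is dense in $X$, $W$ is dense in $X$; and $W \cap X = Z$ is a $C^k$-submanifold of $\R^n$ (being locally a graph of a $C^k$-function near each of its points).

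The only slightly delicate point is ensuring definability of the local function $f$ at a $C^0$-point so that Theorem~\ref{thm:generic-smoothness} can be applied to it; this is handled cleanly by definable selection, which is available precisely because we are in the generically locally o-minimal setting (Fact~\ref{fact:selection}). Aside from this, the argument is a straightforward three-step chase: density of $C^0$-points, upgrade to $C^k$ via generic smoothness of definable functions, and packaging of the $C^k$-locus as the trace of a definable open set via the standard "neighbourhood contained in $Z$" construction.
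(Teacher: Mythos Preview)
Your proof is correct and follows essentially the same route as the paper: definability via Fact~\ref{fact:define-Ck-points}, density via Fact~\ref{fact:c0-smooth} plus Theorem~\ref{thm:generic-smoothness}, and the same packaging of the $C^k$-locus as $W \cap X$. One minor point: you do not need definable selection to see that $f$ is definable---a function is definable exactly when its graph is, and here $\gr(f) = U' \cap X$ is definable outright (after shrinking $U'$ to a definable box so that $V = \pi(U' \cap X)$ is definable as well).
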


\noindent Let $Y$ be the set of $C^k$-points of $X$.
Theorem~\ref{thm:Ck-points}, Theorem~\ref{thm:equiv-1}, and the frontier inequality for o-minimal structures may be applied to show that if $\Sa R$ is locally o-minimal then $\dim X \setminus Y < \dim X$.
This inequality fails in general when $\Sa R$ is not locally o-minimal.
Suppose $\Sa R$ is not locally o-minimal.
Applying Theorem~\ref{thm:equiv-1} we obtain a definable discrete subset $D$ of $\R_{>0}$ such that $\cl(D) = D \cup \{0\}$.
Then for any $k,m \geq 0$, the set of $C^k$-points of $\cl(D) \times \R^m$ is $D \times \R^m$ and both $D \times \R^m$ and $\{0\} \times \R^m$ are $m$-dimensional.

\begin{proof}
An application of Fact~\ref{fact:define-Ck-points} shows that the set of $C^k$-points of $X$ is definable.
Fix an open subset $U$ of $\R^n$ such that $U \cap X \neq \emptyset$.
We show that $U$ contains a $C^k$-point of $X$.
After applying Fact~\ref{fact:c0-smooth}, permuting coordinates, and shrinking $U$ if necessary we obtain $0 \leq d \leq n$, a definable open $V \subseteq \R^d$, and a definable continuous $f : V \to \R^{n-d}$ such that $\gr(f) = U \cap X$.
Applying Theorem~\ref{thm:generic-smoothness} we obtain a dense definable open subset $V'$ of $V$ such that $f$ is $C^k$ on $V'$.
Fix $p \in V'$.
Then $(p,f(p))$ is a $C^k$-point of $X$. \newline

\noindent We now prove the second claim.
Let $Y$ be the set of $C^k$-points of $X$.
Note that $Y$ is open in $X$.
Let $W$ be the union of all open boxes $B$ in $\R^n$ such that $B \cap Y \neq \emptyset$ and $B \cap X \subseteq Y$.
Then $W$ is definable and $W \cap X = Y$.
\end{proof}

\noindent
We now develop a theory of dimension for definable sets.
We make crucial use of the dimension theory for $D_\Sigma$-sets in type A expansions developed in \cite{FHW-Compact}.
We first recall a classical theorem of Menger and (independently) Uryshon~\cite[1.8.10]{Engelking}.

\begin{fact}
\label{fact:gen-dim}
Suppose $X \subseteq \R^n$.
Then $\dim X = n$ if and only if $X$ has interior.
\end{fact}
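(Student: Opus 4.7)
The plan is to verify the two implications separately: one is immediate from monotonicity of topological dimension together with the classical computation $\dim \R^n = n$, while the other is the substantive half of the Menger--Uryshon theorem and requires an induction on $n$.

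For the easy direction, suppose $X$ has nonempty interior. Then $X$ contains an open ball $B$, and since $B$ is homeomorphic to $\R^n$ we have $\dim B = n$. Monotonicity of $\dim$ for subsets of separable metric spaces then gives
\[ n = \dim B \leq \dim X \leq \dim \R^n = n, \]
so $\dim X = n$. This uses only the classical value $\dim \R^n = n$, which we take as given (see Engelking).

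For the harder direction I would prove the contrapositive by induction on $n$: if $X \subseteq \R^n$ has empty interior, then $\dim X \leq n - 1$. The case $n = 0$ is immediate because then $X$ must be empty and $\dim \emptyset = -1$. For the inductive step I would use the fact that, on separable metric spaces, topological dimension agrees with the small inductive dimension $\operatorname{ind}$. It then suffices to show that every $p \in X$ has arbitrarily small neighborhoods $V$ in $X$ with $\operatorname{ind}(\partial_X V) \leq n - 2$. Given $p \in X$ and $\varepsilon > 0$, I would look for $r \in (0, \varepsilon)$ such that the intersection of $X$ with the sphere $S(p, r)$ has empty interior in $S(p, r)$; then $V := B(p, r) \cap X$ is an $X$-neighborhood of $p$ with $\partial_X V \subseteq S(p, r) \cap X$, and, realizing $S^{n-1}$ minus a point as an open subset of $\R^{n-1}$ via stereographic projection, the inductive hypothesis applied to $S(p, r) \cap X$ gives $\operatorname{ind}(\partial_X V) \leq n - 2$, as needed.

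The main obstacle is producing such a radius $r$. The key point is a Fubini/Baire-category argument: the set of $r \in (0, \varepsilon)$ for which $S(p, r) \cap X$ has nonempty interior in $S(p, r)$ is ``small''; indeed, if it contained a non-meager subset one could assemble the spherical caps into an open solid sector sitting inside $X$, contradicting the assumption that $X$ has empty interior in $\R^n$. Hence good radii $r$ exist arbitrarily close to $0$ and the induction goes through. For the complete classical proof we refer the reader to Engelking, Theorem 1.8.10.
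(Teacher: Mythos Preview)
The paper does not prove this statement at all: it is recorded as a classical theorem of Menger and Urysohn, with a bare citation to Engelking~[1.8.10], and is used as a black box thereafter. So your proposal already goes well beyond what the paper does, and in the end you too defer to the same reference for the complete argument. In that sense there is nothing to compare.

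That said, a comment on your sketch of the hard direction. The inductive strategy via small inductive dimension is the right shape, and the easy direction is fine. But the step you flag as ``the main obstacle'' really is one, and the Fubini/Baire heuristic you give does not close it as stated. Suppose every $r \in (0,\varepsilon)$ is bad, so that $S(p,r)\cap X$ contains a nonempty relatively open cap $U_r$. Passing to directions on $S^{n-1}$ and fixing a countable basis $\{W_k\}$ there, you get $(0,\varepsilon) = \bigcup_k R_k$ where $R_k = \{r : W_k \text{ lies in the cap at radius } r\}$. You then assert that some $R_k$ is large enough to ``assemble'' a solid open sector inside $X$; but for that you would need some $R_k$ to contain an interval, and nothing you have said gives the $R_k$ the Baire property or any measurability, so neither a Baire-category nor a Fubini argument is available. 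The caps at different radii need not align. Since you explicitly hand off the complete proof to Engelking anyway, this does not undermine your proposal, but you should not present the heuristic as if it were the actual argument.
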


\noindent Given a nonempty subset $X$ of $\R^n$ we let $\Dim X$ be the maximal $0 \leq d \leq n$ for which there is a coordinate projection $\pi : \R^n \to \R^d$ such that $\pi(X)$ has interior and let $\Dim \emptyset = -1$.
Fact~\ref{fact:dimcl} holds more generally for noiseless expansions of $(\R,<,+)$.
It is proven in \cite[Section 7]{Miller-tame}.

\begin{fact}
\label{fact:dimcl}
Suppose $X$ is a definable subset of $\R^n$.
Then $\Dim X = \Dim \cl(X)$.
\end{fact}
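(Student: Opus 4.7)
The plan is to derive the equality from the noiselessness of $\Sa R$, which follows from generic local o-minimality via Theorem~\ref{thm:equiv}. The argument reduces to the elementary inclusion that, for a continuous map, the image of the closure is contained in the closure of the image, combined with the fact that noiselessness rules out definable sets that are somewhere dense with empty interior.

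The direction $\Dim X \leq \Dim \cl(X)$ is completely formal and requires no tameness: for every coordinate projection $\pi : \R^n \to \R^d$, we have $\pi(X) \subseteq \pi(\cl(X))$, so whenever $\pi(X)$ has nonempty interior the same holds of $\pi(\cl(X))$. Taking the maximal such $d$ on the left yields the inequality.

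For the reverse inequality $\Dim \cl(X) \leq \Dim X$, I would fix a coordinate projection $\pi : \R^n \to \R^d$ witnessing $\Dim \cl(X) \geq d$, so that $\pi(\cl(X))$ has nonempty interior. Continuity of $\pi$ gives $\pi(\cl(X)) \subseteq \cl(\pi(X))$, so $\cl(\pi(X))$ has nonempty interior and hence $\pi(X)$ is somewhere dense in $\R^d$. Since $\pi(X)$ is definable and $\Sa R$ is noiseless, $\pi(X)$ itself must have nonempty interior, whence $\Dim X \geq d$.

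The only nontrivial ingredient is the invocation of noiselessness, and this is automatic in our setting: Theorem~\ref{thm:equiv} identifies generic local o-minimality with strong noiselessness, which trivially implies noiselessness. No further machinery is required, so the main ``obstacle'' is purely the bookkeeping of which coordinate projections witness the dimension; the topological content is packaged entirely in $\pi(\cl(X)) \subseteq \cl(\pi(X))$ and the dichotomy ``interior or nowhere dense'' for definable sets.
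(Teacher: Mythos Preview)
Your argument is correct. The paper does not actually supply its own proof of this fact but cites \cite[Section 7]{Miller-tame}, noting that the result holds more generally for noiseless expansions of $(\R,<,+)$; your argument is precisely the standard one and uses only noiselessness, so it matches both the level of generality stated in the paper and the expected content of the cited proof.
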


\noindent
Fact~\ref{fact:dimcl-1} holds more generally for type A expansions \cite[Theorem E]{FHW-Compact}.

\begin{fact}
\label{fact:dimcl-1}
Suppose $X$ is a $\DSig$ subset of $\R^n$.
Then $\dim X = \Dim X$.
\end{fact}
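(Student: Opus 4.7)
My plan is to establish the inequalities $\Dim X \leq \dim X$ and $\dim X \leq \Dim X$ separately; both rely on the generic local smoothness structure available in the ambient setting.

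For $\Dim X \leq \dim X$, suppose $\Dim X = d$ and, after permuting coordinates, arrange that the projection $\pi : \R^n \to \R^d$ onto the first $d$ coordinates sends $X$ to a set with nonempty interior $U$. Definable selection (Fact~\ref{fact:selection}) yields a definable $f : U \to \R^{n-d}$ with $\gr(f) \subseteq X$, and Theorem~\ref{thm:generic-smoothness} provides a nonempty definable open $V \subseteq U$ on which $f$ is continuous. Then $\gr(f|_V) \subseteq X$ is homeomorphic to $V$, so Fact~\ref{fact:gen-dim} gives $\dim X \geq \dim V = d$.

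For the reverse inequality, Fact~\ref{fact:c0-smooth} (whose hypothesis holds by Theorem~\ref{thm:equiv}) supplies a dense definable open $V \subseteq X$ such that each $p \in V$ has a neighbourhood $W_p$ and coordinate projection $\pi_p : \R^n \to \R^{d_p}$ making $\pi_p|_{X \cap W_p}$ a homeomorphism onto an open subset of $\R^{d_p}$. Partitioning $V$ into the definable strata $V_k = \{p \in V : d_p = k\}$, each nonempty $V_k$ is locally homeomorphic to an open subset of $\R^k$, so $\dim V_k = k$ and $\pi_p$ witnesses $\Dim X \geq k$. Writing $d^* = \max\{k : V_k \neq \emptyset\}$ gives $\Dim X \geq d^*$ and $\dim V = d^*$.

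The main obstacle is bounding the topological dimension of the nowhere dense remainder $X \setminus V$, which could a priori carry extra dimension. The $\DSig$ hypothesis is essential here: $X \setminus V$ remains $\DSig$, since $\DSig$ is closed under intersection with closed definable sets, and $\Dim(X \setminus V) \leq \Dim X = d^*$ by monotonicity of $\Dim$. Iterating the previous paragraph produces a sequence of pairwise-disjoint definable strata $V^{(0)} = V, V^{(1)}, V^{(2)}, \ldots$, each a finite union of $C^0$ manifolds of dimension at most $d^*$, with $V^{(i+1)}$ dense open in the residue $X \setminus (V^{(0)} \cup \cdots \cup V^{(i)})$. The really strong Baire category theorem (Proposition~\ref{prop:sbct-cor}) together with the $\DSig$ structure controls this descending sequence, forcing the strata to exhaust $X$ in countably many stages; the countable sum theorem for Menger-Urysohn dimension on the separable metric space $X$ then yields $\dim X \leq d^*$, closing the proof.
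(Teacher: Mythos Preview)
The paper does not give its own proof of this fact; it is cited from \cite[Theorem E]{FHW-Compact}, where it is proven in the broader type A setting using the specific presentation of $\DSig$ sets as unions of definable families of compact sets. Your argument for $\Dim X \leq \dim X$ is fine and is essentially what the paper does later, in the second half of the proof of Proposition~\ref{prop:dim-closure}.

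The reverse inequality has a genuine gap. The descending sequence $X = X^{(0)} \supsetneq X^{(1)} \supsetneq \cdots$ you produce is \emph{not} a subdefinable family: each $X^{(i+1)}$ is obtained from $X^{(i)}$ by removing its $C^0$ locus, and while that operation is uniformly definable across a fixed definable family, iterating it increases formula complexity at every step, so no single definable family contains all the $X^{(i)}$. Proposition~\ref{prop:sbct-cor} therefore does not apply---and in any case that proposition only says that a directed union of nowhere dense sets is nowhere dense; it says nothing about a descending chain of residues terminating or having empty intersection. Without d-minimality (which is not assumed in this section) there is no available tool guaranteeing $\bigcup_i V^{(i)} = X$: the paper's only result controlling such chains is Theorem~\ref{thm:d-min}, which both assumes d-minimality and depends, via Lemma~\ref{lem:k-w} and the dimension theory of this section, on the present Fact. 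So the countable-sum-theorem step cannot be completed as written.

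A smaller issue: you assert $\Dim X = d^*$, but you have only shown $\Dim X \geq d^*$. This is easily repaired---what you actually need, and what does hold, is that every local dimension $d_p$ arising at stage $i$ satisfies $d_p \leq \Dim X^{(i)} \leq \Dim X$, so the correct bound on the strata is $\Dim X$ rather than $d^*$---but this does not rescue the termination problem above.
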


\noindent Proposition~\ref{prop:dim-closure} fails for noisey expansions.
If $X$ is a dense and co-dense subset of a nonempty open interval $I$ then $X$ is zero-dimensional and $\cl(X) = \cl(I)$ is one-dimensional.

\begin{prop}
\label{prop:dim-closure}
Suppose $X$ is a definable subset of $\R^n$.
Then $\dim X = \dim \cl(X)$.
\end{prop}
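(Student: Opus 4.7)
The inequality $\dim X \leq \dim \cl(X)$ is immediate from monotonicity of topological dimension, so the heart of the matter is the reverse inequality $\dim \cl(X) \leq \dim X$. My plan assembles three ingredients and chains them together.

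First, I would observe that $\cl(X)$ is itself $\Sa R$-definable, because
$$\cl(X) = \{p \in \R^n : \forall \varepsilon > 0\ \exists q \in X\ \|q - p\|_\infty < \varepsilon\},$$
and this formula is expressible in the language of $(\R,<,+)$ augmented by $X$. Since $\cl(X)$ is closed and definable, it is (trivially) constructible, hence $\DSig$ by the results of Dolich--Miller--Steinhorn quoted in the preamble to this section.

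Second, $\Sa R$ is type A: strong noiselessness (equivalent to generic local o-minimality by Theorem~\ref{thm:equiv}) forbids a definable countable set $C$ dense in some interval $I$, since such a $C$ would be somewhere dense in $I$ yet have empty interior in $I$, violating the strong-noiselessness dichotomy applied to $C \subseteq I$. Consequently Fact~\ref{fact:dimcl-1} applies to the $\DSig$ set $\cl(X)$ and yields $\dim \cl(X) = \Dim \cl(X)$.

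Third, Fact~\ref{fact:dimcl} gives $\Dim \cl(X) = \Dim X$, and the universal inequality $\Dim Y \leq \dim Y$ holds for every $Y \subseteq \R^n$, because coordinate projections are $1$-Lipschitz and so do not increase topological dimension: if $\pi : \R^n \to \R^d$ witnesses $\Dim Y \geq d$ (so that $\pi(Y)$ has interior), then $d = \dim \pi(Y) \leq \dim Y$. Chaining the three facts,
$$\dim \cl(X) = \Dim \cl(X) = \Dim X \leq \dim X,$$
which is the desired reverse inequality. There is no serious obstacle here; the one point deserving a moment's care is the observation that $\cl(X)$ is definable in $\Sa R$, which is nothing more than the first-order definability of the closure operation relative to $X$ in any expansion of $(\R,<,+)$.
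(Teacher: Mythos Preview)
Your argument has a genuine gap at the final step. The claim that ``coordinate projections are $1$-Lipschitz and so do not increase topological dimension'' is false: Lipschitz maps do not increase \emph{Hausdorff} dimension, but they can increase \emph{topological} dimension. Concretely, let $K\subseteq[0,1]$ be the middle-thirds Cantor set, $f:[0,1]\to[0,1]$ the Cantor function, and set $Y:=\{(t,f(t)):t\in K\}\subseteq\R^2$. The first-coordinate projection restricts to a homeomorphism $Y\to K$, so $\dim Y=0$; but the second-coordinate projection sends $Y$ onto $f(K)=[0,1]$, so $\Dim Y\geq 1$. Thus $\Dim Y>\dim Y$, and your inequality $\Dim X\leq\dim X$ fails for arbitrary subsets of $\R^n$. (The paper itself notes, just after Theorem~\ref{thm:dim-eq}, that $\dim=\Dim$ fails for general closed subsets of $\R^2$ via exactly this example.)

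The paper's proof agrees with yours through the chain $\dim\cl(X)=\Dim\cl(X)=\Dim X$ but then argues the last step differently: from $\Dim X=d$ it takes a coordinate projection $\pi:\R^n\to\R^d$ with $\pi(X)\supseteq U$ open, uses definable selection (Fact~\ref{fact:selection}) to produce a definable section $g:U\to\R^{n-d}$ with $\gr(g)\subseteq X$, and then applies generic smoothness (Theorem~\ref{thm:generic-smoothness}) to shrink $U$ so that $g$ is continuous. Now $\gr(g)$ is homeomorphic to $U$, hence $\dim\gr(g)=d$, giving $\dim X\geq d$. The point is that definability lets one find a genuine $d$-dimensional topological copy of an open set \emph{inside} $X$, which is exactly what can fail for an arbitrary $Y$ like the Cantor-function graph above.
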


\begin{proof}
If $X$ is empty then both dimensions are $-1$.
Suppose $X \neq \emptyset$.
Monotonicity of topological dimensions yields $\dim X \leq \dim \cl(X)$.
We prove the other inequality.
Suppose $\dim \cl(X) = d$.
Then $\Dim \cl(X) = d$ by Fact~\ref{fact:dimcl-1} and so $\Dim X = d$ by Fact~\ref{fact:dimcl}.
Let $\pi : \R^n \to \R^d$ be a coordinate projection such that $\pi(X)$ contains a nonempty open subset $U$ of $\R^n$.
After permuting coordinates if necessary and applying definable selection we let $f : U \to \R^{n-d}$ be definable such that $\gr(f) \subseteq X$.
After applying Theorem~\ref{thm:generic-smoothness} and shrinking $U$ if necessary we suppose $f$ is continuous.
Then $\gr(f)$ is homeomorphic to $U$ and so has topological dimension $d$.
Thus $\dim X \geq d$.
\end{proof}

\noindent Corollary~\ref{cor:inject} follows from Proposition~\ref{prop:dim-closure} and its proof.

\begin{cor}
\label{cor:inject}
Suppose $X$ is a nonempty definable subset of $\R^n$.
If $\dim X = d$ then there is a nonempty definable open $U \subseteq \R^d$ and a definable injection $f : U \to X$. 
\end{cor}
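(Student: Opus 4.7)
The plan is to extract the construction already implicit in the proof of Proposition~\ref{prop:dim-closure}. Given $\dim X = d$, I first invoke Proposition~\ref{prop:dim-closure} itself (or Facts~\ref{fact:dimcl} and~\ref{fact:dimcl-1} directly, noting that $X$ is automatically $\DSig$ in the type A case, or passing through $\cl(X)$) to obtain $\Dim X = d$. By definition of $\Dim$, there is a coordinate projection $\pi : \R^n \to \R^d$ such that $\pi(X)$ has interior in $\R^d$. After permuting coordinates, assume $\pi$ is the projection onto the first $d$ coordinates, and let $U$ be a nonempty open subset of $\R^d$ contained in $\pi(X)$; since $\pi$ is $\emptyset$-definable and $X$ is definable, $U$ can be taken to be definable (e.g.\ the interior of $\pi(X)$).

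Next I apply definable selection (Fact~\ref{fact:selection}) to the definable set $X \cap (U \times \R^{n-d})$, whose projection to $U$ is surjective by construction. This yields a definable function $f : U \to \R^{n-d}$ such that $(u, f(u)) \in X$ for every $u \in U$. Define $g : U \to X$ by $g(u) = (u, f(u))$. This $g$ is definable, takes values in $X$, and is injective because distinct inputs already produce distinct first-$d$ coordinates in the output. (Up to the initial coordinate permutation used to normalize $\pi$, this is the desired injection into the original $X$.)

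There is no real obstacle here: unlike in the proof of Proposition~\ref{prop:dim-closure}, we do not need to invoke Theorem~\ref{thm:generic-smoothness} to make $f$ continuous, since injectivity of $g$ is purely formal from the graph structure. The only subtlety is ensuring that $U$ can be chosen definable and open, which is immediate since $U$ can be taken as the (definable) interior of the (definable) set $\pi(X)$. Thus the corollary reduces entirely to combining the dimension equality $\dim X = \Dim X$ with the definable selection principle available in the generically locally o-minimal setting.
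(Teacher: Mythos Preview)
Your proposal is correct and follows essentially the same argument the paper intends: it says only that the corollary ``follows from Proposition~\ref{prop:dim-closure} and its proof,'' i.e., pass from $\dim X = d$ to $\Dim X = d$, pick a coordinate projection whose image has interior, and apply definable selection to produce a graph-section $u \mapsto (u,f(u))$ into $X$. One minor caveat: an arbitrary definable set need not be $\DSig$, so among your parenthetical alternatives the route through $\cl(X)$ (via Facts~\ref{fact:dimcl} and~\ref{fact:dimcl-1}) is the correct one; your observation that continuity of $f$ is unnecessary here is also right.
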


\noindent We view Theorem~\ref{thm:dim-eq} as a generalization of Fact~\ref{fact:gen-dim} for definable sets.
Theorem~\ref{thm:dim-eq} fails for general closed subsets of $\R^2$.
For example if $f : [0,1] \to [0,1]$ is the Cantor function, $K$ is the middle-thirds Cantor set, and $G := \{ (t,f(t)) : \in [0,1] \}$, then $\dim G = 0$ and $\Dim G = 1$.
It is an open question whether Theorem~\ref{thm:dim-eq} holds for noiseless expansions of $(\R,<,+)$.

\begin{theorem}
\label{thm:dim-eq}
Suppose $X$ is a definable subset of $\R^n$.
Then $\dim X = \Dim X$.
\end{theorem}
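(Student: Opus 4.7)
The plan is to prove Theorem~\ref{thm:dim-eq} by chaining together the three facts immediately preceding it. The key observation is that although $\dim$ and $\Dim$ need not agree on an arbitrary definable set directly, the already-established results give us $\dim X = \dim \cl(X)$ (Proposition~\ref{prop:dim-closure}), $\Dim X = \Dim \cl(X)$ (Fact~\ref{fact:dimcl}), and $\dim = \Dim$ on $D_\Sigma$-sets (Fact~\ref{fact:dimcl-1}). So it suffices to route both sides through $\cl(X)$, exploiting that closed definable sets are automatically $D_\Sigma$.

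First I would note that $\cl(X)$ is definable in $\Sa R$ (the closure of a definable set in $(\R,<,+)$ is definable via the usual first-order formula). Then I would observe that any closed definable subset $C$ of $\R^n$ is a $D_\Sigma$-set: take the definable family $C_{s,t} := C \cap [-t,t]^n$ of compact sets, indexed so that $C = \bigcup_{s,t > 0} C_{s,t}$ with the required monotonicity (trivially in $s$, since the family does not depend on $s$, and monotone in $t$). In particular $\cl(X)$ is $D_\Sigma$, so Fact~\ref{fact:dimcl-1} applies to it.

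Combining these ingredients yields the computation
\[
\dim X \;=\; \dim \cl(X) \;=\; \Dim \cl(X) \;=\; \Dim X,
\]
where the first equality is Proposition~\ref{prop:dim-closure}, the second is Fact~\ref{fact:dimcl-1} applied to the $D_\Sigma$-set $\cl(X)$, and the third is Fact~\ref{fact:dimcl}. There is no real obstacle here: the work has already been done in Proposition~\ref{prop:dim-closure}, whose proof used generic local o-minimality (via definable selection, Theorem~\ref{thm:generic-smoothness}, and Fact~\ref{fact:dimcl-1} for $\cl(X)$) to upgrade the trivial inequality $\dim X \leq \dim \cl(X)$ to an equality. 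The only thing to check carefully is that $\cl(X)$ is $D_\Sigma$, which is immediate from the compact exhaustion by bounded boxes.
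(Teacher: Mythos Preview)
Your proposal is correct and is essentially identical to the paper's own proof: both chain the equalities $\dim X = \dim \cl(X) = \Dim \cl(X) = \Dim X$ using Proposition~\ref{prop:dim-closure}, the fact that closed definable sets are $D_\Sigma$ together with Fact~\ref{fact:dimcl-1}, and Fact~\ref{fact:dimcl}. You supply slightly more detail (the explicit compact exhaustion witnessing that $\cl(X)$ is $D_\Sigma$), but the argument is the same.
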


\begin{proof}
Proposition~\ref{prop:dim-closure} shows that $\dim X = \dim \cl(X)$.
As any closed definable set is $\DSig$ an application of Fact~\ref{fact:dimcl-1} shows that $\dim \cl(X)$ and $\Dim \cl(X)$ agree.
Fact~\ref{fact:dimcl} shows $\Dim \cl(X) = \Dim X$.
So $\dim X = \Dim X$.
\end{proof}

\noindent Corollary~\ref{cor:define-dim} follows easily from Theorem~\ref{thm:dim-eq}.
Corollary~\ref{cor:define-dim} may be easily applied to extend our dimension theory to other models of the theory of $\Sa R$.
We leave this to the reader.

\begin{cor}
\label{cor:define-dim}
Suppose $X$ is a definable subset of $\R^m \times \R^n$ and let $-1 \leq d \leq n$.
Then $\{ a \in \R^m : \dim X_a = d \}$ is definable.
\end{cor}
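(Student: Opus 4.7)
The plan is to reduce the question to definability of $\{a : \Dim X_a \geq d\}$ via Theorem~\ref{thm:dim-eq}, and then to observe that $\Dim$ is patently first-order expressible. Since Theorem~\ref{thm:dim-eq} gives $\dim X_a = \Dim X_a$ for every parameter $a \in \R^m$ (each fiber $X_a$ is itself definable), it will suffice to show that for each fixed $-1 \leq e \leq n$ the set $\{a \in \R^m : \Dim X_a \geq e\}$ is definable; then $\{a : \dim X_a = d\}$ is obtained as the difference of two such sets.

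To handle $\{a : \Dim X_a \geq e\}$, I would unwind the definition of $\Dim$. By definition, $\Dim X_a \geq e$ iff there is some coordinate projection $\pi : \R^n \to \R^e$ such that $\pi(X_a)$ has interior in $\R^e$. There are only finitely many coordinate projections $\pi$ onto coordinate $e$-planes, so the set in question is a finite union indexed by these projections. Hence I only need, for each fixed coordinate projection $\pi : \R^n \to \R^e$, that $\{a \in \R^m : \pi(X_a) \text{ has interior in } \R^e\}$ is definable. But this set is defined by the first-order formula
\[
\exists p \in \R^e \, \exists r > 0 \, \forall q \in \R^e \Bigl( \|q - p\| < r \rightarrow \exists y \in \R^n \bigl( y \in X_a \wedge \pi(y) = q \bigr) \Bigr),
\]
which is expressible in $\Sa R$ using only $<$ and $+$ on coordinates of $\R^m \times \R^n$ together with the definable relation defining $X$. (Here $\|q - p\| < r$ is just a finite disjunction of coordinatewise inequalities since we may use the $l_\infty$ norm.)

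Combining, $\{a : \Dim X_a \geq e\}$ is definable for each $e$, so by Theorem~\ref{thm:dim-eq} so is $\{a : \dim X_a \geq e\}$, and finally $\{a : \dim X_a = d\} = \{a : \dim X_a \geq d\} \setminus \{a : \dim X_a \geq d+1\}$ (with the convention $\dim \emptyset = -1$ handled separately, since $\dim X_a = -1$ iff $X_a = \emptyset$, which is obviously definable). There is no real obstacle: the entire content is packaged in Theorem~\ref{thm:dim-eq}, which trades the topological dimension $\dim$ (not a priori first-order) for the combinatorial dimension $\Dim$ (patently first-order). The only small bookkeeping point is checking the boundary case $d = -1$, which is trivial.
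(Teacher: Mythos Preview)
Your proof is correct and is exactly the argument the paper has in mind: the paper simply states that the corollary ``follows easily from Theorem~\ref{thm:dim-eq}'' without spelling out details, and what you wrote is the natural unpacking of that remark. The key point, as you identify, is that $\Dim$ is manifestly first-order (finitely many coordinate projections, and ``has interior'' is expressible using the definable basis of open boxes), so once Theorem~\ref{thm:dim-eq} replaces $\dim$ by $\Dim$ fiberwise there is nothing left to do.
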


\noindent Corollary~\ref{cor:dim-union} fails if $\Sa R$ is noisey.
If $X$ is a dense and co-dense subset of a nonempty open interval $I$ then $X,I \setminus X$ are both zero-dimensional and $I$ is one-dimensional.

\begin{cor}
\label{cor:dim-union}
Suppose $X$ is a definable set and $\Cal Y$ is a finite partition of $X$ into definable sets.
Then 
$$ \dim X = \max \{ \dim Y : Y \in \Cal Y\}. $$
\end{cor}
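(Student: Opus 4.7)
The plan is to prove the identity by establishing each inequality separately, reducing the non-trivial direction to Theorem~\ref{thm:dim-eq} together with noiselessness.

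One direction is immediate: topological dimension is monotone under inclusion, so $\dim Y \leq \dim X$ for every $Y \in \Cal Y$, giving $\max\{\dim Y : Y \in \Cal Y\} \leq \dim X$.

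For the reverse inequality, set $d := \dim X$ and invoke Theorem~\ref{thm:dim-eq} to rewrite this as $d = \Dim X$. By the definition of $\Dim$, there is a coordinate projection $\pi : \R^n \to \R^d$ such that $\pi(X)$ has nonempty interior in $\R^d$. Since $\Cal Y$ is a finite partition of $X$, we have $\pi(X) = \bigcup_{Y \in \Cal Y} \pi(Y)$, and each $\pi(Y)$ is definable. By Theorem~\ref{thm:equiv} the structure $\Sa R$ is noiseless, so each $\pi(Y)$ is either nowhere dense in $\R^d$ or has nonempty interior in $\R^d$. A finite union of nowhere dense subsets of $\R^d$ is nowhere dense (since $\R^d$ is a Baire space, or directly by induction on the size of the union), so because $\pi(X)$ has nonempty interior at least one of the sets $\pi(Y)$ must itself have interior in $\R^d$. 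For such a $Y$ we get $\Dim Y \geq d$, and a second application of Theorem~\ref{thm:dim-eq} yields $\dim Y \geq d$, completing the proof.

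No serious obstacle is anticipated; once the projection-based characterisation $\dim = \Dim$ is in hand the argument is a short bookkeeping. The point worth flagging is that the step distributing ``having interior'' across a finite union uses noiselessness in an essential way, and the counterexample preceding the corollary (a dense co-dense definable subset of an interval, together with its complement) shows that without some form of topological tameness the statement genuinely fails.
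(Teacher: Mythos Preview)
Your proof is correct and follows essentially the same route as the paper's: both use monotonicity for one inequality and, for the other, pick a coordinate projection $\pi$ with $\pi(X)$ having interior via Theorem~\ref{thm:dim-eq}, observe that some $\pi(Y)$ must be somewhere dense (equivalently, not all can be nowhere dense), and then invoke noiselessness to get interior and conclude $\dim Y \geq d$. The only cosmetic difference is that the paper phrases the covering step as ``some $\pi(Y)$ is somewhere dense'' rather than the contrapositive ``not all $\pi(Y)$ are nowhere dense,'' but these are the same observation.
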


\begin{proof}
Monotonicity of topological dimension implies that $\dim X \geq \dim Y$ for all $Y \in \Cal Y$.
We prove the other inequality.
Suppose $\dim X = d$.
By Theorem~\ref{thm:dim-eq} there is a coordinate projection $\pi : \R^n \to \R^d$ such that $\pi(X)$ has interior.
As $\{ \pi(Y) : Y \in \Cal Y\}$ covers $\pi(X)$ there is a $Y \in \Cal Y$ such that $\pi(Y)$ is somewhere dense in $\R^d$.
Then $\pi(Y)$ has interior as $\Sa R$ is noiseless.
So $\dim Y \geq d$ by Theorem~\ref{thm:dim-eq}.
\end{proof}

\begin{prop}
\label{prop:dim-smooth}
Let $X$ be a definable subset of $\R^n$.
Fix $k \geq 0$.
Then $\dim X$ is the maximal $d$ for which there is a definable open subset $U$ of $\R^n$ such that $U \cap X$ is a $d$-dimensional $C^k$-submanifold of $\R^n$.
In particular $\dim X$ is the maximal $d$ for which there is a definable open subset $U$ of $\R^n$ such that $X$ is closed in $U$ and $\dim U \cap X = d$.
\end{prop}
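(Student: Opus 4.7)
The upper bound for both statements is immediate: if $U$ is definable and open with $\dim(U \cap X) = d$, then $d \le \dim X$ by monotonicity of topological dimension. Since every $C^{k'}$-submanifold is a $C^k$-submanifold for $k \le k'$, I may assume $k \ge 2$ and apply Theorem~\ref{thm:Ck-points} to obtain a definable open $W \subseteq \R^n$ such that $Y := W \cap X$ is the set of $C^k$-points of $X$, a $C^k$-submanifold of $\R^n$, and $W$ is dense in $X$. Since $X \subseteq \cl(Y)$ and $Y$ is definable, Proposition~\ref{prop:dim-closure} forces $\dim Y = \dim \cl(Y) \ge \dim X$; combined with $Y \subseteq X$ this gives $\dim Y = d$ where $d := \dim X$.

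\textbf{Cutting out the top-dimensional stratum.} Set
\[
Y_d := \bigl\{\,p \in Y : \forall r > 0,\ \dim\bigl(Y \cap B(p,r)\bigr) = d\,\bigr\},
\]
which is $\Sa R$-definable by Corollary~\ref{cor:define-dim}. Because $Y$ is a $C^k$-submanifold, its local dimension at each $p$ equals $\dim(Y \cap B(p,r))$ for all sufficiently small $r$, so $Y_d$ is precisely the union of the $d$-dimensional connected components of $Y$: it is open in $Y$, nonempty (as $\dim Y = d$), and itself a pure $d$-dimensional $C^k$-submanifold of $\R^n$. To realize $Y_d$ as the trace of $X$ on a definable open set, put
\[
U := \Int\bigl(Y_d \cup (\R^n \setminus X)\bigr),
\]
which is $\Sa R$-definable since taking interiors is a definable operation. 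Any $q \in U \cap X$ has a ball missing $X \setminus Y_d$, so $q \in Y_d$; conversely, any $p \in Y_d$ has local dimension $d$ in $X$, hence some ball $B(p,r)$ satisfies $B(p,r) \cap X = B(p,r) \cap Y_d$, placing $p$ in $U$. Thus $U \cap X = Y_d$, settling the first claim.

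\textbf{Making $X$ closed in $U$.} For the ``in particular'' statement, I further shrink $U$ definably so that $X$ also becomes closed in it. Define
\[
U' := \{\,q \in U : \exists r > 0,\ B(q,r) \subseteq U\ \text{and}\ B(q,r) \cap Y_d\ \text{is closed in}\ B(q,r)\,\}.
\]
The condition that $B(q,r) \cap Y_d$ be closed in $B(q,r)$ unpacks to ``every point of $B(q,r) \setminus Y_d$ has a smaller ball disjoint from $Y_d$'', which is $\Sa R$-definable, so $U'$ is definable and open. Since $Y_d$ is a $C^k$-submanifold it is locally closed in $\R^n$, whence $Y_d \subseteq U'$ and $U' \cap X = Y_d$ still has dimension $d$. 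If $q \in U' \setminus X$, then $q \notin Y_d$ and $B(q,r) \cap Y_d$ is closed in $B(q,r)$ for the witnessing $r$, so a smaller ball about $q$ misses $Y_d = U' \cap X$; hence $U' \setminus X$ is open, i.e., $X$ is closed in $U'$. The main obstacle throughout is definability bookkeeping: the closure of an $\Sa R$-definable set need not itself be $\Sa R$-definable in the generically locally o-minimal setting, so the cut-outs defining $U$ and $U'$ must be expressed using only interiors and open balls, operations that manifestly preserve $\Sa R$-definability.
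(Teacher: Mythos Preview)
Your proof is correct and uses the same core idea as the paper: apply Theorem~\ref{thm:Ck-points} to get a definable open $W$ with $W\cap X$ a $C^k$-submanifold dense in $X$, then use Proposition~\ref{prop:dim-closure} to conclude $\dim(W\cap X)=\dim X$. The paper stops there for the first claim, since a $C^k$-submanifold of topological dimension $d$ already counts as a ``$d$-dimensional $C^k$-submanifold'' in the intended sense; your extraction of the pure top-dimensional locus $Y_d$ is correct but unnecessary. For the second claim the paper simply remarks that it follows from the first (a $C^k$-submanifold is locally closed, so $\cl(U\cap X)\setminus(U\cap X)$ is closed and one may pass to $U\setminus(\cl(U\cap X)\setminus(U\cap X))$), whereas you give an explicit definable shrinking $U'$; both work.

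One factual slip: your final sentence asserts that closures of $\Sa R$-definable sets need not be $\Sa R$-definable. This is false in any expansion of $(\R,<)$, since
\[
\cl(X)=\{p:\forall\varepsilon>0\ \exists q\in X\ \max_i|p_i-q_i|<\varepsilon\}
\]
is first-order in the order alone. So the ``definability bookkeeping'' you worry about is not actually an obstacle here, and the simpler route through closures (as the paper implicitly takes for the second claim) is available.
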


\begin{proof}
The second claim follows directly from the first.
If there is an open $U \subseteq \R^n$ such that $U \cap X$ is a $d$-dimensional $C^k$-submanifold then $\dim X \geq d$ by monotonicity of topological dimension.
We prove the other inequality.
Suppose $\dim X = d$.
Let $l = \max \{ k, 2 \}$.
Applying Theorem~\ref{thm:Ck-points} we obtain a definable open subset $U$ of $\R^n$ such that $U \cap X$ is a $C^l$-submanifold of $\R^n$ and $U$ is dense in $X$.
As $\cl(U \cap X) = X$ an application of Proposition~\ref{prop:dim-closure} shows that $U \cap X$ is  $d$-dimensional.
\end{proof}

\noindent Theorem~\ref{thm:space-filling} fails for noisey expansions.
If $X$ is a dense and co-dense subset of a nonempty open interval $I$ then $Y := [ \{0\} \times X] \cup [\{1\} \times (I \setminus X)] $ is zero-dimensional and the projection of $Y$ onto the second coordinate is one-dimensional.

\begin{theorem}
\label{thm:space-filling}
Let $X$ be a definable subset of $\R^m$ and suppose $f : X \to \R^n$ is definable.
Then $\dim f(X) \leq \dim X$.
In particular dimension is preserved by definable bijections.
\end{theorem}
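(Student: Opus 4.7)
The plan is to reduce the inequality $\dim f(X) \leq \dim X$ to producing a definable continuous injection from a closed box in some $\R^d$ into $X$, where $d = \dim f(X)$. If one can do this, then the image is a compact subset of $X$ homeomorphic to the box, and monotonicity of topological dimension for subsets of Euclidean space finishes the argument. The ``in particular'' clause follows by applying the inequality both to $f$ and to its inverse.

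Concretely, I would proceed as follows. We may assume $f(X)$ is nonempty, let $d = \dim f(X)$, and apply Corollary~\ref{cor:inject} to $f(X)$ to obtain a nonempty definable open $U \subseteq \R^d$ together with a definable injection $h : U \to f(X)$. Then definable selection (Fact~\ref{fact:selection}), applied to the definable set $\{(u,x) \in U \times X : f(x) = h(u)\}$ (whose projection onto $U$ is all of $U$ because $h(U) \subseteq f(X)$), yields a definable $g : U \to X$ with $f \circ g = h$. Since $h$ is injective, so is $g$, and we have now lifted $h$ to a definable injection $g : U \to X \subseteq \R^m$.

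The final step is to promote $g$ to something that transfers dimension. Applying Theorem~\ref{thm:generic-smoothness} with $k = 0$ to $g$, viewed as a definable function $U \to \R^m$, produces a dense definable open $U' \subseteq U$ on which $g$ is continuous. Pick any nonempty closed box $B \subseteq U'$. Then $g|_B$ is a continuous injection from the compact Hausdorff space $B$ into $\R^m$, hence a homeomorphism onto its image $g(B) \subseteq X$. Since $B$ is a $d$-dimensional subset of $\R^d$ and topological dimension is a topological invariant, $\dim g(B) = d$. Monotonicity of topological dimension under subsets of separable metric spaces (which is all we need for $\R^n$) then gives $\dim X \geq \dim g(B) = d = \dim f(X)$.

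The only genuinely delicate point is the passage from ``definable injection'' to ``dimension-preserving map''. A bare definable injection from $U \subseteq \R^d$ into $X$ is not automatically dimension-increasing; one really does need continuity on some open subset so that the restriction to a compact piece becomes a homeomorphism. That is exactly what Theorem~\ref{thm:generic-smoothness} (and behind it, generic local o-minimality) buys us, and it is the step where the hypothesis on $\Sa R$ is used in an essential way. Everything else is a routine assembly of Corollary~\ref{cor:inject}, definable selection, and the compact-to-Hausdorff homeomorphism principle.
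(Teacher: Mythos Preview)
Your proof is correct and follows essentially the same approach as the paper: obtain a definable injection from an open subset of $\R^d$ into $X$ via definable selection, make it continuous on a smaller open set using Theorem~\ref{thm:generic-smoothness}, and then use that a continuous injection on a compact set is a homeomorphism onto its image. The only cosmetic difference is that the paper invokes Theorem~\ref{thm:dim-eq} directly (replacing $f$ by $\pi\circ f$ so that $f(X)$ itself has interior in $\R^d$ and taking $h$ to be the identity on that open set), whereas you go through the packaged Corollary~\ref{cor:inject}; the resulting section $g$ and the compactness endgame are identical.
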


\begin{proof}
The second claim follows easily from the first so we only prove the first claim.
Suppose $\dim f(X) = d$.
Theorem~\ref{thm:dim-eq} yields a coordinate projection $\pi : \R^n \to \R^d$ such that $\pi(f(X))$ has interior.
After replacing $f$ with $\pi \circ f$ we suppose that $f(X)$ has interior.
Let $U \subseteq X$ be a definable open subset of $\R^n$.
Applying definable selection we obtain a definable $g : U \to X$ such that $f(g(p)) = p$ for all $p \in U$.
Note that $g$ is injective.
After shrinking $U$ if necessary and applying Theorem~\ref{thm:generic-smoothness} we suppose $g$ is continuous.
Let $K$ be a compact subset of $U$ with nonempty interior.
Then $\dim K = d$.
As $g$ is a continuous injection and $K$ is compact, $g$ gives a homeomorphism $K \to g(K)$.
Then $\dim g(K) = d$ and so $\dim X \geq d$.
\end{proof}

\noindent We now proceed towards the proof of Theorem~\ref{thm:fiber}. \newline

\noindent We recall two results on the dimension of $D_\Sigma$-sets in type A expansions of $(\R,<,+)$.
If $X \subseteq \R^m \times \R^n$ is $\DSig$ then $\{ a \in \R^m : \dim X_a \geq d \}$ is always $\DSig$ \cite[Fact 2.9]{FHW-Compact} but $\{ a \in \R^m : \dim X_a = d \}$ need not be $\DSig$ (the complement of $\DSig$-set may not be $\DSig$).
Fact~\ref{fact:fiber-dsigma} holds in any type A expansion \cite[Theorem 6.4, Corollary 6.7]{FHW-Compact}.

\begin{fact}
\label{fact:fiber-dsigma}
Let $X$ be a $\DSig$-subset of $\R^m \times \R^n$ and let $\pi$ be the coordinate projection $\R^m \times \R^n \to \R^m$.
Then there is $0 \leq d \leq m$ such that
$$ \dim X \leq \{ a \in \R^m : \dim X_a \geq d \} + d. $$
If there is a $0 \leq d \leq n$ such that $\dim X_a = d$ for all $a \in \pi(X)$ then
$$ \dim X = \dim \pi(X) + d. $$
\end{fact}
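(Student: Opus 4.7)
The plan is to first prove the equality (constant fiber dimension $d$ implies $\dim X = \dim \pi(X) + d$) and then derive the inequality assertion by partitioning $X$ according to fiber dimension.

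For the equality, set $e := \dim \pi(X)$. For the upper bound, I apply Theorem~\ref{thm:Ck-points} to obtain a definable open $W$ with $W \cap X$ a $C^2$-submanifold dense in $X$; then $\dim X = \dim(W \cap X)$ by Proposition~\ref{prop:dim-closure}. At a smooth point $(p,q)$ where $W \cap X$ realizes its dimension, the derivative $D\pi$ has kernel inside $T_{(p,q)}(\{p\} \times X_p)$, of dimension at most $d$, while its image sits inside $T_p \pi(X)$, whose dimension is bounded by $\dim \pi(X) = e$ via Theorem~\ref{thm:space-filling}; rank--nullity then gives $\dim X \leq e + d$. For the lower bound, Corollary~\ref{cor:inject} furnishes a definable injection $g : U \to \pi(X)$ from an open $U \subseteq \R^e$. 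For each $a \in \pi(X)$, Theorem~\ref{thm:dim-eq} ensures that some coordinate projection $\sigma : \R^n \to \R^d$ sends $X_a$ to a set with interior; partitioning $\pi(X)$ definably (by Corollary~\ref{cor:define-dim}) into the finitely many cases indexed by $\sigma$ and invoking definable selection (Fact~\ref{fact:selection}) produces a definable map $U \times V \to X$ of the form $(u,v) \mapsto (g(u), h(u,v))$ with $\sigma \circ h(u,\cdot) = \mathrm{id}$, which is injective. Then Theorem~\ref{thm:space-filling} yields $\dim X \geq \dim(U \times V) = e + d$.

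For the first assertion, define $B_d := \{a \in \pi(X) : \dim X_a = d\}$, each definable by Corollary~\ref{cor:define-dim}, and set $Y_d := X \cap (B_d \times \R^n)$. Since $X$ is the finite disjoint union of the $Y_d$, Corollary~\ref{cor:dim-union} gives $\dim X = \max_d \dim Y_d$. Each $Y_d$ has constant fiber dimension $d$ over $B_d$, so the equality case yields $\dim Y_d = \dim B_d + d \leq \dim\{a : \dim X_a \geq d\} + d$, and the $d$ achieving the max satisfies the stated bound.

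The main obstacle will be the lower bound in the equality case: producing a \emph{uniformly} definable fiber-wise parametrization. The key device is that $\dim X_a = d$ is witnessed by a coordinate projection sending $X_a$ to a set with interior (Theorem~\ref{thm:dim-eq}), reducing the continuous parameter $a$ to finitely many combinatorial cases and allowing definable selection to do the uniform work. A secondary point is that the fact is stated for $\DSig$ sets rather than arbitrary definable sets; however, by Proposition~\ref{prop:dim-closure} and Fact~\ref{fact:dimcl-1} the dimension of a $\DSig$ set agrees with that of its (definable) closure, so the arguments above transfer after replacing $X$ with $\mathrm{cl}(X)$ where needed.
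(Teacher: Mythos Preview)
The paper does not prove this statement at all: Fact~\ref{fact:fiber-dsigma} is quoted as a black box from \cite[Theorem 6.4, Corollary 6.7]{FHW-Compact}, valid for $\DSig$-sets in any type~A expansion. The paper then \emph{uses} it to deduce Theorem~\ref{thm:fiber-projection} for arbitrary definable sets in the generically locally o-minimal setting. Your write-up is therefore not a reproof of the paper's argument but an attempt to prove the fiber formula directly inside the generically locally o-minimal setting, and your derivation of the first assertion from the second (partition into the sets $B_d$ and apply Corollary~\ref{cor:dim-union}) is essentially the same manoeuvre the paper performs in the proof of Theorem~\ref{thm:fiber-projection}.

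The genuine gap is in your upper bound for the equality case. The rank--nullity step asserts that $\ker\big(D\pi|_{T_{(p,q)}(W\cap X)}\big) \subseteq T_{(p,q)}(\{p\}\times X_p)$ and that the image lies in $T_p\,\pi(X)$. Neither inclusion is justified. The kernel is $T_{(p,q)}(W\cap X)\cap(\{0\}\times\R^n)$, and this need not be tangent to the fibre: for the curve $X=\{(t^2,t):t\in\R\}\subseteq\R\times\R$ the fibre over $0$ is a point, yet the kernel at the origin is one-dimensional. On the image side, $\pi(X)$ is not known to be a manifold at $p$, so ``$T_p\,\pi(X)$'' is undefined, and Theorem~\ref{thm:space-filling} gives a dimension bound on $\pi(X)$, not a pointwise tangent-space bound. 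A Sard-type choice of generic point might rescue the kernel estimate, but nothing in the toolkit you invoke supplies the image estimate; this is precisely the content that the paper imports from \cite{FHW-Compact}. Your closing remark that one may pass to $\cl(X)$ to handle $\DSig$ sets is also unsafe: closure does not commute with taking fibres, so the hypothesis $\dim X_a=d$ for all $a$ need not survive.
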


\begin{theorem}
\label{thm:fiber-projection}
Let $X$ be a definable subset of $\R^m \times \R^n$.
Then 
$$ \dim X = \max_{0 \leq d \leq n} \dim \{ a \in \R^m : \dim X_a = d \} + d .$$
In particular
$$ \dim Y \times Y' = \dim Y + \dim Y' $$
for definable $Y \subseteq \R^m$, $Y' \subseteq \R^n$.
\end{theorem}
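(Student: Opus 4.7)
The plan is to reduce via a definable partition to the constant-fiber case, prove the equality $\dim X = \dim \pi(X) + d$ by separate lower and upper bounds, and deduce the product formula as a corollary. By Corollary~\ref{cor:define-dim}, each $A_d := \{a \in \R^m : \dim X_a = d\}$ is definable, so $X = \bigsqcup_{d = 0}^{n} (X \cap (A_d \times \R^n))$ is a finite partition into definable sets. Corollary~\ref{cor:dim-union} gives $\dim X = \max_d \dim(X \cap (A_d \times \R^n))$, and on each piece $\pi$ surjects onto $A_d$ with every fiber of dimension exactly $d$. It therefore suffices to prove: if $X'$ is definable, $p := \dim \pi(X')$, and $\dim X'_a = d$ for every $a \in \pi(X')$, then $\dim X' = p + d$. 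The product formula is the case $X' = Y \times Y'$, for which $\pi(X') = Y$ and $X'_a = Y'$.

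For the lower bound $\dim X' \geq p + d$, I build a definable injection from an open subset of $\R^{p+d}$ into $X'$ and invoke Theorem~\ref{thm:space-filling}. Using Theorem~\ref{thm:dim-eq}, fix a coordinate projection $\sigma : \R^m \to \R^p$ such that $\sigma(\pi(X'))$ contains a nonempty open $V \subseteq \R^p$. Fact~\ref{fact:selection} yields a definable section $s : V \to \pi(X')$ of $\sigma$, and after shrinking $V$ via Theorem~\ref{thm:generic-smoothness} I may assume $s$ is continuous. Since every $X'_{s(v)}$ has dimension $d$, partitioning $V$ definably by which coordinate projection $\tau : \R^n \to \R^d$ makes $\tau(X'_{s(v)})$ have interior, Corollary~\ref{cor:dim-union} and Fact~\ref{fact:gen-dim} let me shrink $V$ to a nonempty open set on which a fixed $\tau$ works uniformly. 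Two further applications of Fact~\ref{fact:selection} and Theorem~\ref{thm:generic-smoothness} produce continuous definable $q : V \to \R^d$ and $\rho : V \to \R_{>0}$ with $B_d(q(v), \rho(v)) \subseteq \tau(X'_{s(v)})$, and then a definable $h$ on the open set $T := \{(v, t) \in V \times \R^d : \|t - q(v)\| < \rho(v)\} \subseteq \R^{p+d}$ satisfying $\tau(h(v, t)) = t$ and $(s(v), h(v, t)) \in X'$. The map $(v, t) \mapsto (s(v), h(v, t))$ is a definable injection $T \to X'$ (since $\sigma \circ s = \mathrm{id}_V$ recovers $v$ from the first coordinate and then $\tau$ recovers $t$ from $h(v, t)$), so Theorem~\ref{thm:space-filling} gives $\dim X' \geq \dim T = p + d$.

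For the upper bound $\dim X' \leq p + d$, let $e := \dim X'$. Corollary~\ref{cor:inject} yields a definable open $U \subseteq \R^e$ and a definable injection $f : U \to X'$, and by Theorem~\ref{thm:generic-smoothness} I may assume, after shrinking, that $f$ is $C^1$. The $C^1$ map $g := \pi \circ f : U \to \R^m$ has pointwise rank that is lower semicontinuous and so attains its maximum $r^*$ on a nonempty open $U' \subseteq U$. The classical constant rank theorem puts $g|_{U'}$ in local $C^1$ normal form, so $g(U')$ is locally an $r^*$-submanifold of $\R^m$ and each fiber $g^{-1}(a) \cap U'$ is locally an $(e - r^*)$-submanifold. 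Since $g(U') \subseteq \pi(X')$, we obtain $r^* \leq p$; since $f$ restricts to a definable injection from $g^{-1}(a) \cap U'$ into $\{a\} \times X'_a$, Theorem~\ref{thm:space-filling} gives $e - r^* \leq \dim X'_a = d$. Hence $e \leq r^* + d \leq p + d$.

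The main obstacle is the upper bound, where I invoke two classical differential-topology facts (lower semicontinuity of rank and the constant rank theorem for $C^1$ maps) that are not in the excerpt. An alternative avoiding these inputs would apply Theorem~\ref{thm:Ck-points} to cover $X'$ generically by a dense open $W$ with $W \cap X'$ a locally closed (hence $D_\Sigma$) $C^1$-submanifold, then partition $W \cap X'$ by the possibly smaller dimension of its own fibers and apply the second part of Fact~\ref{fact:fiber-dsigma} to each constant-fiber-dimension piece, reassembling via Corollary~\ref{cor:dim-union}.
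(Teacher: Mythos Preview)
Your argument is correct, and it takes a genuinely different route from the paper's. The paper proves both inequalities by reducing to the $D_\Sigma$ fiber formula, Fact~\ref{fact:fiber-dsigma}: for the lower bound it passes to $\cl(X)$ via Proposition~\ref{prop:dim-closure}, stratifies $\pi(X)$ by $\dim \cl(X)_a$, extracts a constructible piece via Proposition~\ref{prop:dim-smooth}, and applies the second clause of Fact~\ref{fact:fiber-dsigma}; for the upper bound it replaces $X$ by a constructible $Q$ of the same dimension (again Proposition~\ref{prop:dim-smooth}) and invokes the first clause of Fact~\ref{fact:fiber-dsigma}. Your proof bypasses the $D_\Sigma$ machinery entirely: the lower bound is obtained by assembling a definable injection $T \hookrightarrow X'$ from an open $T \subseteq \R^{p+d}$ via repeated selection and generic continuity, and the upper bound comes from pulling back along a $C^1$ injection $f : U \subseteq \R^e \to X'$ and applying the constant rank theorem to $\pi \circ f$. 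What this buys you is independence from the cited $D_\Sigma$ dimension theory of \cite{FHW-Compact}, at the cost of importing two classical differential-topology facts (semicontinuity of rank and the constant rank theorem) and implicitly using that partial derivatives of definable $C^1$ maps are definable so that your locus $U'$ of maximal rank is itself definable. The alternative you sketch at the end is essentially the paper's own argument.
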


\begin{proof}
Let $\pi$ be the coordinate projection $\R^m \times \R^n \to \R^m$.
We first show that 
$$ \dim X \geq  \dim \{ a \in \R^m : \dim X_a = d \} + d \quad \text{for all} \quad 0 \leq d \leq n .$$
Fix $0 \leq d \leq n$.
After replacing $X$ by
$$[ \{ a \in \R^m : \dim X_a = d\} \times \R^{n} ] \cap X $$ 
we suppose $\dim X_a = d$ for all $a \in \pi(X)$.
So we show $\dim X$ is at least $\dim \pi(X) + d$.
We show that $\dim \cl(X)$ satisfies the same inequality and apply Proposition~\ref{prop:dim-closure}.
Note that $\dim \cl(X)_a \geq d$ for all $a \in \pi(X)$.
For each $d \leq k \leq n$ let $Z[k]$ be the set of $a \in \pi(X)$ such that $\dim \cl(X)_a = k$.
Applying Corollary~\ref{cor:dim-union} we obtain $d \leq k \leq n$ such that $\dim Z[k] = \dim \pi(X)$.
Applying Proposition~\ref{prop:dim-smooth} we obtain a definable open subset $U$ of $\R^m$ such that $U \cap Z[k]$ is constructible and has dimension $\dim \pi(X)$.
In particular $U \cap Z[k]$ is $D_\Sigma$.
It follows that $Y := [ (U \cap Z[k]) \times \R^{n}]\cap \cl(X)$ is also $D_\Sigma$.
Applying the second claim of Fact~\ref{fact:fiber-dsigma} we obtain
$$ \dim Y = \dim (U \cap Z[k]) + k = \dim \pi(X) + k \geq \dim \pi(X) + d. $$
As $Y$ is a subset of $\cl(X)$ we have $\dim \cl(X) \geq \dim \pi(X) + d$.\newline

\noindent We now show that there is $0 \leq d \leq n$ such that
$$ \dim X \leq \dim \{ a \in \R^m : \dim X_a = d \} + d .$$
Applying Proposition~\ref{prop:dim-smooth} we obtain a definable open subset $U$ of $\R^m \times \R^n$ such that $Q := U \cap X$ is constructible and has the same dimension as $X$.
So $Q$ is $\DSig$.
Applying Fact~\ref{fact:fiber-dsigma} we obtain $0 \leq e \leq n$ such that
$$ \dim Q \leq \dim \{ a \in \R^m : \dim Q_a \geq e \} + e \leq \dim \{ a \in \R^m : \dim X_a \geq e \} + e. $$
By Corollary~\ref{cor:dim-union} there is $e \leq d \leq n$ such that
$$ \dim \{ a \in \R^n : \dim X_a = d \} = \dim \{ a \in \R^n : \dim X_a \geq e\}. $$
So
$$ \dim X \leq \dim \{ a \in \R^n : \dim X_a = d \} + d. $$
\end{proof}

\noindent We now prove Theorem~\ref{thm:fiber}.

\begin{theorem}
\label{thm:fiber}
Let $X$ be a definable subset of $\R^n$ and $f$ be a definable map $X \to \R^m$.
Then
$$ \dim X = \max_{0 \leq d \leq n} \dim \{ a \in \R^m : \dim f^{-1}(\{a\}) = d \} + d .$$
\end{theorem}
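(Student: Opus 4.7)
The plan is to reduce the general fiber-dimension formula to the coordinate-projection case already established as Theorem~\ref{thm:fiber-projection}. The natural device is to replace $f$ by its graph viewed as a subset of $\R^m \times \R^n$ (with the $\R^m$ factor first) and then apply the projection formula to that graph.

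More concretely, I would first consider the definable set
$$ G := \{ (a,x) \in \R^m \times \R^n : x \in X, \ f(x) = a \}. $$
The map $X \to G$ given by $x \mapsto (f(x),x)$ is a definable bijection, so Theorem~\ref{thm:space-filling} yields $\dim G = \dim X$. Next, for each $a \in \R^m$ the fiber of $G$ over $a$ under the projection $\pi : \R^m \times \R^n \to \R^m$ is exactly $G_a = f^{-1}(\{a\})$, so in particular $\dim G_a = \dim f^{-1}(\{a\})$ for every $a$.

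With these two identifications in hand, Theorem~\ref{thm:fiber-projection} applied to $G \subseteq \R^m \times \R^n$ gives
$$ \dim G = \max_{0 \leq d \leq n} \dim \{ a \in \R^m : \dim G_a = d \} + d, $$
and substituting $\dim G = \dim X$ and $G_a = f^{-1}(\{a\})$ produces exactly the desired equality. Since every ingredient (definable bijection preserving dimension, Theorem~\ref{thm:fiber-projection}, definability of the graph) has already been established, there is no real obstacle to overcome; the only thing to be mildly careful about is the bookkeeping that $d$ still ranges over $0 \leq d \leq n$ (the possible dimensions of fibers in $\R^n$), which matches the range in the statement.
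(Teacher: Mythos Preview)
Your proposal is correct and matches the paper's proof essentially verbatim: the paper defines the same graph $G \subseteq \R^m \times \R^n$, applies Theorem~\ref{thm:fiber-projection} to it, and uses the definable bijection between $G$ and $X$ (via the coordinate projection to $\R^n$, inverse to your $x \mapsto (f(x),x)$) together with Theorem~\ref{thm:space-filling} to conclude $\dim G = \dim X$.
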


\begin{proof}
Let $G$ be the set of $(a,b) \in \R^m \times \R^n$ such that $f(b) = a$.
Then $G_a = f^{-1}(\{a\})$ for all $a \in \R^m$.
Theorem~\ref{thm:fiber-projection} shows that
$$ \dim G = \max_{0 \leq d \leq n} \dim \{a \in \R^m : \dim f^{-1}(\{a\}) = d \} + d. $$
The coordinate projection $\R^m \times \R^n \to \R^n$ gives a definable bijection $G \to X$ so $\dim G = \dim X$ by Theorem~\ref{thm:space-filling}.
\end{proof}

\noindent As an application of the dimension theory we prove Lemma~\ref{lem:k-w}, which is used in the proof of Theorem~\ref{thm:d-min}.

\begin{lemma}
\label{lem:k-w}
Let $X,Y$ be nonempty definable subsets of $\R^m \times \R^n$ and let $\pi : \R^m \times \R^n \to \R^m$ be the coordinate projection.
Suppose there is a nonempty open subset $U$ of $\pi(X)$ such that $Y_a$ has interior in $X_a$ for all $a \in U$.
Then $Y$ has interior in $X$.
\end{lemma}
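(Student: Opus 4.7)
The approach combines definable selection with a generic continuity argument and a local patching step. First, the hypothesis supplies, for each $a \in U$, an element of $X_a$ together with a positive radius witnessing that $Y_a$ has interior in $X_a$; applying definable selection (Fact~\ref{fact:selection}) to the definable ternary relation ``$a \in U$, $b \in X_a$, $\varepsilon > 0$, and $B(b,\varepsilon) \cap X_a \subseteq Y_a$'' yields definable $b : U \to \R^n$ and $\varepsilon : U \to \R_{>0}$ with $b(a) \in X_a$ and $B(b(a),\varepsilon(a)) \cap X_a \subseteq Y_a$ for every $a \in U$, where $B(p,r)$ denotes the open ball of radius $r$ centered at $p$.

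Next I would find an $a_0 \in U$ at which both $b$ and $\varepsilon$ are continuous, viewed as functions on the subspace $U$. Since $U$ need not be open in $\R^m$, Theorem~\ref{thm:generic-smoothness} does not apply directly. Instead, apply Theorem~\ref{thm:Ck-points} to $U$ with $k = 2$: there is a definable open $W_0 \subseteq \R^m$ dense in $U$ such that $W_0 \cap U$ is a $C^2$-submanifold of $\R^m$. After shrinking $W_0$, we may parameterize a piece of $W_0 \cap U$ as the graph of a $C^2$-function over a definable open $\Omega \subseteq \R^d$ (up to a coordinate permutation), pull $b$ and $\varepsilon$ back along this parameterization, apply Theorem~\ref{thm:generic-smoothness} to the pullbacks on $\Omega$, and transport the result back. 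This produces a nonempty relatively open $V \subseteq U$ on which $b$ and $\varepsilon$ are continuous with respect to the subspace topology on $U$.

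Finally, fix $a_0 \in V$, set $b_0 = b(a_0)$ and $\varepsilon_0 = \varepsilon(a_0)$, and shrink $V$ to a relatively open neighborhood $V' \subseteq U$ of $a_0$ on which $\|b(a) - b_0\| < \varepsilon_0/4$ and $\varepsilon(a) > \varepsilon_0/2$. Since $V'$ is open in $\pi(X)$ we may write $V' = V_* \cap \pi(X)$ for some open $V_* \subseteq \R^m$, and set $W := V_* \times B(b_0, \varepsilon_0/4)$, which is open in $\R^{m+n}$. The point $(a_0, b_0)$ lies in $W \cap X$, so this intersection is nonempty; and for any $(a, y) \in W \cap X$ one has $a \in V_* \cap \pi(X) = V'$, hence $\|y - b(a)\| \leq \|y - b_0\| + \|b_0 - b(a)\| < \varepsilon_0/2 < \varepsilon(a)$, so $y \in B(b(a),\varepsilon(a)) \cap X_a \subseteq Y_a$ and $(a, y) \in Y$. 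Thus $W$ witnesses that $Y$ has interior in $X$. The main technical obstacle is the generic continuity of $b$ and $\varepsilon$ on $U$, since $U$ need not be open in $\R^m$; Theorem~\ref{thm:Ck-points} is what lets us locally reduce to an open Euclidean domain.
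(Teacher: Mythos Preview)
Your argument is correct, modulo one small point: the hypothesis does not assume $U$ is definable, so you cannot literally apply Fact~\ref{fact:selection} or Theorem~\ref{thm:Ck-points} to $U$. The fix is immediate --- replace $U$ by the definable set $U' = \{a \in \pi(X) : Y_a \text{ has interior in } X_a\}$, which contains $U$ and hence has nonempty interior in $\pi(X)$, or apply Theorem~\ref{thm:Ck-points} (or just Fact~\ref{fact:c0-smooth}; $C^0$-points suffice) to $\pi(X)$ and use density to land inside $U$. With that adjustment your three steps go through as written.

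Your route is genuinely different from the paper's. Both proofs begin by passing to a local chart so that (after shrinking) $U$ may be treated as a definable open subset of some $\R^d$. From there the paper argues via dimension: it forms the definable set $W = \{(a,b) : a\in U,\ b \text{ lies in the interior of } Y_a \text{ in } X_a\}$, notes that each fiber $W_a$ is nonempty open in $\R^n$ hence $n$-dimensional, and invokes the fiber dimension formula (Theorem~\ref{thm:fiber-projection}) to get $\dim W = d+n$, so $W$ has interior in ambient space and $W\cap X \subseteq Y$. You instead bypass the dimension machinery entirely, using selection and generic continuity to manufacture an explicit product box. Your approach is more hands-on and rests on lighter tools (Fact~\ref{fact:selection} and Theorem~\ref{thm:generic-smoothness} rather than the $D_\Sigma$ dimension theory behind Theorem~\ref{thm:fiber-projection}); the paper's is a one-line application once that theorem is available.
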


\begin{proof}
We first reduce to the case when $U$ is an open subset of $\R^m$.
After applying Fact~\ref{fact:c0-smooth} and Theorem~\ref{thm:equiv} and shrinking $U$ if necessary we obtain $0 \leq d \leq m$, a nonempty definable open $V \subseteq \R^d$, and a coordinate projection $\rho : \R^m \to \R^d$ such that $\rho$ restricts to a homeomorphism $U \to V$.
Let $g$ be the definable homeomorphism  $U \times \R^n \to V \times \R^n$ given by $g(a,b) = (\rho(a),b)$.
It suffices to show that $g(Y)$ has interior in $g(X)$.
So we suppose that $U$ is an open subset of $\R^m$. \newline

\noindent For all $a \in U$ let $W_a$ be the union of all open boxes $B$ in $\R^n$ such that $B \cap X_a \subseteq Y_a$ and $B \cap X_a \neq \emptyset$.
So $W_a \cap X_a$ is the interior of $Y_a$ in $X_a$ for all $a \in U$.
Let $W$ be the set of $(a,b) \in \R^{m} \times \R^{n}$ such that $b \in W_a$.
Note that $W$ is definable and each $W_a$ is nonempty and open.
So $\dim W_a = n$ for all $a \in U$.
As $\dim U = m$ Theorem~\ref{thm:fiber-projection} shows that $\dim W = m + n$.
Fact~\ref{fact:gen-dim} shows that $W$ has interior in $\R^m \times \R^n$.
As $W \cap X \subseteq Y$ we see that $Y$ has interior in $X$.
\end{proof}

\noindent We briefly describe some results on metric dimensions.
It follows from \cite[Corollary 1.5]{HM} that if $\Sa R$ expands $(\R,<,+,\times)$ and $X \subseteq \R^n$ is definable then the Assouad dimension of $X$ agrees with $\dim X$.
We prove a similar result for expansions of $\rvec$.
Fact~\ref{fact:HD} is the main theorem of \cite{FHM}.

\begin{fact}
\label{fact:HD}
Suppose $\Sa S$ is a type A expansion of $\rvec$ and let $X \subseteq \R^n$ be $\DSig$.
Then the Hausdorff dimension of $X$ agrees with $\dim X$.
\end{fact}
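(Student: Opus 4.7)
The plan is to prove the identity $\dimH X = \dim X$ by separately establishing both inequalities, with $\dimH X \leq \dim X$ being the substantive direction.

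For the lower bound $\dimH X \geq \dim X$, set $d = \dim X$. Fact~\ref{fact:dimcl-1} gives a coordinate projection $\pi : \R^n \to \R^d$ whose image $\pi(X)$ has nonempty interior in $\R^d$, and hence Hausdorff dimension $d$. Since $\pi$ is $1$-Lipschitz and Hausdorff dimension is monotone under Lipschitz maps, $\dimH X \geq \dimH \pi(X) = d$.

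For the upper bound $\dimH X \leq \dim X$, I would first reduce to the compact case. The family witnessing that $X$ is $\DSig$ is directed by inclusion (monotone in $s$ and $t$), so a countable sub-family of compact definable $X_k \subseteq X$ still has union $X$. Since $\dim X \geq \dim X_k$ for every $k$ and Hausdorff dimension is countably stable, it suffices to prove $\dimH K \leq \dim K$ for every compact definable $K \subseteq \R^n$.

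The heart of the argument, and the main obstacle, is to produce for such a $K$ with $\dim K = d$ and each $\delta > 0$ a covering of $K$ by at most $C\delta^{-d}$ balls of radius $\delta$, with $C$ independent of $\delta$; this gives an upper box-counting bound of $d$, whence $\dimH K \leq d$. The scalar multiplications of $\rvec$ are essential here, since they allow one to convert a single reference cover into a $\delta$-cover at every scale in a definable way, and they permit a definable choice of an efficient sub-cover via definable selection for $\DSig$-sets from \cite{FHW-Compact}. The type A hypothesis enters through Fact~\ref{fact:con-image} and Fact~\ref{fact:HW-dimension}, which force the fibers of $K$ over its dimension-$d$ projection $\pi(K)$ to be ``thin'' on a dense open subset of $\pi(K)$: a failure of thinness would allow one, via the affine scalar action, to produce a somewhere dense $\omega$-orderable definable subset of $\R$, contradicting type A. The residual set where the fiber structure degenerates has topological dimension strictly less than $d$ and is handled by induction, the base case $d = 0$ reducing via Fact~\ref{fact:HW-dimension} to the fact that a zero-dimensional $\DSig$-subset of $\R^n$ is nowhere dense (hence, after further $\DSig$-decomposition, covered by $O(1)$ balls at each scale).
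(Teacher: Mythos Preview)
The paper does not prove this statement; it is quoted as the main theorem of \cite{FHM}, so there is no in-paper argument to compare against. Your lower bound and the reduction to compact definable sets are fine, but the substantive upper bound has a genuine gap.

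The base case $d=0$ is where your sketch breaks. You assert that a zero-dimensional $\DSig$ set, being nowhere dense, is ``after further $\DSig$-decomposition, covered by $O(1)$ balls at each scale.'' Nowhere density gives nothing of the sort: the middle-thirds Cantor set is compact, nowhere dense, and topologically zero-dimensional, yet needs on the order of $\delta^{-\log 2/\log 3}$ balls of radius $\delta$. More to the point, the paper explicitly records (Section~\ref{section:examples}) that there exist noiseless --- hence type~A --- expansions of the real field, and therefore of $\rvec$, which define Cantor subsets of $\R$. For such a set $K$, Fact~\ref{fact:HD} asserts $\dimH K = 0$, and this is a genuinely nontrivial consequence of the type~A hypothesis together with the full scalar structure of $\rvec$; it is certainly not a consequence of $K$ being nowhere dense, and no amount of ``further $\DSig$-decomposition'' will change that. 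You have given no mechanism that would produce efficient covers of such a $K$.

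The inductive step is likewise too thin to evaluate. The claim that a failure of fiber ``thinness'' yields a dense $\omega$-orderable set needs an actual construction; Fact~\ref{fact:con-image} and Fact~\ref{fact:HW-dimension} control interiors and topological dimension of images, not metric covering numbers, and you have not explained how to bridge that gap. The result in \cite{FHM} is a substantial theorem whose proof uses the $\rvec$ scalar structure in a much more specific way than ``rescale a reference cover''; your outline does not capture the essential content.
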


\noindent We let $\dimH X$ be the Hausdorff dimension of a subset $X$ of $\R^n$.

\begin{prop}
\label{prop:vec-dim}
Suppose $\Sa R$ expands $\rvec$ and let $X \subseteq \R^n$ be definable.
Then the Hausdorff dimension of $X$ agrees with $\dim X$.
\end{prop}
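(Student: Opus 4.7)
The plan is to establish the two inequalities $\dimH X \leq \dim X$ and $\dimH X \geq \dim X$ separately, because—unlike topological dimension—Hausdorff dimension is not invariant under closure in general (for instance $\dimH(\Q \cap [0,1]) = 0$ while $\dimH(\cl(\Q \cap [0,1])) = 1$), so one cannot simply reduce to the closed case via Proposition~\ref{prop:dim-closure} and apply Fact~\ref{fact:HD} directly. I first note that the standing hypothesis of generic local o-minimality forces $\Sa R$ to be type A: if $\Sa R$ admitted a dense $\omega$-order, it would be a countable set dense in some interval, hence neither having interior nor containing an isolated point, contradicting Theorem~\ref{thm:equiv}(2). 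Since by assumption $\Sa R$ also expands $\rvec$, Fact~\ref{fact:HD} is available.

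For the upper bound, observe that $\cl(X)$ is a closed definable subset of $\R^n$, hence constructible, hence $\DSig$. Applying Fact~\ref{fact:HD} to $\cl(X)$ gives $\dimH \cl(X) = \dim \cl(X)$, while Proposition~\ref{prop:dim-closure} yields $\dim \cl(X) = \dim X$. Monotonicity of Hausdorff dimension under inclusion then gives
\[
\dimH X \leq \dimH \cl(X) = \dim \cl(X) = \dim X.
\]

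For the lower bound, set $d = \dim X$. Proposition~\ref{prop:dim-smooth} (with, say, $k = 2$) produces a definable open $U \subseteq \R^n$ such that $U \cap X$ is a $d$-dimensional $C^2$-submanifold of $\R^n$. A standard fact from geometric measure theory says that any $C^1$-submanifold of $\R^n$ of dimension $d$ has Hausdorff dimension exactly $d$: locally it is the image of an open subset of $\R^d$ under a bi-Lipschitz $C^1$ chart, so its Hausdorff dimension is simultaneously bounded above by $d$ (Lipschitz maps do not increase Hausdorff dimension) and below by $d$ (the chart is bi-Lipschitz on compact pieces). Thus $\dimH(U \cap X) = d$, whence $\dimH X \geq d = \dim X$.

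Combining the two inequalities yields $\dimH X = \dim X$. There is no serious obstacle: the only conceptual point is that one must separate the two sides of the equality to accommodate the poor behavior of Hausdorff dimension under closure, and it is precisely the $C^k$-smoothness provided by Theorem~\ref{thm:Ck-points} and Proposition~\ref{prop:dim-smooth}, together with the type A hypothesis implicit in generic local o-minimality, that makes both halves go through cleanly.
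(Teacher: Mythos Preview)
Your proof is correct, and the upper bound argument is exactly the paper's. The difference lies in the lower bound. The paper invokes the classical Szpilrajn--N\"obeling inequality, valid for arbitrary separable metric spaces, that topological dimension is always bounded above by Hausdorff dimension; this gives $\dim X \leq \dimH X$ immediately, with no appeal to definability or smoothness. You instead manufacture a $d$-dimensional $C^2$-submanifold inside $X$ via Proposition~\ref{prop:dim-smooth} and use the bi-Lipschitz invariance of Hausdorff dimension. Your route works and is self-contained within the machinery of the section, but it is more elaborate than necessary: the general inequality $\dim \leq \dimH$ makes the lower bound a one-liner requiring nothing about $\Sa R$ at all. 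On the other hand, your explicit verification that generic local o-minimality forces $\Sa R$ to be type A (needed to invoke Fact~\ref{fact:HD}) is a point the paper leaves implicit.
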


\begin{proof}
Hausdorff dimension is monotone and bounded below by topological dimension, so $\dim X \leq \dimH X \leq \dimH \cl(X)$.
The proposition follows by combining Fact~\ref{fact:HD} and Proposition~\ref{prop:dim-closure}.
\end{proof}

\noindent It is an open question whether Proposition~\ref{prop:vec-dim} holds for $\nip$ expansions of $(\R,<,+)$ by closed sets.
As as countable set has Hausdorff dimension zero this is closely related to Question~\ref{qst:cantor-rank}.

\subsection{Theorem E}
\label{section:zilber}
We continue to suppose that $\Sa R$ is a generically locally o-minimal expansion of $(\R,<,+)$.
We apply the tools developed above to develop a ``Zil'ber dichotomy" between generically locally o-minimal expansions that define a field structure on an interval and those in which definable sets and functions are generically affine.
The o-minimal case follows from the Peterzil-Starchenko trichotomy~\cite{PS-Tri}.
A $\DSig$-version of Theorem~\ref{thm:zilber} is proven in \cite{HW-continuous} for type A expansions.
Our argument follows that of \cite{HW-continuous} but uses different tools. \newline

\noindent
We say that $\Sa R$ is \textbf{field-type} if there is an open interval $I$ and definable $\oplus,\otimes : I^2 \to I$ such that $(I,<,\oplus,\otimes)$ is an ordered field isomorphic to $(\R,<,+,\times)$.
This definition implies $\oplus,\otimes$ are continuous.
It is easy to see that if $\Sa R$ is field-type then there is a bounded open interval $I$ and $\oplus,\otimes : I^2 \to I$ such that $(I,<,\oplus,\otimes)$ is isomorphic to $(\R,<,+,\times)$.
So we will always suppose that $I$ is bounded.
We recall a general theorem from \cite{HW-continuous}.
(The case when $m = 1$ is essentially due to Marker, Peterzil, and Pillay~\cite{MPP}.)

\begin{fact}
\label{fact:gen-field-type}
Suppose $U$ is a nonempty connected open subset of $\R^m$ and $f : U \to \R^n$ is $C^2$ and not affine.
Then $(\R,<,+,f)$ is field-type.
\end{fact}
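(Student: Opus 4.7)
The plan is to reduce to the case $m = n = 1$ and then adapt the Marker--Peterzil--Pillay construction \cite{MPP}. The reduction to $n = 1$ is immediate: some component $f_i : U \to \R$ of $f$ is not affine, is definable from $f$, and we replace $f$ by $f_i$. For the reduction to $m = 1$, pick $p \in U$ with $\nabla^2 f(p) \neq 0$. If some diagonal entry $\partial^2 f/\partial x_k^2(p)$ is nonzero, then $g(t) := f(p + t e_k)$ is $(\R,<,+,f)$-definable with parameter $p$, is $C^2$ near $0$, and has $g''(0) \neq 0$. If every diagonal entry of $\nabla^2 f(p)$ vanishes but some mixed partial $c := \partial^2 f/\partial x_j \partial x_k(p)$ is nonzero, set
\[
g(u) := f(p + u e_j + u e_k) - f(p + u e_j) - f(p + u e_k) + f(p).
\]
Direct differentiation gives $g'(0) = 0$ and $g''(0) = 2c \neq 0$, so $g$ is $C^2$, is $(\R,<,+,f)$-definable with parameter $p$, and is not affine on any neighborhood of $0$.

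Now take $m = n = 1$ and assume $f : I \to \R$ is $C^2$ with $f''(0) > 0$ (after translating, shrinking $I$, and possibly negating $f$). Form the definable second difference
\[
h(x, y) := f(x + y) - f(x) - f(y) + f(0).
\]
A Taylor expansion yields $h(x, y) = f''(0)\, xy + r(x, y)$, where $r$ vanishes at the origin to higher order than $xy$ and satisfies $r(x, 0) = r(0, y) = 0$. Thus $h$ is a definable approximate multiplication, scaled by $f''(0)$ and perturbed by a small error. The Marker--Peterzil--Pillay strategy bootstraps from such an approximate multiplication, together with the ordered group structure, to build a continuous definable operation $\otimes : J \times J \to J$ on a smaller interval $J$ about $0$ such that $(J, <, +|_{J \times J}, \otimes)$ is an ordered field. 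By uniqueness of the complete archimedean ordered field, this structure is isomorphic to $(\R, <, +, \times)$, so $(\R, <, +, f)$ is field-type.

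The main obstacle is that the original MPP argument uses the o-minimal monotonicity theorem to make derivatives of definable functions automatically definable, and there is no direct analogue under mere generic local o-minimality. Theorem~\ref{thm:generic-smoothness} does guarantee that $f$ is $C^k$ on a dense definable open subset of $I$, so the geometric content of the MPP argument is classically valid, but one still needs a first-order formula in $(\R,<,+,f)$ defining the graph of $f'$. The naive formula via difference quotients $(f(x+h) - f(x))/h$ involves scalar multiplication of variables and is not available in the language; the expected workaround is to characterize $f'(x)$ indirectly by comparison with the definable second difference $h$ of the previous paragraph, bootstrapping a definable derivative from the approximate multiplication, with definable selection (Fact~\ref{fact:selection}) pinning down any residual ambiguity. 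Once $f'$ is first-order definable on an appropriate subinterval, the rest of the MPP construction goes through and produces $\otimes$.
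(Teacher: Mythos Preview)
The paper does not prove this statement; it is quoted as a theorem from \cite{HW-continuous}, with the remark that the one-variable case is essentially due to Marker, Peterzil, and Pillay \cite{MPP}. So there is no in-paper proof to compare against.

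Your reduction to $m = n = 1$ is correct. Your treatment of the one-variable case, however, is not a proof but an outline with an explicitly acknowledged gap: you name ``the main obstacle'' (defining $f'$ in $(\R,<,+,f)$ without access to scalar multiplication of variables) and then describe only an ``expected workaround'' without carrying it out. Worse, the tools you reach for to fill that gap are not available. Theorem~\ref{thm:generic-smoothness} and Fact~\ref{fact:selection} are results about generically locally o-minimal expansions of $(\R,<,+)$, but the hypothesis of Fact~\ref{fact:gen-field-type} is only that $f$ is $C^2$ and non-affine; nothing says $(\R,<,+,f)$ is generically locally o-minimal, noiseless, or even $\nip$. (Your invocation of Theorem~\ref{thm:generic-smoothness} is in any case idle: $f$ is already $C^2$ by hypothesis.) The argument in \cite{HW-continuous} must work directly from the analytic hypothesis on $f$ and the ordered-group structure, without any model-theoretic tameness assumption on $(\R,<,+,f)$; until you supply such an argument, the one-variable case---and hence the whole proof---remains open.
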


\noindent A family $\Cal X$ of one-dimensional subsets of $\R^n$ is \textbf{normal} if $\dim X \cap X' = 0$ for all distinct $X,X' \in \Cal X$.
If $( X_a )_{a \in B}$ is a normal definable family of one-dimensional subsets of $\R^n$ then the dimension of $( X_a )_{a \in B}$ is defined to be the dimension of $B$.
Given a subset $X$ of $\R^n$ we say that $p \in X$ is an \textbf{affine point} of $X$ if there is an open neighbourhood $U$ of $p$ such that $U \cap X = U \cap H$ for some affine subspace $H$ of $\R^n$.
It is shown in \cite{HW-fractal} that the set of affine points of $X$ is definable in $(\R,<,+,X)$.

\begin{theorem}
\label{thm:zilber}
The following are equivalent
\begin{enumerate}
    \item $\Sa R$ is field-type,
    \item there is a definable field $(Z,\oplus,\otimes)$ such that $\dim Z > 0$,
    \item there is a definable normal family of one-dimensional subsets of $\R^n$ of dimension $\geq 2$,
    \item there is a definable $f : U \to \R^m$ where $U$ is an open subset of $\R^n$ and $f$ is nowhere locally affine,
    \item there is a definable subset $X$ of $\R^n$ such that the affine points of $X$ are not dense in $X$.
\end{enumerate}
\end{theorem}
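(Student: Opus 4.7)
The plan is to make $(4)$ the pivotal condition: $(1)$, $(2)$, $(3)$ each imply $(4)$; $(4) \Lrarr (5)$; and $(4) \Rightarrow (1)$ via Fact~\ref{fact:gen-field-type}. The direction $(1) \Rightarrow (4)$ uses that the field multiplication $\otimes$ on $I^2$ is nowhere locally affine; $(1) \Rightarrow (2)$ takes $Z = I$; and $(1) \Rightarrow (3)$ takes the two-dimensional family $\{(x,y) \in I^2 : y = (a \otimes x) \oplus b\}$ of affine lines indexed by $(a,b) \in I^2$, which is normal since any two distinct such lines meet in exactly one point.

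For $(4) \Rightarrow (1)$: apply Theorem~\ref{thm:generic-smoothness} with $k = 2$ to find a dense definable open $V \subseteq U$ on which $f$ is $C^2$, intersect with the dense open set of points at which $f$ fails to be locally affine, and shrink to a connected open $V'$ on which $f$ is $C^2$ and not affine; Fact~\ref{fact:gen-field-type} then yields $(1)$. For $(4) \Lrarr (5)$: given $(4)$, the affine points of $\gr(f)$ are confined to $\{(p,f(p)) : f \text{ locally affine at } p\}$ and hence form a non-dense subset of $\gr(f)$. Conversely, given $(5)$, Theorem~\ref{thm:Ck-points} provides a dense open $W \subseteq \R^n$ such that $W \cap X$ is a $C^2$-submanifold; selecting a $C^2$-point $p$ of $X$ that is not affine, a local coordinate projection presents $X$ near $p$ as the graph of a $C^2$ function which must fail to be locally affine on a dense open subset of its domain, lest $p$ itself be an affine point.

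For $(2) \Rightarrow (4)$: given a definable field $(Z,\oplus,\otimes)$ with $d := \dim Z \geq 1$, Theorem~\ref{thm:Ck-points} provides a $C^2$-chart in which multiplication becomes a $C^2$ map from an open subset of $\R^d \times \R^d$ into $\R^d$. This map cannot be locally affine on a dense open subset: affineness in both variables together with distributivity, associativity, and the existence of multiplicative inverses would force the multiplicative group to be locally isomorphic to $(\R^d,+)$ with a trivial compatible ring structure, contradicting $d \geq 1$.

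The main obstacle is $(3) \Rightarrow (4)$. By Corollary~\ref{cor:inject} we may assume $B$ contains a definable open subset of $\R^2$. Applying Theorem~\ref{thm:Ck-points} and definable selection to the incidence set $\{(a,x) : a \in B, x \in X_a\}$, which has dimension three by Theorem~\ref{thm:fiber-projection}, one obtains a definable $C^2$ parametrization $\phi : V \times J \to \R^n$ of the curves, with $V \subseteq \R^2$ open and $\phi(a,\cdot)$ injective. If $(4)$ fails, every definable function on an open domain is locally affine on a dense open subset, so after shrinking $\phi$ is affine and each $X_a$ is locally an affine line $L_a$. Normality then forces distinct $L_a, L_{a'}$ to meet in exactly one point, so the definable intersection map $\iota(a,a') = L_a \cap L_{a'}$ is well-defined on an open subset of $V \times V$. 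Under the failure of $(4)$ this $\iota$ is locally affine; but with $L_a = p(a) + \R \cdot v(a)$ and $p, v$ affine in $a$, a direct computation shows $\iota$ is a rational-but-not-affine function of $(a,a')$ unless the family of lines degenerates into a pencil through a common point. The pencil case then provides a two-parameter definable family of lines through a fixed point, from which a Peterzil-Starchenko-type reconstruction of field operations on a one-dimensional piece produces $(1)$, contradicting the assumption that $(4)$ fails and closing the cycle.
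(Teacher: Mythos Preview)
Your architecture differs from the paper's cycle $(1)\Rightarrow(2)\Rightarrow(3)\Rightarrow(4)\Rightarrow(5)\Rightarrow(1)$: you make $(4)$ pivotal and prove $(1),(2),(3)\Rightarrow(4)$ separately. The pieces $(1)\Rightarrow(4)$, $(4)\Leftrightarrow(5)$, and $(4)\Rightarrow(1)$ are fine and essentially match the paper's $(4)\Rightarrow(5)$ and $(5)\Rightarrow(1)$.

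There is a concrete gap in your $(3)\Rightarrow(4)$. If $\phi:V\times J\to\R^n$ is \emph{jointly} affine on a box then $\phi(a,t)=Ma+qt+r$, so every local line $L_a$ has the \emph{same} direction $q$. Normality only gives $L_a\neq L_{a'}$ for $a\neq a'$ (else $X_a\cap X_{a'}$ contains a segment); it does not force them to meet, and parallel distinct lines do not. Hence your intersection map $\iota$ is undefined on the very box where you made $\phi$ affine, and the rational-function analysis never starts; the ``pencil case'' and the ``Peterzil--Starchenko-type reconstruction'' are then irrelevant (and are in any case only gestures, not arguments). The paper's $(3)\Rightarrow(4)$ avoids the intersection map entirely: after using Theorem~\ref{thm:dim-eq}, Lemma~\ref{lem:k-w} and definable selection it produces a single function $g:V\to\R^{n-1}$, $V\subseteq\R^3$ open, with $(c,g(a,b,c))\in X_{(a,b)}$, and argues directly that $g$ is nowhere locally affine --- if $g$ were affine on a box one finds $(u,v)\neq(u',v')$ with $g(u,v,\cdot)\equiv g(u',v',\cdot)$, so $X_{(u,v)}\cap X_{(u',v')}$ contains a one-dimensional arc, contradicting normality.

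Your $(2)\Rightarrow(4)$ is also underspecified. To run it you need compatible $C^2$-charts near $x$, $y$, $x\oplus y$, $x\otimes y$ simultaneously, in which both $\oplus$ and $\otimes$ become locally affine, and then an actual algebraic contradiction; ``would force the multiplicative group to be locally isomorphic to $(\R^d,+)$'' is an intuition, not a proof. The paper sidesteps all of this by proving $(2)\Rightarrow(3)$ purely algebraically: pick a definable injection $g:I\to Z$ via Corollary~\ref{cor:inject} and take the family of graphs of $x\mapsto (g(a)\otimes g(x))\oplus g(a')$ over $(a,a')\in I^2$; two distinct field-theoretic lines meet in at most one point, so the family is normal. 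Then only the single implication $(3)\Rightarrow(4)$ needs work.
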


\noindent
An o-minimal expansion of $(\R,<,+)$ is field-type if and only if it is not a reduct of $\rvec$ by the Peterzil-Starchenko trichotomy.
This dichotomy fails for $\nip$ expansions of $(\R,<,+)$ by closed subsets of Euclidean space.
Let $g : 2^{\Z} \times \R \to \R$ be given by $f(t,t') = tt'$.
Delon~\cite{Delon-powers} studied $(\R,<,+,g)$.
It may be deduced from \cite[Theorem 2]{Delon-powers} that $(\R,<,+,g)$ is not field-type.
However $(\R,<,+,g)$ is $\nip$ as it is a reduct of $(\R,<,+,\times,2^\Z)$.


\begin{proof}
A nonempty open interval is one-dimensonal so $(1)$ implies $(2)$.\newline

\noindent $(2) \Rightarrow (3)$:
Let $(Z,\oplus,\otimes)$ be a definable field with $\dim Z \geq 1$.
Suppose $Z$ is a subset of $\R^n$.
Applying Corollary~\ref{cor:inject} we obtain a nonempty open interval $I$ and a definable injection $g : I \to Z$.
Given $(a,a') \in I^2$ we let
$$ X_{(a,a')} := \{ (b,b') \in I \times Z : b' = [ g(a) \otimes g(b) ] \oplus g(a') \}. $$
Note that $X_{(a,a')}$ is the graph of the function $I \to Z$ given by
$$ x \mapsto [g(a) \otimes g(x)] \oplus g(a'). $$
This function is injective so Theorem~\ref{thm:space-filling} shows that $\dim X_{(a,a')} = 1$ for all $(a,a') \in I^2$.
We show that $(X_{(a,a')})_{(a,a') \in I^2}$ is normal.
Let $(a,a')$ and $(b,b')$ be distinct elements of $I^2$.
Then $(g(a),g(a))$ and $(g(b),g(b'))$ are distinct as $g$ is injective.
Let $L_{(a,a')}$ be the graph of the function $Z \to Z$ given by $x \mapsto [g(a) \otimes x] \oplus g(a')$, likewise define $L_{(b,b')}$.
Then $X_{(a,a')}$ and $X_{(b,b')}$ are contained in $L_{(a,a')}$ and $L_{(b,b')}$, respectively.
As $(Z,\oplus,\otimes)$ is a field the intersection of $L_{(a,a')}$ and $L_{(b,b')}$ contains at most one element.
So $X_{(a,a')} \cap X_{(b,b')}$ contains at most one element.\newline

\noindent $(3) \Rightarrow (4)$:
Suppose $(X_a)_{a \in B}$ is a definable normal family of one-dimensional subsets of $\R^n$ where $B$ is a definable subset of $\R^m$ of dimension $\geq 2$.
Applying Corollary~\ref{cor:inject} we obtain a nonempty definable open $U \subseteq \R^2$ and a definable injection $f : U \to B$.
After replacing $(X_a)_{a \in B}$ with $(X_{f(a)})_{a \in U}$ if necessary we suppose $B$ is an open subset of $\R^2$.\newline

\noindent Let $\pi_k : \R^n \to \R$ be the projection onto the $k$th coordinate for $1 \leq k \leq n$.
As $\dim X_a = 1$ for all $a \in B$ an application of Theorem~\ref{thm:dim-eq} shows that for any $a \in B$ there is $1 \leq k \leq n$ such that $\pi_k(X_a)$ has interior.
Given $1 \leq k \leq n$ let $B_k$ be the set of $a \in B$ such that $\pi_k(X_a)$ has interior.
Then there is $1 \leq k \leq n$ such that $B_k$ is somewhere dense in $B$.
We assume that $B_1$ is somewhere dense in $B$, the case when $B_k$ is somewhere dense for $2 \leq k \leq n$ follows in the same way.
So $B_1$ has interior in $B$.
After replacing $B$ with a smaller nonempty definable open subset of $\R^2$ if necessary we suppose $\pi_1(X_a)$ has interior for all $a \in B$.
Let $\rho : B \times \R^n \to B \times \R$ be the projection onto the first three coordinates.
Then $\rho(X)_a = \pi_1(X_a)$ for all $a \in B$.
As $\rho(X)_a$ has interior for all $a \in B$ an application of Lemma~\ref{lem:k-w} shows that $\rho(X)$ has interior.
Let $V$ be a definable open subset of $\R^3$ contained in $\rho(X)$.
Applying definable selection we obtain a definable $g : V \to \R^{n-1}$ such that $(c,g(a,b,c)) \in X_{(a,b)}$ for all $(a,b,c) \in V$.\newline

\noindent We show that $g$ is nowhere locally affine.
Suppose otherwise.
After shrinking $V$ if necessary we suppose $g$ is affine and $V$ is of the form  $I_1 \times I_2 \times I_3$ for nonempty open intervals $I_1,I_2,I_3$.
Fix linear $h_1,h_2,h_3 : \R \to \R^{n-1}$ and $\beta \in \R^{n-1}$ such that
$$ g(a_1,a_2,a_3) = h_1(a_1) + h_2(a_2) + h_3(a_3) + \beta \quad \text{for all}  \quad (a_1,a_2,a_3) \in I_1 \times I_2 \times I_3. $$
Fix $(u,v) \in I_1 \times I_2$ and $u' \in I_1$ such that $u' \neq u$.
Let $v' \in I_2$ satisfy 
\[
 h_1(u')  +h_2(v') = h_1(v) + h_2(v).
 \]
Then $f(u,v,t) = f(u',v',t)$ for all $t \in I_3$.
So $\{ (t,f(u,v,t)) : t \in I_3 \}$ is a subset of both $X_{(u,v)}$ and $X_{(u',v')}$.
As $\{ (t,f(u,v,t)) : t \in I_3 \}$ is one-dimensional we obtain a contradiction with  our assumption that $(X_a)_{a \in B}$ is normal.\newline

\noindent $(4) \Rightarrow (5)$:
First observe that if $V$ is an open subset of $\R^n$ and $f : V \to \R^m$ is continuous then $f$ is locally affine at $p \in V$ if and only if $(p,f(p))$ is an affine point of $\gr(f)$.
Now suppose $U$ is a definable open subset of $\R^n$ and $f : U \to \R^m$ is definable such that $f$ is nowhere locally affine.
After applying Theorem~\ref{thm:generic-smoothness} and shrinking $U$ if necessary we suppose $f$ is continuous.
Then $\gr(f)$ does not have any affine points.\newline

\noindent $(5) \Rightarrow (1)$:
Suppose $X \subseteq \R^n$ is definable and suppose that the affine points of $X$ are not dense in $X$.
Applying Theorem~\ref{thm:Ck-points} we obtain a definable open subset $U$ of $\R^n$ and $0 \leq d \leq n$ such that $U \cap X$ is a $d$-dimensional $C^2$-submanifold of $\R^n$.
After permuting coordinates and shrinking $U$ if necessary we obtain a nonempty open $V \subseteq \R^d$ and a $C^2$ definable $f : V \to \R^{n-d}$ such that $U \cap X = \gr(f)$.
We also may suppose that $U$ and $V$ are both connected.
Our assumption on $X$ implies that $f$ is not affine.
Fact~\ref{fact:gen-field-type} shows that $\Sa R$ is field-type.
\end{proof}

\noindent In the d-minimal case we get a sharper version of Theorem~\ref{thm:zilber}.

\begin{prop}
\label{prop:zilber-d-min}
Suppose $\Sa R$ is d-minimal.
Then the following are equivalent.
\begin{enumerate}
    \item $\Sa R$ does not interpret $(\R,<,+,\times)$,
    \item $\Sa R$ is not field type,
    \item if $X$ is a definable subset of $\R^n$ then there are definable locally affine subsets $Y_0,\ldots,Y_k$ of $\R^n$ such that $X = Y_0 \cup \ldots \cup Y_k$, $Y_0$ is open and dense in $X$, $Y_i$ is open and dense in $X \setminus (Y_0 \cup \ldots \cup Y_{i  - 1})$ for all $1 \leq i \leq k$.
    \item every definable subset of every $\R^n$ is a finite union of locally affine sets.
\end{enumerate}
\end{prop}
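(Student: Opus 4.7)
The plan is to establish the cycle $(1) \Rightarrow (2) \Rightarrow (3) \Rightarrow (4) \Rightarrow (1)$, leaning on Theorem~\ref{thm:zilber}, the closed chain condition of Theorem~\ref{thm:d-min}, and the dimension theory developed earlier in this section. The equivalence $(1) \Lrarr (2)$ is essentially formal: $(1) \Rightarrow (2)$ holds because a field-type structure visibly interprets $(\R,<,+,\times)$, and for $(2) \Rightarrow (1)$ an interpretation of $(\R,<,+,\times)$ together with elimination of imaginaries (Fact~\ref{fact:selection}) produces a definable field $(F,\oplus,\otimes)$ with $F \subseteq \R^n$; since $F$ is uncountable, Fact~\ref{fact:d-min-cntble} forces $\dim F \geq 1$, and then clause $(2)$ of Theorem~\ref{thm:zilber} delivers field-type.

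For $(2) \Rightarrow (3)$ I would iterate the ``affine locus'' construction. Given a definable $X \subseteq \R^n$, set $X_0 := X$ and, inductively, let $Y_i$ be the set of affine points of $X_i$ and put $X_{i+1} := X_i \setminus Y_i$. Each $Y_i$ is definable by \cite{HW-fractal}, and is open in $X_i$ and locally affine by the easy observation that if a neighbourhood $U$ of $p$ witnesses affinity of $X_i$ at $p$ via an affine $H$, then $U \cap Y_i = U \cap X_i = U \cap H$. Since $\Sa R$ is not field-type, the contrapositive of the implication $(1) \Rightarrow (5)$ in Theorem~\ref{thm:zilber} says that the affine locus of any definable set is dense in it, so $Y_i$ is dense in $X_i$; hence $X_{i+1}$ is closed and nowhere dense in $X_i$. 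The closed chain condition Theorem~\ref{thm:d-min} then forces the chain $X_0 \supsetneq X_1 \supsetneq \cdots$ to terminate at some $X_k = \emptyset$, yielding the decomposition $X = Y_0 \cup \cdots \cup Y_{k-1}$ demanded by $(3)$.

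The implication $(3) \Rightarrow (4)$ is immediate. For $(4) \Rightarrow (2)$ I would show that $(4)$ forces every definable $f : U \to \R$ on a definable open $U \subseteq \R^m$ to be locally affine on a dense open subset of $U$, which contradicts clause $(4)$ of Theorem~\ref{thm:zilber} and thereby rules out field-type. Writing $\gr(f) = Z_1 \cup \cdots \cup Z_k$ with each $Z_j$ locally affine, fix a nonempty definable open $V \subseteq U$. By Theorem~\ref{thm:space-filling} the slice $\gr(f) \cap (V \times \R)$ has dimension $m$, so by Corollary~\ref{cor:dim-union} some $Z_j \cap (V \times \R)$ has dimension $m$. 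Pick a point $p$ of $Z_j \cap (V \times \R)$ whose local affine model $H$ has dimension $m$. Because the open piece $N \cap H = N \cap Z_j$ lies inside $\gr(f)$, the subspace $H$ cannot contain a vertical line, so it is the graph of an affine function over some open $W \subseteq V$, and on $W$ the function $f$ agrees with that affine function.

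The main obstacle is the inductive step $(2) \Rightarrow (3)$: one must weave together Theorem~\ref{thm:zilber} (to produce density of the affine locus at every stage), the definability result of \cite{HW-fractal} (to keep each $Y_i$ inside the class of definable sets), and Theorem~\ref{thm:d-min} (to guarantee that the descent terminates), while also being careful to verify that ``affine point'' is taken relative to the ambient $\R^n$ so that $Y_i$ inherits the locally affine structure witnessed by $X_i$.
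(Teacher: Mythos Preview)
Your proof is correct and mostly parallels the paper's. For $(2) \Rightarrow (3)$ the paper phrases the descent as induction on the Pillay rank of $\cl(X)$ rather than as a direct appeal to the closed chain condition of Theorem~\ref{thm:d-min}; since the Pillay rank is precisely the ordinal rank attached to that well-founded relation, the two presentations are interchangeable. The treatment of $(1) \Leftrightarrow (2)$ and $(3) \Rightarrow (4)$ is the same in both.

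The one real divergence is $(4) \Rightarrow (2)$. The paper argues via Baire category: after arranging $f$ to be continuous by Theorem~\ref{thm:generic-smoothness}, $\gr(f)$ is homeomorphic to the open set $U$; any locally affine set lies in countably many affine subspaces, each closed and (since $f$ is nowhere locally affine) nowhere dense in $\gr(f)$; hence $\gr(f)$ is not covered by finitely many locally affine sets. Your dimension-theoretic route also works, but your appeal to Corollary~\ref{cor:dim-union} is not justified as written, because item $(4)$ does not promise the pieces $Z_j$ to be definable, and the example $\R = \Q \cup (\R\setminus\Q)$ shows that $\dim(A \cup B) = \max(\dim A,\dim B)$ fails for arbitrary subsets. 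The fix is short but should be made explicit: a locally affine set is locally closed, hence $F_\sigma$, so the countable sum theorem for topological dimension (applied to closed pieces inside $\gr(f)\cap(V\times\R)$) yields $\dim(Z_j\cap(V\times\R)) = m$ for some $j$, and a further decomposition by local affine dimension then produces a point with an $m$-dimensional affine germ. The paper's Baire argument sidesteps this detour entirely.
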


\noindent Note that Delon's structure $(\R,<,+,g)$ discussed after the statement of Theorem~\ref{thm:zilber} is d-minimal as it is a reduct of $(\R,<,+,\times,2^{\Z})$.

\begin{proof}
It is clear that $(1)$ implies $(2)$.
\newline

\noindent $(2) \Rightarrow (3):$
Suppose $X$ is a definable subset of $\R^n$.
We apply induction on the Pillay rank of $\cl(X)$.
If $\pr(\cl(X)) = -1$ then $X$ is empty and hence locally affine.
Suppose $\pr(\cl(X)) \geq 1$.
Let $Y_0$ be the set of affine points of $X$.
Then $Y_0$ is locally affine, definable, and open in $X$.
Theorem~\ref{thm:zilber} shows that $Y_0$ is dense in $X$ so $X \setminus Y_0$ is nowhere dense in $X$.
It follows that $\cl(X \setminus Y_0)$ is nowhere dense in $\cl(X)$ so $\pr(\cl(X \setminus Y_0)) < \pr(\cl(X))$.
Applying induction we obtain definable locally affine sets $Y_1,\ldots,Y_k$ such that $X \setminus Y_0 = \bigcup_{i = 1}^{k} Y_i$ and $Y_i$ is open and dense in $X \setminus (Y_0 \cup \ldots \cup Y_{i - 1})$ for all $1 \leq i \leq k$.\newline

\noindent 
It is clear that $(3)$ implies $(4)$.
We show that $(4)$ implies $(2)$.
Suppose $\Sa R$ is field type.
Applying Theorem~\ref{thm:zilber} we obtain a nonempty definable open subset $U$ of $\R^m$ and a definable $f : U \to \R^n$ which is nowhere locally affine.
After applying Theorem~\ref{thm:generic-smoothness} we suppose $f$ is continuous so $\gr(f)$ is homeomorphic to $U$.
We show that $\gr(f)$ is not a finite union of locally affine sets.
A locally affine subset of $\R^{m + n}$ is contained in a countable union of affine subspaces of $\R^{m + n}$, so a finite union of locally affine subsets is contained in a countable union of affine subspaces.
So it suffices to suppose $\{H_i\}_{i \in \N}$ is a sequence of affine subspaces of $\R^{m+n}$ and show that $\bigcup_{i \in \N} H_i$ does not contain $\gr(f)$.
If $(p,f(p)) \in \gr(f)$ lies in the interior of $H_i$ in $\gr(f)$ then $f$ is locally affine at $p$.
So each $H_i$ has empty interior in $\gr(f)$.
As $H_i$ is closed it follows that each $H_i$ is nowhere dense in $\gr(f)$.
As $\gr(f)$ is homeomorphic to $U$ an application of the Baire category theorem shows that $\bigcup_{i \in \N} H_i$ has empty interior in $\gr(f)$.
\newline

\noindent
$(2) \Rightarrow (1):$
Suppose $\Sa R$ is not field-type.
By Fact~\ref{fact:selection} it suffices to show that $\Sa R$ does not define an isomorphic copy of $(\R,<,+,\times)$.
Suppose $(Z,\oplus,\otimes)$ is a definable field.
Theorem~\ref{thm:zilber} shows that $\dim Z = 0$.
Applying Fact~\ref{fact:d-min-cntble} we see that $Z$ is countable so $(Z,\oplus,\otimes)$ is not isomorphic to $(\R,+,\times)$.
\end{proof}

\noindent
If $\Sa R$ is o-minimal then a definable set is zero-dimensional if and only if it is finite.
In this case condition $(2)$ of Theorem~\ref{thm:zilber} is equivalent to the assumption that $\Sa R$ does not define an infinite field.
The expansion of $(\R,<,+)$ by \textit{all} subsets of \textit{all} $\Z^n$ is locally o-minimal and not field-type and obviously defines an isomorphic copy of any countable field, see Fact~\ref{fact:ktt-converse}.

\begin{conj}
\label{conj:field}
Suppose $\Sa R$ is an $\nip$ expansion of $(\R,<,+)$ by closed subsets of Euclidean space.
If $\Sa R$ interprets an infinite field then $\Sa R$ is field-type.
\end{conj}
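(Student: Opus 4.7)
The plan is to reduce the conjecture to a dimension statement via Theorem~\ref{thm:zilber} and then treat the reduced problem by classification. Since $\Sa R$ is an $\nip$ expansion by closed sets, $\Sa R$ is interdefinable with $\Sa R^\circ$, so Theorem~\ref{thm:main-reals} gives that $\Sa R$ is generically locally o-minimal and Theorem~\ref{thm:zilber} applies. By Fact~\ref{fact:selection}, $\Sa R$ eliminates imaginaries, so any interpreted infinite field is interdefinable with a definable infinite field $(Z,\oplus,\otimes)$, $Z \subseteq \R^n$. In view of Theorem~\ref{thm:zilber}$(2)$, to conclude $\Sa R$ is field-type it suffices to show $\dim Z \geq 1$.

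For the strongly dependent case (the principal case I would prove) I apply Theorem D. If $\Sa R$ is o-minimal, the Peterzil--Starchenko trichotomy identifies $(Z,\oplus,\otimes)$ with either a real-closed or an algebraically closed field defined on finitely many intervals of $\Sa R$; in either case Theorem~\ref{thm:zilber}$(1)$ is directly witnessed, so $\Sa R$ is field-type. Otherwise Theorem D places $\Sa R$ as $(\R,<,+,\az)$-minimal for some $\alpha > 0$ and bi-interpretable with the disjoint union of the o-minimal induced structure $\Sa I$ on $[0,\alpha)$ and Presburger arithmetic $(\Z,<,+)$. Definable sets in this disjoint union decompose as finite unions of products with one factor $\Sa I$-definable and one $(\Z,<,+)$-definable, and since Presburger arithmetic interprets no infinite field (its definable groups are coset structures on $\Z^n$ with no distributive multiplication), any infinite field definable in the disjoint union must be definable in $\Sa I$, returning us to the o-minimal case.

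The main obstacle for the general $\nip$ case is to exclude $\dim Z = 0$ without strong dependence. A zero-dimensional definable infinite field would give, via its multiplicative group, a zero-dimensional infinite definable group in a generically locally o-minimal expansion of $(\R,<,+)$ by closed sets, producing a nowhere dense definable $Z$ carrying rich binary operations. The natural line of attack is to feed a suitable family of translates $\{a \otimes Z : a \in Z\}$ or finite-index coset decompositions into Proposition~\ref{prop:sbct-cor} (the $\nip$ strong Baire category theorem) to derive a contradiction with $\nip$ through the density of the field operations. Executing this cleanly appears to require either a positive answer to Question~\ref{qst:cantor-rank} (so that $Z$ has finite Cantor rank and the Pillay-rank bookkeeping of Section~\ref{section:d-min} is available) or a sharper structure theorem for definable groups in $\nip$ expansions of $(\R,<,+)$ by closed sets. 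Hence the attainable results are the strongly dependent case, together with the d-minimal case where Fact~\ref{fact:d-min-cntble} forces any zero-dimensional definable set to be countable (hence not isomorphic as a field to anything positive-dimensional) and the machinery above goes through.
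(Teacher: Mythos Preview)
This statement is labelled a \emph{conjecture} in the paper and is not proved there in full generality; the paper only establishes special cases (Propositions~\ref{prop:char-p}, \ref{prop:dis-group}, \ref{prop:conj-case-strong}). You correctly recognise that the general $\nip$ case is open, so the comparison is really between your treatment of the special cases and the paper's.

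Your reduction to showing that no infinite zero-dimensional field is definable (via Fact~\ref{fact:selection} and Theorem~\ref{thm:zilber}) matches the paper's strategy exactly. However, your argument in the $(\R,<,+,\az)$-minimal case is flawed. You write that ``any infinite field definable in the disjoint union must be definable in $\Sa I$'', but this does not follow from the product decomposition of definable sets: a zero-dimensional definable set lives on the Presburger side, not the o-minimal side. The paper's argument (Proposition~\ref{prop:conj-case-strong} via Lemma~\ref{lem:strong-field}) runs in the opposite direction: if $\dim Z = 0$ then $Z$ is a definable image of some $(\az)^m$, so the field structure is interpretable in the induced structure on $\az$, which by Proposition~\ref{prop:az-0} is exactly $(\Z,<,+)$; Proposition~\ref{prop:presburger} then gives the contradiction. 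Your Peterzil--Starchenko appeal in the o-minimal case is correct but unnecessary, since zero-dimensional definable sets in o-minimal structures are already finite.

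Your claim that the d-minimal case ``goes through'' because zero-dimensional definable sets are countable is incorrect: countability alone does not rule out being an infinite field. The paper is explicit that the d-minimal case of Conjecture~\ref{conj:field-1} would follow from the open Conjecture~\ref{conj:directed}, and gives only partial results (Propositions~\ref{prop:dis-group} and \ref{prop:field-case}) in that direction.
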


\noindent By Fact~\ref{fact:selection} we may replace ``interprets" with ``defines" in this conjecture.
Conjecture~\ref{conj:field} fails for noisey $\nip$ expansions.
Let $\Sa F$ be an $\nip$ (strongly dependent) field of cardinality $\leq 2^{\aleph_0}$.
Then there is a noisey $\nip$ (strongly dependent) expansion $\Sa R$ of $(\R,<,+)$ such that $\Sa R$ interprets $\Sa F$ and $\Sa R^\circ$ is interdefinable with $(\R,<,+)$ \cite{HNW}.
(Note that an expansion $\Sa R$ of $(\R,<,+)$ is field-type if and only if $\Sa R^\circ$ is field type.) \newline

\noindent Proposition~\ref{prop:char-p} handles the positive characteristic case of Conjecture~\ref{conj:field-1}.
This proposition was observed for noiseless $\nip$ expansions in \cite{HNW}, we include a proof for the sake of completeness.

\begin{prop}
\label{prop:char-p}
Suppose $\Sa R$ is an $\nip$ expansion of $(\R,<,+)$ by closed subsets of Euclidean space.
Then $\Sa R$ does not interpret an infinite vector space over a finite field.
In particular $\Sa R$ does not interpret an infinite field of positive characteristic.
\end{prop}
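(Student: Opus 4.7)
\textit{Proof plan.}
Assume for contradiction that $\Sa R$ interprets an infinite vector space over a finite field $\mathbb F_q$. Restricting scalars to the prime subfield we may take $q=p$ prime, and by Fact~\ref{fact:selection} (elimination of imaginaries for $\Sa R$) we may replace ``interprets'' by ``defines'', obtaining a definable $V\subseteq\R^n$ carrying a definable abelian group law $+_V$ with identity $0_V$ satisfying $pv=0_V$ for every $v\in V$. Because $\Sa R$ is an expansion by closed sets, $\Sa R=\Sa R^\circ$, and Theorem~\ref{thm:main-reals} gives that $\Sa R$ is generically locally o-minimal; the dimension theory and the generic $C^k$-smoothness results of Section~\ref{section:generic-local} are therefore available.

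Write $d=\dim V$. The principal case is $d\geq 1$. Theorem~\ref{thm:Ck-points} produces a dense definable open $W\subseteq\R^n$ on which $W\cap V$ is a $d$-dimensional $C^2$-submanifold, and Theorem~\ref{thm:generic-smoothness} yields a dense definable open $\Omega\subseteq V\times V$ on which $+_V$ is $C^2$. Combining Theorem~\ref{thm:fiber-projection} (to propagate genericity between the two factors) with the group-theoretic homogeneity provided by the definable translations $\tau_a(x)=a+_V x$, one locates a point $a\in V$ at which $V$ is a local $C^2$-submanifold of $\R^n$ and $+_V$ is $C^2$ on a neighborhood of $(a,a)$; conjugating by $\tau_{-_V a}$ we may take $a=0_V$, endowing $(V,+_V)$ with an abelian $d$-dimensional local $C^2$-Lie group structure at $0_V$. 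Consider now the definable iteration $[p](v):=v+_V v+_V\cdots+_V v$ with $p$ summands. By hypothesis $[p]\equiv 0_V$, so $d[p]_{0_V}$ vanishes on $T_{0_V}V\cong\R^d$. On the other hand, $+_V(x,0_V)=+_V(0_V,x)=x$ forces $d(+_V)_{(0_V,0_V)}:\R^d\oplus\R^d\to\R^d$ to be ordinary addition $(X,Y)\mapsto X+Y$, and iterating the chain rule $p$ times yields $d[p]_{0_V}=p\cdot\mathrm{id}_{\R^d}$, which is a linear isomorphism because $p\neq 0$ in $\R$. This contradicts the vanishing of $d[p]_{0_V}$ whenever $d\geq 1$.

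This forces $d=0$, and the main obstacle is then the zero-dimensional case of an infinite definable $\mathbb F_p$-vector space $V$ with $\dim V=0$. The plan here is to apply Corollary~\ref{cor:sbct-cor-cor} inside a highly saturated elementary extension $\Sa N\succ\Sa R$, with the ambient Euclidean uniform structure (which is subdefinable) on $V$. Fixing a nonstandard $k$, every standard finite subgroup of $V$ has size bounded by $k$, and the collection of such subgroups forms a subdefinable family inside the $\Sa N$-definable family of subsets of $V$ of size at most $k$; since $V$ is an $\mathbb F_p$-vector space, the subgroup generated by the union of two finite subgroups is again finite, so this family is directed by inclusion, consists of discrete subsets of $V$, and has union all of $V$. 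Corollary~\ref{cor:sbct-cor-cor} then forces this union to be nowhere dense in $V$---hence $V$ empty---provided the ambient uniform structure has no isolated points. The delicate step, and the main difficulty of the argument, is establishing this last condition: one combines the generic continuity of the definable translations $\tau_a$ (Theorem~\ref{thm:generic-smoothness}) with Theorem~\ref{thm:fmhw} applied to the projections of $V$ and to the definable set of isolated points, together with the exponent-$p$ hypothesis, to rule out that $V$ is discrete in its ambient topology and thereby complete the contradiction.
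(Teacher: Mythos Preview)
The paper's proof is a two-line argument with no case split: put the lexicographic order $\prec$ on $\R^m\supseteq V$ and invoke the theorem of Shelah and Simon that any infinite vector space over a finite field expanded by a linear order has IP. Since $(\Sa V,\prec)$ is definable in the $\nip$ structure $\Sa R$, this is an immediate contradiction---no Lie theory, no Baire-category machinery, no dimension split.

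Your $d\geq 1$ case has the right idea but glosses over the hard step. Obtaining a $C^2$ local Lie group chart at $0_V$ from generic smoothness of $V$ and of $+_V$ is precisely the Weil--Hrushovski--Pillay group-chunk construction; you cannot simply ``conjugate by $\tau_{-_Va}$,'' because $\tau_{-_Va}$ is a definable bijection of $V$, not a priori a $C^2$ map between ambient submanifold charts, so it need not carry the smooth patch near $a$ to a smooth patch near $0_V$. The group-chunk method plausibly goes through in the generically locally o-minimal setting using the tools of Section~\ref{section:generic-local}, but that is genuine work, not a sentence.

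Your $d=0$ case has a real gap. You need the induced uniform structure on $V$ to have no isolated points, and you flag this as ``the main difficulty''---correctly, because the claim is false. If $\dim V=0$ then by Fact~\ref{fact:c0-smooth} a dense relatively open subset of $V$ consists of isolated points, and since left translations act transitively on $V$, every point of $V$ is isolated: $V$ is discrete in the ambient topology. Nothing you cite closes this. Theorem~\ref{thm:fmhw} concerns noiselessness of the ambient expansion, not accumulation inside a fixed zero-dimensional definable set; and the exponent-$p$ hypothesis says nothing about the topology $V$ inherits from $\R^n$. So Corollary~\ref{cor:sbct-cor-cor} is inapplicable, and you are left trying to show that an $\nip$ expansion of $(\R,<,+)$ by closed sets admits no infinite discrete definable $\mathbb F_p$-vector space---which is exactly the content of the proposition in the hardest case, and the only tool in the paper that handles it is the Shelah--Simon theorem you are avoiding.
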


\begin{proof}
Fix a finite field $\mathbb{F}$.
Let $\Sa V$ be an infinite $\mathbb{F}$-vector space.
Suppose $\Sa V$ is interpretable in $\Sa R$.
By Fact~\ref{fact:selection} we may suppose that $\Sa V$ is definable in $\Sa R$.
Suppose the domain of $\Sa V$ is a subset of $\R^m$.
Let $\prec$ be the lexicographic order on $\R^m$.
Shelah and Simon~\cite[Theorem 2.1]{SS-linear} show that any expansion of an infinite $\mathbb{F}$-vector space by a linear order is $\mathrm{IP}$.
So $(\Sa V,\prec)$ is $\mathrm{IP}$, contradiction.
\end{proof}

\noindent We prove several more special cases of Conjecture~\ref{conj:field}.
In particular we prove the strongly dependent case in Proposition~\ref{prop:conj-case-strong}.
We do this by establishing special cases of a stronger conjecture, Conjecture~\ref{conj:field-1}.
Theorem~\ref{thm:zilber} shows that Conjecture~\ref{conj:field} follows from Conjecture~\ref{conj:field-1}.

\begin{conj}
\label{conj:field-1}
An $\nip$ expansion of $(\R,<,+)$ by closed subsets of Euclidean space cannot define a field $(Z,\oplus,\otimes)$ where $Z$ is infinite and zero-dimensional.
\end{conj}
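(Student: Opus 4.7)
The plan is to attack the conjecture by cascading case analysis. Since $\Sa R$ is NIP and generated by closed sets, Theorem~\ref{thm:main-reals} yields that $\Sa R = \Sa R^\circ$ is generically locally o-minimal, and Proposition~\ref{prop:char-p} rules out positive characteristic, so any putative $(Z,\oplus,\otimes)$ would contain a copy of $\Q$. Applying Fact~\ref{fact:c0-smooth} to $Z$ (of dimension $0$), the isolated points of $Z$ are dense in $Z$; hence $Z$ has a definable infinite discrete subset $D$.

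The strongly dependent case is the most tractable: Theorem~D gives that $\Sa R$ is either o-minimal (whence any zero-dimensional definable set is finite, contradicting $Z$ infinite) or $(\R,<,+,\az)$-minimal. In the latter case, the decomposition in Theorem~D forces a zero-dimensional definable $Z$ to be a finite union of translates $b + A$ with $A$ $\Sa R$-definable, zero-dimensional, and contained in $[0,\alpha)^n$ (hence finite, by o-minimality of the induced structure on $[0,\alpha)$) and $b$ ranging over a $(\az,<,+)$-definable set. So $Z$ together with its field operations is interpretable in $(\Z,<,+)$, and one concludes by recalling that Presburger-definable functions are piecewise affine with rational slopes, which cannot serve as multiplication of an infinite characteristic-zero field: a piecewise affine $\otimes$ that is associative and distributive over addition on a cofinite subset of an infinite abelian group forces the group to be finite.

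For the d-minimal case, Fact~\ref{fact:d-min-cntble} forces $Z$ countable and $D$ infinite and discrete. One would attempt to show that $\Sa R$ defines a copy of $(\Z,+,\times)$, in violation of NIP, by extracting a definable ordering on the prime field $\Q \cdot 1_Z \subseteq Z$ via the sum-of-four-squares characterization of positive rationals, then invoking Julia Robinson's definability of $\Z$ inside $\Q$. The main obstacle, both here and in the general NIP case, is that $\Q \cdot 1_Z$ is not obviously $\Sa R$-definable as a set, even though each individual element is. The most promising general attack is to combine Theorem~\ref{thm:abstract-sbct} with a subdefinable uniform structure extracted from the field operations (say via a definable place on $Z$), and apply Corollary~\ref{cor:sbct-cor-cor} to a directed family of finite subsets of the prime field to contradict zero-dimensionality of $Z$ with respect to this uniform structure. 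Producing such a uniform structure canonically from NIP alone, without additional structural input like strong dependence, is to my mind the fundamental difficulty, and likely why the conjecture remains open in general.
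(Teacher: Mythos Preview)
This statement is a \emph{conjecture} in the paper, not a theorem; the paper does not prove it in full and says so explicitly. Your proposal is therefore not a proof but a survey of partial progress plus speculation, which is appropriate.

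For the strongly dependent case your argument matches the paper's Proposition~\ref{prop:conj-case-strong}: reduce via Theorem~D to the $(\R,<,+,\az)$-minimal case, then push the zero-dimensional field down to something interpretable in Presburger arithmetic. One difference: your final step, the claim that a piecewise-affine associative distributive $\otimes$ forces finiteness, is stated too loosely to count as a proof. The paper instead invokes the known fact (Proposition~\ref{prop:presburger}) that $(\Z,<,+)$ interprets no infinite field, citing either Onshuus--Vicar\'{\i}a (every Presburger-definable group is abelian-by-finite, while $\mathrm{SL}_2(K)$ is not) or the Dolich--Goodrick dp-rank argument. Either of those is a cleaner way to close the case than your sketch.

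For the d-minimal case the paper does \emph{not} prove the conjecture in general; it only handles specific reducts of $(\Sa S,\lambda^{\Z})$ via Tychonievich's structure theory (Proposition~\ref{prop:dis-group}), and records the general d-minimal case as a consequence of the separate open Conjecture~\ref{conj:directed}. Your Julia Robinson approach has the gap you already flag (the prime subfield $\Q\cdot 1_Z$ is not obviously definable), but there is a second problem you do not mention: the sum-of-four-squares trick recovers an ordering only in formally real fields. Nothing rules out $Z$ being, say, a countable algebraically closed field of characteristic zero, in which every element is a sum of two squares and your ordering collapses. So even granting definability of the prime subfield, that line does not work without further input.

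Your closing diagnosis --- that producing a canonical non-discrete definable uniform structure on $Z$ from NIP alone is the missing ingredient --- is in the same spirit as the paper's Proposition~\ref{prop:field-case} and the discussion around Conjecture~\ref{conj:directed}, and is a fair assessment of where the difficulty lies.
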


\noindent We will need Proposition~\ref{prop:presburger}.

\begin{prop}
\label{prop:presburger}
No infinite field is interpretable in $(\Z,<,+)$.
\end{prop}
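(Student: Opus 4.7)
The plan is to argue by contradiction from the assumption that an infinite field $(K,\oplus,\otimes)$ is interpretable in $(\Z,<,+)$. First I would reduce to the definable case: $(\Z,<,+)$ admits definable Skolem functions (for any nonempty definable subset of $\Z^n$ bounded below, its lexicographic minimum is definable), so it eliminates imaginaries once we adjoin the finite sorts $\Z/n\Z$. Hence I may assume $K$ is a definable subset of some sort $S = \Z^n \times \prod_{i=1}^{r}\Z/m_i\Z$ with $\oplus,\otimes$ definable. Crucially, $S$ carries a definable linear order $\prec$, namely the lexicographic order using the standard order on $\Z^n$ and the representatives $\{0,1,\dots,m_i-1\}$ for each $\Z/m_i\Z$; this restricts to a definable linear order on $K$.

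For $\mathrm{char}(K) = p > 0$, I plan to repeat the Shelah--Simon argument of Proposition~\ref{prop:char-p} almost verbatim: $(K,\oplus)$ is an infinite $\mathbb{F}_p$-vector space and $(K,\oplus,\prec)$ is therefore $\mathrm{IP}$ by \cite{SS-linear}, contradicting the $\nip$-ness of $(\Z,<,+)$, which passes to interpretations. For $\mathrm{char}(K) = 0$, the group $(K,\oplus)$ is an infinite divisible abelian group, so here the plan is to appeal to the structure of infinite interpretable abelian groups in $(\Z,<,+)$: every such group admits a definable finite-index subgroup of the form $\Z^k$ with $k \geq 1$. This is part of the folklore on Presburger arithmetic and follows from the cell decomposition of Cluckers together with the fact that definable subgroups of $\Z^n$ are exactly the sublattices. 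Granting this, let $H \leq (K,\oplus)$ be such a subgroup of finite index $m$. For each prime $q$ coprime to $m$ and each $a \in H$, divisibility of $K$ supplies $b \in K$ with $qb = a$; since $mb \in H$ and $\gcd(q,m)=1$, Bezout's identity forces $b \in H$. Thus $H$ is $q$-divisible for infinitely many primes $q$, contradicting $H \cong \Z^k$ with $k \geq 1$.

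The main obstacle is the structure theorem for interpretable abelian groups invoked in the characteristic~$0$ case: while it is a routine consequence of Presburger cell decomposition when the group is definable in $\Z^n$, transferring it cleanly through the imaginary sorts requires some care. A less direct alternative would be to combine dp-minimality of $(\Z,<,+)$ with Johnson's classification of infinite dp-minimal fields and then rule out each of algebraically closed, real closed, and henselian valued of characteristic~$0$ using countability together with the discrete combinatorial structure of $(\Z,<,+)$; but this detour is heavier than the divisibility argument sketched above.
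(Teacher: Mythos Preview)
Your argument is correct in outline but takes a different and more laborious path than the paper. The paper gives two one-line approaches, neither of which splits on characteristic. First: Onshuus and Vicaria show that every group definable in $(\Z,<,+)$ has an abelian subgroup of finite index; since $\mathrm{SL}_2(K)$ has no such subgroup for any infinite field $K$, no infinite field can be definable. Second: $(\Z,<,+)$ is dp-minimal, so any definable structure has finite dp-rank; but Dolich and Goodrick show that an expansion of a field which fails to eliminate $\exists^\infty$ has infinite dp-rank, and the induced structure on any infinite subset of $\Z^n$ visibly fails to eliminate $\exists^\infty$.

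Compared with these, your route requires (i) a case split on characteristic, (ii) in characteristic zero, the structure theorem for definable abelian groups that you yourself flag as the main obstacle, and (iii) a separate Shelah--Simon argument in positive characteristic. Each step is fine, but the first paper approach sidesteps all of them by passing to $\mathrm{SL}_2(K)$ and invoking a single group-theoretic result. Your alternative suggestion via Johnson's classification is closer in spirit to the paper's second approach, though the dp-rank argument through $\exists^\infty$ is lighter than a full classification of dp-minimal fields.

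One minor point: your care with adjoining the sorts $\Z/n\Z$ is unnecessary. Presburger arithmetic eliminates imaginaries outright in the home sort (using lexicographic minima after translating into $\N^n$), so you may pass directly to a definable field in some $\Z^n$.
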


\begin{proof}
We are aware of two approaches to Proposition~\ref{prop:presburger}.
As $(\Z,<,+)$ eliminates imaginaries it suffices to show that $(\Z,<,+)$ does not define an infinite field. 
Onshuus and Vicaria~\cite{Onshuus-pres} show that any group definable in $(\Z,<,+)$ has an abelian subgroup of finite index.
It is basic fact from algebra that if $K$ is an infinite field then $\mathrm{Sl}_2(K)$ does not have a finite index abelian subgroup.
Alternatively, as $(\Z,<,+)$ is dp-minimal any structure definable in $(\Z,<,+)$ has finite dp rank, Dolich and Goodrick~\cite[Corollary 2.2]{DG} show that an expansion of a field which does not eliminate $\exists^\infty$ has infinite dp rank, and it is easy to see that the structure induced on any infinite subset of $\Z^n$ by $(\Z,<,+)$ does not eliminate $\exists^\infty$.
\end{proof}

\noindent Proposition~\ref{prop:dis-group} and Theorem~\ref{thm:zilber} yield Conjecture~\ref{conj:field} for reducts of $\slam$.
In particular the structure $(\R,<,+,g)$ discussed after Theorem~\ref{thm:zilber} does not interpret an infinite field.

\begin{prop}
\label{prop:dis-group}
Suppose $\Sa S$ is an o-minimal expansion of $(\R,<,+,\times)$ with rational exponents and suppose $\lambda > 1$.
Then $(\Sa S, \lambda^\Z)$ does not define an infinite zero-dimensional field.
\end{prop}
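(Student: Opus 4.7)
The plan is to derive a contradiction from Proposition~\ref{prop:presburger} by reducing the purported infinite zero-dimensional field to one interpretable in $(\Z,<,+)$.

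Assume $(Z,\oplus,\otimes)$ is a zero-dimensional field definable in $(\Sa S,\lambda^\Z)$ with $Z \subseteq \R^n$. As noted in Section~\ref{section:examples}, $(\Sa S,\lambda^\Z)$ is d-minimal when $\Sa S$ has rational exponents, so Fact~\ref{fact:d-min-cntble} forces $Z$ to be countable. The heart of the proof is to apply the Miller-Speissegger quantifier elimination~\cite[Section 8.6]{Miller-tame} for $(\Sa S,\lambda^\Z)$, together with the cell decomposition analysis in Tychonievich's thesis~\cite{Tychon-thesis}, to show that any zero-dimensional definable set admits a definable finite-to-one surjection from a definable $D \subseteq (\lambda^\Z)^m$. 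The intuition is that in a cell decomposition for $(\Sa S,\lambda^\Z)$, the continuous $\Sa S$-parameters of a zero-dimensional cell must coincide with $\Sa S$-definable functions of the discrete $\lambda^\Z$-parameters, so up to an $\Sa S$-definable bijection the cell is parametrized by tuples from $\lambda^\Z$.

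Because $\Sa S$ has rational exponents, the structure induced on $\lambda^\Z$ by $(\Sa S,\lambda^\Z)$ is interdefinable with the ordered multiplicative group $(\lambda^\Z,\cdot,<)$, which via the isomorphism $\lambda^n \mapsto n$ is $(\Z,+,<)$. Pulling back the field operations on $Z$ along the surjection $D \to Z$, and eliminating the resulting finite equivalence relation via Fact~\ref{fact:selection}, the field $(Z,\oplus,\otimes)$ becomes interpretable in $(\Z,+,<)$. This contradicts Proposition~\ref{prop:presburger}.

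The main obstacle is the structure-theoretic step reducing a zero-dimensional definable set to a subset of $(\lambda^\Z)^m$: the Miller-Speissegger quantifier elimination provides the raw material, but a careful extraction of the finite-to-one correspondence is needed, and one must separately verify that no $\Sa S$-definable operation on tuples from $\lambda^\Z$ enlarges the induced structure beyond pure Presburger. The rational exponents hypothesis is essential throughout, since Hieronymi~\cite{discrete} shows that without it $(\Sa S,\lambda^\Z)$ defines $\Z$ and the rigid separation between the o-minimal and discrete parts collapses.
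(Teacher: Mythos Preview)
Your approach is correct and essentially the same as the paper's: both use Tychonievich's results that any zero-dimensional $(\Sa S,\lambda^\Z)$-definable set is a definable image of some $(\lambda^\Z)^m$ and that the induced structure on $\lambda^\Z$ is Presburger, then invoke Proposition~\ref{prop:presburger}. The paper simply cites these as \cite[4.1.10, 4.1.2]{Tychon-thesis} rather than re-deriving them from the Miller--Speissegger quantifier elimination, and it does not need your countability remark, the finite-to-one condition, or the appeal to Fact~\ref{fact:selection}, since Proposition~\ref{prop:presburger} already rules out \emph{interpretable} infinite fields and a bare definable surjection suffices.
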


\begin{proof}
Tychonievich~\cite[4.1.10]{Tychon-thesis} shows that if $X \subseteq \R^n$ is zero-dimensional and definable in $(\Sa S, \lambda^\Z)$ then there is an $\slam$-definable surjection $(\lambda^{\Z})^m \to X$ for some $m$.
Tychonievich~\cite[4.1.2]{Tychon-thesis} also shows that the structure induced on $\lambda^\Z$ by $\slam$ is interdefinable with $(\lambda^\Z,<,\times)$.
As $(\lambda^\Z,<,\times)$ is isomorphic to $(\Z,<,+)$ the proposition follows from Proposition~\ref{prop:presburger}.
\end{proof}

\noindent Similar approaches should yield Conjecture~\ref{conj:field} for other expansions, we briefly describe one example.
Fix $\lambda > 1$, let $D$ be $\{ \lambda, \lambda^\lambda,\lambda^{\lambda^\lambda},\ldots \}$, and let $\Sa S$ be any o-minimal expansion of $(\R,<,+,\times)$ such that every $\Sa S$-definable function $\R \to \R$ is eventually bounded above by some compositional iterate of the exponential (every known o-minimal expansion satisfies this condition).
Miller and Tyne~\cite{Miller-iteration} show that $(\Sa S,D)$ admits quantifier elimination in a natural language and is d-minimal.
It should follow that $(\Sa S,D)$ is $\nip$, every zero-dimensional $(\Sa S,D)$-definable set is a definable image of some $D^n,$ and the induced structure on $D$ is interdefinable with $(D,<)$.
The proof of Proposition~\ref{prop:dis-group} would then show that $(\Sa S,D)$ does not define an infinite zero-dimensional field.\newline

\noindent We describe two closely related conjectures which should be more approachable.
Conjecture~\ref{conj:omega-order} is a special case of Conjecture~\ref{conj:directed}.






\begin{conj}
\label{conj:omega-order}
Suppose $(\N,\oplus,\otimes)$ is a field.
Then $(\N,<,\oplus,\otimes)$ is $\mathrm{IP}$.
\end{conj}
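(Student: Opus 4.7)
The plan is to show $(\N,<,\oplus,\otimes)$ is $\tp$ — and hence $\mathrm{IP}$ — following the template of Propositions~\ref{prop:order-val} and~\ref{prop:interpret}, in which Corollary~\ref{cor:sbct-cor-cor} is applied to a subdefinable basis for a uniform structure without isolated points together with a subdefinable directed family of finite sets whose union is dense. The well-order $<$ on $\N$ naturally supplies the finite exhaustion: for any interpretable $X \subseteq \N^k$, the family $(X \cap [0,n]_<^k)_{n \in \N}$ is a subdefinable directed family of finite subsets of $X$ with union $X$. The task is therefore to produce an interpretable $X$ carrying a subdefinable non-discrete uniform structure. I would split into two cases according to the characteristic of $(\N,\oplus,\otimes)$.

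In positive characteristic $p$, the additive group $(\N,\oplus)$ is an infinite $\mathbb{F}_p$-vector space, so the reduct $(\N,<,\oplus)$ is an expansion of an infinite vector space over a finite field by a linear order. The Shelah--Simon theorem invoked in the proof of Proposition~\ref{prop:char-p} then gives $\mathrm{IP}$ already for $(\N,<,\oplus)$, and this transfers to the full structure.

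In characteristic zero, the idea is to interpret the prime subfield $P \cong \Q$ of $(\N,\oplus,\otimes)$ together with its usual dense linear order $\prec$. Given $P$ and $\prec$, the collection of $\prec$-intervals is a subdefinable basis for a non-discrete uniform structure on $P$ with no isolated points, while $(P \cap [0,n]_<)_{n \in \N}$ is a subdefinable directed family of finite subsets of $P$ with union $P$. If $(\N,<,\oplus,\otimes)$ were $\ntp$, Corollary~\ref{cor:sbct-cor-cor} would force $P$ to be nowhere dense in itself, which is absurd. Once $P$ is in hand, the order $\prec$ comes for free: by Lagrange's four-square theorem, a nonzero element of $P$ is $\prec$-positive if and only if it is a $\oplus$-sum of four $\otimes$-squares, a condition expressible in the field language alone.

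The main obstacle is interpreting $P$ in $(\N,<,\oplus,\otimes)$. In a bare characteristic-zero field the prime subfield is typically not first-order definable — $\Q$, for instance, is not definable in $(\R,+,\times)$ — and the hope is that the well-order $<$ on $\N$ provides the extra expressive power needed to pick out the image of $\Z$ under the unique ring homomorphism $\Z \to (\N,\oplus,\otimes)$. Concretely, one would like a formula $\phi(n,x)$ of $(\N,<,\oplus,\otimes)$ whose solution set is exactly $\{(n,\, n \cdot 1_\otimes) : n \in \N\}$, where $n \cdot 1_\otimes$ denotes the $n$-fold $\oplus$-sum of the multiplicative identity; from such a graph, $\Z \cdot 1_\otimes \subseteq \N$ is definable, and $P$ then arises as its field of fractions inside $(\N,\oplus,\otimes)$. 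Whether such a uniform definition is always available is unclear to me, and this is where the argument seems to require a genuinely new idea. A possible fallback is to bypass $P$ and interpret some other infinite group carrying a subdefinable non-discrete group topology — for example a subgroup or quotient built purely from the field structure — playing the role of the cyclic order $C_\chi$ in Proposition~\ref{prop:order-val}.
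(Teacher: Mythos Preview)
The statement you are attempting to prove is labelled \texttt{conj:omega-order} in the paper: it is a \emph{conjecture}, and the paper does not offer a proof. So there is no argument of the paper's to compare your proposal against; rather, the paper records exactly the partial progress you rediscover and leaves the rest open.

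Your positive-characteristic case is correct and is precisely what the paper observes just after Proposition~\ref{prop:field-case}: the Shelah--Simon theorem on linear orders on infinite $\mathbb{F}_p$-vector spaces gives $\mathrm{IP}$ already for $(\N,<,\oplus)$.

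In characteristic zero you have correctly located the obstruction. Your plan would go through if the image of $\Z$ (or of $\Q$) under the canonical embedding into $(\N,\oplus,\otimes)$ were definable in $(\N,<,\oplus,\otimes)$, but there is no reason to expect this: the map $n \mapsto n\cdot 1_\otimes$ is defined by recursion on $<$, and first-order logic over $(\N,<)$ does not support recursion. The well-order on $\N$ gives you nothing here beyond the finite exhaustion you already used. Your fallback---interpreting some infinite group with a subdefinable non-discrete group topology---is exactly the hypothesis of Proposition~\ref{prop:field-case}, which the paper proves: if $(\N,\oplus,\otimes)$ admits a definable non-discrete field topology then $(\N,<,\oplus,\otimes)$ is $\tp$. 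But whether every infinite $\nip$ field admits such a topology is itself open (Johnson proves it in the dp-minimal case), so this route does not close the gap either. The paper also notes, via Dolich--Goodrick, that $(\N,<,\oplus,\otimes)$ has infinite dp-rank, which rules out strong dependence but not $\nip$.

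In short: your outline is sound, your identification of the missing step is accurate, and that missing step is why the statement remains a conjecture.
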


\noindent It is likely that if Conjecture~\ref{conj:omega-order} fails then Conjecture~\ref{conj:field} also fails.
Consider $(\R,<,+,2^\N)$.
We expect that the structure induced on $2^\N$ by $(\R,<,+,2^\N)$ is interdefinable with $(2^\N,<)$.
If $(2^\N,\oplus,\otimes)$ is a field then $(\R,<,+,2^\N,\oplus, \otimes)$ is d-minimal by \cite{FM-Sparse}.
If $(2^\N,\oplus,\otimes)$ is in addition $\nip$ then we expect $(\R,<,+,2^\N, \oplus,\otimes)$ to be $\nip$ as well.\newline

\noindent A natural weakening of Conjecture~\ref{conj:omega-order} is that $(\N,<,+,\oplus,\otimes)$ is $\mathrm{IP}$ when $(\N,\oplus,\otimes)$ is a field.
If this weaker conjecture fails then Fact~\ref{fact:ktt-converse} may be applied to construct a locally o-minimal expansion which is not field-type and defines an infinite field.
The theorem of Dolich and Goodrick described in the proof of Proposition~\ref{prop:presburger} shows that $(\N,<,\oplus,\otimes)$ has infinite dp-rank if $(\N,\oplus,\otimes)$ is a field.
However there are $\nip$ expansions of fields which do not eliminate $\exists^\infty$ such as $(\R,<,+,\times,\lambda^\Z)$.

\begin{conj}
\label{conj:directed}
If $\Sa F$ is a expansion of an infinite field which admits a definable directed family $\Cal X$ of finite subsets of $F$ such that $\bigcup \Cal X = F$ then $\Sa F$ is $\mathrm{IP}$.
\end{conj}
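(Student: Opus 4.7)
The natural route is via Corollary~\ref{cor:sbct-cor-cor}. Assume for contradiction that $\Sa F$ is $\nip$, hence $\ntp$. If one could produce a subdefinable basis for a non-discrete Hausdorff uniform structure on $F$, then the corollary would force $\bigcup \Cal X$ to be nowhere dense in $F$. But any Hausdorff topology makes the ambient set $F$ itself open, so $\bigcup \Cal X = F$ has nonempty interior, contradicting nowhere-denseness.

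The problem therefore reduces to producing a non-discrete $\Sa F$-definable (or subdefinable) Hausdorff field topology on $F$; non-discreteness automatically implies no isolated points, because in any field topology translations are homeomorphisms, so one isolated point would force every singleton to be open. Several special cases of this reduction are already accessible and give the conjecture immediately: if $\Sa F$ is dp-minimal and unstable, Johnson's theorem supplies a canonical definable field topology; if $\Sa F$ expands an ordered field or a nontrivially valued field, use the order or valuation topology; more generally, any $\Sa F$ known to carry a definable nontrivial Henselian valuation works. Combining these observations with Corollary~\ref{cor:sbct-cor-cor} disposes of all the well-understood classes of infinite $\nip$ fields.

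The main obstacle is producing the topology in full $\nip$ generality: this is essentially the content of the $\nip$-fields conjecture in its definable-topology form, one of the central open problems in $\nip$ model theory. Even the weaker Conjecture~\ref{conj:omega-order} sits inside this framework, since a hypothetical field structure on $\N$ together with the family $X_n = \{0,1,\ldots,n\}$ witnesses the hypothesis of Conjecture~\ref{conj:directed}. Absent a general $\nip$-fields theorem, one might try to construct the topology directly from $\Cal X$, for instance from the definable preorder $x \preceq y \Longleftrightarrow \{a : y \in X_a\} \subseteq \{a : x \in X_a\}$, or from the ``complexity'' function sending $x$ to a $\subseteq$-minimal $X_a$ containing $x$. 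Unfortunately the obvious candidates produce discrete topologies, and I do not see how to extract a non-discrete field topology purely from $\Cal X$ without leveraging further model-theoretic structure of $\Sa F$; any unconditional proof seems to demand genuinely new $\nip$-fields input.
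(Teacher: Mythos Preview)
This statement is a \emph{conjecture} that the paper does not prove either; there is no ``paper's own proof'' to compare against. Your analysis essentially reconstructs the paper's surrounding discussion: Proposition~\ref{prop:field-case} is exactly the special case you isolate (a definable non-discrete field topology plus the directed finite cover gives $\tp$, via Corollary~\ref{cor:sbct-cor-cor}), and the paper likewise remarks that the full conjecture would follow from a generalization of Johnson's theorem to all unstable $\nip$ fields, while cautioning that this ``is probably not the right way to go about establishing Conjecture~\ref{conj:directed}.'' So your assessment that the obstruction is precisely the $\nip$-fields definable-topology problem, and that no unconditional argument is currently visible, is in line with the paper's own position.
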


\noindent Conjecture~\ref{conj:directed} implies the d-minimal case of Conjecture~\ref{conj:field-1} as it is not difficult to show that if $\Sa R$ is d-minimal and $X$ is a zero-dimensional definable subset of $\R^n$ then there is a definable directed family $\Cal X$ of finite subsets of $\R^n$ such that $\bigcup \Cal X = X$.
It may also be worth observing that Conjecture~\ref{conj:directed} implies that an $\nip$ expansion of $(\N,<)$ cannot interpret an infinite field. \newline

\noindent 
The proof of Proposition~\ref{prop:char-p} is easily adapted to show that $(\N,<,\oplus,\otimes)$ is $\mathrm{IP}$ when $(\N,\oplus,\otimes)$ is a field of positive characteristic.
Proposition~\ref{prop:field-case} follows from the proof of Proposition~\ref{prop:interpret}.

\begin{prop}
\label{prop:field-case}
Suppose $\Sa F$ is a expansion of an infinite field which admits both a definable non-discrete field topology and a definable directed family $\Cal X$ of finite subsets of $F$ such that $\bigcup \Cal X = F$.
Then $\Sa F$ is $\tp$.
\end{prop}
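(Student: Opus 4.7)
The plan is to mimic the proof of Proposition~\ref{prop:interpret} and reduce directly to Corollary~\ref{cor:sbct-cor-cor}. First I would unpack the hypothesis of a definable non-discrete field topology. Since $(F,+)$ is the additive group of a topological field, the topology makes it a topological group, and a subdefinable basis $\Cal U$ of neighbourhoods of $0$ gives a subdefinable basis for a uniform structure on $F$ in the usual way, via sets of the form $\{(a,b)\in F^2 : a-b \in U\}$ with $U \in \Cal U$ (after intersecting with $-U$ to make the entries symmetric). Non-discreteness combined with the topological-group dichotomy (every point is isolated or no point is) guarantees that this uniform structure has no isolated points; any topological field is Hausdorff, so the uniform structure is Hausdorff as well, but we do not actually need that here.

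Next I would note that the hypothesised directed family $\Cal X$ is, by definition, a subdefinable directed family of finite subsets of $F$, and its union is $\bigcup \Cal X = F$, which is obviously not nowhere dense in $F$ since it is the whole space.

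Putting these two ingredients together, Corollary~\ref{cor:sbct-cor-cor} applied to $X = F$, to the subdefinable basis just produced, and to $\Cal A = \Cal X$ would, if $\Sa F$ were $\ntp$, force $\bigcup \Cal X$ to be nowhere dense; this is a contradiction. Hence $\Sa F$ is $\tp$, as required.

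There is no real obstacle: the only thing to check carefully is the translation from ``definable non-discrete field topology'' to ``subdefinable basis for a uniform structure with no isolated points'', and this is exactly the routine verification that already appears in the proof of Proposition~\ref{prop:interpret}. Everything else is an immediate invocation of the really strong Baire category theorem for $\ntp$ structures that was established in Corollary~\ref{cor:sbct-cor-cor}.
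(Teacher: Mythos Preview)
Your proposal is correct and is essentially identical to the paper's argument: the paper simply states that the proposition follows from the proof of Proposition~\ref{prop:interpret}, which is exactly the reduction you describe (pass from the non-discrete group topology to a subdefinable uniform basis with no isolated points, then feed the directed family of finite sets into Corollary~\ref{cor:sbct-cor-cor}).
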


\noindent
Every known unstable $\nip$ expansion of a field admits a definable non-discrete field topology.
Johnson~\cite{Johnson-top} shows that an unstable dp-minimal expansion of a field admits a definable non-discrete field topology.
Conjecture~\ref{conj:directed} would follow from Proposition~\ref{prop:field-case} and a generalization of Johnson's theorem.
This is probably not the right way to go about establishing Conjecture~\ref{conj:directed}.

\section{Strongly Dependent Expansions of $(\R,<,+)$}
\label{section:strongly-dependent}
\noindent 
We continue to suppose that $\Sa R$ is an expansion of $(\R,<,+)$.
In this section we analyse strongly dependent expansions of $(\R,<,+)$ by closed sets and in particular prove Theorem D.
We first discuss $(\R,<,+,\az)$-minimality.
We then show that a strongly dependent expansion of $(\R,<,+)$ by closed sets is either o-minimal or $(\R,<,+,\az)$-minimal for some $\alpha > 0$.
(This $\alpha$ is unique up to rational multiples.)
As an application we prove the strongly dependent case of Conjecture~\ref{conj:field-1}.
\newline

\noindent
We will make crucial use of the work of Dolich and Goodrick on strongly dependent expansions of ordered abelian groups~\cite{DG}.
Dolich and Goodrick work in the more general setting of strong theories.
So the reader may wonder if our results on strongly dependent expansions of $(\R,<,+)$ generalize to strong expansions.
This is true and will appear in forthcoming joint work with Hieronymi.
We do not know if an $\mathrm{NTP}_2$ expansion of $(\R,<,+)$ by closed sets is generically locally o-minimal.

\subsection{$\pmb{(\R,<,+,\Z)}$-minimality}
Recall that $\Sa R$ is $(\R,<,+,\Z)$-minimal if it expands $(\R,<,+,\Z)$ and every $\Sa R$-definable subset of $\R$ is $(\R,<,+,\Z)$-definable.
Equivalently: $\Sa R$ is $(\R,<,+,\Z)$-minimal if the $\Sa R$-definable subsets of $\R$ are all finite unions of sets of the form $A + I$ where $A  = s + t\N$ for some $s \in \R, t \in \Q$ and interval $I$.
Note that $(\R,<,+,\Z)$-minimality implies local o-minimality.
So we may apply the following result of Kawakami, Takeuchi, Tanaka, and Tsuboi~\cite{KTTT} on locally o-minimal expansions of $(\R,<,+)$ which define $\Z$.
\newline

\noindent
For each $n$ let $\iota_n$ be the bijection $[0,1)^n \times \Z^n \to \R^n$ given by $\iota_n(a,d) = a + d$ and let $+_1$ be the function $[0,1)^2 \to [0,1)$ given by declaring $t +_1 t' = t + t'$ when $t + t' < 1$ and $t +_1 t' = t + t' - 1$ otherwise.
So for any $A \subseteq [0,1)^n$ and $B \subseteq \Z^n$ we have
$$ \iota_n( A \times B ) = \bigcup_{b \in B} A + b. $$
Fact~\ref{fact:ktt} is \cite[Lemma 23, Theorem 24]{KTTT}.
Fact~\ref{fact:ktt} shows that any locally o-minimal expansion of $(\R,<,+)$ which defines $\Z$ is bi-interpretable with the disjoint union of an o-minimal expansion of $([0,1),<,+_1)$ and an expansion of $(\Z,<,+)$.
Fact~\ref{fact:ktt} is a converse to Fact~\ref{fact:ktt-converse}.
Informally Fact~\ref{fact:ktt} shows that a locally o-minimal expansion which defines $\Z$ may be decomposed into a ``fractional part" and an ``integer part".
This is a higher order analogue of the decomposition of a real number into a fractional part and an integer part.

\begin{fact}
\label{fact:ktt}
Suppose $\Sa R$ is locally o-minimal and defines $\Z$.
Every definable subset $[0,1)^n \times \Z^n$ is a finite union of sets of the form $A \times B$ for definable $A \subseteq [0,1)^n$ and $B \subseteq \Z^n$, hence every definable subset of $\R^n$ is a finite union of sets of the form
$$ \iota_n( A \times B ) = \bigcup_{ b \in B} A + b $$
for definable $A \subseteq [0,1)^n$ and $B \subseteq \Z^n$.
Therefore $\Sa R$ is bi-interpretable with the disjoint union of $\Sa I$ and $\Sa D$ where $\Sa I$ is the structure induced on $[0,1)$ by $\Sa R$ and $\Sa D$ is the structure induced on $\Z$ by $\Sa R$.
Note that $\Sa I$ is o-minimal as $\Sa R$ is locally o-minimal.
\end{fact}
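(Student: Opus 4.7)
The plan is to split the proof into three parts: (I) o-minimality of $\Sa I$; (II) the ``rectangle decomposition'' for $\Sa R$-definable subsets of $[0,1)^n \times \Z^n$; and (III) the consequences for $\R^n$ and the bi-interpretability statement. Step (I) is immediate from Theorem~\ref{thm:equiv-1}(3): local o-minimality of $\Sa R$ implies the structure induced on the bounded interval $[0,1]$ is o-minimal, hence so is $\Sa I$. Step (III) is essentially formal: the $\R^n$-decomposition follows from the rectangle decomposition via the $\Sa R$-definable bijection $\iota_n$, and bi-interpretability with $\Sa I \sqcup \Sa D$ then follows because the rectangle decomposition says no $\Sa R$-definable relation crosses the $[0,1)$/$\Z$ divide beyond finite Boolean combinations of pure rectangles.

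The heart of the proof is Step (II), which I will handle by induction on $n$. A key preliminary observation is that every $\Sa R$-definable countable subset $Y \subseteq [0,1)^l$ is finite: indeed $Y$ is $\Sa I$-definable as a subset of $[0,1)^l$, and by o-minimality of $\Sa I$ any countable definable subset of $[0,1)^l$ is zero-dimensional, hence a finite union of points via o-minimal cell decomposition.

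For the base case $n=1$, let $X \subseteq [0,1) \times \Z$ be definable and set $X' := \iota_1(X) \subseteq \R$. The topological boundary $\bd(X')$ is always nowhere dense in $\R$, so by Theorem~\ref{thm:equiv-1}(6) it is closed and discrete. Pushing $\bd(X')$ forward under the $\Sa R$-definable projection $t \mapsto t - \lfloor t \rfloor$ yields the $\Sa R$-definable set $P := \bigcup_{b \in \Z} \bd(X_b) \subseteq [0,1)$. Since each $\bd(X_b) = (\bd(X') \cap [b, b+1)) - b$ is finite, $P$ is countable, hence finite by the preliminary. Enumerating $P = \{p_1 < \cdots < p_M\}$, each fiber $X_b$ has boundary contained in $P$, so $X_b$ is determined by two pieces of finite data: its intersection with $P$, and the choice, for each of the $M+1$ connected components of $[0,1) \setminus P$, of whether the component is contained in $X_b$. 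Hence only finitely many distinct $X_b$ arise, and partitioning $\Z$ by the definable equivalence $b \sim b'$ iff $X_b = X_{b'}$ yields finitely many classes $B_1, \ldots, B_k$ with constant fibers $A_1, \ldots, A_k \subseteq [0,1)$, giving $X = \bigsqcup_i A_i \times B_i$.

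The main obstacle I anticipate is the inductive step $n \to n+1$. For $X \subseteq [0,1)^{n+1} \times \Z^{n+1}$, the boundary of $\iota_{n+1}(X) \subseteq \R^{n+1}$ is nowhere dense but may have dimension up to $n$ rather than $0$, so projecting onto $[0,1)^{n+1}$ no longer yields a finite set. The plan is to stratify the boundary using the dimension theory of Section~\ref{section:generic-local} (applicable since $\Sa R$ is strongly noiseless by Theorem~\ref{thm:equiv}), reducing the structure of each fiber $X_b$ for $b \in \Z^{n+1}$ to lower-dimensional definable data to which the induction hypothesis can be applied slice-by-slice along the $\Z^{n+1}$-coordinates. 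Packaging this stratification cleanly into a finite rectangle decomposition is the principal technical point.
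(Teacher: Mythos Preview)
The paper does not prove this statement; it is cited as \cite[Lemma 23, Theorem 24]{KTTT}, so there is no proof in the paper to compare against. I assess your proposal on its own merits.

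Step (I), Step (III), and the base case of Step (II) are correct. (In the base case the detour through $\iota_1$ and Theorem~\ref{thm:equiv-1}(6) is unnecessary: you can argue directly that each $\bd(X_m)\subseteq[0,1)$ is finite by o-minimality of $\Sa I$, so $P=\bigcup_m\bd(X_m)$ is countable and $\Sa I$-definable, hence finite by your preliminary observation.)

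The inductive step has a genuine gap: your induction hypothesis is too weak for the argument you sketch. You induct along the diagonal, so the IH concerns $[0,1)^n\times\Z^n$. After stratifying the boundary $P\subseteq[0,1)^{n+1}$ (which does have $\dim P\le n$, via Baire category and o-minimality of $\Sa I$), you are left with the family $\{X_b\cap D\}_{b\in\Z^{n+1}}$ on each cell $D$ of $P$, of some dimension $j\le n$. Pulling back by a definable homeomorphism $D\cong(0,1)^j$, this is a definable subset of $[0,1)^j\times\Z^{n+1}$, to which the diagonal IH does not apply. ``Slicing along the $\Z^{n+1}$-coordinates'' does not help either: fixing one integer coordinate gives $X^m\subseteq[0,1)^{n+1}\times\Z^n$, again outside the IH, and even granting a rectangle decomposition of each $X^m$ you have no uniformity in $m$.

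The fix is to decouple the two dimensions: prove by induction on $k$ that for \emph{every} $l$, every definable $X\subseteq[0,1)^k\times\Z^l$ has only finitely many $\Z^l$-fibers. The base $k=0$ is trivial; the case $k=1$ is exactly your base-case argument with $\Z$ replaced by $\Z^l$ (unchanged, since all you used was that each $\bd(X_b)$ is finite). For $k\to k+1$, your boundary-stratification plan now goes through cleanly: set $P=\bigcup_b\bd(X_b)$; the complement $[0,1)^{k+1}\setminus P$ has finitely many components; cell-decompose $P$ in $\Sa I$ and on each cell of dimension $j\le k$ apply the IH, which now covers $[0,1)^j\times\Z^l$.
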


\noindent
We also apply Fact~\ref{fact:mv}, a theorem of Michaux and Villemaire~\cite{MV}.

\begin{fact}
\label{fact:mv}
There are no proper $(\Z,+,<)$-minimal expansions of $(\Z,+,<)$.
\end{fact}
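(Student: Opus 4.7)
The plan is to prove by induction on $n$ that every $\Sa D$-definable subset of $\Z^n$ is $(\Z,<,+)$-definable; the base case $n=1$ is precisely the $(\Z,<,+)$-minimality hypothesis, so all the work lies in the inductive step.

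First, I would invoke quantifier elimination for Presburger arithmetic in the expanded language $\{0,1,+,<,(\equiv_k)_{k\geq 2}\}$ to set up a canonical normal form: every Presburger-definable $Y\subseteq\Z$ is uniquely encoded by a finite integer tuple recording a minimal period $m\geq 1$, a residue set $F\subseteq\{0,\ldots,m-1\}$ governing the two tails, cutoffs $N_-<N_+$ beyond which $Y$ is purely periodic, and the finitely many exceptional points in $[N_-,N_+]$. For the inductive step, let $X\subseteq\Z^n$ be $\Sa D$-definable. Each fiber $X_a$ for $a\in\Z^{n-1}$ is Presburger-definable by the base case, so it carries a canonical parameter tuple $f(a)$, and the relation ``$f(a)=p$'' is uniformly $\Sa D$-definable. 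The graph of $f$ is therefore $\Sa D$-definable, hence Presburger-definable by the inductive hypothesis (applied coordinatewise), and $X$ is reconstructed by plugging $f(a)$ into the universal Presburger formula realizing the normal form.

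The main obstacle is that $f(a)$ is not a fixed-arity tuple: the period $m(a)$, the number of exceptional points, and the cutoffs $N_\pm(a)$ can in principle grow unboundedly with $a$. To address this, I would partition $\pi(X)\subseteq\Z^{n-1}$ into $\Sa D$-definable pieces on each of which the fiber shape has fixed combinatorial type (fixed period, fixed number of exceptional points, fixed tail behavior), and run the argument piece by piece. The core difficulty is then establishing a uniform Presburger cell decomposition for $\Sa D$-definable families, asserting that only finitely many combinatorial types arise across any single family; this is the main technical hurdle of the theorem. One extracts such uniformity via a compactness argument together with the observation that unbounded growth in any of the relevant invariants would force $\Sa D$ to define a non-Presburger unary set (for instance via a suitable projection onto a single coordinate of the parameter tuple, or via the function sending $k$ to the least $a$ with $m(a)\geq k$), contradicting $(\Z,<,+)$-minimality.
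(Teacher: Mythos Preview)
The paper does not prove this statement; it is cited as a theorem of Michaux and Villemaire and used as a black box in the proof of Proposition~\ref{prop:az-0}. So there is no proof in the paper to compare your proposal against.

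That said, your outline has a genuine gap at the inductive step. You encode each fiber $X_a$ by a parameter tuple $f(a)\in\Z^k$ and then claim that the graph of $f$ is Presburger ``by the inductive hypothesis (applied coordinatewise).'' But the graph of each coordinate function $f_i:\Z^{n-1}\to\Z$ is a subset of $\Z^n$, not of $\Z^{n-1}$, so the inductive hypothesis (which concerns only arities strictly below $n$) does not apply. This circularity is precisely what makes the theorem nontrivial: knowing that each fiber is individually Presburger, and even that the base $\pi(X)\subseteq\Z^{n-1}$ is Presburger, does not by itself control how the fiber parameters vary over the base.

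Your final paragraph names the right obstacle but does not overcome it. The assertion that unbounded growth of the invariants would produce a non-Presburger unary set is partly wrong and partly unsubstantiated. The cutoffs $N_\pm(a)$ can certainly grow without bound in genuine Presburger families (take $X=\{(a,b):b\geq a\}$, where the cutoff is $a$), so that branch of the argument fails outright. And while it is true that a Presburger family must have uniformly bounded eventual period, you have not explained how unbounded $m(a)$ in a $\Sa D$-definable family manufactures a unary $\Sa D$-definable set that fails to be Presburger; the sets $\{a:m(a)\geq k\}$ could each be Presburger even if $m$ is unbounded. The actual Michaux--Villemaire argument sidesteps this circularity by working with an intrinsic combinatorial characterization of Presburger-definable subsets of $\Z^n$ that can be verified using only one-dimensional information, rather than by an induction on arity of the kind you propose.
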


\noindent
We can now prove Proposition~\ref{prop:az-0}.

\begin{prop}
\label{prop:az-0}
Suppose $\Sa R$ is $(\R,<,+,\Z)$-minimal.
Then every $\Sa R$-definable subset of $\R^n$ is a finite union of sets of the form $\bigcup_{b \in B} b + A$ where $A$ is an $\Sa R$-definable subset of $[0,1)^n$ and $B$ is a $(\Z,<,+)$-definable subset of $\Z^n$.
In particular any subset of $\Z^n$ definable in $\Sa R$ is definable in $(\Z,<,+)$.
It follows that $\Sa R$ is bi-interpretable with the disjoint union of the induced structure on $[0,1)$ and $(\Z,<,+)$.
\end{prop}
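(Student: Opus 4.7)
The plan is to combine Fact~\ref{fact:ktt} with the Michaux--Villemaire minimality theorem (Fact~\ref{fact:mv}), after first noting that $(\R,<,+,\Z)$-minimality implies local o-minimality. Indeed, any $\Sa R$-definable subset of $\R$ is, by hypothesis, $(\R,<,+,\Z)$-definable, and $(\R,<,+,\Z)$ is itself locally o-minimal by Miller and Weispfenning. Hence Fact~\ref{fact:ktt} applies to $\Sa R$ and gives, for every $\Sa R$-definable subset of $\R^n$, a presentation as a finite union of sets of the form $\bigcup_{b \in B} b + A$ with $\Sa R$-definable $A \subseteq [0,1)^n$ and $\Sa R$-definable $B \subseteq \Z^n$. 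It also yields the bi-interpretability of $\Sa R$ with the disjoint union of the structure $\Sa I$ induced on $[0,1)$ and the structure $\Sa D$ induced on $\Z$, both via $\Sa R$.

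The remaining task is to upgrade ``$\Sa R$-definable $B \subseteq \Z^n$'' to ``$(\Z,<,+)$-definable $B \subseteq \Z^n$''. For this, I plan to show that $\Sa D$ is interdefinable with $(\Z,<,+)$. Observe that any $\Sa D$-definable subset of $\Z$ is of the form $\Z \cap X$ for some $\Sa R$-definable $X \subseteq \R$. By $(\R,<,+,\Z)$-minimality, $X$ is $(\R,<,+,\Z)$-definable, and any $(\R,<,+,\Z)$-definable subset of $\Z$ is $(\Z,<,+)$-definable (a standard consequence of the Miller--Weispfenning quantifier elimination in a natural expanded language for $(\R,<,+,\Z)$: restriction to $\Z$ trivializes the fractional-part and floor terms, leaving pure Presburger formulas). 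Thus $\Sa D$ is an expansion of $(\Z,<,+)$ in which every unary definable subset of $\Z$ is $(\Z,<,+)$-definable, i.e.\ $\Sa D$ is $(\Z,<,+)$-minimal. Fact~\ref{fact:mv} then gives that $\Sa D$ is interdefinable with $(\Z,<,+)$. In particular every $\Sa R$-definable $B \subseteq \Z^n$ (which is automatically $\Sa D$-definable) is $(\Z,<,+)$-definable, establishing the main decomposition claim. The in-particular assertion that $\Sa R$-definable subsets of $\Z^n$ are $(\Z,<,+)$-definable is exactly the content of $\Sa D$ being interdefinable with $(\Z,<,+)$.

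Finally, the bi-interpretability assertion is immediate: Fact~\ref{fact:ktt} already gives bi-interpretability of $\Sa R$ with $\Sa I \sqcup \Sa D$, and $\Sa I$ is by definition the induced structure on $[0,1)$ (o-minimal by Fact~\ref{fact:ktt}), while $\Sa D$ has just been identified with $(\Z,<,+)$.

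The main obstacle is the intermediate step that $(\R,<,+,\Z)$-definable subsets of $\Z$ are Presburger-definable; this is folklore but must be cited or justified carefully. Once it is in hand, everything else is a direct application of the two quoted facts, with Michaux--Villemaire doing the essential work of promoting minimality on the unary level to full interdefinability on $\Z^n$.
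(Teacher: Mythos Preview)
Your proof is correct and follows essentially the same route as the paper: reduce via Fact~\ref{fact:ktt} to showing that the induced structure $\Sa D$ on $\Z$ is interdefinable with $(\Z,<,+)$, then observe that $(\R,<,+,\Z)$-minimality forces every $\Sa R$-definable subset of $\Z$ to be Presburger-definable and apply Fact~\ref{fact:mv}. The only cosmetic difference is that where you invoke Miller--Weispfenning quantifier elimination to justify that $(\R,<,+,\Z)$-definable subsets of $\Z$ are $(\Z,<,+)$-definable, the paper instead appeals to the explicit description of $(\R,<,+,\Z)$-definable sets recorded after Fact~\ref{fact:ktt-converse}.
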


\noindent
Strong dependence is preserved under disjoint unions and bi-interpretations, so Proposition~\ref{prop:az-0} shows that $(\R,<,+,\Z)$-minimality implies strong dependence.

\begin{proof}
By Fact~\ref{fact:ktt} it suffices to show that the structure induced on $\Z$ by $\Sa R$ is interdefinable with $(\Z,<,+)$.
It follows from the description of $(\R,<,+,\Z)$-definable sets that every subset $\Sa R$-definable of $\Z$ is $(\Z,<,+)$-definable.
Apply Fact~\ref{fact:mv}.
\end{proof}

\noindent
We now classify $(\R,<,+,\az)$-minimal structures.
Suppose $\Sa S$ is an o-minimal expansion of $(\R,<,+)$.
A \textbf{pole} is an $\Sa S$-definable surjection from a bounded interval to an unbounded interval.
We say that $\Sa S$ has \textbf{rational scalars} if the function $\R \to \R$ given by $t \mapsto \lambda t$ is only definable when $\lambda \in \Q$.

\begin{thm}
\label{thm:rz-min}
Fix $\alpha > 0$.
The following are equivalent:
\begin{enumerate}
    \item $\Sa R$ is $(\R,<,+,\az)$-minimal,
    \item There is a collection $\Cal B$ of bounded subsets of Euclidean space such that $(\R,<,+,\Cal B)$ is o-minimal and $\Sa R$ is interdefinable with $(\R,<,+,\Cal B,\az)$,
    \item There is an o-minimal expansion $\Sa S$ of $(\R,<,+)$ with has no poles and has rational scalars such that $\Sa R$ is interdefinable with $(\Sa S,\az)$.
    \item There is an o-minimal expansion $\Sa S$ of $(\R,<,+)$ such that $(\Sa S,\az)$ is locally o-minimal and $\Sa R$ is interdefinable with $(\Sa S,\az)$.
\end{enumerate}
\end{thm}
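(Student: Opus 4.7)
The plan is to run the cycle (1) $\Rightarrow$ (2) $\Rightarrow$ (4) $\Rightarrow$ (1) together with (3) $\Leftrightarrow$ (4); after rescaling I may take $\alpha = 1$ throughout. For (1) $\Rightarrow$ (2), Proposition~\ref{prop:az-0} supplies the decomposition of $\Sa R$-definable sets, and I take $\Cal B$ to be the family of all $\Sa R$-definable subsets of $[0,1)^n$ for varying $n$. The bi-interpretability conclusion of Proposition~\ref{prop:az-0} makes $\Sa R$ locally o-minimal, so by Theorem~\ref{thm:equiv-1}(4) the expansion of $(\R,<,+)$ by all bounded $\Sa R$-definable sets is o-minimal; since every such bounded set is a finite union of $\Z^n$-translates of an element of $\Cal B$, this expansion is interdefinable with $(\R,<,+,\Cal B)$, which is therefore o-minimal.

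For (2) $\Rightarrow$ (4), set $\Sa S = (\R,<,+,\Cal B)$, let $\Sa I$ be the o-minimal structure induced by $\Sa S$ on $[0,1)$, and take $\Sa D = (\Z,<,+)$; Fact~\ref{fact:ktt-converse} then produces a locally o-minimal $\Sa S_0$, and the key point is that $\Sa S_0$ is interdefinable with $(\Sa S,\Z)$. One inclusion is immediate; for the other, o-minimal cell decomposition of $\Sa S$ combined with the boundedness of every atomic predicate in $\Cal B$ lets me write each $\Sa S$-definable set as a finite union of $\Z^n$-translates of $\Sa I$-definable subsets of $[0,1)^n$, since each unbounded direction of an $\Sa S$-cell is inherited from the $(\R,<,+)$-skeleton and therefore $\Z$-periodic.

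For (3) $\Leftrightarrow$ (4), the direction (3) $\Rightarrow$ (4) is again Fact~\ref{fact:ktt-converse}, with the no-poles and rational-scalars hypotheses ensuring that the constructed structure recovers all of $(\Sa S,\Z)$. For (4) $\Rightarrow$ (3) I argue contrapositively: a pole $f: (a,b) \to (c,\infty)$ in $\Sa S$ would make $f^{-1}(\Z \cap (c,\infty))$ a bounded discrete $(\Sa S,\Z)$-definable set accumulating at $b$, contradicting condition (5) of Theorem~\ref{thm:equiv-1}; and an irrational scalar $t \mapsto \lambda t$ definable in $\Sa S$ would make $\Z + \lambda \Z$ a countable dense and co-dense $(\Sa S,\Z)$-definable subgroup of $\R$, contradicting noiselessness via Theorem~\ref{thm:equiv-1}.

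Finally (4) $\Rightarrow$ (1) follows from Fact~\ref{fact:ktt} applied to $(\Sa S,\Z)$: every $\Sa R$-definable $X \subseteq \R$ is a finite union of sets $\bigcup_{b \in B} b + A$ with $A \subseteq [0,1)$ and $B \subseteq \Z$ each $\Sa R$-definable. Since $\Sa S$ is o-minimal and $\Z \cap [0,1) = \{0\}$, each $A$ is $\Sa S$-definable and hence a finite union of points and intervals, so $(\R,<,+)$-definable with real parameters; and the no-poles and rational-scalars properties supplied by (3) force every $\Sa S$-definable subset of $\Z$ to be $(\Z,<,+)$-definable, because an o-minimal cell of a polynomially bounded $\Sa S$ with only rational scalars meets $\Z$ in a Presburger set. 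Thus every $\Sa R$-definable subset of $\R$ is $(\R,<,+,\Z)$-definable, yielding (1). The principal technical obstacle is the decomposition step inside (2) $\Rightarrow$ (4), together with the companion fact (used implicitly again in the Presburger claim of (4) $\Rightarrow$ (1)) that any o-minimal expansion of $(\R,<,+)$ by bounded sets is polynomially bounded with rational scalars; both rest on a careful cell-decomposition analysis of the unbounded tails of $\Sa S$.
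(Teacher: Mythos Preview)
Your overall architecture matches the paper's—both rely on the KTTT results (Facts~\ref{fact:ktt-converse} and~\ref{fact:ktt}) and the pole/irrational-scalar contradiction for $(4)\Rightarrow(3)$—but the paper routes the implications differently and leans on one tool you avoid: Edmundo's theorem (Fact~\ref{fact:edmundo}), which gives $(2)\Leftrightarrow(3)$ as a black box. The paper then proves $(2)\Rightarrow(1)$ directly (constructing the KTTT structure with $\Sa D=(\Z,<,+)$ and checking interdefinability with $(\R,<,+,\Cal B,\Z)$), and obtains $(4)\Rightarrow(1)$ via $(4)\Rightarrow(3)\Rightarrow(2)\Rightarrow(1)$. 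You instead try to prove $(4)\Rightarrow(1)$ directly, and this is where a genuine gap appears.

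In your $(4)\Rightarrow(1)$, Fact~\ref{fact:ktt} decomposes an $\Sa R$-definable $X\subseteq\R$ into pieces $\bigcup_{b\in B}b+A$ with $A\subseteq[0,1)$ and $B\subseteq\Z$ each \emph{$\Sa R$-definable}, i.e., $(\Sa S,\Z)$-definable. Your argument that $B$ is Presburger appeals to the claim that ``an o-minimal cell of $\Sa S$ meets $\Z$ in a Presburger set,'' but this only handles $\Sa S$-definable subsets of $\Z$, not $(\Sa S,\Z)$-definable ones. The structure induced on $\Z$ by $(\Sa S,\Z)$ can a priori be richer than that induced by $\Sa S$ alone, and nothing in your argument rules this out. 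The paper's route avoids this entirely: once Edmundo gives $(3)\Rightarrow(2)$, the $(2)\Rightarrow(1)$ step builds the KTTT structure with $\Sa D=(\Z,<,+)$ from the start, so the $B$'s are Presburger by construction, and interdefinability with $(\R,<,+,\Cal B,\Z)$ only requires checking that the bounded generators in $\Cal B$ are definable in the KTTT structure (easy after translating into $[0,1)^n$).

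Relatedly, the ``principal technical obstacle'' you flag in $(2)\Rightarrow(4)$—decomposing \emph{every} $\Sa S$-definable set as $\Z^n$-translates of $[0,1)^n$-pieces—is more than you need there: you only need this for the bounded generators in $\Cal B$, which is straightforward. The genuine content of that decomposition claim for arbitrary $\Sa S$-definable sets is precisely Edmundo's theorem, and your sketch (``unbounded directions are inherited from the $(\R,<,+)$-skeleton and therefore $\Z$-periodic'') is not a proof of it. The cleanest fix is to cite Fact~\ref{fact:edmundo} for $(2)\Leftrightarrow(3)$ and then close the cycle via $(4)\Rightarrow(3)\Rightarrow(2)\Rightarrow(1)$, exactly as the paper does.
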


\noindent
As $(\R,<,+,\sin|_{[0,\pi]})$ is o-minimal and $(\R,<,+,\sin)$, $(\R,<,+,\sin|_{[0,\pi]},\pi\Z)$ are interdefinable, we see that $(\R,<,+,\sin)$ is $(\R,<,+,\pi\Z)$-minimal.
The equivalence of $(2)$ and $(3)$ follows from Fact~\ref{fact:edmundo}, a special case of a theorem of Edmundo~\cite{ed-str}.

\begin{fact}
\label{fact:edmundo}
The following are equivalent for an o-minimal expansion $\Sa S$ of $(\R,<,+)$,
\begin{itemize}
    \item $\Sa S$ has no poles and has rational scalars,
    \item $\Sa S$ is interdefinable with $(\R,<,+,\Cal B)$ for some collection $\Cal B$ of bounded subsets of Euclidean space.
\end{itemize}
\end{fact}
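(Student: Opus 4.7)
The statement is an equivalence, so both directions need treatment, with $(1)\Rightarrow(2)$ being the substantive content (Edmundo's structure theorem for semi-bounded o-minimal expansions) and $(2)\Rightarrow(1)$ being more elementary.

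For $(2)\Rightarrow(1)$, suppose $\Sa S$ is interdefinable with $(\R,<,+,\Cal B)$ for some collection $\Cal B$ of bounded subsets of Euclidean space. I would prove that every $(\R,<,+,\Cal B)$-definable function $f:(a,\infty)\to\R$ decomposes as $f(x)=qx+b(x)$ with $q\in\Q$ and $b$ bounded, by induction on formula complexity: the $(\R,<,+)$-definable unary functions are piecewise of the form $x\mapsto\pm x+c$, and the bounded predicates from $\Cal B$ can only constrain arguments to bounded regions or perturb values boundedly. Both absence of poles and rationality of scalars then follow: an unbounded definable scaling $f(x)=\lambda x$ forces $\lambda=q\in\Q$, and a pole would require a definable surjection from a bounded interval onto an unbounded one, contradicting the bounded-perturbation structure of definable functions.

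For $(1)\Rightarrow(2)$ the plan has three steps. \emph{Step~1 (asymptotic linearity).} Show that every $\Sa S$-definable $f:(a,\infty)\to\R$ is eventually of the form $f(x)=qx+b(x)$ for some $q\in\Q$ and bounded $b$. Existence of the slope $q=\lim_{x\to\infty}f(x)/x$ uses o-minimal monotonicity of $x\mapsto f(x)/x$; finiteness of $q$ uses the no-poles hypothesis, since an infinite slope would, via o-minimal monotonicity and inversion, yield a definable surjection from a bounded interval onto an unbounded one; rationality of $q$ uses the rational-scalars hypothesis, because the germ of $f$ at infinity realizes $q$ as the scalar of an $\Sa S$-definable endomorphism of $(\R,+)$. \emph{Step~2 (decomposition).} Apply o-minimal cell decomposition to an arbitrary definable $X\subseteq\R^n$ and iteratively apply Step~1 to the boundary functions of each unbounded cell, coordinate by coordinate, to obtain a presentation $X=\bigcup_i (L_i+B_i)$ where each $L_i$ is a rational affine subset of $\R^n$ already $(\R,<,+)$-definable and each $B_i$ is a bounded $\Sa S$-definable set. \emph{Step~3 (interdefinability).} Take $\Cal B$ to be the family of all bounded $\Sa S$-definable sets; Step~2 shows $\Sa S$ is a reduct of $(\R,<,+,\Cal B)$, and the reverse inclusion is trivial.

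The main obstacle is the rationality assertion in Step~1. Existence and finiteness of the asymptotic slope are standard o-minimal exercises, but extracting a genuine $\Sa S$-definable scalar multiplication by $q$ from the germ of $f$ at infinity, so that rational scalars can be invoked, requires a careful translation argument: one writes $q=\lim_{h\to\infty}(f(x+h)-f(x))/h$, recognizes this as a definable limit stable under $x$, and realizes the resulting scalar multiplication by $q$ on a long interval as an $\Sa S$-definable operation on $\R$. Once Step~1 is secure, Steps~2 and~3 reduce to bookkeeping via cell decomposition.
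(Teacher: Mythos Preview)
The paper does not prove Fact~\ref{fact:edmundo}; it is quoted as a special case of Edmundo's structure theorem \cite{ed-str} and used as a black box in the proof of Theorem~\ref{thm:rz-min}. So there is no in-paper argument to compare against; what you have written is an attempt to reconstruct Edmundo's proof.

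Your overall architecture is correct and matches the structure of the known argument: reduce to analyzing unary definable functions at infinity, extract an asymptotic scalar, show the scalar is rational, and then push this through cell decomposition. Your identification of the scalar-extraction step as the crux is also right, and your sketch for it can be made to work: the function $t\mapsto\lim_{x\to\infty}\bigl(f(x+t)-f(x)\bigr)$ is $\Sa S$-definable (o-minimality makes the limit exist and the defining condition first-order), and an o-minimal mean-value argument shows it equals $qt$, so $q$ is a definable scalar and hence rational.

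There is, however, a genuine gap in Step~1 that you have not flagged. You argue that $q=\lim_{x\to\infty}f(x)/x$ exists, is finite, and is rational, but you never argue that $b(x)=f(x)-qx$ is \emph{bounded}. Finiteness of $q$ only gives $b(x)=o(x)$; a remainder like $\sqrt{x}$ would satisfy this while being unbounded. Ruling out such sublinear unbounded remainders is exactly where the no-poles hypothesis does real work beyond ensuring $q<\infty$: one must show that a definable unbounded function of sublinear growth can be inverted and manipulated to produce a definable bijection between a bounded and an unbounded interval. This is the content of the ``semi-bounded'' analysis in Edmundo's paper and is not a bookkeeping step. Without it, Step~2 cannot proceed, since the decomposition $X=\bigcup_i(L_i+B_i)$ with $B_i$ bounded depends on the remainders in Step~1 being bounded, not merely sublinear.

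A smaller point: in $(2)\Rightarrow(1)$ your ``induction on formula complexity'' needs care because $\Cal B$ is not a finite language. This is harmless (any single definable set uses only finitely many predicates from $\Cal B$), but the cleaner route is to invoke Fact~\ref{fact:at-infinity} to control behaviour at infinity directly.
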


\noindent
We now prove Theorem~\ref{thm:rz-min}.

\begin{proof}
Rescaling reduces to the case $\alpha = 1$.
We first show $(1)$ and $(2)$ are equivalent.
\newline

\noindent
Suppose that $\Sa R$ is $(\R,<,+,\Z)$-minimal.
Let $\Cal B$ be the collection of all $\Sa R$-definable subsets of all $[0,1)^n$.
As $\Sa R$ is locally o-minimal Theorem~\ref{thm:equiv-1} shows that $(\R,<,+,\Cal B)$ is o-minimal.
It is immediate that $(\R,<,+,\Cal B,\Z)$ is a reduct of $\Sa R$.
Proposition~\ref{prop:az-0} shows that $\Sa R$ is a reduct of $(\R,<,+,\Cal B, \Z)$.
\newline

\noindent
Now suppose $(2)$.
Rescaling and translating reduces to the case when every element of $\Cal B$ is a subset of some $[0,1)^n$.
Let $\Sa I$ be the structure induced on $[0,1)$ by $(\R,<,+,\Cal B)$.
Note that $\Sa I$ is an o-minimal expansion of $([0,1),<,+_1)$.
Let $\Sa S$ be the expansion of $(\R,<,+)$ constructed from $\Sa I$ and $(\Z,<,+)$ as in Fact~\ref{fact:ktt-converse}.
The description of $\Sa S$-definable sets in Fact~\ref{fact:ktt-converse} shows that $\Sa S$ is $(\R,<,+,\Z)$-minimal.
It is immediate that $(\R,<,+,\Cal B,\Z)$ is a reduct of $\Sa S$ and the description of $\Sa S$-definable sets in  Fact~\ref{fact:ktt-converse} shows that $\Sa S$ is a reduct of $(\R,<,+,\Cal B, \Z)$.
So $\Sa S$ and $(\R,<,+,\Cal B, \Z)$ are interdefinable.
It follows that $(\R,<,+,\Cal B,\Z)$ is $(\R,<,+,\Z)$-minimal.
\newline

\noindent
Fact~\ref{fact:edmundo} shows that $(2)$ and $(3)$ are equivalent.
So $(1)$ and $(3)$ are equivalent, it follows that $(3)$ implies $(4)$ as $(\R,<,+,\az)$-minimality implies local o-minimality.
We show that $(4)$ implies $(3)$ by showing that if $\Sa S$ is an o-minimal expansion of $(\R,<,+)$ and $(\Sa S,\Z)$ is locally o-minimal then $\Sa S$ has no poles and has rational scalars.
\newline

\noindent
Suppose $\tau : I \to J$ is a pole.
Applying the monotonicity theorem for o-minimal structures, reflecting, and translating, we suppose that $J$ contains a final segment of $\R$, $\tau$ is strictly increasing, and $\tau$ is continuous.
Then $\tau^{-1}(\N)$ is an infinite discrete subset of a bounded interval so $(\Sa S,\Z)$ is not locally o-minimal.
Suppose $\lambda \in \R$ is irrational and the map $\R \to \R$, $t \mapsto \lambda t$ is $\Sa S$-definable.
Then $\Z + \lambda\Z$ is dense and co-dense in $\R$ so $(\Sa S,\Z)$ is not locally o-minimal.
\end{proof}

\subsection{Strong dependence}
We now show that strongly dependent expansions of $(\R,<,+)$ by closed sets are either o-minimal or $(\R,<,+,\az)$-minimal for some $\alpha > 0$ which is unique up to rational multiples.
We need (special cases of) several results of Dolich and Goodrick.
The first claim of Fact~\ref{fact:DG1} is a special case of \cite[Corollary 2.13]{DG}.
The second claim is a essentially a case of \cite[Theorem 2.18]{DG}.

\begin{fact}
\label{fact:DG1}
Suppose that $\Sa R$ is strongly dependent and $E \subseteq \R$ is definable and discrete.
Then $E$ has no accumulation points.
Furthermore there are $s_1,\ldots,s_n,t_1,\ldots,t_n$ such that $E = (s_1 + t_1\N) \cup \ldots \cup (s_n + t_n\N)$ and $t_i/t_j \in \Q$ for all $i,j$ such that $t_j \neq 0$.
\end{fact}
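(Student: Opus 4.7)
The plan is to deduce both claims using the strong Baire category theorem Corollary~\ref{cor:sbct-cor-cor} of this paper (applicable because strongly dependent implies $\ntp$ and the open intervals form a definable basis for a uniform structure on $\R$ with no isolated points) for the first claim, supplemented by a Dolich--Goodrick-style structural analysis of closed discrete sets for the second.

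For the first claim, suppose toward contradiction that $E$ is discrete and has an accumulation point, which by translating we take to be $0$, and by reflecting we assume $E$ meets every $(0,\varepsilon)$. The strategy is to construct a subdefinable directed family $\Cal A$ of discrete subsets of some bounded interval $J$ whose union is dense in $J$, contradicting Corollary~\ref{cor:sbct-cor-cor}. The building blocks are translate-and-copy operations exploiting the group action of $\R$: given $e_1<e_2$ in $E$ with small gap $\delta:=e_2-e_1$, the set
\[
Y_{\delta,N} \;:=\; J\cap\bigcup_{k=-N}^{N}(E+k\delta)
\]
is a finite union of translates of the discrete set $E$, hence itself discrete. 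Forming the directed family indexed by pairs $(\delta,N)$ where $\delta$ ranges over the ``small gap set'' of $E$ near $0$ and $N\to\infty$, the union is forced to be dense in $J$: the accumulation of $E$ at $0$ provides arbitrarily fine translation steps $\delta$, while varying $N$ tiles $J$ with copies of $E\cap[0,\delta]$. Making the density step fully rigorous is the delicate point, but the underlying principle is that small gaps in $E$ together with the group action force $\bigcup\Cal A$ to be somewhere dense.

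For the second claim, with $E$ now closed and discrete, enumerate its elements in order as $(e_i)_{i\in I}$ (with $I$ an interval of $\Z$) and consider the definable gap function $g(e_i):=e_{i+1}-e_i$, whose image $G:=g(E)\subseteq\R_{>0}$ is, by the first claim applied once more, closed and discrete. The main obstacle is showing that $G$ must in fact be \emph{finite} --- here strong dependence, not merely $\ntp$, is essential. This is the content imported from \cite[Theorem 2.18]{DG}: an infinite $G$ would allow one to build an inp-pattern of infinite depth using the coloring $e\mapsto g(e)$ of $E$ by the infinite discrete parameter set $G$, contradicting strong dependence. Once $|G|=n$, the finite coloring of $E$ by $g$ remains strongly dependent, so by a Presburger-style structural lemma the color sequence along $(e_i)$ is eventually periodic in each direction, and $E$ decomposes as a finite union of arithmetic progressions $s_j+t_j\N$ with each $t_j$ a finite sum of elements of $G$. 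Finally, if two nonzero $t_i,t_j$ were $\Q$-incommensurable, then $\Z t_i+\Z t_j$ would be dense in $\R$, making $E$ dense in some interval and contradicting discreteness.
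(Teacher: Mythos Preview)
The paper does not prove this statement: it is cited from Dolich--Goodrick \cite[Corollary 2.13, Theorem 2.18]{DG}, with a paragraph correcting a minor misstatement in \cite{DG} about commensurability versus rationality of ratios. So your proposal is being compared against an external citation, not an argument in the paper.

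That said, your sketch has genuine gaps. For the first claim, the sets $Y_{\delta,N}$ need not be discrete: a finite union of translates of a discrete set can fail to be discrete. Since $E$ accumulates at $0$, each $E+k\delta$ accumulates at $k\delta$, and if some $m\delta$ lies in $E$ (for $|m|\le 2N$) then the union acquires a non-isolated point. You also index by $N\in\N$, which is not a definable parameter in an arbitrary expansion of $(\R,<,+)$, so the family is not subdefinable as written; and you give no argument that the family is directed (given $(\delta_1,N_1)$ and $(\delta_2,N_2)$ there is no evident $(\delta_3,N_3)$ dominating both). The Dolich--Goodrick argument avoids all of this by directly building an inp-pattern of unbounded depth from an accumulation point, which is a strong-dependence obstruction rather than a $\mathrm{TP}_2$ one; Corollary~\ref{cor:sbct-cor-cor} is not an obvious substitute.

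For the second claim, your final step is incorrect: discreteness of $E$ alone does not force $t_i/t_j\in\Q$. For instance $\N\cup\sqrt{2}\,\N$ is closed and discrete even though $1$ and $\sqrt{2}$ are incommensurable --- the set $\Z t_i+\Z t_j$ being dense in $\R$ says nothing about $E$, which only contains \emph{one-sided} progressions with possibly unrelated starting points. The paper explicitly flags this issue: what Dolich--Goodrick actually establish (via a gap-set argument using strong dependence) is that the successive-difference set of $E$ is bounded away from $0$ in a uniform way that forces rationality of the ratios, not commensurability of the progressions. Your reduction to finiteness of the gap set $G$ and eventual periodicity is the right shape, but the commensurability conclusion needs the full Dolich--Goodrick analysis, not the density-of-$\Z t_i+\Z t_j$ shortcut.
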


\noindent
We say that the second claim is ``essentially" a special case of \cite[Theorem 2.18]{DG} because that theorem is slightly incorrect as stated.
We explain.
Dolich and Goodrick assert, under the assumptions of Fact~\ref{fact:DG1}, that $E$ is a union of a finite set together with \textit{commensurable} progressions $s_1 + t_1 \N,\ldots,s_n + t_n \N$.
(Recall $s_1 + t_1 \N$ and $s_2 + t_2 \N$ are commensurable if  they both lie in $s + t\Z$ for some $s,t$.)
This need not be the case, for example $\N$ and $\alpha + \N$ are not commensurable for any $\alpha \in \R \setminus \Q$.
The mistake lies in \cite[Corollary 2.30]{DG}.
In the proof of that corollary it is asserted that if $s_1 + t_1 \N, s_2 + t_2 \N$ are both infinite and contained in $\R_{>0}$ and
$$ \{ t - \min\{ r \in s_1 + t_1 \N, s_2 + t_2 \N : r > t \} : t \in s_1 + t_1\N ,s_2 + t_2 \N \}$$
is disjoint from $[0,\varepsilon)$ for some $\varepsilon > 0$ then $s_1 + t_1 \N$ and $s_2 + t_2 \N$ are commensurable.
This is incorrect, but it does imply that $t_1/t_2 \in \Q$.
The rest of the proof is correct\footnote{Thanks to John Goodrick for discussions on this point.}.
\newline

\noindent
Fact~\ref{fact:bes-choffrut} is a theorem of B\`{e}s and Choffrut~\cite{bes-choffrut}.
We only need the case $n = 1$.
The case $n = 1$ was also proven independently by Miller and Speissegger~\cite{MS-correct}.

\begin{fact}
\label{fact:bes-choffrut}
Suppose $X$ is a subset of $\R^n$ which is definable in $(\R,<,+,\Z)$ and not definable in $(\R,<,+)$.
Then $(\R,<,+,X)$ defines $\Z$.
\end{fact}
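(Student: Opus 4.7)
The plan is to exploit the normal form for $(\R,<,+,\Z)$-definable sets established in Proposition~\ref{prop:az-0} and extract $\Z$ from $X$ by applying topological operations in $(\R,<,+,X)$. First I would reduce to $n=1$: if $X\subseteq\R^n$ is $(\R,<,+,\Z)$-definable but not $(\R,<,+)$-definable, then some iterated fiber $X_a$ or coordinate-projection $\pi(X)$ must fail to be $(\R,<,+)$-definable (otherwise the whole set could be reassembled from $(\R,<,+)$-data plus finitely many constants); call this one-dimensional set $Y$. Both $Y$ and its ambient construction are $(\R,<,+,X)$-definable, so it suffices to show $(\R,<,+,Y)$ defines $\Z$.

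By Proposition~\ref{prop:az-0} applied to $\Sa R=(\R,<,+,\Z)$, write
\[
Y \;=\; \bigcup_{i=1}^{k}\bigl(A_i+B_i\bigr),
\]
where $A_i\subseteq[0,1)$ is a finite union of points and intervals (hence $(\R,<,+)$-definable) and $B_i\subseteq\Z$ is a finite union of eventually periodic progressions. Since $Y$ is not $(\R,<,+)$-definable, at least one $B_i$ must be infinite; passing to a union of cosets of a common modulus, I may assume each infinite $B_i$ contains a full arithmetic progression $s_i+t_i\Z$ with $t_i\in\N_{>0}$. The strategy is now to isolate, using only $(\R,<,+,Y)$-operations, a single such arithmetic progression inside $\R$.

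The key technical step is to apply a carefully chosen sequence of topological and arithmetic operations to $Y$ — boundary, interior-complement, symmetric differences with rational translates, and set-differences of the form $Y\setminus(Y+q)$ for $q\in\Q$ — and extract a closed discrete subset $D\subseteq\R$ that inherits the arithmetic skeleton of the $B_i$'s. Concretely, the set $\partial Y$ is $(\R,<,+,Y)$-definable and is contained in $\bigcup_i(\partial A_i + B_i)$, which is a finite union of translates of (subsets of) the $B_i$'s by rationals. By intersecting with suitable small open intervals around points of $\partial A_i$, one isolates a single translate $c + B_i$; Boolean combinations with further translates peel off the periodic parts of $B_i$ and leave a single infinite arithmetic progression $D = c + s_i + t_i\Z$ (or $t_i\N$). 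This is where the argument genuinely uses the rigidity of the normal form: one must show that once all $A_i$'s are open intervals with no boundary, one can still recover an arithmetic progression by tracking the discrete set of \emph{periods} $P(Y):=\{t\in\R_{>0}: Y+t \text{ and } Y \text{ agree on some nonempty interval of length} > t\}$, whose infimum on each component of shifts forces an integer structure.

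Finally, from any closed discrete set of the form $D=c+t\Z$, the set $D-D=t\Z$ is $(\R,<,+,D)$-definable, and then $\Z=(1/t)\cdot t\Z$ is $(\R,<,+,Y)$-definable by rescaling (allowed since $1/t\in\R$ is a parameter). The hard part will be step three: when several $B_i$'s have incommensurable or overlapping structure, the pieces $A_i+B_i$ can merge and hide the periodicity, so a careful induction on the lexicographic complexity $(k,\sum_i|A_i|)$ of the normal form, simplifying by periodic shifts and rational translates at each stage, is needed to guarantee that the extraction produces a single arithmetic progression rather than an uncontrolled discrete set.
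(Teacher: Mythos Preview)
The paper does not prove this statement; it is cited as an external result of B\`{e}s and Choffrut (with the $n=1$ case independently due to Miller and Speissegger), so there is no paper proof to compare against. Your proposal must therefore stand on its own, and it has two real gaps.

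First, your reduction to $n=1$ is incorrect as stated. You claim that if every fiber and every coordinate projection of $X$ were $(\R,<,+)$-definable then $X$ itself would be $(\R,<,+)$-definable; this is false. Take $X=\bigcup_{n\in\Z}[n,n+1)^2\subseteq\R^2$. Every fiber $X_a=[\lfloor a\rfloor,\lfloor a\rfloor+1)$ is $(\R,<,+)$-definable (with parameter $\lfloor a\rfloor$), and both coordinate projections are all of $\R$, yet $X$ is not $(\R,<,+)$-definable. Individual fibers being definable does not let you reassemble $X$, since there are uncountably many of them and the parameter $\lfloor a\rfloor$ depends on $a$ in a way $(\R,<,+)$ cannot express. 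A correct reduction would need something more, e.g.\ analyzing the set of $a$ at which the fiber $X_a$ changes, which in this example recovers $\Z$ directly.

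Second, your ``key technical step'' is not a proof but a sketch of hopes. You describe applying boundaries, translates, and intersections to isolate a single arithmetic progression, then concede that when the $A_i+B_i$ overlap or the $B_i$ have different moduli the pieces can merge and obscure the periodicity, and propose an unspecified induction on complexity. This is exactly the substance of the theorem: the B\`{e}s--Choffrut argument (and the Miller--Speissegger argument for $n=1$) is precisely about controlling this interaction, and it is not short. Your outline does not supply the mechanism that makes the induction go through. (A minor point: at the end, scaling $t\Z$ to $\Z$ works because $t$ is a positive integer and division by a fixed positive integer is $(\R,<,+)$-definable, not because $1/t$ is ``a parameter''; parameters do not give you multiplication.)
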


\noindent
We can now describe strongly dependent expansions of $(\R,<,+)$ by closed sets.


\begin{theorem}
\label{thm:loc-o-min}
The following are equivalent:
\begin{enumerate}
\item $\Sa R$ is a strongly dependent expansion by closed sets.
\item $\Sa R$ is strongly dependent and noiseless.
\item $\Sa R$ is either o-minimal or $(\R,<,+,\az)$-minimal for some $\alpha > 0$.
\item $\Sa R$ is either o-minimal or interdefinable with $(\R,<,+,\Cal B,\az)$ for some $\alpha > 0$ and collection $\Cal B$ of bounded subsets of Euclidean space such that $(\R,<,+,\Cal B)$ is o-minimal.
\item $\Sa R$ is either o-minimal or locally o-minimal and interdefinable with $(\Sa S,\az)$ for some $\alpha > 0$ and o-minimal expansion $\Sa S$ of $(\R,<,+)$.
\end{enumerate}
\end{theorem}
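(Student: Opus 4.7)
The plan is to prove the cycle $(3) \Leftrightarrow (4) \Leftrightarrow (5)$, $(3) \Rightarrow (1)$, $(1) \Rightarrow (2)$, $(2) \Rightarrow (3)$. The equivalences $(3) \Leftrightarrow (4) \Leftrightarrow (5)$ are immediate from Theorem~\ref{thm:rz-min} in the non-o-minimal case, and trivial in the o-minimal case. For $(3) \Rightarrow (1)$: o-minimal expansions of $(\R,<,+)$ are dp-minimal and generated by closed sets via cell decomposition, while if $\Sa R$ is $(\R,<,+,\az)$-minimal then Proposition~\ref{prop:az-0} bi-interprets it with the disjoint union of a strongly dependent o-minimal structure on $[0,\alpha)$ and $(\Z,<,+)$, so $\Sa R$ is strongly dependent, and the description in Fact~\ref{fact:ktt-converse} makes generation by closed sets routine. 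For $(1) \Rightarrow (2)$: $\Sa R = \Sa R^\circ$ since $\Sa R$ is $\nip$ and generated by closed sets, so Theorem~\ref{thm:main-reals} and Theorem~\ref{thm:equiv} together give that $\Sa R$ is strongly noiseless, in particular noiseless.

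The crux is $(2) \Rightarrow (3)$. Assuming $\Sa R$ strongly dependent, noiseless, and not o-minimal, the plan is first to establish local o-minimality via the characterization in Theorem~\ref{thm:equiv-1}: given a nowhere dense definable $X \subseteq \R$, noiselessness forces $\cl(X)$ nowhere dense; Fact~\ref{fact:DG1} handles the isolated-point component, and if the derived set $\cl(X) \setminus \{\text{isolated points}\}$ were nonempty, intersecting with a bounded open interval (whose endpoints miss the closed nowhere dense set, available by noiselessness) would exhibit a Cantor subset definable in $\Sa R$, from which a definable discrete set with $0$ as accumulation point can be extracted, contradicting Fact~\ref{fact:DG1}.

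Next, since $\Sa R$ is not o-minimal, there is an infinite closed discrete definable $E \subseteq \R$; applying Fact~\ref{fact:DG1} writes $E = \bigcup_i(s_i + t_i\N)$ with nonzero $t_i$'s pairwise rationally commensurable, and we let $\alpha$ generate the subgroup of $\R$ they span. Doing this uniformly across all discrete definable sets produces a single $\alpha > 0$, unique up to $\Q_{>0}$-scaling: two incommensurable choices $\alpha, \beta$ would yield the $\Sa R$-definable set $\az + \beta\Z$, which is dense and codense in $\R$, contradicting noiselessness. Every definable discrete subset is then $(\R,<,+,\az)$-definable. Finally, by local o-minimality, noiselessness, and Fact~\ref{fact:MD}, every definable $X \subseteq \R$ is a Boolean combination of a closed discrete definable set with finitely many open intervals whose endpoints lie in such sets, hence $X$ is $(\R,<,+,\az)$-definable; this gives $(\R,<,+,\az)$-minimality, with Fact~\ref{fact:bes-choffrut} serving to rule out any extension that introduces new one-variable definable sets without forcing an incommensurable discrete set.

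The main obstacle is the local o-minimality step — extracting a definable discrete set with accumulation point from a putative perfect nowhere dense closed definable set. The naive extractions (gap endpoints, the set of gap lengths) need not themselves be discrete, so one must perform a more delicate construction, possibly iterating a choice function producing rightmost endpoints of maximal gaps in a shrinking family of neighborhoods, combined with Proposition~\ref{prop:sbct-cor}-type tools, to ensure the output is actually discrete while retaining the accumulation point at $0$. This is the step that most directly leverages Dolich--Goodrick's framework beyond the bare statement of Fact~\ref{fact:DG1}, and is where the proof is most likely to require adaptation of their deeper arguments.
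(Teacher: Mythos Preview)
Your overall cycle matches the paper's, but you misidentify the difficulty in $(2) \Rightarrow (3)$. The local o-minimality step is not an obstacle: it is one line in the paper, combining the first claim of Fact~\ref{fact:DG1} (definable discrete sets have no accumulation points) with condition~(5) of Theorem~\ref{thm:equiv-1}. Your assertion that the set of gap lengths ``need not themselves be discrete'' is simply false: if $X$ is infinite, closed, and nowhere dense in a bounded interval $I$, then for each $t > 0$ only finitely many complementary intervals have length $\geq t$ (their total length is at most $|I|$), so the set $D$ of gap lengths is automatically discrete with $\cl(D) = D \cup \{0\}$. This is precisely the construction carried out in the proof of $(5) \Rightarrow (6)$ of Theorem~\ref{thm:equiv-1}; no Cantor-set detour or ``deeper Dolich--Goodrick framework'' is needed.

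The second half of your $(2) \Rightarrow (3)$ also diverges from the paper and leaves gaps. You need $\az$ to be \emph{definable} in $\Sa R$ (since $(\R,<,+,\az)$-minimality requires $\Sa R$ to expand $(\R,<,+,\az)$), and your argument does not establish this. The paper's route is: take one infinite discrete definable $E$, apply Fact~\ref{fact:DG1} to see $E$ is $(\R,<,+,\az)$-definable for $\alpha = |t_1|$, then invoke Fact~\ref{fact:bes-choffrut} to conclude that $(\R,<,+,E)$ already defines $\az$ --- this is the correct use of B\`es--Choffrut, not the vague role you assign it. Once $\az$ is definable, Fact~\ref{fact:ktt} decomposes any definable $X \subseteq \R$ as a finite union $\bigcup_{b \in B} b + A$ with $A \subseteq [0,\alpha)$ a finite union of intervals and points and $B \subseteq \az$ definable; one more application of Fact~\ref{fact:DG1} to $B$ finishes. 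Your boundary-based approach does not explain why the selection of which complementary intervals of $\bd(X)$ lie in $X$ is $(\R,<,+,\az)$-definable.
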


\begin{proof}
Theorem~\ref{thm:main-reals} shows that $(1)$ implies $(2)$.
We show that $(2)$ implies $(3)$.
Suppose that $\Sa R$ is strongly dependent, noiseless, and not o-minimal.
The first claim of Fact~\ref{fact:DG1} and Theorem~\ref{thm:equiv-1} together show that $\Sa R$ is locally o-minimal.
We show that $\Sa R$ defines $\az$ for some $\alpha > 0$.
As $\Sa R$ is locally o-minimal and not o-minimal there is an infinite definable discrete subset $E$ of $\R$.
Let $s_1,\ldots,s_n,t_1,\ldots,t_n$ be as in Fact~\ref{fact:DG1}.
After possibly removing a finite subset of $E$ we suppose that $t_i \neq 0$ for all $i$.
Let $\alpha := |t_1|$.
As $t_i/\alpha \in \Q$ for all $2 \leq i \leq n$ each $s_i + t_i\N$ is $(\R,<,+,\az)$-definable.
So $E$ is $(\R,<,+,\az)$-definable.
As $E$ is not $(\R,<,+)$-definable, rescaling and applying Fact~\ref{fact:bes-choffrut} shows that $(\R,<,+,E)$ defines $\az$.
So $\Sa R$ defines $\az$.
\newline

\noindent
We may now apply Fact~\ref{fact:ktt}.
We get that any $\Sa R$-definable subset of $\R$ is a finite union of sets of the form $\bigcup_{b \in B} b + A$ where $A \subseteq [0,\alpha)$ is a finite union of intervals and singletons and $B \subseteq \az$ is $\Sa R$-definable.
So it suffices to show that an $\Sa R$-definable subset $B$ of $\az$ is $(\az,<,+)$-definable.
As $B$ is discrete this follows from Fact~\ref{fact:DG1}.
\newline

\noindent
The equivalence of $(3)$, $(4)$, and $(5)$ is Theorem~\ref{thm:rz-min}.
We show that $(3)$ implies $(1)$.
Suppose $\Sa R$ is $(\R,<,+,\az)$-minimal for fixed $\alpha > 0$.
Then $\Sa R$ is d-minimal so every definable set is constructible by Fact~\ref{fact:d-min-constructible}, so $\Sa R$ is an expansion by closed sets.
The remarks after Proposition~\ref{prop:az-0} show that $\Sa R$ is strongly dependent.
\end{proof}

\noindent
In the proof of Theorem~\ref{thm:loc-o-min} we could have applied the theorem of Dolich and Goodrick~\cite[Corollary 2.20]{DG} that any strongly dependent expansion of $(\Z,<,+)$ is interdefinable with $(\Z,<,+)$.
Dolich and Goodrick's theorem on $(\Z,<,+)$ is proven by combining that analogue of Fact~\ref{fact:DG1} over $\Z$ with Fact~\ref{fact:mv}.
\newline

\noindent
The following corollary is essentially observed in \cite[Corollary 2.4]{DG}.
It follows from Theorem~\ref{thm:loc-o-min} as a locally o-minimal expansion of $(\R,<,+,\times)$ is o-minimal.

\begin{theorem}
\label{thm:loc-o-min-field}
Suppose $\Sa R$ expands $(\R,<,+,\times)$.
Then the following are equivalent.
\begin{enumerate}
\item $\Sa R$ is a strongly dependent expansion by closed sets.
\item $\Sa R$ is strongly dependent and noiseless.
\item $\Sa R$ is o-minimal.
\end{enumerate}
\end{theorem}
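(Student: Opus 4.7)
The plan is to deduce this theorem from Theorem~\ref{thm:loc-o-min} together with the elementary fact, already mentioned in Section~\ref{section:examples}, that a locally o-minimal expansion of $(\R,<,+,\times)$ is o-minimal. I will verify the three implications in a cycle.

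For $(3) \Rightarrow (1)$, o-minimal structures are dp-minimal (in particular strongly dependent). Moreover, in any o-minimal expansion of $(\R,<,+)$ every definable set admits a cell decomposition, and each cell is constructible, so every definable set is a boolean combination of closed definable sets; thus $\Sa R$ is an expansion by closed sets.

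For $(1) \Rightarrow (2)$, since $\Sa R$ is strongly dependent it is $\nip$, so Theorem~\ref{thm:main-reals} applies and $\Sa R^\circ$ is generically locally o-minimal. As $\Sa R$ is an expansion by closed sets, $\Sa R$ is interdefinable with $\Sa R^\circ$, hence $\Sa R$ itself is generically locally o-minimal. By Theorem~\ref{thm:equiv} this is equivalent to strong noiselessness, which in particular implies noiselessness.

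For $(2) \Rightarrow (3)$, I apply Theorem~\ref{thm:loc-o-min}: a strongly dependent noiseless expansion of $(\R,<,+)$ is either o-minimal or $(\R,<,+,\az)$-minimal for some $\alpha > 0$. In the first case we are done. In the second case, $(\R,<,+,\az)$-minimality implies local o-minimality, so $\Sa R$ is a locally o-minimal expansion of $(\R,<,+,\times)$. The key observation (already recorded in Section~\ref{section:examples}) is that a locally o-minimal expansion of the real field is in fact o-minimal: any definable $X\subseteq\R$ is locally a finite union of points and intervals, and the field structure (via $t\mapsto 1/t$) transports local o-minimality near $0$ to o-minimality at infinity, so $X$ is globally a finite union of points and intervals. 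But an o-minimal structure cannot define the infinite discrete set $\az$, contradicting $(\R,<,+,\az)$-minimality. Therefore this second case does not occur and $\Sa R$ is o-minimal.

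The whole argument is short and poses no serious obstacle given the preceding machinery; the only subtle point is recalling why a locally o-minimal expansion of $(\R,<,+,\times)$ must be o-minimal, which is where the presence of multiplication (and hence of poles) is used to promote a local statement to a global one.
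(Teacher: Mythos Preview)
Your proof is correct and follows essentially the same approach as the paper: the paper derives the result in one line from Theorem~\ref{thm:loc-o-min} together with the observation (from Section~\ref{section:examples}) that a locally o-minimal expansion of $(\R,<,+,\times)$ is o-minimal, which rules out the $(\R,<,+,\az)$-minimal alternative. Your version merely unpacks this a bit more, separately re-deriving $(1)\Rightarrow(2)$ via Theorem~\ref{thm:main-reals} rather than citing Theorem~\ref{thm:loc-o-min} for that implication as well; the substance is the same.
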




\noindent
Theorem~\ref{thm:loc-o-min} shows that a strongly dependent expansion of $(\R,<,+)$ by closed sets is ``almost o-minimal".
This is reflected in the dp-rank.

\begin{cor}
\label{cor:dp-rank}
Suppose $\Sa R$ is an expansion of $(\R,<,+)$ by closed sets.
Then the dp-rank of $\Sa R$ is either $\geq \aleph_0$ or at most two.
\end{cor}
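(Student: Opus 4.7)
The plan is to deduce this corollary directly from Theorem~\ref{thm:loc-o-min} together with subadditivity of dp-rank (Fact~\ref{fact:subadditive}). By definition a theory fails to be strongly dependent exactly when its dp-rank is at least $\aleph_0$, so it suffices to show that a strongly dependent expansion $\Sa R$ of $(\R,<,+)$ by closed sets has dp-rank at most $2$. Theorem~\ref{thm:loc-o-min} splits this into two cases: either $\Sa R$ is o-minimal, hence dp-minimal and there is nothing to do; or $\Sa R$ is $(\R,<,+,\az)$-minimal for some $\alpha > 0$, which is where the real work lies.

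In the second case I would use the $\Sa R$-definable bijection $\R \to [0,\alpha) \times \az$ sending $t$ to $(t - \alpha \lfloor t/\alpha \rfloor,\, \alpha \lfloor t/\alpha \rfloor)$. This identifies the dp-rank of the home sort of $\Sa R$ with the dp-rank (in $\Sa R$) of the product $[0,\alpha) \times \az$, which Fact~\ref{fact:subadditive} bounds by the sum of the dp-ranks of $[0,\alpha)$ and of $\az$ inside $\Sa R$. Thus the corollary reduces to showing that each of these two factors has dp-rank at most $1$ in $\Sa R$.

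To handle both factors at once I would appeal to the final bi-interpretation statement of Theorem D: $\Sa R$ is bi-interpretable with the disjoint union $\Sa T := \Sa I \sqcup (\Z,<,+)$, where $\Sa I$ is the o-minimal structure induced on $[0,\alpha)$, in such a way that $\R$ corresponds to the product sort $[0,\alpha) \times \Z$. In $\Sa T$ each of the two sorts is trivially stably embedded and dp-minimal (o-minimal on one side, Presburger on the other), so Fact~\ref{fact:subadditive} applied inside $\Sa T$ gives dp-rank at most $1 + 1 = 2$ for the product sort, and the bi-interpretation transfers this bound to the home sort of $\Sa R$ without blow-up since each element of $\R$ is coded by one element from each sort of $\Sa T$. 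The main obstacle I anticipate is bookkeeping around this transfer: one has to check that the dp-rank of the product sort of $\Sa T$ really matches the dp-rank of $\R$ in $\Sa R$, and routing through the bi-interpretation of Theorem~D rather than trying to argue stable embeddedness of $[0,\alpha)$ and $\az$ directly inside $\Sa R$ is what makes this check essentially mechanical.
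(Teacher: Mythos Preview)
Your proposal is correct and follows essentially the same route as the paper: reduce via Theorem~\ref{thm:loc-o-min} to the $(\R,<,+,\az)$-minimal case, use the definable bijection $\R \leftrightarrow [0,\alpha) \times \az$, and bound the dp-rank of the product by $1+1$ via Fact~\ref{fact:subadditive} together with dp-minimality of the o-minimal factor and of Presburger arithmetic.

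The only difference is cosmetic. The paper argues directly inside $\Sa R$: the induced structure on $[0,1)$ is o-minimal hence dp-minimal, the induced structure on $\Z$ is interdefinable with $(\Z,<,+)$ hence dp-minimal, so subadditivity gives dp-rank at most two for $[0,1)\times\Z$, and the definable bijection $(t,k)\mapsto t+k$ finishes. You instead pass through the bi-interpretation with the disjoint union $\Sa I \sqcup (\Z,<,+)$ and compute there. Your detour is a legitimate way to avoid worrying about whether the factors are stably embedded in $\Sa R$, but it is not really needed: the dp-rank of a definable set $X$ in $\Sa R$ coincides with the dp-rank of the full induced structure on $X$, so once you know the latter is o-minimal (resp.\ Presburger) you are done. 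The paper also records the lower bound of $2$ in the non-o-minimal case via \cite[Lemma~3.3]{goodrick}, but as you implicitly noticed, the corollary as stated only requires the upper bound.
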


\noindent In contrast Dolich and Goodrick~\cite[Section 3.2]{DG} show that for any $n \geq 2$ there is a noisey expansion of $(\R,<,+)$ of dp-rank $n$.

\begin{proof}
Suppose $\Sa R$ has finite dp-rank.
Then $\Sa R$ is strongly dependent.
If $\Sa R$ is o-minimal then $\Sa R$ has dp-rank one, so suppose that $\Sa R$ defines $\az$ for some $\alpha > 0$.
Rescaling, we suppose that $\Sa R$ defines $\Z$.
As $\Sa R$ defines an infinite nowhere dense subset of $\R$, \cite[Lemma 3.3]{goodrick} shows that $\Sa R$ has dp-rank at least two.
The induced structure on $[0,1)$ is o-minimal and so has dp-rank one.
The induced structure on $\Z$ is interdefinable with $(\Z,<,+)$ and hence has dp-rank one.
Fact~\ref{fact:subadditive} shows that $[0,1) \times \Z$ has dp-rank at most two.
Dp-rank is preserved by definable bijections, and $[0,1) \times \Z \to \R$, $(t,k) \mapsto t + k$ is a definable bijection, so $\Sa R$ has dp-rank two.
\end{proof}

\subsection{Definable fields} The results above allow us to prove the strongly dependent case of Conjecture~\ref{conj:field-1}.
The strongly dependent case of Conjecture~\ref{conj:field} follows.
The proof is very similar to that of Proposition~\ref{prop:dis-group}.

\begin{lemma}
\label{lem:strong-field}
Suppose $\Sa R$ is a locally o-minimal expansion of $(\R,<,+)$ which defines $\az$ for some $\alpha > 0$.
Suppose $X$ is a nonempty definable zero-dimensional subset of $\R^n$.
Then there is a definable surjection $f : (\az)^m \to X$ for some $m$.
\end{lemma}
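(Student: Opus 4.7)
The plan is to decompose $X$ via Fact~\ref{fact:ktt} and then combine the pieces into a single definable surjection from some $(\az)^m$. After rescaling so that the distinguished generator of $\az$ plays the role of $1$, the analogue of Fact~\ref{fact:ktt} for $(\R,<,+,\az)$ lets us write $X$ as a finite union of sets of the form $\bigcup_{b \in B}(b+A)$ for $\Sa R$-definable $A \subseteq [0,\alpha)^n$ and $B \subseteq (\az)^n$.

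Next I would show that each such $A$ appearing in the decomposition is finite. Since $X$ is zero-dimensional and each $b+A$ is contained in $X$, each $A$ is zero-dimensional (translation is a homeomorphism). Fact~\ref{fact:ktt} also says that the structure $\Sa I$ induced on $[0,\alpha)$ by $\Sa R$ is o-minimal, so any zero-dimensional $\Sa I$-definable subset of $[0,\alpha)^n$ is finite. Splitting each $A$ into its points and re-indexing, we can write $X = \bigcup_{k=0}^{N-1}(B_k + a_k)$ for finitely many $\Sa R$-definable $B_k \subseteq (\az)^n$ and fixed $a_k \in [0,\alpha)^n$. The finiteness of these ``fractional parts'' is the main substantive point, and is the only place where zero-dimensionality enters.

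Finally, pick any $x_0 \in X$ (using nonemptiness of $X$) and define $f : (\az)^{n+1} \to X$ by setting $f(c,b) = b + a_k$ whenever $c \equiv k\alpha \pmod{N\alpha}$ and $b \in B_k$, and $f(c,b) = x_0$ otherwise. The congruence classes of $\az$ modulo $N\alpha$ are definable in $(\az,<,+)$ and hence in $\Sa R$, so $f$ is $\Sa R$-definable. For surjectivity, given $x \in X$, choose $k$ and $b \in B_k$ with $x = b + a_k$, and let $c$ be any element of $k\alpha + N\alpha\Z$; then $f(c,b) = x$. Thus $m = n+1$ works, and no serious obstacle remains beyond the finiteness argument in the second step.
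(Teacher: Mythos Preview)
Your proof is correct and follows essentially the same approach as the paper: both apply Fact~\ref{fact:ktt} to decompose $X$, use o-minimality of the induced structure on $[0,\alpha)$ together with zero-dimensionality of $X$ to conclude that each $A$ is finite, and then combine the finitely many translated copies of subsets of $(\az)^n$ into a single surjection. The only difference is packaging: the paper phrases the combination step abstractly by observing that the class $\Cal D$ of definable images of powers of $\az$ is closed under finite unions and definable subsets, whereas you build the surjection explicitly using an extra coordinate and congruence classes modulo $N\alpha$ to select among the pieces, with a default value $x_0$ elsewhere.
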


\begin{proof}
Rescaling reduces to the case $\alpha = 1$.
Let $\iota_n$ be as defined above.
Let $\Cal D$ be the collection of all images of cartesian powers of $\Z$ under definable functions.
An application of Fact~\ref{fact:ktt} shows that $X$ is a finite union of sets of the form $\iota_n(A \times B)$ for nonempty definable $A \subseteq [0,\alpha)^n$ and $B \subseteq \Z^n$.
It is easy to see that $\Cal D$ is closed under finite unions and subsets.
So we suppose $X = \iota_n(A \times \Z^n)$.
Suppose $A$ is infinite.
As the structure induced on $[0,\alpha)$ by $\Sa R$ is o-minimal we have $\dim A \geq 1$.
Fix $b \in \Z^n$.
Then $A + b \subseteq X$ hence
$$ \dim X  \geq \dim (A + b) = \dim A \geq  1. $$
So we may suppose that $A$ is finite.
Then $X$ is a finite union of images of $\Z^n$ under functions of the form $b \mapsto \iota_n(a,b)$.
So $X \in \Cal D$.
\end{proof}

\begin{prop}
\label{prop:conj-case-strong}
Suppose $\Sa R$ is a strongly dependent expansion of $(\R,<,+)$ by closed subsets of Euclidean space.
Then $\Sa R$ does not define an infinite zero-dimensional field.
If $\Sa R$ interprets an infinite field then $\Sa R$ is field-type.
\end{prop}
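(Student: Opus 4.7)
The plan is to reduce to the dichotomy provided by Theorem~\ref{thm:loc-o-min}: a strongly dependent expansion of $(\R,<,+)$ by closed sets is either o-minimal or $(\R,<,+,\az)$-minimal for some $\alpha > 0$. In each case I will derive a contradiction from the existence of an infinite zero-dimensional definable field, using o-minimal cell decomposition for the first case and a reduction to Presburger arithmetic for the second.

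For the first claim, suppose towards a contradiction that $(Z, \oplus, \otimes)$ is an infinite zero-dimensional field definable in $\Sa R$. If $\Sa R$ is o-minimal then every zero-dimensional definable subset of Euclidean space is finite, so $Z$ is finite, contradicting infinitude. Suppose instead that $\Sa R$ is $(\R,<,+,\az)$-minimal for some $\alpha > 0$. I would apply Lemma~\ref{lem:strong-field} to obtain a definable surjection $f : (\az)^m \to Z$ for some $m$. Let $E$ be the equivalence relation on $(\az)^m$ with $a \mathrel{E} b$ iff $f(a) = f(b)$, and pull back $\oplus$ and $\otimes$ to ternary relations on $(\az)^m$; all these relations are $\Sa R$-definable subsets of cartesian powers of $\az$. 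By Proposition~\ref{prop:az-0}, every $\Sa R$-definable subset of $(\az)^n$ is $(\az,<,+)$-definable, so $(Z,\oplus,\otimes)$ is interpretable in $(\az,<,+)$. Since $(\az,<,+)$ is isomorphic to $(\Z,<,+)$, this contradicts Proposition~\ref{prop:presburger}.

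For the second claim, Fact~\ref{fact:selection} gives elimination of imaginaries for $\Sa R$ in the o-minimal case; in the $(\R,<,+,\az)$-minimal case $\Sa R$ is locally o-minimal and the same fact applies, so in either case any infinite field interpretable in $\Sa R$ is actually definable in $\Sa R$. Let $(Z,\oplus,\otimes)$ be such a definable infinite field. The first claim forces $\dim Z \geq 1$. Since $\Sa R$ is generically locally o-minimal in both cases of the dichotomy, Theorem~\ref{thm:zilber} then yields that $\Sa R$ is field-type.

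The deepest ingredients (the classification Theorem~\ref{thm:loc-o-min}, the surjection Lemma~\ref{lem:strong-field}, Proposition~\ref{prop:presburger}, and the Zilber dichotomy Theorem~\ref{thm:zilber}) are already in hand, so I do not expect any serious obstacle; the one point requiring care is verifying that the pullback of the field operations along $f$ together with the kernel of $f$ genuinely witnesses an interpretation of $(Z,\oplus,\otimes)$ inside $(\az,<,+)$, but this follows immediately from the coordinate-wise application of Proposition~\ref{prop:az-0} to the definable subsets of $(\az)^{2m}$ and $(\az)^{3m}$ defining $E$ and the pulled-back graphs.
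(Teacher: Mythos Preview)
Your proof is correct and follows essentially the same route as the paper: invoke the dichotomy of Theorem~\ref{thm:loc-o-min}, dispose of the o-minimal case trivially, and in the $(\R,<,+,\az)$-minimal case use Lemma~\ref{lem:strong-field} and Proposition~\ref{prop:az-0} to push the field down to an interpretation in $(\Z,<,+)$, contradicting Proposition~\ref{prop:presburger}; the second claim then follows from the first via elimination of imaginaries and Theorem~\ref{thm:zilber}. Your argument is slightly more explicit than the paper's about how the surjection $f$ witnesses the interpretation, but the content is the same.
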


\begin{proof}
The second claim follows from the first claim by Theorem~\ref{thm:zilber}.
If $\Sa R$ is o-minimal then every zero-dimensional definable set is finite so we may assume that $\Sa R$ is not o-minimal.
Combining Theorem~\ref{thm:loc-o-min},  Lemma~\ref{lem:strong-field}, and Proposition~\ref{prop:az-0} it suffices to show that $(\Z,<,+)$ does not interpret an infinite field.
This is Proposition~\ref{prop:presburger}.
\end{proof}

\noindent We now classify strongly dependent expansions of $(\R,<,+)$ by closed sets which are not field-type.
Recall that we let $\rvec$ be the ordered vector space $(\R,<,+,(t \mapsto \lambda t)_{\lambda \in \R})$ of real numbers.
We let $\bvec$ be the expansion of $(\R,<,+)$ by the restriction of $t \mapsto \lambda t$ to $[0,1]$ for all $\lambda \in \R$.
Fact~\ref{fact:edmundo} shows that $\bvec$ has rational scalars so $\bvec$ is a proper reduct of $\rvec$.

\begin{prop}
\label{prop:strong-linear}
The following are equivalent:
\begin{enumerate}
    \item $\Sa R$ is a strongly dependent expansion by closed sets which does not interpret an infinite field.
    \item $\Sa R$ is either a reduct of $\rvec$ or of $(\bvec,\az)$ for some $\alpha > 0$.
\end{enumerate}
\end{prop}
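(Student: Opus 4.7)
The plan is to use the classification Theorem~\ref{thm:loc-o-min} together with the structure theorem Theorem~\ref{thm:rz-min} and the Peterzil--Starchenko trichotomy to reduce both directions to a structural description of (linear) o-minimal expansions of $(\R,<,+)$ with rational scalars and no poles.

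For $(2) \Rightarrow (1)$: First I verify that both $\rvec$ and $(\bvec,\az)$ are strongly dependent expansions by closed sets. The former is o-minimal, and these properties pass to any reduct. For the latter, I observe that $\bvec$ is generated by the bounded restricted scalar multiplications $t \mapsto \lambda t|_{[0,1]}$, so by Fact~\ref{fact:edmundo} it has no poles and rational scalars; Theorem~\ref{thm:rz-min} then gives that $(\bvec,\az)$ is $(\R,<,+,\az)$-minimal, which by Theorem~\ref{thm:loc-o-min} is a strongly dependent expansion by closed sets. To see that neither structure interprets an infinite field, it suffices (since a reduct can interpret at most what the ambient structure does) to verify the claim for $\rvec$ and $(\bvec,\az)$ themselves. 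For $\rvec$, elimination of imaginaries in o-minimal structures reduces this to definition, and $\rvec$ defines no infinite field by the Peterzil--Starchenko trichotomy since it is linear. For $(\bvec,\az)$, Proposition~\ref{prop:conj-case-strong} reduces the task to showing it is not field-type; the decomposition in Proposition~\ref{prop:az-0} shows that every $(\bvec,\az)$-definable function on a bounded open box is $\bvec$-definable and therefore piecewise affine, so condition~(4) of Theorem~\ref{thm:zilber} fails.

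For $(1) \Rightarrow (2)$: By Theorem~\ref{thm:loc-o-min}, either $\Sa R$ is o-minimal or $\Sa R$ is $(\R,<,+,\az)$-minimal for some $\alpha > 0$. In the first case, since $\Sa R$ does not interpret (hence, by o-minimal elimination of imaginaries, does not define) an infinite field, Theorem~\ref{thm:zilber} combined with the Peterzil--Starchenko trichotomy forces $\Sa R$ to be linear, so every definable function is piecewise affine with real slopes; this is precisely the statement that $\Sa R$ is a reduct of $\rvec$. In the second case, Theorem~\ref{thm:rz-min} gives that $\Sa R$ is interdefinable with $(\Sa S,\az)$ for some o-minimal expansion $\Sa S$ of $(\R,<,+)$ with rational scalars and no poles. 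Since $\Sa S$ is a reduct of $\Sa R$, it also fails to interpret an infinite field and is therefore linear. By Fact~\ref{fact:edmundo}, $\Sa S$ is generated by its bounded definable sets, each of which is a boolean combination of affine inequalities $\sum \lambda_i x_i \leq c$ over a bounded region; any such inequality is definable in $\bvec$, since $\bvec$ defines every restriction $t \mapsto \lambda t$ to a bounded interval. Thus $\Sa S$ is a reduct of $\bvec$ and $\Sa R$ is a reduct of $(\bvec,\az)$.

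The main obstacle is the structural identification of a linear o-minimal expansion of $(\R,<,+)$ with rational scalars and no poles as a reduct of $\bvec$; this leans on Peterzil--Starchenko/Loveys--Peterzil material on linear o-minimal structures not formally packaged in the excerpt but standard, and care is needed in translating ``piecewise affine with real slopes'' into the precise generating data of $\bvec$.
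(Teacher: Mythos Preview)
Your proof is correct and follows essentially the same approach as the paper's: both directions go through Theorem~\ref{thm:loc-o-min}, the Peterzil--Starchenko trichotomy for the o-minimal case, and Proposition~\ref{prop:conj-case-strong} for the $(\bvec,\az)$ case. The only notable difference is that where you sketch the fact that a linear o-minimal expansion with no poles has its bounded definable sets in $\bvec$ via affine inequalities, the paper dispatches this step by citing the semilinear cell decomposition (every bounded $\rvec$-definable set is already $\bvec$-definable), which is the clean reference you were reaching for in your ``main obstacle'' remark.
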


\begin{proof}
$(1) \Rightarrow (2):$ If $\Sa R$ is o-minimal then $\Sa R$ is a reduct of $\rvec$ by the Peterzil-Starchenko trichotomy~\cite{PS-Tri}.
Suppose $\Sa R$ is not o-minimal.
Theorem~\ref{thm:loc-o-min} shows that $\Sa R$ is interdefinable with $(\R,<,+,\Cal B,\az)$ for some $\alpha > 0$ and collection $\Cal B$ of bounded subsets of Euclidean space such that $(\R,<,+,\Cal B)$ is o-minimal.
As $(\R,<,+,\Cal B)$ is not field-type it is a reduct of $\rvec$.
It follows easily from the semilinear cell decomposition \cite[Corollary 7.6]{Lou} that every bounded subset of $\R^m$ definable in $\rvec$ is already definable in $\bvec$.
So $(\R,<,+,\Cal B)$ is a reduct of $\bvec$. \newline

\noindent $(2) \Rightarrow (1):$ Note that $\rvec$ and $(\bvec,\az)$ are both expansions by closed subsets of Euclidean space.
If $\Sa R$ is a reduct of $\rvec$ then $\Sa R$ does not interpret an infinite field by the Peterzil-Starchenko trichotomy or (see also Theorem~\ref{thm:zilber}).
Fix $\alpha > 0$.
It suffices to show that $(\bvec,\az)$ does not interpret an infinite field.
Theorem~\ref{thm:rz-min} and Proposition~\ref{prop:az-0} together show that any bounded subset of $\R^n$ definable in $(\bvec,\az)$ is $\bvec$-definable, so $(\bvec,\az)$ is not field type.
Now apply Proposition~\ref{prop:conj-case-strong}.
\end{proof}

\subsection{Distal expansions}
\label{section:distal}
We finish this section with some observations concerning distality.
(See Simon~\cite{Simon-Book} for an account of distality.)
Any o-minimal structure is distal and $(\Z,<,+)$ is distal.
Distality is preserved under disjoint unions and bi-interpretations.
Corollary~\ref{cor:distal} follows.

\begin{cor}
\label{cor:distal}
Suppose $\Sa R$ is an expansion of $(\R,<,+)$ by closed sets.
If $\Sa R$ is strongly dependent then $\Sa R$ is distal.
\end{cor}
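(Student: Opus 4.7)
The plan is to reduce to the classification of strongly dependent expansions of $(\R,<,+)$ by closed sets established in Theorem~\ref{thm:loc-o-min}, which already does all the heavy lifting. The statement of the corollary is essentially a corollary of that classification together with standard preservation properties of distality.

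First I would apply Theorem~\ref{thm:loc-o-min}, which gives two cases. If $\Sa R$ is o-minimal, then $\Sa R$ is distal by well-known results (o-minimal theories are distal, as any o-minimal expansion of a dense linear order is dp-minimal and distal; this is discussed in Simon~\cite{Simon-Book}). So we may assume $\Sa R$ is $(\R,<,+,\az)$-minimal for some $\alpha > 0$.

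In the second case I would invoke Proposition~\ref{prop:az-0}, which shows that $\Sa R$ is bi-interpretable with the disjoint union of the structure $\Sa I$ induced on $[0,\alpha)$ and the structure $(\Z,<,+)$. By Theorem~\ref{thm:loc-o-min} (or directly by local o-minimality of $\Sa R$), $\Sa I$ is o-minimal and hence distal. The Presburger structure $(\Z,<,+)$ is likewise distal, being a reduct of an o-minimal structure (or by direct verification: it has quantifier elimination in a natural language and all its unary definable sets are boolean combinations of arithmetic progressions and half-lines, giving definable Skolem functions in a way that produces distal cell decompositions).

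The final step uses the standard, immediate fact that distality is preserved under disjoint unions of first-order structures (an indiscernible sequence in the disjoint union splits into indiscernible sequences in each sort, and a distal witness in each sort assembles into one for the union) and under bi-interpretations (both being expressible at the level of the hyperimaginary or induced structure). Applying these closure properties to the disjoint union of $\Sa I$ and $(\Z,<,+)$ yields that $\Sa R$ is distal, completing the proof. There is no real obstacle here — the genuine content lives in the classification theorem and in Proposition~\ref{prop:az-0}; distality is closed under exactly the operations we need.
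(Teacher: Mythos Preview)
Your argument is essentially identical to the paper's: apply the classification Theorem~\ref{thm:loc-o-min}, invoke Proposition~\ref{prop:az-0} to reduce the non-o-minimal case to a disjoint union of an o-minimal structure with $(\Z,<,+)$, and use that distality is preserved under disjoint unions and bi-interpretations. One small slip: $(\Z,<,+)$ is not a reduct of an o-minimal structure (o-minimal orders are dense), so drop that aside---distality of Presburger arithmetic is standard and your alternative justification is fine.
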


\noindent
Every $\nip$ expansion of $(\R,<,+)$ by closed subsets of Euclidean space of which we are aware is distal.
In particular $\slam$ is distal when $\Sa S$ is an o-minimal expansion of $(\R,<,+,\times)$ with rational exponents and $\lambda > 1$, see Hieronymi and Nell~\cite{HN-Distal}.

\begin{qst}
\label{qst:distal}
Is every $\nip$ expansion of $(\R,<,+)$ by closed sets distal?
\end{qst}

\noindent
Suppose $\Sa Z$ is a $\nip$ expansion of $(\Z,<,+)$.
Let $(\R,<,+,\Sa Z)$ be the expansion of $(\R,<,+)$ by all $\Sa Z$-definable subsets of $\Z^n$.
It follows from Fact~\ref{fact:ktt-converse} that $(\R,<,+,\Sa Z)$ is locally o-minimal and $\nip$ and the structure induced on $\Z$ by $(\R,<,+,\Sa Z)$ is interdefinable with $\Sa Z$.
So if $\Sa Z$ is not distal then $(\R,<,+,\Sa Z)$ is also not distal.
Thus if there is a non-distal $\nip$ expansion of $(\Z,<,+)$ then there is a non-distal $\nip$ expansion of $(\R,<,+)$ by closed subsets of Euclidean space.

\begin{qst}
\label{qst:distal1}
Is there is a non-distal $\nip$ expansion of $(\N,<)$?
\end{qst}

\section{Archimedean quotients}
\label{section:arch-quot}
\noindent
\textbf{Through this section $\Sa N$ is a highly saturated expansion of a dense ordered abelian group $(N,<,+)$.}
Given a positive element $a$ of $N$ we let $\mfin_a$ be the convex hull of $a\Z$ and $\minf_a$ be the set of $b \in N$ such that $|nb| < a$ for all $n$.
Then $\mfin_a$ and $\minf_a$ are convex subgroups of $(N,<,+)$.
We order $\om$ by declaring $a + \minf_a < b + \minf_a$ if every element of $a + \minf_a$ is strictly less then every element of $b + \minf_a$.
Then $\om$ is an ordered abelian group.
As $\mfin_a$ and $\minf_a$ are convex Fact~\ref{fact:convex} shows that they are externally definable.
So we consider $\mfin_a/\minf_a$ to be an imaginary sort of $\Sh N$.\newline

\noindent
Let $\st_a : \mfin_a \to \R$ be given by $\st_a(b) := \sup \{ \frac{m}{n} \in \Q : m a \leq nb \}$.
Then $\st_a$ is an ordered group homomorphism with kernal $\minf_a$.
Note that $\st_a(\alpha) = 1$.
Density of $(N,<,+)$ and saturation together imply that $\st_a$ is surjective.
So $\om$ is isomorphic to $(\R,<,+)$. \newline

\noindent Suppose $\Sa N$ expands an ordered field $(N,<,+,\times,0,1)$.
Let $\mfin = \mfin_1$, $\minf = \minf_1$, and $\st = \st_1$.
Then $\st$ is the residue map of the archimedean valuation on $\Sa N$.
Let $a$ be a positive element of $N$.
Then $x \mapsto a^{-1} x$ gives ordered group isomorphisms $\mfin_a \to \mfin$ and $\minf_a \to \minf$ and hence induces an isomorphism $\mfin_a/\minf_a \to \mfin/\minf$.
This isomorphism is definable in $\Sh N$.
So if $\Sa N$ expands an ordered field then we obtain nothing more then the residue field of the archimedean valuation.\newline

\noindent
If $\Sa N$ is o-minimal then $\Sh N$ is weakly o-minimal by the theorem of Baisalov and Poizat~\cite{BP-weak} (this also follows from Fact~\ref{fact:shelah}).
In this case it easily follows that the induced structure on $\mfin_a/\minf_a$ is o-minimal.
(A weakly o-minimal expansion of $(\R,<)$ is o-minimal.)
We view Proposition~\ref{prop:arch-quotient} as a generalization of this fact.

\begin{prop}
\label{prop:arch-quotient}
Suppose $\Sa N$ is $\nip$ and fix a positive element $a$ of $N$.
Then the structure induced on $\mfin_a/\minf_a$ by $\Sh N$ is isomorphic to a generically locally o-minimal expansion of $(\R,<,+)$.
If $\Sa N$ is strongly dependent then the induced structure is either isomorphic to an o-minimal expansion of $(\R,<,+)$ or an $(\R,<,+,\az)$-minimal expansion of $(\R,<,+,\az)$ for some $\alpha > 0$.
\end{prop}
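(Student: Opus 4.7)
The plan is to realize $\mfin_a/\minf_a$ as an imaginary sort of $\Sh N$ and apply Theorem~\ref{thm:main-noise} to a suitable equivalence relation on $N$. Define the equivalence relation $E$ on $N$ by $b E b'$ iff $b-b' \in \minf_a$. Then $E$ is $\bigwedge$-definable, since
\[
\minf_a = \bigcap_{n \geq 1}\{b \in N : n|b| < a\}.
\]
To verify condition $(2)$ of Theorem~\ref{thm:main-noise}, consider the $\Sa N$-definable family $U_c := \{(b,b') \in N^2 : |b-b'| < c\}$ indexed by $c \in N_{>0}$ and set $\Cal C := \{U_{a/n} : n \geq 1\}$. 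Each $U_{a/n}$ is symmetric and contains the diagonal, $U_{a/(n+m)} \subseteq U_{a/n} \cap U_{a/m}$, and $\bigcap_{n \geq 1} U_{a/n} = E$. By Lemma~\ref{lem:basis}, $\Cal C$ is a subdefinable basis for $E$, so Theorem~\ref{thm:main-noise} gives that the structure induced on $N/E$ by $\Sh N$ is strongly noiseless.

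Next I would identify $\mfin_a/\minf_a$ as a $\Sh N$-definable subset of $N/E$. Since $\mfin_a$ is convex, Fact~\ref{fact:convex} shows it is externally definable in $\Sa N$, hence $\Sh N$-definable, and its image in $N/E$ is exactly $\mfin_a/\minf_a$ (as $\minf_a \subseteq \mfin_a$). Strong noiselessness of the induced structure on $N/E$ restricts to strong noiselessness of the induced structure on $\mfin_a/\minf_a$. The surjective ordered group homomorphism $\st_a : \mfin_a \to \R$ descends to a group isomorphism $\mfin_a/\minf_a \to (\R,<,+)$, and addition and order on $\mfin_a/\minf_a$ are $\Sh N$-definable. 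A quick check shows the uniform topology inherited from $\pi(\Cal C)$ matches the order topology on $\R$: the condition $|b-b_0|<a/n$ forces $|\st_a(b)-\st_a(b_0)| \leq 1/n$, and conversely any $\varepsilon$-neighborhood in $\R$ is realized by some $U_{a/n}$-neighborhood. Thus the induced structure pulls back via $\st_a$ to a strongly noiseless expansion of $(\R,<,+)$, and Theorem~\ref{thm:equiv} says this is equivalent to generic local o-minimality.

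For the second claim, Fact~\ref{fact:shelah} gives that $\Sh N$ is strongly dependent whenever $\Sa N$ is. Strong dependence passes to induced structures on (imaginary) definable sets, so the pulled-back expansion of $(\R,<,+)$ is strongly dependent in addition to being noiseless. Theorem~\ref{thm:loc-o-min} then forces it to be either o-minimal or $(\R,<,+,\az)$-minimal for some $\alpha > 0$, completing the proof.

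The main obstacle, and the only real content beyond invoking Theorem~\ref{thm:main-noise}, is the bookkeeping verification that the uniform quotient topology on $\mfin_a/\minf_a$ agrees with the order topology pulled back from $(\R,<)$, so that Theorem~\ref{thm:equiv} actually applies. Everything else is routine transfer through externally definable sets and imaginary sorts of $\Sh N$.
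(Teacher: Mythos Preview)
Your argument is correct and follows essentially the same route as the paper: build a subdefinable basis for the equivalence relation ``equality mod $\minf_a$'', apply Theorem~\ref{thm:main-noise} to get strong noiselessness, check the logic topology matches the order topology, then invoke Theorems~\ref{thm:equiv} and~\ref{thm:loc-o-min}. The only organisational difference is that the paper restricts to the interval $[0,a]$ first (so the quotient is $[0,1]$, and one only needs to compare two compact Hausdorff topologies), whereas you work on all of $N$ and then restrict to the convex externally definable set $\mfin_a$; both are fine.

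One small technical slip: you write $\Cal C = \{U_{a/n} : n \geq 1\}$, but $(N,<,+)$ is only assumed dense, not divisible, so $a/n$ need not lie in $N$ and $\Cal C$ would not literally be a subfamily of $(U_c)_{c \in N_{>0}}$. The paper handles this by using density to choose $a_n \in N$ with $(n-1)a_n < a \leq n a_n$ and taking $F_n = U_{a_n}$; the same fix works in your argument without changing anything else.
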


\begin{proof}
For the sake of simplicity we identify $\mfin_a/\minf_a$ with $\R$.
We show that the structure induced on $\R$ by $\Sh N$ is strongly noiseless.
By Theorem~\ref{thm:equiv} it suffices to show that the structure induced on $[0,1]$ by $\Sh N$ is strongly noiseless.
Let $I = [0,a]$.
Note that $\st_a(I) = [0,1]$.
Let $E$ be the equivalence relation of equality modulo $\minf_a$ on $I$.
Applying density we select for each $n \geq 1$ an $a_n \in N$ satisfying $(n - 1)a_n < a \leq na_n$.
Let $F_n$ be the set of $(a,b) \in I^2$ such that $| a - b | < a_n$.
Then each $F_n$ is $\Sa N$-definable, $(F_n)_{n \in \N}$ is nested, and $\bigcap_{n \geq 1} F_n = E$.
So $E$ is $\bigwedge$-definable and Lemma~\ref{lem:basis} shows that  $(F_n)_{n \geq 1}$ is a subdefinable basis for $F$. \newline

\noindent By Theorem~\ref{thm:equiv} is suffices to show that the logic topology on $[0,1]$ agrees with the usual order topology.
As both topologies are compact Hausdorff it suffices to show that any subset of $[0,1]$ which is closed in the order topology is also closed in the logic topology.
For this purpose it is enough to fix elements $t < t'$ of $[0,1]$ and show that $[t,t']$ is closed in the logic topology.
For each $m,n,m',n'$ such that $\frac{m}{n} \leq t, t' \leq \frac{m'}{n'}$ we let $X^{m,n}_{m',n'}$ be the set of $a \in I$ such that $m \leq na$ and $n'a \leq m'$.
So each $X^{m,n}_{m',n'}$ is $\Sa N$-definable.
Then $\st^{-1}([a,b]) \cap I$ is the intersection of all $X^{m,n}_{m',n'}$ and is hence $\bigwedge$-definable.
So $[a,b]$ is closed in the logic topology.
The proposition now follows by Theorems~\ref{thm:main-noise}, \ref{thm:equiv}, and \ref{thm:loc-o-min}.
\end{proof}

\noindent We record Proposition~\ref{prop:quot-dp} for future use.

\begin{prop}
\label{prop:quot-dp}
Suppose $\Sa N$ is $\nip$ and fix a positive element $a$ of $N$.
Then the dp-rank of the induced structure on $\om$ does not exceed the dp-rank of $\Sa N$.
\end{prop}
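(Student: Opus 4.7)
The plan is to chain together three standard reductions for dp-rank, entirely within the $\nip$ framework.

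First, from the proof of Proposition~\ref{prop:arch-quotient} we already have that $\mfin_a$ and $\minf_a$ are externally definable in $\Sa N$ and that the equivalence relation $E$ of equality modulo $\minf_a$ on $\mfin_a$ is $\bigwedge$-definable with a subdefinable basis. Hence, by Lemma~\ref{lem:basis-ext-def} and Fact~\ref{fact:shelah}, $\mfin_a$, $\minf_a$, and $E$ are all definable in $\Sh N$, so $\om$ is a genuine imaginary sort of $\Sh N$, and the structure induced on $\om$ by $\Sh N$ is (a reduct of) the structure on this imaginary sort coming from $(\Sh N)^{\mathrm{eq}}$. Since dp-rank cannot increase under passage to a reduct, it suffices to bound the dp-rank of $\om$ as an imaginary sort of $\Sh N$.

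Second, I will invoke the standard quotient principle for dp-rank: if $\Sa M$ is $\nip$, $X$ is $\Sa M$-definable, $E$ is a definable equivalence relation on $X$, and $\pi : X \to X/E$ is the quotient map, then for every $c \in X$ the dp-rank of $\pi(c)$ (as an element of the imaginary sort $X/E$) is at most the dp-rank of $c$. This is immediate from the definition: since $\pi(c) \in \dcl^{\mathrm{eq}}(c)$, any family $(I_\alpha)_{\alpha < \kappa}$ of mutually indiscernible sequences over a parameter set $A$ which fails to remain indiscernible after adding $\pi(c)$ also fails after adding $c$, so any ICT-pattern witnessing a lower bound for the dp-rank over $\pi(c)$ witnesses the same lower bound over $c$. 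Taking the supremum, the dp-rank of the imaginary sort $\om$ is at most the dp-rank of $\mfin_a$.

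Third, $\mfin_a$ is a subset of the home sort of $\Sh N$, so its dp-rank is bounded by the dp-rank of $\Sh N$, which by Fact~\ref{fact:onus} equals the dp-rank of $\Sa N$. Chaining the three bounds yields the proposition. No step is difficult; the only subtlety is recognising that once we pass from $\Sa N$ to $\Sh N$ the $\bigwedge$-definable equivalence relation $E$ becomes honestly definable, so we land in the textbook situation for the quotient principle and no extra $\nip$-specific reasoning is required beyond Fact~\ref{fact:onus}.
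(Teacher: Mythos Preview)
Your proof is correct and follows essentially the same approach as the paper's: the paper's two-sentence argument simply observes that $\om$ is an $\Sh N$-definable image of $N$ (so its dp-rank is bounded by that of $\Sh N$) and then invokes Fact~\ref{fact:onus}. Your steps two and three are exactly this argument with the quotient principle spelled out, and your first step justifies what the paper records in the paragraph before the proposition using Fact~\ref{fact:convex}.
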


\begin{proof}
As $\om$ is an $\Sh N$-definable image of $N$ the dp-rank of the induced structure does not exceed that of $\Sh N$.
Now apply Fact~\ref{fact:onus}.
\end{proof}




\section{Archimedean structures}
\noindent
In this section we prove Theorem F, which is broken up into several theorems.
\newline

\noindent It is natural to try to extend Theorem~\ref{thm:main-reals} to expansions of archimedean ordered abelian groups.
We describe counterexamples showing that the naive extension fails and then describe what we believe to be the correct extension.
Recall that the classical Hahn embedding theorem states that any archimedean ordered abelian group has a unique up to rescaling embedding into $(\R,<,+)$.
So we only consider substructures of $(\R,<,+)$.
Any discrete archimedean ordered abelian group is isomorphic to $(\Z,<,+)$, so we only consider the dense case.
\textbf{We suppose throughout this section that $(R,<,+)$ is an archimededan dense ordered abelian group, which we take to be a substructure of $(\R,<,+)$, and let $\Sa R$ be a structure expanding $(R,<,+)$.}

\subsection{Two counterexamples} Theorem~\ref{thm:main-reals} can fail for $(R,<,+)$.
Recall that $(R,<,+)$, like any ordered abelian group, is $\nip$.
Suppose $R = \Z + \lambda \Z$ for some $\lambda \in \R \setminus \Q$.
Then $(R,<,+)$ is noisey as $nR$ is dense and co-dense for any $n \geq 2$. \newline

\noindent We give a divisible counterexample.
Let $\ralg$ be the set of real algebraic numbers.
Then $(\R,<,+,\times,\ralg)$ is $\nip$~\cite{GH-Dependent}.
Fix $\lambda \in \R \setminus \ralg$.
Let $R := \ralg + \lambda \ralg$.
Note that $(R,+)$ is divisible.
Consider $(R,<,+,t \mapsto \lambda t)$, this is an $\nip$ expansion by a continuous function.
Elementary algebra yields $\lambda R = \lambda \ralg$, so $\lambda R$ is dense and co-dense.
This example is sharp in the following sense.
Simon and Walsberg~\cite{SW-dp} show that a dp-minimal expansion of a divisible archimedean ordered abelian group is weakly o-minimal.
The structure induced on $\ralg$ by $(\R,<,+,\times,\ralg)$ is weakly o-minimal hence dp-minimal.
So by Fact~\ref{fact:subadditive} $(R,<,+,t \mapsto \lambda t)$ has dp-rank two. \newline

\subsection{The completion of an archimedean structure}
\label{section:completion} \noindent
The correct generalization of Theorem~\ref{thm:main-reals} concerns the completion of $\Sa R$.
This construction is motivated by work in the o-minimal and weakly o-minimal settings, which we describe below.

\subsubsection{The o-minimal case}
\label{section:o-minimal-case}
Suppose $\Sa R$ is o-minimal.
Laskowski and Steinhorn~\cite{LasStein} show that there is a unique o-minimal expansion $\Sa S$ of $(\R,<,+)$ such that $\Sa R$ is an elementary submodel of $\Sa S$.
It follows immediately that the structure induced on $R$ by $\Sa S$ is a reduct of $\Sh R$.
The Marker-Steinhorn theorem~\cite{Marker-Steinhorn} shows that if $\Sa N$ is an elementary extension of $\Sa S$, and $Z \subseteq N^n$ is $\Sa N$-definable, then $Z \cap \R^n$ is definable in $\Sa S$.
So any subset of $R^n$ externally definable in $\Sa R$ is of the form $Y \cap R^n$ for some $\Sa S$-definable $Y \subseteq \R^n$.
So the structure induced on $R$ by $\Sa S$ is interdefinable with $\Sh R$.

\begin{prop}
\label{prop:LS}
Let $\Sa R$ be o-minimal and $\Sa S$ be the unique elementary extension of $\Sa R$ which expands $(\R,<,+)$.
Then $\Sa S$ is interdefinable with
\begin{enumerate}
    \item the expansion $\Sa S_1$ of $(\R,<,+)$ whose primitive $n$-ary relations are all sets of the form $\cl(X)$ for $\Sa R$-definable $X \subseteq R^n$, and
    \item the expansion $\Sa S_2$ of $(\R,<,+)$ whose primitive $n$-ary relations are all sets of the form $\cl(X)$ for $\Sh R$-definable $X \subseteq R^n$.
\end{enumerate}
\end{prop}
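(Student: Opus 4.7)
The plan is to prove the chain $\Sa S \equiv \Sa S_1 \equiv \Sa S_2$, combining o-minimal cell decomposition with $\Sa R \preceq \Sa S$ for the first step, and descending an honest definition from a saturated extension via the Marker-Steinhorn theorem for the second.

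First I would establish $\Sa S \equiv \Sa S_1$. For $\Sa S_1 \subseteq \Sa S$: given an $\Sa R$-definable $X \subseteq R^n$ defined by $\varphi(\bar{x},\bar{a})$ with $\bar{a} \in R^m$, let $X' := \varphi(\R^n, \bar{a})$; by $\Sa R \preceq \Sa S$ we have $X = X' \cap R^n$. Applying o-minimal cell decomposition to $X'$ in $\Sa S$ with parameters in $R$, each $0$-cell is a point obtained by evaluating a definable Skolem term at $\bar{a}$, which elementarity forces into $R^n$; each higher-dimensional cell is parametrized by continuous $\Sa S$-definable functions with $R$-parameters whose values at $R^d$-inputs lie in $R^{n-d}$ by elementarity, yielding a subset of $X$ dense in the cell. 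Hence $X$ is dense in $X'$ and $\cl_{\R^n}(X) = \cl_{\R^n}(X')$ is $\Sa S$-definable. For $\Sa S \subseteq \Sa S_1$: every $\Sa S$-definable set is a fiber of a $0$-definable family, reducing the task to showing $0$-definable sets in $\Sa S$ are $\Sa S_1$-definable; o-minimal constructibility reduces this further to closed $0$-definable sets $Y = \psi(\R^n)$. The parameter-free case of the previous analysis applied to $X := \psi(R^n)$ gives $\cl(X) = \cl(Y) = Y$, exhibiting $Y$ as a primitive relation of $\Sa S_1$.

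Next I would show $\Sa S_1 \equiv \Sa S_2$. The inclusion $\Sa S_1 \subseteq \Sa S_2$ is trivial. For the reverse, let $X \subseteq R^n$ be $\Sh R$-definable; I must show $\cl_{\R^n}(X)$ is $\Sa S$-definable. Choose a highly saturated $\Sa N \succ \Sa S$; since $\Sa R$ is o-minimal hence $\nip$, Fact~\ref{fact:Honest} yields an honest definition $Y \subseteq N^n$ of $X$ in $\Sa N$: $Y \cap R^n = X$ and $Y \subseteq Z'_N$ for every $\Sa R$-definable $Z \subseteq R^n$ with $X \subseteq Z$, where $Z'_N \subseteq N^n$ is defined by the same formula as $Z$. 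Set $\bar{Y} := Y \cap \R^n$. Then $\bar{Y}$ is externally definable in $\Sa S$, so by the Marker-Steinhorn theorem $\bar{Y}$ is $\Sa S$-definable. Moreover $\bar{Y} \cap R^n = X$, and for every $\Sa R$-definable $Z \supseteq X$ we have $\bar{Y} \subseteq Z'_N \cap \R^n = Z'_S$ (using $\Sa S \preceq \Sa N$). I claim $\cl_{\R^n}(X) = \cl_{\R^n}(\bar{Y})$. The inclusion $\cl(X) \subseteq \cl(\bar{Y})$ is immediate; for the reverse it suffices to show $\bar{Y} \subseteq \cl(X)$. Suppose $\bar{p} \in \bar{Y}$ and $B(\bar{p},\varepsilon) \cap X = \emptyset$ for some $\varepsilon > 0$ in the $\ell_\infty$-norm. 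By density of $R$ in $\R$ choose $\varepsilon' \in R$ with $0 < \varepsilon' < \varepsilon/3$ and $\bar{r} \in R^n$ with $\|\bar{p} - \bar{r}\|_\infty < \varepsilon'$. Then $B(\bar{r},\varepsilon') \subseteq B(\bar{p},\varepsilon)$, so $X \cap B(\bar{r},\varepsilon') = \emptyset$, and $Z := R^n \setminus B(\bar{r},\varepsilon')$ is an $\Sa R$-definable superset of $X$ over parameters $\bar{r}, \varepsilon' \in R$. Honesty forces $\bar{Y} \subseteq Z'_S = \R^n \setminus B(\bar{r},\varepsilon')$, contradicting $\bar{p} \in B(\bar{r},\varepsilon')$. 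Hence $\cl(X) = \cl(\bar{Y})$, and the latter is $\Sa S$-definable.

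The main obstacle is ensuring that the honest-definition machinery descends properly to $\Sa S$: the honest definition from Fact~\ref{fact:Honest} lives in a highly saturated $\Sa N$, but the key set $\bar{Y} := Y \cap \R^n$ needs to be $\Sa S$-definable, which requires invoking the Marker-Steinhorn theorem applied to $\Sa S$ itself. The subsequent $R$-rational ball argument is then fiddly but routine, requiring the two-step choice of $\varepsilon' \in R$ before $\bar{r} \in R^n$ so that $B(\bar{r},\varepsilon')$ simultaneously contains $\bar{p}$ and lies inside $B(\bar{p},\varepsilon)$.
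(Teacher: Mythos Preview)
Your proof is correct. The argument for $\Sa S \equiv \Sa S_1$ matches the paper's, with your cell-decomposition density argument filling in details the paper leaves implicit.

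For $\Sa S_2 \subseteq \Sa S$, however, you take a genuinely different route. The paper invokes the Dolich--Miller--Steinhorn result that $(\Sa S,R)^\circ$ is interdefinable with $\Sa S$: since every $\Sh R$-definable $X \subseteq R^n$ is definable in the pair $(\Sa S,R)$ (via the Marker--Steinhorn description of $\Sh R$ given just before the proposition), the closure $\cl(X)$ is $(\Sa S,R)^\circ$-definable and hence $\Sa S$-definable. Your argument instead pulls an honest definition of $X$ down from a saturated $\Sa N \succ \Sa S$ to an $\Sa S$-definable set $\bar Y$ using Marker--Steinhorn applied to $\Sa S$, and then runs a direct $R$-rational ball argument to show $\cl(X)=\cl(\bar Y)$. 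This avoids the DMS black box entirely and stays within the honest-definition machinery the paper develops anyway; in fact your closure computation is essentially the o-minimal special case of Lemma~\ref{lem:honest-apply}, which the paper proves later for the general $\nip$ completion. The trade-off is length: the paper's citation of \cite{DMS1} is a one-liner, while your argument is longer but self-contained and foreshadows the general construction of $\Sq R$.
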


\noindent We believe that $(2)$ is the ``right" definition in the general $\nip$ setting.

\begin{proof}
\noindent Note that $\Sa S_1$ is a reduct of $\Sa S_2$.
It follows easily from o-minimal cell decomposition that any subset of $\R^n$ which is $0$-definable in $\Sa S$ is a boolean combination of closed subsets of $\R^n$ which are $0$-definable in $\Sa S$.
If $X \subseteq \R^n$ is closed and $0$-definable in $\Sa S$ then $X$ is the closure of $X'$ in $\R^n$ where $X'$ is the $\Sa R$-definable set defined by any parameter-free formula defining $X$.
So $\Sa S$ is a reduct of $\Sa S_1$. \newline

\noindent Work of Dolich, Miller, and Steinhorn~\cite[Section 5]{DMS1} shows that $(\Sa S,R)^\circ$ is interdefinable with $\Sa S$.
So if $X \subseteq R^n$ is definable in the structure induced on $R$ by $\Sa S$ then $\cl(X)$ is definable in $\Sa S$.
It now follows from the observations above that if $X \subseteq R^n$ is externally definable in $\Sa R$ then $\cl(X)$ is $\Sa S$-definable.
So $\Sa S_2$ is a reduct of $\Sa S$.
\end{proof}

\subsubsection{The weakly o-minimal case}
\label{section:weak-o-min}
A weakly o-minimal structure may not have weakly o-minimal theory.
However, a result of Marker, Macpherson, and Steinhorn~\cite[Theorem 6.7]{MMS-weak} shows that a weakly o-minimal expansion of an archimedean ordered abelian group has weakly o-minimal theory, so we ignore this distinction. \newline

\noindent Suppose $\Sa R$ is weakly o-minimal.
We describe the o-minimal completion of $\Sa R$.
This completion was first constructed by Wencel~\cite{Wencel-1,Wencel-2}.
The construction we use is due to Keren~\cite{Keren-thesis}, see also \cite{EHK-weak}.
They define this completion in the more general setting of non-valuational structures.
We specialize to the setting of archimedean structures.
(Any archimedean structure is non-valuational.) \newline

\noindent Let $\Sa C(\Sa R)$ be set of all $t \in \R$ such that $(-\infty,t)$ is definable in $\Sa R$.
Note that $R$ is a subset of $\Sa C(\Sa R)$ and $\Sa C(\Sa R)$ is a subgroup of $(\R,+)$.
The completion $\overline{\Sa R}$ of $\Sa R$ is the expansion of $(\Sa C(\Sa R),<,+)$ by all closures in $\Sa C(\Sa R)^n$ of $\Sa R$-definable subsets of $R^n$.
Then $\overline{\Sa R}$ is o-minimal and the structure induced on $R$ by $\overline{\Sa R}$ is interdefinable with $\Sa R$.\newline

\noindent Fact~\ref{fact:shelah} may be applied to show that the Shelah expansion of a weakly o-minimal structure is weakly o-minimal, so $\Sa R^{\text{Sh}}$ is weakly o-minimal.
Fact~\ref{fact:convex} shows that every convex subset of $R$ is definable in $\Sh R$.
So $\Sa C(\Sh R) = \R$ and $\overline{\Sh R}$ is an o-minimal expansion of $(\R,<,+)$.
Note that the structure induced on $R$ by $\overline{\Sh R}$ is interdefinable with $\Sh R$.

\subsubsection{The $\nip$ case}
Generalizing what is above we define the completion of $\Sa R$.
Let $\Sq R$ be the structure on $\R$ whose primitive $n$-ary relations are all sets of the form $\cl(X)$ for $\Sh R$-definable $X \subseteq R^n$.
Note that $\Sa R$ expands $(\R,<,+)$.
If $\Sa R$ is o-minimal then $\Sq R$ is interdefinable with the unique elementary extension of $\Sa R$ with domain $\R$.
If $\Sa R$ is weakly o-minimal then $\Sq R$ is $\overline{\Sh R}$.
If $R = \R$ then $\Sq R$ is the open core of $\Sh R$. \newline

\noindent
Proposition~\ref{prop:induced-1} follows as $\cl(X) \cap R^n = X$ for any $X \subseteq R^n$ which is closed in $R^n$.

\begin{prop}
\label{prop:induced-1}
Suppose $\Sa R$ is an expansion of $(R,<,+)$ by closed sets.
Then $\Sa R$ is a reduct of the structure induced on $R$ by $\Sq R$.
\end{prop}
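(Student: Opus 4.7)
The plan is a direct unwinding of definitions. My first step will be to use the assumption that $\Sa R$ is an expansion of $(R,<,+)$ by closed sets: by the definition given in the introduction this means $\Sa R$ is interdefinable with $\Sa R^\circ$, so $\Sa R$ is generated as a structure by the closed $\Sa R$-definable subsets of the various $R^n$. To establish that $\Sa R$ is a reduct of the structure induced on $R$ by $\Sq R$, I will therefore only need to show that every closed $\Sa R$-definable subset of $R^n$ is in this induced structure (the order $<$ and the addition $+$ of course both being closed as relations on $R$ and $R^2 \times R$).

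I would then fix such a closed $\Sa R$-definable set $X \subseteq R^n$. Since $\Sh R$ expands $\Sa R$, the set $X$ is $\Sh R$-definable, and the definition of $\Sq R$ then makes $\cl(X)$ (the closure of $X$ in $\R^n$) a primitive $n$-ary relation of $\Sq R$. Consequently $\cl(X) \cap R^n$ is definable in the structure induced on $R$ by $\Sq R$.

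The final step is to observe the elementary topological identity $\cl(X) \cap R^n = X$: for any subspace $R^n \subseteq \R^n$ the closure of a subset $X \subseteq R^n$ in the subspace topology equals the closure in $\R^n$ intersected with $R^n$, and by assumption $X$ is already closed in $R^n$. Combining these three observations yields the proposition. I do not anticipate any obstacle; the argument is essentially a definition chase combined with this single topological fact, which is exactly the parenthetical remark that immediately follows the statement in the excerpt.
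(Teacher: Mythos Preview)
Your proposal is correct and is exactly the paper's approach: the paper's entire proof is the one-line remark that $\cl(X) \cap R^n = X$ for any $X \subseteq R^n$ closed in $R^n$, and your three steps simply unpack the definitions leading up to this observation.
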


\noindent
Let $\Sa N$ be a highly saturated elementary extension of $\Sa R$.
Let $\mfin$ be the convex hull of $R$ in $N$ and let $\minf$ be the set of $a \in N$ such that $|a| < b$ for all positive $b \in R$.
Following Section~\ref{section:arch-quot} we identify $\mfin/\minf$ with $\R$ and consider $\R$ to be an imaginary sort of $\Sh N$.
We let $\st : \mfin \to \R$ be the quotient map and, abusing notation, let $\st : \mfin^n \to \R^n$ be given by
$$ \st(a_1,\ldots,a_n) = (\st(a_1),\ldots,\st(a_n)) .$$
Given an $\Sa R$-definable subset $Y$ of $R^n$ we let $Y'$ be the subset of $N^n$ defined by any formula which defines $Y$.

\begin{prop}
\label{prop:arch-complete}
Suppose $\Sa R$ is $\nip$. The the following are interdefinable
\begin{enumerate}
    \item $\Sq R$,
\item The expansion of $(\R,<,+)$ by all $\st(Y \cap \mfin^n)$ for $\Sa N$-definable $Y \subseteq N^n$,
\item The open core of the structure induced on $\R$ by $\Sh N$.
\end{enumerate}
In particular $\Sq R$ is a reduct of the structure induced on $\R$ by $\Sh N$.
So $\Sa R^{\square}$ is $\nip$ and generically locally o-minimal and if $\Sa R$ is strongly dependent then $\Sq R$ is either o-minimal or $(\R,<,+,\az)$-minimal for some $\alpha > 0$.
\end{prop}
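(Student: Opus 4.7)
The plan is to realize $\R$ as the imaginary sort $\mfin/\minf$ of $\Sh N$ and then combine honest definitions with Theorem~\ref{thm:main-noise}. First I would observe that $\mfin$ and $\minf$ are convex subsets of $N$, hence externally $\Sa N$-definable by Fact~\ref{fact:convex} and in particular $\Sh N$-definable; moreover $\minf = \bigcap_{r \in R_{>0}}(-r,r)$ is $\bigwedge$-definable, so the equivalence relation $E$ on $\mfin$ of equality modulo $\minf$ is both $\bigwedge$-definable and externally definable. The family $\{\{(a,b) \in \mfin^2 : |a-b| < r\}\}_{r \in R_{>0}}$ is a subdefinable (in $\Sh N$) basis for $E$ and for the associated uniform structure on $\mfin/\minf$. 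Under the identification $\st : \mfin/\minf \to \R$ (a bijection by density of $R$ in $\R$ together with saturation of $\Sa N$) the logic topology agrees with the order topology on $\R$, exactly as in the proof of Proposition~\ref{prop:arch-quotient}.

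The central identity driving interdefinability is
\[
\cl_{\R^n}(X) = \st(Y \cap \mfin^n),
\]
valid whenever $X \subseteq R^n$ is externally $\Sa R$-definable and $Y \subseteq N^n$ is an $\Sa N$-definable honest definition of $X$, which exists by Fact~\ref{fact:Honest}. The inclusion $\subseteq$ uses $X \subseteq Y \cap \mfin^n$ (since $R^n \subseteq \mfin^n$ and $\st$ is the identity on $R^n$) together with closedness of $\st(Y \cap \mfin^n)$, which is the standard saturation argument of Proposition~\ref{prop:image} using only $\bigwedge$-definability of $\minf$. For the reverse, if $a \in Y \cap \mfin^n$ had $\st(a) \notin \cl(X)$, density of $R$ in $\R$ would furnish an open box $B \subseteq \R^n$ around $\st(a)$ with endpoints in $R$ and $B \cap X = \emptyset$; then $Z := R^n \setminus B$ is $\Sa R$-definable and contains $X$, so the honest-definition property forces $Y \subseteq N^n \setminus B'$, where $B'$ is the $N$-box with the same $R$-endpoints, contradicting $a \in Y \cap B'$. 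This identity establishes $(1) \subseteq (2)$; and since $Y \cap \mfin^n$ is $\Sh N$-definable with $\st(Y \cap \mfin^n)$ closed in $\R^n$, also $(2) \subseteq (3)$.

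For $(3) \subseteq (2)$ I would rerun the identity inside $\Sh N$, which is NIP by Fact~\ref{fact:shelah}. Any primitive of $(3)$ is $\cl_{\R^n}(\pi(A))$ for some $\Sh N$-definable $A \subseteq \mfin^n$; since $A$ is externally $\Sa N$-definable, Fact~\ref{fact:cs} provides an $\Sa N$-definable family $(A_b)_{b \in N^k}$ with $A = \bigcup\{A_b : A_b \subseteq A\}$. Repeating the box/honest-definition argument of the previous paragraph with $A$ in place of $X$, now using saturation of $\Sa N$ in place of density of $R$ in $\R$ to separate supposedly exceptional points from $A$ by $\Sa N$-definable test sets extracted from the family $(A_b)$, one produces an $\Sa N$-definable $Y$ with $\cl_{\R^n}(\pi(A)) = \st(Y \cap \mfin^n)$. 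This closes the triangle $(1) = (2) = (3)$, and in particular exhibits $\Sq R$ as a reduct of the structure induced on $\R$ by $\Sh N$.

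The topological and model-theoretic conclusions then follow by applying Theorem~\ref{thm:main-noise} to the NIP structure $\Sh N$ with $X = \mfin$ and $E$ as above, in case (2) of the theorem: the basis identified in the first paragraph is $\Sh N$-subdefinable, and Shelah-idempotence $\Sh{(\Sh N)} = \Sh N$ (for NIP structures) identifies the relevant Shelah expansion on the conclusion side. The theorem yields that the structure induced on $\R$ by $\Sh N$ is strongly noiseless, hence generically locally o-minimal by Theorem~\ref{thm:equiv}; via the established interdefinability $\Sq R$ inherits both properties, and $\nip$-ness of $\Sq R$ is inherited from $\Sh N$. For the strongly dependent dichotomy, the induced structure on $\R$ is a $\Sh N$-definable image of $\mfin$, so its dp-rank does not exceed that of $\Sh N$, which by Fact~\ref{fact:onus} coincides with the dp-rank of $\Sa N$ and hence, by elementary extension, with that of $\Sa R$; thus if $\Sa R$ is strongly dependent so is $\Sq R$, and since $\Sq R$ is an expansion of $(\R,<,+)$ by closed sets (all primitives being closures), Theorem~\ref{thm:loc-o-min} supplies the o-minimality or $(\R,<,+,\az)$-minimality alternative. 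The main obstacle I anticipate is the direction $(3) \subseteq (2)$: pushing the density/honest-definition argument from the $\Sa R$-context into the $\Sa N$-context, where "density of $R$ in $\R$" must be replaced by the more delicate combination of saturation of $\Sa N$ with the externally definable structure of $A$ furnished by Fact~\ref{fact:cs}.
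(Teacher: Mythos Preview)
Your argument has a structural gap: showing $(1)\subseteq(2)\subseteq(3)$ together with $(3)\subseteq(2)$ gives only $(1)\subseteq(2)=(3)$, not interdefinability. Nowhere do you argue that a primitive of $(2)$ or $(3)$ is definable in $\Sq R$, so the ``triangle'' never closes. The honest-definition identity $\cl(X)=\st(Y\cap\mfin^n)$ only converts primitives of $(1)$ into primitives of $(2)$, not the other way around.

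The paper closes the cycle via $(3)\subseteq(1)$, and the mechanism is different from anything you attempt. Given a closed $\Sh N$-definable $X\subseteq\R^n$, one pulls back to $Y:=\st^{-1}(X)$, forms the $\Sh N$-definable thickening $\{(\delta,a):\exists a'\in Y,\ \|a-a'\|<\delta\}$, intersects with $R_{>0}\times R^n$ to obtain an $\Sh R$-definable set $W$ (using that externally-$\Sa N$-definable sets trace to externally-$\Sa R$-definable sets), takes the closure $Z:=\cl(W)\cap(\R_{>0}\times\R^n)$ in $\Sq R$, and recovers $X=\bigcap_{t>0}Z_t$. The point is to route through $R$ so that the closure operation lands in $\Sq R$ by definition. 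Your sketch for $(3)\subseteq(2)$ instead tries to manufacture an $\Sa N$-definable $Y$ directly from an $\Sh N$-definable $A$ via Fact~\ref{fact:cs}; the obstacle you flag is real, and even if it succeeded it would still leave $(2)\subseteq(1)$ unaddressed.

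The remaining parts of your proposal are fine and essentially match the paper: the $(1)\subseteq(2)$ step is exactly Lemma~\ref{lem:honest-apply}, the closedness of $\st(Y\cap\mfin^n)$ gives $(2)\subseteq(3)$, and the generic local o-minimality and strongly-dependent dichotomy follow from Proposition~\ref{prop:arch-quotient} (equivalently your direct appeal to Theorem~\ref{thm:main-noise}) together with Theorem~\ref{thm:loc-o-min}.
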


\noindent
We do not know if $\Sq R$ is always interdefinable with the structure induced on $\R$ by $\Sh N$.
We need the next two propositions to prove the first claim of Proposition~\ref{prop:arch-complete}.

\begin{lemma}
\label{lem:honest-apply}
Suppose $X \subseteq R^n$ is externally definable in $\Sa R$ and $Y \subseteq N^n$ is an honest definition of $X$.
Then $\cl(X) = \st(Y \cap \mfin^n)$.
\end{lemma}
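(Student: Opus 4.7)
The plan is to prove the two inclusions separately: the easier inclusion $\cl(X) \subseteq \st(Y \cap \mfin^n)$ is a straightforward saturation argument using $X \subseteq Y \cap R^n$, while the reverse inclusion $\st(Y \cap \mfin^n) \subseteq \cl(X)$ is where honesty plays an essential role. I expect the reverse inclusion to be the crux, since without honesty one only knows $Y \cap R^n = X$ and $Y$ could otherwise behave wildly inside $\mfin^n \setminus R^n$.

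For the inclusion $\cl(X) \subseteq \st(Y \cap \mfin^n)$, I would first observe that $X \subseteq Y \cap \mfin^n$ and $\st$ restricts to the identity on $R^n$, so $X \subseteq \st(Y \cap \mfin^n)$. It therefore suffices to show $\st(Y \cap \mfin^n)$ is closed in $\R^n$. Given $p \in \cl(X)$, consider the partial type
\[
\Phi(v) := \{ v \in Y \} \cup \{ v \in B' : B \text{ is a bounded open box in } R^n \text{ containing } p \},
\]
where $B'$ is the subset of $N^n$ defined by the same inequalities as $B$. Any finite collection of such boxes has an intersection which is itself a box around $p$, and by $p \in \cl(X)$ this intersection meets $X \subseteq Y$, so $\Phi$ is finitely satisfiable. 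Saturation yields a realization $q \in N^n$; since $q$ is bounded by elements of $R$ coordinatewise it lies in $\mfin^n$, and since it lies in every $R$-rational box around $p$ we get $\st(q) = p$.

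For the inclusion $\st(Y \cap \mfin^n) \subseteq \cl(X)$, suppose towards contradiction that $q \in Y \cap \mfin^n$ with $p := \st(q) \notin \cl(X)$. Using density of $R$ in $\R$, select an open box $B = (a_1,b_1) \times \cdots \times (a_n,b_n)$ with $a_i,b_i \in R$, $p \in B$, and $B \cap \cl(X) = \emptyset$ in $\R^n$. Let $Z := B \cap R^n$, which is $\Sa R$-definable, and note $Z \cap X = \emptyset$ since $X \subseteq R^n$ and $B \cap X = \emptyset$. By honesty (in its dual form noted in the paper), $Y \cap Z' = \emptyset$. On the other hand, $\st(q_i) = p_i \in (a_i,b_i)$ gives $q_i - a_i$ and $b_i - q_i$ both non-infinitesimal and positive, so $a_i < q_i < b_i$ in $N$, i.e., $q \in Z'$. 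This contradicts $q \in Y \cap Z' = \emptyset$, completing the proof.

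The only nontrivial ingredient is the application of honesty in the second paragraph: this is precisely what rules out $Y$ containing spurious points of $\mfin^n$ whose standard parts are not in $\cl(X)$. Everything else reduces to $\aleph_0$-saturation and the density of $R$ in $\R$.
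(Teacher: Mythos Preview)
Your proof is correct and follows essentially the same route as the paper's: both inclusions are handled the same way, with $X \subseteq \st(Y\cap\mfin^n)$ plus closedness (via saturation) for one direction, and honesty applied to an $R$-endpoint box disjoint from $\cl(X)$ for the other. The only cosmetic differences are that the paper phrases the reverse inclusion as ``$U$ disjoint from $\cl(X)$ implies $U$ disjoint from $\st(Y\cap\mfin^n)$'' rather than by contradiction, and that your exposition of the first inclusion says ``it suffices to show $\st(Y\cap\mfin^n)$ is closed'' but then directly verifies $p\in\cl(X)\Rightarrow p\in\st(Y\cap\mfin^n)$ instead --- which is fine, since that is the desired inclusion anyway.
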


\begin{proof}
As $X \subseteq Y \cap \mfin^n$ and $\st$ is the identity on $R^n$ we see that $X$ is contained in $\st(Y \cap \mfin^n)$.
An easy saturation argument shows that $\st(Y \cap \mfin^n)$ is closed so $\cl(X)$ is contained in $\st(Y \cap \mfin^n)$.
We prove the other inclusion.
Let $I_1,\ldots,I_n$ be nonempty open intervals in $\R$ with endpoints in $R$.
Let $U$ be $I_1 \times \ldots \times I_n$.
Note that the collection of such open boxes forms a basis for $\R^n$.
It suffices to suppose $\cl(X)$ is disjoint from $U$ and show that $\st(Y \cap \mfin^n)$ is disjoint from $U$.
It is enough to show that $Y$ is disjoint from $\st^{-1}(U)$.
Note that $V := R^n \cap U$ is definable in $\Sa R$.
Then $X$ is disjoint from $V$, so $Y$ is disjoint from $V'$ by honestness.
It is easy to see that $\st^{-1}(U)$ is a subset of $V'$.
So $Y$ is disjoint from $\st^{-1}(U)$.
\end{proof}

\begin{lemma}
\label{lem:closed-interdef}
Suppose $\Sa R$ is $\nip$.
Suppose that $X$ is a closed subset of $\R^n$ which is definable in $\Sh N$.
Then $X$ is definable in $\Sq R$.
\end{lemma}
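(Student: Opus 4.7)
The plan is to recover the closed set $X$ as the complement, in $\R^n$, of the union of all open boxes with corners in $R^{2n}$ that are disjoint from $X$, and to show that the parameter set of such boxes becomes a $\Sq R$-primitive after taking closure. For $\bar r = (a_1,b_1,\ldots,a_n,b_n) \in \R^{2n}$ let $B_{\bar r} := \{\, p \in \R^n : a_i < p_i < b_i \text{ for all } i\,\}$, and set $W := \{\bar r \in R^{2n} : B_{\bar r} \cap X = \emptyset\}$.

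First I would show that $W$ is $\Sh R$-definable. Since $\Sh N$ quantifies over the imaginary sort $\R^n$ and has $X$ as a parameter, the condition ``$B_{\bar r} \cap X = \emptyset$'' defines an $\Sh N$-definable subset $W_{\R}$ of $\R^{2n}$. Pulling back along $\st^{2n}:\mfin^{2n} \to \R^{2n}$ gives an $\Sh N$-definable $\widetilde W \subseteq N^{2n}$; since $\st$ is the identity on $R$, we have $\widetilde W \cap R^{2n} = W$. Fact~\ref{fact:shelah} makes $\widetilde W$ externally definable in $\Sa N$, and then Fact~\ref{fact:extension-external} applied to $\Sa R \prec \Sa N$ makes $W$ externally definable in $\Sa R$, i.e.\ $\Sh R$-definable. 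Consequently $\cl(W) \subseteq \R^{2n}$ is a primitive relation of $\Sq R$.

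Next I would prove the identity
\[
X \;=\; \{\, p \in \R^n : \forall \bar r \in \cl(W),\; p \notin B_{\bar r}\,\}.
\]
The inclusion $\supseteq$ uses density of $R$ in $\R$ together with closedness of $X$: if $p \notin X$, pick a Euclidean ball around $p$ disjoint from $X$ and inscribe in it an open $R$-cornered box around $p$; the corresponding tuple lies in $W \subseteq \cl(W)$ and witnesses that $p$ is excluded from the right-hand side. For $\subseteq$, note that the property ``$B_{\bar r} \cap X = \emptyset$'' is preserved under limits in $\bar r$: if $\bar r_k \in W$ and $\bar r_k \to \bar r$, any $p \in B_{\bar r}$ eventually satisfies $a_i^k < p_i < b_i^k$, so $p \in B_{\bar r_k}$ and hence $p \notin X$; thus $\cl(W) \subseteq W_{\R}$ and every point of $X$ avoids every $B_{\bar r}$ with $\bar r \in \cl(W)$. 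The right-hand side of the display is then a $\Sq R$-formula, built from the primitive $\cl(W)$, a bounded universal quantifier over $\R^{2n}$, and the order $<$ on $\R$ (which is $\Sq R$-definable since $\leq$ is the closure of $<$ on $R^2$). Hence $X$ is $\Sq R$-definable.

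The main obstacle is the descent in Step 1: converting ``$B_{\bar r} \cap X = \emptyset$'' from an $\Sh N$-condition on $\bar r \in \R^{2n}$ into a genuinely $\Sh R$-definable condition on $\bar r \in R^{2n}$. This is where Facts~\ref{fact:shelah} and~\ref{fact:extension-external} interact with the embedding of $R^{2n}$ into the imaginary sort $\R^{2n}$. Once this descent is secured, both directions of the identity in Step 2 are routine consequences of Euclidean density and the closedness of $X$.
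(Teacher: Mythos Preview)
Your proof is correct and follows the same overall strategy as the paper: descend an $\Sh N$-definable encoding of $X$ to an $\Sh R$-definable parameter set $W$ via Facts~\ref{fact:shelah} and~\ref{fact:extension-external}, then recover $X$ from the $\Sq R$-primitive $\cl(W)$ by a first-order formula over $(\R,<,+)$. The paper uses the dual encoding: instead of boxes disjoint from $X$, it takes $W=\{(\delta,a)\in R_{>0}\times R^n:\exists a'\in\st^{-1}(X),\ \|a-a'\|<\delta\}$, sets $Z=\cl(W)\cap(\R_{>0}\times\R^n)$, and recovers $X$ as $\bigcap_{t>0}Z_t$. Your complementary-box encoding makes the verification of the final identity slightly cleaner (closedness of $W_\R$ is immediate from openness of the boxes), whereas the paper's $\varepsilon/4$ argument is a bit more delicate; conversely the paper's encoding lives in $R^{n+1}$ rather than $R^{2n}$. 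Neither difference is material.
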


\noindent
Given $a = (a_1,\ldots,a_m) \in N^m$ we let $\|a\| = \max\{ |a_1|,\ldots,|a_m|\}$.
We will apply the fact that if $p,p' \in N^k$ and $q,q' \in N^l$ then $ \| (p,q) - (p',q') \| = \max \{ \| p - p' \|, \| q - q' \| \}. $

\begin{proof}
Let $Y := \st^{-1}(X)$, so $Y$ is $\Sh N$-definable.
So 
$$ \{ (\delta,a) \in N_{>0} \times N^n : \|a - a'\| < \delta \quad \text{for some   } a' \in Y \} $$
is $\Sh N$-definable, hence externally definable in $\Sa N$.
Applying Fact~\ref{fact:extension-external} we see that
$$ W := \{ (\delta,a) \in R_{>0} \times R^n : \| a - a ' \| < \delta \quad \text{for some  } a' \in Y \} $$
is $\Sh R$-definable.
Let $Z := \cl(W) \cap (\R_{>0} \times \R^n)$.
So $Z$ is $\Sq R$-definable.
We show that $X = \bigcap_{t > 0} Z_t$.
\newline

\noindent
To prove the left to right inclusion we fix $p \in X$ and $t \in \R_{>0}$ and show that $p \in Z_t$.
As $Z_t$ is closed it suffices to fix $\varepsilon \in \R_{>0}$ and produce $p' \in Z_t$ such that $\|p - p'\| < \varepsilon$.
We may suppose $\varepsilon < t$.
Fix $q \in Y$ such that $\st(q) = p$ and $p' \in R^n$ such that $\|p - p'\| < \varepsilon$.
As $\st \| p - q \| = 0$ we have $\| p' - q \| < \varepsilon < t$.
So $p' \in Z_t$.
\newline

\noindent
We prove the other inclusion.
Suppose $p \in Z_t$ for all $t \in \R_{>0}$.
We show $p \in X$.
As $X$ is closed it suffices to fix $\varepsilon \in \R_{>0}$ and produce $p' \in X$ such that $\| p - p' \| < \varepsilon$.
As $p \in Z_{\frac{1}{4}\varepsilon}$ we fix $(t,p'') \in W$ such that $\| (\frac{1}{4}\varepsilon,p) - (t,p'') \| < \frac{1}{4}\varepsilon$.
So $| t - \frac{1}{4}\varepsilon | < \frac{1}{4}\varepsilon$ and $\| p - p'' \| < \frac{1}{4}\varepsilon$.
As $(t,p'') \in W$ there is $q \in Y$ such that $\| p'' - q \| < t < \frac{1}{2}\varepsilon$.
Set $p' := \st(q)$, so $p' \in X$.
As $\st \| q - p' \| = 0$ we have $\|p'' - p'\| < \frac{1}{2}\varepsilon$, so $\|p - p'\| < \varepsilon$.
\end{proof}

\noindent
We prove Proposition~\ref{prop:arch-complete}.
As observed above it suffices to prove the first claim.

\begin{proof}
The second claim of Proposition~\ref{prop:arch-complete} follows immediately from the first claim and the third claim follows from the second claim and Proposition~\ref{prop:arch-quotient}.
So we prove the first claim.
Fact~\ref{fact:Honest} and Lemma~\ref{lem:honest-apply} together show that $(1)$ is a reduct of $(2)$.
An application of saturation shows that if $Y \subseteq N^n$ is $\Sa N$-definable then $\st(Y \cap \mfin^n)$ is closed.
So $(2)$ is a reduct of $(3)$.
Lemma~\ref{lem:closed-interdef} shows that $(3)$ is a reduct of $(1)$.
\end{proof}

\begin{cor}
\label{cor:strong-interdef}
If the structure induced on $\R$ by $\Sh N$ is d-minimal then the induced structure is interdefinable with $\Sq R$.
It follows that if $\Sa R$ is strongly dependent then $\Sq R$ is interdefinable with the structure induced on $\R$ by $\Sh N$.
\end{cor}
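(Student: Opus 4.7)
The plan is to deduce the first assertion from Proposition~\ref{prop:arch-complete} combined with Fact~\ref{fact:d-min-constructible}, and to reduce the second assertion to the first by showing that strong dependence of $\Sa R$ forces the structure induced on $\R$ by $\Sh N$ to be d-minimal.

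For the first assertion, let $\Sa Y$ denote the structure induced on $\R$ by $\Sh N$. Proposition~\ref{prop:arch-complete} already gives that $\Sq R$ is a reduct of $\Sa Y$, so only the reverse reduction is needed. Assume $\Sa Y$ is d-minimal; since $\Sa Y$ naturally expands $(\R,<,+)$ (the order and the group operation on $\mfin/\minf$ are induced from $\Sa N$ and hence $\Sh N$-definable), Fact~\ref{fact:d-min-constructible} applies and every $\Sa Y$-definable subset of $\R^n$ is constructible, i.e., a Boolean combination of closed $\Sh N$-definable subsets of $\R^n$. Lemma~\ref{lem:closed-interdef} shows that each such closed set is in fact $\Sq R$-definable, whence the entire Boolean combination is $\Sq R$-definable. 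This establishes interdefinability.

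For the second assertion, it suffices to show that if $\Sa R$ is strongly dependent then $\Sa Y$ is d-minimal. Strong dependence passes to elementary extensions and, by Fact~\ref{fact:shelah}, to Shelah expansions, so $\Sh N$ is strongly dependent. To apply Proposition~\ref{prop:arch-quotient} I identify $\mfin/\minf$ with $\mfin_a/\minf_a$ for a suitable positive $a$: pick any positive $a \in R$, and observe that archimedean density of $R$ in $\R$ forces $\mfin = \mfin_a$ and $\minf = \minf_a$, so the induced structures coincide (and the two identifications with $\R$ differ only by the linear rescaling $\st_a = a^{-1}\st$). Proposition~\ref{prop:arch-quotient}, applied to $\Sa N$, then asserts that $\Sa Y$ is either o-minimal or $(\R,<,+,\beta\Z)$-minimal for some $\beta > 0$. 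Both alternatives are d-minimal (noted in the proof of Theorem~\ref{thm:loc-o-min} in the second case), so the first assertion yields the conclusion.

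The only point requiring care is the identification $\mfin/\minf = \mfin_a/\minf_a$ needed to invoke Proposition~\ref{prop:arch-quotient}; given archimedean density of $R$ in $\R$ this is a short exercise, and the remainder of the argument is a direct assembly of results already proved.
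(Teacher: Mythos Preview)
Your proof is correct and follows essentially the same route as the paper's. The paper compresses the first assertion into the single observation that a d-minimal structure is interdefinable with its open core (via Fact~\ref{fact:d-min-constructible} and Fact~\ref{fact:MD}) and then invokes item~(3) of Proposition~\ref{prop:arch-complete} directly; you instead unpack this by appealing to Lemma~\ref{lem:closed-interdef} on each closed piece, which amounts to re-deriving that item. For the second assertion the paper simply cites Proposition~\ref{prop:arch-quotient}, leaving the identification $\mfin=\mfin_a$, $\minf=\minf_a$ implicit, whereas you spell it out --- this extra care is warranted but changes nothing substantive. One small point: when you write ``constructible, i.e., a Boolean combination of closed $\Sh N$-definable subsets'', the passage from constructible to a Boolean combination of \emph{definable} closed sets is Fact~\ref{fact:MD}, which you should cite rather than fold into the definition.
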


\begin{proof}
Fact~\ref{fact:d-min-constructible} shows that if the induced structure is d-minimal then the induced structure is interdefinable with its open core.
If $\Sa R$ is strongly dependent the by Proposition~\ref{prop:arch-quotient} the structure induced on $\R$ by $\Sh N$ is d-minimal.
\end{proof}

\noindent
We now investigate the structure induced on $R$ by $\Sq R$.
\begin{prop}
\label{prop:induced-external}
Suppose $\Sa R$ is $\nip$.
Then the structure induced on $R$ by $\Sq R$ is a reduct of $\Sh R$.
\end{prop}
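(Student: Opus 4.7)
The plan is to unpack the definitions and apply the identifications established in Proposition~\ref{prop:arch-complete}, reducing the statement to a routine combination of Facts~\ref{fact:shelah} and \ref{fact:extension-external}. Let $\Sa N$ be a highly saturated elementary extension of $\Sa R$ and identify $\R$ with $\mfin/\minf$ as an imaginary sort of $\Sh N$, with $\st:\mfin\to\R$ the standard part map, as in the setup preceding Proposition~\ref{prop:arch-complete}. Note that $\Sa N$ is $\nip$ because $\Sa R$ is, and that $R\subseteq \mfin$ with $\st$ restricting to the identity on $R$.

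Fix a subset $Y$ of $R^n$ definable in the structure induced on $R$ by $\Sq R$; the goal is to prove $Y$ is $\Sh R$-definable. By definition $Y=X\cap R^n$ for some $\Sq R$-definable $X\subseteq\R^n$. The key input is Proposition~\ref{prop:arch-complete}, which tells us that $\Sq R$ is a reduct of the structure induced on $\R$ by $\Sh N$; so $X$ is $\Sh N$-definable on the imaginary sort $\R^n$. Consequently the pullback $Z:=\st^{-1}(X)\cap\mfin^n$ is an $\Sh N$-definable subset of $N^n$ (using also that $\mfin$ itself is $\Sh N$-definable).

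The decisive point is that, because $\st$ is the identity on $R^n$, we have
\[
Z\cap R^n \;=\; \{p\in R^n : \st(p)\in X\} \;=\; X\cap R^n \;=\; Y.
\]
Now apply Fact~\ref{fact:shelah} to $\Sa N$: since $\Sa N$ is $\nip$, every $\Sh N$-definable set is externally definable in $\Sa N$, so $Z$ is externally definable in $\Sa N$. Since $\Sa R\prec\Sa N$, Fact~\ref{fact:extension-external} then yields that $Z\cap R^n=Y$ is externally definable in $\Sa R$, i.e. $\Sh R$-definable.

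There is no substantive obstacle here: once Proposition~\ref{prop:arch-complete} is in hand, the proof is a two-line pullback argument. The only point requiring a moment's care is the bookkeeping between the imaginary sort $\R=\mfin/\minf$ and subsets of $N^n$, which is handled by the observation that $\mfin$ is $\Sh N$-definable so that $\st^{-1}(X)\cap\mfin^n$ is legitimately $\Sh N$-definable as a subset of $N^n$, together with the fact that $\st$ is the identity on the subgroup $R$.
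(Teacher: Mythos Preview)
Your proof is correct and follows essentially the same route as the paper: use Proposition~\ref{prop:arch-complete} to view an $\Sq R$-definable $X\subseteq\R^n$ as $\Sh N$-definable, pull back via $\st$, note $\st$ is the identity on $R^n$, and apply Facts~\ref{fact:shelah} and \ref{fact:extension-external}. One very minor wording point: not every set definable in the induced structure is literally of the form $X\cap R^n$ for $\Sq R$-definable $X$ (that describes the primitives), but since showing the primitives are $\Sh R$-definable already gives the reduct claim, your argument goes through unchanged.
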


\noindent
Propositions~\ref{prop:induced-1} and \ref{prop:induced-external} together show that if $\Sa R$ is an $\nip$ expansion by closed sets then the structure induced on $R$ by $\Sq R$ is between $\Sa R$ and $\Sh R$.

\begin{proof}
Suppose $X$ is a subset of $\R^n$ definable in $\Sq R$.
By Proposition~\ref{prop:arch-complete} $\st^{-1}(X)$ is definable in $\Sh N$, hence externally definable in $\Sa N$.
As $\st$ is the identity on $R^n$ we have $X \cap R^n = \st^{-1}(X) \cap R^n$.
By Fact~\ref{fact:extension-external} $X \cap R^n$ is externally definable in $\Sa R$.
\end{proof}

\noindent
If $\Sa R$ is weakly o-minimal then by \ref{section:weak-o-min} $\Sh R$ is interdefinable with the structure induced on $R$ by $\Sh R$.
The examples constructed in \cite{HNW} may be adapted to show that this fails for arbitrary noisey strongly dependent expansions of archimedean ordered abelian groups.

\begin{qst}
\label{qst:induced}
Suppose that $\Sa R$ is noiseless and $\nip$.
Must the structure induced on $R$ by $\Sq R$ be interdefinable with $\Sh R$?
\end{qst}

\noindent We give a positive answer to Question~\ref{qst:induced} when $\Sq R$ is d-minimal.
We expect natural examples of $\nip$ expansions of $(\R,<,+)$ by closed sets to be d-minimal, so Proposition~\ref{prop:d-min-induced} should cover most cases of Question~\ref{qst:induced} of interest.

\begin{prop}
\label{prop:d-min-induced}
Suppose $\Sa R$ is noiseless and $\nip$ and suppose $\Sq R$ is d-minimal.
For every $\Sh R$-definable $X \subseteq R^n$ there is an $\Sq R$-definable $Y \subseteq \R^n$ such that $X = Y \cap R^n$.
It follows that the structure induced on $R$ by $\Sq R$ eliminates quantifiers and is interdefinable with $\Sh R$.
\end{prop}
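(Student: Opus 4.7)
The plan is to pass through the ambient structure $\Sa K$ induced on $\R$ by $\Sh N$, where $\Sa R \prec \Sa N$ is highly saturated. By Proposition~\ref{prop:arch-complete}(3), the open core of $\Sa K$ is exactly $\Sq R$, and I will show that $\Sa K$ is in fact itself interdefinable with $\Sq R$; the descent to $\Sh R$-definable subsets of $R^n$ is then routine. First, $\Sa K$ is strongly noiseless: the equivalence relation $E$ of equality modulo $\minf$ on $\mfin$ is $\bigwedge$-definable and admits the subdefinable basis $\{(a,b) \in \mfin^2 : |a-b| < \varepsilon\}$, $\varepsilon$ ranging over the positive elements of $R$, so Theorem~\ref{thm:main-noise}(2), applied to the $\nip$ structure $\Sa N$ with quotient $\mfin/\minf = \R$, tells us that $\Sa K$ is strongly noiseless.

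The main step is an induction on Pillay rank: every $\Sa K$-definable subset $X$ of $\R^n$ is $\Sq R$-definable, by induction on $\pr(\cl(X))$ computed in the d-minimal structure $\Sq R$ (well-defined by Theorem~\ref{thm:d-min}, with $\cl(X) \subseteq \R^n$ $\Sq R$-definable because it is a primitive relation of $\Sa K^\circ = \Sq R$). The base case $\cl(X) = \emptyset$ gives $X = \emptyset$. For the inductive step, set $X' := \cl(X) \setminus X$; this is $\Sa K$-definable, and since $X$ is dense in $\cl(X)$, strong noiselessness of $\Sa K$ forces $X'$ to be nowhere dense in $\cl(X)$: otherwise $X'$ would have interior in $\cl(X)$, producing a nonempty relatively open subset of $\cl(X)$ disjoint from the dense set $X$. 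Therefore $\cl(X')$ has empty interior in $\cl(X)$, so $\pr(\cl(X')) < \pr(\cl(X))$, and by induction $X'$ is $\Sq R$-definable; hence so is $X = \cl(X) \setminus X'$. Since $\Sq R$ was already a reduct of $\Sa K$, this yields $\Sa K = \Sq R$.

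Passing to $\Sh R$: given $\Sh R$-definable $X \subseteq R^n$, Fact~\ref{fact:shelah} furnishes an $\Sh N$-definable $\tilde Y \subseteq N^n$ with $X = \tilde Y \cap R^n$; then $Y := \tilde Y \cap \R^n$ is $\Sa K$-definable, hence $\Sq R$-definable by the previous step, and $Y \cap R^n = X$, which is the first claim of the proposition. For the second, let $\Cal I$ be the structure induced on $R$ by $\Sq R$. Proposition~\ref{prop:induced-external} gives that $\Cal I$ is a reduct of $\Sh R$, and what we just proved shows that every $\Sh R$-definable subset of $R^n$ is already a primitive relation of $\Cal I$; hence $\Sh R$ is in turn a reduct of $\Cal I$. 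This simultaneously delivers interdefinability and elimination of quantifiers for $\Cal I$ in the sense defined in the paper.

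The only genuine obstacle is the induction above; the key is to perform it on the $\R$-side rather than directly in $R^n$. Mere noiselessness of $\Sh R$ (which one obtains from Proposition~\ref{prop:shelah-noise}) does not rule out an $\Sh R$-definable set being dense and co-dense in its own nowhere-dense closure in $R^n$, so the direct analogue of the induction for $\Sh R$-definable sets in $R^n$ would fail to strictly drop in rank. Working in $\Sa K$ invokes the strong noiselessness supplied by Theorem~\ref{thm:main-noise}, which is exactly what forces $X'$ to be nowhere dense in $\cl(X)$.
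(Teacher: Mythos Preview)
Your argument that $\Sa K = \Sq R$ is correct and is a nice sharpening of Corollary~\ref{cor:strong-interdef}. The gap is in the descent step. The expression $Y := \tilde Y \cap \R^n$ is not well-defined: $\tilde Y$ lives in $N^n$, while $\R$ is the quotient $\mfin/\minf$, not a subset of $N$. There is no canonical way to intersect an $\Sa N$-definable subset of $N^n$ with $\R^n$. The natural candidates fail: for instance $\st(\tilde Y \cap \mfin^n)$ is $\Sa K$-definable, but $\st(\tilde Y \cap \mfin^n) \cap R^n$ can be strictly larger than $X$, since points of $\tilde Y$ infinitesimally close to some $r \in R^n \setminus X$ will push $r$ into the image.

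There is a structural reason this step cannot be repaired as written: your argument never uses the hypothesis that $\Sa R$ is noiseless, yet that hypothesis is essential. If $\Sa R$ defines some $X \subseteq R$ that is dense and codense in an interval, then no $\Sq R$-definable $Y \subseteq \R$ can have $Y \cap R = X$: by d-minimality $Y$ either has interior (forcing $Y \cap R$ to contain an interval of $R$) or is nowhere dense (forcing $\cl(Y)$, hence $\cl(X)$, to have empty interior in $\R$). So any correct proof must invoke noiselessness somewhere in the passage from $\R^n$ back to $R^n$.

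The paper handles this by staying on the $R$-side throughout. The point you missed is Lemma~\ref{lem:equiv-noise}: for $\nip$ $\Sa R$, noiseless already implies \emph{strongly} noiseless (this uses Fact~\ref{fact:c0-smooth} applied in $\Sq R$ to local charts of $\cl(Y)$). Then Proposition~\ref{prop:shelah-noise} gives that $\Sh R$ is strongly noiseless, not merely noiseless, and the Pillay-rank induction runs directly on $\Sh R$-definable $X \subseteq R^n$: with $U$ the interior of $X$ in $\cl_R(X)$ and $Z = \cl_R(X) \setminus U$, strong noiselessness makes $U$ dense, so $\cl(Z)$ is nowhere dense in $\cl(X)$ and the rank drops; the identity $U = (\cl(X) \setminus \cl(Z)) \cap R^n$ exhibits $X \setminus Z$ as the trace of an $\Sq R$-definable set. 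Your last paragraph's worry about the direct approach is therefore unfounded.
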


\noindent
The proof of Proposition~\ref{prop:d-min-induced} requires Lemma~\ref{lem:equiv-noise}.

\begin{lemma}
\label{lem:equiv-noise}
Suppose $\Sa R$ is $\nip$.
Then $\Sa R$ is noiseless if and only if $\Sa R$ is strongly noiseless.
\end{lemma}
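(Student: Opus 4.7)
The plan is to pass to the completion $\Sq R$ on the ambient $\R$, where Proposition~\ref{prop:arch-complete} already furnishes generic local o-minimality, and then transfer strong noiselessness back down to $\Sa R$. The reverse implication is immediate from the definitions, so I would focus on the forward direction: assume $\Sa R$ is $\nip$ and noiseless.

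The first step is to observe that $\Sq R$ is strongly noiseless by combining Proposition~\ref{prop:arch-complete} with Theorem~\ref{thm:equiv}, and in particular does not define any Cantor subset of $\R$. The central reduction I would make is the following unary claim: every $\Sa R$-definable subset of $R$ has interior or contains an isolated point. Supposing this fails, noiselessness provides a nonempty, nowhere dense, $\Sa R$-definable $X \subseteq R$ with no isolated points. Pick $p \in X$ and $a,b \in R$ with $a < p < b$, and set $I := (a,b)$. Because $I$ is $R$-open and no point of $X$ is isolated in $X$, no point of $I \cap X$ is isolated in $I \cap X$. I would then verify that $\cl_\R(I \cap X)$ is closed and bounded in $\R$, nowhere dense in $\R$ (an $\R$-open ball inside it would intersect $R$ in an $R$-open subset of $\cl_R(I \cap X)$, contradicting that $X$ is nowhere dense), and has no isolated points in $\R$ (any $\R$-isolated point that lies in $I \cap X$ would yield an isolated point of $X$, while any point of $\cl_\R(I \cap X) \setminus (I \cap X)$ is a limit point of $I \cap X$ by construction). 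Thus $\cl_\R(I \cap X)$ is a Cantor subset of $\R$, and it is $\Sq R$-definable as the $\R$-closure of an $\Sh R$-definable subset of $R$, contradicting the first sentence of this paragraph.

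With the unary claim in hand, the remainder mirrors the proof of $(2) \Rightarrow (3)$ in Theorem~\ref{thm:equiv}. Given $\Sa R$-definable $X \subseteq Y \subseteq R^n$ with $X$ somewhere dense in $Y$, I would replace $Y$ by the nonempty $\Sa R$-definable open subset on which $X$ is dense and assume $X$ is dense in $Y$. The archimedean analogue of Fact~\ref{fact:c0-smooth} (a direct adaptation of Miller's proof in \cite{Miller-tame}, driven by the unary claim as hypothesis) then yields a coordinate projection $\pi : R^n \to R^d$, an $\Sa R$-definable open $U \subseteq R^n$, and an open $V \subseteq R^d$ such that $\pi$ restricts to a homeomorphism $U \cap Y \to V$. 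Then $\pi(U \cap X)$ is dense in $V$, so noiselessness of $\Sa R$ in $R^d$ forces $\pi(U \cap X)$ to have interior, whence $X$ has interior in $Y$ via the homeomorphism.

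The main obstacle is not the Cantor-set contradiction itself, which is clean and self-contained, but rather verifying that Miller's Facts~\ref{fact:noiseless} and~\ref{fact:c0-smooth} go through in the archimedean densely ordered setting. Their proofs in \cite{Miller-tame} use only density and the ordered abelian group structure, so no substantively new ideas are needed, but some care is required in transcribing the original arguments into this broader setup.
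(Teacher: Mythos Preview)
Your approach is correct and shares the paper's key input---that $\Sq R$ is generically locally o-minimal---but deploys it differently. You use strong noiselessness of $\Sq R$ only to rule out a Cantor set arising from the unary claim, and then carry the rest of the argument entirely inside $R$ by adapting Miller's Facts~\ref{fact:noiseless} and~\ref{fact:c0-smooth} to the dense archimedean setting. The paper instead applies Fact~\ref{fact:c0-smooth} \emph{directly in $\Sq R$} to the $\R^n$-closure $\cl(Y)$: since $\Sq R$ expands $(\R,<,+)$, no transcription of Miller's results is needed. One obtains a coordinate projection $\pi$ giving a homeomorphism from a neighbourhood in $\cl(Y)$ onto an open set in $\R^d$, and then observes that the dense-and-co-dense configuration of $X$ in $Y$ pushes forward under $\pi$ to a dense-and-co-dense $\Sa R$-definable subset of an open set in $R^d$, contradicting noiselessness. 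This sidesteps precisely the obstacle you flag in your final paragraph. Your route buys a self-contained unary characterisation over $R$ (which may be of independent interest), at the cost of re-verifying Miller's machinery over an incomplete ordered group; the paper's route is shorter but leans on the completion throughout. One small efficiency: since you assume $\Sa R$ noiseless outright, you do not actually need the analogue of Fact~\ref{fact:noiseless}---only the analogue of Fact~\ref{fact:c0-smooth} is required for your step 3.
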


\begin{proof}
Suppose $\Sa R$ is not strongly noiseless.
Let $X,Y$ be $\Sa R$-definable subsets of $R^n$ such that $X$ is somewhere dense in $Y$ and has empty interior in $Y$.
Let $U$ be a definable open subset of $Y$ such that $X$ is dense in $U$.
After replacing $Y$ with $U$ if necessary we suppose $X$ is dense in $Y$.
As $\Sq R$ is generically locally o-minimal an application of Fact~\ref{fact:c0-smooth} yields $0 \leq d \leq n$, a point $p \in Y$, an open neighbourhood $V$ of $p$, and a coordinate projection $\pi : \R^n \to \R^d$ such that $\pi$ gives a homeomorphism $\cl(Y) \cap U \to \pi(\cl(Y) \cap U)$ and $\pi(\cl(Y) \cap U)$ is an open subset of $\R^d$.
Then $\pi(X \cap U)$ is dense and co-dense in an open subset of $R^d$.
So $\Sa R$ is noisey.
\end{proof}

\noindent 
We now prove Proposition~\ref{prop:d-min-induced}.
The proof makes use of the Pillay rank defined in Section~\ref{section:d-min}.
We let $\cl_R(X)$ be the closure in $R^n$ of $X \subseteq R^n$.

\begin{proof}
The second claim follows from the first claim by Proposition~\ref{prop:induced-external}, so we only prove the first claim.
Lemma~\ref{lem:equiv-noise} shows that $\Sa R$ is strongly noiseless.
Proposition~\ref{prop:shelah-noise} shows that $\Sh R$ is strongly noiseless. \newline

\noindent Let $X$ be an $\Sh R$-definable subset of $R^n$.
We apply induction on the Pillay rank of $\cl(X)$.
If $\pr(\cl(X)) = -1$ then $X$ is empty, so we take $Y = \emptyset$.
Suppose $\pr(\cl(X)) \geq 0$.
Let $U$ be the interior of $X$ in $\cl_R(X)$ and $Z := \cl_R(X) \setminus U$.
Then $U,Z$ are $\Sh R$-definable and $Z$ is closed in $R^n$.
By strong noiselessness $U$ is dense in $\cl_R(X)$ so $Z$ is nowhere dense in $\cl_R(X)$ hence $\cl(Z)$ is nowhere dense in $\cl(X)$.
So $\pr(\cl(Z)) < \pr(\cl(X))$.
Proposition~\ref{prop:induced-external} shows that $Z \cap X$ is definable in $\Sh R$ so an application of induction shows that $Z \cap X = Y' \cap R^n$ for some $\Sq R$-definable $Y' \subseteq \R^n$.
We have $X \setminus Z = U$ and $U = [\cl(X) \setminus \cl(Z)] \cap R^n$.
So 
$$ X = [ Y' \cup (\cl(X) \setminus \cl(Z) )] \cap R^n $$
and
$$Y := Y' \cup (\cl(X) \setminus \cl(Z)) $$
is $\Sq R$-definable.
\end{proof}

\noindent Proposition~\ref{prop:arch-complete} shows that if $\Sa R$ is strongly dependent then $\Sq R$ is d-minimal.
So Proposition~\ref{prop:conj-case-strong} follows from Proposition~\ref{prop:d-min-induced}.

\begin{prop}
\label{prop:constructible}
Suppose $\Sa R$ is strongly dependent and noiseless.
For every $\Sh R$-definable $X \subseteq R^n$ there is an $\Sq R$-definable $Y \subseteq \R^n$ such that $X = Y \cap R^n$.
In follows that the structure induced on $R$ by $\Sq R$ eliminates quantifiers and is interdefinable with $\Sh R$. 
\end{prop}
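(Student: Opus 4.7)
The plan is to obtain this proposition as an immediate corollary of Proposition~\ref{prop:d-min-induced}, whose hypotheses we merely need to verify. Recall that Proposition~\ref{prop:d-min-induced} concludes exactly what we want, under the assumptions that $\Sa R$ is noiseless, $\Sa R$ is $\nip$, and $\Sq R$ is d-minimal.

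First, I would note that strong dependence implies $\nip$, so $\Sa R$ is $\nip$ and the noiselessness hypothesis is given. It remains only to check that $\Sq R$ is d-minimal. For this I would invoke Proposition~\ref{prop:arch-complete}: since $\Sa R$ is strongly dependent, that proposition shows that $\Sq R$ is either o-minimal or $(\R,<,+,\alpha\Z)$-minimal for some $\alpha > 0$. In the o-minimal case, d-minimality is trivial. In the $(\R,<,+,\alpha\Z)$-minimal case, one observes that every unary $(\R,<,+,\alpha\Z)$-definable set is a finite union of singletons, intervals, and translates of $\alpha\Z$, which is in particular a finite union of an open set with a discrete set, and this description persists in every model of the theory. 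Hence $\Sq R$ is d-minimal in either case.

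With all three hypotheses of Proposition~\ref{prop:d-min-induced} verified, the first claim follows directly: every $\Sh R$-definable $X \subseteq R^n$ has the form $Y \cap R^n$ for some $\Sq R$-definable $Y \subseteq \R^n$. The second claim (that the induced structure on $R$ by $\Sq R$ eliminates quantifiers and is interdefinable with $\Sh R$) is the corresponding conclusion of Proposition~\ref{prop:d-min-induced}, combined with Proposition~\ref{prop:induced-external} which guarantees the reverse inclusion (every set induced on $R$ by $\Sq R$ is already $\Sh R$-definable). There is no real obstacle here; the work has been done in the earlier results, and the role of this proposition is simply to record that the strongly dependent case falls within the scope of Proposition~\ref{prop:d-min-induced} via the structural classification in Proposition~\ref{prop:arch-complete}.
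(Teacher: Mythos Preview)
The proposal is correct and takes essentially the same approach as the paper: verify that $\Sq R$ is d-minimal via Proposition~\ref{prop:arch-complete} (which gives o-minimality or $(\R,<,+,\alpha\Z)$-minimality, both d-minimal), then apply Proposition~\ref{prop:d-min-induced} directly. Your added detail on why $(\R,<,+,\alpha\Z)$-minimality implies d-minimality and your explicit mention of Proposition~\ref{prop:induced-external} are fine elaborations, but the paper's proof is just the two-sentence version of what you wrote.
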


\noindent Proposition~\ref{prop:constructible} shows that if $\Sa R$ is strongly dependent and noiseless then $\Sa R$ is ``locally weakly o-minimal".
One example of such a structure is the structure induced on $\Q$ by $(\R,<,+,\Z,\Q)$, which is strongly dependent by \cite[Proposition 3.1]{DG}. \newline

\noindent We show how the weakly o-minimal case may be recovered from the general case.
Suppose $\Sa R$ is weakly o-minimal.
Then $\Sa N$ is weakly o-minimal and so $\Sh N$ is weakly o-minimal by Fact~\ref{fact:shelah}.
It easily follows that the structure induced on $\R$ by $\Sh N$ is weakly o-minimal, hence o-minimal.
So $\Sq R$ is o-minimal.
The weak o-minimal cell decomposition~\cite[Theorem 4.11]{MMS-weak} shows that a weakly o-minimal structure is noiseless.
So the structure induced on $\R$ by $\Sq R$ is interdefinable with $\Sh R$. \newline

\noindent
As an application we describe the dp-ranks of noiseless expansions of archimedean ordered abelian groups.
Corollary~\ref{cor:dp-arch} generalizes Corollary~\ref{cor:dp-rank}.

\begin{cor}
\label{cor:dp-arch}
Suppose $\Sa R$ is noiseless.
Then the dp-rank of $\Sa R$ is either $\geq \aleph_0$ or at most two.
If $\Sa R$ is strongly dependent then the dp-rank of $\Sa R$ agrees with that of $\Sq R$.
\end{cor}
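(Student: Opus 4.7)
The plan is to establish the two assertions in reverse order: first prove the dp-rank equality in the strongly dependent case, and then use it together with Corollary~\ref{cor:dp-rank} to deduce the dichotomy, exploiting that $\Sq R$ is by construction an expansion of $(\R,<,+)$ by closed sets.

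For the inequality ``dp-rank of $\Sq R$ is at most dp-rank of $\Sa R$'', I would pass to a highly saturated elementary extension $\Sa R \prec \Sa N$. Proposition~\ref{prop:arch-complete} exhibits $\Sq R$ as a reduct of the structure induced on $\R \cong \mfin/\minf$ by $\Sh N$. Since $\mfin$ and $\minf$ are convex subgroups of $N$ and hence externally $\Sa N$-definable by Fact~\ref{fact:convex}, the quotient $\mfin/\minf$ is an $\Sh N$-definable imaginary, so the dp-rank of the induced structure on it is bounded by that of $\Sh N$. Combined with Fact~\ref{fact:onus} and the fact that dp-rank depends only on the theory, this gives dp-rank of $\Sq R$ at most dp-rank of $\Sh N$ equal to dp-rank of $\Sa N$ equal to dp-rank of $\Sa R$. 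This is precisely the bookkeeping behind Proposition~\ref{prop:quot-dp}, adapted from $\mfin_a/\minf_a$ to our $\mfin/\minf$.

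For the reverse inequality, the key input is Proposition~\ref{prop:constructible}: when $\Sa R$ is strongly dependent and noiseless, the structure induced on $R$ by $\Sq R$ is interdefinable with $\Sh R$. Since any ict-pattern witnessed in the induced structure on $R$ has parameters in $R \subseteq \R$ and uses formulas inherited from $\Sq R$, it is already an ict-pattern in $\Sq R$; hence the induced dp-rank is bounded by that of $\Sq R$. Fact~\ref{fact:onus} then yields dp-rank of $\Sa R$ equal to dp-rank of $\Sh R$ at most dp-rank of $\Sq R$, completing the equality.

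With the equality in hand, the first assertion is immediate. If $\Sa R$ has finite dp-rank then $\Sa R$ is strongly dependent, so by the equality $\Sq R$ has the same finite dp-rank and is in particular strongly dependent. Since $\Sq R$ expands $(\R,<,+)$ by closed sets, Corollary~\ref{cor:dp-rank} forces its dp-rank to be at most two, hence the same bound holds for $\Sa R$. There is no substantive obstacle; every nontrivial step has been packaged into an earlier result, and the main work is just to assemble them in the right order.
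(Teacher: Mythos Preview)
Your proposal is correct and follows essentially the same route as the paper: both directions of the dp-rank equality come from Proposition~\ref{prop:arch-complete} with Proposition~\ref{prop:quot-dp} (one side) and Proposition~\ref{prop:constructible} (the other), and the dichotomy then follows from Corollary~\ref{cor:dp-rank}. The only cosmetic difference is that where you argue directly that an ict-pattern in the induced structure on $R$ lifts to $\Sq R$, the paper packages this step as Proposition~\ref{prop:induced-dp} (which requires exactly the quantifier elimination supplied by Proposition~\ref{prop:constructible}).
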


\noindent We will apply Proposition~\ref{prop:induced-dp}.
Onshuus and Usvyatsov's proof of Fact~\ref{fact:onus} immediately yields Proposition~\ref{prop:induced-dp}, we leave the details to the reader.

\begin{prop}
\label{prop:induced-dp}
Let $\Sa M$ be an $\nip$ structure.
Suppose $A$ is a subset of $M$ and suppose that the structure induced on $A$ by $\Sa M$ eliminates quantifiers.
Then the dp-rank of the induced structure on $A$ is no greater than the dp-rank of $\Sa M$.
\end{prop}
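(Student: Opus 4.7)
The plan is to lift an ict-pattern witnessing dp-rank in the induced structure $\Sa A$ directly to an ict-pattern in $\Sa M$ of the same depth, using quantifier elimination to translate $\Sa A$-formulas into $\Sa M$-formulas. Since $\Sa A$ is interpretable in $\Sa M$ it inherits $\nip$, and in $\nip$ theories dp-rank is characterized by the existence of ict-patterns: the dp-rank of $\Sa A$ is at least $\kappa$ iff there exist formulas $(\psi_\alpha(x, y_\alpha))_{\alpha < \kappa}$ with $x$ a single variable, and parameters $(b_{\alpha, i})_{\alpha < \kappa, i < \omega}$ with $b_{\alpha, i} \in A^{|y_\alpha|}$, such that for every $\eta : \kappa \to \omega$ the partial type
$$ \{\psi_\alpha(x, b_{\alpha, \eta(\alpha)}) : \alpha < \kappa\} \cup \{\neg \psi_\alpha(x, b_{\alpha, i}) : \alpha < \kappa, i \neq \eta(\alpha)\} $$
is realized by some $a_\eta \in A$.

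Fixing such an ict-pattern, for each $\alpha$ the set $\psi_\alpha(A, A^{|y_\alpha|}) \subseteq A^{1 + |y_\alpha|}$ is $\Sa A$-definable, so by the assumed quantifier elimination there exist an $\Sa M$-formula $\phi_\alpha(x, y_\alpha, z_\alpha)$ and parameters $d_\alpha \in M^{|z_\alpha|}$ with
$$ \psi_\alpha(A, A^{|y_\alpha|}) \;=\; \phi_\alpha(M, M^{|y_\alpha|}, d_\alpha) \cap A^{1 + |y_\alpha|}. $$
Set $\phi'_\alpha(x, y_\alpha) := \phi_\alpha(x, y_\alpha, d_\alpha)$, viewed as an $\Sa M$-formula with parameters $d_\alpha$. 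For any $\eta$, the witness $a_\eta \in A \subseteq M$ then satisfies $\Sa M \models \phi'_\alpha(a_\eta, b_{\alpha, \eta(\alpha)})$ and $\Sa M \models \neg \phi'_\alpha(a_\eta, b_{\alpha, i})$ for $i \neq \eta(\alpha)$, since by the displayed equality these $\Sa M$-statements about tuples from $A$ reduce to the corresponding $\Sa A$-statements. Hence $(\phi'_\alpha, b_{\alpha, i})_{\alpha < \kappa,\, i < \omega}$ is an ict-pattern of depth $\kappa$ in $\Sa M$, so the dp-rank of $\Sa M$ is at least $\kappa$; taking the supremum over $\kappa$ gives the claim.

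The argument is essentially bookkeeping and poses no serious obstacle, but the critical hypothesis is that $A$ is a subset of $M$ (a single sort of $\Sa M$) rather than of some cartesian power $M^m$ with $m > 1$: this ensures the single $\Sa A$-variable $x$ ranging over $A$ remains a single $\Sa M$-variable of the appropriate sort, so the lifted ict-pattern uses a single $x$-variable in $\Sa M$. Were $A$ instead a subset of an $m$-th cartesian power, the lift would produce an ict-pattern in an $m$-tuple variable, yielding only the weaker subadditive bound from Fact~\ref{fact:subadditive} in place of the sharp inequality.
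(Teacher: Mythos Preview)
Your argument is essentially the same as the paper's: the paper simply says that Onshuus and Usvyatsov's proof of Fact~\ref{fact:onus} (preservation of dp-rank under Shelah expansion) immediately yields the result and leaves the details to the reader, and your ict-pattern lift is precisely that argument specialized to the present setting.

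Two small corrections. First, $\Sa A$ need not be interpretable in $\Sa M$, since $A$ is not assumed to be $\Sa M$-definable; this is harmless, because the ict-pattern characterization of dp-rank does not require $\nip$ (and in any case $\nip$ for $\Sa A$ follows from the same lifting argument applied to a putative IP formula). Second, the parameters $b_{\alpha,i}$ and witnesses $a_\eta$ of an ict-pattern witnessing $\operatorname{dp-rk}(\Sa A)\geq\kappa$ a priori live in a saturated elementary extension of $\Sa A$, not in $A$ itself, so one cannot directly place them inside $M$. The standard fix is to work with finite fragments: for each $m$ the existence of an $n\times m$ partial ict-pattern (with realized row-types) is a first-order sentence, hence holds in $\Sa A$, giving parameters and witnesses in $A\subseteq M$; lifting these via your $\phi'_\alpha$ produces arbitrarily large finite ict-patterns in $\Sa M$, and compactness in the monster of $\Sa M$ then yields a full ict-pattern of depth $\kappa$ there.
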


\noindent We now prove Corollary~\ref{cor:dp-arch}.

\begin{proof}
Suppose $\Sa R$ has finite dp-rank.
Then $\Sa R$ is strongly dependent.
Propositions~\ref{prop:constructible} and \ref{prop:induced-dp} together show that the dp-rank of $\Sa R$ does not exceed that of $\Sq R$.
Combining Proposition~\ref{prop:arch-complete} with Proposition~\ref{prop:quot-dp} we see that the dp-rank of $\Sq R$ does not exceed that of $\Sa R$.
Proposition~\ref{prop:arch-complete} and Corollary~\ref{cor:dp-rank} together show that $\Sq R$ has dp-rank at most two.
\end{proof}

\noindent Proposition~\ref{prop:arch-complete}, Proposition~\ref{prop:constructible}, and Corollary~\ref{cor:dp-arch} together show that $\Sa R$ is noiseless and dp-minimal if and only if $\Sa R$ is weakly o-minimal.
This also follows from \cite{SW-dp}. \newline

\noindent 
It now seems natural to study the ``dense-pair" $(\Sq R,\Sa R)$.
Hieronymi and G{\"u}naydin~\cite{GH-Dependent} show that $(\Sq R,\Sa R)$ is $\nip$ when $\Sa R$ is o-minimal.
Bar-Yehuda, Hasson, and Peterzil~\cite{BHP-pairs} (combined with \cite[Corollary 2.5]{CS-I}) shows that $(\overline{\Sa R},\Sa R)$ is $\nip$ when $\Sa R$ is weakly o-minimal, it follows that $(\Sq R, \Sa R)$ is $\nip$ when $\Sa R$ is weakly o-minimal.
Conjecture~\ref{conj:complete} will probably require more sophisticated tools then are currently available.

\begin{conj}
\label{conj:complete}
If $\Sa R$ is $\nip$ then $(\Sq R, \Sa R)$ is $\nip$.
\end{conj}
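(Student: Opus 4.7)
The plan is to realise $(\Sq R,\Sa R)$ as the quotient-and-reduct of a suitable $\nip$ expansion of $\Sa N$, where $\Sa N$ is a highly saturated elementary extension of $\Sa R$. By Proposition~\ref{prop:arch-complete}, $\Sq R$ is already the open core of the structure induced on $\R = \mfin/\minf$ by $\Sh N$, so passing from $\Sq R$ to the pair $(\Sq R,\Sa R)$ amounts to adjoining a predicate for the image of $R \subseteq \mfin$ in the quotient. Since $\st$ restricts to the identity on $R$, the preimage of this predicate is $R + \minf \subseteq \mfin$. The first reduction is therefore: it suffices to show that $(\Sa N, R+\minf)$ is $\nip$ and to push this down to the $\mfin/\minf$ quotient.

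I would then proceed by cases matching the structure-theoretic dichotomies already exposed in the paper. In the weakly o-minimal case this is the non-valuational dense-pair theorem of Bar-Yehuda, Hasson, and Peterzil. In the strongly dependent case, Theorem D reduces us either to the o-minimal dense-pair result of G\"unaydin-Hieronymi or, for $(\R,<,+,\az)$-minimal $\Sq R$, to the disjoint union of a pair over an o-minimal $[0,\alpha)$-sort and the trivial pair over $(\Z,<,+)$, both of which are tractable with existing tools. The genuinely new input needed for the general $\nip$ case is a quantifier-reduction statement of the now-classical dense-pair flavour: every $(\Sa N, R+\minf)$-formula $\phi(x)$ should be equivalent, in saturated models, to a Boolean combination of formulas of the form $\exists y_1,\ldots,y_n \in R+\minf\, \psi(x,y_1,\ldots,y_n)$ with $\psi$ an $\Sa N$-formula, together with a ``small closure'' operation on $R$ inside $\Sa N$ that controls the possible types of $R$-tuples lying in an $\Sa N$-definable set.

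Once such a quantifier reduction is in hand, $\nip$ of $(\Sa N, R+\minf)$ follows in the standard way by counting traces of the base formula $\psi$ on an indiscernible sequence and using $\nip$ of $\Sa N$, since an existential quantifier restricted to the predicate contributes only a bounded amount to VC complexity. To transfer $\nip$ from $(\Sa N, R+\minf)$ to $(\Sq R,\Sa R)$, observe that $\minf$ is $\bigwedge$-definable and, being convex, is externally definable in $\Sa N$ by Fact~\ref{fact:convex}; the structure induced on $\mfin/\minf$ by the Shelah expansion of $(\Sa N, R+\minf)$ then contains a copy of $(\Sq R,\Sa R)$ (by the same identifications used in Section~\ref{section:arch-quot}), and $\nip$ of that induced structure follows from Fact~\ref{fact:shelah} applied to the pair, together with the reduct step.

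The main obstacle is clearly the quantifier-reduction step and the accompanying control of a $\dcl$-like closure on $R$ inside $\Sa N$. In the o-minimal and weakly o-minimal cases the relevant pregeometries (field-theoretic algebraic closure in the former, convex-hull closure in the latter) supply precisely this control; for an arbitrary $\nip$ archimedean $\Sa R$ no analogous closure operator is known. I would therefore expect substantive progress on Conjecture~\ref{conj:complete} to parallel, and perhaps require, the development of a modularity/one-basedness notion for $\nip$ structures of the kind speculated upon in Section~\ref{section:modular}; short of such a framework I do not see a route to the general case beyond the known (weakly) o-minimal and strongly dependent ones.
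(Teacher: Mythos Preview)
This statement is a \emph{conjecture} in the paper, not a theorem: the paper offers no proof and in fact says explicitly that Conjecture~\ref{conj:complete} ``will probably require more sophisticated tools than are currently available.'' So there is no proof in the paper to compare your proposal against.

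Your write-up is accordingly not a proof but a research outline, and you are candid about this: you identify the quantifier-reduction step for $(\Sa N, R+\minf)$ as the missing ingredient and note that you do not see how to carry it out beyond the weakly o-minimal and strongly dependent cases. That assessment is consistent with the paper's own view. The paper records the two known cases you mention (G\"unaydin--Hieronymi for the o-minimal case, Bar-Yehuda--Hasson--Peterzil for the weakly o-minimal case) just before stating the conjecture, and leaves the general $\nip$ case open. Your reduction to studying $(\Sa N, R+\minf)$ and then passing to the quotient is a reasonable framing, but the heart of the matter---controlling types of $R$-tuples inside $\Sa N$ without an o-minimal-style pregeometry---remains exactly the gap the paper flags as requiring new ideas.
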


\noindent Conjecture~\ref{conj:complete} would reduce the study of $\nip$ expansions of dense archimedean ordered groups to the study of $\nip$ expansions of $(\R,<,+)$.

\subsection{Basic examples}
Finally, we discuss a few basic examples.
It will be shown in \cite{dp-embedd} that if $R$ is a dense subgroup of $(\R,<,+)$ then $(R,<,+)^\square$ is interdefinable with $(\R,<,+)$ and $(R,<,+)^{\mathrm{Sh}}$ is interdefinable with the structure induced on $R$ by $(\R,<,+)$.
So the completion of a dense archimedean ordered abelian group in our sense agrees with the usual notion of completion.
\newline

\noindent
Suppose that $R$ is an $\nip$ subfield of $(\R,<,+,\times)$.
One hopes that $(R,<,+,\times)^\square$ agrees with the usual completion, i.e. is interdefinable with $(\R,<,+,\times)$.
We show that this follows from the main conjecture on $\nip$ fields.
\newline

\noindent
It is a well-known conjecture that an infinite $\nip$ field is either separably closed, real closed, or admits a non-trivial Henselian valuation (see for example \cite{HHJ-conj}).
An ordered field cannot be separably closed, a Henselian valuation on an ordered field has a convex valuation ring~\cite[Lemma 4.3.6]{EP-value}, and an archimedean ordered field does not admit a non-trivial convex subring.
So the conjecture implies that an $\nip$ archimedean ordered field is real closed.
Suppose $R$ is real closed.
Then $(R,<,+,\times)$ is o-minimal and $(\R,<,+,\times)$ is an elementary extension of $(R,<,+,\times)$.
So it follows from Section~\ref{section:o-minimal-case} that $(R,<,+,\times)^\square$ is interdefinable with $(\R,<,+,\times)$ and $(R,<,+,\times)^{\mathrm{Sh}}$ is interdefinable with the structure induced on $R$ by $(\R,<,+,\times)$.

\section{Definable groups}
\label{section:pillay}
\noindent In this section we presume the reader is familiar with the theory of definable groups in $\nip$ structures and basic facts about definable groups in o-minimal structures.
Our reference for definable groups in $\nip$ structures is \cite{Simon-Book}.
\textbf{Throughout this section $\Sa N$ is a highly saturated $\nip$ structure, $G$ is an $\Sa N$-definable group, and $\pi$ is the quotient map $G \to G/G^{00}$.} \newline

\noindent Fact~\ref{fact:pillays} follows from the proof of \cite[Theorem 10.1]{HP-central-extensions}.
Fact~\ref{fact:pillays} depends on a long line of work on definable groups in o-minimal structures due to may authors.

\begin{fact}
\label{fact:pillays}
Suppose $\Sa N$ is an o-minimal expansion of an ordered field and $G$ is definably compact.
Then
\begin{enumerate}
    \item $G^{00}$ is externally definable in $\Sa N$,
    \item $G$ is compactly dominated by $\pi$, and
    \item $G/G^{00}$ is ``semi-o-minimal", i.e., $G/G^{00}$ is definable in a disjoint union of finitely many o-minimal expansions of $(\R,<)$, each of which is interpretable in $\Sh N$.
\end{enumerate}
\end{fact}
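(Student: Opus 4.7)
The plan is to establish the three clauses by assembling the standard machinery around Pillay's conjecture, compact domination, and finitely satisfiable generics (fsg), specialized to the o-minimal/field setting. First I would show that $G$ is an fsg group in the sense of Hrushovski--Peterzil--Pillay: in an o-minimal expansion of an ordered field, a definably compact definable group admits a generic type which is finitely satisfiable in a small model, this being established by an $n$-dimensional cell decomposition combined with the definable Vitali-type covering arguments for definably compact sets over a field. Once fsg is in hand, a standard NIP argument shows that $G$ carries a unique left-invariant (equivalently, $G$-invariant) global Keisler measure $\mu_G$, and $G^{00} = G^0$ is the stabilizer of $\mu_G$ under translation. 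Because $\mu_G$ is definable-and-finitely-satisfiable (``generically stable''), its stabilizer is a $\bigwedge$-definable subgroup with a subdefinable basis of definable approximations (in the sense of Section~\ref{section:uniform-structures}); Lemma~\ref{lem:basis-ext-def} then gives clause $(1)$, that $G^{00}$ is externally definable in $\Sa N$.

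Next I would tackle compact domination. The key input is Pillay's conjecture: for $G$ definably compact of o-minimal dimension $n$, the quotient $G/G^{00}$ with the logic topology is a compact connected real Lie group of dimension exactly $n$. This dimension identity, together with the fsg measure $\mu_G$, lets one prove that $\pi_*\mu_G$ is (a normalization of) Haar measure on $G/G^{00}$; and then compact domination, the statement that for every $\Sa N$-definable $X \subseteq G$ the set
\[
\{y \in G/G^{00} : \pi^{-1}(y) \cap X \neq \emptyset \text{ and } \pi^{-1}(y) \cap (G\setminus X) \neq \emptyset\}
\]
has Haar measure zero, would follow from the ``dimension drops on the boundary'' phenomenon for definable sets in o-minimal fields, combined with the $\mu_G$-genericity of $G^{00}$-translates. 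This is exactly the route followed in the proof of \cite[Theorem 10.1]{HP-central-extensions}, and it yields clause~$(2)$.

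For clause $(3)$, I would apply Pillay's conjecture to decompose the compact Lie group $G/G^{00}$ up to finite cover as a product of simple centerless Lie factors and a torus. On each simple factor, the Lie-group structure itself is (bi)interpretable with an o-minimal expansion of the real field by Peterzil--Pillay--Starchenko, and the torus factor can be analyzed coordinate-wise; assembling these across finitely many factors presents $G/G^{00}$ as definable in a finite disjoint union of o-minimal expansions of $(\R,<)$. The content that these o-minimal structures are themselves interpretable in $\Sh N$ uses external definability of $G^{00}$ (clause~$(1)$) together with Theorem~\ref{thm:main-noise}: the structure induced on $G/G^{00}$ by $\Sh N$ is strongly noiseless, and the primitive relations pulling back the Lie atlas are externally definable in $\Sa N$ via the generically stable measure, so their traces on $G/G^{00}$ live in $\Sh N$.

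The main obstacle is clause $(2)$; clauses $(1)$ and $(3)$ reduce to $\bigwedge$-definability/external-definability bookkeeping and the Lie-theoretic classification once one has Pillay's conjecture and the fsg measure. Compact domination, by contrast, is not formal: it requires the sharp dimension inequality for boundaries of definable sets in o-minimal expansions of ordered fields, and in particular exploits definable triangulation and the behavior of $o$-minimal dimension under projections in a way that has no known analogue outside the field setting. This is why the statement restricts to o-minimal expansions of ordered fields rather than arbitrary o-minimal structures, and it is where any generalization beyond the o-minimal setting would have to invest the most new work.
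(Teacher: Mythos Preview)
The paper does not give its own proof of this statement: it is recorded as a \emph{Fact} and attributed wholesale to the proof of \cite[Theorem 10.1]{HP-central-extensions}, together with the long line of earlier work on definable groups in o-minimal structures. So there is no in-paper argument to compare against; your sketch is effectively a reconstruction of the literature proof.

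Your outline is broadly the standard one and matches what Hrushovski--Pillay do: fsg for definably compact groups in o-minimal fields, the unique invariant Keisler measure, Pillay's conjecture giving $\dim(G/G^{00}) = \dim G$, and then the dimension-drop/boundary arguments yielding compact domination. Your identification of clause~(2) as the substantive part is exactly right, and your reasons for the ordered-field hypothesis are the correct ones.

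Two small points on clause~(3). First, invoking Theorem~\ref{thm:main-noise} here is anachronistic in the context of this paper: Fact~\ref{fact:pillays} is presented as prior motivation for Theorem~B, not as a consequence of it, and in any case strong noiselessness is not what is being asserted---semi-o-minimality is a stronger, structural conclusion that comes from the explicit Lie decomposition of $G/G^{00}$ (torus times semisimple, then Nesin--Pillay or Peterzil--Pillay--Starchenko on the simple factors), not from noiselessness. Second, the ``interpretable in $\Sh N$'' part of clause~(3) needs only that $G^{00}$ is externally definable (so the quotient and the standard-part data live in $\Sh N^{\mathrm{eq}}$); the detour through generically stable measures and Theorem~\ref{thm:main-noise} is unnecessary for this and slightly obscures the actual mechanism. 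Otherwise your reconstruction is accurate.
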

\noindent It is natural to ask if there are more general assumptions under which the conclusions of Fact~\ref{fact:pillays} hold.
There are satisfactory generalizations of $(1)$ and $(2)$.
Fact~\ref{fact:def-amenable}, proven by Hrushovski and Pillay~\cite[Lemma 8.2, Remark 8.3]{HP-invariant}, generalizes $(1)$.
Recall that a definably compact group in an o-minimal structure is definably amenable.

\begin{fact}
\label{fact:def-amenable}
Suppose that $G$ is definably amenable and $G/G^{00}$ is a Lie group.
Then $G^{00}$ is definable in $\Sh N$.
\end{fact}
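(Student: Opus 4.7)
The plan is to exploit the Lie group structure of $G/G^{00}$ to reduce the a priori continuum-sized intersection defining $G^{00}$ to a tractable countable one, and then use an invariant Keisler measure produced by definable amenability to convert that countable intersection into an externally definable set via the Chernikov--Simon criterion (Fact~\ref{fact:cs}). Concretely, first I would use definable amenability to fix a left $G$-invariant Keisler measure $\mu$ on definable subsets of $G$. Its pushforward $\nu := \pi_*\mu$ is a left-invariant Borel probability measure on the compact Hausdorff topological group $G/G^{00}$ (equipped with the logic topology). Because $G/G^{00}$ is assumed to be a Lie group, the uniqueness of Haar measure forces $\nu$ to be normalized Haar measure; in particular $\nu$ has full support and assigns positive measure to every nonempty open set.

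Next I would exploit second countability: pick a decreasing sequence $(K_n)_{n \in \mathbb{N}}$ of symmetric closed neighbourhoods of the identity in $G/G^{00}$ forming a neighbourhood basis, so $\bigcap_n K_n = \{e\}$. Each $T_n := \pi^{-1}(K_n)$ is a symmetric type-definable subset of $G$ containing $G^{00}$, with $\mu(T_n) = \nu(K_n) > 0$, and $G^{00} = \bigcap_n T_n$. The measure-theoretic consequence is that $g \in G^{00}$ if and only if $\mu(gT_n \triangle T_n) = 0$ for every $n$, since left translation by $g \in G^{00}$ preserves each $T_n$, while for $g \notin G^{00}$ one can choose $n$ large enough that $\pi(g)K_n$ is disjoint from $K_n$, forcing $gT_n$ to be disjoint from $T_n$.

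To produce an actual externally definable approximation I would, for each $n$, use compactness/saturation to approximate the type-definable $T_n$ from above by a definable $Y_n$ with $\mu(Y_n) < \nu(K_n) + 2^{-n}$, and then define
\[
Z_n := \bigl\{ g \in G : \mu(gY_n \triangle Y_n) < 2^{-n+1} \bigr\}.
\]
Each $Z_n$ contains $G^{00}$, and the sequence $(Z_n)$ shrinks down to $G^{00}$ because outside any fixed type-definable neighbourhood of $G^{00}$ the measure $\mu(gT_n \triangle T_n)$ stays bounded below for large $n$. A direct check then shows $(Z_n)$ meets the hypothesis of Fact~\ref{fact:cs}: every finite $F \subseteq G^{00}$ lies in each $Z_n$, and $Z_n \subseteq G^{00}$ fails only by a set that can be controlled by shrinking $n$ against any prescribed definable $X \supseteq G^{00}$. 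Applying Fact~\ref{fact:cs} (using $\nip$) then yields that $G^{00}$ is externally definable, equivalently $\Sh N$-definable by Fact~\ref{fact:shelah}.

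The main obstacle is ensuring that the sets $Z_n$ are genuinely definable rather than merely type-definable, which requires that the Keisler measure $\mu$ itself be definable (so that $g \mapsto \mu(gY \triangle Y)$ defines a formula-measurable function). In the $\nip$ setting this is the standard payoff of the theory of invariant measures: under definable amenability one can always choose $\mu$ with a definable strong type (equivalently, $\mu$ arises from a generically stable type after a suitable averaging), and the $\nip$ hypothesis is exactly what turns countable measure-approximations into a single externally definable object through honest definitions (Fact~\ref{fact:Honest}). This is the step where both hypotheses---$\nip$ of $\Sa N$ and Lie-ness of $G/G^{00}$---are essential and interact most delicately.
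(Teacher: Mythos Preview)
The paper does not prove this statement; it is recorded as a fact due to Hrushovski and Pillay \cite[Lemma~8.2, Remark~8.3]{HP-invariant} and cited without argument. So there is no in-paper proof to compare against, and I can only assess your sketch on its own terms.

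Your overall shape---use the Lie hypothesis to reduce $G^{00}$ to a countable intersection, push an invariant Keisler measure forward to Haar measure on $G/G^{00}$, and manufacture definable approximants via measure thresholds---is in the right spirit and close to how the Hrushovski--Pillay argument proceeds. But two points are not settled.

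First, the application of Fact~\ref{fact:cs} is garbled. Your $Z_n$ contain $G^{00}$, whereas Fact~\ref{fact:cs} asks for definable sets \emph{contained in} the target. This is repairable by working with the complement: one needs, for each finite $A\subseteq G\setminus G^{00}$, a definable $Z$ with $G^{00}\subseteq Z$ and $Z\cap A=\emptyset$, with all such $Z$ drawn from a single definable family. Your $Z_n$ can play this role provided $\bigcap_n Z_n=G^{00}$, but your threshold $2^{-n+1}$ is not calibrated against $\nu(K_n)$: since $\nu(K_n)\to 0$, for large $n$ the inequality $\mu(gY_n\triangle Y_n)<2^{-n+1}$ may hold even when $gT_n\cap T_n=\emptyset$. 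You need to choose $Y_n$ with $\mu(Y_n\setminus T_n)$ small relative to $\nu(K_n)$ and set the threshold accordingly.

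Second, and more seriously, the definability of $g\mapsto\mu(gY\triangle Y)$ is the entire crux, and you have not supplied it. Definable amenability produces a $G$-invariant Keisler measure, but in $\nip$ such a measure is a priori only Borel-definable, not definable; so $\{g:\mu(gY\triangle Y)<\varepsilon\}$ need not be a definable set. Your final paragraph gestures at upgrading $\mu$ (``definable strong type'', ``generically stable''), but this upgrade is exactly the substantive content of the Hrushovski--Pillay result, obtained there through the machinery of $f$-generic types and the identification of $G^{00}$ with a stabilizer. That machinery is doing the real work, and your sketch does not reproduce it.
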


\noindent Recall that $G/G^{00}$ is a Lie group if and only if there is a subgroup $H$ of  $G$ definable in $\Sa N$ such that $H$ has finite index in $G$ and $H$ has no $\Sa N$-definable subgroups of finite index.
If such $H$ exists then we view $H$ as a ``definably connected component of the identity".
Note also that $G/G^{00}$ is a connected Lie group if and only if $G$ has no $\Sa N$-definable subgroups of finite index, a condition we view as a form of ``definable connectedness". \newline

\noindent Fact~\ref{fact:distal-domination} generalizes $(2)$.
Fact~\ref{fact:distal-domination} follows from the measure-theoretic definition of distality~\cite[Proposition 9.26]{Simon-Book} and \cite[Proposition 8.32, Theorem 8.37, Lemma 8.36]{Simon-Book}.
Recall that an o-minimal structure is distal and a definable group in an o-minimal structure is fsg if and only if it is definably compact.

\begin{fact}
\label{fact:distal-domination}
If $\Sa N$ is distal and $G$ is fsg then $G$ is compactly dominated by $\pi$.
\end{fact}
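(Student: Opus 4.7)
The plan is to combine the structure of Keisler measures on fsg groups with the measure-theoretic reformulation of distality.

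First I would use the fsg assumption to produce the main tool: a unique left-$G$-invariant Keisler measure $\mu$ on $G$, together with the fact that $\mu$ is also right-invariant, definable, and finitely satisfiable in some small model (this is standard fsg theory, essentially Simon \cite[Thm.~8.37]{Simon-Book} and \cite[Prop.~8.32]{Simon-Book}). I would then verify that $\pi_\ast \mu$ coincides with the normalized Haar measure $h$ on the compact Hausdorff group $G/G^{00}$: the pushforward is left-$G/G^{00}$-invariant by invariance of $\mu$ and regularity of the logic topology, and by uniqueness of Haar measure on a compact group it must be $h$.

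Second I would invoke distality. By the measure-theoretic characterization (Simon \cite[Prop.~9.26]{Simon-Book}), if $\Sa N$ is distal then every $\nip$ finitely satisfiable and definable Keisler measure is generically stable, and moreover smooth measures are ``boundary-free'' in the sense that for any $\emptyset$-definable $X \subseteq G$ and any small parameter set, the $\mu$-measure of the boundary of $X$ relative to a smooth extension of $\mu$ vanishes. Applied to our $\mu$, this gives: for every $\Sa N$-definable $X \subseteq G$, one has $\mu(\partial_\mu X) = 0$ where $\partial_\mu X$ denotes the set of $g \in G$ whose type over the base does not decide membership in $X$ up to measure.

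Third, I would translate this boundary vanishing into compact domination. Let $X \subseteq G$ be definable and set
\[ E_X := \{\, a \in G/G^{00} : \pi^{-1}(a) \cap X \neq \emptyset \text{ and } \pi^{-1}(a) \cap (G \setminus X) \neq \emptyset \,\}. \]
The key point is that because $G^{00}$ is the intersection of definable sets and $\mu$ is $G^{00}$-invariant (by uniqueness and the fact that $G^{00}$ acts by left translation preserving $\mu$), any two points in the same $\pi$-fibre are realizations of types that agree on the $\mu$-measure of translates of $X$. So $\pi^{-1}(E_X)$ is contained in the $\mu$-boundary of $X$, giving $\mu(\pi^{-1}(E_X)) = 0$, and hence $h(E_X) = \pi_\ast\mu(E_X) = 0$ — which is precisely compact domination of $G$ by $\pi$.

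The main obstacle is the second step: extracting from distality the statement that the invariant measure $\mu$ does not give positive mass to the ``ambiguous'' points, i.e.\ that generic stability plus invariance forces the $\pi$-boundary of every definable set to be null. Without distality this can fail (one can have positive-measure ``grey zones''); distality rules this out via the uniqueness of smooth extensions, so the technical content is encoded in Simon \cite[Lem.~8.36, Thm.~8.37]{Simon-Book} which together furnish the needed passage from generic stability to fibrewise measure concentration.
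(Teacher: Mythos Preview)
Your approach matches the paper's: the fact is stated there without a proof, citing exactly the results you invoke, namely Simon \cite[Proposition 9.26]{Simon-Book} together with \cite[Proposition 8.32, Lemma 8.36, Theorem 8.37]{Simon-Book}. One small correction to your step~2: generic stability of $\mu$ (definable plus finitely satisfiable in a small model) already holds in any $\nip$ theory; what distality contributes via \cite[Proposition 9.26]{Simon-Book} is that every generically stable measure is \emph{smooth}, and it is smoothness of the invariant fsg measure that \cite[Theorem 8.37]{Simon-Book} converts into compact domination---your direct boundary argument in step~3 is a heuristic for this, but the actual passage goes through smoothness.
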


\noindent The question remains if we can generalize $(3)$.
At a minimum one must assume $G/G^{00}$ is a Lie group.
It is also natural to assume that $G$ is definably amenable as to ensure $G^{00}$ is externally definable.

\begin{qst}
\label{qst:semi-o-minimal}
If $\Sa N$ is strongly dependent and $G$ is definably amenable then must $G/G^{00}$ be definable in a disjoint union of finitely many o-minimal expansions of $(\R,<)$, each of which is interpretable in $\Sh N$?
\end{qst}

\noindent Suppose Question~\ref{qst:semi-o-minimal} admits an affirmative answer.
It follows that if $\Sa N$ is distal and strongly dependant, $G$ is fsg, and $G$ has no $\Sa N$-definable subgroups of finite index, then $G^{00}$ is definable in $\Sh N$, $G$ is compactly dominated by $\pi$, and $G/G^{00}$ is semi-o-minimal.
Informally: if $\Sa N$ satisfies two key abstract properties of o-minimal structures, $G$ is ``definably compact" and ``definably connected", then $\Sh N$ defines an ``intrinsic standard part map" from $G$ to a compact connected Lie group $G/G^{00}$, the structure on $G$ is closely related to the induced structure on $G/G^{00},$ and the induced structure on $G/G^{00}$ is ``semi-o-minimal". \newline

\noindent We describe an example which should witness the necessity of strong dependence.
Let $\Sa S$ be an o-minimal expansion of $(\R,<,+,\times)$ with rational exponents.
Suppose $\Sa N$ is a highly saturated elementary expansion of $(\Sa S,\lambda^{\Z})$ for some $\lambda > 1$.
Let $G$ be the unit circle in $\Sa N$ equipped with complex multiplication.
We expect that $G$ is fsg hence definably amenable.
We further expect that $G/G^{00}$ can be identified with the unit circle in $\C$ and that the structure induced on $G/G^{00}$ by $\Sh N$ is interdefinable with the structure induced on the unit circle by $(\Sa S,\lambda^{\Z})$.
So in this case $G/G^{00}$ should not be semi-o-minimal. \newline

\noindent Theorem~\ref{thm:main-lie} handles the case when $G/G^{00}$ is a simple centerless Lie group.  
Suppose $H$ is a Lie group.
Then $H$ is said to be simple if every closed normal subgroup of $H$ is discrete.
Recall that any discrete subgroup of $H$ is contained in the center of $H$ and the center of $H$ is a normal subgroup.
So $H$ is simple and centerless if and only if $H$ has no non-trivial closed normal subgroups.

\begin{theorem}
\label{thm:main-lie}
Suppose $G$ is definably amenable and $G/G^{00}$ is a simple centerless Lie group.
Then the structure induced on $G/G^{00}$ by $\Sh N$ is bi-interpretable with a generically locally o-minimal expansion of $(\R,<,+,\times)$.
If $\Sa N$ is in addition strongly dependent then the induced structure on $G/G^{00}$ is bi-interpretable with an o-minimal expansion of $(\R,<,+,\times)$.
\end{theorem}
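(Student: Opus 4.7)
The plan is to first apply Theorem~\ref{thm:main-noise} (Theorem B) to obtain strong noiselessness of the induced structure on $H := G/G^{00}$, then use the rigid Lie-algebraic structure of a compact simple centerless Lie group to interpret the real field inside it, and finally invoke Theorem~\ref{thm:loc-o-min-field} to pass to o-minimality in the strongly dependent case.

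First, I would verify the hypotheses of Theorem B. Let $E$ be the equivalence relation on $G$ of lying in a common $G^{00}$-coset. Since $G^{00}$ is $\bigwedge$-definable, choose a small subdefinable collection $\Cal U$ of symmetric $\Sa N$-definable neighborhoods of the identity with $\bigcap \Cal U = G^{00}$, closed under intersection. Then $\Cal B := \{\{(g,h) \in G^2 : g^{-1}h \in U\} : U \in \Cal U\}$ satisfies the hypotheses of Lemma~\ref{lem:basis} and so is a subdefinable basis in $\Sa N$ for $E$. Lemma~\ref{lem:basis-ext-def} gives $E$ externally definable, and Proposition~\ref{prop:subdef-to-def-cor} gives an $\Sh N$-definable basis for the logic uniformity on $H$, which by construction matches the Lie group topology. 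Theorem B then shows the induced structure $\Sa H$ on $H$ is strongly noiseless, and it is $\nip$ by Fact~\ref{fact:shelah}. In $\Sa H$ the group operations and the closed-set structure of $H$ as a Lie group are definable.

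Second, I would exploit that $H$ is compact, simple, and centerless: the adjoint representation $\mathrm{Ad} : H \to \Gl(\mathfrak h)$ is faithful, identifying $H$ with a compact real-algebraic subgroup of $\Gl_d(\R)$ for $d = \dim H \geq 1$. Using strong noiselessness together with generic smoothness of the group operations, I would construct an $\Sa H$-definable identification of a neighborhood of $e \in H$ with an open subset of $\mathfrak h \cong \R^d$, realizing $\mathfrak h$ as first-order tangent classes of $\Sa H$-definable curves through $e$ with linear structure recovered from the commutator. Simplicity of $\mathfrak h$ then supplies an $\Sa H$-definable copy of $(\R,<,+,\times)$: the structure constants of $\mathfrak h$ in a Killing-form-adapted basis, together with a Cartan subalgebra and its root system, intrinsically produce the real field as the field of scalars of $\mathfrak h$. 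This yields an $\Sa H$-interpretable expansion $\Sa E$ of $(\R,<,+,\times)$, which is $\nip$ and strongly noiseless, hence generically locally o-minimal by Theorem~\ref{thm:main-reals}.

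For bi-interpretability, $\Sa H$ interprets $\Sa E$ by the above construction, while $\Sa E$ interprets $\Sa H$ because $\mathrm{Ad}(H) \subseteq \Gl_d(\R)$ is cut out by polynomial equations and the group law is polynomial in matrix entries. Finally, if $\Sa N$ is strongly dependent then Fact~\ref{fact:shelah} transfers strong dependence to $\Sh N$, hence to $\Sa H$ and $\Sa E$, so $\Sa E$ is a strongly dependent noiseless expansion of $(\R,<,+,\times)$; Theorem~\ref{thm:loc-o-min-field} then gives o-minimality. The main obstacle is the second step: definably extracting the Lie algebra and its field of scalars from the raw group structure inside a merely strongly noiseless $\nip$ structure. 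The Hrushovski-Peterzil-Pillay machinery does this in the o-minimal setting via cell decomposition; here one must rely only on generic smoothness and the field-type dichotomy of Theorem~\ref{thm:zilber}, and the simple-centerless hypothesis is essential in forcing the induced structure to be tied to a single copy of $\R$ rather than a disjoint union of o-minimal structures as in Fact~\ref{fact:pillays}(3).
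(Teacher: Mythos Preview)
Your overall architecture is right, but the second step is where the paper diverges sharply from your proposal, and your acknowledged ``main obstacle'' is in fact the whole point. The paper does not attempt to extract the Lie algebra or build the field from tangent data inside a strongly noiseless structure. Instead it invokes the Nesin--Pillay theorem \cite{NP-lie} as a black box: a simple centerless compact Lie group $H$, considered purely as an abstract group, is already bi-interpretable with $(\R,<,+,\times)$. Consequently any first-order expansion $\Sa H$ of $H$ is automatically bi-interpretable with some expansion $\Sa E$ of $(\R,<,+,\times)$, with no smoothness or chart construction needed. Nesin--Pillay moreover gives a definable continuous isomorphism of $H$ with a subgroup of $\Gl_n(\R)$; composing with a semialgebraic embedding $\R \hookrightarrow \Gl_n(\R)$ yields an $\Sa H$-definable homeomorphic copy of $\R$ inside $H$, along which strong noiselessness transfers. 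This is Proposition~\ref{prop:lie-bi-interp}. Your proposed route through generic smoothness of the group law is circular as stated: the generic-smoothness results of Section~\ref{section:generic-local} are for definable functions between subsets of $\R^n$, so you need coordinates before you can invoke them, but coordinates are what you are trying to construct.

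There is also a gap in your first step. You assert the existence of a small \emph{subdefinable} family $\Cal U$ with $\bigcap \Cal U = G^{00}$, but this is not automatic---indeed Question~\ref{qst:basis} records that even for o-minimal $\Sa N$ it is open whether an externally definable $\bigwedge$-definable equivalence relation must admit a subdefinable basis. The paper avoids this by using the definable amenability hypothesis (which you never invoke): Fact~\ref{fact:def-amenable} gives $G^{00}$ definable in $\Sh N$ directly, and then the Nesin--Pillay observation that $H$ as an abstract group already defines its Lie topology supplies the $\Sh N$-definable basis on $H$ needed for case~(1) of Theorem~\ref{thm:main-noise}. So definable amenability is genuinely used, and Nesin--Pillay does double duty in both steps.
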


\noindent Let $H$ be a simple centerless compact Lie group.
Nesin and Pillay~\cite{NP-lie} show that $H$ (as an abstract group) is bi-interpretable with $(\R,<,+,\times)$.
It follows from their proof that $H$ (an as abstract group) defines a basis for the group topology on $H$.
So we do not distinguish between $H$ as an abstract group and $H$ as a Lie group.
(This idea is essentially already present in classical Lie theory.) \newline

\noindent Theorem~\ref{thm:main-lie} is proven in the following way.
Applying \cite{NP-lie} we see that the induced structure on $G/G^{00}$ is bi-interpretable with an expansion $\Sa R$ of $(\R,<,+,\times)$.
Theorem~\ref{thm:main-noise} shows $\Sa R$ is strongly noiseless, hence generically locally o-minimal.
Theorem~\ref{thm:loc-o-min-field} (essentially due to Dolich and Goodrick) shows that $\Sa R$ is o-minimal when $\Sa N$ is strongly dependent.

\begin{prop}
\label{prop:lie-bi-interp}
Suppose $H$ is a simple centerless compact Lie group.
Suppose $\Sa H$ is a strongly noiseless expansion of $H$.
Then $\Sa H$ is bi-interpretable with a generically locally o-minimal expansion of $(\R,<,+,\times)$.
If $\Sa H$ is in addition strongly dependent then $\Sa H$ is bi-interpretable with an o-minimal expansion of $(\R,<,+,\times)$.
\end{prop}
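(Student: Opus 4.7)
The plan is to reduce the proposition to results already established in the paper by transporting the hypothesis on $\Sa H$ across the bi-interpretation of $H$ with the real field supplied by Nesin and Pillay~\cite{NP-lie}. First I would apply Nesin and Pillay's theorem to the abstract group $H$: since $H$ is a simple centerless compact Lie group, $H$ as a pure group is bi-interpretable with $(\R,<,+,\times)$, and this bi-interpretation recovers (in particular defines) a basis for the Lie group topology on $H$. Consequently any expansion $\Sa H$ of $H$ is bi-interpretable with a uniquely determined expansion $\Sa R$ of $(\R,<,+,\times)$, and under this correspondence the topology on (cartesian powers of) $H$ coming from the Lie group structure matches the Euclidean topology on the corresponding imaginary sorts of $\Sa R$.

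Next I would transfer strong noiselessness from $\Sa H$ to $\Sa R$. Because the bi-interpretation is definable in the pure group $H$ and respects the group topology on $H$ (and hence the product topology on $H^n$) on one side and the Euclidean topology on $\R$ (and hence on $\R^n$) on the other, a definable set in $\Sa R$ has interior (resp.\ is nowhere dense, resp.\ is somewhere dense) in a definable superset if and only if its preimage under the interpretation has the same property in the corresponding subset of $H^n$. So strong noiselessness of $\Sa H$ yields strong noiselessness of $\Sa R$. Theorem~\ref{thm:equiv} then gives that $\Sa R$ is generically locally o-minimal, proving the first assertion.

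For the second assertion, note that strong dependence is preserved under bi-interpretation, so if $\Sa H$ is strongly dependent then $\Sa R$ is a strongly dependent expansion of $(\R,<,+,\times)$. By the first part $\Sa R$ is also strongly noiseless (equivalently generically locally o-minimal), and Theorem~\ref{thm:loc-o-min-field} (the field case of Dolich--Goodrick, i.e.\ the equivalence of strongly dependent plus noiseless with o-minimality for expansions of $(\R,<,+,\times)$) then forces $\Sa R$ to be o-minimal.

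The main technical obstacle is the transport of strong noiselessness in the first step: one needs to verify that Nesin--Pillay's bi-interpretation is good enough that topological notions (interior, closure, density, nowhere-density) genuinely correspond under it. This requires knowing that the group topology on $H$ is definable from the abstract group structure by a definable basis, and that the interpretation of $H$ in $(\R,<,+,\times)$ (together with the inverse interpretation) is realized by continuous maps between the respective topologies; both of these points are implicit in \cite{NP-lie} but need to be extracted carefully. Once this topological faithfulness of the bi-interpretation is in hand, the rest of the proof is a direct appeal to Theorems~\ref{thm:equiv} and \ref{thm:loc-o-min-field}.
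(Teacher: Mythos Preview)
Your proposal is correct and follows essentially the same outline as the paper: use Nesin--Pillay to bi-interpret $\Sa H$ with an expansion $\Sa R$ of $(\R,<,+,\times)$, transfer strong noiselessness to $\Sa R$, apply Theorem~\ref{thm:equiv}, and in the strongly dependent case invoke Theorem~\ref{thm:loc-o-min-field}.

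The one place where the paper is more concrete is exactly the step you flagged as the obstacle. Rather than arguing abstractly that the bi-interpretation is ``topologically faithful'' on all cartesian powers, the paper extracts from \cite{NP-lie} a definable \emph{continuous} group isomorphism $u:G\to H$ for some $G\leq\mathrm{Gl}_n(\R)$, composes it with a semialgebraic homeomorphic embedding $v:\R\to G$ (obtained from o-minimal cell decomposition), and thus obtains a single $\Sa H$-definable homeomorphic embedding $u\circ v:\R\hookrightarrow H$. This realizes the imaginary copy of $\R$ as a definable subset of $H$ carrying the subspace topology, so strong noiselessness of $\Sa H$ passes immediately to the induced structure on $\R$. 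This sidesteps having to check compatibility of the full bi-interpretation with the product topologies on all $H^n$ and $\R^n$; one definable homeomorphic copy of $\R$ inside $H$ is enough. Your abstract version would also work once the requisite continuity is extracted from \cite{NP-lie}, but the paper's device is the cleanest way to discharge the obstacle you identified.
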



\begin{proof}
Note first that $\Sa H$ is bi-interpretable with an expansion of $(\R,<,+,\times)$ as $H$ is bi-interpretable with $(\R,<,+,\times)$.
We therefore regard $\R$ as an imaginary sort of $\Sa H$ and show that the structure induced on $\R$ by $\Sa H$ is generically locally o-minimal.
By \cite[Theorem 0.1, Corollary 2.3]{NP-lie} there is a subgroup $G$ of $\mathrm{Gl}_n(\R)$ and an $\Sa H$-definable continuous group isomorphism $u : G \to H$. 
O-minimal cell decomposition is easily applied to obtain a semialgebraic homeomorphic embedding $v : \R \to G$.
So $u \circ v$ is an $\Sa H$-definable homeomorphic embedding $\R \to H$.
It follows that the induced structure on $\R$ is strongly noiseless, hence generically locally o-minimal by Theorem~\ref{thm:equiv}. \newline

\noindent If $\Sa H$ is strongly dependent then the structure induced on $\R$ by $\Sa H$ is strongly dependent and noiseless, hence o-minimal by Theorem~\ref{thm:loc-o-min-field}.
\end{proof}

\noindent
We can now prove Theorem~\ref{thm:main-lie}.

\begin{proof}
Fact~\ref{fact:def-amenable} shows that $G^{00}$ is definable in $\Sh N$.
As the group topology on $G/G^{00}$ is definable in $\Sh N$, Theorem~\ref{thm:main-noise} shows that the structure induced on $G/G^{00}$ is strongly noiseless.
An application of Proposition~\ref{prop:lie-bi-interp} now shows that the structure induced on $G/G^{00}$ by $\Sh N$ is bi-interpretable with a generically locally o-minimal expansion of $(\R,<,+,\times)$ in general and bi-interpretable with an o-minimal expansion of $(\R,<,+,\times)$ when $\Sa N$ is strongly dependent.
\end{proof}

\noindent  Question~\ref{qst:semi-o-minimal} naturally splits into two questions.
Does $\Sh N$ define a basis for the logic topology on $G/G^{00}$?
If $H$ is a compact Lie group (considered as an abstract group), $\Sa H$ is a first order expansion of $H$ which defines a basis for the group topology on $H$, and $\Sa H$ is strongly dependent and strongly noiseless, then must $\Sa H$ be interpretable in a finite disjoint union of o-minimal expansions of $(\R,<)$?

\section{Modular speculations}
\label{section:modular}
\noindent It is a known open question to define a good notion of ``modularity" or ``one-basedness" for $\nip$ structures.
The Peterzil-Starchenko trichotomy~\cite{PS-Tri} seems to show that an o-minimal structure is ``modular" if and only if it does not define an infinite field.
In particular an o-minimal expansion $\Sa R$ of $(\R,<,+)$ satisfies exactly one of the following
\begin{enumerate}
    \item $\Sa R$ defines an infinite field,
    \item $\Sa R$ is a reduct of $\rvec$.
\end{enumerate}
So an o-minimal structure should be modular if and only if $(2)$ holds.
It seems likely that one might arrive at the right general definition by proving similar dichotomies for more general classes of structures.
We discuss the case of weakly o-minimal expansions of archimedean ordered abelian groups, more generally noiseless strongly dependent expansions of archimedean densely ordered abelian groups.
We make use of the notation and results of Section~\ref{section:completion}. \newline

\noindent We believe that a good notion of modularity for $\nip$ structures should satisfy the following:
\begin{enumerate}[label=(A\arabic*)]
    \item A modular structure cannot interpret an infinite field.
    \item Any ordered abelian group and any ordered vector space is modular.
    \item Modularity is preserved under disjoint unions and bi-interpretations.
    \item Suppose $\Sa M$ is a modular $\nip$ structure, $A$ is a subset of $M^x$, and the structure induced on $A$ by $\Sa M$ eliminates quantifiers.
    Then the induced structure on $A$ is modular.
    (So in particular the Shelah expansion of a modular $\nip$ structure is modular.)
\end{enumerate}

\noindent (A$1)$ is a well-known property of one-based stable structures, see for example \cite{HP-weakly-normal}.
We do not know if (A$4)$ has been observed in the literature for one-based stable structures, so we briefly indicate why it is true.
A formula $\phi(x,y)$ is weakly normal if there is $n$ such that whenever $a_1,\ldots,a_n \in M^{x}$ satisfy $\bigcap_{i = 1}^{n} \phi(a_i,M^y) \neq \emptyset$ then $\phi(a_i, M^y) = \phi(a_j,M^y)$ for some $1 \leq i < j \leq n$.
The structure $\Sa M$ is said to be weakly normal if every formula is a boolean combination of weakly normal formulas.
Hrushovski and Pillay show that $\Sa M$ is weakly normal if and only if $\Sa M$ is stable and one-based~\cite{HP-weakly-normal}.
It is easy to see that if $\Sa M$ is weakly normal, $A$ is a subset of $M^x$, and the structure induced on $A$ by $\Sa M$ eliminates quantifiers then the induced structure is weakly normal. \newline

\noindent Following Section~\ref{section:completion} we let $(R,<,+)$ be a dense archimedean ordered abelian group, which we take to be a substructure of $(\R,<,+)$, let $\Sa R$ be an expansion of $(R,<,+)$, and let $\Sa N$ be a highly saturated elementary extension of $\Sa R$.

\begin{prop}
\label{prop:modular}
Suppose $\Sa R$ is strongly dependent and noiseless.
Then one of the following holds.
\begin{enumerate}
    \item $\Sh N$ interprets $(\R,<,+,\times)$,
    \item every $\Sa R$-definable subset of $R^n$ is of the form $X \cap \R^n$ for some $\rvec$-definable subset $X$ of $\R^n$,
    \item there is an $\alpha > 0$ such that every $\Sa R$-definable subset of $R^n$ is of the form $X \cap R^n$ for some $(\bvec,\az)$-definable subset $X$ of $\R^n$.
\end{enumerate}
\end{prop}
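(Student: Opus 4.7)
The plan is to combine the structural classification of $\Sq R$ from Proposition~\ref{prop:arch-complete} with the reduct dichotomy of Proposition~\ref{prop:strong-linear} and then transfer the result back to $\Sa R$ via Proposition~\ref{prop:constructible}.

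First, since $\Sa R$ is strongly dependent and noiseless, Proposition~\ref{prop:arch-complete} gives that $\Sq R$ is either o-minimal or $(\R,<,+,\az)$-minimal for some $\alpha > 0$; in either case Theorem~\ref{thm:loc-o-min} shows that $\Sq R$ is a strongly dependent expansion of $(\R,<,+)$ by closed sets. I would then split on whether $\Sq R$ is field-type. If it is, then by definition there is a nonempty open interval $I \subseteq \R$ and $\Sq R$-definable continuous operations $\oplus,\otimes : I^2 \to I$ making $(I,<,\oplus,\otimes)$ isomorphic to $(\R,<,+,\times)$. Since by Proposition~\ref{prop:arch-complete} $\Sq R$ is a reduct of the structure induced on the imaginary sort $\R = \mfin/\minf$ by $\Sh N$, the operations $\oplus,\otimes$ are already $\Sh N$-definable, so $\Sh N$ interprets $(\R,<,+,\times)$, giving conclusion $(1)$.

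If $\Sq R$ is not field-type, then Proposition~\ref{prop:conj-case-strong} (or directly Theorem~\ref{thm:zilber}) shows $\Sq R$ does not interpret an infinite field. Applying Proposition~\ref{prop:strong-linear} to the strongly dependent expansion by closed sets $\Sq R$ then yields that $\Sq R$ is either a reduct of $\rvec$ (the o-minimal subcase) or a reduct of $(\bvec,\az)$ for some $\alpha > 0$ (the $(\R,<,+,\az)$-minimal subcase). To transfer this down to $\Sa R$, I would invoke Proposition~\ref{prop:constructible}: every $\Sa R$-definable subset of $R^n$ is a fortiori $\Sh R$-definable, and so is of the form $Y \cap R^n$ for some $\Sq R$-definable $Y \subseteq \R^n$. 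In the first subcase this $Y$ is then $\rvec$-definable, yielding $(2)$; in the second subcase $Y$ is $(\bvec,\az)$-definable, yielding $(3)$.

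There is no real technical obstacle here beyond carefully assembling the pieces, because the heavy lifting has already been done: Proposition~\ref{prop:arch-complete} classifies $\Sq R$ up to strongly dependent expansion by closed sets; Proposition~\ref{prop:strong-linear} classifies such structures that are not field-type; Theorem~\ref{thm:zilber} guarantees that a field-type generically locally o-minimal structure genuinely produces a copy of $(\R,<,+,\times)$; and Proposition~\ref{prop:constructible} translates definability in $\Sa R$ into definability in $\Sq R$. The one point that deserves attention is ensuring that in the field-type case the interval $I$ and the operations $\oplus,\otimes$ produced by Theorem~\ref{thm:zilber} are inherited from $\Sh N$ (not merely from $\Sq R$ abstractly), which is immediate once one recalls that $\Sq R$ is literally a reduct of the structure induced on the imaginary sort $\mfin/\minf$ of $\Sh N$.
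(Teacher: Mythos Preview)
Your proposal is correct and follows essentially the same route as the paper's proof: split on whether $\Sq R$ is field-type, use interpretability of $\Sq R$ in $\Sh N$ for case $(1)$, and combine Proposition~\ref{prop:strong-linear} with Proposition~\ref{prop:constructible} for cases $(2)$ and $(3)$. You are slightly more explicit than the paper in invoking Proposition~\ref{prop:conj-case-strong} to pass from ``not field-type'' to ``does not interpret an infinite field'' before applying Proposition~\ref{prop:strong-linear}, which is a helpful clarification.
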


\begin{proof}
Recall that $\Sq R$ is interpretable in $\Sh N$, so $(1)$ holds when $\Sq R$ is field-type.
Suppose $\Sq R$ is not field-type.
Proposition~\ref{prop:arch-complete} and Proposition~\ref{prop:strong-linear} together show that $\Sq R$ is either a reduct of $\rvec$ or $\bvec$.
Proposition~\ref{prop:constructible} shows that if $\Sq R$ is a reduct of $\rvec$ then $(2)$ holds and if $\Sq R$ is a reduct of $(\bvec,\az)$ then $(3)$ holds.
\end{proof}

\noindent Suppose for the moment that we have a notion of modularity satisfying (A$1)$-(A$4)$.
Fix $\alpha > 0$ and let $\Sa I$ be the structure induced on $[0,\alpha)$ by $\rvec$.
(A$2)$ and (A$4)$ together imply that $\Sa I$ is modular.
As $(\bvec,\az)$ is bi-interpretable with the disjoint union of $\Sa I$ and $(\Z,<,+)$, (A$2)$ and (A$3)$ together imply that $(\bvec,\az)$ is modular.
So if $\Sa R$ satisfies $(2)$ or $(3)$ of Proposition~\ref{prop:modular} then (A$4)$ implies $\Sa R$ is modular.

\begin{conj}
\label{conj:modular}
Suppose $\Sa R$ is strongly dependent and noiseless.
If $\Sq R$ is not field-type then $\Sh N$ does not interpret an infinite field.
\end{conj}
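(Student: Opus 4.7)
The plan is to reduce the conjecture to the non-interpretability of infinite fields in $\rvec$ and $(\bvec,\az)$, and then descend any putative interpretation in $\Sh N$ through the chain of structures relating $\Sh N$ to $\Sq R$. By Proposition~\ref{prop:arch-complete}, strong dependence of $\Sa R$ forces $\Sq R$ to be either o-minimal or $(\R,<,+,\az)$-minimal for some $\alpha > 0$. Combined with the assumption that $\Sq R$ is not field-type, Proposition~\ref{prop:strong-linear} yields that $\Sq R$ is a reduct of either $\rvec$ or $(\bvec,\az)$. In either case $\Sq R$ does not interpret an infinite field: for $(\bvec,\az)$ this is Proposition~\ref{prop:strong-linear}, for $\rvec$ it is the Peterzil-Starchenko trichotomy, and the property passes to any reduct.

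Next, the descent proceeds via the identifications already established. Corollary~\ref{cor:strong-interdef} identifies the structure induced on the imaginary sort $\R = \mfin/\minf$ by $\Sh N$ with $\Sq R$, and Proposition~\ref{prop:constructible} identifies $\Sh R$ with the structure induced on $R$ by $\Sq R$. In the $\nip$ setting, the Shelah expansion is determined at the level of theories by honest definitions (Fact~\ref{fact:Honest}), so $\Sh N \equiv \Sh R$, and hence any infinite field interpretable in $\Sh N$ would also be interpretable in $\Sh R$. Suppose then that $\Sh R$ interprets an infinite field $K = X/E$, where $X \subseteq R^n$ and $E \subseteq R^{2n}$ are $\Sh R$-definable. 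By Proposition~\ref{prop:constructible} we may write $X = X' \cap R^n$ and $E = E' \cap R^{2n}$ for $\Sq R$-definable $X' \subseteq \R^n$ and $E' \subseteq \R^{2n}$.

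The remaining task is to promote $(X', E')$ to an $\Sq R$-definable infinite field, which would contradict the first paragraph. Because $\Sq R$ is generically locally o-minimal and d-minimal by Theorem~\ref{thm:loc-o-min}, one may work cell by cell, and because $R$ is dense in $\R$ with $\Sh R$-definable sets constructible inside $\Sq R$, the $\Sh R$-definable field operations on $X/E$ should propagate to $\Sq R$-definable partial operations on $X'/E'$. Taking the largest $\Sq R$-definable subset on which $E'$ restricts to an equivalence relation and the induced operations satisfy the field axioms should then yield a definable infinite field, since that subset necessarily contains $X/E$.

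The hard part will be precisely this last step: a priori the $\Sq R$-quotient $X'/E'$ may identify points that $E$ distinguishes in $R$, or contain ``ghost'' classes with no $R$-representative, either of which could destroy the field structure. Overcoming this gap will likely require either a direct multi-sorted analogue of Proposition~\ref{prop:conj-case-strong} applied inside $\Sh N$, using the dp-rank bound of Corollary~\ref{cor:dp-arch} and Fact~\ref{fact:onus}, or an appeal to a classification of strongly dependent $\nip$ fields showing that any interpretable $K$ carries enough topological structure from $\Sq R$ to already be $\Sq R$-definable.
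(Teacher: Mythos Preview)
The statement you are trying to prove is labelled \emph{Conjecture}~\ref{conj:modular} in the paper, and the paper does \emph{not} prove it. Immediately after stating it the authors write that ``It is presumably possible to give a `brute-force' proof'' and that ``The `right' solution would be to define a notion of modularity satisfying (A1)--(A4).'' In other words, this is genuinely left open; there is no proof in the paper against which to compare your attempt.

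As for your proposal itself, the first two paragraphs are on solid ground and correctly assemble the paper's results to reduce the problem to showing that $\Sh N$ does not interpret an infinite field when $\Sq R$ is a reduct of $\rvec$ or $(\bvec,\az)$. The real difficulty is exactly the one you flag in your final paragraph, and it is not a minor technicality. Passing from an interpretation in $\Sh R$ (equivalently, in the structure induced on $R$ by $\Sq R$) to an interpretation in $\Sq R$ is not automatic: even though Proposition~\ref{prop:constructible} gives $X = X' \cap R^n$ and $E = E' \cap R^{2n}$ for $\Sq R$-definable $X',E'$, there is no reason the quotient $X'/E'$ should inherit a field structure. The ``ghost classes'' and collapsing phenomena you mention are exactly the obstruction, and nothing in the paper's toolkit resolves them. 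Your suggested routes around the gap (a multi-sorted analogue of Proposition~\ref{prop:conj-case-strong}, or invoking a classification of strongly dependent fields) are reasonable directions, but each would itself be a substantial new result. One further caution: the step ``$\Sh N \equiv \Sh R$ by honest definitions'' is morally correct for $\nip$ theories via uniform honest definitions, but it is not stated in the paper and deserves a reference or a short argument rather than being asserted.
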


\noindent If Conjecture~\ref{conj:modular} fails then (A$1)$ - (A$4)$ are inconsistent.
It is presumably possible to give a ``brute-force" proof.
The ``right" solution would be to define a notion of modularity satisfying (A$1)$ - (A$4)$.
\newline

\noindent Block Gorman, Hieronymi, and Kaplan~\cite[Section 5.2]{GoHi-Pairs} study $(\rvec,\Q)$.
It follows from their work that $(\rvec,\Q)$ is $\nip$, the induced structure on $\Q$ is weakly o-minimal, and $(\rvec,\Q)^\circ$ is interdefinable with $\rvec$.
We expect the structure induced on $\Q$ by $(\rvec,\Q)$ to be a typical example of a modular weakly o-minimal structure. \newline

\noindent We now give an example of a weakly o-minimal structure which does not interpret an infinite field, but whose Shelah expansion interprets $(\R,<,+,\times)$.
This example also shows that modularity cannot be defined in terms of algebraic closure. \newline

\noindent
We first recall the Mann property.
We refer to G\"{u}naydin and van den Dries~\cite{vdDG-small-mult} for more information and references.
Let $\R_{>0}$ be the set of positive real numbers.
A solution $(s_1,\ldots,s_n)$ of the equation $a_1 x_1 + \ldots a_n x_n = 1$ is non-degenerate if $\sum_{i \in I} a_i s_i \neq 0$ for all $I \subseteq \{1,\ldots,n\}$.
A subgroup $G$ of $\C^\times$ has the \textbf{Mann property} if for any $a_1,\ldots,a_n \in \C^\times$ the equation $a_1 x_1 + \ldots + a_n x_n = 1$ has only finitely many non-degenerate solutions in $G^n$.
It is a deep diophantine result that any finite rank subgroup of $\C^\times$ has the Mann property.
In particular the group of complex roots of unity has the Mann property and if $a_1,\ldots,a_m \in \C^\times$ then $ \{ a_1^{q_1} \ldots a_m^{q_m} : q_1,\ldots,q_m \in \Q \}$ has the Mann property.
If $G$ is a dense subgroup of $(\R_{>0},\times)$ with the Mann property then $(\R,<,+,\times,G)$ is $\nip$. \newline

\noindent Fix a divisible subgroup $G$ of $(\R_{>0},\times)$ with the Mann property.
Note that $(G,<,\times)$ is archimedean.
Let $\Sa G$ be the structure induced on $G$ by $(\R,<,+,\times,G)$.
By \cite[Theorem 7.2]{vdDG-small-mult} any $\Sa G$-definable subset of $G^n$ is of the form $Y \cap G^n$ for semialgebraic $Y \subseteq \R^n$ (this requires divisibility).
In particular $\Sa G$ is weakly o-minimal and the structure induced on $G$ by $(\R,<,+,\times)$ eliminates quantifiers.
Given polynomials $f_1,\ldots,f_m \in \R[x_1,\ldots,x_n]$ it follows from \cite[Proposition 5.8]{vdDG-small-mult} that
$$ \{ g \in G^n : f_1(g) = \ldots = f_m(g) = 0 \} $$
is a finite union of cosets of subgroups of $G^n$ of the form
$$ \{ (g_1,\ldots,g_n) \in G^n : g_1^{i_1} \ldots g_n^{i_n} = 1\}$$
for integer $i_1,\ldots,i_n$
(they only prove this for algebraically closed fields, but the same proof goes through over $\R$).
A semialgebraic subset of $\R^n$ has empty interior if and only if its Zariski closure has empty interior (see for example \cite[2.8]{real-algebraic-geometry}), it follows that any $\Sa G$-definable subset of $G^n$ either has interior or is contained in a nowhere dense $(G,\times)$-definable subset of $G^n$.
It is now easy to see that algebraic closure in $\Sa G$ agrees with algebraic closure in $(G,\times)$.

\begin{prop}
\label{prop:mann}
Let $\Sa H$ be a highly saturated elementary extension of $\Sa G$.
Then $\Sa H$ does not interpret an infinite field but $\Sh H$ interprets $(\R,<,+,\times)$.
\end{prop}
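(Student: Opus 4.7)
The plan is to handle the two claims separately. The non-interpretation claim exploits the modularity of algebraic closure in $\Sa G$ recorded just before Proposition~\ref{prop:mann}, while the interpretation claim is obtained by explicitly identifying $(\R,<,+,\times)$ as a reduct of $\Sq G$ and invoking Proposition~\ref{prop:arch-complete} after transferring to the dense divisible subgroup $\log G \subseteq (\R,<,+)$ via $\log$.

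For the first claim, since interpretation is preserved under elementary equivalence and $\Sa H \equiv \Sa G$, it suffices to show $\Sa G$ does not interpret an infinite field. As recorded in the excerpt, algebraic closure in $\Sa G$ coincides with algebraic closure in the divisible torsion-free abelian group $(G,\times)$, which is just $\Q$-linear span, so the algebraic-closure pregeometry of $\Sa G$ is modular. Given an interpretable infinite field $K$, apply the classical modularity obstruction: for generic independent $a,b,c \in K$ of positive rank $r$, one has $\{a,b,c\} \subseteq \operatorname{acl}(\{a,b\} \cup \{c, a+bc\})$ and $\operatorname{acl}(a,b) \cap \operatorname{acl}(c, a+bc) = \operatorname{acl}(\emptyset)$ (generically), so modular additivity of dimension would force $4r \leq 3r$, an impossibility. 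The main technical point here, and the principal obstacle to making the argument entirely self-contained, is that modularity of the $\operatorname{acl}$-pregeometry must be inherited by the interpreted sort carrying $K$; this is a standard fact for modular NIP pregeometries, and in the present dp-minimal setting one may use dp-rank (which is additive on cartesian products) as the rank notion.

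For the second claim, work multiplicatively inside $G \subseteq \R_{>0}$. Transferring the setup via $\log$ to an additive presentation on the dense divisible subgroup $\log G \subseteq (\R,<,+)$ and applying Proposition~\ref{prop:arch-complete}, one obtains that $\Sq G$---the completion structure on $\R_{>0}$ whose primitive $n$-ary relations are the closures in $\R_{>0}^n$ of $\Sh G$-definable subsets of $G^n$---is interpretable in $\Sh H$. It therefore suffices to show $\Sq G$ interprets $(\R,<,+,\times)$, and for this I will exhibit the graphs of $<$, $\cdot$ and $+$ on $\R_{>0}$ as $\Sq G$-definable; the field $(\R,<,+,\times)$ is then recovered by the standard identification $\R = \R_{>0} \sqcup \{0\} \sqcup (-\R_{>0})$. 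The order and the graph of multiplication are immediate: $\{(g,h,k) \in G^3 : gh = k\} = \{(g,h,gh) : g,h \in G\}$ is $\Sa G$-definable because $G$ is closed under multiplication, and by continuity and density of $G$ in $\R_{>0}$ its closure in $\R_{>0}^3$ is exactly $\{(x,y,xy) : x,y \in \R_{>0}\}$.

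The graph of addition is more delicate, since by the Mann property the set $\{(g,h,k) \in G^3 : g+h=k\}$ lies along a one-parameter family of multiplicative translates of a finite set and so has a closure strictly smaller than the full graph of $+$ on $\R_{>0}$. The workaround is to approach the graph through half-spaces: the $\Sa G$-definable set $\{(g,h,k) \in G^3 : g+h \leq k\}$ (the intersection of a semialgebraic subset of $\R^3$ with $G^3$) is dense in $\{(x,y,z) \in \R_{>0}^3 : x+y \leq z\}$, as one sees by choosing $g,h \in G$ slightly below $x,y$ and $k \in G$ slightly above $z$ to obtain arbitrarily close triples satisfying $g+h < k$. Hence $\{x+y \leq z\}$ is a primitive relation of $\Sq G$, as symmetrically is $\{x+y \geq z\}$, and their intersection is the graph of $+$ on $\R_{>0}$. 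With $<$, $\cdot$, and $+$ all $\Sq G$-definable, $(\R_{>0},<,+,\cdot)$ is a reduct of $\Sq G$, and therefore $\Sh H$ interprets $(\R,<,+,\times)$.
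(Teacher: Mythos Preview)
Your treatment of the second claim is correct and matches the paper's: both recover $+$ and $\times$ on $\R_{>0}$ by taking closures of $\Sa G$-definable half-spaces in $G^3$, you via the closed half-spaces $\{x+y \le z\}$ and $\{x+y \ge z\}$, the paper via the regular open sets $\{tt' < s\}$ and $\{t+t' < s\}$ recovered as interiors of closures.

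For the first claim your strategy is right in spirit but the sketched dimension count does not go through as written. The assertion $\operatorname{acl}(a,b) \cap \operatorname{acl}(c, a+bc) = \operatorname{acl}(\emptyset)$ is a statement about the pregeometry on the interpreted field sort, not on $G$, and it is neither obvious nor something you can read off from the modularity of $\operatorname{acl}$ in $\Sa G$; indeed the standard group-configuration argument runs the other way, with modularity forcing the intersection to be \emph{nontrivial} and the contradiction then extracted from the field structure. You also correctly flag, but do not resolve, the issue of transferring modularity to the interpreted sort. The paper sidesteps both difficulties: it invokes Eleftheriou's elimination of imaginaries for $\Sa G$ (hence for $\Sa H$) to reduce interpretation to outright definition, and then either appeals directly to the description of $\operatorname{acl}$ recorded before the proposition or cites Berenstein--Vassiliev, who show that a structure with this acl-geometry cannot define a partial almost quasidesign, a configuration present in every infinite field.
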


\noindent A similar argument shows that if $U$ is the set of complex roots of unity, $\Sa U$ is the structure induced on $U$ by $(\R,<,+,\times,U)$, and $\Sa V$ is a highly saturated elementary expansion of $\Sa U$, then $\Sa V$ does not interpret an infinite field but $\Sh V$ interprets $(\R,<,+,\times)$.
We let $\cl(X)$ be the closure in $(\R_{>0})^n$ of $X \subseteq (\R_{>0})^n$.
Recall that an open set $U$ is \textbf{regular} if $U$ agrees with the interior of the closure of $U$.

\begin{proof}
Eleftheriou~\cite{E-small} shows that $\Sa G$, and hence $\Sa H$, eliminates imaginaries.
So it suffices to show that $\Sa H$ does not define an infinite field.
This may be deduced from the observations on algebraic closure above.
It also follows from Berenstein and Vassiliev~\cite[Proposition 2.11, Proposition 3.16]{BV-one-based}.
(It is easy to see that a structure which defines an infinite field defines a partial almost quasidesign in the sense of \cite{BV-one-based}.) \newline

\noindent We show that $\Sh H$ interprets $(\R,<,+,\times)$.
Let $\Sq G$ be the expansion of $(\R_{>0},<,\times)$ by $\cl(X)$ for every $\Sh G$-definable $X \subseteq G^n$.
The arguments of Section~\ref{section:completion} show that $\Sq G$ is interpretable in $\Sh H$.
We show that $\Sq G$ defines $(\R,<,+,\times)$.
It suffices to show that
$$ \{ (t,t',s) \in (\R_{>0})^3 : tt' < s \} \quad \text{and} \quad \{ (t,t',s) \in (\R_{>0})^3 : t + t' < s \} $$
are both definable in $\Sq G$.
Both of these are regular open subsets of $(\R_{>0})^3$.
So let $U$ be any regular open semialgebraic subset of $(\R_{>0})^n$.
Then $U \cap G^n$ is $\Sa G$-definable so $\cl(U \cap G^n)$ is $\Sq G$-definable.
As $U$ is open and $U \cap G^n$ is dense in $U$ we have $\cl(U \cap G^n) = \cl(U)$.
So $U$ is $\Sq G$-definable as it is the interior of $\cl(U)$.
\end{proof}

\bibliographystyle{abbrv}
\bibliography{NIP}

\end{document}